\newtheorem{thm}{Theorem}[section]
\newtheorem{lemma}[thm]{Lemma}
\newtheorem{corollary}[thm]{Corollary}
\newtheorem{prop}[thm]{Proposition}
\theoremstyle{definition}
\newtheorem{rem}[thm]{Remark}
\newtheorem{defn}[thm]{Definition}
\newcommand{\isom}{\overset{\sim}{\rightarrow}}
\def\ge{{\geqq}}
\title{Deformations of trianguline $B$-pairs and Zariski density of two dimensional 
crystalline representations.}
\author{Kentaro Nakamura}
\date{} 
\begin{document}

\maketitle
\pagestyle{plain}
\footnote{2010 Mathematical Subject Classification 11F80 (primary), 11F85, 11S25 (secondary).

Keywords: $p$-adic Hodge theory, trianguline representations, B-pairs.  

Address:
Department of Mathematics, Hokkaido University, Kita 10, Nishi 8, Kita-Ku, Sapporo, Hokkaido, 060-0810, Japan. 

E-mail address: kentaro@math.sci.hokudai.ac.jp}
\begin{abstract}
The aims of this article are to study the deformation theory of 
trianguline $B$-pairs and to construct a $p$-adic family of two dimensional trianguline 
representations for any $p$-adic field. The deformation theory is the generalization 
of Bella\"iche-Chenevier's and Chenevier's works in the $\mathbb{Q}_p$-case,
 where they used ($\varphi,\Gamma$)-modules over the Robba ring instead of using $B$-pairs. 
 Generalizing and modifying Kisin's theory of $X_{fs}$ for any $p$-adic field, we construct 
 a $p$-adic family of two dimensional trianguline representations.
 As an application of these theories, we prove a theorem concerning the 
 Zariski density of two dimensional 
 crystalline representations for any $p$-adic field, which is a generalization of Colmez's and Kisin's theorem for the $\mathbb{Q}_p$-case.
\end{abstract}
\setcounter{tocdepth}{2}
\tableofcontents

\section{Introduction.}
\subsection{Background.}
Let $p$ be a prime number and let $K$ be a $p$-adic field, i.e. finite 
extension of $\mathbb{Q}_p$. The theory of trianguline representations which form a class of $p$-adic representations 
of $G_K:=\mathrm{Gal}(\overline{K}/K)$, in particular the theory of their $p$-adic families turns out to be very important for the study of the families of $p$-adic Galois representations over 
the eigenvarieties which parametrize some $p$-adic automorphic representations. 
Inspired by Kisin's $p$-adic Hodge theoretic study of Coleman-Mazur eigencurve \cite{Ki03},
 Colmez \cite{Co08}
defined the notion of trianguline representations by using Fontaine's and Kedlaya's theory of ($\varphi,\Gamma$)-modules over the Robba ring in the study of $p$-adic Langlands correspondence for $\mathrm{GL}_2(\mathbb{Q}_p)$. 
Based on their works, Bella\"iche-Chenevier \cite{Bel-Ch09} and Chenevier \cite{Ch09b} studied deformation theory of trianguline representations 
and $p$-adic families of trianguline representations. These theories are the fundamental tools for 
their applications of the eigenvarieties to some number theoretic problems, e.g. a construction of 
non trivial elements in some Selmer groups. 
Because all their studies are limited to the case $K=\mathbb{Q}_p$, we didn't have any results 
concerning the $p$-adic Hodge theoretic properties of eigenvarieties  over a number field $F$ 
except when $F$ is $\mathbb{Q}$ or more generally is a number field in which $p$ splits completely.


On the other hands, in \cite{Na09}, the author of this article generalized
many results of  \cite{Co08} for any $p$-adic field $K$. The author proved some fundamental properties of trianguline representations and then classified two dimensional trianguline representations for any $p$-adic field, where we studied trianguline representations by using 
$B$-pairs, which were defined by Berger in \cite{Be09}, instead of using ($\varphi,\Gamma$)-modules over the Robba ring.

The aim of this article is 
to generalize Kisin's, Bella\"iche-Chenevier's  and Chenevier's works for any $p$-adic field $K$, more precisely, 
to develop  deformation 
theory of trianguline representations and to construct  a $p$-adic family of two dimensional 
trianguline representations for any $p$-adic field $K$. The author expects that these generalizations are also fundamental for applications 
to  $p$-adic Hodge theoretic study of eigenvarieties for more general number fields. 

As an application of these theories, 
we prove some theorems (see Theorem \ref{000} and Theorem\ref{0000} in Introduction)
concerning the Zariski density of two dimensional crystalline representations for any $p$-adic field. 
These results are the generalizations of  a theorem of Colmez and Kisin for $K=\mathbb{Q}_p$, which played 
some crucial roles in the proof of $p$-adic Langlands correspondence for $\mathrm{GL}_2(\mathbb{Q}_p)$ (\cite{Co10}, \cite{Ki10}, \cite{Pa10}).  
In the next article \cite{Na11} which is based on the results of this article, we construct a $p$-adic family of $d$-dimensional trianguline representations 
for any $d\in \mathbb{Z}_{\geqq 1}$  and for any $K$ and prove some theorems concerning 
the Zariski density of $d$-dimensional crystalline representations for any $d$ and $K$.


  \subsection{Overview.}
 
 Here, we explain the contents of each section of this article.
 
 \smallskip
 
 In $\S$ 2, we study the deformation theory of trianguline $B$-pairs, which is the 
 generalization of the studies of \cite{Bel-Ch09}, \cite{Ch09b} for any $p$-adic field. 
 
 In $\S$ 2.1, we recall the definition of $B$-pairs and some fundamental properties of trianguline 
 $B$-pairs proved in \cite{Na09} and then we extend these notions to those over Artin local rings. 
 Let $E$ be a suitable finite extension of $\mathbb{Q}_p$ which is sufficiently large as in Notation below. We recall the definition of $E$-$B$-pairs of $G_K$, which is the $E$-coefficient version of $B$-pairs. Set $\bold{B}_{e}:=\bold{B}_{\mathrm{cris}}^{\varphi=1}$. 
 An $E$-$B$-pair is a pair $W=(W_e,W^+_{\mathrm{dR}})$ where $W_e$ is a finite free $\bold{B}_{e}\otimes_{\mathbb{Q}_p}E$-module 
 with a continuous semi-linear $G_K$-action such that $W^+_{\mathrm{dR}}\subseteq W_{\mathrm{dR}}:=\bold{B}_{\mathrm{dR}}\otimes_{\bold{B}_{e}}W_e$ 
 is a $G_K$-stable $\bold{B}^+_{\mathrm{dR}}\otimes_{\mathbb{Q}_p}E$-lattice of $W_{\mathrm{dR}}$. The category of $E$-representations 
 of $G_K$ is embedded in the category of $E$-$B$-pairs by $V\mapsto W(V):=(\bold{B}_{e}\otimes_{\mathbb{Q}_p}V, \bold{B}^+_{\mathrm{dR}}\otimes_{\mathbb{Q}_p}V)$. 
 We say that an $E$-$B$-pair is split trianguline if $W$ is a successive extension of rank one $E$-$B$-pairs, i.e. $W$ has a filtration 
 $0=W_0\subseteq W_1\subseteq W_2\subseteq \cdots \subseteq W_{n-1}\subseteq W_n=W$ such that $W_i$ is a saturated sub $E$-$B$-pair 
 of $W$ and $W_i/W_{i-1}$ is a rank one $E$-$B$-pair for any $1\leqq i\leqq n$. We say that $W$ is trianguline 
 if $W\otimes_{E}E'$ is a $E'$-split trianguline $E'$-$B$-pair for a finite extension $E'$ of $E$. We say that an $E$-representation $V$ is split trianguline 
 (resp. trianguline) if $W(V)$ is split trianguline (resp. trianguline). By these definitions, to study trianguline $E$-$B$-pairs, we first need to classify 
 rank one $E$-$B$-pairs and then we need to calculate extensions between them, which were studied in \cite{Co08} for $K=\mathbb{Q}_p$ and in 
 \cite{Na09} for general $K$. In $\S$ 2.1, we recall these results which we need to study the deformation theory of trianguline $E$-$B$-pairs.
We define the Artin local ring coefficient version of $B$-pairs. Let $\mathcal{C}_E$ be the category of Artin local $E$-algebras with the residue fields isomorphic to $E$. For $A\in \mathcal{C}_E$, we say that $W:=(W_e,W^+_{\mathrm{dR}})$ is an $A$-$B$-pair if $W_e$ is a finite free $\bold{B}_{e}\otimes_{\mathbb{Q}_p}A$-module with a continuous semi-linear $G_K$-action and 
 $W^+_{\mathrm{dR}}\subseteq W_{\mathrm{dR}}:=\bold{B}_{\mathrm{dR}}\otimes_{\bold{B}_{e}}W_e$ is a $G_K$-stable $\bold{B}^+_{\mathrm{dR}}\otimes_{\mathbb{Q}_p}A$-lattice. We generalize some results of \cite{Na09} for $A$-$B$-pairs.
 
 In $\S$ 2, we study two types of deformations of split trianguline $E$-$B$-pairs.
 In $\S$ 2.2, first we study the 
 usual deformation for all $E$-$B$-pairs, which is the generalization of Mazur's deformation theory
 of $p$-adic Galois representations. 
 Let $W$ be an $E$-$B$-pair and $A\in \mathcal{C}_E$. 
 We say that $(W_A,\iota)$ is a deformation of $W$ over $A$ if $W_A$ is an $A$-$B$-pair and $\iota:W_A\otimes_A E\isom W$ is an 
 isomorphism. We define the deformation functor of $W$, $D_W:\mathcal{C}_E\rightarrow (\mathrm{Sets})$ by 
 $D_W(A):=\{$equivalent classes of deformations $(W_A,\iota)$ of $W$ over $A$ $\}$. We prove the following proposition concerning 
 the pro-representability and the formal smoothness and the dimension formula 
 of $D_W$.  
 
  \begin{prop} (Corollary \ref{13})
    Let $W$ be an $E$-$B$-pair of rank $n$. If $W$ satisfies the following conditions, 
    \begin{itemize}
    \item[(1)]$\mathrm{End}_{G_K}(W)=E$,
    \item[(2)]$\mathrm{H}^2(G_K, \mathrm{ad}(W))=0$, 
    \end{itemize}
    then the functor $D_W$ is pro-representable by a pro-object $R_W$ of $\mathcal{C}_E$ such that 
    $$R_W\isom E[[T_1,\cdots,T_d]]\,\,\text{ for } d:=[K:\mathbb{Q}_p]n^2 +1.$$
    \end{prop}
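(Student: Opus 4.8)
The plan is to transcribe Mazur's deformation-theoretic argument into the setting of $A$-$B$-pairs, using as substitute for the Galois cohomology of $p$-adic representations the cohomology theory of $E$- and $A$-$B$-pairs set up in $\S$ 2.1. Throughout write $\mathrm{ad}(W)$ for the rank $n^2$ endomorphism $E$-$B$-pair of $W$, so that $\mathrm{H}^0(G_K,\mathrm{ad}(W))=\mathrm{End}_{G_K}(W)$ and $\mathrm{H}^1(G_K,\mathrm{ad}(W))$ classifies extensions of $W$ by itself. The first step is the standard cocycle computation: for a small extension $0\to I\to A'\to A\to 0$ in $\mathcal{C}_E$ (so $I$ is a finite-dimensional $E$-vector space killed by $\mathfrak{m}_{A'}$) and a deformation $W_A$ of $W$ over $A$, the set of isomorphism classes of lifts of $W_A$ to an $A'$-$B$-pair, if non-empty, is a torsor under $\mathrm{H}^1(G_K,\mathrm{ad}(W))\otimes_E I$, and the existence of a lift is obstructed by a class in $\mathrm{H}^2(G_K,\mathrm{ad}(W))\otimes_E I$. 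Taking $A=E$ and $A'=E[\epsilon]$ this yields a canonical isomorphism $D_W(E[\epsilon])\isom\mathrm{H}^1(G_K,\mathrm{ad}(W))$, the tangent space of $D_W$.

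Next I would check Schlessinger's criteria. For a fiber product $A'\times_A A''$ in $\mathcal{C}_E$, giving an $(A'\times_A A'')$-$B$-pair is equivalent to giving an $A'$-$B$-pair, an $A''$-$B$-pair, and an isomorphism between their base changes to $A$; this reduces to the corresponding statements for finite free modules over $B_e\otimes_{\mathbb{Q}_p}(A'\times_A A'')$ and for $B^+_{\mathrm{dR}}\otimes_{\mathbb{Q}_p}(A'\times_A A'')$-lattices, which follow from the module-theoretic facts recalled in $\S$ 2.1. This gives surjectivity of $D_W(A'\times_A A'')\to D_W(A')\times_{D_W(A)}D_W(A'')$, i.e. Schlessinger's $H_1$ and $H_2$; finiteness of $\mathrm{H}^1(G_K,\mathrm{ad}(W))$ gives $H_3$. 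The bijectivity required for pro-representability (Schlessinger's $H_4$) is where hypothesis (1) enters: the possible failure of injectivity is measured by automorphisms of deformations that restrict to the identity on the special fiber, and for a small extension these are controlled by $\mathrm{H}^0(G_K,\mathrm{ad}(W))\otimes_E I=E\otimes_E I$, i.e. by scalars, which always lift; hence $\mathrm{End}_{G_K}(W)=E$ forces $H_4$ to hold. Therefore $D_W$ is pro-representable by a complete local Noetherian $E$-algebra $R_W$ with residue field $E$.

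Then hypothesis (2) gives formal smoothness: for every small extension $A'\to A$ the obstruction group $\mathrm{H}^2(G_K,\mathrm{ad}(W))\otimes_E I$ vanishes, so $D_W(A')\to D_W(A)$ is surjective. A pro-representable, formally smooth functor on $\mathcal{C}_E$ is represented by a power series ring, so $R_W\cong E[[T_1,\dots,T_d]]$ with $d=\dim_E D_W(E[\epsilon])=\dim_E\mathrm{H}^1(G_K,\mathrm{ad}(W))$. To evaluate $d$ I would invoke the Euler-Poincar\'e characteristic formula for the cohomology of $E$-$B$-pairs (the $B$-pair analogue of Tate's local formula, recalled in $\S$ 2.1), applied to $\mathrm{ad}(W)$, which has rank $n^2$:
\[
\dim_E\mathrm{H}^0(G_K,\mathrm{ad}(W))-\dim_E\mathrm{H}^1(G_K,\mathrm{ad}(W))+\dim_E\mathrm{H}^2(G_K,\mathrm{ad}(W))=-[K:\mathbb{Q}_p]\,n^2.
\]
Hypothesis (1) makes the $\mathrm{H}^0$-term equal to $1$ and hypothesis (2) kills the $\mathrm{H}^2$-term, so $\dim_E\mathrm{H}^1(G_K,\mathrm{ad}(W))=[K:\mathbb{Q}_p]n^2+1=d$, as asserted.

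The genuinely substantial input is not this formal argument but the theory of $A$-$B$-pairs underlying it: that these form a well-behaved $\otimes$-category with internal Hom and short exact sequences, that they carry a finite Galois cohomology theory with long exact sequences and an Euler characteristic formula, and that all of this is compatible with base change along $\mathcal{C}_E$. I expect this to be the main obstacle; once it is in place, as it is in $\S$ 2.1, the proof is the routine adaptation of Mazur's argument sketched above.
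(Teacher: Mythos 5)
Your proposal follows the same route as the paper: establish pro-representability via Schlessinger's criterion (with $\mathrm{End}_{G_K}(W)=E$ controlling the glueing of isomorphisms needed for $H_4$, which the paper implements through its Lemma \ref{11}), identify the tangent space with $\mathrm{H}^1(G_K,\mathrm{ad}(W))$ by an explicit cocycle construction, obtain formal smoothness from the vanishing of $\mathrm{H}^2(G_K,\mathrm{ad}(W))$ by an obstruction argument, and compute the dimension from the Euler--Poincar\'e formula for $B$-pairs. You correctly identify where the substantive work lies — transporting the cocycle and module-theoretic arguments to the $A$-$B$-pair setting — so apart from some looseness in phrasing the $H_4$ step, this is a faithful blind reconstruction of the paper's proof.
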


 In $\S$ 2.3, we study the other more important type of deformations, i.e. the trianguline deformations. Let $W$ be a split trianguline $E$-$B$-pair of rank $n$ and 
 $\mathcal{T}:0\subseteq W_1\subseteq W_2\subseteq \cdots \subseteq W_{n-1}\subseteq W_n=W$ be a fixed triangulation of $W$.  For $A\in \mathcal{C}_E$, 
 we say that $(W_A, \iota, \mathcal{T}_A)$ is a trianguline deformation of $(W,\mathcal{T})$ over $A$ if $(W_A,\iota)$ is a deformation of $W$ 
 over $A$ and $\mathcal{T}_A:0\subseteq W_{1,A}\subseteq \cdots \subseteq W_{n-1,A}\subseteq W_{n,A}=W_A$ is an $A$-triangulation of $W_A$
( i.e. $W_{i,A}$ is a saturated sub $A$-$B$-pair of $W_A$ such that $W_{i,A}/W_{i-1,A}$ is a rank one $A$-$B$-pair for any $i$) such that 
$\iota(W_{i,A}\otimes_A E)=W_i$ for any $i$. We define the trianguline deformation functor $D_{W,\mathcal{T}}:
\mathcal{C}_E\rightarrow (\mathrm{Sets})$ of $(W,\mathcal{T})$ by $D_{W,\mathcal{T}}(A):=\{ $equivalent classes of trianguline deformations $(W_A,\iota,\mathcal{T}_A)$ 
of $(W,\mathcal{T})$ over $A$$\}$. We prove the following proposition concerning the pro-representability and the formal smoothness and the dimension formula of this functor, which is the  generalization of Proposition 3.6 of \cite{Ch09b} for any $p$-adic field. For the notations, see the main body of the article.

\begin{prop}\label{000000}(Proposition \ref{19})
Let $W$ be a split trianguline $E$-$B$-pair of rank $n$ with a triangulation 
$\mathcal{T}:0\subseteq W_1\subseteq \cdots\subseteq W_{n-1}\subseteq W_n=W$. 
We assume that $(W,\mathcal{T})$ satisfies the following conditions, 
\begin{itemize}
\item[(0)]$\mathrm{End}_{G_K}(W)=E$,
\item[(1)]For any $1\leqq i < j\leqq n$,  $\delta_j/\delta_i\not= \prod_{\sigma\in\mathcal{P}}\sigma^{k_{\sigma}}$ 
for any $\{k_{\sigma}\}_{\sigma\in\mathcal{P}}\in \prod_{\sigma\in\mathcal{P}}\mathbb{Z}_{\leqq 0}$
\item[(2)]For any $1\leqq i < j\leqq n$,
$\delta_i/\delta_j\not= |\mathrm{N}_{K/\mathbb{Q}_p}|\prod_{\sigma\in\mathcal{P}}\sigma^{k_{\sigma}}$ 
for any $\{k_{\sigma}\}_{\sigma\in\mathcal{P}}\in \prod_{\sigma\in\mathcal{P}}\mathbb{Z}_{\geqq 1}$, 
\end{itemize}
then $D_{W,\mathcal{T}}$ is pro-representable by a quotient ring  $R_{W,\mathcal{T}}$ of $R_W$ such 
that $$R_{W,\mathcal{T}}\isom E[[T_1,\cdots, T_{d_n}]]\text{ for }d_n:=\frac{n(n+1)}{2}[K:\mathbb{Q}_p]+1.$$
\end{prop}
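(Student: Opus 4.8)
The plan is to prove pro-representability, formal smoothness, and compute the dimension of the tangent space of the trianguline deformation functor $D_{W,\mathcal{T}}$, following the strategy of Chenevier's Proposition 3.6 of \cite{Ch09b} but working systematically with $B$-pairs over Artin local rings instead of $(\varphi,\Gamma)$-modules over the Robba ring.

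First I would verify Schlessinger's criteria $(H_1)$--$(H_4)$ for $D_{W,\mathcal{T}}$. The functor assigns to $A$ the set of equivalence classes of trianguline deformations $(W_A,\iota,\mathcal{T}_A)$; the gluing properties $(H_1)$ and $(H_2)$ follow from the fact that a trianguline deformation is the same data as a compatible system of deformations of the graded pieces $W_i/W_{i-1}$ together with extension data, and all of these are governed by fiber products of modules (both the $B_e$-part and the $B^+_{\mathrm{dR}}$-lattice) which commute with the relevant fiber products $A'\times_A A''$ of Artin local $E$-algebras; hypothesis $(H_4)$, which upgrades pro-representability, will require condition $(0)$, i.e. $\mathrm{End}_{G_K}(W)=E$, exactly as in Mazur's theory, to kill automorphisms of deformations. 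The key computational input is that the tangent space $D_{W,\mathcal{T}}(E[\epsilon])$ is naturally an $E$-vector space; I would identify it with a cohomology group. Following the by-now standard dévissage, a trianguline deformation over $E[\epsilon]$ is classified by a class in $\mathrm{H}^1_{\mathcal{T}}(G_K,\mathrm{ad}(W))$, the cohomology of the subcomplex/sub-$G_K$-module $\mathrm{ad}_{\mathcal{T}}(W)\subseteq \mathrm{ad}(W)$ consisting of endomorphisms preserving the triangulation $\mathcal{T}$ (upper-triangular endomorphisms with respect to the filtration), where $\mathrm{H}^i$ here means the $B$-pair cohomology $\mathrm{H}^i(G_K,-)$ of the associated $B$-pair. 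So the heart of the matter is computing $\dim_E\mathrm{H}^1(G_K,\mathrm{ad}_{\mathcal{T}}(W))$.

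For that, I would run the long exact sequences attached to the filtration of $\mathrm{ad}_{\mathcal{T}}(W)$ by the sub-$B$-pairs whose graded pieces are the rank-one $B$-pairs $W(\delta_j/\delta_i)$ for $i\leqq j$ (there are $\frac{n(n+1)}{2}$ such pieces, the diagonal ones being trivial). The dimension count then reduces to the Euler characteristic formula and the vanishing of $\mathrm{H}^0$ and $\mathrm{H}^2$ for each graded piece $W(\delta_j/\delta_i)$ with $i<j$. This is precisely where conditions $(1)$ and $(2)$ enter: by the classification of cohomology of rank-one $B$-pairs recalled from \cite{Na09} in $\S 2.1$, $\mathrm{H}^0(G_K,W(\delta_j/\delta_i))\neq 0$ forces $\delta_j/\delta_i=\prod_{\sigma\in\mathcal{P}}\sigma^{k_\sigma}$ with $k_\sigma\le 0$, which $(1)$ excludes, and by Tate-type local duality for $B$-pairs $\mathrm{H}^2(G_K,W(\delta_j/\delta_i))$ is dual to $\mathrm{H}^0(G_K,W(\delta_i/\delta_j)\otimes W(|\mathrm{N}_{K/\mathbb{Q}_p}|))$, whose non-vanishing is exactly the exceptional case ruled out by $(2)$. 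Granting these vanishings, each $\mathrm{H}^1$ of an off-diagonal piece has dimension $-\chi = [K:\mathbb{Q}_p]$ (from the Euler characteristic formula for $B$-pairs, $\dim\mathrm{H}^0-\dim\mathrm{H}^1+\dim\mathrm{H}^2=-[K:\mathbb{Q}_p]\cdot\mathrm{rank}$), while each diagonal piece $W(\mathbf{1})$ contributes $\dim\mathrm{H}^1(G_K,E)=[K:\mathbb{Q}_p]+1$ on the nose and, crucially, $\mathrm{H}^2(G_K,E)$ terms from diagonal pieces vanish as well (again by condition $(2)$ applied trivially or by the fact that $|\mathrm{N}_{K/\mathbb{Q}_p}|\neq \mathbf{1}$). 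Summing: $\frac{n(n-1)}{2}$ off-diagonal pieces each giving $[K:\mathbb{Q}_p]$ plus $n$ diagonal pieces each giving $[K:\mathbb{Q}_p]+1$ would overcount; the correct bookkeeping is that only the full $\mathrm{ad}_{\mathcal{T}}$ has $\frac{n(n+1)}{2}$ pieces total and the $\mathrm{H}^1$-dimension telescopes to $\frac{n(n+1)}{2}[K:\mathbb{Q}_p]+1$, where the single extra $+1$ comes from the one surviving $\mathrm{H}^0=E$ (scalars, by condition $(0)$) propagating to produce one extra dimension in $\mathrm{H}^1$ beyond the Euler characteristic sum — this is the same phenomenon as the $+1$ in the unramified/cyclotomic twist count.

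The main obstacle I anticipate is twofold. First, making the cohomology of $B$-pairs over Artin local rings behave well: I need the long exact sequences, the Euler characteristic formula, and Tate local duality to hold for $A$-$B$-pairs (or at least for their reductions), which requires the foundational extension of \cite{Na09} to $\mathcal{C}_E$-coefficients carried out in $\S 2.1$ — in particular finiteness of the cohomology groups. Second, and more delicate, is the \emph{formal smoothness}: one must show the obstruction classes for lifting trianguline deformations along a small surjection $A'\to A$ actually lie in $\mathrm{H}^2(G_K,\mathrm{ad}_{\mathcal{T}}(W))$ and that this group vanishes under hypotheses $(1)$ and $(2)$; the subtlety is that the naive obstruction lives in $\mathrm{H}^2(G_K,\mathrm{ad}(W))$, not in the smaller group, so one has to check that the triangulation can always be lifted compatibly, i.e. that the relevant connecting maps vanish — this again reduces to the vanishing of $\mathrm{H}^2$ of the off-diagonal rank-one pieces, controlled by $(2)$. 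Once $\mathrm{H}^2(G_K,\mathrm{ad}_{\mathcal{T}}(W))=0$ and the tangent space has the asserted dimension $d_n$, standard deformation theory (a power-series ring presentation with $d_n$ generators and zero relations) gives $R_{W,\mathcal{T}}\cong E[[T_1,\dots,T_{d_n}]]$, and the fact that it is a quotient of $R_W$ is immediate from the natural transformation $D_{W,\mathcal{T}}\to D_W$ forgetting $\mathcal{T}$.
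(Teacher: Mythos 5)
Your overall plan — Schlessinger's criterion, identification of the tangent space with $\mathrm{H}^1(G_K,\mathrm{ad}_{\mathcal{T}}(W))$, the Euler--Poincar\'e formula together with vanishing of $\mathrm{H}^0$ and $\mathrm{H}^2$ on the off-diagonal rank-one pieces $W(\delta_j/\delta_i)$ ($i<j$), and formal smoothness via vanishing of $\mathrm{H}^2(G_K,\mathrm{ad}_{\mathcal{T}}(W))$ — matches the strategy of the paper (Corollary~\ref{16}, Proposition~\ref{17}, Lemma~\ref{18}, and the proof of Proposition~\ref{19}). However, two points are treated too glibly and one of them is a genuine gap.

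First, you dismiss the assertion that $R_{W,\mathcal{T}}$ is a \emph{quotient} of $R_W$ as ``immediate from the natural transformation forgetting $\mathcal{T}$.'' It is not: a natural transformation of pro-representable functors gives a ring map $R_W\to R_{W,\mathcal{T}}$, but surjectivity of that map requires injectivity on tangent spaces $D_{W,\mathcal{T}}(E[\varepsilon])\hookrightarrow D_W(E[\varepsilon])$, and this is precisely the content of $D_{W,\mathcal{T}}$ being a \emph{subfunctor} of $D_W$ (uniqueness of the lift of the triangulation). That uniqueness is not automatic — it is Lemma~\ref{14}, and it is exactly here that condition~(1) is used a second time, via the vanishing $\mathrm{H}^0(G_K,W(\delta_j/\delta_i))=0$ for $i<j$ (Proposition~\ref{i}(1)). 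The paper moreover does not verify $(H_1)$--$(H_4)$ for $D_{W,\mathcal{T}}$ directly; it proves \emph{relative representability} of $D_{W,\mathcal{T}}\hookrightarrow D_W$ in the sense of Mazur's \S 23, and the hardest of the three conditions there (descent along an injection $A\hookrightarrow A'$) requires the auxiliary left-exact functor $F(W)=\{x\in W_e\cap W^+_{\mathrm{dR}}\mid\text{$G_K$ acts unipotently}\}$ of Lemma~\ref{14.5} (the $B$-pair analogue of Bella\"iche--Chenevier's Lemma 2.3.8), which your sketch never mentions. Your alternative of checking Schlessinger directly is in principle viable and would avoid the descent step, but you would then still need the tangent-space injectivity to conclude the quotient statement, and that cannot be called immediate.

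Second, the dimension count is garbled. Your ``naive sum'' is $\frac{n(n-1)}{2}[K:\mathbb{Q}_p]+n([K:\mathbb{Q}_p]+1)=\frac{n(n+1)}{2}[K:\mathbb{Q}_p]+n$, which over-counts by $n-1$, and ``the sum telescopes'' is not an argument. The clean statement is a single application of the Euler--Poincar\'e formula to $\mathrm{ad}_{\mathcal{T}}(W)$, a $B$-pair of rank $\frac{n(n+1)}{2}$, together with the two facts $\mathrm{H}^0(G_K,\mathrm{ad}_{\mathcal{T}}(W))=E$ (squeezed between $E$ and $\mathrm{H}^0(G_K,\mathrm{ad}(W))=E$, the latter being condition~(0)) and $\mathrm{H}^2(G_K,\mathrm{ad}_{\mathcal{T}}(W))=0$ (by induction on the rank and the vanishing of $\mathrm{H}^2$ of the off-diagonal pieces, condition~(2)); then $\dim_E\mathrm{H}^1=1+0+\frac{n(n+1)}{2}[K:\mathbb{Q}_p]$. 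As for formal smoothness, your plan of realizing the obstruction directly in $\mathrm{H}^2(G_K,\mathrm{ad}_{\mathcal{T}}(W))$ is a reasonable alternative to the paper's Proposition~\ref{17}, which instead lifts the triangulation inductively, one graded piece at a time, and uses $\mathrm{H}^2(G_K,W_{n-1}(\delta_n^{-1}))=0$ to lift the extension class at each step; the paper's inductive route is more elementary in that it never has to set up an obstruction theory for filtered objects, and both ultimately hinge on the same vanishing of $\mathrm{H}^2$ of the off-diagonal rank-one twists granted by condition~(2).
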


In $\S$ 2.4, we define the notion of benign $E$-$B$-pairs, which forms a special good class of split trianguline and potentially crystalline 
 $E$-$B$-pairs, and prove a theorem concerning the tangent spaces of the deformation rings of this class. 
In $\S$ 2.4.1, we define the notion of benign $E$-$B$-pairs , in \cite{Ch09b} this class is called generic, in this article we follow the terminology of \cite{Ki10}. Let $W$ be a potentially crystalline $E$-$B$-pair of rank $n$ such that 
$W|_{G_L}$ is crystalline for a finite totally ramified abelian extension $L$ of $K$, which we call 
a crystabelline representation.
We assume that $\bold{D}_{\mathrm{cris}}^{L}(W):=(\bold{B}_{\mathrm{cris}}\otimes_{\bold{B}_{e}}W_e)^{G_L}=K_0\otimes_{\mathbb{Q}_p}Ee_1\oplus 
\cdots \oplus K_0\otimes_{\mathbb{Q}_p}Ee_n$ such that $K_0\otimes_{\mathbb{Q}_p}Ee_i$ are preserved by $(\varphi,\mathrm{Gal}(L/K))$-action 
and that $\varphi^f(e_i)=\alpha_ie_i$ for some $\alpha_i\in E^{\times}$, here $f:=[K_0:\mathbb{Q}_p]$ and $K_0$ is the maximal unramified extension of $\mathbb{Q}_p$ in $K$. 
We denote by $\{k_{1,\sigma}.k_{2,\sigma},\cdots,k_{n,\sigma}\}_{\sigma:K\hookrightarrow \overline{K}}$ the Hodge-Tate weights of $W$ such that 
$k_{1,\sigma}\geqq k_{2,\sigma}\geqq \cdots \geqq k_{n,\sigma}$ for any $\sigma:K\hookrightarrow \overline{K}$. In this paper, we define 
the Hodge-Tate weight of $\mathbb{Q}_p(1)$ by $1$. 
Let $\mathfrak{S}_n$ be the $n$-th permutation group. For any $\tau\in\mathfrak{S}_n$, we can define a filtration on $\bold{D}^L_{\mathrm{cris}}(W)$ 
by $0\subseteq K_0\otimes_{\mathbb{Q}_p}Ee_{\tau(1)}\subseteq K_0\otimes_{\mathbb{Q}_p}Ee_{\tau(1)}\oplus K_0\otimes_{\mathbb{Q}_p}Ee_{\tau(2)}\subseteq 
\cdots \subseteq K_0\otimes_{\mathbb{Q}_p}Ee_{\tau(1)}\oplus \cdots \oplus K_0\otimes_{\mathbb{Q}_p}Ee_{\tau(n-1)}\subseteq \bold{D}^L_{\mathrm{cris}}(W)$ by 
sub $E$-filtered $(\varphi, G_K)$-modules, where the filtration of $K_0\otimes_{\mathbb{Q}_p}Ee_{\tau(1)}\oplus \cdots \oplus K_0\otimes_{\mathbb{Q}_p}Ee_{\tau(i)}$ is 
the one induced from that of $\bold{D}^L_{\mathrm{dR}}(W)=L\otimes_{K_0}\bold{D}^L_{\mathrm{cris}}(W)$. By the equivalence between 
the category of potentially crystalline $B$-pairs and the category of filtered $(\varphi,G_K)$-modules, for any $\tau\in \mathfrak{S}_n$, 
we obtain the triangulation $\mathcal{T}_{\tau}:0\subseteq W_{\tau,1}\subseteq W_{\tau,2}\subseteq \cdots \subseteq W_{\tau,n}=W$ 
such that 
$W_{\tau,i}$ is potentially crystalline and $\bold{D}^L_{\mathrm{cris}}(W_{\tau,i})\isom K_0\otimes_{\mathbb{Q}_p}Ee_{\tau(1)}\oplus \cdots \oplus K_0\otimes_{\mathbb{Q}_p}E
e_{\tau(i)}$ for any $1\leqq i\leqq n$.

Under this situation, we define the notion of benign $E$-$B$-pairs as follows.
\begin{defn}\label{ben}
Let $W$ be a potentially crystalline $E$-$B$-pair of rank $n$ as above. Then we say that $W$ is benign if $W$ satisfies the following;
\begin{itemize}
\item[(1)] For any $i\not= j$, we have $\alpha_i/\alpha_j\not= 1, p^f, p^{-f}$,
\item[(2)] For any $\sigma:K\hookrightarrow \overline{K}$, we have $k_{1,\sigma}>k_{2,\sigma}>\cdots >k_{n-1,\sigma}> k_{n,\sigma}$,
\item[(3)] For any $\tau\in \mathfrak{S}_n$ and $1\leqq i \leqq n$, the Hodge-Tate weights of $W_{\tau,i}$ are $\{k_{1,\sigma},k_{2,\sigma},\cdots,k_{i,\sigma}\}_{\sigma :K\hookrightarrow \overline{K}}$.
\end{itemize}
\end{defn}

In $\S$ 2.4.2, we prove the main theorem of $\S$ 2. Let $W$ be a benign $E$-$B$-pair of rank $n$ as above. For any $\tau\in \mathfrak{S}_n$, we can define 
the trianguline deformation functor $D_{W,\mathcal{T}_{\tau}}$ as above. Let $R_W$ be the universal deformation ring of $D_W$, and 
let $R_{W,\mathcal{T}_{\tau}}$ be the universal deformation ring of $D_{W,\mathcal{T}_{\tau}}$ for each $\tau\in \mathfrak{S}_n$. 
Let denote by $t(R_{W})$ and $t(R_{W,\mathcal{T}_\tau})$ the tangent spaces 
of $R_W$ and $R_{W,\mathcal{T}_{\tau}}$ respectively. For each $\tau\in \mathfrak{S}_n$, $t(R_{W,\mathcal{T}_{\tau}})$ is a sub $E$-vector space of $t(R_W)$. 
The main theorem of $\S$ 2 is the following, which is the generalization of Theorem 3.19 of  \cite{Ch09b} for any 
$p$-adic field.
\begin{thm}$(\text{Theorem } \mathrm{\ref{33}} )$
Let $W$ be a benign $E$-$B$-pair of rank $n$, then we 
 have an equality $$\sum_{\tau\in \mathfrak{S}_n}t(R_{W,\mathcal{T}_{\tau}})=t(R_W).$$
 \end{thm}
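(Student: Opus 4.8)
The plan is to compute both sides of the equality in terms of Galois cohomology of the adjoint $B$-pair, reducing everything to a linear-algebra statement about subspaces of $\mathrm{H}^1(G_K,\mathrm{ad}(W))$. By Corollary \ref{13} (applicable since a benign $W$ has $\mathrm{End}_{G_K}(W)=E$ and, being regular, satisfies $\mathrm{H}^2(G_K,\mathrm{ad}(W))=0$), the tangent space $t(R_W)$ is canonically $\mathrm{H}^1(G_K,\mathrm{ad}(W))$, of dimension $[K:\mathbb{Q}_p]n^2+1$. By Proposition \ref{19} (whose hypotheses (0),(1),(2) are implied by the benign conditions (1),(2) on $\alpha_i$ and on the Hodge--Tate weights — here one must check that the regularity of the weights and the non-criticality assumption in condition (3) force the genericity inequalities on the $\delta_i/\delta_j$, using the explicit form of the parameters of the rank one pieces $W_{\tau,i}/W_{\tau,i-1}$ coming from $D^L_{\mathrm{cris}}$), each $t(R_{W,\mathcal{T}_\tau})$ sits inside $t(R_W)$ as the subspace of classes $c\in\mathrm{H}^1(G_K,\mathrm{ad}(W))$ that lift the triangulation $\mathcal{T}_\tau$ to first order; concretely this is the image of $\mathrm{H}^1(G_K,\mathrm{ad}(\mathcal{T}_\tau))\to\mathrm{H}^1(G_K,\mathrm{ad}(W))$, where $\mathrm{ad}(\mathcal{T}_\tau)\subseteq\mathrm{ad}(W)$ is the sub-$B$-pair of endomorphisms preserving the flag $\mathcal{T}_\tau$, and has dimension $\frac{n(n+1)}{2}[K:\mathbb{Q}_p]+1$.

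Next I would reduce the global statement to a local one at each embedding place. The key point is that $\mathrm{ad}(W)=\bigoplus_{\tau}\mathrm{ad}(\mathcal{T}_\tau)$ fails as $B$-pairs, but after passing to the associated objects that compute cohomology — i.e. to the complexes computing $\mathrm{H}^\bullet(G_K,-)$ — the relevant statement becomes: the images of $\mathrm{H}^1(\mathrm{ad}(\mathcal{T}_\tau))$ span $\mathrm{H}^1(\mathrm{ad}(W))$. Since $\mathrm{ad}(\mathcal{T}_\tau)$ contains the ``Borel'' part and the quotient $\mathrm{ad}(W)/\mathrm{ad}(\mathcal{T}_\tau)$ is a successive extension of rank one $B$-pairs $W_{\tau,i}/W_{\tau,j}$ for $i<j$ (the negative root spaces for the ordering $\tau$), I would use the long exact sequence
\begin{equation*}
\mathrm{H}^1(G_K,\mathrm{ad}(\mathcal{T}_\tau))\to\mathrm{H}^1(G_K,\mathrm{ad}(W))\to\mathrm{H}^1(G_K,\mathrm{ad}(W)/\mathrm{ad}(\mathcal{T}_\tau))\to\mathrm{H}^2(G_K,\mathrm{ad}(\mathcal{T}_\tau))
\end{equation*}
together with $\mathrm{H}^2(G_K,\mathrm{ad}(W))=0$ to identify the cokernel of the first map with a subquotient, and then argue that as $\tau$ ranges over $\mathfrak{S}_n$ these cokernels have trivial common ``intersection'' in the appropriate sense — equivalently, that the sum of the $\mathrm{ad}(\mathcal{T}_\tau)$ as sub-$B$-pairs of $\mathrm{ad}(W)$ is all of $\mathrm{ad}(W)$ and that this surjectivity is preserved on $\mathrm{H}^1$. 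The benign hypotheses enter decisively here: condition (3) (each $W_{\tau,i}$ has the ``expected'' Hodge--Tate weights) guarantees that the rank one constituents $W_{\tau,i}/W_{\tau,j}$ of $\mathrm{ad}(W)$ are all of de Rham/crystalline type with controlled weights, so that by the rank one computations recalled in $\S2.1$ one has exact control of $\dim\mathrm{H}^0$, $\dim\mathrm{H}^1$ and the vanishing of $\mathrm{H}^2$ for each of them, and condition (1) on the $\alpha_i$ ensures the relevant $\mathrm{H}^0$'s vanish so the dimensions add up without correction terms.

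The cleanest route to the span assertion is a dimension count combined with an inclusion--exclusion / representation-theoretic argument on the Lie algebra $\mathfrak{gl}_n$: the sub-$B$-pairs $\mathrm{ad}(\mathcal{T}_\tau)$ correspond fibrewise to the Borel subalgebras $\mathfrak{b}_\tau$ attached to the $n!$ orderings of a fixed basis, and it is elementary that $\sum_\tau\mathfrak{b}_\tau=\mathfrak{gl}_n$ (already the two opposite Borels $\mathfrak{b}$ and $\mathfrak{b}^{-}$ span). Thus $\sum_\tau\mathrm{ad}(\mathcal{T}_\tau)=\mathrm{ad}(W)$ as $B$-pairs. It then remains to promote this to the surjectivity $\sum_\tau\mathrm{Im}(\mathrm{H}^1(\mathrm{ad}(\mathcal{T}_\tau)))=\mathrm{H}^1(\mathrm{ad}(W))$; here I would filter $\mathrm{ad}(W)$ by the root height for a chosen ordering, so that each graded piece is a rank one $B$-pair appearing in some $\mathrm{ad}(\mathcal{T}_\tau)$, and do a diagram chase using $\mathrm{H}^2(G_K,\mathrm{ad}(W))=0$ and the vanishing of the relevant $\mathrm{H}^0$'s and $\mathrm{H}^2$'s of the rank one pieces (guaranteed by benignness) to lift cohomology classes step by step, exactly mirroring the argument in Theorem 3.19 of \cite{Ch09b}. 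The main obstacle I anticipate is not the abstract linear algebra but the bookkeeping of Hodge--Tate weights and $\varphi$-eigenvalues for the many rank one constituents of $\mathrm{ad}(W)$ over a general $p$-adic field $K$ — i.e. verifying that the benign conditions really do kill every $\mathrm{H}^0$ and $\mathrm{H}^2$ that could obstruct the stepwise lifting, since over a general $K$ the rank one classification from $\S2.1$ involves the full set of embeddings $\mathcal{P}$ and the Hodge--Tate weights are $[K:\mathbb{Q}_p]$-tuples rather than integers, so the ``generic'' inequalities must be checked embedding by embedding.
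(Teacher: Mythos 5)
The central identity your plan rests on --- that $\sum_{\tau}\mathrm{ad}(\mathcal{T}_\tau)=\mathrm{ad}(W)$ as $B$-pairs, because Borel subalgebras span $\mathfrak{gl}_n$ --- is false for a benign $W$, and it fails precisely because of the Hodge filtration that you relegate to ``bookkeeping.'' Take $n=2$. By the non-criticality condition (3) in Definition \ref{ben}, \emph{both} rank-one sub-$B$-pairs $W_{\mathrm{id},1}$ and $W_{(12),1}$ have the same Hodge--Tate weights $\{k_{1,\sigma}\}_{\sigma}$. Hence the natural morphism $W_{\mathrm{id},1}\oplus W_{(12),1}\to W$ is an isomorphism on $B_e$-parts (the two $\varphi$-eigenlines span), but on $W^+_{\mathrm{dR}}$-parts it sends a lattice of weight $\{k_{1,\sigma},k_{1,\sigma}\}$ into a lattice of weight $\{k_{1,\sigma},k_{2,\sigma}\}$ with $k_{1,\sigma}>k_{2,\sigma}$, so the image is a \emph{proper} sublattice. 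The same defect propagates to the adjoint: $\mathrm{ad}_{\mathcal{T}_\mathrm{id}}(W)+\mathrm{ad}_{\mathcal{T}_{(12)}}(W)$ equals $\mathrm{ad}(W_e)$ on the $B_e$-side but is strictly smaller than $\mathrm{ad}(W^+_{\mathrm{dR}})$ on the $B^+_{\mathrm{dR}}$-side. In short, the Lie-theoretic spanning holds over $B_e$ but not over $B^+_{\mathrm{dR}}$, and a $B$-pair consists of exactly those two pieces. Your ``root-height filtration and diagram chase'' cannot repair this, because the failure is not a cohomological obstruction that $\mathrm{H}^2$-vanishing can remove: the underlying map of $B$-pairs is not even surjective, so there is no exact sequence $\bigoplus_\tau\mathrm{ad}(\mathcal{T}_\tau)\twoheadrightarrow\mathrm{ad}(W)$ to chase through.

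The paper's proof circumvents this by introducing an object your sketch never uses: the crystalline deformation functor $D^{\mathrm{cris}}_{W,\tau}$, consisting of deformations that preserve a fixed rank-one crystabelline sub-$B$-pair $W'_\tau$ (Definition \ref{34}). The decisive step is Lemma \ref{32}: $D_{W,\mathcal{T}_\tau}(E[\varepsilon])+D^{\mathrm{cris}}_{W,\tau}(E[\varepsilon])=D_W(E[\varepsilon])$. This is obtained by a dimension count over the deformation functors (Corollary \ref{23}, Corollary \ref{25}, Lemma \ref{27}) combined with the identification of the intersection, $D_{W,\mathcal{T}_\tau}(A)\cap D^{\mathrm{cris}}_{W,\tau}(A)=D^{\mathrm{cris}}_W(A)$, which requires the full strength of the benign hypotheses (1)--(3) through Lemmas \ref{28}, \ref{29}, \ref{31}. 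The crystalline directions are exactly what ``fills in'' the piece of the tangent space that the naive Borel-spanning argument cannot reach --- concretely, they account for the deformations of the Hodge filtration itself. With Lemma \ref{32} in hand, the theorem is then proved by a finer induction on $n$ via the functor $D_{W,\tau}$ (fixing only $W'_\tau$, not the full flag) and the four short exact sequences in the proof, peeling off $W'_\tau$ and invoking the induction hypothesis for $W/W'_\tau$. So the two arguments are genuinely different, and the essential idea --- play the trianguline deformations against the crystalline ones --- is missing from your proposal.
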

 
 This theorem is a crucial local result for the applications to some Zariski density theorems of 
 local or global $p$-adic Galois representations. In fact, using 
 this theorem for $K=\mathbb{Q}_p$, Chenevier \cite{Ch09b} proved a theorem concerning the Zariski density of the unitary automorphic $p$-adic Galois representations 
 in the universal deformation spaces of three dimensional conjugate-selfdual $p$-adic representations of $G_{F}$ for any CM field $F$
 in which $p$ splits completely. Moreover, this theorem is also a crucial result for the proof of 
 Zariski density of crystalline representations in the universal deformation spaces of $p$-adic Galois representations of $p$-adic fields.
 In the rest of this paper $\S$ 3 and $\S4$, we apply this theorem only for the two dimensional case. Using 
 this theorem for $K=\mathbb{Q}_p$, Chenevier \cite{Ch10} recently 
 proved the Zariski density of crystalline representations for higher dimensional and $K=\mathbb{Q}_p$ case.
 In the next paper \cite{Na11}, the author uses this theorem for proving the Zariski density of crystalline representations for higher dimensional and 
 any $p$-adic field case.
 
 \smallskip
 
 In $\S$ 3, we construct some $p$-adic families of two dimensional trianguline representations for any $p$-adic field. To construct these, we generalize Kisin's theory of finite slope subspace $X_{fs}$ in \cite{Ki03} for any $p$-adic field. 
 As in the case of $\mathbb{Q}_p$ (\cite{Ki03}, \cite{Ki10}), this family is essential for the proof of the Zariski density of two dimensional crystalline 
 representations in $\S4$ and for the study of the $p$-adic Hodge theoretic properties of Hilbert modular eigenvarieties.
 
 In $\S$ 3.1, we prove some propositions concerning Banach $G_K$-modules which we need for the construction of 
 $p$-adic families of trianguline representations. In particular, we show that these Banach $G_K$-modules can be 
 obtained naturally from some almost $\mathbb{C}_p$-representations \cite{Fo03}. For us, one of the important properties of these 
 Banach $G_K$-modules is orthonormalizability as Banach modules over some Banach algebras. All these properties follow from some general facts of almost 
 $\mathbb{C}_p$-representations.
 
 In $\S$ 3.2, for any separated rigid analytic space $X$ over $E$ and for any 
 finite free $\mathcal{O}_X$-module $M$ with a continuous $G_K$-action and for any invertible function 
 $Y$ on $X$, we construct a Zariski closed subspace $X_{fs}$ of $X$, which is ``roughly" defined as 
 the subspace of $X$ consisting of the points $x\in X$ such that $\bold{D}_{\mathrm{cris}}^+(M(x))^{\varphi^f=Y(x)}\not= 0$, 
 where  $M(x)$ is the fiber of $M$ at $x$. For the precise characterization 
 of $X_{fs}$, see Theorem \ref{5.8}. This construction is the generalization of 
 Kisin's $X_{fs}$ in $\S5$ of \cite{Ki03} for any $p$-adic field. After obtaining the results in $\S$ 3.1, the construction 
 and the proof is almost all the same as that of  \cite{Ki03}, but a difference is that we need to consider all the 
 embeddings $\tau:K\hookrightarrow \overline{K}$, which makes the situation more complicated. For convenience of the readers or the author, we choose to re-prove this 
 construction in full detail.
 
 In $\S$ 3.3, we apply this construction to the rigid analytic space associated to 
 the universal deformation ring of a two dimensional mod $p$-representation of $G_K$. 
 Let $\bar{\rho}:G_K\rightarrow \mathrm{GL}_2(\mathbb{F})$ be 
 a two dimensional mod $p$ representation of $G_K$, where $\mathbb{F}$ is the residue field of $E$. 
 For simplicity, in this paper, we assume that $\mathrm{End}_{\mathbb{F}[G_K]}(\bar{\rho})=\mathbb{F}$, then there exists the universal deformation ring $R_{\bar{\rho}}$ 
 of $\bar{\rho}$, which is a complete noetherian local $\mathcal{O}$-algebra, where $\mathcal{O}$ is the integer ring of $E$. Let $\mathfrak{X}(\bar{\rho})$ be the rigid 
 analytic space over $E$ associated to $R_{\bar{\rho}}$. The universal deformation $V^{\mathrm{univ}}$ of $\bar{\rho}$ over $R_{\bar{\rho}}$ defines a rank two free $\mathcal{O}_{\mathfrak{X}(\bar{\rho})}$-module $\widetilde{V}^{\mathrm{univ}}$ with a continuous $\mathcal{O}_{\mathfrak{X}(\bar{\rho})}$-linear $G_K$-action. 
The space  $\mathfrak{X}(\bar{\rho})$ parametrizes $p$-adic representations $V$ of $G_K$ whose reductions are isomorphic to 
 $\bar{\rho}$ for some  $G_K$-stable lattices of $V$.
  Let $\mathcal{W}$ be the rigid analytic space over $E$ which represents the functor $D_{\mathcal{W}}:\{$ rigid analytic spaces over $E\} 
  \rightarrow (\mathrm{Sets})$ defined by $D_{\mathcal{W}}(X):=\{\delta:\mathcal{O}_{K}^{\times}\rightarrow \Gamma(X, \mathcal{O}_X^{\times}):$ continuous group homomorphisms $\}$ for each rigid analytic space $X$ over $E$. 
  Let $\delta^{\mathrm{univ}}:\mathcal{O}_K^{\times}\rightarrow \Gamma(\mathcal{W}, \mathcal{O}_{\mathcal{W}}^{\times})$ be the 
  universal homomorphism. If we fix a uniformizer $\pi_K\in K$, there exists a unique character $\widetilde{\delta}^{\mathrm{univ}}
  :G_K^{\mathrm{ab}}\rightarrow \Gamma(\mathcal{W}, \mathcal{O}_{\mathcal{W}}^{\times})$ such that $\widetilde{\delta}^{\mathrm{univ}}\circ \mathrm{rec}_K|_{\mathcal{O}_K^{\times}}=\delta^{\mathrm{univ}}$ and $\widetilde{\delta}^{\mathrm{univ}}\circ \mathrm{rec}_K(\pi_K)=1$, where 
  $\mathrm{rec}_K:K\hookrightarrow G_K^{\mathrm{ab}}$ is the reciprocity map of the local 
  class field theory. 
  We denote by $X(\bar{\rho}):=\mathfrak{X}(\bar{\rho})\times_E\mathcal{W}\times_E \mathbb{G}_{m,E}^{\mathrm{an}}$ and 
  denote by $p_1:X(\bar{\rho})\rightarrow \mathfrak{X}(\bar{\rho})$, $p_2:X(\bar{\rho})\rightarrow \mathcal{W}$ and 
  $p_3:X(\bar{\rho})\rightarrow \mathbb{G}_{m,E}^{\mathrm{an}}$ the canonical projections. 
  For $x\in X(\bar{\rho})$, we denote by $E(x)$ the residue field at $x$  which is a finite extension of $E$. 
  Let $x=[V]\in \mathfrak{X}(\bar{\rho})$ be a point which corresponds to a two dimensional trianguline representation $V$ 
   with a triangulation $\mathcal{T}:0\subseteq W(\delta_1)
  \subseteq W(V\otimes_{E(x)} E')$ for some $E'$, then we define a point 
  $x_{(V,\mathcal{T})}:=([V],\delta_1|_{\mathcal{O}_K}^{\times},\delta_1(\pi_K))\in X(\bar{\rho})$. We define 
  $M:=p_1^*(\widetilde{V}^{\mathrm{univ}})((p_2^*\widetilde{\delta}^{\mathrm{univ}})^{-1})$ a rank two $\mathcal{O}_{X(\bar{\rho})}$-module 
  with a continuous $\mathcal{O}_{X(\bar{\rho})}$-linear $G_K$-action. Let $Y:=p_3^*(T)\in \mathcal{O}_{X(\bar{\rho})}^{\times}$ be the pullback 
  of the canonical coordinate $T$ of $\mathbb{G}_{m,E}^{\mathrm{an}}$. If we apply the construction of $X_{fs}$ to 
  the triple $(X(\bar{\rho}), M, Y)$, we obtain a Zariski closed subspace $X(\bar{\rho})_{fs}$ of $X(\bar{\rho})$, which we denote by $\mathcal{E}(\bar{\rho}):=X(\bar{\rho})_{fs}$. The main result of $\S$ 3 is the following theorem (see Theorem \ref{5.14} and 
  Theorem \ref{5.18} for more precise statements), which is a generalization of Proposition 10.4 and 10.6 of \cite{Ki03}.
  
  \begin{thm}
  $\mathcal{E}(\bar{\rho})$ satisfies the following properties.
  \begin{itemize}
  \item[(1)] For any point $x:=([V_x],\delta_x,\lambda_x)\in \mathcal{E}(\bar{\rho})$, $V_x$ is a trianguline representation.
  \item[(2)] Conversely, if $x=[V_x]\in \mathfrak{X}(\bar{\rho})$ is a point such that $V_x$ is a split trianguline $E(x)$-representation with a triangulation 
  $\mathcal{T}:0\subseteq W(\delta_1)\subseteq W(V_x)$ satisfying all the conditions in 
  Proposition \ref{000000}, then the point $x_{(V_x,\mathcal{T})}\in X(\bar{\rho})$ defined as above is contained in $\mathcal{E}(\bar{\rho})$. 
  \item[(3)] For each point $x_{(V_x,\mathcal{T})}$ as in $(2)$, there exists an isomorphism $\hat{\mathcal{O}}_{\mathcal{E}(\bar{\rho}),x_{(V_x,\mathcal{T})}}\isom 
  R_{V_x,\mathcal{T}}$, in particular $\mathcal{E}(\bar{\rho})$ is smooth of its dimension $3[K:\mathbb{Q}_p]+1$ at such points.
  \end{itemize}
  
  \end{thm}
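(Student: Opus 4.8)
The plan is to derive all three assertions from the explicit description of $X_{fs}$ obtained in Theorem \ref{5.8} together with its construction in \S 3.2, applied to the triple $(X(\bar{\rho}),M,Y)$: that description presents $\mathcal{E}(\bar{\rho})=X(\bar{\rho})_{fs}$ as a Zariski closed subspace whose points are exactly the $x$ with $D^+_{\mathrm{cris}}(M(x))^{\varphi^f=Y(x)}\neq 0$ and whose formal neighbourhoods are governed by the analogous condition over Artin local coefficient rings. The single device tying this condition to triangulations of $B$-pairs is the following. Let $\delta_1\colon K^{\times}\to E^{\times}$ be a continuous character and let $\tilde{\delta}_1\colon G_K^{\mathrm{ab}}\to E^{\times}$ be the character attached to $\delta_1|_{\mathcal{O}_K^{\times}}$ via $\mathrm{rec}_K$ and the chosen uniformizer $\pi_K$ (so $\tilde{\delta}_1\circ\mathrm{rec}_K(\pi_K)=1$); then twisting by $W(\tilde{\delta}_1)^{-1}$ identifies rank one saturated sub-$B$-pairs $W(\delta_1)\subseteq W(V)$ with rank one saturated sub-$B$-pairs $W(\mu_{\lambda})\subseteq W(V)\otimes W(\tilde{\delta}_1)^{-1}$, where $\mu_{\lambda}\colon K^{\times}\to E^{\times}$ is defined by $\mu_{\lambda}|_{\mathcal{O}_K^{\times}}=1$ and $\mu_{\lambda}(\pi_K)=\lambda:=\delta_1(\pi_K)$; and since $\mu_{\lambda}$ has all Hodge--Tate weights $0$, the module $D^+_{\mathrm{cris}}(W(\mu_{\lambda}))=D_{\mathrm{cris}}(W(\mu_{\lambda}))$ is free of rank one over $K_0\otimes_{\mathbb{Q}_p}E$ with $\varphi^f$ acting by $\lambda$ (in the normalizations fixed in \S 3). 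Together with the classification of rank one $E$-$B$-pairs and the $\mathrm{Hom}$-- and $\mathrm{Ext}$--computations recalled in \S 2.1, this is the engine of all three parts; the whole argument is the transcription into $B$-pair language, following \cite{Na09}, of \S 10 of \cite{Ki03}, the only genuinely new bookkeeping being that every Hodge--Tate weight and every vanishing statement has to be handled embedding by embedding over the $\sigma\colon K\hookrightarrow\overline{K}$.

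For part $(1)$, given $x=([V_x],\delta_x,\lambda_x)\in\mathcal{E}(\bar{\rho})$ I would take a nonzero $v\in D^+_{\mathrm{cris}}(M(x))^{\varphi^f=\lambda_x}$; its $B_{\mathrm{cris}}\otimes E(x)$-span in $B_{\mathrm{cris}}\otimes_{B_e}M(x)_e$ is $\varphi$-- and $G_K$--stable and, as $v\in D^+_{\mathrm{cris}}$, generates a $B^+_{\mathrm{dR}}\otimes E(x)$-lattice inside $(M(x))^+_{\mathrm{dR}}$, hence a rank one crystalline sub-$B$-pair isomorphic to $W(\mu_{\lambda_x})$; untwisting by $W(\tilde{\delta}_x)$ and saturating produces a rank one saturated sub-$B$-pair of $W(V_x)$, so $V_x$ is split trianguline over $E(x)$, in particular trianguline. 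For part $(2)$, if $x=[V_x]\in\mathfrak{X}(\bar{\rho})$ is split trianguline over $E(x)$ with triangulation $\mathcal{T}\colon 0\subseteq W(\delta_1)\subseteq W(V_x)$ satisfying the hypotheses of Proposition \ref{000000}, I would twist $\mathcal{T}$ by $W(\tilde{\delta}_1)^{-1}$ to get a rank one saturated $W(\mu_{\lambda})\subseteq M(x_{(V_x,\mathcal{T})})$ with $\lambda=\delta_1(\pi_K)=\lambda_x=Y(x_{(V_x,\mathcal{T})})$, whence $D^+_{\mathrm{cris}}(M(x_{(V_x,\mathcal{T})}))^{\varphi^f=Y(x_{(V_x,\mathcal{T})})}\supseteq D^+_{\mathrm{cris}}(W(\mu_{\lambda}))^{\varphi^f=\lambda}\neq 0$; condition $(1)$ of Proposition \ref{000000}, which by the classification of rank one $B$-pairs prevents the quotient line of $\mathcal{T}$ from contributing to this eigenspace, makes it exactly one-dimensional over $K_0\otimes E(x)$, and then the criterion of Theorem \ref{5.8} gives $x_{(V_x,\mathcal{T})}\in X(\bar{\rho})_{fs}=\mathcal{E}(\bar{\rho})$.

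Part $(3)$ carries the real content; here I would show that $\hat{\mathcal{O}}_{\mathcal{E}(\bar{\rho}),x_{(V_x,\mathcal{T})}}$ pro-represents the trianguline deformation functor $D_{W(V_x),\mathcal{T}}$. Since $\mathcal{E}(\bar{\rho})$ is Zariski closed in $X(\bar{\rho})=\mathfrak{X}(\bar{\rho})\times_E\mathcal{W}\times_E\mathbb{G}_{m,E}^{\mathrm{an}}$, the ring $\hat{\mathcal{O}}_{\mathcal{E}(\bar{\rho}),x_{(V_x,\mathcal{T})}}$ is a quotient of $\hat{\mathcal{O}}_{\mathfrak{X}(\bar{\rho}),[V_x]}\,\widehat{\otimes}_E\,\hat{\mathcal{O}}_{\mathcal{W},\delta_x}\,\widehat{\otimes}_E\,\hat{\mathcal{O}}_{\mathbb{G}_{m,E}^{\mathrm{an}},\lambda_x}$, and the hypotheses of Proposition \ref{000000} in particular make Corollary \ref{13} applicable to $W(V_x)$, so the first factor is the universal deformation ring $R_{W(V_x)}$. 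Hence, for $A\in\mathcal{C}_{E(x)}$, a homomorphism $\hat{\mathcal{O}}_{X(\bar{\rho}),x_{(V_x,\mathcal{T})}}\to A$ is a triple $(V_A,\delta_A,\lambda_A)$ deforming $(V_x,\delta_x,\lambda_x)$, with associated $B$-pair $M_A:=W(V_A)\otimes W(\tilde{\delta}_A)^{-1}$ (the pullback of $M$) and scalar $Y_A:=\lambda_A$, and by Theorem \ref{5.8} and the construction of $X_{fs}$ it factors through $\hat{\mathcal{O}}_{\mathcal{E}(\bar{\rho}),x_{(V_x,\mathcal{T})}}$ exactly when $D^+_{\mathrm{cris}}(M_A)^{\varphi^f=Y_A}$ contains a $K_0\otimes_{\mathbb{Q}_p}A$-free rank one direct summand reducing modulo $\mathfrak{m}_A$ to $D_{\mathrm{cris}}(W(\mu_{\lambda_x}))$. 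It then remains to identify this functor with $D_{W(V_x),\mathcal{T}}$: in one direction, an $A$-triangulation $\mathcal{T}_A\colon 0\subseteq W(\delta_{1,A})\subseteq W(V_A)$ deforming $\mathcal{T}$ twists, via $W(\tilde{\delta}_A)^{-1}$, to a rank one saturated $W(\mu_{\lambda_A})\subseteq M_A$ whose $D^+_{\mathrm{cris}}$ is the required summand; in the other direction, given such a summand $L_A$, the $B_{\mathrm{cris}}\otimes_{\mathbb{Q}_p}A$-span of $L_A$ in $B_{\mathrm{cris}}\otimes_{B_e}(M_A)_e$ is $\varphi$-- and $G_K$--stable and generates, by the $+$-condition, a $B^+_{\mathrm{dR}}\otimes_{\mathbb{Q}_p}A$-lattice inside $(M_A)^+_{\mathrm{dR}}$, and one checks --- and here conditions $(1)$ and $(2)$ of Proposition \ref{000000} are used, to secure $B_e\otimes_{\mathbb{Q}_p}A$-freeness of the sub-object so generated and of its quotient and to exclude any further crystalline eigenvectors --- that this produces a rank one saturated sub-$A$-$B$-pair $W(\mu_{\lambda_A})\subseteq M_A$, hence after untwisting a rank one saturated $W(\delta_{1,A})\subseteq W(V_A)$ with $\delta_{1,A}|_{\mathcal{O}_K^{\times}}=\delta_A$ and $\delta_{1,A}(\pi_K)=\lambda_A$, i.e.\ an $A$-triangulation deforming $\mathcal{T}$.

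These two constructions are mutually inverse and functorial in $A$, so $\hat{\mathcal{O}}_{\mathcal{E}(\bar{\rho}),x_{(V_x,\mathcal{T})}}$ pro-represents $D_{W(V_x),\mathcal{T}}$; by Proposition \ref{000000} in the case $n=2$ the latter is also pro-represented by $R_{V_x,\mathcal{T}}\isom E(x)[[T_1,\dots,T_{3[K:\mathbb{Q}_p]+1}]]$, and therefore $\hat{\mathcal{O}}_{\mathcal{E}(\bar{\rho}),x_{(V_x,\mathcal{T})}}\isom R_{V_x,\mathcal{T}}$ and $\mathcal{E}(\bar{\rho})$ is smooth of dimension $3[K:\mathbb{Q}_p]+1$ at $x_{(V_x,\mathcal{T})}$. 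I expect the one real obstacle to be the converse construction just indicated in part $(3)$: manufacturing an honest saturated $B$-pair filtration, with the prescribed rank one graded pieces, out of the linear-algebra datum of a $\varphi^f$-eigen-direct-summand of $D^+_{\mathrm{cris}}$ over an Artin local base, while keeping everything flat over $B_e\otimes_{\mathbb{Q}_p}A$. This is the delicate portion of the \S 10 argument of \cite{Ki03}, and running it over an arbitrary $p$-adic field is precisely what forces all the Hodge--Tate weight and non-vanishing hypotheses of Proposition \ref{000000} to be verified for each embedding $\sigma\colon K\hookrightarrow\overline{K}$.
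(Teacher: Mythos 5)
Your part (1) is sound and follows the paper (via Proposition \ref{5.13}(3)). But parts (2) and (3) each have a genuine gap, and for part (3) the gap is exactly where the real work of Theorem \ref{5.18} lies.

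For part (2), you propose to check that $x_{(V_x,\mathcal{T})}$ lies in $\mathcal{E}(\bar\rho)$ by applying the criterion of Theorem \ref{5.8} directly to the point (or its infinitesimal thickenings). That criterion, however, is only stated for $Y$-small maps $\mathrm{Spm}(R')\to X$ that factor through every $X_{Q_\sigma(i)}$ ($\sigma\in\mathcal{P}$, $i\in\mathbb{Z}_{\leqq 0}$); for $R'$ Artin local this forces $Q_\sigma(i)$ to be a \emph{unit} in $R'$, i.e.\ $Q_\sigma(i)(x)\neq 0$. This can fail for $x_{(V_x,\mathcal{T})}$: the hypotheses of Proposition \ref{000000} do not force the $\sigma$-Hodge--Tate weight of the quotient line to avoid the non-positive integers. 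The paper instead builds the ring $\bar R$ (a completed tensor product, identified with $R_{V_x,\mathcal{T}_x}$), shows each $\mathrm{Spm}(\bar R/\mathfrak{m}^n)\to X(\bar\rho)$ factors through $X_0(\bar\rho)$ via Lemma \ref{5.7.5}, and then establishes that $R\to\bar R$ factors through $R/J$ using the explicit construction of the ideals $I\subseteq J$ together with Lemma \ref{5.15}; the key point there is that $Q_\sigma(i)$ is a \emph{non-zero element of the domain} $\bar R$ (hence a non-zero-divisor), which is strictly weaker than being a unit and is what actually makes the argument close.

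For part (3), your plan is to identify $\hat{\mathcal{O}}_{\mathcal{E}(\bar\rho),x_{(V_x,\mathcal{T})}}$ with $R_{V_x,\mathcal{T}}$ by a direct $A$-point bijection: a local homomorphism to $A$ should be equivalent to a rank-one $\varphi^f$-eigen-direct-summand of $D^+_{\mathrm{cris}}(M_A)^{\varphi^f=Y_A}$, hence to a triangulation. This is where the argument breaks. The smallest ideal $H_\sigma\subseteq R$ through which the universal $G_K$-equivariant map $M_R^\vee\to (B^+_{\mathrm{max},K}\hat\otimes_{K,\sigma}R)^{\varphi_K=Y}$ factors (Proposition \ref{5.13}) can be contained in $\mathfrak{m}_x$; Proposition \ref{5.13}(2) only guarantees that $\mathrm{Spm}(R)\setminus V(H_\sigma)$ is scheme-theoretically dense, not that it contains $x$. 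Consequently the universal crystalline period may vanish to positive order along $\mathfrak{m}_x$, and one cannot directly produce a rank-one sub-$A$-$B$-pair of $M_A$ over $A=\hat{\mathcal{O}}_{\mathcal{E}(\bar\rho),x}/\mathfrak{m}^n$ from it. The paper's proof (following \S 10 of \cite{Ki03}) deals with exactly this: it blows up $\mathrm{Spm}(R)$ along $H=\prod_\sigma H_\sigma$ so that over the blow-up $\tilde T$ each $H_\sigma\hat{\mathcal{O}}_{\tilde T,\tilde x}$ is principal generated by a non-zero-divisor $f_\sigma$, divides the universal period by $f_\sigma$ to obtain a nonzero period over each $\hat{\mathcal{O}}_{\tilde T,\tilde x}$, deduces a factorization $R_{V_x}\to R_{V_x,\mathcal{T}_x}\to\hat{\mathcal{O}}_{\tilde T,\tilde x}$ there (using Lemma \ref{s4} to turn this filtered-module datum into an honest saturated sub-$B$-pair), and finally uses the injection $\hat{\mathcal{O}}_{\mathcal{E}(\bar\rho),x}\hookrightarrow\prod_{\tilde x}\hat{\mathcal{O}}_{\tilde T,\tilde x}$ (Lemma 10.7 of \cite{Ki03}, applicable thanks to Proposition \ref{5.13}(2)) to descend the factorization back to $\hat{\mathcal{O}}_{\mathcal{E}(\bar\rho),x}$. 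You flagged the converse direction as the ``real obstacle,'' but located the difficulty in producing a flat $B$-pair filtration from the linear-algebra datum; the lemmas of \S 2.1 (Lemma \ref{s4}) handle that. The missing idea is the blow-up and the descent, which are what let one extract a triangulation from a period that may be formally degenerate at $x$.
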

  
  In the next paper \cite{Na11}, the author will generalize all these results for higher dimensional case.
  
  
  
  \smallskip
  
  In the final section $\S$ 4, as an application of $\S$ 2 (in the two dimensional case) and of $\S$ 3, we prove 
  the following theorems concerning the Zariski density of two 
  dimensional crystalline representations.
  We denote by $\mathfrak{X}(\bar{\rho})_{\mathrm{reg}-\mathrm{cris}}:=\{x=[V_x]\in \mathfrak{X}(\bar{\rho})| 
  V_x $ is crystalline with the Hodge-Tate weights
  $\{k_{1,\sigma},k_{2,\sigma}\}_{\sigma:K\hookrightarrow \overline{K}}$ such that $k_{1,\sigma}\not= k_{2,\sigma}$ for 
  any $\sigma \}$, $\mathfrak{X}(\bar{\rho})_{\mathrm{b}}:=\{x\in \mathfrak{X}(\bar{\rho})| V_x $ is crystalline 
  and benign $\}$. We denote by $\overline{\mathfrak{X}(\bar{\rho})}_{\mathrm{b}}$ the Zariski closure of $\mathfrak{X}(\bar{\rho})_{\mathrm{b}}$ in $\mathfrak{X}(\bar{\rho})$.
  \begin{thm}$(\text{Theorem } \mathrm{\ref{6.14}} )$\label{000}
  If $\mathfrak{X}(\bar{\rho})_{\mathrm{reg}-\mathrm{cris}}$ is non empty, 
  then $\mathfrak{X}(\bar{\rho})_{\mathrm{b}}$ is also non empty and the closure $\overline{\mathfrak{X}(\bar{\rho})}_{\mathrm{b}}$ is a union of irreducible components of $\mathfrak{X}(\bar{\rho})$.
  
  \end{thm}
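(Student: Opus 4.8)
\textbf{Proof proposal for Theorem \ref{000}.}

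The plan is to reduce the global statement to the local smoothness result of $\S 3$ (the structure theorem for $\mathcal{E}(\bar\rho)$) together with the tangent-space equality of $\S 2$ (Theorem \ref{33}). First I would observe that an irreducible component $C$ of $\mathfrak{X}(\bar\rho)$ has dimension $3[K:\mathbb{Q}_p]+1$: this is a standard computation with the universal deformation ring of $\bar\rho$, using that $\mathrm{H}^2(G_K,\mathrm{ad}(\bar\rho))$ contributes via the local Euler characteristic formula, so $\dim R_{\bar\rho}\otimes_{\mathcal{O}}E = [K:\mathbb{Q}_p]\cdot 4 + 1 - (\dim\mathrm{H}^0 - \dim\mathrm{H}^2) $, which for a generic/irreducible situation gives exactly $3[K:\mathbb{Q}_p]+1$; in general one argues that every component has at least this dimension and then shows the benign crystalline points sit on components of exactly this dimension. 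The hypothesis that $\mathfrak{X}(\bar\rho)_{\mathrm{reg}-\mathrm{cris}}$ is nonempty is then used to produce, by a small deformation inside the crystalline locus (varying the crystalline Frobenius eigenvalues and Hodge-Tate weights within the regular range), a point of $\mathfrak{X}(\bar\rho)_{\mathrm{b}}$; this shows $\overline{\mathfrak{X}(\bar\rho)}_{\mathrm{b}}$ is nonempty.

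Next I would pick any benign crystalline point $x_0\in\mathfrak{X}(\bar\rho)_{\mathrm{b}}$ and let $C$ be an irreducible component of $\mathfrak{X}(\bar\rho)$ through $x_0$. The key point is to show $C\subseteq\overline{\mathfrak{X}(\bar\rho)}_{\mathrm{b}}$. For this, a benign $W = W(V_{x_0})$ carries the $n!=2$ triangulations $\mathcal{T}_{\tau}$, $\tau\in\mathfrak{S}_2$, and each $x_{(V_{x_0},\mathcal{T}_\tau)}$ lies in $\mathcal{E}(\bar\rho)$ by Theorem (the structure theorem for $\mathcal{E}(\bar\rho)$, part (2)), with $\hat{\mathcal{O}}_{\mathcal{E}(\bar\rho),x_{(V_{x_0},\mathcal{T}_\tau)}}\isom R_{V_{x_0},\mathcal{T}_\tau}$ smooth of dimension $3[K:\mathbb{Q}_p]+1$ by part (3). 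Pushing forward along $p_1:X(\bar\rho)\to\mathfrak{X}(\bar\rho)$, the image of the component of $\mathcal{E}(\bar\rho)$ through $x_{(V_{x_0},\mathcal{T}_\tau)}$ is an irreducible subset of $\mathfrak{X}(\bar\rho)$ of dimension $3[K:\mathbb{Q}_p]+1$ whose tangent space at $x_0$ is $t(R_{W,\mathcal{T}_\tau})$ (identified inside $t(R_W)\hookrightarrow t(R_{\bar\rho})\otimes E = T_{x_0}\mathfrak{X}(\bar\rho)$, since $V_{x_0}$ has scalar endomorphisms and $\mathrm{H}^2(G_K,\mathrm{ad}(W))$ vanishes for a benign $W$, so $D_W$ is pro-represented and of the expected dimension by Corollary \ref{13}). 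Therefore the images of these families span, by Theorem \ref{33}, the whole tangent space $t(R_W) = T_{x_0}\mathfrak{X}(\bar\rho)$, provided $x_0$ is a smooth point of $\mathfrak{X}(\bar\rho)$ — which holds because $\dim T_{x_0}\mathfrak{X}(\bar\rho)=\dim t(R_W) = 4[K:\mathbb{Q}_p]+1-\dim\mathrm{H}^0 = 3[K:\mathbb{Q}_p]+1 $... wait, more carefully: smoothness of $\mathfrak{X}(\bar\rho)$ at $x_0$ follows from $\mathrm{H}^2(G_K,\mathrm{ad}(V_{x_0}))=0$, which is part of the benign hypothesis via condition (1) on the Frobenius eigenvalues.

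From here the argument is a dimension-count. Each family $\overline{p_1(\mathcal{E}(\bar\rho)_{x_{(V_{x_0},\mathcal{T}_\tau)}})}$ is contained in the closure of the crystalline (indeed benign) locus: a Zariski-dense set of points in $\mathcal{E}(\bar\rho)$ near $x_{(V_{x_0},\mathcal{T}_\tau)}$ corresponds to trianguline representations whose parameter $\delta_1$ has integer, strictly decreasing Hodge-Tate-type exponents and ``crystalline'' Frobenius eigenvalue (this is the key density statement inside $X_{fs}$, exactly as in Kisin's and Chenevier's arguments — the crystalline points are Zariski dense in $\mathcal{E}(\bar\rho)$ because the defining conditions cut out a closed analytic subset but the crystalline/benign ones fill up an open dense part of each irreducible component of dimension $3[K:\mathbb{Q}_p]+1$). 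Hence each such family lies in $\overline{\mathfrak{X}(\bar\rho)}_{\mathrm{b}}$. Since $C$ is irreducible of dimension $3[K:\mathbb{Q}_p]+1$, contains $x_0$ as a smooth point, and its tangent space at $x_0$ is spanned by the tangent spaces of these finitely many smooth subfamilies each of the same dimension $3[K:\mathbb{Q}_p]+1$ lying in $\overline{\mathfrak{X}(\bar\rho)}_{\mathrm{b}}$, at least one of these subfamilies must have the same tangent space as $C$ at $x_0$, hence equals (an open neighborhood in) $C$; therefore $C\subseteq\overline{\mathfrak{X}(\bar\rho)}_{\mathrm{b}}$. Running this over all components meeting the benign locus, and using the nonemptiness of $\mathfrak{X}(\bar\rho)_{\mathrm{b}}$ established in the first step, shows $\overline{\mathfrak{X}(\bar\rho)}_{\mathrm{b}}$ is a nonempty union of irreducible components. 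I expect the main obstacle to be the careful identification of the tangent space of the pushed-forward family $p_1(\mathcal{E}(\bar\rho))$ at $x_0$ with $t(R_{W,\mathcal{T}_\tau})\subseteq t(R_W)$ — i.e. checking that $p_1$ is, near $x_{(V_{x_0},\mathcal{T}_\tau)}$, a closed immersion onto its image, or at least injective on tangent vectors, so that Theorem \ref{33} can be applied directly on $\mathfrak{X}(\bar\rho)$; this requires knowing that the extra coordinates $(\delta,\lambda)$ are determined by $V_x$ together with the triangulation, which is where the rigidity coming from conditions (1)--(2) of Proposition \ref{000000} enters.
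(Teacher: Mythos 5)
Your overall strategy — reduce to Theorem~\ref{33} plus the local structure of $\mathcal{E}(\bar{\rho})$ from Theorem~\ref{5.18} by passing to tangent spaces — is essentially the strategy of the paper. But there are two linked errors in the execution that make the argument as written incorrect.

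First, the dimension: an irreducible component of $\mathfrak{X}(\bar{\rho})$ has dimension $4[K:\mathbb{Q}_p]+1$, not $3[K:\mathbb{Q}_p]+1$. For a benign point $x_0$ one has $\mathrm{H}^2(G_K,\mathrm{ad}(V_{x_0}))=0$ and $\mathrm{End}_{G_K}(V_{x_0})=E$, so by Corollary~\ref{13} the local ring $\hat{\mathcal{O}}_{\mathfrak{X}(\bar{\rho}),x_0}\isom R_{V_{x_0}}$ is formally smooth of dimension $n^2[K:\mathbb{Q}_p]+1 = 4[K:\mathbb{Q}_p]+1$. It is $\mathcal{E}(\bar{\rho})$ that is $(3[K:\mathbb{Q}_p]+1)$-dimensional near the points $x_{(V_{x_0},\mathcal{T}_\tau)}$; you have confused the two spaces. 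Second, and consequently, your final step is a non sequitur: each trianguline tangent space $t(R_{V_{x_0},\mathcal{T}_\tau})$ has dimension $3[K:\mathbb{Q}_p]+1$, whereas $t(R_{V_{x_0}})$ has dimension $4[K:\mathbb{Q}_p]+1$, so no single pushed-forward family can have ``the same tangent space as $C$ at $x_0$'' and therefore cannot by itself fill an open neighborhood of $x_0$ in $C$. The whole point of Theorem~\ref{33} is that the \emph{sum} $\sum_\tau t(R_{V_{x_0},\mathcal{T}_\tau})$, not any individual summand, exhausts $t(R_{V_{x_0}})$.

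The fix is what the paper actually does: rather than trying to show that a single trianguline family coincides with $C$, one lets $Z'$ be an irreducible component of the Zariski closure $Z=\overline{\mathfrak{X}(\bar{\rho})}_{\mathrm{b}}$, chooses a benign crystalline point $x\in Z'$ at which $Z'$ (and $\mathfrak{X}(\bar{\rho})$) is smooth (possible because benign points are Zariski dense in $Z'$ and the singular locus is a proper closed subset), and then observes that for each $\tau\in\mathfrak{S}_2$ the component $Y'_\tau$ of $\mathcal{E}(\bar{\rho})$ through $x_\tau:=x_{(V_x,\mathcal{T}_\tau)}$ maps into $Z'$ via $p_1$ (using Corollary~\ref{6.11.1} that each $Y'_\tau$ consists, up to closure, of benign points). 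This gives inclusions $t(R_{V_x,\mathcal{T}_\tau})\subseteq t_{Z',x}\subseteq t_{\mathfrak{X}(\bar{\rho}),x}=t(R_{V_x})$ for all $\tau$, so Theorem~\ref{33} forces $t_{Z',x}=t(R_{V_x})$, of dimension $4[K:\mathbb{Q}_p]+1$. Smoothness of $Z'$ at $x$ then gives $\dim Z'=4[K:\mathbb{Q}_p]+1$, whence $Z'$ is an irreducible component of $\mathfrak{X}(\bar{\rho})$. The concern you flag at the end — whether $p_1$ is injective on tangent vectors at $x_\tau$ — is the right thing to worry about; it is handled by the identification $\hat{\mathcal{O}}_{\mathcal{E}(\bar{\rho}),x_\tau}\isom R_{V_x,\mathcal{T}_\tau}$ together with the fact that $R_{V_x,\mathcal{T}_\tau}$ is by construction a quotient of $R_{V_x}$, so the induced map on tangent spaces is an injection $t(R_{V_x,\mathcal{T}_\tau})\hookrightarrow t(R_{V_x})$.
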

  
  Moreover, under the following assumptions, we prove the following stronger results concerning the Zariski density.
  
  \begin{thm}$(\text{Theorem } \mathrm{\ref{4.17}}, \text{Theorem } \mathrm{\ref{4.18}})$\label{0000}
Assume the following conditions,
\begin{itemize}
\item[(0)]$\mathrm{End}_{G_K}(\bar{\rho})=\mathbb{F}$,
\item[(1)]$\mathfrak{X}(\bar{\rho})_{\mathrm{reg}-\mathrm{cris}}$ is not empty.
\end{itemize}
Moreover, assume one of the following two conditions (2), (3),
\begin{itemize}
\item[(2)]$\zeta_p\not\in K$ ($\zeta_p$ is a primitive root of unity) and $\bar{\rho}$ satisfies one of the following conditions (i), (ii),
\begin{itemize}
\item[(i)]If $\bar{\rho}$ is absolutely reducible, then $\bar{\rho}\otimes_{\mathbb{F}}\overline{\mathbb{F}}\not\sim \begin{pmatrix} 1 & \ast \\ 0 & \omega \end{pmatrix}\otimes \chi$ for 
any $\chi:G_K\rightarrow \overline{\mathbb{F}}^{\times}$, where $\omega$ is the mod $p$ cyclotomic character,
\item[(ii)]If $\bar{\rho}$ is absolutely irreducible, then 
$[K(\zeta_p):K]\not=2$ or $\bar{\rho}|_{I_K}\otimes_{\mathbb{F}}\overline{\mathbb{F}}\not\sim \begin{pmatrix} \chi_2^i & 0 \\ 0 & \chi_2^{ip^f}\end{pmatrix}$ such that $\chi_2^{i(p^f-1)}=\omega|_{I_K}$, where $\chi_2:I_K\rightarrow 
\overline{\mathbb{F}}^{\times}$ is a fundamental character of the second kind,
\end{itemize}
\item[(3)]$\zeta_p\in K$ and $p\not=2$,
\end{itemize}
then we have an equality $\overline{\mathfrak{X}(\bar{\rho})}_{\mathrm{b}}=\mathfrak{X}(\bar{\rho})$.

\end{thm}

  This theorem generalizes the results of Colmez \cite{Co08} and Kisin \cite{Ki10} for $K=\mathbb{Q}_p$ to the case of any $p$-adic field. 
  As is stated in the above paragraph, Chenevier recently proved similar results for higher dimensional 
  and for $K=\mathbb{Q}_p$, and the author will prove these theorems in full generality (i.e. for higher dimensional 
  and for any $p$-adic field) in the next paper.

\subsection*{Notation.}
Let $p$ be a prime number. Let $K$ be a finite extension of $\mathbb{Q}_p$, $\overline{K}$  be a 
fixed algebraic closure of $K$, $K_0$ be the maximal unramified extension of $\mathbb{Q}_p$ in $K$, 
$K^{\mathrm{nor}}$ be the Galois closure of $K$ in $\overline{K}$. Let $G_K:=\mathrm{Gal}(\overline{K}/K)$ be the 
absolute Galois group of $K$. Let $\mathcal{O}_K$ be the ring of integers of $K$, $\pi_K\in\mathcal{O}_K$ be a uniformizer of $K$, $k:=\mathcal{O}_K/\pi_K\mathcal{O}_K$ be the residue field 
of $K$, $q=p^f:=\sharp k$ be the order of $k$. Denote by $\chi_p:G_K\rightarrow \mathbb{Z}_p^{\times}$ the $p$-adic cyclotomic character 
(i.e. $g(\zeta_{p^n})=\zeta_{p^n}^{\chi(g)}$ for any $p^n$-th roots of unity and for any $g\in G_K$). Let $\mathbb{C}_p:=\hat{\overline{K}}$ 
be the $p$-adic completion of $\overline{K}$, which is an algebraically closed $p$-adically completed field, and $\mathcal{O}_{\mathbb{C}_p}$ be its ring of integers. We denote by $v_p$ the normalized valuation on $\mathbb{C}_p^{\times}$ such that 
$v_p(p)=1$. We denote by $|-|_p:\mathbb{C}_p\rightarrow \mathbb{R}_{\geqq 0}$ the absolute value such that 
$|p|_p=\frac{1}{p}$. Let $E$ be a finite extension of $\mathbb{Q}_p$ in $\overline{K}$ such that 
$K^{\mathrm{nor}}\subseteq E$. In this paper, we use the notation $E$ as a coefficient field of representations. 
We denote by $\mathcal{P}:=\{\sigma:K\hookrightarrow \overline{K}\}=\{\sigma:K\hookrightarrow E\}$ the set of 
$\mathbb{Q}_p$-algebra homomorphisms from $K$ to $\overline{K}$ (or $E$).
Let $\chi_{\mathrm{LT}}:G_K\rightarrow \mathcal{O}_K^{\times}$
be 
the Lubin-Tate character associated with the fixed uniformizer $\pi_K$. Let $\mathrm{rec}_K:K^{\times}
\rightarrow G_K^{\mathrm{ab}}$ be the reciprocity map of local class field theory normalized such that 
$\mathrm{rec}_K(\pi_K)$ is a lifting of the inverse of $q$-th power Frobenius on $k$. We remark that 
$\chi_{\mathrm{LT}}\circ \mathrm{rec}_K:K^{\times}\rightarrow \mathcal{O}_K^{\times}$ satisfies 
$\chi_{\mathrm{LT}}\circ\mathrm{rec}_K(\pi_K)=1$ and $\chi_{\mathrm{LT}}\circ\mathrm{rec}_K|_{\mathcal{O}_K^{\times}}
=\mathrm{id}_{\mathcal{O}_K^{\times}}$.
\subsection*{Acknowledgement.}
The author would like to thank Kenichi Bannai for reading the first version of this article and for helpful comments. 
He also would like to thank Tadashi Ochiai for many valuable and interesting discussions and also
thank Yoichi Mieda, Atsushi Shiho and Takahiro Tsushima for discussing 
about rigid geometry which we use in $\S$ 4 and  also thank Ga\"etan Chenevier for discussing 
about generalization to higher dimensional case and also thank Go Yamashita and Seidai Yasuda for 
pointing out some mistakes in the previous version of this article and for many valuable discussions. This work is supported in part by the Grant-in-aid (NO. S- 23224001) for Scientific Research, JSPS. 
Part of this paper was written at l'Institute Henri Poincar\'e during Galois Trimester.  The author would like to thank his host 
Pierre Colmez and all of the people at Keio University related to the JSPS ITP program which enabled my stay. 
 

\section{Deformation theory of trianguline $B$-pairs.}
\subsection{Review of $B$-pairs.}
\subsubsection{$E$-$B$-pairs}
We start by recalling the definition of $E$-$B$-pairs (\cite{Be09},  \cite{Na09}) and then recall some fundamental 
properties of them established in \cite{Na09}. 
First, we recall some rings of $p$-adic periods [Fo94] which we need for defining $B$-pairs.
Let $\widetilde{\bold{E}}^+:=\varprojlim_n\mathcal{O}_{\mathbb{C}_p}\isom
\varprojlim_n\mathcal{O}_{\mathbb{C}_p}/p\mathcal{O}_{\mathbb{C}_p}$, where the limits are taken with respect to the $p$-th power map. It is known that $\widetilde{\bold{E}}^+$ is a complete valuation ring of characteristic  $p$ whose valuation
is defined by $v(x):=v_{p}(x^{(0)})$ for $x=(x^{(n)})_{n\geqq 0}\in \varprojlim_n\mathcal{O}_{\mathbb{C}_p}$.
We fix a system of $p^n$-th roots of unity $\{\varepsilon^{(n)}\}_{n\ge 0}$ 
such that $\varepsilon^{(0)}=1$, $(\varepsilon^{(n+1) })^p=\varepsilon^{(n)}$ for any $n$, 
$\varepsilon^{(1)}\not= 1$. Then $\varepsilon:=(\varepsilon^{(n)})_{n\geqq 0}$ is an element of $\widetilde{\bold{E}}^+$ satisfying $v(\varepsilon -1)=p/(p-1)$. The topological group 
$G_K$ acts on this ring continuously in natural way. We put  $\widetilde{\bold{A}}^+:=W(\widetilde{\bold{E}}^+)$, where we denote by $W(R)$ the ring of Witt vectors in $R$ for any perfect ring $R$. We put $\widetilde{\bold{B}}^+:=\widetilde{\bold{A}}^+[\frac{1}{p}]$. These rings are equipped with the weak topology and also have a natural continuous $G_K$-action and have a Frobenius 
action $\varphi$. We have a $G_K$-equivariant surjection $\theta:\widetilde{\bold{A}}^+
\rightarrow \mathcal{O}_{\mathbb{C}_p}:\sum_{k=0}^{\infty}p^k[x_k]\mapsto \sum_{k=0}^{\infty} p^k x_k^{(0)}$, where $[\,\, ]:\widetilde{\bold{E}}^+\rightarrow \widetilde{\bold{A}}^+$ is the
Teichm\"uller lift.
Inverting $p$, we obtain a surjection $\widetilde{\bold{B}}^+\rightarrow \mathbb{C}_p$.
We put $\bold{B}_{\mathrm{dR}}^+:=\varprojlim_n\widetilde{\bold{B}}^+/(\mathrm{Ker}(\theta))^n$, which is
a complete discrete valuation ring with the residue field $\mathbb{C}_p$ and 
is equipped with the projective limit topology of the $\mathbb{Q}_p$-Banach spaces
$\widetilde{\bold{B}}^+/(\mathrm{Ker}(\theta))^n$ ($n\geqq 1$) whose $\mathbb{Z}_p$-lattice is the image of the natural map 
$\widetilde{\bold{A}}^+\rightarrow 
\widetilde{\bold{B}}^+/(\mathrm{Ker}(\theta))^n$. 
Let $\bold{A}_{\mathrm{max}}$ be the $p$-adic completion of $\widetilde{\bold{A}}^+[\frac{[\widetilde{p}]]}{p}]$, where $\widetilde{p}:=(p^{(n)})$ is an element in $\widetilde{\bold{E}}^+$ such that $p^{(0)}=p, (p^{(n+1) })^p=p^{(n)}$ for any $n$. We put $\bold{B}_{\mathrm{max}}^+:=\bold{A}_{\mathrm{max}}[\frac{1}{p}]$. $\bold{A}_{\mathrm{max}}$ and  
$\bold{B}^+_{\mathrm{max}}$ have a continuous $G_K$-action and a Frobenius actions $\varphi$. We have a natural 
$G_K$-equivariant embedding $K\otimes_{K_0}\bold{B}^+_{\mathrm{max}}\hookrightarrow \bold{B}^+_{\mathrm{dR}}$.
If we put $t:=\mathrm{log}([\varepsilon])=\sum_{n=1}^{\infty}(-1)^{n-1}\frac{([\varepsilon]-1)^n}{n}$, then we can see that $t\in \bold{A}_{\mathrm{max}}$, 
$\varphi(t)=pt, g(t)=\chi_p(g)t$ for any $g\in G_K$ and $\mathrm{Ker}(\theta)=t\bold{B}_{\mathrm{dR}}^+
\subset \bold{B}_{\mathrm{dR}}^+$ is the maximal ideal. If we put $\bold{B}_{\mathrm{max}}:=\bold{B}_{\mathrm{max}}^+
[\frac{1}{t}], \bold{B}_{\mathrm{dR}}:=\bold{B}_{\mathrm{dR}}^+[\frac{1}{t}]$, we have a natural embedding 
$K\otimes_{K_0}\bold{B}_{\mathrm{max}}\hookrightarrow \bold{B}_{\mathrm{dR}}$. 
We put $\bold{B}_{e}:=\bold{B}_{\mathrm{max}}^{\varphi =1}$ which is equipped with the locally convex inductive limit topology of 
$\bold{B}_{e}=\cup_{n}(\frac{1}{t^n}\bold{B}^+_{\mathrm{max}})^{\varphi=1}$, where the topology on each $(\frac{1}{t^n}\bold{B}^+_{\mathrm{max}})^{\varphi=1}
=\frac{1}{t^n}\bold{B}^{+,  \varphi=p^n}_{\mathrm{max}}$ is induced that of $\bold{B}^+_{\mathrm{max}}$. We put $\mathrm{Fil}^i \bold{B}_{\mathrm{dR}}:=t^i \bold{B}_{\mathrm{dR}}^+$ for any $i\in\mathbb{Z}$.  On $\bold{B}_{\mathrm{dR}}$, we also equipped with the locally convex inductive limit 
topology of $\bold{B}_{\mathrm{dR}}=\varinjlim_n\frac{1}{t^n}\bold{B}^+_{\mathrm{dR}}$.

In this paper, we fix a coefficient field of $p$-adic representations or $B$-pairs.
Hence we start by recalling the definition of $E$-coefficient versions of $p$-adic representations and $B$-pairs.
\begin{defn}\label{a}
An $E$-representation of $G_K$ is a finite dimensional $E$-vector space $V$ with a continuous $E$-linear action of $G_K$. We call $E$-representation for simplicity when there is no risk 
of confusion about $K$.
\end{defn}
\begin{defn} \label{b}
A pair $W:=(W_e,W^+_{\mathrm{dR}})$ is an $E$-$B$-pair if 
\begin{itemize}
\item[(1)] $W_e$ is a finite $\bold{B}_{e}\otimes_{\mathbb{Q}_p}E$-module which is free over $\bold{B}_{e}$ with 
a continuous semi-linear $G_K$-action.
\item[(2)] $W^+_{\mathrm{dR}}$ is a $G_K$-stable finite $\bold{B}^+_{\mathrm{dR}}\otimes_{\mathbb{Q}_p}E$-submodule 
of $\bold{B}_{\mathrm{dR}}\otimes_{\bold{B}_{e}}W_e$ which generates $\bold{B}_{\mathrm{dR}}\otimes_{\bold{B}_{e}}W_e$ as a $\bold{B}_{\mathrm{dR}}$-module.
\end{itemize}
\end{defn}
We have an exact  fully faithful functor $W(-)$ from the category of $E$-representations to the 
category of $E$-$B$-pairs defined by
$$W(V):=(\bold{B}_{e}\otimes_{\mathbb{Q}_p}V, \bold{B}^+_{\mathrm{dR}}\otimes_{\mathbb{Q}_p}V)$$
for any $E$-representation $V$, 
where the fully faithfulness follows from the Bloch-Kato's fundamental short exact sequence,
$$0\rightarrow \mathbb{Q}_p\rightarrow \bold{B}_{e}\oplus \bold{B}^+_{\mathrm{dR}}\rightarrow \bold{B}_{\mathrm{dR}}\rightarrow 0.$$
We remark that $W_e$ is a free $\bold{B}_{e}\otimes_{\mathbb{Q}_p}E$-module and 
$W^+_{\mathrm{dR}}$ is a free $\bold{B}^+_{\mathrm{dR}}\otimes_{\mathbb{Q}_p}E$-module by Lemma 1.7, 1.8 of \cite{Na09}. We define the rank of $W$ by 
$$\mathrm{rank}(W):=\mathrm{rank}_{\bold{B}_{e}\otimes_{\mathbb{Q}_p}E}(W_e).$$
For $E$-$B$-pairs $W_1:=(W_{1,e},W^+_{1,\mathrm{dR}})$ and $W_2:=(W_{2,e}, W^+_{2,\mathrm{dR}})$, 
we define the tensor product of $W_1$ and $W_2$ by  
$$W_1\otimes W_2:=(W_{1,e}\otimes_{\bold{B}_{e}\otimes_{\mathbb{Q}_p}E}W_{2,e}, W^+_{1,\mathrm{dR}}\otimes_{\bold{B}^+_{\mathrm{dR}}\otimes_{\mathbb{Q}_p}E}W^+_{2,\mathrm{dR}})$$
 and define the dual of $W_1$ 
by
$$W_1^{\vee}:=(\mathrm{Hom}_{\bold{B}_{e}\otimes_{\mathbb{Q}_p}E}(W_{1,e}, B_{e}\otimes_{\mathbb{Q}_p}E), W^{+,\vee}_{1,\mathrm{dR}})$$
where we define 
$$W_{1,\mathrm{dR}}^{+,\vee}:=\{f\in \mathrm{Hom}_{\bold{B}_{\mathrm{dR}}\otimes_{\mathbb{Q}_p}E}(\bold{B}_{\mathrm{dR}}
\otimes_{\bold{B}_{e}}W_{1,e}, \bold{B}_{\mathrm{dR}}\otimes_{\mathbb{Q}_p}E)| f(W^+_{1,\mathrm{dR}})\subseteq \bold{B}^+_{\mathrm{dR}}\otimes_{\mathbb{Q}_p}E\})$$
  (remark: there is a mistake in
the definition 1.9 of \cite{Na09}).
The category of $E$-$B$-pairs is not an abelian category. In particular, an inclusion $W_1\hookrightarrow W_2$
does not have a quotient in the category of $E$-$B$-pairs in general. However we can always take the saturation 
$$W^{\mathrm{sat}}_{1}:=(W^{\mathrm{sat}}_{1.e}, W^{+,\mathrm{sat}}_{1,\mathrm{dR}})$$ 
such that $W^{\mathrm{sat}}_1$ sits in 
$W_1\hookrightarrow W^{\mathrm{sat}}_1\hookrightarrow W_2$ and $W_{1,e}=W^{\mathrm{sat}}_{1,e}$ and $W_2/W^{\mathrm{sat}}_{1}$ is an $E$-$B$-pair (see Lemma 1.14 of \cite{Na09}). We say that an inclusion $W_1\hookrightarrow W_2$ is saturated if $W_2/W_1$ is an $E$-$B$-pair, i.e. $W_1=W_1^{\mathrm{sat}}$.

Next, we recall the $p$-adic Hodge theory for $B$-pairs. Let $W=(W_e,W^+_{\mathrm{dR}})$ be an $E$-$B$-pair. 
We define 
$$\bold{D}_{\mathrm{cris}}(W):=(\bold{B}_{\mathrm{max}}\otimes_{\bold{B}_{e}}W_e)^{G_K}, \,\bold{D}^L_{\mathrm{cris}}(W):=
(\bold{B}_{\mathrm{max}}\otimes_{\bold{B}_{e}}W_e)^{G_L}$$ for any finite extension $L$ of $K$ and define
$$\bold{D}_{\mathrm{dR}}(W):=(\bold{B}_{\mathrm{dR}}\otimes_{\bold{B}_{e}}
W_e)^{G_K},\, \bold{D}_{\mathrm{HT}}(W):=(\bold{B}_{\mathrm{HT}}\otimes_{\mathbb{C}_p}(W^+_{\mathrm{dR}}/tW^+_{\mathrm{dR}}))^{G_K},$$
 here 
$\bold{B}_{\mathrm{HT}}:=\mathbb{C}_p[T, T^{-1}]$ on which $G_K$ acts by $g(aT^i):=\chi_p(g)^ig(a)T^i$ for any $g\in G_K, a\in \mathbb{C}_p, i\in \mathbb{Z}$.
\begin{defn}\label{c}
We say that $W$ is crystalline (resp. de Rham, resp. Hodge-Tate) if 
$\mathrm{dim}_{K_{\ast}}\bold{D}_{\mathrm{\ast}}(W)=[E:\mathbb{Q}_p]\mathrm{rank}(W)$ for $\ast= \mathrm{cris}$ (resp. $\ast=\mathrm{dR}$, resp. 
$\ast=\mathrm{HT}$), where $K_{\ast}=K_0$ when $\ast=\mathrm{cris}$ and $K_{\ast}=K$ when $\ast=\mathrm{dR},\mathrm{HT}$.
We say that $W$ is potentially crystalline if $\mathrm{dim}_{L_0}(\bold{D}^L_{\mathrm{cris}}(W))=[E:\mathbb{Q}_p]\mathrm{rank}(W)$ for 
a finite extension $L$ of $K$, where $L_0$ is the maximal unramified extension of $\mathbb{Q}_p$ in $L$.
\end{defn}

\begin{defn}\label{d}
Let $L$ be a finite Galois extension of $K$. Set $G_{L/K}:=\mathrm{Gal}(L/K)$.
We say that $D$ is an $E$-filtered ($\varphi,G_{L/K}$)-module over $K$ if 
\begin{itemize}
\item[(1)] $D$ is a finite free $L_0\otimes_{\mathbb{Q}_p} E$-module with a $\varphi$-semi-linear 
action $\varphi_D:D\isom D$ and a semi-linear action of $G_{L/K}$ such
 that $\varphi_D$ and  the action of $G_{L/K}$ commute, where ($\varphi$-)semi-linear means that 
$\varphi_D(a\otimes b\cdot x)=\varphi(a)\otimes b\cdot \varphi_D(x)$, $g(a\otimes b\cdot x)=g(a)\otimes b\cdot g(x)$ for 
any $a\in L_0, b\in E, x\in D,  g\in G_{L/K}$,
\item[(2)] $D_L:=L\otimes_{L_0}D$ has a separated and exhausted decreasing $G_{L/K}$-stable filtration $\{\mathrm{Fil}^iD_L\}_{i\in \mathbb{Z}}$ by $L\otimes_{\mathbb{Q}_p}E$-submodules.
\end{itemize}
\end{defn}

Let $W$ be a potentially crystalline $E$-$B$-pair such that $W|_{G_L}$ is crystalline 
for a finite Galois extension $L$ of $K$, then we define an $E$-filtered ($\varphi, G_{L/K}$)-module's 
structure on $\bold{D}^L_{\mathrm{cris}}(W)$ as follows. First, $\bold{D}^L_{\mathrm{cris}}(W)$ has a Frobenius action 
induced from that on $\bold{B}_{\mathrm{max}}$ and has a $G_{L/K}$-action induced from those on $\bold{B}_{\mathrm{max}}$ and 
$W_e$. 
We define a filtration on $L\otimes_{L_0}\bold{D}^L_{\mathrm{cris}}(W)=L\otimes_{K}\bold{D}_{\mathrm{dR}}(W)$ by 
$$\mathrm{Fil}^i(L\otimes_{L_0}\bold{D}^{L}_{\mathrm{cris}}(W)):=(L\otimes_{K}\bold{D}_{\mathrm{dR}}(W))\cap t^iW^+_{\mathrm{dR}}$$
for any $i\in \mathbb{Z}$.

Let $D:=L_0e$ be a rank one $\mathbb{Q}_p$-filtered ($\varphi, G_{L/K}$)-module with a base $e$, then 
we define $t_N(D):=v_p(\alpha)$ where $\varphi_D(e)=\alpha\cdot e$ and define $t_H(D):=i$ such that 
$\mathrm{Fil}^iD_L/\mathrm{Fil}^{i+1}D_L\not= 0$. For general $D$ of rank $d$, we define $t_N(D):=t_N(\wedge^dD)$, 
$t_H(D):=t_H(\wedge^dD)$. We say that $D$ is weakly admissible if $t_N(D)=t_H(D)$ and $t_N(D')\geqq t_H(D')$ for any 
sub $\mathbb{Q}_p$-filtered ($\varphi,G_{L/K}$)-module $D'$ of $D$.
\begin{thm}\label{e}
Let $L$ be a finite Galois extension of $K$, then we have the following results.
\begin{itemize}
\item[(1)] The functor $W\mapsto \bold{D}^L_{\mathrm{cris}}(W)$ gives an equivalence of categories between 
the category of potentially crystalline $E$-$B$-pairs which are crystalline if restricted to $G_L$ and the 
category of $E$-filtered $(\varphi,G_{L/K})$-modules over $K$.
\item[(2)] Restricting the above functor to $E$-representations, the functor $V\mapsto \bold{D}^{L}_{\mathrm{cris}}(V)$ 
gives an equivalence of categories between the category of potentially crystalline $E$-representations which are 
crystalline if restricted to $G_L$  and the category of weakly admissible $E$-filtered $(\varphi,G_{L/K})$-modules over $K$.
\end{itemize}
\end{thm}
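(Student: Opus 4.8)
The plan is to factor the functor $W\mapsto D^L_{\mathrm{cris}}(W)$ through restriction to $G_L$, and to reduce the whole statement to two inputs: Berger's equivalence between crystalline $E$-$B$-pairs of $G_L$ and \emph{arbitrary} (not necessarily weakly admissible) $E$-filtered $\varphi$-modules over $L_0$, together with effectivity of Galois descent along the finite extension $L/K$. For essential surjectivity and the quasi-inverse in (1), I would begin with an $E$-filtered $(\varphi,G_{L/K})$-module $D$ over $K$, forget the $G_{L/K}$-action to view $D$ as an $E$-filtered $\varphi$-module over $L_0$, and form the pair
\[
W_L(D):=\bigl((B_{\mathrm{max}}\otimes_{L_0}D)^{\varphi=1},\ \mathrm{Fil}^0(B_{\mathrm{dR}}\otimes_{L}D_L)\bigr),
\]
where $D_L=L\otimes_{L_0}D$ carries the convolution of $\mathrm{Fil}^{\bullet}B_{\mathrm{dR}}$ and $\mathrm{Fil}^{\bullet}D_L$. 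Following \cite{Be09}, one checks that $W_L(D)$ is a crystalline $E$-$B$-pair of $G_L$ with $D_{\mathrm{cris}}(W_L(D))\cong D$ and that $W\mapsto D_{\mathrm{cris}}(W)$ is a quasi-inverse; the $E$-coefficients are carried along formally, since $L_0\otimes_{\mathbb{Q}_p}E$ and $L\otimes_{\mathbb{Q}_p}E$ are finite products of fields. The one point needing genuine care is that $(B_{\mathrm{max}}\otimes_{L_0}D)^{\varphi=1}$ is free over $B_e\otimes_{\mathbb{Q}_p}E$ of rank $\mathrm{rank}_{L_0\otimes_{\mathbb{Q}_p}E}D$, which follows from the B\'ezout property of $B_e$ and the fundamental exact sequence, exactly as for $W(V)$.

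The next step restores the group action. The semilinear $G_{L/K}$-action on $D$, which commutes with $\varphi_D$ and preserves $\mathrm{Fil}^{\bullet}D_L$, induces functorially a semilinear $G_{L/K}$-action on both components of $W_L(D)$ (using that $B_{\mathrm{max}}$ and $B_{\mathrm{dR}}$ carry $G_K$-actions extending their $G_L$-actions); this is a descent datum for $W_L(D)$ along $L/K$, and since $G_{L/K}$ is finite it is effective, producing an $E$-$B$-pair $W(D)$ of $G_K$ with $W(D)|_{G_L}\cong W_L(D)$. Thus $W(D)$ is potentially crystalline, crystalline over $G_L$, and $D^L_{\mathrm{cris}}(W(D))\cong D$ as $E$-filtered $(\varphi,G_{L/K})$-modules; conversely, any potentially crystalline $W$ that becomes crystalline over $G_L$ equips $D^L_{\mathrm{cris}}(W)$ with exactly this structure, as recalled just before the theorem, and the two constructions are mutually quasi-inverse. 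Full faithfulness is then formal: morphisms on either side are computed as the $\varphi=1$, $\mathrm{Fil}^0$, $G_{L/K}$-invariant part of the internal Hom, and $D^L_{\mathrm{cris}}$ commutes with the internal Hom and with these invariants.

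For (2) the task is to identify, under the equivalence of (1), the essential image of the functor $V\mapsto W(V)$. By Bloch-Kato's fundamental exact sequence, a $B$-pair $W$ lies in that image exactly when it is \'etale, and for potentially crystalline $W$ this is equivalent to $D^L_{\mathrm{cris}}(W)$ being weakly admissible. The implication from a representation to weak admissibility is the easy half of Fontaine's theorem: a sub-$(\varphi,G_{L/K})$-module $D'\subseteq D^L_{\mathrm{cris}}(W)$ cuts out a saturated sub-$B$-pair of $W$, and the inequalities $t_N(D')\ge t_H(D')$, together with the equality $t_N=t_H$ on the top exterior power (reflecting that the determinant $B$-pair comes from a character), follow from the additivity and normalization of $t_N$ and $t_H$. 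The converse --- weakly admissible $\Rightarrow$ $W=W(V)$ --- is the substantive content, and is precisely the theorem of Colmez and Fontaine that weakly admissible implies admissible, in the $B$-pair reformulation due to Berger \cite{Be09}; combined with part (1) and Galois descent for $G_L$-representations, it upgrades the crystalline $G_L$-representation produced over $L$ to an $E$-representation $V$ of $G_K$ with $W(V)\cong W(D)$. I expect this last invocation to be the only real obstacle: the crystalline equivalence of \cite{Be09}, the $E$-linear bookkeeping, and the descent along $L/K$ are routine, and in the present review subsection it is natural to quote these facts from \cite{Be09} and \cite{Na09} rather than reprove them.
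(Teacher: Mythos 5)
Your sketch correctly unpacks the two results the paper cites in lieu of a proof (Proposition 2.3.4 and Theorem 2.3.5 of Berger's paper, and Theorem 1.18 of the author's [Na09]), so it follows the same route: Berger's equivalence for crystalline $E$-$B$-pairs of $G_L$, enrichment by the $(\varphi,G_{L/K})$-structure, and Colmez--Fontaine to identify weak admissibility with \'etaleness in part (2). Two points of framing are slightly off but harmless to the argument: what you call ``effective descent'' is not a descent problem --- the $G_K$-action on $(B_{\mathrm{max}}\otimes_{L_0}D)^{\varphi=1}$ and on $\mathrm{Fil}^0(B_{\mathrm{dR}}\otimes_L D_L)$ is built directly by combining the semilinear $G_{L/K}$-action on $D$ with the $G_K$-action on the period rings, so the underlying $B_e$- and $B^+_{\mathrm{dR}}$-modules never change and there is nothing to ``effectivize''; and Bloch--Kato's exact sequence alone gives only full faithfulness of $W(\cdot)$, whereas essential surjectivity onto \'etale $B$-pairs requires slope-filtration input (Kedlaya) together with Fontaine's theorem, which the paper packages through the citation to Berger.
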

\begin{proof}
See Proposition 2.3.4 and Theorem 2.3.5 of \cite{Be09} or Theorem 1.18 of \cite{Na09}.
\end{proof}

Next, we recall the definition of trianguline $E$-$B$-pairs, whose deformation theory we study in detail in this chapter. 
\begin{defn}\label{f}
Let $W$ be an $E$-$B$-pair of rank $n$, then we say that $W$ is split trianguline if 
there exists a filtration 
$$\mathcal{T}:0\subseteq W_1\subseteq W_2\subseteq \cdots \subseteq W_n=W$$
 by sub $E$-$B$-pairs such that $W_{i}$ is saturated in $W_{i+1}$ and $W_{i+1}/W_{i}$ is a rank one 
$E$-$B$-pair for any $0\leqq i\leqq n-1$. We say that $W$ is trianguline if $W\otimes_{E}E'$, the base change of $W$  to $E'$, 
is a split trianguline $E'$-$B$-pair for a finite extension $E'$ of $E$. 
\end{defn} 

By this definition, to study split trianguline $E$-$B$-pairs, it is important to classify rank one $E$-$B$-pairs 
and calculate extension classes of rank one $E$-$B$ pairs, which were studied in \cite{Na09}. 
We recall some  results concerning these.

\begin{thm}\label{g}
There exists a canonical one to one correspondence $\delta\mapsto W(\delta)$ between the set of 
continuous homomorphisms $\delta:K^{\times}\rightarrow E^{\times}$ and  the set of isomorphism classes
of rank one $E$-$B$-pairs. 
\end{thm}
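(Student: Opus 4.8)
The plan is to construct explicitly, for each continuous homomorphism $\delta: K^\times \to E^\times$, a rank one $E$-$B$-pair $W(\delta)$, and then to show that every rank one $E$-$B$-pair arises this way from a unique such $\delta$. For the construction, I would first treat $\delta$ as a character of $G_K^{\mathrm{ab}}$ via the reciprocity map: the decomposition $K^\times \cong \mathcal{O}_K^\times \times \pi_K^{\mathbb{Z}}$ lets one write $\delta = \delta_1 \cdot \delta_2$ where $\delta_1|_{\mathcal{O}_K^\times}$ is of finite order up to a power of $\chi_{\mathrm{LT}}$-type twist, and $\delta_2$ is the unramified character sending $\pi_K$ to $\delta(\pi_K) \in E^\times$. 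The unramified part and the ``locally algebraic'' part $\prod_{\sigma \in \mathcal{P}} \sigma^{k_\sigma}$ of $\delta$ on $\mathcal{O}_K^\times$ give genuine characters of $G_K$, hence rank one $E$-representations, hence rank one $E$-$B$-pairs via $W(-)$. The remaining case is an unramified character $\mathrm{unr}(\lambda)$ with $\lambda = \delta(\pi_K)$ of negative valuation (or a character not coming from a Galois character at all): here one builds $W(\delta)$ directly as a pair $(W_e, W^+_{\mathrm{dR}})$ with $W_e$ a free rank one $B_e \otimes_{\mathbb{Q}_p} E$-module on which $G_K$ acts through $\delta$ in a suitable sense, using the crystalline period structure (an element $\alpha$-eigenvector for $\varphi^f$ inside $B_{\mathrm{cris}}$, cf. the construction of $D_{\mathrm{cris}}$), and $W^+_{\mathrm{dR}}$ the standard lattice $B^+_{\mathrm{dR}} \otimes_{\mathbb{Q}_p} E$. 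Multiplicativity $W(\delta)\otimes W(\delta') \cong W(\delta\delta')$ is then immediate from the definition of tensor product of $B$-pairs, which reduces the well-definedness to the generators just discussed.

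For the converse — that every rank one $E$-$B$-pair $W$ is isomorphic to some $W(\delta)$ — I would argue as in \cite{Na09}. Since $W_e$ is free of rank one over $B_e \otimes_{\mathbb{Q}_p} E$, twisting $W$ by an explicitly constructed $W(\delta_0)$ one may reduce to the case where $W_e$ admits a $G_K$-fixed basis, i.e. $W_e \cong B_e \otimes_{\mathbb{Q}_p} E$ with trivial action; this uses that $H^1(G_K, (B_e\otimes E)^\times)$ is controlled (the key input being $(B_e)^{G_K} = \mathbb{Q}_p$ and the analogue of Hilbert 90 for $B_e$-lines, from Berger's and Kedlaya's results). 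Then $W^+_{\mathrm{dR}}$ is a $G_K$-stable $B^+_{\mathrm{dR}}\otimes_{\mathbb{Q}_p}E$-lattice in $B_{\mathrm{dR}}\otimes_{\mathbb{Q}_p}E$, hence of the form $\prod_\sigma t^{k_\sigma} B^+_{\mathrm{dR}} \otimes_{\mathbb{Q}_p} E$ after decomposing $B_{\mathrm{dR}} \otimes_{\mathbb{Q}_p} E \cong \prod_{\sigma \in \mathcal{P}} B_{\mathrm{dR}}$ along the embeddings $K \hookrightarrow E$; up to a further twist by $W(\prod_\sigma \sigma^{-k_\sigma})$ we reach $W = W(1)$, reversing the normalization to recover $\delta$.

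Uniqueness of $\delta$ follows by computing $\mathrm{End}$: if $W(\delta) \cong W(\delta')$ then $W(\delta/\delta')$ has a nonzero global section, so $\mathrm{H}^0(G_K, W(\delta/\delta')) \ne 0$, and one checks (again by the explicit description above, splitting into unramified, locally algebraic, and finite-order parts) that this forces $\delta/\delta' = 1$. The main obstacle, and the place where the argument genuinely uses the structure theory rather than formal manipulation, is the reduction step in the converse direction: showing that a rank one $E$-$B$-pair with free $W_e$ can be trivialized after twisting, i.e. the vanishing/computation of the relevant $H^1$ with coefficients in $(B_e \otimes_{\mathbb{Q}_p} E)^\times$. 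This is precisely where one invokes the deep input of \cite{Na09} (building on \cite{Be09}), and I would cite it rather than reprove it, since the theorem is explicitly stated there as a recollection.
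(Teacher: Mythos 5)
The paper does not give its own proof of Theorem \ref{g}: it simply cites Proposition 3.1 of \cite{Co08} for $K=\mathbb{Q}_p$ and Theorem 1.45 of \cite{Na09} in general, and you likewise propose deferring the hard step to \cite{Na09}, so in that sense your overall plan matches the paper's.

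Your sketch of what the proof would be, however, contains a decomposition that is incorrect and would mislead you if you tried to make it precise. You claim that the restriction $\delta_1:=\delta|_{\mathcal{O}_K^{\times}}$ (extended by $\delta_1(\pi_K)=1$) is ``of finite order up to a power of $\chi_{\mathrm{LT}}$-type twist,'' i.e.\ a finite-order character times some $\prod_{\sigma\in\mathcal{P}}\sigma^{k_{\sigma}}$ with integers $k_{\sigma}$. That is false: continuous characters $\mathcal{O}_K^{\times}\to E^{\times}$ form a $[K:\mathbb{Q}_p]$-dimensional rigid-analytic family (the $E$-points of $\mathcal{W}_E$), and the generic member is neither locally algebraic nor a locally algebraic twist of a finite-order character. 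What actually makes the construction work — and what the paper carries out explicitly in the Artinian-coefficient version, Proposition \ref{5} — is much simpler: since $\mathcal{O}_K^{\times}$ is compact, any such $\delta_1$ automatically has image in $\mathcal{O}_E^{\times}$, hence extends to a genuine Galois character $\tilde{\delta}_1:G_K^{\mathrm{ab}}\to\mathcal{O}_E^{\times}$ via local class field theory, with no need to express it in the restricted form you describe. One then sets $W(\delta)=W(E(\tilde{\delta}_1))\otimes W(D_u)$, where $D_u$ is the rank one crystalline filtered $\varphi$-module with $\varphi^f$-eigenvalue $u=\delta(\pi_K)$ and trivial filtration. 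Similarly, what you describe as ``the analogue of Hilbert 90 for $B_e$-lines'' is, in the actual argument, Kedlaya's slope filtration theorem: one shows $W$ is pure of some slope, twists by the crystalline rank one $B$-pair $W(D_{\pi_E^n})$ of the same slope to reach slope zero, and then invokes the equivalence between slope-zero (\'etale) $B$-pairs and $E$-representations of $G_K$. The group $H^1(G_K,(B_e\otimes_{\mathbb{Q}_p}E)^{\times})$ does not vanish — otherwise every rank one $B$-pair would be \'etale — so framing the key input as a Hilbert 90 statement would not survive an attempt to write it down.
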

\begin{proof}
See  Proposition 3.1 of \cite{Co08} for $K=\mathbb{Q}_p$ and Theorem 1.45 of \cite{Na09} for general $K$.
For the construction of $W(\delta)$, see $\S$1.4 of \cite{Na09}.
\end{proof}

This correspondence is compatible with the local class field theory, i.e. for any unitary homomorphism 
$\delta:K^{\times}\rightarrow \mathcal{O}_E^{\times}$, if we take  the character $\widetilde{\delta}:G^{\mathrm{ab}}_K\rightarrow \mathcal{O}_E^{\times}$ 
satisfying $\widetilde{\delta}\circ \mathrm{rec}_K=\delta$, then we have a canonical  isomorphism $$W(\delta)\isom W(E(\widetilde{\delta})).$$ 
This correspondence is also compatible with tensor products and with duals, i.e for continuous homomorphisms $\delta_1, \delta_2:K^{\times}
\rightarrow E^{\times}$, we have  canonical isomorphisms $$W(\delta_1)\otimes W(\delta_2)\isom W(\delta_1\delta_2)\text{ and } W(\delta_1)^{\vee}
\isom W(\delta_1^{-1}).$$ 

There are some important examples of rank one $E$-$B$-pairs which we recall now.
For any $\{k_{\sigma}\}_{\sigma\in\mathcal{P}}\in \prod_{\sigma\in \mathcal{P}}\mathbb{Z}$, we define a homomorphism 
$$\prod_{\sigma\in\mathcal{P}}\sigma^{k_{\sigma}}:K^{\times}\rightarrow E^{\times}:y\mapsto \prod_{\sigma\in \mathcal{P}}
\sigma(y)^{k_{\sigma}},$$  then we have an isomorphism 
$$W(\prod_{\sigma\in\mathcal{P}}\sigma^{k_{\sigma}})\isom(\bold{B}_{e}\otimes_{\mathbb{Q}_p}E, 
\oplus_{\sigma\in\mathcal{P}}t^{k_{\sigma}}\bold{B}^+_{\mathrm{dR}}\otimes_{K,\sigma}E)$$ (see  Lemma 2.12 of \cite{Na09}).
 Let  $N_{K/\mathbb{Q}_p}:K^{\times}\rightarrow \mathbb{Q}_p^{\times}$ be the norm and 
 $\allowbreak|-|:\mathbb{Q}_p^{\times}\rightarrow \mathbb{Q}^{\times}\hookrightarrow E^{\times}$ be the $p$-adic absolute value 
 such that $|p|=\frac{1}{p}$, and we define $|N_{K/\mathbb{Q}_p}|:K^{\times}\rightarrow E^{\times}$  the composite
 of $N_{K/\mathbb{Q}_p}$ and $|-|$, then we have an isomorphism 
 $$W(|N_{K/\mathbb{Q}_p}|\prod_{\sigma\in \mathcal{P}}\sigma)\isom W(E(\chi_p)),$$ which is the $E$-$B$-pair
 associated to the $p$-adic cyclotomic character $\chi_p$.  
 Next, we recall the definition and some properties of Galois cohomology of $E$-$B$-pairs. 
 For an $E$-$B$-pair $W:=(W_e, W^+_{\mathrm{dR}})$, we put $W_{\mathrm{dR}}:=\bold{B}_{\mathrm{dR}}\otimes_{\bold{B}_{e}}W_e$. 
 We have natural inclusions $W_e\hookrightarrow W_{\mathrm{dR}}$ and $W^+_{\mathrm{dR}}
 \hookrightarrow W_{\mathrm{dR}}$. We define the Galois cohomology $\mathrm{H}^i(G_K, W)$ of $W$ as the cohomology of the continuous cochains of $G_K$ with values in the complex 
 $$W_e\oplus W^+_{\mathrm{dR}}\rightarrow W_{\mathrm{dR}}:(x,y)\mapsto 
 x-y,$$
  see the appendix of this article or $\S$2.1 of \cite{Na09} for the precise definition. As in the case 
  of usual $p$-adic representations, we have the following isomorphisms 
  of $E$-vector spaces
  $$\mathrm{H}^0(G_K, W)
 \isom\mathrm{Hom}_{G_K}(\bold{B}_E, W),\, \mathrm{H}^1(G_K, W)\isom\mathrm{Ext}^1(\bold{B}_E, W),$$
  where $\bold{B}_E:=(\bold{B}_{e}\otimes_{\mathbb{Q}_p}E, 
 \bold{B}^+_{\mathrm{dR}}\otimes_{\mathbb{Q}_p}E)$ is the trivial $E$-$B$-pair and $\mathrm{Hom}_{G_K}(-,-)$ is the group of homomorphisms
 of $E$-$B$-pairs and $\mathrm{Ext}^1(-,-)$ is the extension class group in the category of $E$-$B$-pairs. If 
 $V$ is an $E$-representation of $G_K$, we have a canonical isomorphism 
 $$\mathrm{H}^i(G_K, V)\isom \mathrm{H}^i(G_K, W(V)),$$ which follows from 
 the Bloch-Kato's fundamental short exact sequence.
 Moreover, we have the following theorem, the Euler-Poincar\'e characteristic  formula and the Tate duality for 
 $B$-pairs. 
 \begin{thm}\label{h}
 Let $W$ be an $E$-$B$-pair. 
 \begin{itemize}
 \item[(1)] For $i=0,1,2$, $\mathrm{H}^i(G_K, W)$ is finite dimensional over $E$ and $\mathrm{H}^i(G_K, W)=0$ for 
 $i\not= 0,1,2$.
 \item[(2)] $\sum_{i=0}^2(-1)^{i-1} \mathrm{dim}_E\mathrm{H}^i(G_K, W)=[K:\mathbb{Q}_p] \mathrm{rank}(W)$.
 \item[(3)]For any $i=0,1,2$, there is a natural perfect pairing defined by cup 
 product,
 \item[] \[
              \begin{array}{ll}
              \mathrm{H}^i(G_K, W)\times\mathrm{H}^{2-i}(G_K, W^{\vee}(\chi_p))&\rightarrow \mathrm{H}^2(G_K, W\otimes W^{\vee}(
 \chi_p)) \\
  &\rightarrow \mathrm{H}^2(G_K, W(E(\chi_p))\allowbreak \isom E,
  \end{array}
  \]
 \item[] where the last isomorphism is the Tate's trace map.
 \end{itemize}
 \end{thm}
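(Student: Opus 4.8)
The plan is to deduce all three parts from the corresponding classical theorems for $p$-adic Galois representations (local finiteness, the Euler--Poincar\'e formula, and Tate local duality) and for $\mathbb{C}_p$-representations (Tate--Sen theory), exploiting that by definition $\mathrm{H}^*(G_K,W)$ is the cohomology of the two-term complex $C^\bullet(W)\colon W_e\oplus W^+_{\mathrm{dR}}\rightarrow W_{\mathrm{dR}}$, $(x,y)\mapsto x-y$, taken with continuous $G_K$-cochains. Since $C^\bullet(W)$ is the total complex of $C^\bullet_{\mathrm{cont}}(G_K,W_e)\oplus C^\bullet_{\mathrm{cont}}(G_K,W^+_{\mathrm{dR}})\rightarrow C^\bullet_{\mathrm{cont}}(G_K,W_{\mathrm{dR}})$, there is a long exact sequence relating $\mathrm{H}^i(G_K,W)$ to $\mathrm{H}^i(G_K,W_e)$, $\mathrm{H}^i(G_K,W^+_{\mathrm{dR}})$ and $\mathrm{H}^i(G_K,W_{\mathrm{dR}})$, and I would treat the three terms in turn. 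For the two de Rham terms, the $t$-adic filtration has graded pieces $t^jW^+_{\mathrm{dR}}/t^{j+1}W^+_{\mathrm{dR}}\cong(W^+_{\mathrm{dR}}/tW^+_{\mathrm{dR}})(j)$, finite-dimensional $\mathbb{C}_p$-representations, and Tate--Sen theory gives that $\mathrm{H}^i(G_K,\mathbb{C}_p(j))$ is finite for $i=0,1$, vanishes for $i\geqq 2$, and vanishes entirely for $j\neq 0$; since the transition maps of $\{W^+_{\mathrm{dR}}/t^m\}_m$ are surjective with finite cohomology one gets $\mathrm{H}^i(G_K,W^+_{\mathrm{dR}})=\varprojlim_m\mathrm{H}^i(G_K,W^+_{\mathrm{dR}}/t^m)$ finite for $i=0,1$ and zero for $i\geqq 2$, and likewise $\mathrm{H}^i(G_K,W_{\mathrm{dR}})=\varinjlim_n\mathrm{H}^i(G_K,t^{-n}W^+_{\mathrm{dR}})$. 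For $\mathrm{H}^i(G_K,W_e)$, with $W_e$ a $B_e$-representation, I would reduce to $p$-adic representations by the structure theory of $B$-pairs: the Harder--Narasimhan (slope) filtration reduces to isoclinic $B$-pairs, which reduce after an unramified base change to \'etale ones, i.e.\ to $W(V)$, and then $\mathrm{H}^i(G_K,W(V))\cong\mathrm{H}^i(G_K,V)$ by Bloch--Kato's fundamental exact sequence so that Tate's theorems apply. Assembling the three analyses gives (1): finiteness in degrees $0,1,2$; vanishing in degrees $\geqq 4$ because $C^\bullet(W)$ sits in degrees $0,1$ and $\mathrm{cd}_p(G_K)=2$; and vanishing of $\mathrm{H}^3(G_K,W)$ because it is a quotient of $\mathrm{H}^2(G_K,W_{\mathrm{dR}})=0$.

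For (2), put $\chi(W):=\sum_{i=0}^2(-1)^{i-1}\dim_E\mathrm{H}^i(G_K,W)$. This is additive on short exact sequences of $B$-pairs, since a saturated inclusion induces a short exact sequence of the complexes $C^\bullet$, and the long exact sequence above yields $\chi(W)=\chi_e(W_e)+\chi(W^+_{\mathrm{dR}})-\chi(W_{\mathrm{dR}})$, where $\chi_e$ is the analogous alternating sum for the $B_e$-part. The de Rham contribution cancels: $\chi(W^+_{\mathrm{dR}})-\chi(W_{\mathrm{dR}})=-\chi(W_{\mathrm{dR}}/W^+_{\mathrm{dR}})=0$, because $W_{\mathrm{dR}}/W^+_{\mathrm{dR}}$ is a successive extension of $\mathbb{C}_p(j)$'s, each of Euler characteristic $0$. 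Hence $\chi(W)=\chi_e(W_e)$, and by the dévissage of the previous paragraph this equals $\chi_e(B_e\otimes_{\mathbb{Q}_p}V)$ for a $p$-adic representation $V$ of $B_e$-rank $\mathrm{rank}(W)$; using $0\rightarrow V\rightarrow B_e\otimes V\rightarrow(B_{\mathrm{dR}}/B^+_{\mathrm{dR}})\otimes V\rightarrow 0$ together with $\chi(\mathrm{H}^*(G_K,(B_{\mathrm{dR}}/B^+_{\mathrm{dR}})\otimes V))=0$, this is Tate's value $[K:\mathbb{Q}_p]\dim_E V=[K:\mathbb{Q}_p]\mathrm{rank}(W)$. (Alternatively the rank-one case can be read off from the classification of Theorem~\ref{g} and the cohomology computations of \cite{Na09}.)

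For (3), the cup-product pairing is built from the evaluation map $W\otimes W^\vee\rightarrow B_{\mathbb{Q}_p}$ and the cup product on the complexes $C^\bullet$, landing in $\mathrm{H}^2$ of the $B$-pair of $\mathbb{Q}_p(1)$, which is $\mathrm{H}^2(G_K,\mathbb{Q}_p(1))\cong\mathbb{Q}_p$ via Tate's trace. Functoriality of the cup product makes this pairing compatible with short exact sequences of $B$-pairs and with dualization, so non-degeneracy for all $W$ follows by a five-lemma argument along the same dévissage, once it is known for $W=W(V)$; and for $W=W(V)$ the identification $\mathrm{H}^i(G_K,W(V))\cong\mathrm{H}^i(G_K,V)$ is compatible with cup products, so the pairing becomes the classical perfect Tate pairing. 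The base cases (rank-one $B$-pairs) can be verified explicitly.

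The step I expect to be the main obstacle is the passage from $p$-adic representations to general $B$-pairs: a general $B$-pair is neither \'etale nor trianguline, so it cannot be built up from rank-one pieces, and controlling the $B_e$-part $W_e$ genuinely relies on the structure theory of $B$-pairs---Fargues--Fontaine slope filtrations, or equivalently Berger's equivalence of $B$-pairs with $(\varphi,\Gamma)$-modules over the Robba ring together with Herr's and Liu's cohomology theory. A secondary subtlety, relevant for (3), is checking that the cup product on the two-term complex $C^\bullet(W)$ is genuinely perfect and that it matches the Galois cup product under the Bloch--Kato identification when $W=W(V)$.
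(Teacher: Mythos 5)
Your reduction of the problem to the $p$-adic de Rham pieces plus $W_e$, and your treatment of the two de Rham terms (Tate--Sen plus Mittag--Leffler limits), match the paper's argument closely. The genuine gap is in the step you yourself flag as the likely obstacle, namely the $B_e$-part: the specific mechanism you propose for it is false. You write that isoclinic $B$-pairs ``reduce after an unramified base change to \'etale ones, i.e.\ to $W(V)$.'' This is not so. Under restriction to $G_{K'}$ with $K'/K$ unramified of degree $h$, the slope of a pure $B$-pair is multiplied by $h$ (roughly, $\varphi_K^h$ replaces $\varphi_K$), so slope $a/h$ becomes the integer $a$, but it does not become $0$; and a pure $B$-pair of nonzero slope is \emph{never} of the form $W(V)$ over any finite extension of $K$, because for slope $s>0$ one already has $W_e\cap W^+_{\mathrm{dR}}=0$, while $W(V)_e\cap W(V)^+_{\mathrm{dR}}=V\neq 0$. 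Twisting by crystalline rank-one $B$-pairs to kill the slope does not rescue the argument either, because the twist changes $\mathrm{H}^i$ and does not leave the Euler characteristic manifestly unchanged; one would be reasoning in a circle.

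The paper avoids this reduction entirely. After invoking the slope filtration to reduce to $W$ pure of slope $s$, it appeals to Berger's Theorem 3.1 of \cite{Be09} (quoted as Theorem \ref{5-0} in the appendix): if $s\leqq 0$ then $X_0(W):=W_e\cap W^+_{\mathrm{dR}}$ is an almost $\mathbb{C}_p$-representation in Fontaine's sense with $\mathrm{ht}(X_0(W))=\mathrm{rank}(W)$ and $X_1(W)=0$, giving an exact sequence $0\to X_0(W)\to W_e\oplus W^+_{\mathrm{dR}}\to W_{\mathrm{dR}}\to 0$ and hence $\mathrm{H}^i(G_K,W)\cong\mathrm{H}^i(G_K,X_0(W))$; if $s>0$ then $X_0(W)=0$ and $X_1(W):=W_{\mathrm{dR}}/(W_e+W^+_{\mathrm{dR}})$ is an almost $\mathbb{C}_p$-representation with $\mathrm{ht}(X_1(W))=-\mathrm{rank}(W)$, giving $\mathrm{H}^i(G_K,W)\cong\mathrm{H}^{i-1}(G_K,X_1(W))$. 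Finiteness, vanishing of $\mathrm{H}^i$ for $i\geqq 3$, and the Euler characteristic $-[K:\mathbb{Q}_p]\mathrm{ht}(\cdot)$ for almost $\mathbb{C}_p$-representations are then elementary consequences of Fontaine's theory (Theorems B and C of \cite{Fo03}), with no need to manufacture a $p$-adic representation from a general pure $B$-pair. Your closing remark that one could instead route through Berger's equivalence with $(\varphi,\Gamma)$-modules and Liu's cohomology is a legitimate alternative---the paper establishes exactly this comparison at the end of the appendix, and cites Liu's method for part (3)---but as a standalone proof you would still have to supply the comparison of the mapping-cone cohomology with Liu's complex, which the paper does via weak effaceability.
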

 \begin{proof}
 See Theorem \ref{5-6} and Theorem \ref{5-7} in the appendix.
 \end{proof}
 \begin{rem}
 In \cite{Li08}, Liu proved all these results for the cohomology of $(\varphi,\Gamma)$-modules over the Robba ring. 
 In the appendix of this article, we first prove the finiteness and the Euler-Poincar\'e formula (Theorem \ref{5-6}) 
 for the Galois cohomology of $B$-pairs using the theory of almost $\mathbb{C}_p$-representations. Then, 
 we prove the Tate duality (Theorem \ref{5-7} ) for the Galois cohomology of $B$-pairs using the 
 Liu's argument. After establishing these properties, 
 we prove the comparison results (Theorem \ref{5.100}) between the cohomology of $(\varphi,\Gamma)$-modules with that of the corresponding 
 $B$-pairs.
 
  \end{rem}
 Using these formulae, we obtain the following dimension formulae of Galois cohomologies of rank one $E$-$B$-pairs.
 \begin{prop}\label{i}
 Let $\delta:K^{\times}\rightarrow E^{\times}$ be a continuous homomorphism, then we have$:$
 \begin{itemize}
 \item[(1)] $\mathrm{H}^0(G_K, W(\delta))\isom E $ if $\delta=\prod_{\sigma\in \mathcal{P}}\sigma^{k_{\sigma}}$ 
 such that $\{k_{\sigma}\}_{\sigma\in \mathcal{P}}\in \prod_{\sigma\in \mathcal{P}}\mathbb{Z}_{\leqq 0}$
 , and $\mathrm{H}^0(G_K, W(\delta))=0$ otherwise.
 \item[(2)] $\mathrm{H}^2(G_K, W(\delta))\isom E$ if $\delta=|N_{K/\mathbb{Q}_p}|\prod_{\sigma\in\mathcal{P}}\sigma^{k_{\sigma}}$ 
 such that  $\{k_{\sigma}\}_{\sigma\in \mathcal{P}}\in \prod_{\sigma\in \mathcal{P}}\mathbb{Z}_{\geqq 1}$, and $\mathrm{H}^2(G_K, W(\delta))=0$ otherwise.
 \item[(3)] $\mathrm{dim}_E\mathrm{H}^1(G_K, W(\delta))=[K:\mathbb{Q}_p] +1$ if $\delta=\prod_{\sigma\in \mathcal{P}}\sigma
 ^{k_{\sigma}}$ such that $\{k_{\sigma}\}_{\sigma\in \mathcal{P}}\in \prod_{\sigma\in \mathcal{P}}\mathbb{Z}_{\leqq 0}$ or $\delta=
 |N_{K/\mathbb{Q}_p}|\prod_{\sigma\in \mathcal{P}}\sigma^{k_{\sigma}}$ such that $\{k_{\sigma}\}_{\sigma\in \mathcal{P}}\in \prod_{\sigma\in \mathcal{P}}\mathbb{Z}_{\geqq 1}$, and $\mathrm{dim}_E\mathrm{H}^1(G_K, W(\delta))=[K:\mathbb{Q}_p]$ otherwise.
 \end{itemize}
 \end{prop}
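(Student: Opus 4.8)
The plan is to compute $\mathrm{H}^0$ and $\mathrm{H}^2$ explicitly and then to read off $\mathrm{H}^1$ from the Euler--Poincar\'e formula (Theorem \ref{h}(2)). The only substantive computation is the one for $\mathrm{H}^0$; once that is in hand, $\mathrm{H}^2$ follows formally from Tate duality (Theorem \ref{h}(3)) and $\mathrm{H}^1$ is pure bookkeeping. I expect the main obstacle to be the $\mathrm{H}^0$ statement, and within it the step that a nonzero $G_K$-invariant section forces the triviality of the $W_e$-component of $W(\delta)$.

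For (1): using the identification $\mathrm{H}^0(G_K, W(\delta))\cong\mathrm{Hom}_{G_K}(B_E, W(\delta))$ recalled above, a nonzero class gives a nonzero morphism $f\colon B_E\to W(\delta)$ of $E$-$B$-pairs. I would pass to the saturation of the image of $f$ inside $W(\delta)$: it is a saturated rank one sub $E$-$B$-pair of the rank one object $W(\delta)$, hence equals $W(\delta)$, and by definition of the saturation its $W_e$-part coincides with that of $B_E$, namely $B_e\otimes_{\mathbb{Q}_p}E$ with the trivial $G_K$-action. (This is where one really uses that $B_e$ is a domain with $(B_e)^{G_K}=\mathbb{Q}_p$, together with the classification of rank one $E$-$B$-pairs, Theorem \ref{g}.) Consequently $W(\delta)_e\cong B_e\otimes_{\mathbb{Q}_p}E$ with trivial action, and by Theorem \ref{g} and the explicit formula $W(\prod_{\sigma\in\mathcal{P}}\sigma^{k_{\sigma}})\cong(B_e\otimes_{\mathbb{Q}_p}E,\ \oplus_{\sigma\in\mathcal{P}}t^{k_{\sigma}}B^+_{\mathrm{dR}}\otimes_{K,\sigma}E)$ recalled above, this forces $\delta=\prod_{\sigma\in\mathcal{P}}\sigma^{k_{\sigma}}$ for some integers $k_{\sigma}$. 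For such $\delta$ one computes directly $(W(\delta)_e)^{G_K}=(B_e)^{G_K}\otimes_{\mathbb{Q}_p}E=E$, and an element of this $E$ lies in $W(\delta)^+_{\mathrm{dR}}=\oplus_{\sigma}t^{k_{\sigma}}B^+_{\mathrm{dR}}\otimes_{K,\sigma}E$ exactly when $1\in t^{k_{\sigma}}B^+_{\mathrm{dR}}$ for every $\sigma$, i.e. when $k_{\sigma}\leqq 0$ for all $\sigma\in\mathcal{P}$; this proves (1).

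For (2): I would deduce it from (1) via Tate duality. Applying Theorem \ref{h}(3) after restriction of scalars from $E$ to $\mathbb{Q}_p$, and identifying the $\mathbb{Q}_p$-dual and the $E$-dual of an $E$-$B$-pair by means of the trace form of $E/\mathbb{Q}_p$ (the one mildly technical bookkeeping point), one gets $\dim_E\mathrm{H}^2(G_K, W(\delta))=\dim_E\mathrm{H}^0(G_K, W(\delta)^{\vee}(\chi_p))$. Since $\delta\mapsto W(\delta)$ is compatible with duals and tensor products and $W(E(\chi_p))\cong W(|N_{K/\mathbb{Q}_p}|\prod_{\sigma\in\mathcal{P}}\sigma)$ (both recalled above), $W(\delta)^{\vee}(\chi_p)\cong W(\delta^{-1})\otimes W(E(\chi_p))\cong W(\delta^{-1}|N_{K/\mathbb{Q}_p}|\prod_{\sigma\in\mathcal{P}}\sigma)$, so part (1) gives $\mathrm{H}^2(G_K, W(\delta))\neq 0$ iff $\delta^{-1}|N_{K/\mathbb{Q}_p}|\prod_{\sigma}\sigma=\prod_{\sigma}\sigma^{l_{\sigma}}$ with all $l_{\sigma}\leqq 0$, i.e. iff $\delta=|N_{K/\mathbb{Q}_p}|\prod_{\sigma}\sigma^{k_{\sigma}}$ with all $k_{\sigma}\geqq 1$, in which case $\mathrm{H}^2(G_K, W(\delta))\cong E$.

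For (3): I would first observe that the conditions of (1) and (2) are mutually exclusive, since $|N_{K/\mathbb{Q}_p}|$ is trivial on $1+\pi_K\mathcal{O}_K$ while $\prod_{\sigma\in\mathcal{P}}\sigma^{m_{\sigma}}$ is trivial on $1+\pi_K\mathcal{O}_K$ only when all $m_{\sigma}=0$ (linear independence of the embeddings $\sigma\colon K\hookrightarrow E$); thus an equality $\prod_{\sigma}\sigma^{k_{\sigma}}=|N_{K/\mathbb{Q}_p}|\prod_{\sigma}\sigma^{k'_{\sigma}}$ would force $k_{\sigma}=k'_{\sigma}$ for all $\sigma$, contradicting $k_{\sigma}\leqq 0<1\leqq k'_{\sigma}$. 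Hence at most one of $\mathrm{H}^0(G_K, W(\delta))$ and $\mathrm{H}^2(G_K, W(\delta))$ is nonzero, and it is one-dimensional over $E$ precisely in the cases of (1) and (2). The Euler--Poincar\'e formula (Theorem \ref{h}(2)) for the rank one $E$-$B$-pair $W(\delta)$ reads $-\dim_E\mathrm{H}^0+\dim_E\mathrm{H}^1-\dim_E\mathrm{H}^2=[K:\mathbb{Q}_p]$, whence $\dim_E\mathrm{H}^1(G_K, W(\delta))=[K:\mathbb{Q}_p]+\dim_E\mathrm{H}^0+\dim_E\mathrm{H}^2$, which equals $[K:\mathbb{Q}_p]+1$ precisely when $\delta$ is as in (1) or (2) and $[K:\mathbb{Q}_p]$ otherwise.
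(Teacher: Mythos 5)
Your proposal is correct and follows essentially the same route the paper indicates in its one-line citation: compute $\mathrm{H}^0$ by structural considerations on rank one $E$-$B$-pairs (this is precisely what Propositions 2.14--2.15 of \cite{Na09} supply), deduce $\mathrm{H}^2$ by Tate duality for $B$-pairs, and read off $\mathrm{H}^1$ from the Euler--Poincar\'e formula. The only point worth tightening in your part (1) is the passage from ``a nonzero morphism $f\colon B_E\to W(\delta)$'' to ``the $W_e$-part of (the saturation of) the image is $B_e\otimes_{\mathbb{Q}_p}E$ with trivial action'': this tacitly requires $f_e$ to be injective, i.e.\ that a nonzero morphism between rank one $E$-$B$-pairs has free image over $B_e\otimes_{\mathbb{Q}_p}E$, which is true but deserves a word (it is where the B\'ezout structure of $B_e$ and Lemma 1.14 of \cite{Na09} enter).
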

 \begin{proof}
 See Theorem 2.9 and Theorem 2.22 of \cite{Co08} for $K=\mathbb{Q}_p$. For general $K$, the results can be proved by using 
 Proposition 2.14 and Proposition 2.15 of \cite{Na09} and Tate duality for $B$-pairs.
 \end{proof}

\subsubsection{$B$-pairs over Artin local rings}
Here, we define $B$-pairs over Artin local rings, which we need to define the notion of deformations of $E$-$B$-pairs.
Let $\mathcal{C}_E$ be the category of Artin local $E$-algebra $A$ with the residue field $E$. The morphisms in $\mathcal{C}_E$ are given by 
local $E$-algebra homomorphisms. For $A\in \mathcal{C}_E$, we denote by  $\mathfrak{m}_A$ the maximal ideal of $A$. We define the $A$-coefficient version of $B$-pairs as follows.
\begin{defn}\label{1}
We call a pair $W:=(W_e, W^+_{\mathrm{dR}})$ an $A$-$B$-pair of $G_K$ if 
\begin{itemize}
\item[(1)] $W_e$ is a finite $\bold{B}_{e}\otimes_{\mathbb{Q}_p}A$-module which is flat over $A$ 
and is free over $\bold{B}_{e}$, with a continuous semi-linear $G_K$-action.
\item[(2)] $W^+_{\mathrm{dR}}$ is a finite generated $\bold{B}^+_{\mathrm{dR}}\otimes_{\mathbb{Q}_p}A$-submodule of $\bold{B}_{\mathrm{dR}}\otimes_{\bold{B}_{e}}W_e$ which is stable by the $G_K$-action and which generates 
$\bold{B}_{\mathrm{dR}}\otimes_{\bold{B}_{e}}W_e$ as a $\bold{B}_{\mathrm{dR}}\otimes_{\mathbb{Q}_p}A$-module such that 
$W^+_{\mathrm{dR}}/tW^+_{\mathrm{dR}}$ is flat over $A$.
\end{itemize}
For an $A$-$B$-pair $W:=(W_e,W^+_{\mathrm{dR}})$, we put $W_{\mathrm{dR}}:=\bold{B}_{\mathrm{dR}}
\otimes_{\bold{B}_{e}}W_e$.
\end{defn}
We simply call an $A$-$B$-pair if there is no risk of confusing about $K$.

\begin{lemma}\label{2}
Let $W:=(W_e,W^+_{\mathrm{dR}})$ be an $A$-$B$-pair. Then 
$W_e$ is a finite free $\bold{B}_{e}\otimes_{\mathbb{Q}_p}A$-module,
$W^+_{\mathrm{dR}}$ is a finite free $\bold{B}^+_{\mathrm{dR}}\otimes_{\mathbb{Q}_p}A$-module 
and $W^+_{\mathrm{dR}}/tW^+_{\mathrm{dR}}$ is a finite free $\mathbb{C}_p\otimes_{\mathbb{Q}_p}A$-module.
\end{lemma}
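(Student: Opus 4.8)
The plan is to reduce everything to the key freeness result for $E$-$B$-pairs (Lemmas 1.7, 1.8 of \cite{Na09}) and to the structure of finite flat modules over the Artin local ring $A$, together with a dévissage along a composition series of $A$ as an $A$-module. Since $A\in\mathcal{C}_E$ is Artin local with residue field $E$, its maximal ideal $\mathfrak{m}_A$ is nilpotent, so there is a filtration $A=I_0\supseteq I_1\supseteq \cdots\supseteq I_m=0$ with each successive quotient $I_j/I_{j+1}\cong E$ as an $A$-module. Tensoring $W$ with this filtration produces a filtration of $W_e$ (resp. $W^+_{\mathrm{dR}}$, resp. $W^+_{\mathrm{dR}}/tW^+_{\mathrm{dR}}$) whose graded pieces are, thanks to the flatness hypotheses over $A$ in Definition \ref{1}, exactly $W_e\otimes_A E$ (resp. $W^+_{\mathrm{dR}}\otimes_A E$, resp. $(W^+_{\mathrm{dR}}/tW^+_{\mathrm{dR}})\otimes_A E$). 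The first step is therefore to observe that $W\otimes_A E$ is an $E$-$B$-pair: $W_e\otimes_A E$ is finite free over $B_e$ (as $W_e$ is), has a semilinear $G_K$-action, and $W^+_{\mathrm{dR}}\otimes_A E$ is a $G_K$-stable $B^+_{\mathrm{dR}}\otimes_{\mathbb{Q}_p}E$-lattice in $W_{\mathrm{dR}}\otimes_A E$; hence by Lemmas 1.7, 1.8 of \cite{Na09} its $W_e$-part is finite free over $B_e\otimes_{\mathbb{Q}_p}E$ and its $W^+_{\mathrm{dR}}$-part is finite free over $B^+_{\mathrm{dR}}\otimes_{\mathbb{Q}_p}E$, and consequently $W^+_{\mathrm{dR}}/tW^+_{\mathrm{dR}}\otimes_A E$ is finite free over $\mathbb{C}_p\otimes_{\mathbb{Q}_p}E$.

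The second step is to lift freeness from the residue field $E$ to $A$. For $W_e$: choose elements $f_1,\dots,f_r\in W_e$ whose images form a basis of the free $B_e\otimes_{\mathbb{Q}_p}E$-module $W_e\otimes_A E$, and consider the map $\phi:(B_e\otimes_{\mathbb{Q}_p}A)^{\oplus r}\to W_e$ they define. By construction $\phi\otimes_A E$ is an isomorphism; since $W_e$ is flat (even free) over $B_e$ and $A$ is Artin local, one applies the graded-pieces argument: $\phi$ induces an isomorphism on each graded piece $I_j/I_{j+1}$ (which is $\phi\otimes_A E$ up to identifying $I_j/I_{j+1}\cong E$), hence $\phi$ itself is an isomorphism by the five lemma / induction on the length of $A$. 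This shows $W_e$ is finite free over $B_e\otimes_{\mathbb{Q}_p}A$. The identical argument, using the flatness of $W^+_{\mathrm{dR}}/tW^+_{\mathrm{dR}}$ over $A$ to control the graded pieces, gives that $W^+_{\mathrm{dR}}/tW^+_{\mathrm{dR}}$ is finite free over $\mathbb{C}_p\otimes_{\mathbb{Q}_p}A$. Finally, for $W^+_{\mathrm{dR}}$ one lifts a basis of $W^+_{\mathrm{dR}}/tW^+_{\mathrm{dR}}$ over $\mathbb{C}_p\otimes_{\mathbb{Q}_p}A$ to elements of $W^+_{\mathrm{dR}}$ and uses $t$-adic completeness of $B^+_{\mathrm{dR}}$ (a successive-approximation / Nakayama argument in the complete DVR $B^+_{\mathrm{dR}}$, exactly as in the classical case over $E$) to conclude that these lifts form a basis of $W^+_{\mathrm{dR}}$ over $B^+_{\mathrm{dR}}\otimes_{\mathbb{Q}_p}A$.

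The step I expect to be the main obstacle is the bookkeeping in the dévissage: one must be careful that tensoring the short exact sequences $0\to I_{j+1}\to I_j\to E\to 0$ with $W_e$, $W^+_{\mathrm{dR}}$, and $W^+_{\mathrm{dR}}/tW^+_{\mathrm{dR}}$ stays exact, which is precisely where the flatness-over-$A$ hypotheses in Definition \ref{1}(1) and (2) are used, and that the resulting identifications of graded pieces with the mod-$\mathfrak{m}_A$ reduction are compatible with the $B_e$- (resp. $B^+_{\mathrm{dR}}$-, resp. $\mathbb{C}_p$-) module structures and the $G_K$-action. The $t$-adic lifting for $W^+_{\mathrm{dR}}$ is routine given that the mod-$t$ statement is already settled and $B^+_{\mathrm{dR}}\otimes_{\mathbb{Q}_p}A$ is $t$-adically complete with $t$ topologically nilpotent. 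Once these points are in place the lemma follows, and it also yields the well-definedness of $\mathrm{rank}(W)$ for $A$-$B$-pairs used implicitly in the sequel.
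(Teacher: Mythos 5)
Your overall strategy (reduce to the $E$-coefficient case via Lemmas 1.7, 1.8 of \cite{Na09}, then lift a basis through a composition series of $A$) is the same as the paper's, and the lifting step itself is fine: the map $\phi:(B_e\otimes_{\mathbb{Q}_p}A)^{\oplus r}\to W_e$ with $\bar\phi$ an isomorphism is an isomorphism by Nakayama and $A$-flatness of $W_e$, exactly as in the paper. But your ``first step'' --- that $W\otimes_A E$ is visibly an $E$-$B$-pair --- is not an observation; it is the crux, and as written there is a genuine gap.

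The problem is that Definition \ref{1} does \emph{not} say $W^+_{\mathrm{dR}}$ is flat over $A$; it only says $W_e$ and $W^+_{\mathrm{dR}}/tW^+_{\mathrm{dR}}$ are. Your sentence ``one must be careful that tensoring $\dots$ with $W_e$, $W^+_{\mathrm{dR}}$, and $W^+_{\mathrm{dR}}/tW^+_{\mathrm{dR}}$ stays exact, which is precisely where the flatness-over-$A$ hypotheses in Definition \ref{1}(1) and (2) are used'' misreads the definition for the middle term. In particular, to know that $W^+_{\mathrm{dR}}\otimes_A E$ is a \emph{sub}-$B^+_{\mathrm{dR}}\otimes_{\mathbb{Q}_p}E$-lattice of $W_{\mathrm{dR}}\otimes_A E$ (which is part of being an $E$-$B$-pair, and is needed before Lemma 1.8 of \cite{Na09} can be invoked) you need $\mathrm{Tor}_1^A(W_{\mathrm{dR}}/W^+_{\mathrm{dR}},E)=0$, and this is not given. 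The paper establishes it as an intermediate step, and the order of operations there is forced: first prove $W_e$ is free over $B_e\otimes_{\mathbb{Q}_p}A$, so that $W_{\mathrm{dR}}$ is $A$-flat; then combine with the given $A$-flatness of $W^+_{\mathrm{dR}}/tW^+_{\mathrm{dR}}$ (via the $t$-adic filtration of $W_{\mathrm{dR}}/W^+_{\mathrm{dR}}$) to get $W_{\mathrm{dR}}/W^+_{\mathrm{dR}}$ flat, hence $W^+_{\mathrm{dR}}$ flat and $W^+_{\mathrm{dR}}\otimes_A E\hookrightarrow W_{\mathrm{dR}}\otimes_A E$; only then does Lemma 1.8 of \cite{Na09} apply and a basis can be lifted. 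Your plan of first treating $W^+_{\mathrm{dR}}/tW^+_{\mathrm{dR}}$ and then lifting $t$-adically runs into the same circularity: constancy of the $\mathbb{C}_p$-rank across the factors of $\mathbb{C}_p\otimes_{\mathbb{Q}_p}E$ is exactly what Lemma 1.8 gives, and you cannot appeal to it until you know $W\otimes_A E$ is an honest $E$-$B$-pair.

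A secondary, fillable, gap: ``$W_e\otimes_A E$ is finite free over $B_e$ (as $W_e$ is)'' does not follow merely from freeness of $W_e$; a quotient of a free $B_e$-module need not be torsion-free. What makes it work is the $A$-flatness: for a minimal nonzero ideal $I\subseteq A$ one has $I\cong E$ and, by flatness, $W_e\otimes_A E\cong I\otimes_A W_e\cong IW_e\subseteq W_e$, so it is torsion-free, hence free over the B\'ezout domain $B_e$, and then Lemma 1.7 of \cite{Na09} upgrades this to freeness over $B_e\otimes_{\mathbb{Q}_p}E$. (The paper argues equivalently via $\mathfrak m_A W_e\subseteq W_e$ and a cokernel lemma of \cite{Be08}.) You should spell this out and, more importantly, insert the flatness-of-$W^+_{\mathrm{dR}}$ step before invoking Lemma 1.8.
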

\begin{proof}
First, we prove the assertion for $W_e$. Because the submodule $\mathfrak{m}_A W_e\subseteq W_e$ is a $G_K$-stable finite generated 
torsion free $\bold{B}_{e}$-module and because $\bold{B}_{e}$ is a B\'ezout domain by Proposition 1.1.9 of \cite{Be08}, $\mathfrak{m}_AW_e$ is a finite free $\bold{B}_{e}$-module by Lemma 2.4 of \cite{Ke04}. 
By Lemma 2.1.4 of \cite{Be08}, the cokernel $W_e\otimes_A E$ is also a finite free $\bold{B}_{e}$-module 
(with an $E$-action). By Lemma 1.7 of \cite{Na09}, $W_e\otimes_AE$ is a finite free $\bold{B}_{e}\otimes_{\mathbb{Q}_p}E$-module of 
some rank $n$.
We take a $\bold{B}_{e}\otimes_{\mathbb{Q}_p}A$-linear morphism $f:(\bold{B}_{e}\otimes_{\mathbb{Q}_p}A)^n\rightarrow W_e$ which 
is a lift of a $\bold{B}_{e}\otimes_{\mathbb{Q}_p}E$-linear isomorphism $(\bold{B}_{e}\otimes_{\mathbb{Q}_p}E)^n\isom W_e\otimes_A E$.
Because $A$ is Artinian, $f$ is surjective. Because $W_e$ is $A$-flat, we have $\mathrm{Ker}(f)\otimes_AE=0$, 
hence $\mathrm{Ker}(f)=0$. Hence $W_e$ is a free $\bold{B}_{e}\otimes_{\mathbb{Q}_p}A$-module.

Next, we prove that $W^+_{\mathrm{dR}}$ is a free $\bold{B}^+_{\mathrm{dR}}\otimes_{\mathbb{Q}_p}A$-module.
Because $W_e$ is a free $\bold{B}_{e}\otimes_{\mathbb{Q}_p}A$-module, $W_{\mathrm{dR}}$ is a free $\bold{B}_{\mathrm{dR}}\otimes_{\mathbb{Q}_p}A$-module, in particular this is flat over $A$. Because $W^+_{\mathrm{dR}}/tW^+_{\mathrm{dR}}$ is a flat $A$-module, $W_{\mathrm{dR}}/W^+_{\mathrm{dR}}$ is also  a flat $A$-module. Hence $W^+_{\mathrm{dR}}$ is also flat over $A$. 
By the $A$-flatness of $W_{\mathrm{dR}}/W^+_{\mathrm{dR}}$, we have an inclusion 
$W^+_{\mathrm{dR}}\otimes_A E\hookrightarrow W_{\mathrm{dR}}\otimes_{A} E$, hence 
$W^{+}_{\mathrm{dR}}\otimes_A E$ is a finite generated torsion free $\bold{B}^+_{\mathrm{dR}}$-module, hence is a
free $\bold{B}^+_{\mathrm{dR}}$-module. By Lemma 1.8 of \cite{Na09}, we can show that 
$W^+_{\mathrm{dR}}$ is a free $\bold{B}^+_{\mathrm{dR}}\otimes_{\mathbb{Q}_p}A$-module in the same way as in the case of $W_e$.
The freeness over $\mathbb{C}_p\otimes_{\mathbb{Q}_p}A$ of $W^+_{\mathrm{dR}}/tW^+_{\mathrm{dR}}$ follows from the
$\bold{B}^+_{\mathrm{dR}}\otimes_{\mathbb{Q}_p}A$-freeness of $W^+_{\mathrm{dR}}$.

\end{proof}
\begin{defn}
Let $W=(W_e, W^+_{\mathrm{dR}})$ be an $A$-$B$-pair. We define the rank of $W$ by 
$\mathrm{rank}(W):=\mathrm{rank}_{\bold{B}_{e}\otimes_{\mathbb{Q}_p}A}(W_e)$.
\end{defn}

\begin{defn}\label{3}
Let $f:A\rightarrow A'$ be a morphism in $\mathcal{C}_E$, and let $W=(W_e,W_{\mathrm{dR}})$ be an $A$-$B$-pair.
We define the base change of $W$ to $A'$ by 
$$W\otimes_A A':=(W_e\otimes_A A',W^+_{\mathrm{dR}}\otimes_A A').$$
By Lemma $\ref{2}$, we can easily see that this is an $A'$-$B$-pair.
\end{defn}

\begin{defn}\label{4}
Let $W_1=(W_{e,1},W^+_{\mathrm{dR},1})$, $W_2=(W_{e,2}, W^+_{\mathrm{dR},2})$ be $A$-$B$-pairs.
We define the tensor product of $W_1$ and $W_2$ by $W_1\otimes W_2:=(W_{e,1}\otimes_{\bold{B}_{e}\otimes_{\mathbb{Q}_p}A}W_{e,2}, 
W^+_{\mathrm{dR},1}\otimes_{\bold{B}^+_{\mathrm{dR}}\otimes_{\mathbb{Q}_p}A}W^+_{\mathrm{dR},2})$ , 
and define the dual of $W_1$ by $W_1^{\vee}:=(\mathrm{Hom}_{\bold{B}_{e}\otimes_{\mathbb{Q}_p}A}(W_{e,1},\allowbreak \bold{B}_{e}\otimes_{\mathbb{Q}_p}A), 
W^{+,\vee}_{\mathrm{dR},1})$. Here, $W^{+,\vee}_{\mathrm{dR},1}:=\{f\in\mathrm{Hom}_{\bold{B}_{\mathrm{dR}}\otimes_{\mathbb{Q}_p}A}(
W_{\mathrm{dR},1}, \bold{B}_{\mathrm{dR}}\otimes_{\mathbb{Q}_p}A)|f(W^{+}_{\mathrm{dR},1})\subseteq \bold{B}^+_{\mathrm{dR}}\otimes_{\mathbb{Q}_p}A\}$. By Lemma $\ref{2}$, we can easily see that these are $A$-$B$-pairs.
\end{defn}

Next, we classify rank one $A$-$B$-pairs.
Let $\delta:K^{\times}\rightarrow A^{\times}$ be a continuous homomorphism, then we define a rank one $A$-$B$-pair $W(\delta)$ as follows.
Let $\bar{u}\in E^{\times}$ be the image of $u:=\delta(\pi_K)\in A^{\times}$ by the canonical 
projection $A\rightarrow A/\frak{m}_A=E$.
We define a homomorphism $\delta_0:K^{\times}\rightarrow A^{\times}$ such that $\delta_0|_{\mathcal{O}_K^{\times}}
=\delta|_{\mathcal{O}_K^{\times}}$, $\delta_0(\pi_K)=u/\bar{u}$. Because $u/\bar{u}\in 1+\mathfrak{m}_A$ ,  $(u/\bar{u})^{p^n}$ ($n\rightarrow \infty$)
converges to $1\in A^{\times}$. If we fix an isomorphism $K^{\times}\isom \mathcal{O}_K^{\times}\times \mathbb{Z}
:v\pi_K^n\mapsto (v,n)$ ($v\in\mathcal{O}_K^{\times}$), then $\delta_0$ uniquely extends to a continuous homomorphism $\delta_0':\mathcal{O}_K^{\times}\times 
\hat{\mathbb{Z}}\rightarrow \mathcal{O}_K^{\times}\times \mathbb{Z}_p\rightarrow A^{\times}$, where 
the first map is induced by the natural projection $\hat{\mathbb{Z}}\rightarrow \mathbb{Z}_p$. By the local class field theory, then there exists a unique character $\widetilde{\delta}_0:G_K^{\mathrm{ab}}\rightarrow A^{\times}$ such 
that $\delta_0=\widetilde{\delta}_0\circ\mathrm{rec}_K$, where $\mathrm{rec}_K:K^{\times}\rightarrow G_K^{\mathrm{ab}}$ is the reciprocity map which is normalized as in Notation. Using $\widetilde{\delta}_0$, we define an
\'etale rank one $A$-$B$-pair $W(A(\widetilde{\delta}_0))$, which is the $A$-$B$-pair associated to the rank one $A$-representation $A(\widetilde{\delta}_0)$.
Next, we define a non-\'etale rank one $A$-$B$-pair by using $\bar{u}\in E^{\times}$. 
For this, we first define a rank one $E$-filtered $\varphi$-module $D_{\bar{u}}:=K_0\otimes_{\mathbb{Q}_p}Ee_{\bar{u}}$ such that 
$\varphi^f(e_{\bar{u}}):=\bar{u}e_{\bar{u}}$ and $\mathrm{Fil}^0(K\otimes_{K_0}D_{\bar{u}}):=K\otimes_{K_0}D_{\bar{u}}$, $\mathrm{Fil}^1(K\otimes_{K_0}D_{\bar{u}}):=0$.
From this, we obtain the rank one crystalline $E$-$B$-pair $W(D_{\bar{u}})$ such that $\bold{D}_{\mathrm{cris}}(W(D_{\bar{u}}))\isom D_{\bar{u}}$ which is 
pure of slope $\frac{v_p(\bar{u})}{f}$.
By tensoring these, we define a rank one $A$-$B$-pair $W(\delta)$ by $W(\delta):=(W(D_{\bar{u}})\otimes_EA)\otimes W(A(\widetilde{\delta}_0))$,
 which is pure of slope $\frac{v_p(\bar{u})}{f}$.
 
 The following proposition is the $A$-coefficient version of Theorem 1.45 of \cite{Na09}.
 
 \begin{prop}\label{5}
 This construction $\delta\mapsto W(\delta)$ does not depend on the 
 choice of uniformizer $\pi_K$, and gives a bijection between 
 the set of continuous homomorphisms $\delta:K^{\times}\rightarrow A^{\times}$ and 
 the set of isomorphism classes of rank one $A$-$B$-pairs.
 \end{prop}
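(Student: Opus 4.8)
The plan is to deduce well-definedness and independence of the uniformizer from the $E$-coefficient classification (Theorem \ref{g}), and then to establish bijectivity by dévissage along the length of $A$. The first preliminary observation is that $\delta\mapsto W(\delta)$ is multiplicative, $W(\delta_1\delta_2)\isom W(\delta_1)\otimes W(\delta_2)$ and $W(\delta)^{\vee}\isom W(\delta^{-1})$: for the étale factor $W(A(\tilde{\delta}_0))$ this follows from the monoidality of $V\mapsto W(V)$ on $A$-representations together with $A(\tilde{\delta}_{1,0})\otimes_AA(\tilde{\delta}_{2,0})\isom A((\tilde{\delta}_1\tilde{\delta}_2)_0)$, and for the crystalline factor from functoriality of $D_{\bar{u}}\mapsto W(D_{\bar{u}})$ together with $D_{\bar{u}_1}\otimes_ED_{\bar{u}_2}\isom D_{\bar{u}_1\bar{u}_2}$. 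Reducing modulo $\mathfrak{m}_A$ gives $W(\delta)\otimes_AE\isom W(\bar{\delta})$, where $\bar{\delta}:K^{\times}\rightarrow E^{\times}$ is the reduction of $\delta$, since the same splitting underlies the construction of $W(\bar{\delta})$ in the $E$-case. Independence of $\pi_K$ should then be a direct transcription of the corresponding argument in \cite{Na09} for $E$-coefficients, which uses only the compatibility of the correspondence with $\mathrm{rec}_K$, with twists by the explicit crystalline characters $W(D_{\bar{u}})$ and $W(\prod_{\sigma}\sigma^{k_{\sigma}})$, and with products — all of which are now available over $A$.

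For bijectivity I would induct on $\dim_EA$, the case $A=E$ being Theorem \ref{g}. Two facts are used along the way. First, for every rank one $A$-$B$-pair $W$ one has $\mathrm{End}_{G_K}(W)=A$: an endomorphism of $W_e$ is multiplication by a scalar in $(B_e\otimes_{\mathbb{Q}_p}A)^{G_K}=B_e^{G_K}\otimes_{\mathbb{Q}_p}A=A$, which automatically preserves $W^+_{\mathrm{dR}}$; hence $\mathrm{Aut}_{G_K}(W)=A^{\times}$ acts by scalars. Second, by Lemma \ref{2} and Definition \ref{3}, base change sends rank one $A$-$B$-pairs to rank one $A'$-$B$-pairs. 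Now pick a small surjection $A\twoheadrightarrow A'$ in $\mathcal{C}_E$ with kernel $I$ satisfying $\mathfrak{m}_AI=0$ and $\dim_EI=1$; by the induction hypothesis $\delta'\mapsto W(\delta')$ is bijective at the $A'$-level, so it suffices to prove that for each continuous $\delta':K^{\times}\rightarrow(A')^{\times}$, the rule $\delta\mapsto[W(\delta)]$ is a bijection from the set of continuous lifts of $\delta'$ to $A$ onto the set of isomorphism classes of rank one $A$-$B$-pairs $W$ with $W\otimes_AA'\isom W(\delta')$.

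The heart of the matter is a comparison of torsors. The source set is non-empty ($\bar{\delta}'$ lifts through the section $E\rightarrow A$, and the residual factor $K^{\times}\rightarrow 1+\mathfrak{m}_{A'}$ lifts after applying $\log$ and using surjectivity of $\mathfrak{m}_A\rightarrow\mathfrak{m}_{A'}$) and is a torsor under $\mathrm{Hom}_{\mathrm{cont}}(K^{\times},1+I)=\mathrm{Hom}_{\mathrm{cont}}(K^{\times},I)$, of $E$-dimension $[K:\mathbb{Q}_p]+1$ because a continuous homomorphism to the torsion-free group $I$ kills $\mu(K)$ and $K^{\times}/\mu(K)$ is, as a topological group, of the shape $\mathbb{Z}_p^{[K:\mathbb{Q}_p]}\times\mathbb{Z}$. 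The target set is non-empty because, $\mathrm{ad}$ of a rank one $B$-pair being trivial, the deformation obstruction lies in $\mathrm{H}^2(G_K,B_E\otimes_EI)=\mathrm{H}^2(G_K,B_E)\otimes_EI$, which vanishes by Proposition \ref{i}(2) for the trivial character; and since $\mathrm{Aut}_{G_K}(W(\delta'))=(A')^{\times}$ consists of scalars and acts trivially on the torsor of deformations, the target set is a torsor under $\mathrm{H}^1(G_K,B_E\otimes_EI)$, of $E$-dimension $[K:\mathbb{Q}_p]+1$ by Proposition \ref{i}(3) for the trivial character. Finally $\delta\mapsto[W(\delta)]$ is equivariant for the linear map $\mathrm{Hom}_{\mathrm{cont}}(K^{\times},I)\rightarrow\mathrm{H}^1(G_K,B_E\otimes_EI)$ which — unwinding the construction, a lift with $\bar{u}=1$ being purely étale so that $W(1+\chi)=W(A(\widetilde{1+\chi}))$ — factors as $\chi\mapsto\psi$, where $\psi$ is the unique element of $\mathrm{H}^1(G_K,I)=\mathrm{Hom}_{\mathrm{cont}}(G_K,I)$ with $\psi\circ\mathrm{rec}_K=\chi$, followed by the map $\mathrm{H}^1(G_K,I)\rightarrow\mathrm{H}^1(G_K,B_E\otimes_EI)$ induced by Bloch--Kato's fundamental exact sequence. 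This composite is injective: the first map is injective because $\mathrm{rec}_K$ has dense image and $I$ is Hausdorff, and the second because the relevant connecting map $\mathrm{H}^0(G_K,B_{\mathrm{dR}}\otimes_{\mathbb{Q}_p}I)\rightarrow\mathrm{H}^1(G_K,I)$ vanishes, since $\mathrm{H}^0(G_K,(B_e\oplus B^+_{\mathrm{dR}})\otimes_{\mathbb{Q}_p}I)=(\mathbb{Q}_p\oplus K)\otimes_{\mathbb{Q}_p}I\rightarrow K\otimes_{\mathbb{Q}_p}I=\mathrm{H}^0(G_K,B_{\mathrm{dR}}\otimes_{\mathbb{Q}_p}I)$ is already surjective. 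An injective linear map between $E$-vector spaces of equal finite dimension is bijective, so the equivariant torsor map $\delta\mapsto[W(\delta)]$ is a bijection, closing the induction.

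I expect the main obstacle to be precisely the equivariance claim in the last step: one must check that if $\delta_1/\delta_2=1+\chi$ then $W(\delta_1)\isom W(\delta_2)\otimes W(1+\chi)$ realizes the Baer sum of the deformation $W(\delta_2)$ of $W(\bar{\delta}')$ with the image of $\chi$ in $\mathrm{H}^1(G_K,B_E\otimes_EI)$. This rests on the multiplicativity of $W(-)$, on the comparison $\mathrm{H}^i(G_K,V)\isom\mathrm{H}^i(G_K,W(V))$ for $A$-representations, and on the compatibility of $\log$ with $\mathrm{rec}_K$; once this is in place, everything else is bookkeeping with Proposition \ref{i}, the $E$-coefficient case, and the finiteness of $\dim_{\mathbb{Q}_p}I$ (which lets the cohomology functors commute with $-\otimes_{\mathbb{Q}_p}I$).
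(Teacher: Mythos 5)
Your proof is correct, but it takes a genuinely different route from the paper's. The paper handles surjectivity in one short paragraph via slope theory: a rank one $A$-$B$-pair, viewed as an $E$-$B$-pair, is a successive self-extension of $W\otimes_A E$, hence pure of the same slope by Theorem 1.6.6 of \cite{Ke08}; twisting by $(W(D_{\pi_E^n})\otimes_E A)^{\vee}$ brings it to slope zero, hence \'etale, hence of the form $W(A(\tilde{\delta}'))$ for a rank one $A$-representation, and one untwists. You instead run a d\'evissage on $\dim_E A$ across a small extension $A\twoheadrightarrow A'$ with kernel $I$, realizing both the set of continuous lifts of $\delta'$ and the set of isomorphism classes of $A$-$B$-pair lifts of $W(\delta')$ as non-empty torsors under $E$-vector spaces of dimension $[K:\mathbb{Q}_p]+1$, and showing $\delta\mapsto[W(\delta)]$ is equivariant for an injective, hence bijective, linear map between them. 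This trades the slope-filtration input (Kedlaya's theorem and Lemmas 1.34, 1.42 of \cite{Na09}) for cohomological input (Proposition~\ref{i} and the deformation-theoretic torsor structure underlying \S 2.2); that reorganization is legitimate, since \S 2.2 never invokes Proposition~\ref{5}, though it does reverse the paper's order of exposition. Two small remarks. First, the composite $\mathrm{Hom}_{\mathrm{cont}}(K^{\times},I)\rightarrow\mathrm{H}^1(G_K,I)\rightarrow\mathrm{H}^1(G_K,B_E\otimes_E I)$ you verify to be injective is in fact an isomorphism: the second arrow is precisely the Bloch--Kato comparison $\mathrm{H}^1(G_K,I)\isom\mathrm{H}^1(G_K,W(I))$, since $W(I)=B_E\otimes_E I$, so your explicit vanishing-of-the-connecting-map computation can be dropped. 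Second, the equivariance you defer as ``bookkeeping'' is indeed the one step left implicit, but it does go through by the cocycle manipulation of Proposition~\ref{12}: tensoring by $W(1+\chi)=W(A(\widetilde{1+\chi}))$ multiplies all three cocycles $(c_e,c_{\mathrm{dR}},c)$ of $W(\delta_2)$ by $g\mapsto 1+\tilde{\chi}(g)$, and since $I\cdot\mathfrak{m}_A=0$ this is exactly the additive torsor shift by the class of $\tilde{\chi}$ in $\mathrm{H}^1(G_K,B_E\otimes_E I)$.
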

 \begin{proof}
 The independence of the choice of uniformizer and the injection can be  proved in the same 
 way as in the proof of Theorem 1.45 of \cite{Na09}. We prove the surjection. Let $W$ be a rank one $A$-$B$-pair.
 As an $E$-$B$-pair, $W$ is a successive extension of the rank one $E$-$B$-pair $W\otimes_A E$.
 By Lemma 1.42 of \cite{Na09}, $W\otimes_AE$ is pure of slope $\frac{n}{fe_E}$ for some $n\in \mathbb{Z}$. Then, $W$ is also pure of slope $\frac{n}{fe_E}$ by Theorem 1.6.6 of \cite{Ke08}.  
 We define a rank one $E$-filtered $\varphi$-module $D_{\pi_E^n}:=K_0\otimes_{\mathbb{Q}_p}Ee_{\pi_E^n}$ 
 in the same way as for $D_{\bar{u}}$, where $\pi_E$ is a uniformizer of $E$. Then, $W\otimes (W(D_{\pi_E^n}) \otimes_E A)^{\vee}$ is pure of slope zero by Lemma 
 1.34 of \cite{Na09}. Hence, there exists $\widetilde{\delta}':G_K^{\mathrm{ab}}\rightarrow A^{\times}$ such that 
 $W\otimes (W(D_{\pi_E^n})\otimes_E A)^{\vee}\isom W(A(\widetilde{\delta}'))$. We put $\delta':=\widetilde{\delta}'\circ\mathrm{rec}_K:
 K^{\times}\rightarrow A^{\times}$ and define $\delta:K^{\times}\rightarrow A^{\times}$ such that $\delta|_{\mathcal{O}_K^{\times}}
 :=\delta'|_{\mathcal{O}_K^{\times}}$ and $\delta(\pi_K):=\delta'(\pi_K)\pi_E^n$. Then, we have an isomorphism 
 $W\isom W(\delta)$, which can be easily seen from the construction of $W(\delta)$.

 \end{proof}
 By the local class field theory, we have a canonical bijection $\delta\mapsto A(\widetilde{\delta})$ from  the set of unitary continuous homomorphisms 
 from $K^{\times}$ to $A^{\times}$ (here,``unitary" means that the image of the 
 composition of $\delta$ with the 
 projection $A^{\times}\rightarrow E^{\times}$ is contained in $\mathcal{O}_E^{\times}$) 
  to the set of isomorphism class of rank one $A$-representations of $G_K$, where $\widetilde{\delta}:G_K^{ab}\rightarrow 
  A^{\times}$ is the continuous homomorphism such that $\delta=\widetilde{\delta}\circ\mathrm{rec}_K$.
  By the definition of $W(\delta)$  and by the above proof, it is easy to see that there exists an isomorphism 
  $W(\delta)\isom W(A(\widetilde{\delta}))$ for any unitary homomorphism $\delta:K^{\times}\rightarrow A^{\times}$.
 Moreover, it is easy to see that, for any continuous homomorphisms $\delta_1,\delta_2: K^{\times}
 \rightarrow A^{\times}$, we have isomorphisms $W(\delta_1)\otimes W(\delta_2)\isom W(\delta_1\delta_2)$ and $W(\delta_1)^{\vee}
 \isom W(\delta_1^{-1})$. 
  
Next, we generalize the functor $\bold{D}_{\mathrm{cris}}$ to potentially crystalline $A$-$B$-pairs.
First, we define the $A$-coefficient version of filtered $(\varphi,G_K)$-modules.
Let $L$ be a finite Galois extension of $K$, we denote by $G_{L/K}:=\mathrm{Gal}(L/K)$.
\begin{defn}
Let $A$ be an object of $\mathcal{C}_E$. 
We say that $D$ is an $A$-filtered $(\varphi,G_{L/K})$-module of $K$ if $D$ satisfies 
the following conditions.
\begin{itemize}
\item[(1)]$D$ is a finite $L_0\otimes_{\mathbb{Q}_p}A$-module which is free as an 
$A$-module with 
a $\varphi$-semi-linear action $\varphi:D\isom D$.
\item[(2)]$D_L:=L\otimes_{L_0}D$ has a decreasing filtration $\mathrm{Fil}^iD_L$ by 
$L\otimes_{\mathbb{Q}_p}A$-submodules such that $\mathrm{Fil}^kD_L=0$ and 
$\mathrm{Fil}^{-k}D_L=D_L$ for sufficiently large $k$ and that 
$\mathrm{Fil}^kD_L/\mathrm{Fil}^{k+1}D_L$ are free $A$-modules for any $k$.
\item[(3)]$G_{L/K}$ acts on $D$ by $L_0\otimes_{\mathbb{Q}_p}A$-semi-linear automorphism 
which commutes with the action of $\varphi$ and preserves the filtration.
\end{itemize}
\end{defn}
\begin{rem}
Using  the $\varphi$-structure on $D$, we can easily see that $D$ is a free $L_0\otimes_{\mathbb{Q}_p}A$-module.
\end{rem}

Let $W_A:=(W_{A,e},W^+_{A,\mathrm{dR}})$ be an $A$-$B$-pair such that $W_A|_{G_L}$ is crystalline as an $E$-$B$-pair
for a finite 
Galois extension $L$ of $K$. As in the case of $E$-$B$-pairs, we define 
$\bold{D}^L_{\mathrm{cris}}(W_A):=(\bold{B}_{\mathrm{max}}\otimes_{\bold{B}_{e}}W_e)^{G_L}$ with a $\varphi$-action induced from 
that on $\bold{B}_{\mathrm{max}}$, then the natural map $L\otimes_{L_0}\bold{D}^L_{\mathrm{cris}}(W_A)\rightarrow 
\bold{D}^L_{\mathrm{dR}}(W_A):=(\bold{B}_{\mathrm{dR}}\otimes_{\bold{B}_{e}}W_e)^{G_L}$ is isomorphism. 
We define $\mathrm{Fil}^k\bold{D}^L_{\mathrm{dR}}(W_A):=\bold{D}^L_{\mathrm{dR}}(W_A)\cap t^kW^+_{\mathrm{dR}}$ for 
any $k\in \mathbb{Z}$. These are naturally equipped with a $G_{L/K}$-action.
\begin{lemma}\label{s1}
In the above situation, $\bold{D}^L_{\mathrm{cris}}(W_A)$ is an $A$-filtered $(\varphi,G_{L/K})$-module of $K$.
\end{lemma}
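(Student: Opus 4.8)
The plan is to bootstrap everything from the $E$-coefficient theory of Theorem \ref{e}: almost all of the statement is a formal transcription of the $E$-case, and the only genuinely new content is two freeness assertions over the Artinian ring $A$. First I would dispose of the formal part. By functoriality of $W_e\mapsto(B_{\mathrm{cris}}\otimes_{B_e}W_e)^{G_L}$, the $A$-module structure on $W_{A,e}$ induces one on $D:=D^L_{\mathrm{cris}}(W_A)$, the Frobenius of $B_{\mathrm{cris}}$ induces a $\varphi$-semilinear endomorphism $\varphi_D$, and $G_{L/K}=G_K/G_L$ acts on $D$ by $L_0\otimes_{\mathbb{Q}_p}A$-semilinear automorphisms commuting with $\varphi_D$, exactly as in the $E$-case; bijectivity of $\varphi_D$ follows from the $E$-case by Nakayama, or from the comparison isomorphism used below. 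The isomorphism $L\otimes_{L_0}D\isom D^L_{\mathrm{dR}}(W_A)$ is part of the hypotheses and is visibly $A$-linear and $G_{L/K}$-equivariant; together with $\mathrm{Fil}^kD^L_{\mathrm{dR}}(W_A):=D^L_{\mathrm{dR}}(W_A)\cap t^kW^+_{A,\mathrm{dR}}$ it gives an $L\otimes_{\mathbb{Q}_p}A$-stable and $G_{L/K}$-stable decreasing filtration, which is exhaustive and separated because the Hodge--Tate weights of a de Rham $B$-pair are bounded. So the lemma reduces to: (i) $D$ is a free $A$-module (then it is free over $L_0\otimes_{\mathbb{Q}_p}A$ by the Frobenius argument recorded in the Remark following the definition of $A$-filtered $(\varphi,G_{L/K})$-modules); and (ii) each $\mathrm{gr}^kD^L_{\mathrm{dR}}(W_A)$ is a free $A$-module.

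For (i), the key point is that $W_A$ being crystalline over $G_L$ (as an $E$-$B$-pair) means it is $B_{\mathrm{cris}}$-admissible over $G_L$, i.e. the natural comparison map
\[ B_{\mathrm{cris}}\otimes_{L_0}D\longrightarrow B_{\mathrm{cris}}\otimes_{B_e}W_{A,e} \]
is an isomorphism (\cite{Be09}). The right-hand side is free over $B_{\mathrm{cris}}\otimes_{\mathbb{Q}_p}A$ because $W_{A,e}$ is free over $B_e\otimes_{\mathbb{Q}_p}A$; in particular it is flat over $A$. Since $L_0$ is a field, $B_{\mathrm{cris}}$ is a free $L_0$-module, and hence $B_{\mathrm{cris}}\otimes_{L_0}D\cong\bigoplus_{j\in J}D$ as $A$-modules for a nonempty index set $J$. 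Therefore $\bigoplus_{j\in J}D$ is $A$-flat, its direct summand $D$ is $A$-flat, and $D$, being finitely generated over the Artin local ring $A$, is $A$-free. (Alternatively one can induct on $\mathrm{length}_EA$, reducing to Theorem \ref{e} via a short exact sequence $0\to W_E\to W_A\to W_{A/I}\to 0$ of $E$-$B$-pairs for a length-one ideal $I\subseteq A$; but even there one has to feed in the crystalline dimension formula $\dim_E D^L_{\mathrm{cris}}(-)=[E:\mathbb{Q}_p]\mathrm{rank}_E(-)$ to force the relevant $G_L$-invariants to commute with reduction modulo $I$.)

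For (ii) I would run the same argument one filtration level down. As $W_A$ is crystalline, hence de Rham, hence Hodge--Tate over $G_L$, the de Rham comparison identifies $\bigoplus_k\mathrm{gr}^kD^L_{\mathrm{dR}}(W_A)$ with $D^L_{\mathrm{HT}}(W_A):=(B_{\mathrm{HT}}\otimes_{\mathbb{C}_p}(W^+_{A,\mathrm{dR}}/tW^+_{A,\mathrm{dR}}))^{G_L}$, and Hodge--Tate-ness over $G_L$ says the comparison map $B_{\mathrm{HT}}\otimes_LD^L_{\mathrm{HT}}(W_A)\to B_{\mathrm{HT}}\otimes_{\mathbb{C}_p}(W^+_{A,\mathrm{dR}}/tW^+_{A,\mathrm{dR}})$ is an isomorphism. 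By Lemma \ref{2}, $W^+_{A,\mathrm{dR}}/tW^+_{A,\mathrm{dR}}$ is free over $\mathbb{C}_p\otimes_{\mathbb{Q}_p}A$, so the right-hand side is $A$-flat; and since $L\subseteq\mathbb{C}_p$ is a field, $B_{\mathrm{HT}}$ is a free $L$-module, so the same direct-summand argument shows that $D^L_{\mathrm{HT}}(W_A)$, and hence each graded piece $\mathrm{gr}^kD^L_{\mathrm{dR}}(W_A)$, is $A$-free.

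The only real obstacle throughout is the familiar one that taking $G_L$-invariants of a period-ring module is neither exact nor compatible with base change in $A$ (the $\mathrm{H}^1$-obstruction), so that one cannot simply reduce statements modulo $\mathfrak{m}_A$. The proof sidesteps this entirely by using the $B_{\mathrm{cris}}$- and $B_{\mathrm{HT}}$-admissibility of $W_A|_{G_L}$ — which already encode the content of the crystalline hypothesis and directly exhibit the relevant modules as $A$-flat — after which all remaining verifications (bijectivity of $\varphi_D$, exhaustiveness/separatedness of the filtration, $G_{L/K}$-equivariance) are bookkeeping identical to the $E$-coefficient case and to the proof of Lemma \ref{2}.
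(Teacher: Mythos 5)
Your proof is correct, but it takes a genuinely different route from the paper's. The paper deduces $A$-flatness of $D^L_{\mathrm{cris}}(W_A)$ by exploiting the \emph{exactness} of $D^L_{\mathrm{cris}}$ on the category of $E$-$B$-pairs crystalline over $G_L$ (a consequence of the equivalence of categories in Theorem \ref{e}): this exactness gives the base-change isomorphism $D^L_{\mathrm{cris}}(W_A)\otimes_A N\isom D^L_{\mathrm{cris}}(W_A\otimes_A N)$ for every finite $A$-module $N$, and then $A$-flatness of $W_{A,e}$ together with left-exactness of $(B_{\mathrm{cris}}\otimes_{B_e}-)^{G_L}$ shows that $D^L_{\mathrm{cris}}(W_A)\otimes_A -$ preserves injections of finite $A$-modules, whence $D^L_{\mathrm{cris}}(W_A)$ is $A$-flat (and, being finitely generated over the Artin local ring $A$, free). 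You instead feed in the admissibility comparison isomorphism $B_{\mathrm{cris}}\otimes_{L_0}D\isom B_{\mathrm{cris}}\otimes_{B_e}W_{A,e}$ directly: the right side is manifestly free over $B_{\mathrm{cris}}\otimes_{\mathbb{Q}_p}A$ hence $A$-flat, the left side is (as an $A$-module) a possibly infinite direct sum of copies of $D$ because $B_{\mathrm{cris}}$ is a free $L_0$-module, and a direct summand of a flat module is flat; the graded pieces are handled the same way via the Hodge--Tate comparison and the $\mathbb{C}_p\otimes_{\mathbb{Q}_p}A$-freeness of $W^+_{A,\mathrm{dR}}/tW^+_{A,\mathrm{dR}}$ from Lemma \ref{2}. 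Both arguments are sound; yours is more hands-on, using only the comparison isomorphism that defines ``crystalline'' rather than the full exactness of the Dieudonn\'e-type functor, while the paper's one-line argument is more in the spirit of the deformation-theoretic base-change lemmas used elsewhere in the section. Your parenthetical length-induction alternative is essentially a repackaging of the paper's base-change strategy, so you are right that it would require the dimension formula to control the $G_L$-invariants mod $I$; the approach you settled on sidesteps this cleanly.
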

\begin{proof}
It suffices only to show the $A$-freeness of $\bold{D}^L_{\mathrm{cris}}(W_A)$, $\mathrm{Fil}^k\bold{D}^L_{\mathrm{dR}}(W_A)$, 
$\mathrm{Fil}^k\bold{D}^L_{\mathrm{dR}}\allowbreak (W_A)/\mathrm{Fil}^{k+1}\bold{D}^L_{\mathrm{dR}}(W_A)$. 
Here, we only prove the $A$-freeness of $\bold{D}^L_{\mathrm{cris}}(W_A)$, other cases can be proved in a similar way.
By the exactness of $\bold{D}^L_{\mathrm{cris}}$ for $E$-$B$-pairs which are crystalline when restricted to $G_L$, we have 
a natural isomorphism $\bold{D}^L_{\mathrm{cris}}(W_A)\otimes_A N\isom \bold{D}^L_{\mathrm{cris}}(W_A\otimes_A N)$  for any finite $A$-module $N$. 
From this, for any $A$-linear injection $N_1\hookrightarrow N_2$ of finite $A$-modules, we have 
an inclusion $\bold{D}^L_{\mathrm{cris}}(W_A)\otimes_A N_1=\bold{D}^L_{\mathrm{cris}}(W_A\otimes_A N_1)\hookrightarrow 
\bold{D}^L_{\mathrm{cris}}(W_A\otimes_A N_2)=\bold{D}^L_{\mathrm{cris}}(W_A)\otimes_A N_2$ because 
$W_{A,e}$ is $A$-flat. Hence, $\bold{D}^L_{\mathrm{cris}}(W_A)$ is $A$-flat.
\end{proof}

Conversely, let $D$ be  an $A$-filtered $(\varphi,G_{L/K})$-module of $K$, then we define 
$W_e(D):=(\bold{B}_{\mathrm{cris}}\otimes_{L_0}D)^{\varphi=1}$. We have 
a natural isomorphism $\bold{B}_{\mathrm{dR}}\otimes_{\bold{B}_{e}}W_e(D)\isom \bold{B}_{\mathrm{dR}}\otimes_{L}D_L$. 
We define $W^+_{\mathrm{dR}}(D):=\mathrm{Fil}^0(\bold{B}_{\mathrm{dR}}\otimes_LD_L)\subseteq 
\bold{B}_{\mathrm{dR}}\otimes_{\bold{B}_{e}}W_e(D)$. We write $W(D):=(W_e(D), W^+_{\mathrm{dR}}(D))$ which is 
a potentially crystalline $E$-$B$-pair with an $A$-action.
\begin{lemma}\label{s2}
In the above situation, $W(D)$ is a potentially crystalline 
$A$-$B$-pair.
\end{lemma}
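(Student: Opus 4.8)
The plan is to deduce everything from the $E$-coefficient case, which is Theorem \ref{e}, by dévissage along the filtration of $A$ by powers of $\mathfrak{m}_A$. As in the proof of Lemma \ref{2}, the non-Noetherianness of $B_{\mathrm{cris}}$ and $B^+_{\mathrm{dR}}$ will be circumvented using only that $\mathfrak{m}_A$ is nilpotent, so that ``$M\otimes_AE=0$'' already forces ``$M=0$''.

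First I would treat $W_e(D)=(B_{\mathrm{cris}}\otimes_{L_0}D)^{\varphi=1}$. Since $D$ is free over $L_0\otimes_{\mathbb{Q}_p}A$, the module $M:=B_{\mathrm{cris}}\otimes_{L_0}D$ is free over $A$. I claim $\varphi-1$ is surjective on $M$: indeed $\mathrm{coker}(\varphi-1\colon M\to M)\otimes_AE=\mathrm{coker}(\varphi-1\colon M_E\to M_E)$ where $M_E=B_{\mathrm{cris}}\otimes_{L_0}(D\otimes_AE)$, and this cokernel vanishes because $W(D\otimes_AE)$ is, by Theorem \ref{e}, a potentially crystalline $E$-$B$-pair with $B_{\mathrm{cris}}\otimes_{B_e}W_e(D\otimes_AE)\isom M_E$ of the expected rank; hence $\mathrm{coker}(\varphi-1)=\mathfrak{m}_A\,\mathrm{coker}(\varphi-1)=0$ by nilpotence of $\mathfrak{m}_A$. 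Consequently $0\to W_e(D)\to M\xrightarrow{\varphi-1}M\to 0$ is exact with $M$ flat over $A$, which gives at once that $W_e(D)$ is $A$-flat and that $W_e(D)\otimes_AN\isom (B_{\mathrm{cris}}\otimes_{L_0}(D\otimes_AN))^{\varphi=1}$ for every finite $A$-module $N$; in particular $W_e(D)\otimes_AE\isom W_e(D\otimes_AE)$, a finite free $B_e\otimes_{\mathbb{Q}_p}E$-module of rank $r:=\mathrm{rank}_{L_0\otimes_{\mathbb{Q}_p}A}D$ by Theorem \ref{e}. Now I would run the lifting argument of Lemma \ref{2}: pick $f\colon(B_e\otimes_{\mathbb{Q}_p}A)^{r}\to W_e(D)$ reducing modulo $\mathfrak{m}_A$ to an isomorphism; then $\mathrm{coker}(f)=\mathfrak{m}_A\mathrm{coker}(f)=0$ so $f$ is onto, and $\ker(f)\otimes_AE=0$ by the $A$-flatness of $W_e(D)$, hence $\ker(f)=\mathfrak{m}_A\ker(f)=0$. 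Thus $W_e(D)$ is finite free over $B_e\otimes_{\mathbb{Q}_p}A$, and a fortiori free over $B_e$ since $A$ is free over $\mathbb{Q}_p$; the continuous semi-linear $G_K$-action is the evident one coming from $B_{\mathrm{cris}}$ and the $G_{L/K}$-action on $D$.

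The module $W^+_{\mathrm{dR}}(D)=\mathrm{Fil}^0(B_{\mathrm{dR}}\otimes_LD_L)$ is handled in the same spirit: using the description $\mathrm{Fil}^0(B_{\mathrm{dR}}\otimes_LD_L)=\sum_i t^iB^+_{\mathrm{dR}}\otimes_L\mathrm{Fil}^{-i}D_L$, the flatness of $B^+_{\mathrm{dR}}$ over $L$, and the hypothesis that the graded pieces $\mathrm{Fil}^kD_L/\mathrm{Fil}^{k+1}D_L$ are $A$-free, one checks via the $t$-adic filtration of $W^+_{\mathrm{dR}}(D)$ (whose successive quotients are built out of the $A$-flat modules $\mathbb{C}_p\otimes_L\mathrm{Fil}^{-i}D_L$) that $W^+_{\mathrm{dR}}(D)$ and $W^+_{\mathrm{dR}}(D)/tW^+_{\mathrm{dR}}(D)$ are $A$-flat and commute with base change, reducing modulo $\mathfrak{m}_A$ to $W^+_{\mathrm{dR}}(D\otimes_AE)$ and its quotient by $t$, which are finite free over $B^+_{\mathrm{dR}}\otimes_{\mathbb{Q}_p}E$ and $\mathbb{C}_p\otimes_{\mathbb{Q}_p}E$ respectively by Theorem \ref{e}. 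The same lifting-a-basis argument then shows $W^+_{\mathrm{dR}}(D)$ is finite free over $B^+_{\mathrm{dR}}\otimes_{\mathbb{Q}_p}A$ with $W^+_{\mathrm{dR}}(D)/tW^+_{\mathrm{dR}}(D)$ finite free over $\mathbb{C}_p\otimes_{\mathbb{Q}_p}A$; it is $G_K$-stable by construction, sits inside $B_{\mathrm{dR}}\otimes_{B_e}W_e(D)\isom B_{\mathrm{dR}}\otimes_LD_L$, and generates this module over $B_{\mathrm{dR}}\otimes_{\mathbb{Q}_p}A$ because $\mathrm{Fil}^0$ of a finite decreasing filtration recovers the whole module after multiplying by a suitable power of $t$. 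This verifies all conditions of Definition \ref{1}, so $W(D)$ is an $A$-$B$-pair.

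It remains to see that $W(D)$ is potentially crystalline, i.e.\ that $W(D)|_{G_L}$ is crystalline as an $E$-$B$-pair. Theorem \ref{e} and the same dévissage give a natural $(\varphi,G_K)$-equivariant isomorphism $B_{\mathrm{cris}}\otimes_{B_e}W_e(D)\isom B_{\mathrm{cris}}\otimes_{L_0}D$, on which $g\in G_L$ acts through $B_{\mathrm{cris}}$ only, since $G_L$ maps to $1$ in $G_{L/K}$. As $B_{\mathrm{cris}}^{G_L}=L_0$ and $D$ is free over $L_0$, taking $G_L$-invariants yields $D^L_{\mathrm{cris}}(W(D))=(B_{\mathrm{cris}}\otimes_{L_0}D)^{G_L}=D$; comparing $L_0$-dimensions, $\dim_{L_0}D=r[E:\mathbb{Q}_p]\dim_EA=[E:\mathbb{Q}_p]\cdot\mathrm{rank}_{B_e\otimes_{\mathbb{Q}_p}E}(W_e(D))$, shows $W(D)|_{G_L}$ is crystalline, as desired. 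The one genuinely delicate point above is the surjectivity of $\varphi-1$ on $B_{\mathrm{cris}}\otimes_{L_0}D$, equivalently the $A$-flatness of $W_e(D)$, together with its analogue for $W^+_{\mathrm{dR}}(D)$; both, as explained, are reduced to the $E$-coefficient case of Theorem \ref{e} using nothing beyond the nilpotence of $\mathfrak{m}_A$, which is also what makes the absence of Noetherian hypotheses on the period rings harmless.
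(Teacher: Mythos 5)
Your proof takes a genuinely different route from the paper's. The paper's own proof of Lemma \ref{s2} is terse: it reduces to the $A$-flatness of $W_e(D)$ and $W^+_{\mathrm{dR}}(D)/tW^+_{\mathrm{dR}}(D)$ and then cites ``the exactness of the functor $W(-)$'' (a formal consequence of the equivalence of exact categories in Theorem \ref{e}, since $D^L_{\mathrm{cris}}$ is visibly exact) together with the $A$-flatness of $D$ and of $\mathrm{gr}^kD_L$, running the same injection dévissage as in Lemma \ref{s1}. You instead produce the explicit presentation $0\to W_e(D)\to M\xrightarrow{\varphi-1}M\to 0$ with $M=B_{\mathrm{cris}}\otimes_{L_0}D$ free over $A$, which is cleaner in that it gives $A$-flatness and the base-change compatibility $W_e(D)\otimes_AN\isom W_e(D\otimes_AN)$ in one stroke, and you then rerun the basis-lifting argument of Lemma \ref{2} to get full finite freeness over $B_e\otimes_{\mathbb{Q}_p}A$ (more than Lemma \ref{s2} strictly asserts, but useful). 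The $W^+_{\mathrm{dR}}$ discussion is also in the right spirit; note that since $t\cdot\mathrm{Fil}^0(B_{\mathrm{dR}}\otimes_LD_L)=\mathrm{Fil}^1(B_{\mathrm{dR}}\otimes_LD_L)$, one gets directly $W^+_{\mathrm{dR}}(D)/tW^+_{\mathrm{dR}}(D)\isom\bigoplus_k t^{-k}\,\mathbb{C}_p\otimes_L\mathrm{gr}^kD_L$, whose $A$-flatness is exactly the hypothesis on the graded pieces.

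There is, however, a gap in the justification of your central step. You assert that $\mathrm{coker}(\varphi-1\colon M_E\to M_E)=0$ because, by Theorem \ref{e}, the comparison isomorphism $B_{\mathrm{cris}}\otimes_{B_e}W_e(D\otimes_AE)\isom M_E$ holds. But that isomorphism merely identifies $M_E$ with a finite free $B_{\mathrm{cris}}\otimes_{\mathbb{Q}_p}E$-module on which $\varphi$ acts as $\varphi_{B_{\mathrm{cris}}}\otimes 1$; the vanishing of the cokernel is then literally the surjectivity of $\varphi-1$ on $B_{\mathrm{cris}}$ (equivalently, of $\varphi-p^n$ on $B^+_{\mathrm{cris}}$ for all $n\geqq 0$). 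That is a true and classical period-ring fact, but it is not a consequence of Theorem \ref{e}, and it cannot be ``reduced to the $E$-coefficient case using nothing beyond nilpotence of $\mathfrak{m}_A$'' as your closing paragraph suggests: it is precisely the new analytic input your approach requires, and indeed one of the ingredients hidden inside the exactness statement that the paper invokes instead. Once you cite this surjectivity as an independent fact, your argument is correct.
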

\begin{proof}
It suffices to show the $A$-flatness of $W_e(D)$ and $W^+_{\mathrm{dR}}(D)/tW^+_{\mathrm{dR}}(D)$. 
We can prove these in the same way as in Lemma $\ref{s1}$ by using the
exactness of the functor $W(D)$ and the $A$-flatness of $D$ and $\mathrm{Fil}^kD_L/\mathrm{Fil}^{k+1}D_L$ for any 
$k$.
\end{proof}

\begin{corollary}\label{s3}
For any $A\in\mathcal{C}_E$, the functor $\bold{D}^L_{\mathrm{cris}}$ gives an equivalence of categories between 
the category of potentially crystalline $A$-$B$-pairs which are crystalline if restricted to $G_L$ and 
the category of $A$-filtered $(\varphi,G_{L/K})$-modules of $K$.
\end{corollary}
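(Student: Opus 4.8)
The plan is to exhibit $D \mapsto W(D)$ (which takes values in the category of potentially crystalline $A$-$B$-pairs that are crystalline over $G_L$, by Lemma \ref{s2}) as a quasi-inverse to $W_A \mapsto D^L_{\mathrm{cris}}(W_A)$ (which takes values in the category of $A$-filtered $(\varphi,G_{L/K})$-modules of $K$, by Lemma \ref{s1}). The key observation is that both functors are given by exactly the same period-ring recipes as their $E$-coefficient analogues in Theorem \ref{e}(1), the only addition being that one keeps track of the extra $A$-module structure; so upon forgetting the $A$-action they become the $E$-level functors of Theorem \ref{e}(1), which are already known to be mutually quasi-inverse.

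Concretely, I would start from the functorial isomorphisms $\eta_W \colon W \isom W(D^L_{\mathrm{cris}}(W))$ and $\epsilon_D \colon D^L_{\mathrm{cris}}(W(D)) \isom D$ furnished by Theorem \ref{e}(1) for $E$-$B$-pairs, respectively for $E$-filtered $(\varphi,G_{L/K})$-modules. For an $A$-$B$-pair $W_A$, multiplication by any $a \in A$ is an endomorphism of the underlying $E$-$B$-pair, and by construction the $A$-module structure on $D^L_{\mathrm{cris}}(W_A)$ is obtained by applying $D^L_{\mathrm{cris}}$ to these endomorphisms; the naturality square for $\eta$ applied to multiplication by $a$ then forces $\eta_{W_A}$ to be $A$-linear, hence an isomorphism of $A$-$B$-pairs. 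The symmetric argument shows that $\epsilon_D$ is $A$-linear, hence an isomorphism of $A$-filtered $(\varphi,G_{L/K})$-modules, for every such $D$. Together these give natural isomorphisms $\mathrm{id} \isom W(-)\circ D^L_{\mathrm{cris}}$ and $D^L_{\mathrm{cris}}\circ W(-) \isom \mathrm{id}$ on the two $A$-linear categories, which is precisely the assertion of the corollary.

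I do not expect a genuine obstacle here: beyond Lemmas \ref{s1} and \ref{s2} (which already supply the delicate $A$-flatness statements), the only things to check are purely formal, namely that $D^L_{\mathrm{cris}}(W_A)$ is finite over $L_0\otimes_{\mathbb{Q}_p}A$ and $W_e(D)$ finite over $B_e\otimes_{\mathbb{Q}_p}A$ — immediate since $A$ is finite over $E$ — and that the Frobenius, the $G_{L/K}$- and $G_K$-actions, and the relevant filtrations and lattices are $A$-linear because they are induced from structures on $B_{\mathrm{cris}}$ and $B_{\mathrm{dR}}$ that are insensitive to the coefficient ring. If anything deserves care, it is verifying that objects of the two $A$-linear categories really do correspond to objects of the $E$-linear categories equipped with a compatible $A$-action satisfying the stated flatness constraints; but this correspondence is exactly what Lemmas \ref{s1} and \ref{s2} package, so the content of the corollary is essentially formal once those lemmas and Theorem \ref{e}(1) are in hand.
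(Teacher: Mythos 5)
Your proposal is correct and follows essentially the same route as the paper, whose proof is simply to cite Lemma \ref{s1}, Lemma \ref{s2}, and Theorem \ref{e}; your write-up spells out the formal naturality argument that makes that citation work, namely that the unit and counit of the $E$-level equivalence are automatically $A$-linear because the $A$-module structures are transported through the functors themselves.
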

\begin{proof}
This follows from Lemma $\ref{s1}$ and Lemma $\ref{s2}$ and Theorem $\ref{e}$.
\end{proof}

Next, we prove some lemmas which will be used in later sections.

Let $W_A$ be an $A$-$B$-pair which is not potentially crystalline in general and $L$ be a finite Galois extension 
of $K$, then we can define $\bold{D}^L_{\mathrm{cris}}(W_A)$ in the same way as in the case where $W_A$ is potentially crystalline. 
This is an $E$-filtered $(\varphi,G_{L/K})$-module, but this may not be an 
$A$-filtered $(\varphi,G_{L/K})$-module in general.

\begin{lemma}\label{s3-5}
Let $W$ be an $E$-$B$-pair. Let $D\subseteq \bold{D}_{\mathrm{cris}}(W)$ be 
a rank one sub $E$-filtered $\varphi$-module whose filtration is induced 
from that of $\bold{D}_{\mathrm{cris}}(W)$, then there exists a natural saturated inclusion 
$W(D)\hookrightarrow W$.

\end{lemma}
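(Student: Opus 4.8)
The plan is to produce the inclusion by applying the functor $D\mapsto W(D)$ to the inclusion $D\hookrightarrow D_{\mathrm{cris}}(W)$, and then to check injectivity and saturation; the latter two will be governed by the hypothesis that the filtration on $D_K:=K\otimes_{K_0}D$ is the induced one, i.e. $\mathrm{Fil}^iD_K=D_K\cap t^iW^+_{\mathrm{dR}}$ inside $W_{\mathrm{dR}}$, where we use the natural injection $D_K\hookrightarrow K\otimes_{K_0}D_{\mathrm{cris}}(W)\hookrightarrow D_{\mathrm{dR}}(W)\subseteq W_{\mathrm{dR}}$. First I would construct the morphism $\iota=(\iota_e,\iota^+_{\mathrm{dR}})\colon W(D)\to W$ of $E$-$B$-pairs. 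Since $D$ is a $\varphi$-stable sub $K_0\otimes_{\mathbb{Q}_p}E$-module of $D_{\mathrm{cris}}(W)\subseteq B_{\mathrm{cris}}\otimes_{B_e}W_e$, multiplication inside the latter gives a $B_{\mathrm{cris}}$-linear, $\varphi$- and $G_K$-equivariant map $B_{\mathrm{cris}}\otimes_{K_0}D\to B_{\mathrm{cris}}\otimes_{B_e}W_e$; taking $\varphi$-invariants and using $(B_{\mathrm{cris}}\otimes_{B_e}W_e)^{\varphi=1}=W_e$ (valid because $W_e$ is free over $B_e=B_{\mathrm{cris}}^{\varphi=1}$) yields $\iota_e\colon W_e(D)=(B_{\mathrm{cris}}\otimes_{K_0}D)^{\varphi=1}\to W_e$. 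Extending scalars to $B_{\mathrm{dR}}$ and using the canonical identification $B_{\mathrm{dR}}\otimes_{B_e}W_e(D)\isom B_{\mathrm{dR}}\otimes_K D_K$, the map $B_{\mathrm{dR}}\otimes_{B_e}\iota_e$ becomes the multiplication map $B_{\mathrm{dR}}\otimes_K D_K\to W_{\mathrm{dR}}$ extending $D_K\hookrightarrow W_{\mathrm{dR}}$. Finally, choosing a $K\otimes_{\mathbb{Q}_p}E$-basis $\{e_\sigma\}_{\sigma\in\mathcal{P}}$ of $D_K$ with $e_\sigma$ generating the $\sigma$-component and letting $h_\sigma$ be the corresponding filtration jump, the induced-filtration hypothesis gives $e_\sigma\in\mathrm{Fil}^{h_\sigma}D_K\subseteq t^{h_\sigma}W^+_{\mathrm{dR}}$, so
\[W^+_{\mathrm{dR}}(D)=\mathrm{Fil}^0(B_{\mathrm{dR}}\otimes_K D_K)=\bigoplus_{\sigma\in\mathcal{P}}t^{-h_\sigma}(B^+_{\mathrm{dR}}\otimes_{K,\sigma}E)e_\sigma\subseteq W^+_{\mathrm{dR}},\]
which shows $\iota^+_{\mathrm{dR}}(W^+_{\mathrm{dR}}(D))\subseteq W^+_{\mathrm{dR}}$; hence $\iota$ is a well-defined morphism, visibly functorial in $(W,D)$.

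Next I would show $\iota$ is injective, i.e. that $\iota_e$ is injective. Since $\iota_e$ is the restriction to $\varphi$-invariants of the map $B_{\mathrm{cris}}\otimes_{K_0}D\to B_{\mathrm{cris}}\otimes_{B_e}W_e$, it suffices that the latter be injective; this is standard, since it factors as $B_{\mathrm{cris}}\otimes_{K_0}D\hookrightarrow B_{\mathrm{dR}}\otimes_{K_0}D=B_{\mathrm{dR}}\otimes_K D_K\hookrightarrow B_{\mathrm{dR}}\otimes_K D_{\mathrm{dR}}(W)\hookrightarrow W_{\mathrm{dR}}$ through $B_{\mathrm{cris}}\otimes_{B_e}W_e$, and the composite is injective (injectivity of $B_{\mathrm{dR}}\otimes_K D_{\mathrm{dR}}(W)\hookrightarrow W_{\mathrm{dR}}$ being part of $p$-adic Hodge theory for $B$-pairs, cf. \cite{Be09}, \cite{Na09}). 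Thus $\iota$ exhibits $W(D)$ as a sub $E$-$B$-pair of $W$. It then remains to prove $\iota$ is saturated, i.e. $W/W(D)$ is an $E$-$B$-pair; by the saturation construction (Lemma 1.14 of \cite{Na09}) this amounts to the identity $W^+_{\mathrm{dR}}(D)=W^+_{\mathrm{dR}}\cap(B_{\mathrm{dR}}\otimes_K D_K)$ inside $W_{\mathrm{dR}}$. The inclusion $\subseteq$ was shown above. Conversely, $\Lambda:=W^+_{\mathrm{dR}}\cap(B_{\mathrm{dR}}\otimes_K D_K)$ is a $G_K$-stable $B^+_{\mathrm{dR}}\otimes_{\mathbb{Q}_p}E$-lattice of $B_{\mathrm{dR}}\otimes_K D_K$, and $D_K\subseteq D_{\mathrm{dR}}(W)$ is pointwise $G_K$-fixed; by the standard correspondence between such lattices and decreasing $K\otimes_{\mathbb{Q}_p}E$-filtrations of $D_K$, one has $\Lambda=\mathrm{Fil}^0(B_{\mathrm{dR}}\otimes_K(D_K,\mathrm{Fil}_\Lambda^\bullet))$ with $\mathrm{Fil}_\Lambda^iD_K=D_K\cap t^i\Lambda=D_K\cap t^iW^+_{\mathrm{dR}}$ (using that $B_{\mathrm{dR}}\otimes_K D_K$ is $t$-divisible, so $t^i(B_{\mathrm{dR}}\otimes_K D_K)=B_{\mathrm{dR}}\otimes_K D_K\supseteq D_K$). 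By the induced-filtration hypothesis $\mathrm{Fil}_\Lambda^\bullet=\mathrm{Fil}^\bullet$ on $D_K$, hence $\Lambda=\mathrm{Fil}^0(B_{\mathrm{dR}}\otimes_K D_K)=W^+_{\mathrm{dR}}(D)$, as desired.

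The main obstacle is the de Rham bookkeeping at the two points where the induced-filtration hypothesis enters — the inclusion $\mathrm{Fil}^0(B_{\mathrm{dR}}\otimes_K D_K)\subseteq W^+_{\mathrm{dR}}$ in the construction, and its equality with $W^+_{\mathrm{dR}}\cap(B_{\mathrm{dR}}\otimes_K D_K)$ needed for saturation — which require keeping careful track of the Hodge--Tate jumps $h_\sigma$ across all embeddings $\sigma\in\mathcal{P}$ through the $B^+_{\mathrm{dR}}$-lattice$/$filtration dictionary. The $\varphi$-module side is comparatively formal, the only nontrivial external inputs being the injectivity of the de Rham comparison map and the existence of saturations (Lemma 1.14 of \cite{Na09}), both of which I would quote rather than reprove.
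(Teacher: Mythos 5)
Your proposal is correct and follows essentially the same line as the paper's proof: construct $W_e(D)\hookrightarrow W_e$ by taking $\varphi$-invariants of the $B_{\mathrm{cris}}$-linear map induced by $D\subseteq D_{\mathrm{cris}}(W)$, and handle the de Rham side by comparing $\mathrm{Fil}^0(B_{\mathrm{dR}}\otimes_K D_K)$ to $W^+_{\mathrm{dR}}\cap(B_{\mathrm{dR}}\otimes_K D_K)$ using the structure of $G_K$-stable $B^+_{\mathrm{dR}}$-lattices. The only cosmetic difference is that the paper first twists $W$ by a suitable crystalline character so that the filtration jumps of $D_K$ are all $0$ and then derives a contradiction from $k_\sigma\geqq 1$, whereas you carry the general jumps $h_\sigma$ through and invoke the lattice--filtration dictionary directly; both amount to the same computation.
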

\begin{proof}
Twisting $W$ by a suitable crystalline character of the form $\prod_{\sigma\in\mathcal{P}}\sigma(\chi_{\mathrm{LT}})^{k_{\sigma}}$, 
we may assume that $\mathrm{Fil}^0(D_{K})=D_{K}$ and $\mathrm{Fil}^1(D_{K})=0$, where we put 
$D_{K}:=K\otimes_{K_0}D$.
We have natural inclusions $W(D)_e=(\bold{B}_{\mathrm{max}}\otimes_{K_0}D)^{\varphi=1}\subseteq 
(\bold{B}_{\mathrm{max}}\otimes_{K_0}\bold{D}_{\mathrm{cris}}(W))^{\varphi=1}\subseteq (\bold{B}_{\mathrm{max}}\otimes_{\bold{B}_{e}}W_e)^{\varphi=1}
=W_e$ and, under the above assumption, $W^+_{\mathrm{dR}}(D)\allowbreak =\mathrm{Fil}^0(\bold{B}_{\mathrm{dR}}\otimes_{K}D_{K})=
\bold{B}^+_{\mathrm{dR}}\otimes_KD_{K}\subseteq \mathrm{Fil}^0(\bold{B}_{\mathrm{dR}}\otimes_{K}\bold{D}_{\mathrm{dR}}(W))\subseteq 
W^+_{\mathrm{dR}}$, which define an inclusion 
$W(D)\hookrightarrow W$. Hence, it suffices to show that this inclusion is saturated, i.e. suffices to show that we have $\bold{B}^+_{\mathrm{dR}}\otimes_{K}D_{K}
=(\bold{B}_{\mathrm{dR}}\otimes_KD_{K})\cap W^+_{\mathrm{dR}}$. We can write $(\bold{B}_{\mathrm{dR}}\otimes_KD_{K})\cap W^+_{\mathrm{dR}}=
\oplus_{\sigma\in \mathcal{P}}\frac{1}{t^{k_{\sigma}}}\bold{B}^+_{\mathrm{dR}}\otimes_{K,\sigma}D_{K,\sigma}$ for some $k_{\sigma}\in \mathbb{Z}_{\geqq 0}$, where we decompose $D_{K}$ by 
$D_{K}\isom \oplus_{\sigma\in \mathcal{P}}D_{K}\otimes_{K\otimes_{\mathbb{Q}_p}E, \sigma\otimes id_E}E=:\oplus_{\sigma\in \mathcal{P}}D_{K,\sigma}$. 
If $k_{\sigma}\geqq 1$ for some $\sigma\in \mathcal{P}$, then $D_{K,\sigma}\subseteq t^{k_{\sigma}}W^+_{\mathrm{dR}}$. Because the filtration on $D$ is induced from 
$\bold{D}_{\mathrm{cris}}(W)$, this implies that $\mathrm{Fil}^{k_{\sigma}}D_{K,\sigma}=D_{K,\sigma}$, this contradicts 
to $\mathrm{Fil}^1D_{K,\sigma}=0$. Hence $k_{\sigma}=0$ for any $\sigma\in \mathcal{P}$, which implies 
that $\bold{B}^+_{\mathrm{dR}}\otimes_{K}D_{K}
=(\bold{B}_{\mathrm{dR}}\otimes_KD_{K})\cap W^+_{\mathrm{dR}}$.

\end{proof}
\begin{lemma}\label{s4}
Let $W_A$ be an $A$-$B$-pair. 
Let $D\subseteq \bold{D}_{\mathrm{cris}}(W_A)$ be a sub $E$-filtered $\varphi$-module which is 
an $A$-filtered $\varphi$-module
of rank one, where the filtration on $D$ is the one induced from that of 
$\bold{D}_{\mathrm{cris}}(W_A)$. We assume that the natural map 
$D\otimes_A E\rightarrow \bold{D}_{\mathrm{cris}}(W_A\otimes_A E)$ remains an injection.
Then, we have a natural injection of $A$-$B$-pairs $W(D)\hookrightarrow W_A$ such that 
the cokernel $W_A/W(D)$ is also an $A$-$B$-pair.
\end{lemma}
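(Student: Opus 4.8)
The plan is to imitate the construction in the proof of Lemma \ref{s3-5} to produce the inclusion $W(D)\hookrightarrow W_A$, and then to control its cokernel by reducing modulo $\mathfrak{m}_A$ (where the injectivity hypothesis lets us invoke Lemma \ref{s3-5} itself) and propagating the conclusion back to $A$ by flatness arguments of the type used in Lemma \ref{2}.

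\emph{Construction of the inclusion.} First I would twist. Since $D$ is free of rank one over $K_0\otimes_{\mathbb{Q}_p}A$ and $E\supseteq K^{\mathrm{nor}}$, the module $D_K=K\otimes_{K_0}D$ splits as $\oplus_{\sigma\in\mathcal{P}}D_{K,\sigma}$ with each $D_{K,\sigma}$ free of rank one over $A$, and its induced filtration has a single jump $k_\sigma$ in each component. Twisting $W_A$ by a suitable crystalline rank-one $A$-$B$-pair, exactly as in the proof of Lemma \ref{s3-5} (this operation is exact, preserves the category of $A$-$B$-pairs, and commutes with $D_{\mathrm{cris}}$ up to the corresponding twist of filtered $\varphi$-modules, so it changes neither the hypothesis nor the conclusion), I may assume $\mathrm{Fil}^0(D_K)=D_K$ and $\mathrm{Fil}^1(D_K)=0$. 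Then, just as in Lemma \ref{s3-5}, the inclusion $D\subseteq D_{\mathrm{cris}}(W_A)$ gives $B_{\mathrm{cris}}\otimes_{K_0}D\hookrightarrow B_{\mathrm{cris}}\otimes_{B_e}W_{A,e}$ (injectivity being the standard fact about $B_{\mathrm{cris}}$ and the faithful flatness of $B_{\mathrm{cris}}$ over $B_e$), hence $W(D)_e=(B_{\mathrm{cris}}\otimes_{K_0}D)^{\varphi=1}\hookrightarrow(B_{\mathrm{cris}}\otimes_{B_e}W_{A,e})^{\varphi=1}=W_{A,e}$; and from $\mathrm{Fil}^0(D_K)=D_K$ one gets $D_K\subseteq W^+_{A,\mathrm{dR}}$, so $W^+_{\mathrm{dR}}(D)=B^+_{\mathrm{dR}}\otimes_K D_K\hookrightarrow W^+_{A,\mathrm{dR}}$, the composite into $W_{A,\mathrm{dR}}$ being injective since $B_{\mathrm{dR}}$ is a field and $W(D)_e\hookrightarrow W_{A,e}$. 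These are compatible and give an inclusion of $A$-$B$-pairs $W(D)\hookrightarrow W_A$, where $W(D)$ is an $A$-$B$-pair by Lemma \ref{s2}.

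\emph{Reduction modulo $\mathfrak{m}_A$ and the $W_e$-part.} The functor $D\mapsto W(D)$ commutes with base change of coefficients (as already used in Lemma \ref{s2}), so $W(D)\otimes_A E\cong W(D\otimes_A E)$. By hypothesis $D\otimes_A E\hookrightarrow D_{\mathrm{cris}}(W_A\otimes_A E)$ is a rank-one sub $E$-filtered $\varphi$-module, and its induced filtration is still trivial after the twist; so Lemma \ref{s3-5} applies and shows that $W(D)\otimes_A E\hookrightarrow W_A\otimes_A E$ is saturated, i.e. its cokernel is an $E$-$B$-pair. In particular the map $W(D)_e\otimes_A E\to W_{A,e}\otimes_A E$ is injective with cokernel free over $B_e\otimes_{\mathbb{Q}_p}E$. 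Now set $Q_e:=W_{A,e}/W(D)_e$; applying $-\otimes_A E$ to $0\to W(D)_e\to W_{A,e}\to Q_e\to0$ and using the $A$-flatness of $W(D)_e$ and $W_{A,e}$ (Lemma \ref{2}) together with the injectivity just noted, the local criterion for flatness gives $\mathrm{Tor}^A_1(Q_e,E)=0$, so $Q_e$ is $A$-flat. Then $-\otimes_A E$ is exact on the sequence, $Q_e\otimes_A E$ is free over $B_e\otimes_{\mathbb{Q}_p}E$, and by the basis-lifting argument in the proof of Lemma \ref{2} the module $Q_e$ is free over $B_e\otimes_{\mathbb{Q}_p}A$.

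\emph{The $B^+_{\mathrm{dR}}$-part and conclusion.} It remains to show that $W^+_{A,\mathrm{dR}}\cap W(D)_{\mathrm{dR}}=W^+_{\mathrm{dR}}(D)$ inside $W_{A,\mathrm{dR}}$ — so that $W^+_{A,\mathrm{dR}}/W^+_{\mathrm{dR}}(D)$ injects into $Q_{\mathrm{dR}}:=B_{\mathrm{dR}}\otimes_{B_e}Q_e$ — and that $\bigl(W^+_{A,\mathrm{dR}}/W^+_{\mathrm{dR}}(D)\bigr)\big/ t\bigl(W^+_{A,\mathrm{dR}}/W^+_{\mathrm{dR}}(D)\bigr)$ is $A$-flat. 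I would treat each embedding $\sigma\in\mathcal{P}$ separately, via the decomposition $W^+_{\mathrm{dR}}(D)=\oplus_\sigma B^+_{\mathrm{dR}}\otimes_{K,\sigma}D_{K,\sigma}$ and the analogous one for $W^+_{A,\mathrm{dR}}/tW^+_{A,\mathrm{dR}}$, running the contradiction argument of Lemma \ref{s3-5} in each component (a strictly larger intersection would force $D_{K,\sigma}\subseteq t^{k}W^+_{A,\mathrm{dR}}$ with $k\geqq1$, hence a nonzero $\mathrm{Fil}^1$), but now over $A$: the case $A=E$ is supplied by the previous paragraph, and to lift it to general $A$ I would again combine the $A$-flatness of $W^+_{A,\mathrm{dR}}/tW^+_{A,\mathrm{dR}}$ (Lemma \ref{2}) with $\mathrm{Tor}^A_1$-vanishing and basis lifting, as in the $W_e$-part. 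I expect \emph{this} step to be the main obstacle, precisely because neither the saturation nor the $\mathrm{Fil}$-intersection obviously commutes with $-\otimes_A E$, so the $A$-level statements must be recovered from their reductions modulo $\mathfrak{m}_A$ by a genuine flatness argument. Granting it, the pair $(Q_e,\,W^+_{A,\mathrm{dR}}/W^+_{\mathrm{dR}}(D))$ satisfies the two conditions of Definition \ref{1}: $Q_e$ is finite free over $B_e\otimes_{\mathbb{Q}_p}A$, the second component is finitely generated over $B^+_{\mathrm{dR}}\otimes_{\mathbb{Q}_p}A$, generates $Q_{\mathrm{dR}}$ over $B_{\mathrm{dR}}$, and has $A$-flat reduction modulo $t$; hence $W_A/W(D)$ is an $A$-$B$-pair, and undoing the twist completes the proof.
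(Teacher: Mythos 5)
Your strategy --- twist, construct $W(D)\hookrightarrow W_A$ as in Lemma \ref{s3-5}, reduce mod $\mathfrak{m}_A$ and invoke Lemma \ref{s3-5} to get injectivity and saturation at the $E$-level, then propagate back to $A$ by flatness --- is exactly the paper's. At this point the paper abstracts the remaining claim to: an inclusion $W_1\hookrightarrow W_2$ of $A$-$B$-pairs whose reduction mod $\mathfrak{m}_A$ is injective and saturated has an $A$-$B$-pair cokernel, and cites Lemma 2.2.3 (i), (iii) of \cite{Bel-Ch09}. Your hands-on treatment of $Q_e=W_{A,e}/W(D)_e$ (Tor-vanishing for $A$-flatness of the cokernel, then basis-lifting for $B_e\otimes_{\mathbb{Q}_p}A$-freeness) is precisely a reconstruction of that cited lemma on the $B_e$-side, and it is sound.

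The one place you declare a gap --- ``I expect this step to be the main obstacle\ldots Granting it'' --- is a false obstacle: the $B^+_{\mathrm{dR}}$-side succumbs to the identical two-step argument, and you should \emph{not} try to prove the intersection equality $W^+_{A,\mathrm{dR}}\cap W(D)_{\mathrm{dR}}=W^+_{\mathrm{dR}}(D)$ first by replaying the $\sigma$-by-$\sigma$ filtration computation over $A$. Instead set $Q^+_{\mathrm{dR}}:=W^+_{A,\mathrm{dR}}/W^+_{\mathrm{dR}}(D)$. The inclusion $W^+_{\mathrm{dR}}(D)\hookrightarrow W^+_{A,\mathrm{dR}}$ is injective (both sit inside $W_{A,\mathrm{dR}}$) and both terms are free over $B^+_{\mathrm{dR}}\otimes_{\mathbb{Q}_p}A$ by Lemma \ref{s2} and Lemma \ref{2}, hence $A$-flat; tensoring the exact sequence with $E$ and using the injectivity of $W^+_{\mathrm{dR}}(D)\otimes_AE\to W^+_{A,\mathrm{dR}}\otimes_AE$ supplied by Lemma \ref{s3-5} gives $\mathrm{Tor}^A_1(Q^+_{\mathrm{dR}},E)=0$, so $Q^+_{\mathrm{dR}}$ is $A$-flat; its reduction is the free $B^+_{\mathrm{dR}}\otimes_{\mathbb{Q}_p}E$-cokernel from Lemma \ref{s3-5}, and basis-lifting as in Lemma \ref{2} makes $Q^+_{\mathrm{dR}}$ free over $B^+_{\mathrm{dR}}\otimes_{\mathbb{Q}_p}A$. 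In particular $Q^+_{\mathrm{dR}}$ is $t$-torsion-free, which then \emph{forces} the intersection equality you were after (the kernel of $Q^+_{\mathrm{dR}}\to Q_{\mathrm{dR}}$ is $t$-torsion since $W(D)_{\mathrm{dR}}=W^+_{\mathrm{dR}}(D)[1/t]$), and $Q^+_{\mathrm{dR}}/tQ^+_{\mathrm{dR}}$ is automatically $A$-flat as a free $\mathbb{C}_p\otimes_{\mathbb{Q}_p}A$-module. So the equality you wanted comes out as a consequence, not a prerequisite, and the proof is complete once you simply run the uniform flatness argument on both components rather than switching methods for $W^+_{\mathrm{dR}}$.
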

\begin{proof}
In the same way as in the above proof, 
we have a natural injection $W(D)\hookrightarrow W_A$. Because the natural map 
$D\otimes_A E\rightarrow \bold{D}_{\mathrm{cris}}(W_A\otimes_A E)$ is an injection, 
we obtain an injection $W(D)\otimes_AE \isom W(D\otimes_A E)\hookrightarrow W_A\otimes_A E$ and this injection is saturated by the above lemma.
Hence, it suffices to show that if $W_1\hookrightarrow W_2$ is an inclusion of $A$-$B$-pairs such that 
$W_1\otimes_{A}E\rightarrow W_{2}\otimes_AE$ remains to be injective and saturated, then the cokernel $W_2/W_1$ exists and 
is an $A$-$B$-pair.
We put $W_{3,e}$, $W^+_{3,\mathrm{dR}}$ the cokernels of 
$W_{1,e}\hookrightarrow W_{2,e}$, $W^+_{1,\mathrm{dR}}\hookrightarrow W^+_{2,\mathrm{dR}}$ respectively.
By Lemma 2.2.3 (i) of \cite{Bel-Ch09}, these are $A$-flat. Hence, it suffices to show that these are 
free over $\bold{B}_{e}$, $\bold{B}^+_{\mathrm{dR}}$ respectively. We can prove this claim in the same way as in Lemma 2.2.3 (iii) of \cite{Bel-Ch09}.


\end{proof}

\subsection{Deformations of $B$-pairs}

In this subsection, we develop the deformation theory of $B$-pairs, which 
is a natural generalization of Mazur's deformation theory of $p$-adic Galois 
representations. 

\begin{defn}\label{6}
Let $A$ be an object in $\mathcal{C}_E$, and let $W$ be an $E$-$B$-pair.
We say that a pair $(W_A,\iota)$ is a deformation of $W$ over $A$ 
if $W_A$ is an $A$-$B$-pair and $\iota: W_A\otimes_AE\isom W$
is an isomorphism of $E$-$B$-pairs. Let $(W_A,\iota)$, $(W'_A,\iota')$ be 
two deformations of $W$ over $A$. Then, we say that $(W_A,\iota)$ and $(W'_A,\iota')$ are 
equivalent if there exists an isomorphism $f:W_A\isom W'_A$ of $A$-$B$-pairs which 
satisfies $\iota=\iota'\circ\bar{f}$, where $\bar{f}:W_A\otimes_A E\isom W'_A\otimes_AE$ 
is the isomorphism naturally induced by $f$.
\end{defn}
\begin{defn} \label{7}
Let $W$ be an $E$-$B$-pair, then we define the deformation functor 
$D_W$ from the category $\mathcal{C}_E$ to the category of sets by 
$$D_W(A):=\{ \text{ equivalent classes }  (W_A,\iota) \text{ of deformations of }  W \text{ over } A \}$$
for $A\in \mathcal{C}_E$.
\end{defn} 
We simply denote by $W_A$ if there is no risk of confusing about $\iota$.

Next, we prove the pro-representability 
of the functor $D_W$ under suitable conditions.
For this, we recall Schlessinger's criterion for pro-representability of functors from 
$\mathcal{C}_E$ to the category of sets. 
We call a morphism $f:A'\rightarrow A$ in $\mathcal{C}_E$ a small extension if it is surjective and 
the kernel $\mathrm{Ker}(f)=(t)$ is generated by a nonzero single element $t\in A'$ 
and $\mathrm{Ker}(f)\cdot \mathfrak{m}_{A'}=0$. $E[\varepsilon]$ is the ring defined by $E[\varepsilon]:=E[X]/(X^2):\varepsilon\mapsto \overline{X}$.

\begin{thm}\label{8}
Let $F$ be a functor from $\mathcal{C}_E$ to the category of sets such that 
$F(E)$ consists of a single element. For morphisms $A'\rightarrow A$, $A"\rightarrow A$ in 
$\mathcal{C}_E$, consider the natural map 
\begin{itemize}
\item[(1)] $F(A'\times_{A}A")\rightarrow F(A')\times_{F(A)}F(A")$,
\end{itemize}
then $F$ is pro-representable if and only if $F$ satisfies properties $(H_1)$,
$(H_2)$, $(H_3)$, $(H_4)$ below:
\begin{itemize}
\item[$(H_1)$] $(1)$ is surjective if $A"\rightarrow A$ is surjective.
\item[$(H_2)$] $(1)$ is bijective when $A=E$ and $A"=E[\varepsilon]$.
\item[$(H_3)$] $\mathrm{dim}_E(t_F)<\infty$ $($ where $t_F:=F(E[\varepsilon])$ and, under 
the condition $(H_2)$, it is known that $t_F$ naturally has a structure of an $E$-vector space$)$.
\item[$(H_4)$] $(1)$ is  bijective if $A'=A"$ and $A'\rightarrow A$ is a small extension.
\end{itemize}
\end{thm}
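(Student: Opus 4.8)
This is the pro-representability criterion of Schlessinger, transported from his category of Artinian local algebras over a complete Noetherian base to the category $\mathcal{C}_E$; since his arguments never use anything beyond the formal properties of such a category, the most economical option is to cite \emph{Functors of Artin rings} directly, and I indicate only the shape of the argument. Necessity is formal: if $F\isom h_R:=\mathrm{Hom}_{E\text{-alg}}(R,-)$ for a complete local Noetherian $E$-algebra $R$ with residue field $E$, then for \emph{any} diagram $A'\to A\leftarrow A''$ in $\mathcal{C}_E$ the map $(1)$ is a bijection by the universal property of $A'\times_A A''$, which gives $(H_1)$, $(H_2)$ and $(H_4)$ at once; and a local $E$-algebra homomorphism $R\to E[\varepsilon]$ is the same datum as an $E$-linear map $\mathfrak{m}_R/\mathfrak{m}_R^2\to E$, so $t_F\isom\mathrm{Hom}_E(\mathfrak{m}_R/\mathfrak{m}_R^2,E)$ is finite dimensional by noetherianity, giving $(H_3)$.

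For sufficiency the first step, using only $(H_1)$, $(H_2)$, $(H_3)$, is to construct a \emph{hull}: a complete local Noetherian $E$-algebra $R$ with residue field $E$ together with $\hat\xi\in\widehat F(R):=\varprojlim_n F(R/\mathfrak{m}_R^n)$ such that the induced morphism of functors $h_R\to F$ is smooth (surjective on $F(A')\to F(A)$ for every surjection $A'\twoheadrightarrow A$ in $\mathcal{C}_E$) and is an isomorphism on tangent spaces. One sets $d:=\dim_E t_F$, $S:=E[[t_1,\dots,t_d]]$, and builds a decreasing chain of ideals $\mathfrak{m}_S^2\supseteq J_1\supseteq J_2\supseteq\cdots$ with $\mathfrak{m}_S J_q\subseteq J_{q+1}$ and compatible lifts $\xi_q\in F(S/J_q)$, choosing at each stage $J_{q+1}$ minimal among all ideals $J$ with $\mathfrak{m}_S J_q\subseteq J\subseteq J_q$ to which $\xi_q$ lifts; the family of such $J$ is stable under intersection because $S/(J\cap J')\hookrightarrow S/J\times_{S/J_q}S/J'$ and a common lift can be pulled back along the surjection $(1)$ for $F$ over this fibre product — this is exactly where $(H_1)$ and $(H_2)$ are used. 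Then $R:=\varprojlim_q S/J_q$ with $\hat\xi:=(\xi_q)_q$ is a hull, smoothness and the tangent isomorphism being forced by the minimality in the construction.

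The second step uses $(H_4)$ to upgrade the hull to a pro-representing object, by showing $h_R(A)\to F(A)$ is bijective for all $A$ via induction on $\mathrm{length}_E(A)$, the inductive step treating a small extension $A'\to A$: surjectivity is smoothness, and for injectivity the fibres of $h_R(A')\to h_R(A)$ and of $F(A')\to F(A)$ over a fixed point carry compatible structures of principal homogeneous space under the respective tangent spaces — on the $F$-side the torsor structure is produced precisely by $(H_4)$ applied to $A'\times_A A'\to A'$ — whence $h_R\to F$, being an isomorphism on tangent spaces, is a bijection on these fibres; since every surjection in $\mathcal{C}_E$ is a composite of small extensions this finishes the induction and $F\isom h_R$ is pro-representable. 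The technical heart is the first step — the inductive choice of the $J_q$ and the proof that the resulting $h_R\to F$ is genuinely smooth and bijective on tangent spaces, where the fibre-product bookkeeping and the interplay of $(H_1)$, $(H_2)$ must be handled with care; step two and necessity are comparatively formal.
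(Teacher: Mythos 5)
Your proposal is correct and takes the same approach as the paper: the paper simply cites Schlessinger's original article and Mazur's exposition for this result, and your opening remark that the economical route is a direct citation is exactly what is done. The sketch of the hull construction and the upgrade via $(H_4)$ that you add is the standard argument and contains no gaps.
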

\begin{proof}
See \cite{Schl68} or $\S 18$ of \cite{Ma97}.
\end{proof}

Using this criterion, we prove the pro-representability of 
$D_W$.

\begin{prop}\label{9}
Let $W$ be an $E$-$B$-pair.
If $\mathrm{End}_{G_K}(W)=E$, then $D_W$ is pro-representable by a complete 
noetherian local $E$-algebra $R_W$ with the residue field $E$.
\end{prop}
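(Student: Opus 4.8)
The plan is to verify Schlessinger's criterion (Theorem \ref{8}) for the functor $D_W$. Since $F(E) = D_W(E)$ consists of a single point (the $E$-$B$-pair $W$ itself, with the identity), and since $\mathrm{End}_{G_K}(W) = E$ is assumed, it suffices to check conditions $(H_1)$, $(H_2)$, $(H_3)$, $(H_4)$ for the natural map $D_W(A' \times_A A'') \to D_W(A') \times_{D_W(A)} D_W(A'')$.

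First I would address $(H_1)$ and $(H_4)$, which concern the behavior of $D_W$ under fibre products $A' \times_A A''$. Given deformations $W_{A'}$ over $A'$ and $W_{A''}$ over $A''$ that become isomorphic over $A$ via a chosen isomorphism, one forms the candidate $B$-pair over $A' \times_A A''$ by taking fibre products module-wise: $W_{A' \times_A A'', e} := W_{A', e} \times_{W_{A, e}} W_{A'', e}$, and similarly for the $B^+_{\mathrm{dR}}$-lattice part. The key point is to check this is again an $A' \times_A A''$-$B$-pair in the sense of Definition \ref{1} — i.e.\ that $W_e$ is flat over the coefficient ring and free over $B_e$, and likewise for $W^+_{\mathrm{dR}}/tW^+_{\mathrm{dR}}$. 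Flatness over $A' \times_A A''$ follows because fibre products of flat modules over $A'$, $A''$ that agree over $A$ are flat over the fibre product ring (a standard commutative-algebra fact, as used in \cite{Bel-Ch09} and \cite{Ma97}); freeness over $B_e$ and $B^+_{\mathrm{dR}}$ then follows from Lemma \ref{2}. This shows $(H_1)$ (surjectivity): the fibre product deformation maps to the given pair. For $(H_4)$ (bijectivity when $A' = A''$ and $A' \to A$ is a small extension), surjectivity is as above, and injectivity amounts to showing that an isomorphism between two such glued deformations is determined by its two ``legs'', which again reduces to the module-level fibre product statement.

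Next I would handle $(H_2)$: bijectivity of the map when $A = E$ and $A'' = E[\varepsilon]$, so that $A' \times_E E[\varepsilon]$ is just $A' \times_E E[\varepsilon]$ as an $E$-algebra. Here surjectivity is covered by the gluing construction above; injectivity uses the hypothesis $\mathrm{End}_{G_K}(W) = E$ — this is exactly where the automorphisms of $W$ must be controlled so that two glued deformations that agree on each factor are equivalent. Concretely, an automorphism of $W_A$ reducing to the identity on $W$, when $A \to E$ is small, is parametrized by $\mathrm{Hom}_{G_K}(W, W \otimes_E \mathrm{Ker})$, and the condition $\mathrm{End}_{G_K}(W) = E$ forces this to be as small as possible, ruling out the discrepancies that would break injectivity. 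Finally, $(H_3)$ — finite-dimensionality of the tangent space $t_{D_W} = D_W(E[\varepsilon])$ — follows by identifying $D_W(E[\varepsilon])$ with $\mathrm{H}^1(G_K, \mathrm{ad}(W))$ (via the usual dictionary: a deformation over $E[\varepsilon]$ corresponds to an extension of $W$ by $W$, i.e.\ a class in $\mathrm{Ext}^1(W, W) \cong \mathrm{H}^1(G_K, \mathrm{ad}(W))$), which is finite-dimensional by Theorem \ref{h}(1).

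The main obstacle I expect is the verification that the module-level fibre product $W_{A', e} \times_{W_{A, e}} W_{A'', e}$ is genuinely free over $B_e$ (and the analogous statement for the de Rham lattice), rather than merely torsion-free or flat — i.e.\ translating the commutative-algebra gluing lemmas of \cite{Bel-Ch09} into the $B$-pair setting, where one works over the non-noetherian Bézout domain $B_e$ and over $B^+_{\mathrm{dR}}$ with its $t$-adic structure. Once one knows the gluing construction stays inside the category of $A$-$B$-pairs (which is essentially Lemma \ref{2} together with the flatness bookkeeping), the rest of Schlessinger's axioms are formal, following the classical Galois-representation argument of \cite{Ma97} almost verbatim, with $\mathrm{End}_{G_K}(W) = E$ playing its standard role.
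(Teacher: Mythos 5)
Your overall strategy — verify Schlessinger's $(H_1)$--$(H_4)$ for $D_W$, form fibre-product $B$-pairs module-wise, use $\mathrm{End}_{G_K}(W)=E$ to control automorphisms for injectivity, and identify the tangent space with $\mathrm{H}^1(G_K,\mathrm{ad}(W))$ — is the same as the paper's, and the order in which you tackle the axioms is just a rearrangement.

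There is, however, a real gap in your handling of the fibre-product construction. You propose to establish freeness of $W_{A',e}\times_{W_{A,e}}W_{A'',e}$ over $B_e$ ``from Lemma~\ref{2}.'' But Lemma~\ref{2} takes as \emph{input} an $A$-$B$-pair in the sense of Definition~\ref{1}, and clause (1) of that definition already demands freeness over $B_e$; the lemma only upgrades this to freeness over $B_e\otimes_{\mathbb{Q}_p}A$. So invoking Lemma~\ref{2} here is circular: you would be assuming the very thing you need to prove before Definition~\ref{1} is even satisfied. The flatness of the fibre product over $A'\times_A A''$ alone does not give $B_e$-freeness, and $B_e$ is a non-noetherian Bézout domain, so one cannot wave this away. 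The paper closes exactly this gap by \emph{exhibiting} a basis: one fixes a $B_e\otimes_{\mathbb{Q}_p}A'$-basis $e_1,\ldots,e_n$ of $W_{A',e}$, uses the surjectivity of $A''\to A$ (which is part of the hypothesis in each of $(H_1)$, $(H_2)$, $(H_4)$) together with $A''$-flatness to lift the induced basis of $W_{A,e}$ to a $B_e\otimes_{\mathbb{Q}_p}A''$-basis $\tilde e_1,\ldots,\tilde e_n$ of $W_{A'',e}$ matching mod $A$, and then observes that $(e_i,\tilde e_i)$ is a free $B_e\otimes_{\mathbb{Q}_p}(A'\times_AA'')$-basis of the fibre product. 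This simultaneously gives $A'\times_AA''$-flatness and the required $B_e$-structure in one step, with no appeal to abstract gluing lemmas, and the same trick works for $W^+_{\mathrm{dR}}$. You should replace the appeal to Lemma~\ref{2} with this explicit basis-lifting argument.

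One smaller point: your description of the injectivity step in $(H_2)$/$(H_4)$ is vaguer than it needs to be. The operative fact is not just that automorphisms are ``as small as possible,'' but that $\mathrm{End}_{G_K}(W_A)=A$ for every deformation $W_A$ (the paper's Lemma~\ref{11}, proved by induction on the length of $A$ from $\mathrm{End}_{G_K}(W)=E$). Given two glued deformations with equivalent legs via $h':W_{1A'}\isom W_{2A'}$ and $h'':W_{1A''}\isom W_{2A''}$, the discrepancy $\bar h'\circ\bar h''^{-1}$ is an automorphism of $W_{2A}$, hence a unit $a\in A^\times$; surjectivity of $A''^\times\to A^\times$ lets you lift $a$ and twist $h''$ so the two legs agree on the nose, yielding an isomorphism of fibre products. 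Spelling this out makes clear exactly where $\mathrm{End}_{G_K}(W)=E$ enters.
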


To prove this proposition, we first prove some lemmas.

\begin{lemma}\label{10}
Let $\mathrm{ad}(W):=\mathrm{Hom}(W,W)(\isom W\otimes W^{\vee})$ be the internal endomorphism of $W$,
then there exists an isomorphism of $E$-vector spaces
$$D_W(E[\varepsilon])\isom \mathrm{H}^1(G_K,\mathrm{ad}(W)).$$
\end{lemma}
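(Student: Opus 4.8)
The plan is to exhibit an explicit bijection between the set $D_W(E[\varepsilon])$ of equivalence classes of deformations of $W$ to the dual numbers and the cohomology group $\mathrm{H}^1(G_K, \mathrm{ad}(W))$, mimicking the classical computation of Mazur's tangent space for Galois representations but carried out in the category of $B$-pairs. First I would fix a deformation $(W_{E[\varepsilon]}, \iota)$ of $W$ over $E[\varepsilon]$. By Lemma \ref{2}, the component $W_{E[\varepsilon],e}$ is a finite free $B_e\otimes_{\mathbb{Q}_p}E[\varepsilon]$-module and $W^+_{E[\varepsilon],\mathrm{dR}}$ is a finite free $B^+_{\mathrm{dR}}\otimes_{\mathbb{Q}_p}E[\varepsilon]$-module; since $E[\varepsilon] = E\oplus E\varepsilon$ with $\varepsilon^2=0$, the isomorphism $\iota$ together with freeness gives, after choosing a lift of a basis, identifications $W_{E[\varepsilon],e}\cong W_e\oplus \varepsilon W_e$ and $W^+_{E[\varepsilon],\mathrm{dR}}\cong W^+_{\mathrm{dR}}\oplus \varepsilon W^+_{\mathrm{dR}}$ as $B_e$- (resp. $B^+_{\mathrm{dR}}$-)modules compatibly with the embedding into $W_{\mathrm{dR}}\oplus\varepsilon W_{\mathrm{dR}}$. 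The semilinear $G_K$-action on $W_{E[\varepsilon],e}$ is then $E[\varepsilon]$-linear and reduces to the given action on $W_e$, so for $g\in G_K$ it has the shape $g(x + \varepsilon y) = g_W(x) + \varepsilon(g_W(y) + c_e(g)(g_W(x)))$ for a continuous map $c_e: G_K\to \mathrm{End}_{B_e\otimes E}(W_e)$, and similarly one gets $c^+_{\mathrm{dR}}: G_K\to \mathrm{End}_{B^+_{\mathrm{dR}}\otimes E}(W^+_{\mathrm{dR}})$ from the $G_K$-action on $W^+_{E[\varepsilon],\mathrm{dR}}$.

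Next I would check that the cocycle conditions for the two $G_K$-actions to be honest actions translate exactly into $c_e$ and $c^+_{\mathrm{dR}}$ being continuous $1$-cocycles valued in $\mathrm{End}(W_e)$ and $\mathrm{End}(W^+_{\mathrm{dR}})$ respectively (with the adjoint action), and that the compatibility of these two module structures inside $W_{\mathrm{dR}}\oplus\varepsilon W_{\mathrm{dR}}$ forces $c_e$ and $c^+_{\mathrm{dR}}$ to agree after base change to $B_{\mathrm{dR}}$; concretely, the pair $(c_e, c^+_{\mathrm{dR}})$ is a $1$-cocycle for the complex $\mathrm{ad}(W)_e\oplus \mathrm{ad}(W)^+_{\mathrm{dR}}\to \mathrm{ad}(W)_{\mathrm{dR}}$ computing $\mathrm{H}^1(G_K,\mathrm{ad}(W))$ in the sense recalled just before Theorem \ref{h}. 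Then I would verify that changing the choice of basis lifts of $W_e$ and $W^+_{\mathrm{dR}}$ (equivalently, passing to an equivalent deformation in the sense of Definition \ref{6}) modifies the cocycle pair precisely by a coboundary: an automorphism of $W_{E[\varepsilon]}$ reducing to the identity is $1+\varepsilon\phi$ for some $\phi\in\mathrm{End}(W)$, decomposing into its $W_e$ and $W^+_{\mathrm{dR}}$ parts, and conjugating the action by it changes $(c_e, c^+_{\mathrm{dR}})$ by the differential of $(\phi_e, \phi^+_{\mathrm{dR}})$. This gives a well-defined map $D_W(E[\varepsilon])\to \mathrm{H}^1(G_K,\mathrm{ad}(W))$.

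Conversely, given a class in $\mathrm{H}^1(G_K,\mathrm{ad}(W))$ represented by a cocycle pair $(c_e, c^+_{\mathrm{dR}})$, I would use the formulas above to twist the $G_K$-actions on $W_e\oplus\varepsilon W_e$ and $W^+_{\mathrm{dR}}\oplus\varepsilon W^+_{\mathrm{dR}}$, and check that the resulting pair is an $E[\varepsilon]$-$B$-pair (the flatness and freeness requirements of Definition \ref{1} are automatic since everything is visibly free over $E[\varepsilon]$ and over $B_e$, $B^+_{\mathrm{dR}}$, and the lattice condition in $W_{\mathrm{dR}}\oplus\varepsilon W_{\mathrm{dR}}$ holds because the two cocycles agree over $B_{\mathrm{dR}}$), with its obvious reduction isomorphism to $W$. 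One then checks the two constructions are mutually inverse and that addition of cocycles corresponds to the Baer-sum $E$-vector space structure on $D_W(E[\varepsilon])$ (this is where one uses, implicitly, that $(H_2)$ of Schlessinger's criterion holds for $D_W$, so that $D_W(E[\varepsilon])$ really carries an $E$-vector space structure). The main obstacle I anticipate is purely bookkeeping: keeping straight the semilinearity of the $G_K$-action and making sure the ``gluing'' condition between the $B_e$-part and the $B^+_{\mathrm{dR}}$-part of the cocycle is handled correctly, i.e. that the relevant object is genuinely the hypercohomology of the two-term complex defining $\mathrm{H}^1(G_K,-)$ for $B$-pairs and not just ordinary group cohomology of $W_e$ or of $W_{\mathrm{dR}}$; once the dictionary ``deformation $\leftrightarrow$ cocycle pair for the mapping-fiber complex'' is set up cleanly, the verifications are routine and formally identical to the classical case.
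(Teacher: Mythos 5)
Your plan is essentially the paper's: compute the tangent space in the Mazur style, reading off cocycles from the semilinear $G_K$-action on each component of the $E[\varepsilon]$-$B$-pair in terms of a chosen basis lift and identifying the result with $\mathrm{H}^1(G_K,\mathrm{ad}(W))$ for the mapping-cone complex. But there is a concrete error in the middle step that you would have to repair, and you have in fact flagged the exact spot without actually resolving it. You write that the compatibility inside $W_{\mathrm{dR}}\oplus\varepsilon W_{\mathrm{dR}}$ ``forces $c_e$ and $c^+_{\mathrm{dR}}$ to agree after base change to $B_{\mathrm{dR}}$,'' and then treat the pair $(c_e,c^+_{\mathrm{dR}})$ as the full cocycle datum. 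This is false in general: the basis you lift in $W_{E[\varepsilon],e}$ and the basis you lift in $W^+_{E[\varepsilon],\mathrm{dR}}$ are chosen independently, and after tensoring to $B_{\mathrm{dR}}$ they differ by an automorphism of $W_{\mathrm{dR}}\oplus\varepsilon W_{\mathrm{dR}}$ reducing to the identity, hence of the form $1+\varepsilon c$ for some $c\in\mathrm{End}_{B_{\mathrm{dR}}\otimes E}(W_{\mathrm{dR}})=\mathrm{ad}(W)_{\mathrm{dR}}$. There is no way to choose the two bases compatibly, because $W^+_{\mathrm{dR}}$ is not $B^+_{\mathrm{dR}}\otimes_{B_e}W_e$ in general. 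The correct relation is $c_e(g)-c_{\mathrm{dR}}(g)=gc-c$ (the two cocycles are cohomologous, not equal), and a degree-one cocycle for the mapping-cone complex is the \emph{triple} $(c_e,c_{\mathrm{dR}},c)$ with $c$ a $0$-cochain in $\mathrm{ad}(W)_{\mathrm{dR}}$ witnessing this. The component $c$ is not bookkeeping to be swept under the rug; without it the target of your map is the group cohomology of the intersection $\mathrm{ad}(W)_e\cap\mathrm{ad}(W)^+_{\mathrm{dR}}$ rather than $\mathrm{H}^1(G_K,\mathrm{ad}(W))$, and the inverse construction would only produce deformations whose $e$-part and $\mathrm{dR}$-part admit a common basis over $B_{\mathrm{dR}}$, which is a strictly smaller set. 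The paper's proof introduces $c$ explicitly via the comparison isomorphism $f:W_{\mathrm{dR}}e_1\oplus W_{\mathrm{dR}}e_2\isom W_{\mathrm{dR}}e_1\oplus W_{\mathrm{dR}}e'_2$ and records the relation $c_e(g)-c_{\mathrm{dR}}(g)=gc-c$; incorporating that makes the rest of your argument go through as you outline.
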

\begin{proof}
Let $W_{E[\varepsilon]}:=(W_{E[\varepsilon],e}, W^+_{E[\varepsilon],\mathrm{dR}})$ 
be a deformation of $W$ over $E[\varepsilon]$. From this, we define an element 
in $\mathrm{H}^1(G_K,\mathrm{ad}(W))$ as follows. Because we have natural isomorphisms 
$\varepsilon W_{E[\varepsilon],e}\isom 
W_e$ and $W_{E[\varepsilon],e}/\varepsilon W_{E[\varepsilon],e} \isom W_e$ (here we put $W:=(W_e,W^+_{\mathrm{dR}})$),
we have a natural exact sequence of $\bold{B}_{e}\otimes_{\mathbb{Q}_p}E[G_K]$-modules
\begin{equation*}
0\rightarrow W_e\rightarrow W_{E[\varepsilon],e}\rightarrow W_e\rightarrow 0.
\end{equation*}
We fix an isomorphism of $\bold{B}_{e}\otimes_{\mathbb{Q}_p}E$-modules 
$W_{E[\varepsilon],e}\isom W_e e_1\oplus W_ee_2$ such that first factor $W_e e_1$is equal to $\varepsilon W_{E[\varepsilon]}$ 
as $\bold{B}_{e}\otimes_{\mathbb{Q}_p}E[G_K]$-module and that the above natural projection maps the second factor $W_e e_2$ to $W_e$ by $xe_2\mapsto x$ for any $x\in W_e$.  We define a continuous one cocycle by
$$c_e: G_K\rightarrow \mathrm{Hom}_{\bold{B}_{e}\otimes_{\mathbb{Q}_p}E}(W_e,W_e)\text{ by } 
g(ye_2):=c_e(g)(gy)e_1+gye_2$$
 for  any $g\in G_K$ and $y\in W_e$.  
 For $W^+_{\mathrm{dR}}$,  we fix an isomorphism $W^+_{E[\varepsilon],\mathrm{dR}}\isom W^+_{\mathrm{dR}}e_1\oplus 
 W^+_{\mathrm{dR}}e'_2$ as in the case of $W_e$, then we define a one cocycle by
 $$c_{\mathrm{dR}}:G_K\rightarrow \mathrm{Hom}_{\bold{B}^+_{\mathrm{dR}}\otimes_{\mathbb{Q}_p}E}(W^+_{\mathrm{dR}},W^+_{\mathrm{dR}})\text{ by  }g(ye'_2):=c_{\mathrm{dR}}(g)(gy)e_1+gye'_2$$
  for 
 any $g\in G_K$ and $y\in W^+_{\mathrm{dR}}$. Next, we define $c\in\mathrm{Hom}_{\bold{B}_{\mathrm{dR}}\otimes_{\mathbb{Q}_p}E}
 (W_{\mathrm{dR}},W_{\mathrm{dR}})$ as follows. Tensoring $W_{E[\varepsilon],e}$ and $W^+_{E[\varepsilon],\mathrm{dR}}$ with 
 $\bold{B}_{\mathrm{dR}}$ over $\bold{B}_{e}$ or $\bold{B}^+_{\mathrm{dR}}$, we have 
 an isomorphism $f:W_{\mathrm{dR}}e_1\oplus W_{\mathrm{dR}}e_2\isom W_{E[\varepsilon],\mathrm{dR}}\isom 
 W_{\mathrm{dR}}e_1\oplus W_{\mathrm{dR}}e'_2$ of $\bold{B}_{\mathrm{dR}}\otimes_{\mathbb{Q}_p}E$-modules. 
 We define 
 $$c:W_{\mathrm{dR}}\rightarrow W_{\mathrm{dR}}\text{ by } f(ye_2):=c(y)e_1+ye'_2$$
  for any $y\in W_{\mathrm{dR}}$.
 By the definition, the triple ($c_e$, $c_{\mathrm{dR}}$, $c$) satisfies 
 $$c_e(g)-c_{\mathrm{dR}}(g)=gc-c\text{ in }\mathrm{Hom}_{\bold{B}_{\mathrm{dR}}
 \otimes_{\mathbb{Q}_p}E}(W_{\mathrm{dR}},W_{\mathrm{dR}})$$ for any $g\in G_K$, i.e. the triple $(c_e,c_{\mathrm{dR}},c)$ defines an 
 element of $ \mathrm{H}^1(G_K,\mathrm{ad}(W))$ by the definition of Galois cohomology of $B$-pairs ($\S$ 2.1 of \cite{Na09}), then it is standard to check that this definition is independent of 
 the choice of a fixed isomorphism $W_{E[\varepsilon],e}\isom W_ee_1\oplus W_ee_2$, etc, and it is easy to check that 
 this map defines an isomorphism $D_W(E[\varepsilon])\isom \mathrm{H}^1(G_K,\mathrm{ad}(W))$.

\end{proof}

\begin{lemma}\label{11}
Let $W_A$ be a deformation of $W$ over $A$.
If $\mathrm{End}_{G_K}(W)=E$, then $\mathrm{End}_{G_K}(W_A)=A$.
\end{lemma}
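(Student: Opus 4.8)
The plan is to prove $\mathrm{End}_{G_K}(W_A) = A$ by induction on the length of $A$ as an $E$-module, using the usual d\'evissage along small extensions. The base case $A = E$ is the hypothesis $\mathrm{End}_{G_K}(W) = E$. For the inductive step, write a small extension $E \to \cdots \to A' \to A$ with kernel $(t) \cong E$ satisfying $t\mathfrak{m}_{A'} = 0$, and assume the result for $A$; I want to deduce it for $A'$. Throughout, note that $A \subseteq \mathrm{End}_{G_K}(W_A)$ always holds via the structural $A$-action (scalar homothety), so the content is the reverse inclusion.

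The key step is to exploit the exact sequence of $B_e\otimes_{\mathbb{Q}_p}A'[G_K]$-modules and its $B^+_{\mathrm{dR}}$-counterpart coming from $0 \to (t) \to A' \to A \to 0$, namely $0 \to W_e \to W_{A',e} \to W_{A,e} \to 0$ (using $tW_{A',e} \cong W_e$ since $t\mathfrak{m}_{A'}=0$, so the $A'$-module $tW_{A',e}$ is killed by $\mathfrak{m}_{A'}$, hence is an $E$-module, and it is identified with $W_e$ via $\iota$) and similarly for $W^+_{\mathrm{dR}}$. Given $\phi \in \mathrm{End}_{G_K}(W_{A'})$, first reduce modulo $t$: by the inductive hypothesis $\bar\phi \in \mathrm{End}_{G_K}(W_A) = A$, so after subtracting a lift of $\bar\phi$ in $A' \subseteq \mathrm{End}_{G_K}(W_{A'})$ (here I use that $A' \to A$ is surjective, so $\bar\phi$ lifts to some $a \in A'$) I may assume $\bar\phi = 0$, i.e. $\phi$ factors through $W_{A'} \twoheadrightarrow W_A$ and lands in $tW_{A'} \cong W$. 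Thus $\phi$ is induced by a $G_K$-equivariant map $W_A \to W$, which factors through $W_A \otimes_A E = W$ (since $t\mathfrak{m}_{A'}=0$ forces $\phi$ to kill $\mathfrak{m}_{A'}W_{A'}$, so it factors through $W_{A'}/\mathfrak{m}_{A'}W_{A'} = W$), giving an element of $\mathrm{End}_{G_K}(W) = E$; that is, $\phi$ is multiplication by some $c \in E$ followed by the inclusion $tW_{A'} \hookrightarrow W_{A'}$, which is exactly multiplication by $ct \in A'$. Hence $\mathrm{End}_{G_K}(W_{A'}) \subseteq A'$.

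Concretely I would phrase this using Galois cohomology: $\mathrm{End}_{G_K}(W_{A'})$ and $\mathrm{End}_{G_K}(W_A)$ are computed by $\mathrm{H}^0(G_K, \mathrm{ad}(W_{A'}))$ and $\mathrm{H}^0(G_K, \mathrm{ad}(W_A))$, and tensoring the small extension with $\mathrm{ad}(W_{A'})$ (which is $A'$-flat by Lemma \ref{2}, so the sequence stays exact) gives $0 \to \mathrm{ad}(W) \to \mathrm{ad}(W_{A'}) \to \mathrm{ad}(W_A) \to 0$; the associated long exact sequence in $G_K$-cohomology yields $0 \to \mathrm{H}^0(G_K,\mathrm{ad}(W)) \to \mathrm{H}^0(G_K,\mathrm{ad}(W_{A'})) \to \mathrm{H}^0(G_K,\mathrm{ad}(W_A))$. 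By induction the outer terms have dimensions $1$ and $\mathrm{length}_E(A)$ over $E$, so $\dim_E \mathrm{H}^0(G_K,\mathrm{ad}(W_{A'})) \le 1 + \mathrm{length}_E(A) = \mathrm{length}_E(A')$; combined with $A' \hookrightarrow \mathrm{End}_{G_K}(W_{A'})$ this forces equality.

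The main obstacle is the bookkeeping around the identification $tW_{A',e} \cong W_e$ and its $B^+_{\mathrm{dR}}$-analogue — one must check that a $G_K$-endomorphism of the $A'$-$B$-pair $W_{A'}$ that vanishes mod $t$ really does induce a morphism of $E$-$B$-pairs $W \to W$ on both the $W_e$ and the $W^+_{\mathrm{dR}}$ components compatibly (i.e. that the pair $(c_e, c_{\mathrm{dR}})$ glues to an element of $\mathrm{Hom}_{G_K}(W,W)$), but this is the same routine compatibility already exploited in the proof of Lemma \ref{10}. No deeper input is needed; everything reduces to flatness of $\mathrm{ad}(W_A)$ over $A$ (Lemma \ref{2}) and the hypothesis at the residue field.
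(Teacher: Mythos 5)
Your proposal is correct and takes essentially the same route as the paper: the paper proves the lemma by induction on length, applying $\mathrm{H}^0(G_K,-)$ to the short exact sequence $0 \to \mathrm{Ker}(f)\otimes_E \mathrm{ad}(W) \to \mathrm{ad}(W_A) \to \mathrm{ad}(W_A\otimes_A A')\to 0$ coming from a small extension, then comparing lengths with the inclusion $A\subseteq \mathrm{End}_{G_K}(W_A)$ — which is exactly your Galois-cohomology paragraph (with the roles of $A$ and $A'$ swapped). Your first, element-chasing paragraph is just an unwound version of the same d\'evissage and is also fine.
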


\begin{proof}
We prove this lemma by induction on the length of $A$.
When $A=E$, this is trivial. We assume that the lemma is proved for the rings of length $n$ and assume that 
$A$ is of length $n+1$. We take a small extension 
$f:A\rightarrow A'$. Because we have $\mathrm{End}_{G_K}(W)=\mathrm{H}^0(G_K, 
W^{\vee}\otimes W)$, we have the following short exact sequence
\begin{equation*}
0\rightarrow \mathrm{Ker}(f)\otimes_E\mathrm{End}_{G_K}(W)
\rightarrow \mathrm{End}_{G_K}(W_A)\rightarrow 
\mathrm{End}_{G_K}(W_A\otimes_A A').
\end{equation*}
From this and the induction hypothesis, we have 
\[
\begin{array}{ll}
\mathrm{length}(\mathrm{End}_{G_K}(W_A))&\leqq 
\mathrm{length}(\mathrm{End}_{G_K}(W_A\otimes_A A'))+
\mathrm{length}(\mathrm{Ker}(f)\otimes_E \mathrm{End}_{G_K}(W))\\
&=\mathrm{length}(A')+1=\mathrm{length}(A).
\end{array}
\]
 On the other hand, 
we have a natural inclusion $A\subseteq \mathrm{End}_{G_K}(W_A)$.
Comparing the length, we obtain an equality $A=\mathrm{End}_{G_K}(W_A)$.
\end{proof}

\begin{proof} (of proposition \ref{9})
Let $W$ be an $E$-$B$-pair of rank $n$ satisfying that 
$\mathrm{End}_{G_K}(W)=E$. 
For this $W$, we check the conditions $(H_i)$ ($i=1\sim4$) of Schlessinger's criterion.
First, by Lemma $\ref{10}$, we have 
$$\mathrm{dim}_E(D_{W}(E[\varepsilon]))=
\mathrm{dim}_E(\mathrm{H}^1(G_K,\mathrm{ad}(W)))<\infty,$$ hence $(H_3)$ is satisfied.
Next we check the condition $(H_1)$. Let  $f:A'\rightarrow A$, $g:A"\rightarrow A$ be morphisms in 
$\mathcal{C}_E$ such that $g$ is a surjection. Let $([W_{A'}],[W_{A"}])$ be an element in 
$D_W(A')\times_{D_W(A)}D_W(A")$. We take deformations $W_{A'}:=(W_{A', e},W^{ +}_{A',  \mathrm{dR}})
$, $W_{A"}:=(W_{A", e},W^{+}_{A" ,\mathrm{dR}})$ over $A'$ and $A"$ which are representatives of equivalent classes 
$[W_{A'}]$ and $[W_{A"}]$ respectively, then we have an isomorphism $h:W_{A'}\otimes_{A'}A\isom W_{A"}\otimes_{A"}A=:W_A
:=(W_{A, e}, W^+_{A,\mathrm{dR}})$ 
which defines an equivalent class in $D_W(A)$. We fix a basis $e_1,\cdots,e_n$ of $W_{A' ,e}$ as a $\bold{B}_{e}\otimes_{\mathbb{Q}_p}A'$
-module and denote by $\overline{e}_1,\cdots,\overline{e}_n$ the basis of $W_{A' ,e}\otimes_{A'}A$ induced from $e_1,\cdots,e_n$.
By the surjectivity of $g:A"\rightarrow A$ and by the $A"$-flatness of $W_{A", e}$, we can take a basis 
$\widetilde{e}_1,\cdots,\widetilde{e}_n$ of $W_{A", e}$ such that the basis 
$\bar{\widetilde{e}}_1,\cdots,\bar{\widetilde{e}}_n$ of $W_{A", e}\otimes_{A"}A$ induced from $\widetilde{e}_1,\cdots,\widetilde{e}_n$ 
satisfies $h(\overline{e}_i)=\bar{\widetilde{e}}_i$ for any $i$. If we define $W^{'"}_e$ by 
$$W^{'"}_e:=W_{A', e}\times_{W_{A ,e}}W_{A" ,e}
:=\{(x,y)\in W_{A', e}\times W_{A" ,e}| h(\bar{x})=\bar{y}\}, $$ then 
$W^{'"}_e$ is a free $\bold{B}_{e}\otimes_{\mathbb{Q}_p}(A'\times_A A")$-module
 with a basis $(e_1,\widetilde{e}_1), \cdots, (e_n,\widetilde{e}_n)$. In the same way, we define $W^{'"+}_{\mathrm{dR}}$ by
 $W^{'"+}_{\mathrm{dR}}:=W^{+}_{A' ,\mathrm{dR}}\times_{W^+_{A, \mathrm{dR}}} W^{+}_{A", \mathrm{dR}}$,  
 which is a free $\bold{B}^{+}_{\mathrm{dR}}\otimes_{\mathbb{Q}_p}(A'\times_A A")$-module. If we put
 $W_{A'}\times_{W_A} W_{A"}:=(W^{'"}_e,W^{'" +}_{\mathrm{dR}})$, then this is a $(A'\times_A A")$-$B$-pair which is a deformation 
 of $W$ over $A'\times_A A"$ such that the equivalent class $[W_{A'}\times_{W_A}W_{A"}]\in D_W(A'\times_A A")$ maps 
 $([W_{A'}], [W_{A"}])\in D_W(A')\times_{D_W(A)}D_W(A")$. Hence, we have checked the condition $(H_1)$.
 
 Finally, we prove that if $g:A"\rightarrow A$ is a surjectition, then the natural map 
 $D_W(A'\times_AA")\rightarrow D_W(A')\times_{D_W(A)}D_W(A")$ is bijective, which 
 proves the conditions $(H_2)$ and $(H_4)$, hence proves the pro-representability of $D_W$.
 Let $W^{'"}_1$, $W^{'"}_2$ be deformations of $W$ over $A'\times_A A"$ such 
 that $[W^{'"}_1\otimes_{A'\times_AA"}A']=[W^{'"}_2\otimes_{A'\times_{A}A"}A']$ in $D_W(A')$ and 
 $[W^{'"}_1\otimes_{A'\times_AA"}A"]=[W^{'"}_2\otimes_{A'\times_{A}A"}A"]$ in $D_W(A")$.
 Under this situation, we want to show that $[W^{'"}_1]=[W^{'"}_2]$ in $D_W(A'\times_A A")$. 
 We put $W_{1 A'}:=W^{'"}_1\otimes_{A'\times_A A"}A'$, $W_{1 A"}:=W_1^{'"}\otimes_{A'\times_A A"}A"$
 , $W_{1 A}:=W_1^{'"}\otimes_{A'\times_A A"} A$, and similarly for $W_{2 A'}$, $W_{2 A"}$, $W_{2 A}$,
 then we have natural isomorphisms $W^{'"}_1\isom W_{1 A'}\times_{W_{1 A}} W_{1 A"}$ and $W_2^{'"}
 \isom W_{2 A'}\times_{W_{2 A}} W_{2 A"}$ defined as in the previous paragraph. Because 
 we have $[W_{1 A'}]=[W_{2 A'}]$ and $[W_{1 A"}]=[W_{2 A"}]$, we have isomorphisms 
 $h':W_{1 A'}\isom W_{2 A'}$ and $h":W_{1 A"}\isom W_{2 A"}$. By the base change to $A$, 
 we obtain an automorphism $\bar{h}'\circ \bar{h}^{" -1}:W_{2 A}\isom W_{1 A}\isom W_{2 A}$. 
 By Lemma $\ref{11}$ and by the surjectivity of $g:A^{" \times}\rightarrow A^{\times}$, we can find an automorphism 
 $\widetilde{h}:W_{2 A"}\isom W_{2 A"}$ such that $\bar{\widetilde{h}}=\bar{h}'\circ \bar{h}^{" -1}$. 
 If we define a morphism 
 $$h^{'"}:W_{1 A'}\times_{W_{1 A}} W_{1 A"}\rightarrow 
 W_{2 A"}\times_{W_{2 A}} W_{2 A'}: (x,y)\mapsto (h_1(x), \widetilde{h}\circ h_2(y)),$$ then we can see that this is 
 well-defined and is isomorphism. Hence, we finish to prove the proposition.
 \end{proof}
 
 \begin{prop}\label{12}
 Let $W:=(W_e, W^+_{\mathrm{dR}})$ be an $E$-$B$-pair of rank $n$.
  If $\mathrm{H}^2(G_K, \mathrm{ad}(W))\allowbreak =0$, then the functor 
 $D_W$ is formally smooth.
 \end{prop}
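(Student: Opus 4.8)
The plan is to verify the standard infinitesimal lifting criterion for formal smoothness: given a small extension $f:A'\rightarrow A$ in $\mathcal{C}_E$ with kernel $(t)$ satisfying $(t)\cdot\mathfrak{m}_{A'}=0$, and a deformation $W_A$ of $W$ over $A$, I must produce a deformation $W_{A'}$ of $W$ over $A'$ with $W_{A'}\otimes_{A'}A\isom W_A$ (compatibly with the structure maps to $W$). Since $(t)\cong E$ as an $A'$-module and $\mathrm{Ker}(f)\cdot\mathfrak{m}_{A'}=0$, the obstruction to such a lift will naturally live in $\mathrm{H}^2(G_K,\mathrm{ad}(W))$, which vanishes by hypothesis; hence a lift always exists and $D_W$ is formally smooth.

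First I would set up the lifting problem concretely at the level of $B$-pairs. Choose a $B_e\otimes_{\mathbb{Q}_p}A$-basis of $W_{A,e}$ and a $B^+_{\mathrm{dR}}\otimes_{\mathbb{Q}_p}A$-basis of $W^+_{A,\mathrm{dR}}$ (possible by Lemma \ref{2}), compatibly chosen so that over $B_{\mathrm{dR}}$ they are related by an element of $\mathrm{GL}$ over $B_{\mathrm{dR}}\otimes_{\mathbb{Q}_p}A$. The $G_K$-action on $W_{A,e}$ is then recorded by a continuous cocycle-like datum: $g\mapsto U_e(g)\in\mathrm{GL}(W_{A,e})$, similarly $g\mapsto U_{\mathrm{dR}}(g)$ on $W^+_{A,\mathrm{dR}}$, together with the change-of-basis matrix $M\in\mathrm{GL}(W_{A,\mathrm{dR}})$; the condition that this defines an $A$-$B$-pair is the cocycle identity for the pair plus $U_e=M\,U_{\mathrm{dR}}\,^gM^{-1}$ in the appropriate sense, mirroring the triple $(c_e,c_{\mathrm{dR}},c)$ from Lemma \ref{10}. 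Now lift $U_e(g)$, $U_{\mathrm{dR}}(g)$, $M$ arbitrarily (as continuous functions, using that $A'\to A$ is surjective and the relevant modules are flat, so bases lift and $\mathrm{GL}$ surjects) to $\tilde U_e(g)$, $\tilde U_{\mathrm{dR}}(g)$, $\tilde M$ over $A'$. The failure of these lifts to satisfy the cocycle/compatibility identities is measured, via $\mathrm{Ker}(f)\cong E$ and $(t)\mathfrak{m}_{A'}=0$, by a continuous $2$-cochain valued in $\mathrm{ad}(W)$ in the complex $W_e\oplus W^+_{\mathrm{dR}}\to W_{\mathrm{dR}}$ computing $\mathrm{H}^\bullet(G_K,\mathrm{ad}(W))$; one checks it is a $2$-cocycle (this is the usual associativity-type bookkeeping). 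Its class in $\mathrm{H}^2(G_K,\mathrm{ad}(W))=0$ vanishes, so the $2$-cocycle is a coboundary; subtracting the corresponding $1$-cochain correction from $\tilde U_e,\tilde U_{\mathrm{dR}},\tilde M$ produces genuine data defining an $A'$-$B$-pair $W_{A'}$. That $W_{A'}$ is flat over $A'$ (hence an honest $A'$-$B$-pair in the sense of Definition \ref{1}) follows because it is built from free modules over $B_e\otimes A'$, $B^+_{\mathrm{dR}}\otimes A'$ by construction, and its reduction to $A$ is $W_A$ by construction, so $W_{A'}$ is the desired lift.

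I expect the main obstacle to be purely organizational rather than conceptual: setting up the cochain formalism for cohomology of $B$-pairs carefully enough that "the obstruction lies in $\mathrm{H}^2$" is an actual lemma and not a slogan — i.e.\ checking that the naive lift's defect is a continuous cocycle, that modifying by a $1$-cochain has the expected effect, and that continuity is preserved throughout (the weak/inductive-limit topologies on $B_e$ and $B_{\mathrm{dR}}$ require a little care, but this is exactly the framework of $\S$2.1 of \cite{Na09} and the appendix here, so it may simply be cited). A clean alternative I would consider, to avoid re-deriving the obstruction theory by hand, is to invoke the general principle that for a functor satisfying Schlessinger's $(H_1)$–$(H_4)$ (already verified in Proposition \ref{9}) whose tangent space is $\mathrm{H}^1(G_K,\mathrm{ad}(W))$ and which admits an obstruction theory valued in $\mathrm{H}^2(G_K,\mathrm{ad}(W))$, vanishing of $\mathrm{H}^2$ forces formal smoothness of the pro-representing ring; this is the exact analogue of Mazur's argument for Galois deformations, and since $\mathrm{ad}(W)$ for a $B$-pair $W$ behaves cohomologically just like $\mathrm{ad}$ of a Galois representation (Theorem \ref{h}), the argument transports verbatim. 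Combined with Proposition \ref{9} and the dimension formula for $\mathrm{H}^1$ obtained from Theorem \ref{h}(2) together with $\mathrm{End}_{G_K}(W)=E$ and $\mathrm{H}^2(G_K,\mathrm{ad}(W))=0$ — giving $\dim_E\mathrm{H}^1(G_K,\mathrm{ad}(W))=[K:\mathbb{Q}_p]n^2+1$ — this yields $R_W\isom E[[T_1,\dots,T_d]]$ with $d=[K:\mathbb{Q}_p]n^2+1$, which is the Corollary \ref{13} advertised above.
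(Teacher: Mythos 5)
Your argument is correct and follows essentially the same route as the paper: fix bases to represent $W_A$ by matrix-valued data $(\rho_e,\rho_{\mathrm{dR}},P)$, lift set-theoretically via a section of $A'\to A$, and observe that the failure of the lifted data to satisfy the cocycle and compatibility identities is a $2$-cochain $(c_e,c_{\mathrm{dR}},c)$ in the mapping-cone complex computing $\mathrm{H}^\bullet(G_K,\mathrm{ad}(W))$ whose class lies in $I\otimes_E\mathrm{H}^2(G_K,\mathrm{ad}(W))=0$, so it can be killed by a $1$-cochain correction yielding the desired $A'$-$B$-pair. One caveat on your proposed ``clean alternative'' via Schlessinger: it implicitly imports $\mathrm{End}_{G_K}(W)=E$ (needed for Proposition 2.11's pro-representability), whereas Proposition 2.14 makes no such assumption, so the direct cocycle computation you sketch first is in fact the correct level of generality.
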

 \begin{proof}
 Let $A'\rightarrow A$ be a small extension in $\mathcal{C}_E$,  
 we denote the kernel by $I\subseteq A'$.
 Let $W_A:=(W_{e,A}, W^+_{\mathrm{dR},A})$ be a deformation of $W$ 
 over $A$, then it suffices to show that there exists an $A'$-$B$-pair $W_{A'}$ 
 such that $W_{A'}\otimes_{A'}A\isom W_A$. 
 We fix a basis of $W_{e,A}$ as a $\bold{B}_{e}\otimes_{\mathbb{Q}_p}A$-module.
 Using this basis and the $G_K$-action on $W_{e,A}$, 
 we obtain a continuous one cocycle $$\rho_e:G_K\rightarrow \mathrm{GL}_n(\bold{B}_{e}\otimes_{\mathbb{Q}_p}A).$$
  In the same way, if we fix a basis of $W^+_{\mathrm{dR},A}$ as a $\bold{B}^+_{\mathrm{dR}}\otimes_{\mathbb{Q}_p}A$-module, 
  we obtain a continuous one cocycle $$\rho_{\mathrm{dR}}:G_K\rightarrow \mathrm{GL}_n(\bold{B}^+_{\mathrm{dR}}\otimes_{\mathbb{Q}_p}A).$$
   From the canonical isomorphism $W_{e,A}\otimes_{\bold{B}_{e}}\bold{B}_{\mathrm{dR}}\isom W^+_{\mathrm{dR},A}
   \otimes_{\bold{B}^+_{\mathrm{dR}}}
   \bold{B}_{\mathrm{dR}}$, we obtain a matrix $P\in\mathrm{GL}_n(\bold{B}_{\mathrm{dR}}\otimes_{\mathbb{Q}_p}A)$ which satisfies 
   $$P\rho_e(g)g(P)^{-1}=\rho_{\mathrm{dR}}(g)\text{ for any }g\in G_K.$$ 
   We fix an $E$-linear section $s:A\rightarrow A'$ of $A'\rightarrow A$ and fix a lifting 
   $\widetilde{P}\in \mathrm{GL}_n(\bold{B}_{\mathrm{dR}}\otimes_{\mathbb{Q}_p}A')$ of $P$. Using this section, 
   we obtain  continuous liftings 
   $$\widetilde{\rho}_e:=s\circ \rho_e:G_K\rightarrow \mathrm{GL}_n(\bold{B}_{e}\otimes_{\mathbb{Q}_p}A')$$ of $\rho_e$ 
   and $$\widetilde{\rho}_{\mathrm{dR}}:=s\circ \rho_{\mathrm{dR}}:G_K\rightarrow \mathrm{GL}_n(\bold{B}^+_{\mathrm{dR}}\otimes_{\mathbb{Q}_p}A')$$ of 
   $\rho_{\mathrm{dR}}$.
   Using these liftings, we define 
   $$c_e:G_K\times G_K\rightarrow I\otimes_E \mathrm{Hom}_{\bold{B}_{e}\otimes_{\mathbb{Q}_p}E}(W_e, W_e)$$ 
   by 
   
   \[
\begin{array}{ll}
\widetilde{\rho}_e(g_1g_2)g_1(\widetilde{\rho}_e(g_2))^{-1}\widetilde{\rho}_e(g_1)^{-1}=I_n+c_{e}(g_1,g_2)&\in 
   I_n+I \otimes_{A'}\mathrm{M}_n(\bold{B}_{e}\otimes_{\mathbb{Q}_p}A')\\
   &=I_n+I\otimes_{E}\mathrm{Hom}_{\bold{B}_{e}\otimes_{\mathbb{Q}_p}E}
   (W_e,W_e)
   \end{array}
   \]
    for any $g_1,g_2\in G_K$, where $I_n$ is the identity matrix. In the same way, we define 
    $$c_{\mathrm{dR}}:G_K\times G_K \rightarrow I\otimes_E\mathrm{Hom}_{\bold{B}^+_{\mathrm{dR}}\otimes_{\mathbb{Q}_p}E}
   (W^+_{\mathrm{dR}}, W^+_{\mathrm{dR}})$$ by 
   $$\widetilde{\rho}_{\mathrm{dR}}(g_1g_2)g_1(\widetilde{\rho}_{\mathrm{dR}}(g_2))^{-1}\widetilde{\rho}_{\mathrm{dR}}(g_1)^{-1}
   =I_n+c_{\mathrm{dR}}(g_1,g_2).$$  We define 
   $$c:G_K\rightarrow I\otimes_E\mathrm{Hom}_{\bold{B}_{\mathrm{dR}}\otimes_{\mathbb{Q}_p}E}(W_{\mathrm{dR}}, W_{\mathrm{dR}})$$ 
   by 
   $$\widetilde{P}\widetilde{\rho}_e(g)g(\widetilde{P})^{-1}\widetilde{\rho}_{\mathrm{dR}}(g)^{-1}=I_n+ c(g)\in I_n+I\otimes_E\mathrm{Hom}_{\bold{B}_{\mathrm{dR}}\otimes_{\mathbb{Q}_p}E}(W_{\mathrm{dR}}, W_{\mathrm{dR}}).$$  These $c_e$ and $c_{\mathrm{dR}}$ are continuous two cocycles, i.e. these satisfy 
   $$g_1c_{\ast}(g_2, g_3)-c_{\ast}(g_1g_2,g_3)+c_{\ast}(g_1, g_2g_3)
   -c_{\ast}(g_1,g_2)=0$$ for any $g_1,g_2,g_3\in G_K$ ($\ast=e, \mathrm{dR}$). Moreover, we can check that $c_e$ and $c_{\mathrm{dR}}$ and $c$ satisfy 
   $$c_e(g_1,g_2)-c_{\mathrm{dR}}(g_1,g_2)=g_1(c(g_2))-c(g_1g_2)+c(g_1)$$ for any 
   $g_1,g_2,g_3\in G_K$, here we note that the isomorphism $\mathrm{Hom}_{\bold{B}_{e}\otimes_{\mathbb{Q}_p}E}(W_e, W_e)
   \otimes_{\bold{B}_{e}}\bold{B}_{\mathrm{dR}}\isom \mathrm{Hom}_{\bold{B}^+_{\mathrm{dR}}\otimes_{\mathbb{Q}_p}E}(W^+_{\mathrm{dR}},W^+_{\mathrm{dR}})\otimes_{\bold{B}^+_{\mathrm{dR}}}\bold{B}_{\mathrm{dR}}$ is given by $c\mapsto \bar{P}^{-1}c\bar{P}$, where $\bar{P}\in \mathrm{GL}_n(\bold{B}_{\mathrm{dR}}\otimes_{\mathbb{Q}_p}E)$
    is the reduction of $P\in \mathrm{GL}_n(\bold{B}_{\mathrm{dR}}\otimes_{\mathbb{Q}_p}A)$. By the definition of Galois cohomology 
    of $B$-pairs, these mean that the triple $(c_e,c_{\mathrm{dR}}, c)$ defines an element $[(c_e,c_{\mathrm{dR}},c)]$ in $I\otimes_E\mathrm{H}^2(G_K, \mathrm{ad}(W))$ . 
    We can show that $[(c_e,c_{\mathrm{dR}},c)]$ doesn't depend on the choice of $s$ or $\widetilde{P}$, i.e. it depends only on $W_A$.
    Under the assumption that $\mathrm{H}^2(G_K,\mathrm{ad}(W))=0$, there exists a triple $(f_e,f_{\mathrm{dR}}, f)
    $, where  $f_e:G_K\rightarrow I\otimes_{E}\mathrm{Hom}_{\bold{B}_{e}\otimes_{\mathbb{Q}_p}E}(W_e, W_e)$ and 
    $f_{\mathrm{dR}}:G_K\rightarrow I\otimes_E\mathrm{Hom}_{\bold{B}^+_{\mathrm{dR}}\otimes_{\mathbb{Q}_p}E}(W^+_{\mathrm{dR}}, W^+_{\mathrm{dR}})$ are continuous maps and $f\in I\otimes_E\mathrm{Hom}_{\bold{B}_{\mathrm{dR}}\otimes_{\mathbb{Q}_p}E}(W_{\mathrm{dR}}, W_{\mathrm{dR}})$, satisfying that $$c_e(g_1,g_2)=g_1f_e(g_2)-f_e(g_1g_2)+f_e(g_1)$$ and $$c_{\mathrm{dR}}(g_1,g_2)=g_1f_{\mathrm{dR}}(g_2)
    -f_{\mathrm{dR}}(g_1g_2)+f_{\mathrm{dR}}(g_1)$$ and $$c(g_1)=f_{\mathrm{dR}}(g_1)-\bar{P}^{-1}f_{e}(g_1)\bar{P}+(g_1f-f)$$ 
    for any $g_1,g_2\in G_K$.  
    Using these, we define new liftings $\rho'_{e}:G_K\rightarrow \mathrm{GL}_n(\bold{B}_{e}\otimes_{\mathbb{Q}_p}A')$ by 
    $$\rho'_e(g):=(1+f_e(g))\widetilde{\rho}_e(g),$$ 
    and $\rho'_{\mathrm{dR}}(g):G_K\rightarrow \mathrm{GL}_n(\bold{B}^+_{\mathrm{dR}}\otimes_{\mathbb{Q}_p}A')$ by $$\rho'_{\mathrm{dR}}(g):=(1+f_{\mathrm{dR}}(g))\widetilde{\rho}_{\mathrm{dR}}(g),$$ and define a matrix 
    $$P':=(1+f)\widetilde{P}\in \mathrm{GL}_n(\bold{B}_{\mathrm{dR}}\otimes_{\mathbb{Q}_p}A').$$ Then, we can check 
    that these satisfy the equalities $$\rho'_e(g_1g_2)=\rho'_e(g_1)g_1(\rho'_{e}(g_2))\text{ and }\rho'_{\mathrm{dR}}(g_1g_2)
    =\rho'_{\mathrm{dR}}(g_1)g_1(\rho'_{\mathrm{dR}}(g_2))$$ and $$P'\rho'_e(g_1)g_1(P')^{-1}=\rho'_{\mathrm{dR}}(g_1)$$ for 
    any $g_1,g_2\in G_K$.  By the definition of $A'$-$B$-pair, these equalities mean that the triple 
    $(\rho'_e,\rho'_{\mathrm{dR}},P')$ defines 
    an $A'$-$B$-pair which is a lift of $W_A$, which proves the proposition.
    \end{proof}
    
    \begin{corollary}\label{13}
    Let $W$ be an $E$-$B$-pair of rank $n$.
    If $\mathrm{End}_{G_K}(W)=E$ and $\mathrm{H}^2(G_K, \mathrm{ad}(W))=0$, 
    then the functor $D_W$ is pro-representable by $R_W$ such that 
    $$R_W\isom E[[T_1,\cdots,T_d]]\,\,\text{ for } d:=[K:\mathbb{Q}_p]n^2 +1.$$
    \end{corollary}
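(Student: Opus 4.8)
The plan is to deduce the corollary formally from the two preceding propositions together with the Euler--Poincar\'e characteristic formula. First, since $\mathrm{End}_{G_K}(W)=E$, Proposition~\ref{9} already gives that $D_W$ is pro-representable by a complete noetherian local $E$-algebra $R_W$ with residue field $E$. Second, since $\mathrm{H}^2(G_K,\mathrm{ad}(W))=0$, Proposition~\ref{12} shows that $D_W$ is formally smooth. By the standard structure theory of pro-representable formally smooth functors on $\mathcal{C}_E$ (see $\S 18$ of \cite{Ma97}), a formally smooth complete noetherian local $E$-algebra with residue field $E$ whose tangent space $t(R_W):=D_W(E[\varepsilon])$ has finite dimension $d$ is necessarily isomorphic to the power series ring $E[[T_1,\dots,T_d]]$; concretely one lifts an $E$-basis of the cotangent space $\mathfrak{m}_{R_W}/\mathfrak{m}_{R_W}^2$ to generators of $R_W$ and uses formal smoothness to see that there are no relations among them.

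It therefore remains only to compute $d=\dim_E D_W(E[\varepsilon])$. By Lemma~\ref{10} we have $D_W(E[\varepsilon])\isom \mathrm{H}^1(G_K,\mathrm{ad}(W))$, so $d=\dim_E\mathrm{H}^1(G_K,\mathrm{ad}(W))$. I would then apply the Euler--Poincar\'e formula of Theorem~\ref{h}(2) to the $E$-$B$-pair $\mathrm{ad}(W)\isom W\otimes W^{\vee}$, which has rank $n^2$, to obtain
\[
-\dim_E\mathrm{H}^0(G_K,\mathrm{ad}(W))+\dim_E\mathrm{H}^1(G_K,\mathrm{ad}(W))-\dim_E\mathrm{H}^2(G_K,\mathrm{ad}(W))=[K:\mathbb{Q}_p]n^2 .
\]
Here $\mathrm{H}^0(G_K,\mathrm{ad}(W))\isom \mathrm{Hom}_{G_K}(B_E,\mathrm{ad}(W))=\mathrm{End}_{G_K}(W)=E$ by the first hypothesis together with the interpretation of $\mathrm{H}^0$ of a $B$-pair recalled in $\S 2.1$, so $\dim_E\mathrm{H}^0(G_K,\mathrm{ad}(W))=1$, while $\dim_E\mathrm{H}^2(G_K,\mathrm{ad}(W))=0$ by the second hypothesis. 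Substituting these values gives $\dim_E\mathrm{H}^1(G_K,\mathrm{ad}(W))=[K:\mathbb{Q}_p]n^2+1$, hence $d=[K:\mathbb{Q}_p]n^2+1$, which is the asserted value.

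There is essentially no genuine obstacle left: the substantive input --- the verification of Schlessinger's criteria and the explicit cocycle-lifting argument establishing formal smoothness --- has already been carried out in Propositions~\ref{9} and~\ref{12}, so the corollary is a purely formal consequence together with the dimension count above. The only points meriting a word of care are the identification $\mathrm{ad}(W)\isom W\otimes W^{\vee}$, needed so that Theorem~\ref{h}(2) applies with $\mathrm{rank}=n^2$, and the equality $\dim_E\mathrm{H}^0(G_K,\mathrm{ad}(W))=1$, which rests on the interpretation of $\mathrm{H}^0$ of a $B$-pair as $G_K$-equivariant homomorphisms from the trivial $E$-$B$-pair $B_E$; with these in hand the argument is complete.
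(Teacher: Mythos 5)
Your proof is correct and follows the same route as the paper's: pro-representability from Proposition~\ref{9}, formal smoothness from Proposition~\ref{12}, and the dimension computation via Lemma~\ref{10} together with the Euler--Poincar\'e formula of Theorem~\ref{h}(2), using $\mathrm{H}^0=\mathrm{End}_{G_K}(W)=E$ and $\mathrm{H}^2=0$. You spell out a few steps (the power-series structure of a formally smooth complete local ring, the identification $\mathrm{ad}(W)\isom W\otimes W^\vee$) that the paper takes for granted, but the argument is the same.
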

    \begin{proof}
    The existence and the formal smoothness of $R_W$ follow from Proposition $\ref{9}$ and Proposition $\ref{12}$.
    For its dimension, we have 
\begin{multline*}
    \mathrm{dim}_ED_W(E[\varepsilon])=\mathrm{dim}_E\mathrm{H}^1(G_K, \mathrm{ad}(W))\\
                                                    =[K:\mathbb{Q}_p]n^2
    +\mathrm{dim}_E\mathrm{H}^0(G_K,\mathrm{ad}(W))+\mathrm{dim}_E\mathrm{H}^2(G_K,\mathrm{ad}(W)) 
    =[K:\mathbb{Q}_p]n^2
    +1
    \end{multline*}
     by Theorem $\ref{h}$ and Lemma $\ref{10}$.
    \end{proof}
 
\subsection{Trianguline  deformations of trianguline $B$-pairs}

In this subsection, we define the trianguline deformation functor for 
split trianguline $E$-$B$-pairs and prove the pro-representability and the formal smoothness 
under some conditions, and calculate the dimension of the universal 
deformation ring of  this functor. These are the generalizations of Bella\"iche-Chenevier's works 
in the $\mathbb{Q}_p$-case. In $\S$ 2 of \cite{Bel-Ch09}, Bella\"iche-Chenevier 
proved all these results in the $\mathbb{Q}_p$-case by using $(\varphi,\Gamma)$-modules over the Robba ring and Colmez's 
theory of trianguline representations \cite{Co08}. 
We generalize their results by using 
$B$-pairs and the theory of trianguline representations for any $p$-adic field (\cite{Na09} or $\S$ 2.1). 

We first define the notion of split  trianguline $A$-$B$-pairs  as follows.
\begin{defn}
Let $W$ be an $A$-$B$-pairs of rank $n$. We say that $W$ is a split trianguline 
$A$-$B$-pair if there exists a sequence of sub $A$-$B$-pairs 
$$\mathcal{T}: 0=W_0\subseteq W_1\subseteq W_2\subseteq\cdots \subseteq W_{n-1}
\subseteq W_n=W$$ such that  $W_i$ is saturated in $W_{i+1}$ and the 
quotient $W_{i+1}/W_i$ is a rank one $A$-$B$-pair for any $0\leqq i\leqq 
n-1$.

By Proposition $\ref{5}$, there exists a continuous homomorphisms $\delta_i:K^{\times}\rightarrow A^{\times}$ 
such that $W_i/W_{i-1}\isom W(\delta_i)$ for each $1\leqq i\leqq n$.
We say that the ordered set $\{\delta_i\}_{i=1}^n$ is the parameter of the triangulation 
$\mathcal{T}$.
\end{defn}

Next, we define the trianguline deformation functor. 
Let $W$ be a split trianguline $E$-$B$-pair of rank $n$. We fix a triangulation 
$$\mathcal{T}: 0\subseteq W_1\subseteq\cdots\subseteq W_{n-1}\subseteq 
W_n=W$$ of $W.$ Under this situation, we define the trianguline deformation 
as follows.

\begin{defn}
Let $A$ be an object in $\mathcal{C}_E$.
We say that $(W_A,\iota, \mathcal{T}_A)$ is a trianguline deformation of $(W,\mathcal{T})$ over $A$
if $(W_A,\iota)$ is a deformation of $W$ over $A$ and $W_A$ is a split trianguline $A$-$B$-pair with 
a triangulation 
$$\mathcal{T}_A:0\subseteq W_{1,A}\subseteq \cdots \subseteq W_{n,A}=W_A$$ 
 such that  $\iota(W_{i,A}\otimes_A E)=W_i$ for any $1\leqq i\leqq n$.
Let $(W_A,\iota,\mathcal{T}_A)$ and $(W'_A,\iota',\mathcal{T}'_A)$ be two trianguline 
deformations of $(W,\mathcal{T})$ over $A$. We say that $(W_A,\iota,\mathcal{T}_A)$ and 
$(W'_A,\iota',\mathcal{T}'_A)$ are equivalent if there exists an isomorphism of $A$-$B$-pairs 
$f:W_A\isom W'_A$ satisfying that $\iota=\iota'\circ \bar{f}$ and $f(W_{i,A})=W'_{i,A}$ for 
any $1\leqq i\leqq n$.
\end{defn}
\begin{defn}
Let $W$ be a split trianguline $E$-$B$-pair with a triangulation $\mathcal{T}$. 
We define the trianguline deformation functor $D_{W,\mathcal{T}}$ 
from the category $\mathcal{C}_E$ to the category of sets by 
\begin{multline*}
D_{W,\mathcal{T}}(A):=\{\text{equivalent classes } (W_A,\iota,\mathcal{T}_A) \text{ of }\\
\text{trianguline deformations of } (W,\mathcal{T})\text{ over }A  \}.
\end{multline*}
for $A\in\mathcal{C}_E$.
\end{defn}

By definition, we have a natural map of functors from $D_{W,\mathcal{T}}$ to $D_{W}$ by forgetting 
the triangulations, i.e. defined by 
$$D_{W,\mathcal{T}}(A)\rightarrow D_{W}(A): [(W_A,\iota,\mathcal{T}_A)]
\mapsto [(W_A,\iota)].$$
 In general, $D_{W,\mathcal{T}}$ is not a subfunctor of 
$D_W$ by this map, i.e. a deformation $W_A$ can have many liftings of the triangulation $\mathcal{T}$.
Here, we give a sufficient condition for $D_{W,\mathcal{T}}$ to be a subfunctor of $D_W$.
Let $\{\delta_i\}_{i=1}^n$ be the parameter of triangulation $\mathcal{T}$.
\begin{lemma}\label{14}
Assume that $\delta_j/\delta_i\not= \prod_{\sigma\in\mathcal{P}}\sigma^{k_{\sigma}}$  for any $1\leqq i<j\leqq n$ 
and $\{k_{\sigma}\}_{\sigma\in\mathcal{P}}\in \prod_{\sigma\in\mathcal{P}} \mathbb{Z}_{\leqq 0}$, then 
the functor $D_{W,\mathcal{T}}$ is a subfunctor of $D_W$.
\end{lemma}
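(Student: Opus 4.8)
The plan is to show that for a deformation $W_A$ of $W$ over $A$, any two liftings of the triangulation $\mathcal{T}$ to $W_A$ coincide, so that the forgetful map $D_{W,\mathcal{T}}(A) \to D_W(A)$ has fibers of size at most one. By the equivalence relation defining $D_{W,\mathcal{T}}$, this amounts to showing that if $\mathcal{T}_A : 0 \subseteq W_{1,A} \subseteq \cdots \subseteq W_{n,A} = W_A$ and $\mathcal{T}'_A : 0 \subseteq W'_{1,A} \subseteq \cdots \subseteq W'_{n,A} = W_A$ are two $A$-triangulations with $W_{i,A} \otimes_A E = W_i = W'_{i,A} \otimes_A E$ for all $i$, then $W_{i,A} = W'_{i,A}$ for all $i$. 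I would prove this by induction on $i$ (equivalently, descending induction, but starting from $i=1$ is cleanest): the key point is to show $W_{1,A} = W'_{1,A}$ inside $W_A$, and then pass to the quotient $W_A/W_{1,A}$, which is an $A$-$B$-pair with its own two induced triangulations whose associated graded pieces have parameters $\delta_2, \ldots, \delta_n$ still satisfying the hypothesis, so the induction runs.

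For the base case, both $W_{1,A}$ and $W'_{1,A}$ are rank one sub $A$-$B$-pairs of $W_A$, saturated, reducing modulo $\mathfrak{m}_A$ to the same $W_1 \subseteq W$. I would argue by induction on the length of $A$: take a small extension $A \to \bar{A}$ with kernel $(t) \cong E$, assume $W_{1,\bar{A}} = W'_{1,\bar{A}}$ in $W_{\bar{A}} := W_A \otimes_A \bar{A}$, and study the difference of the two inclusions at the top nilpotent level. The two composites $W_{1,A} \hookrightarrow W_A \twoheadrightarrow W_A/W'_{1,A}$ reduce to zero modulo $(t)$ (using the inductive hypothesis over $\bar{A}$), hence factor through $t \cdot (W_A/W'_{1,A}) \cong (W_A/W_1) \otimes_E (t) \cong W_A/W_1$ as an $E$-$B$-pair, and through $W_{1,A}/t W_{1,A} \cong W_1$. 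So the obstruction to $W_{1,A} = W'_{1,A}$ is an element of $\mathrm{Hom}_{G_K}(W_1, W_A/W_1)$. Now $W_1 \cong W(\delta_1)$ and $W_A/W_1$ is a successive extension (as $E$-$B$-pair) of the $W(\delta_j)$ for $j \geq 2$; hence $\mathrm{Hom}_{G_K}(W_1, W_A/W_1)$ is built out of the groups $\mathrm{Hom}_{G_K}(W(\delta_1), W(\delta_j)) = \mathrm{H}^0(G_K, W(\delta_j/\delta_i)^{\phantom{1}})$ — more precisely $\mathrm{H}^0(G_K, W(\delta_j)\otimes W(\delta_1)^\vee) = \mathrm{H}^0(G_K, W(\delta_j \delta_1^{-1}))$ — for $j \geq 2 > 1 = i$. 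By Proposition \ref{i}(1), this $\mathrm{H}^0$ vanishes precisely because $\delta_j/\delta_1 \neq \prod_{\sigma \in \mathcal{P}} \sigma^{k_\sigma}$ for $\{k_\sigma\} \in \prod_{\sigma} \mathbb{Z}_{\leq 0}$, which is exactly the hypothesis (with the pair $(1,j)$). Therefore the obstruction vanishes and $W_{1,A} = W'_{1,A}$.

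The main technical obstacle, I expect, is the bookkeeping to make the ``difference of two inclusions'' argument precise at the level of $B$-pairs rather than Galois representations: one must realize both triangulations simultaneously in terms of compatible matrices $(\rho_e, \rho_{\mathrm{dR}}, P)$ (as in the proof of Proposition \ref{12}) in block upper-triangular form and check that the change-of-basis matrix relating the two block structures is, after reduction, the identity, so that its non-trivial part lands in the Hom-group above. An alternative and perhaps cleaner route is to phrase everything via the saturated sub-object: $W_{1,A}$ and $W'_{1,A}$ are both equal to the saturation of the image of a single map from a rank one $A$-$B$-pair, and one shows this map is unique up to scalar by the vanishing of the relevant $\mathrm{H}^0$. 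Once the base case is in hand, the inductive step of passing to $W_A/W_{1,A}$ is routine: by the saturatedness, $W_A/W_{1,A}$ is again an $A$-$B$-pair (Lemma \ref{s4}-type statement, or directly since both $W_{i,A}/W_{1,A}$ are $A$-$B$-pairs), it deforms $W/W_1$, and its two triangulations have parameters $\{\delta_j\}_{j=2}^n$ which still satisfy the hypothesis of the lemma for all pairs $2 \leq i < j \leq n$. This completes the proof that $D_{W,\mathcal{T}} \hookrightarrow D_W$.
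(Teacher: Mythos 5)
Your proposal is correct, but it takes a genuinely different route from the paper's proof of the base case $W_{1,A}=W'_{1,A}$, and the extra work you invest is not needed. The paper argues directly at the level of $A$-$B$-pairs: since $W_{i+1,A}/W_{i,A}$ (and likewise $W'_{i+1,A}/W'_{i,A}$), viewed as $E$-$B$-pairs, are successive extensions of $W(\delta_{i+1})$, and $W_{1,A}^{\vee}$ a successive extension of $W(\delta_1^{-1})$, the hypothesis together with Proposition \ref{i}(1) and left-exactness of $\mathrm{H}^0(G_K,-)$ gives $\mathrm{H}^0(G_K, W_{1,A}^{\vee}\otimes (W_{i+1,A}/W_{i,A}))=0$ for all $i\geq 1$, and hence $\mathrm{Hom}_{G_K}(W_{1,A},W_{1,A})=\mathrm{Hom}_{G_K}(W_{1,A},W_A)=\mathrm{Hom}_{G_K}(W_{1,A},W'_{1,A})$; the identity of $W_{1,A}$ then witnesses that $W_{1,A}\hookrightarrow W_A$ factors through $W'_{1,A}$ and, symmetrically, that $W'_{1,A}\hookrightarrow W_A$ factors through $W_{1,A}$. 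You instead add an induction on the length of $A$, pass to the special fibre, and locate an obstruction class in $\mathrm{Hom}_{G_K}(W_1, W/W_1)$ — note you write $W_A/W_1$ where you mean the $E$-$B$-pair $W/W_1$, a slip that should be corrected — whose vanishing comes from the same computation of $\mathrm{H}^0$'s. Both arguments rest on the identical cohomological input, so yours is sound, but the length induction and the reduction to the special fibre are redundant once one notices that the $\mathrm{H}^0$ vanishing already holds for the $A$-modules themselves by left-exactness; the ``cleaner route'' you gesture at in your last paragraph is essentially the paper's argument.
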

\begin{proof}
Let $W_A$ be a deformation of $W$ over $A$, let $0\subseteq W_{A ,1}\subseteq \cdots \subseteq W_{A ,n-1}\subseteq W_{A}$ 
and $0\subseteq W'_{A ,1}\subseteq \cdots \subseteq W'_{A, n-1}\subseteq W_{A}$ be two triangulations which are 
lifts of $\mathcal{T}$. It suffices to show the equalities $W_{A, i}=W'_{A, i}$ for all $i$. By induction, it suffices 
to show the equality $W_{A ,1}= W'_{A ,1}$. To prove this, we first consider $\mathrm{Hom}_{G_K}(W_{1,A}, W_A)$. 
This is equal to  $\mathrm{H}^0(G_K, W^{\vee}_{1, A}\otimes W_A)$. Because $\mathrm{H}^0(G_K, -)$ is left exact 
and because $\mathrm{H}^0(G_K, W(\delta))=0$ for any $\delta:K^{\times}\rightarrow E^{\times}$ such that 
$\delta\not= \prod_{\sigma\in \mathcal{P}}\sigma^{k_{\sigma}}$ for any $\{k_{\sigma}\}_{\sigma\in\mathcal{P}}
\in \prod_{\sigma\in\mathcal{P}}\mathbb{Z}_{\leqq 0}$ by Proposition $\ref{i}$, we have 
$$\mathrm{H}^0(G_K, W_{1, A}^{\vee}\otimes 
(W_{i+1, A}/W_{i, A}))\allowbreak =\mathrm{H}^0(G_K, W_{1, A}^{\vee}\otimes( W'_{i+1, A}/W'_{i, A}))=0$$
 for any $i\geqq 1$. 
Hence, we obtain equalities  $$\mathrm{Hom}_{G_K}(W_{1, A}, W_{1,A})\allowbreak =\mathrm{Hom}_{G_K}(W_{1,A}, \allowbreak W_A)=\mathrm{Hom}_{G_K}(W_{1, A}, 
W'_{1,A}).$$ This means that the given inclusion $W_{1, A}\hookrightarrow W_A$ factors through $W'_{1, A}\hookrightarrow W_A$. 
By the same reason, the inclusion $W'_{1,A}\hookrightarrow W_A$ also factors through $W_{1, A}\hookrightarrow W_{A}$. 
Hence, we obtain an equality $W_{1,A}=W'_{1, A'}$.
\end{proof}

Next, we prove relative representability of $D_{W,\mathcal{T}}$. Before doing this, we need to define the following 
functor which is the $B$-pair version of Lemma 2.3.8 of \cite{Bel-Ch09}.
Let $W=(W_e,W^+_{\mathrm{dR}})$ be an $E$-$B$-pair. Then we define a functor by 
$$F(W):=\{x\in W_e\cap W^+_{\mathrm{dR}}| \exists n\in\mathbb{Z}_{\geqq 1}, (\sigma_1-1)(\sigma_2-1)\cdots (\sigma_n-1)x=0 
, \forall \sigma_1,\cdots,\sigma_n\in G_K\}$$ which is an $E$-vector space with a $G_K$-action, hence 
$F$ is a left exact functor form the category of $E$-$B$-pairs to that of $E[G_K]$-modules. By this definition, we 
obtain the following lemma, which is the $B$-pair version of Lemma 2.3.8 of \cite{Bel-Ch09}.
\begin{lemma}\label{14.5}
Let $\delta:K^{\times}\rightarrow E^{\times}$ be a continuous homomorphism, then 
$F(W(\delta))\not= 0$ if and only if $\mathrm{H}^0(G_K, W(\delta))\not=0$.
\end{lemma}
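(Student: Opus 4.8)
The plan is to reduce both sides of the equivalence to a statement about the $G_K$-stable $E$-subspace $W(\delta)_e\cap W(\delta)^+_{\mathrm{dR}}$ of $W(\delta)_{\mathrm{dR}}:=B_{\mathrm{dR}}\otimes_{B_e}W(\delta)_e$, and then to argue by an elementary induction on the ``order of unipotency'' of an element. First I would record the identification
$$\mathrm{H}^0(G_K,W(\delta))\isom \bigl(W(\delta)_e\cap W(\delta)^+_{\mathrm{dR}}\bigr)^{G_K},$$
the intersection being taken inside $W(\delta)_{\mathrm{dR}}$. This is immediate from the definition of the Galois cohomology of a $B$-pair as the continuous cochain cohomology of the complex $W_e\oplus W^+_{\mathrm{dR}}\rightarrow W_{\mathrm{dR}}$, $(x,y)\mapsto x-y$ (equivalently from the identification $\mathrm{H}^0(G_K,W(\delta))\isom \mathrm{Hom}_{G_K}(B_E,W(\delta))$ recalled in \S 2.1), using that both $W(\delta)_e$ and $W(\delta)^+_{\mathrm{dR}}$ inject into $W(\delta)_{\mathrm{dR}}$.

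For the ``if'' direction there is nothing to prove: a nonzero class in $\mathrm{H}^0(G_K,W(\delta))$ is a nonzero $x\in W(\delta)_e\cap W(\delta)^+_{\mathrm{dR}}$ with $(\sigma_1-1)x=0$ for all $\sigma_1\in G_K$, so $x\in F(W(\delta))$ (with $n=1$). For the ``only if'' direction, assume $F(W(\delta))\neq 0$ and pick $0\neq x\in F(W(\delta))$; let $n\geqq 1$ be minimal with $(\sigma_1-1)(\sigma_2-1)\cdots(\sigma_n-1)x=0$ for all $\sigma_1,\dots,\sigma_n\in G_K$. Since $W(\delta)_e$ and $W(\delta)^+_{\mathrm{dR}}$ are $G_K$-stable and the $G_K$-action on each is $E$-linear, the operator $\sigma-1$ maps the $E$-subspace $W(\delta)_e\cap W(\delta)^+_{\mathrm{dR}}$ into itself for every $\sigma\in G_K$. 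If $n=1$ then $x$ is already $G_K$-invariant and lies in $\mathrm{H}^0(G_K,W(\delta))$. If $n\geqq 2$, then by minimality of $n$ there exist $\sigma_1,\dots,\sigma_{n-1}\in G_K$ with $y:=(\sigma_1-1)\cdots(\sigma_{n-1}-1)x\neq 0$; by the previous remark $y\in W(\delta)_e\cap W(\delta)^+_{\mathrm{dR}}$, and $(\sigma-1)y=(\sigma-1)(\sigma_1-1)\cdots(\sigma_{n-1}-1)x=0$ for every $\sigma\in G_K$, so $y$ is a nonzero element of $\bigl(W(\delta)_e\cap W(\delta)^+_{\mathrm{dR}}\bigr)^{G_K}\isom \mathrm{H}^0(G_K,W(\delta))$. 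Hence $\mathrm{H}^0(G_K,W(\delta))\neq 0$.

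Note that this argument uses nothing about the rank, so it in fact shows $F(W)\neq 0$ if and only if $\mathrm{H}^0(G_K,W)\neq 0$ for an arbitrary $E$-$B$-pair $W$. I do not expect a genuine obstacle here: the proof is elementary, and the only points that need care are the bookkeeping in the identification of $\mathrm{H}^0(G_K,W(\delta))$ with the $G_K$-invariants of $W(\delta)_e\cap W(\delta)^+_{\mathrm{dR}}$ and the observation that $\sigma-1$ stabilizes that intersection, so that the inductive descent stays inside the relevant space. Combined with Proposition \ref{i}\,(1), the lemma then yields the concrete criterion that $F(W(\delta))\neq 0$ precisely when $\delta=\prod_{\sigma\in\mathcal{P}}\sigma^{k_{\sigma}}$ for some $\{k_{\sigma}\}_{\sigma\in\mathcal{P}}\in\prod_{\sigma\in\mathcal{P}}\mathbb{Z}_{\leqq 0}$, which is what is needed for the relative representability of $D_{W,\mathcal{T}}$.
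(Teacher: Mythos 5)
Your argument is correct, and it is evidently the intended one: the paper gives no proof of Lemma \ref{14.5} (it just notes the analogy with Lemma 2.3.8 of Bella\"iche--Chenevier), so the elementary ``unipotent descent'' you write out is precisely what is being left to the reader. The key points are exactly the two you isolate: the identification $\mathrm{H}^0(G_K,W)\isom (W_e\cap W^+_{\mathrm{dR}})^{G_K}$ coming from the degree-zero term of the mapping-cone complex, and the observation that the operators $\sigma-1$ stabilize $W_e\cap W^+_{\mathrm{dR}}$, so that taking a minimal $n$ and applying one fewer $(\tau_i-1)$ produces a nonzero $G_K$-invariant. Your remark that the argument uses nothing about rank one is also accurate; the statement is phrased for $W(\delta)$ only because, combined with Proposition \ref{i}(1), it gives the usable numerical criterion needed in the proof of Proposition \ref{15}.
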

\begin{proof}
The proof is essentially the same as that of  Lemma 2.3.8 of \cite{Bel-Ch09}, so we omit it.
\end{proof}

Using this lemma, we prove the relative representability of $D_{W,\mathcal{T}}$.

\begin{prop}\label{15}
Let $W$ be a trianguline representations with a triangulation $\mathcal{T}$ 
such that the parameter $\{\delta_i\}_{\i=1}^n$ satisfies $\delta_j/\delta_i\not=\prod_{\sigma\in \mathcal{P}}
\sigma^{k_{\sigma}}$ for any $1\leqq i<j\leqq n$ and $\{k_{\sigma}\}_{\sigma\in\mathcal{P}}\in \prod_{\sigma\in\mathcal{P}}\mathbb{Z}_{\leqq 0}$,
then the natural map of functors $D_{W,\mathcal{T}}\rightarrow D_{W}$ is relatively representable.
\end{prop}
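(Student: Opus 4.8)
The plan is to verify a relative form of Schlessinger's fibre-product criterion for the inclusion of functors $D_{W,\mathcal{T}}\hookrightarrow D_W$, which is legitimate by Lemma \ref{14}. Concretely it is enough to prove the following: for any diagram $A'\to A_0\leftarrow A''$ in $\mathcal{C}_E$ with $A''\to A_0$ surjective, setting $A:=A'\times_{A_0}A''$, if a deformation $W_A$ of $W$ over $A$ is such that both $W_{A'}:=W_A\otimes_A A'$ and $W_{A''}:=W_A\otimes_A A''$ admit triangulations lifting $\mathcal{T}$, then so does $W_A$, and this triangulation is unique (the latter being what makes the relevant maps bijective, in particular when $A''\to A_0$ is a small extension). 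Uniqueness follows from the devissage already used in the proof of Lemma \ref{14}: left-exactness of $\mathrm{H}^0(G_K,-)$ together with Proposition \ref{i} and the hypothesis on the $\delta_j/\delta_i$ give $\mathrm{H}^0(G_K,W^{\vee}_{1,B}\otimes(W_{i+1,B}/W_{i,B}))=0$ over every $B\in\mathcal{C}_E$, so any two liftings of $\mathcal{T}$ over $B$ coincide; in particular the given triangulations of $W_{A'}$ and of $W_{A''}$ restrict to one and the same triangulation of $W_{A_0}:=W_A\otimes_A A_0$.

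For existence, writing $\mathcal{T}_{A'}:(0\subseteq W_{1,A'}\subseteq\cdots\subseteq W_{n,A'}=W_{A'})$ and similarly over $A''$ and $A_0$, I would set $W_{i,A}:=W_{i,A'}\times_{W_{i,A_0}}W_{i,A''}$, viewed inside $W_A=W_{A'}\times_{W_{A_0}}W_{A''}$, and check that $\mathcal{T}_A:(0\subseteq W_{1,A}\subseteq\cdots\subseteq W_{n,A}=W_A)$ is a triangulation of $W_A$ lifting $\mathcal{T}$. The required facts are exactly the $B$-pair gluing computations already carried out in the proof of Proposition \ref{9}: a fibre product of finite free $B_e\otimes_{\mathbb{Q}_p}(-)$-modules (resp. $B^+_{\mathrm{dR}}\otimes_{\mathbb{Q}_p}(-)$-modules) along a surjection is again finite free of the expected rank, with a basis obtained by lifting and matching bases over $A_0$ using surjectivity of $A''\to A_0$. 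Applying this to the $W_e$- and $W^+_{\mathrm{dR}}$-components of the $W_{i,\bullet}$, of the quotients $W_\bullet/W_{i,\bullet}$ (which are $\bullet$-$B$-pairs since the $W_{i,\bullet}$ are saturated, hence have free, in particular flat, components), and of the graded pieces $W_{i,\bullet}/W_{i-1,\bullet}$, and verifying $A$-flatness of the $(-)^+_{\mathrm{dR}}/t(-)^+_{\mathrm{dR}}$-terms as in Lemma \ref{2}, shows that each $W_{i,A}$ and $W_A/W_{i,A}$ and $W_{i,A}/W_{i-1,A}$ is an $A$-$B$-pair, that $W_{i,A}\hookrightarrow W_A$ is saturated, and that $W_{i,A}/W_{i-1,A}$ has rank one. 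Thus $\mathcal{T}_A$ is as wanted, and by the uniqueness above the comparison map of the criterion is bijective, which gives relative representability.

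An alternative organisation, presumably the one intended in view of the functor $F$ and Lemma \ref{14.5}, is to induct on $n$: the hypothesis (case $i=1$) gives $\mathrm{Hom}_{G_K}(W_1,W/W_1)=0$, so a saturated rank one sub-$B$-pair lifting $W_1$ is unique when it exists, and one has a natural isomorphism $D_{W,\mathcal{T}}\isom D_{W/W_1,\bar{\mathcal{T}}}\times_{D_{W/W_1}}D_W^{(1)}$, where $D_W^{(1)}$ classifies deformations of $W$ together with a lift of $W_1$ and $\bar{\mathcal{T}}$ is the induced triangulation of the rank $n-1$ pair $W/W_1$ (whose parameter $\{\delta_2,\dots,\delta_n\}$ again satisfies the hypotheses). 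Since relative representability is stable under base change and composition, one is reduced to $D_W^{(1)}\to D_W$, i.e.\ to constructing over a given deformation $W_A$ the candidate sub-$A$-$B$-pair lifting $W_1$ and showing that the locus where it exists is closed; here the functor $F$ provides a canonical such construction, the $B$-pair analogue of Lemma 2.3.8 of \cite{Bel-Ch09}, compensating for the fact that the category of $B$-pairs is not abelian. Either way I expect the main work to be this $B$-pair bookkeeping — freeness over $B_e\otimes_{\mathbb{Q}_p}A$ and $B^+_{\mathrm{dR}}\otimes_{\mathbb{Q}_p}A$, flatness of the degree-zero graded pieces, and existence of the relevant quotients as $A$-$B$-pairs, the analogues of Lemma 2.2.3 of \cite{Bel-Ch09} — rather than anything representation-theoretic, the cohomological input being supplied entirely by Proposition \ref{i} and the hypothesis on the $\delta_j/\delta_i$.
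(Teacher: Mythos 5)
Your first plan — verifying only the fibre-product condition — does not by itself establish relative representability. The paper, following \S23 of \cite{Ma97}, must verify \emph{three} conditions for the subfunctor $D_{W,\mathcal{T}}\subseteq D_W$: (1) stability under base change, (2) the fibre-product condition you state, and crucially (3) descent along injections: if $A\hookrightarrow A'$ in $\mathcal{C}_E$ and $W_A\in D_W(A)$ is such that $W_A\otimes_A A'$ acquires a triangulation lifting $\mathcal{T}$, then $W_A$ itself lies in $D_{W,\mathcal{T}}(A)$. Your argument addresses (1) (via Lemma~\ref{14}) and (2) (via the fibre-product gluing of $B$-pairs from the proof of Proposition~\ref{9}), but says nothing about (3) in its main line of reasoning. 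Note that Proposition~\ref{15} does \emph{not} assume $\mathrm{End}_{G_K}(W)=E$, so $D_W$ is not known to be pro-representable here; in that generality (3) is not a formal consequence of (1) and (2). (If one did assume $\mathrm{End}_{G_K}(W)=E$, as in Corollary~\ref{16}, one could argue that (1) and (2) already force $D_{W,\mathcal{T}}$ to be pro-representable by a quotient of $R_W$, and then (3) becomes automatic — but that is not the generality of the proposition being proved.)

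The paper's proof of (3) is precisely where the functor $F$ and Lemma~\ref{14.5} enter. Given $W_A$ whose base change to $A'$ is split trianguline with first step $W(\delta_{1,A'})$, one twists so that $\delta_{1,A'}\equiv 1\pmod{\mathfrak{m}_{A'}}$, computes $F(W_A\otimes_A A')=A'(\tilde\delta_{1,A'})$ using the unitary description of $W(\delta_{1,A'})$, deduces by left exactness, a length count and the base-change compatibility of $F$ that $F(W_A)$ is free of rank one over $A$ with $F(W_A)\otimes_A A'\isom F(W_A\otimes_A A')$, and then uses $F(W_A)$ to manufacture the sub-$A$-$B$-pair $W_{1,A}\subseteq W_A$; induction on $n$ finishes. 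This is the $B$-pair analogue of Proposition~2.3.9 of \cite{Bel-Ch09}. Your second paragraph shows you have the right tools in hand — you name $F$, Lemma~\ref{14.5}, Lemma~2.3.8 of \cite{Bel-Ch09}, and even sketch the reduction to $D_W^{(1)}\to D_W$ — but you offer it as an ``alternative organisation'' to the fibre-product argument rather than as the missing third condition that the fibre-product argument cannot supply. Presented as you have it, the first plan stops short of a proof; the second plan, made precise, \emph{is} (a reorganisation of) the proof, but it still has to include the fibre-product check for $D_W^{(1)}$ as well as the $F$-functor descent, so the two paragraphs should be combined rather than offered as interchangeable routes.
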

\begin{proof}
By $\S 23$ of \cite{Ma97}, it suffices to check that the map $D_{W,\mathcal{T}}\rightarrow D_W$  satisfies the fallowing three conditions (1), (2), (3).
\begin{itemize}
\item[(1)] For any map $A\rightarrow A'$ in $\mathcal{C}_E$ and $W_{A}\in D_{W,\mathcal{T}}(A)$, we have
 $W_A\otimes_A A'\in D_{W,\mathcal{T}}(A')$. 
 \item[(2)]For any maps $A'\rightarrow A$, $A"\rightarrow A$ in $\mathcal{C}_E$
  and $W^{'"}\in D_W(A'\times_A A")$, if $W^{'"}\otimes_{A'\times_A A"} A'\in D_{W,\mathcal{T}}(A')$ and 
  $W^{'"}\otimes_{A'\times_A A"}A"\in D_{W,\mathcal{T}}(A")$, then we also have $W^{'"}\in D_{W,\mathcal{T}}(A'\times_A A")$.
  \item[(3)] For any inclusion $A\hookrightarrow A'$ in $\mathcal{C}_E$ and $W_{A}\in D_W(A)$, if $W_A\otimes_A A'\in 
   D_{W,\mathcal{T}}(A')$, then we have $W_A\in D_{W,\mathcal{T}}(A)$. 
   \end{itemize}

   The condition (1) is trivial. For (2), let $W^{'"}\in D_{W}(A'\times_A A")$ be a deformation such that $W_{A'}:=W^{'"}\otimes_{A'\times_A A"}A'
   \in D_{W,\mathcal{T}}(A')$ and $W_{A"}:=W^{'"}\otimes_{A'\times_A A"}A"\in D_{W,\mathcal{T}}(A")$. We put $W_A:=W^{'"}\otimes_{A'\times_A A"}A$. In the same way 
    as in the proof of Proposition $\ref{9}$, we have an isomorphism 
   $W^{'"}\isom W_{A'}\times_{W_A} W_{A"}$. 
   By Lemma $\ref{14}$, the triangulations of $W_A$ induced from $W_{A'}$ and $W_{A"}$ coincide, hence 
   these triangulations induce a triangulation of $W^{'"}\isom W_{A'}\times_{W_A} W_{A"}$, i.e. $W^{'"}\in D_{W,\mathcal{T}}(A'\times_A A")$.
   
   Finally, we check the condition (3). The proof is essentially the same as that of Proposition 2.3.9 
   of \cite{Bel-Ch09}, but here we give the proof for convenience of readers. 
   Let $W\in D_W(A)$ and $A\hookrightarrow A'$ be an inclusion such that 
   $W_A\otimes_A A'\in D_{W,\mathcal{T}}(A')$. Let $0\subseteq W_{1,A'}\subseteq\cdots\subseteq W_{n-1,A'}\subseteq W_{A}
   \otimes_A A'$ be a triangulation which is a lifting of $\mathcal{T}$. By induction on the rank of $W$, it suffices to show that there exists 
   a rank one sub $A$-$B$-pair $W_{1,A}\subseteq W_A$ such that $W_{1,A}\otimes_{A}A'=W_{1,A'}$ and that $W_A/W_{1,A}$ is an $A$-$B$-pair. By Proposition \ref{5}, there exists a continuous homomorphism $\delta_{1,A'}:K^{\times}\rightarrow 
   A^{' \times}$ such that $W_{1,A'}\isom W(\delta_{1,A'})$. Twisting $W$ by $\delta_1^{-1}$, we may assume that 
   $\delta_{1,A'}\equiv 1$ (mod $\mathfrak{m}_{A'}$). Under this assumption, we apply the functor $F(-)$. 
   Because $\delta_{1,A'}$ is unitary,  there exists a continuous character $\widetilde{\delta}_{1,A'}:G_K^{\mathrm{ab}}\rightarrow 
   A^{' \times}$ such that $W(\delta_{1,A'})\isom W(A'(\widetilde{\delta}_{1,A'}))=(\bold{B}_{e}\otimes_{\mathbb{Q}_p}A'(\widetilde{\delta}_{1,A'}), 
   \bold{B}^+_{\mathrm{dR}}\otimes_{\mathbb{Q}_p}A'(\widetilde{\delta}_{1,A'}))$, hence we have $W_{1,A',e}\cap W^+_{1,A',\mathrm{dR}}
   =A'(\widetilde{\delta}_{1,A'})$. Moreover, because the image of $\delta_{1,A'}$ is in $1+\mathfrak{m}_{A'}$, we also have 
   $F(W_{1,A'})=A'(\widetilde{\delta}_{1,A'})$. Next, because $(W_A\otimes_A A')/W_{1,A'}$ is a successive extension of $W(\delta_{i}\delta^{-1}_{1})$ ($i\geqq 2$) as an $E$-$B$-pair, 
   the left exactness of $F$ implies that $F((W_A\otimes_A A')/W_{1,A'})=0$ by Lemma \ref{14.5}.
   Applying $F$ to the short exact sequence $0\rightarrow 
   W_{1,A'}\rightarrow W_A\otimes_A A'\rightarrow (W_A\otimes_A A')/W_{1,A'}\rightarrow 0$, we obtain 
   $A'(\widetilde{\delta}_{1,A'})\isom F(W_{1,A'})=F(W_A\otimes_A A')$. In the same way, we obtain 
   $E=F(W_1)=F(W)$. Then, by the left exactness and by considering the length, 
   we can show that $F(W_A)$ is a free $A$-module of rank one and that the natural map $F(W_A)\rightarrow F(W)$ induced by the 
   natural quotient map $W_A\rightarrow W$ is a surjection and that the natural map $F(W_A)\otimes_A A'\rightarrow 
   F(W_A\otimes_A A')$ is isomorphism. If we define the character $\widetilde{\delta}_{1,A}:G_K^{\mathrm{ab}}\rightarrow A^{\times}$ such that 
   $F(W_A)\isom A(\widetilde{\delta}_{1,A})$ and define $W_{1,A}$ as the image of the natural map $(\bold{B}_{e}\otimes_{\mathbb{Q}_p}F(W_A), 
   \bold{B}^+_{\mathrm{dR}}\otimes_{\mathbb{Q}_p}F(W_A))\rightarrow W_A$ induced form $F(W)\hookrightarrow W_{A,e}, F(W)\hookrightarrow 
   W^+_{A,\mathrm{dR}}$, then we can check that $W_{1,A}$ is a rank one $A$-$B$-pair and that the quotient $W_A/W_{1,A}$ is 
   also an $A$-$B$-pair and that $W_{1,A}\otimes_A A'\isom W_{1,A'}$, which proves the condition (3), hence we finish to prove the proposition.

     \end{proof}
     \begin{corollary}\label{16}
     Let $W$ be a trianguline $E$-$B$-pair with a triangulation $\mathcal{T}$. 
     Assume that $\mathrm{End}_{G_K}(W)=E$
      and that the parameter $\{\delta_i\}_{i=1}^n$ of $\mathcal{T}$ satisfies
      $\delta_j/\delta_i\not= \prod_{\sigma\in \mathcal{P}}\sigma^{k_{\sigma}}$  for any $1\leqq i<j\leqq n$ and $\{k_{\sigma}\}_{\sigma\in \mathcal{P}}
      \in \prod_{\sigma\in\mathcal{P}}\mathbb{Z}_{\leqq 0}$, 
      then the functor $D_{W,\mathcal{T}}$ is pro-representable by a quotient $R_{W,\mathcal{T}}$ of $R_W$.
      \end{corollary}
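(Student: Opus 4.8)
The plan is to deduce this corollary directly from the pro-representability of $D_W$ (Proposition \ref{9}, which applies since $\mathrm{End}_{G_K}(W)=E$) together with the relative representability of the forgetful map $D_{W,\mathcal{T}}\to D_W$ (Proposition \ref{15}, which applies because the parameter satisfies the hypothesis $\delta_j/\delta_i\neq\prod_{\sigma\in\mathcal{P}}\sigma^{k_\sigma}$). The general principle is standard (see $\S 23$ of \cite{Ma97}): if a functor $G$ on $\mathcal{C}_E$ is pro-represented by a complete noetherian local $E$-algebra $R$, and $H\to G$ is a relatively representable morphism of functors, then $H$ is pro-represented by a complete noetherian local $E$-algebra $R'$ equipped with a morphism $R\to R'$; moreover relative representability in the sense of the three conditions (1), (2), (3) checked in the proof of Proposition \ref{15} forces $R\to R'$ to be surjective, so that $R'$ is a quotient of $R$.

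Concretely, first I would recall that by Lemma \ref{14} the hypothesis on the parameter guarantees that $D_{W,\mathcal{T}}$ is actually a subfunctor of $D_W$, so that for each $A\in\mathcal{C}_E$ the set $D_{W,\mathcal{T}}(A)\subseteq D_W(A)$ is the subset of those deformation classes admitting a (then necessarily unique) lift of the triangulation. Next, writing $R_W$ for the universal deformation ring and $W^{\mathrm{univ}}$ for the universal object, relative representability says precisely that there is an ideal-theoretic condition cutting out $D_{W,\mathcal{T}}$: for every $\mathcal{C}_E$-algebra map $R_W\to A$ classifying a deformation $W_A$, whether $W_A$ lies in $D_{W,\mathcal{T}}(A)$ depends only on the induced map, and the locus of such maps is represented by a quotient $R_{W,\mathcal{T}}$ of $R_W$. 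Conditions (1) (base change stability), (2) (descent along fiber products), and (3) (descent along injections) verified in Proposition \ref{15} are exactly Mazur's criteria for this quotient to exist; I would simply cite $\S 23$ of \cite{Ma97} to conclude that $D_{W,\mathcal{T}}$ is pro-represented by a quotient ring $R_{W,\mathcal{T}}$ of $R_W$.

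There is essentially no hard step here: this corollary is a formal consequence of the two preceding results, and the only thing to be careful about is making sure the hypotheses line up — namely that $\mathrm{End}_{G_K}(W)=E$ is what feeds Proposition \ref{9} to give the ambient representing ring $R_W$, while the condition on $\{\delta_i\}$ is what feeds Lemma \ref{14} and Proposition \ref{15}. So the proof I would write is just: by Proposition \ref{9} the functor $D_W$ is pro-representable by $R_W$ since $\mathrm{End}_{G_K}(W)=E$; by Proposition \ref{15} the natural transformation $D_{W,\mathcal{T}}\to D_W$ is relatively representable under the stated condition on the parameter; hence by $\S 23$ of \cite{Ma97} the functor $D_{W,\mathcal{T}}$ is pro-represented by a quotient $R_{W,\mathcal{T}}$ of $R_W$. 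The only potential subtlety worth a sentence is noting that the three conditions checked in Proposition \ref{15} are genuinely the right ones for Mazur's machine to apply, but that is already built into the statement of Proposition \ref{15}, so in practice the proof is a one-line invocation.
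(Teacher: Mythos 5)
Your proof is correct and matches the paper's own one-line argument: the paper also deduces Corollary \ref{16} directly from Proposition \ref{9} (pro-representability of $D_W$) and Proposition \ref{15} (relative representability of $D_{W,\mathcal{T}}\hookrightarrow D_W$), invoking the standard Mazur machinery. Your added commentary on how Lemma \ref{14} makes $D_{W,\mathcal{T}}$ a subfunctor and on why the three conditions in Proposition \ref{15} are exactly what is needed is accurate but not strictly necessary; the formal invocation is all the paper records.
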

      \begin{proof} This follows from Proposition $\ref{9}$ and Proposition $\ref{15}$.
      \end{proof}
      
 Next, we prove  the formal smoothness of the functor $D_{W,\mathcal{T}}$.
 \begin{prop}\label{17}
 Let $W$ be a trianguline $E$-$B$-pair of rank $n$ with a triangulation $\mathcal{T}$.
 Assume  that the parameter $\{\delta_i\}_{i=1}^{n}$ of $\mathcal{T}$ satisfies $\delta_i/\delta_j\not=
  |\mathrm{N}_{K/\mathbb{Q}_p}|\prod_{\sigma\in\mathcal{P}}\sigma^{k_{\sigma}} $  for any $1\leqq i< j\leqq n$ and 
  $\{k_{\sigma}\}_{\sigma\in\mathcal{P}}\in \prod_{\sigma\in\mathcal{P}}\mathbb{Z}_{\geqq 1}$, 
  then the functor $D_{W,\mathcal{T}}$ is formally smooth.
  \end{prop}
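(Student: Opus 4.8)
The plan is to verify formal smoothness directly from the definition: it suffices to show that for every small extension $f\colon A'\to A$ in $\mathcal{C}_E$, with kernel $I$ (so $I\cdot\mathfrak{m}_{A'}=0$ and $\dim_E I=1$), every trianguline deformation $(W_A,\iota,\mathcal{T}_A)$ of $(W,\mathcal{T})$ over $A$ admits a lift to a trianguline deformation over $A'$. I would argue by induction on the rank $n$, peeling off the bottom step $W_{1,A}\subseteq W_A$ of the triangulation $\mathcal{T}_A$. For the base case $n=1$ the triangulation is trivial, so $D_{W,\mathcal{T}}=D_{W(\delta_1)}$; here $\mathrm{ad}(W(\delta_1))=W(\delta_1)\otimes W(\delta_1)^{\vee}\isom B_E$ is the trivial $E$-$B$-pair, and the trivial character $\mathbf{1}$ is not of the form $|\mathrm{N}_{K/\mathbb{Q}_p}|\prod_{\sigma\in\mathcal{P}}\sigma^{k_{\sigma}}$ with $\{k_{\sigma}\}\in\prod_{\sigma\in\mathcal{P}}\mathbb{Z}_{\geqq 1}$ (such a character is non-trivial on $\mathcal{O}_K^{\times}$, by linear independence of the embeddings $\sigma$), so Proposition \ref{i}(2) gives $\mathrm{H}^2(G_K,\mathrm{ad}(W(\delta_1)))=0$ and Proposition \ref{12} shows $D_{W(\delta_1)}$ is formally smooth. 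In particular, for arbitrary $n$, the rank one bottom step $W_{1,A}$, which is a deformation of $W_1=W(\delta_1)$, always lifts to a rank one $A'$-$B$-pair $W_{1,A'}$.

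For the inductive step ($n\geqq 2$), put $\overline{W}:=W/W_1$ with the quotient triangulation $\overline{\mathcal{T}}$ of parameter $(\delta_2,\dots,\delta_n)$, and $\overline{W}_A:=W_A/W_{1,A}$, which is an $A$-$B$-pair since $W_{1,A}$ is saturated, with its induced triangulation $\overline{\mathcal{T}}_A$; this is a trianguline deformation of $(\overline{W},\overline{\mathcal{T}})$ over $A$. The pairs $(\delta_i,\delta_j)$ relevant to $(\overline{W},\overline{\mathcal{T}})$ form a subset of those for $(W,\mathcal{T})$, so the hypothesis of the proposition is inherited, and the induction hypothesis yields a lift $(\overline{W}_{A'},\overline{\mathcal{T}}_{A'})$ over $A'$. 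It then remains to lift the extension $0\to W_{1,A}\to W_A\to \overline{W}_A\to 0$ to an extension $0\to W_{1,A'}\to W_{A'}\to\overline{W}_{A'}\to 0$ of $A'$-$B$-pairs: given such a lift, $W_{A'}$ is an $A'$-$B$-pair (an extension of free, $A'$-flat $B$-pairs), the preimages in $W_{A'}$ of the steps of $\overline{\mathcal{T}}_{A'}$ give a triangulation $\mathcal{T}_{A'}$ of $W_{A'}$ reducing to $\mathcal{T}_A$, and after adjusting $\iota'$ one obtains a trianguline deformation $(W_{A'},\iota',\mathcal{T}_{A'})$ over $A'$ restricting to $(W_A,\iota,\mathcal{T}_A)$.

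The heart of the argument is the unobstructedness of this last lifting problem. Identifying extensions of $B$-pairs with $\mathrm{H}^1$ of the internal $\mathrm{Hom}$ (as for $E$-$B$-pairs in $\S$2.1), the class of $W_A$ lies in $\mathrm{H}^1(G_K,\mathrm{Hom}(\overline{W}_A,W_{1,A}))$, and from the short exact sequence of $B$-pairs
\[
0\to I\otimes_E(\overline{W}^{\vee}\otimes W_1)\to \mathrm{Hom}(\overline{W}_{A'},W_{1,A'})\to \mathrm{Hom}(\overline{W}_A,W_{1,A})\to 0
\]
(exact since the middle term is $A'$-flat and $I\cdot\mathfrak{m}_{A'}=0$) the obstruction to lifting the class of $W_A$ is its image under the connecting map in $I\otimes_E\mathrm{H}^2(G_K,\overline{W}^{\vee}\otimes W_1)$. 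Now, as an $E$-$B$-pair, $\overline{W}^{\vee}\otimes W_1$ is a successive extension with graded pieces $W(\delta_1/\delta_j)$, $j=2,\dots,n$; since $\mathrm{H}^3(G_K,-)=0$ (Theorem \ref{h}(1)), the long exact sequences show that $\mathrm{H}^2(G_K,\overline{W}^{\vee}\otimes W_1)$ is built by successive extensions out of the groups $\mathrm{H}^2(G_K,W(\delta_1/\delta_j))$, hence vanishes once each of these does. By Proposition \ref{i}(2), $\mathrm{H}^2(G_K,W(\delta_1/\delta_j))=0$ precisely because hypothesis (2) applied to the pair $1<j$ gives $\delta_1/\delta_j\neq|\mathrm{N}_{K/\mathbb{Q}_p}|\prod_{\sigma\in\mathcal{P}}\sigma^{k_{\sigma}}$ for all $\{k_{\sigma}\}\in\prod_{\sigma\in\mathcal{P}}\mathbb{Z}_{\geqq 1}$. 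Thus the obstruction vanishes and the extension lifts.

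Finally, two extensions over $A$ with equal class being equivalent (via an isomorphism inducing the identity on $W_{1,A}$ and on $\overline{W}_A$), one checks that the reduction of the lift $W_{A'}$ is isomorphic to $W_A$ compatibly with $\iota$ and with the triangulation built from $W_{1,A}\subseteq W_A$ and $\overline{\mathcal{T}}_A$, which closes the induction. I expect the main technical obstacle to be the bookkeeping of the deformation-of-extensions formalism for $B$-pairs (cf.\ $\S$2.3 of \cite{Bel-Ch09}) --- verifying the $A'$-flatness and exactness of the sequence above, the compatibility of the identification of $\mathrm{Ext}^1$ with $\mathrm{H}^1$ with base change, and that the constructed lift is isomorphic to $W_A$ as a \emph{triangulated} deformation rather than merely as an abstract extension --- while the cohomological input itself is immediate from the hypotheses and Proposition \ref{i}.
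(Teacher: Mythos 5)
Your proof is correct, and the cohomological engine is the same as the paper's, but you peel the triangulation from the \emph{bottom} while the paper peels from the \emph{top}. Concretely, the paper's inductive step first applies the induction hypothesis to $W_{n-1}$ and then views $W_A$ as a class in $\mathrm{Ext}^1(W(\delta_{n,A}), W_{n-1,A}) \isom \mathrm{H}^1(G_K, W_{n-1,A}(\delta_{n,A}^{-1}))$, with obstruction killed by $\mathrm{H}^2(G_K, W_{n-1}(\delta_n^{-1}))=0$; the graded pieces there are $W(\delta_i/\delta_n)$ for $i<n$, so the paper invokes hypothesis (2) for the pairs $(i,n)$. You instead quotient out $W_1$, apply the induction hypothesis to $\overline{W}=W/W_1$, and land the obstruction in $\mathrm{H}^2(G_K,\overline{W}^{\vee}\otimes W_1)$ with graded pieces $W(\delta_1/\delta_j)$ for $j>1$, invoking the hypothesis for the pairs $(1,j)$. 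Both decompositions are valid, use Proposition~\ref{i}(2) and Theorem~\ref{h}(1) in the same way, and require the same rank-one preliminary lift via Proposition~\ref{5}; neither buys anything substantive over the other. One small remark: your extra justification in the base case that the trivial character is not of the form $|\mathrm{N}_{K/\mathbb{Q}_p}|\prod_{\sigma}\sigma^{k_{\sigma}}$ with all $k_{\sigma}\geqq 1$ is correct (the $\sigma$'s are $\mathbb{Q}_p$-linearly independent on $K$, so the product is nontrivial on $\mathcal{O}_K^{\times}$) and a welcome clarification that the paper leaves implicit.
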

  \begin{proof}
  We prove this proposition by induction of the rank of $W$. 
  When $W$ is  of rank one, then we have $D_{W,\mathcal{T}}=D_W$ and 
 $\mathrm{ad}(W)$ is the trivial $E$-$B$-pair. 
 Hence $\mathrm{H}^2(G_K,\mathrm{ad}(W))\allowbreak=0$ by Proposition $\ref{i}$, and 
 $D_{W,\mathcal{T}}$ is formally smooth by Proposition $\ref{12}$.
 Let's assume that the proposition is proved for all trianguline $E$-$B$-pairs of rank less or equal $n-1$. Let $W$ be an $E$-$B$-pair of rank $n$ with a triangulation 
  $\mathcal{T}:0\subseteq W_1\subseteq \cdots \subseteq W_{n-1}\subseteq W_n=W$ 
  whose parameter $\{\delta_i\}_{i=1}^n$ satisfies the condition in the proposition. 
  Let $A'\rightarrow A$ be a small extension in $\mathcal{C}_E$, and let 
  $W_A$ be a trianguline deformation of $(W,\mathcal{T})$ with a triangulation 
  $\mathcal{T}_A:0\subseteq W_{1,A}\subseteq \cdots \subseteq W_{n-1,A}\subseteq W_{n,A}=W_A$ 
  which is a lift of $\mathcal{T}$. Then, it suffices to show that there exists a split trianguline $A'$-$B$-pair 
  $W_{A'}$ with a triangulation $0\subseteq W_{1,A'}\subseteq \cdots \subseteq W_{n-1.A'}\subseteq 
  W_{n,A'}=W_{A'}$ which is a lift of $W_A$ and $\mathcal{T}_A$. We take such a lift as follows. 
  Applying the induction hypothesis to $W_{n-1}$, there exists a trianguline $A'$-$B$-pair $W_{n-1,A'}$ 
  with a triangulation $0\subseteq W_{1,A'}\subseteq \cdots \subseteq W_{n-2,A'}\subseteq W_{n-1,A'}$ 
  which is a lift of $W_{n-1,A}$ and $0\subseteq W_{1,A}\subseteq\cdots\subseteq W_{n-1,A}$. 
  We put $\mathrm{gr}_nW_A:=W_{A}/W_{n-1,A}$. By the rank one case and by Proposition $\ref{5}$, there exists 
  a continuous homomorphism $\delta_{n,A'}:K^{\times}\rightarrow A^{' \times}$ such that the rank one $A'$-$B$-pair 
  $W(\delta_{n,A'})$ satisfies $W(\delta_{n,A'})\otimes_{A'}A=W(\delta_{n,A})\isom \mathrm{gr}_nW_A$, 
  where $\delta_{n,A}:K^{\times}\rightarrow A^{\times}$ is the composition of $\delta_{n,A'}$ with $A'\rightarrow A$. 
  We see the isomorphism class $[W_A]$ as an element in $\mathrm{Ext}^1(W(\delta_{n,A}), W_{n-1,A})\isom \mathrm{H}^1(G_K,
  W_{n-1,A}(\delta_{n,A}^{-1}))$. If we take the long exact sequence associated 
  to 
   \begin{equation*}
  0\rightarrow I\otimes_E W_{n-1}(\delta_n^{-1})\rightarrow W_{n-1,A'}(\delta^{-1}_{n,A'})\rightarrow 
  W_{n-1,A}(\delta^{-1}_{n,A})\rightarrow 0,
  \end{equation*}
  where $I\subseteq A'$ is the kernel of $A'\rightarrow A$,  then we obtain a long exact sequence
  \begin{multline*}
  \cdots \rightarrow \mathrm{H}^1(G_K,W_{n-1,A'}(\delta^{-1}_{n,A'}))\rightarrow \mathrm{H}^1(G_K, W_{n-1,A}(\delta^{-1}_{n,A}))\\
                                        \rightarrow I\otimes_E \mathrm{H}^2(G_K, W_{n-1}(\delta^{-1}_n))\rightarrow\cdots 
  \end{multline*}
  By the assumption on $\{\delta_i\}_{i=1}^n$ and by Proposition $\ref{i}$, we have 
  $\mathrm{H}^2(G_K, W_{n-1}(\delta^{-1}_n))\allowbreak=0$. 
  Hence, we can take a lift $[W_{A'}]\in\mathrm{Ext}^1(W(\delta_{n,A'}),W_{n-1,A'})\isom\mathrm{H}^1(G_K, W_{n-1,A'}\allowbreak(\delta^{-1}_{n,A'}))$ of $[W_A]$, which proves the proposition.
  \end{proof}
  
  Next, we calculate the dimension of $D_{W,\mathcal{T}}$. For this, we interpret the tangent space
  $D_{W,\mathcal{T}}(E[\varepsilon])$ in terms of Galois cohomology of $B$-pair as in Lemma $\ref{10}$.
  Let $W$ be a trianguline $E$-$B$-pair with a triangulation $\mathcal{T}:0\subseteq W_1\subseteq \cdots\subseteq 
  W_{n-1}\subseteq W_n=W$, then we define an $E$-$B$-pair $\mathrm{ad}_{\mathcal{T}}(W)$ by 
  $$\mathrm{ad}_{\mathcal{T}}(W):=\{f\in\mathrm{ad}(W)| f(W_i)\subseteq W_i \text{ for any } 1\leqq i\leqq n \}.$$ 
  \begin{lemma}\label{18}
  Let $W$ be a trianguline $E$-$B$-pair, then there exists 
  a canonical bijection of sets $$D_{W,\mathcal{T}}(E[\varepsilon])
  \isom \mathrm{H}^1(G_K, \mathrm{ad}_{\mathcal{T}}(W)).$$ 
  In particular, if $D_{W,\mathcal{T}}$ has a canonical structure of $E$-vector space 
  $($see the condition $(2)$ in Schlessinger's criterion $\ref{8}$$)$, then this bijection is an $E$-linear isomorphism.
  \end{lemma}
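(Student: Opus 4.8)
The plan is to adapt the argument of Lemma~\ref{10} to the trianguline setting, the only new ingredient being that one works throughout with bases compatible with the given flags. Recall that $\mathrm{ad}_{\mathcal{T}}(W)$ is a saturated sub $E$-$B$-pair of $\mathrm{ad}(W)$: since each inclusion $W_i\subseteq W_{i+1}$ is saturated, $W_{i,e}$ is a direct summand of $W_e$ as $B_e\otimes_{\mathbb{Q}_p}E$-module and $W^+_{i,\mathrm{dR}}$ a direct summand of $W^+_{\mathrm{dR}}$ as $B^+_{\mathrm{dR}}\otimes_{\mathbb{Q}_p}E$-module, so the ``parabolic'' subobject of flag-preserving endomorphisms is free on both the $e$- and the $\mathrm{dR}$-side, and in particular $\mathrm{H}^1(G_K,\mathrm{ad}_{\mathcal{T}}(W))$ is defined. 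Given a trianguline deformation $(W_{E[\varepsilon]},\iota,\mathcal{T}_{E[\varepsilon]})$ of $(W,\mathcal{T})$ over $E[\varepsilon]$, I would choose a $B_e\otimes_{\mathbb{Q}_p}E$-basis $e_1,\dots,e_n$ of $W_e$ with $e_1,\dots,e_i$ a basis of $W_{i,e}$ for each $i$, lift it to a $B_e\otimes_{\mathbb{Q}_p}E[\varepsilon]$-basis of $W_{E[\varepsilon],e}$ adapted to the flag $\{W_{i,E[\varepsilon],e}\}$ in the same way, make the analogous choices for $W^+_{\mathrm{dR}}$ and transport them to $W_{\mathrm{dR}}$; then the cocycle triple $(c_e,c_{\mathrm{dR}},c)$ produced exactly as in Lemma~\ref{10} is block upper triangular with respect to these flags, i.e. has values in $\mathrm{ad}_{\mathcal{T}}(W)$, and hence defines a class in $\mathrm{H}^1(G_K,\mathrm{ad}_{\mathcal{T}}(W))$. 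As in Lemma~\ref{10}, one checks this class is independent of all choices: two bases adapted to the same flags differ by a flag-preserving change of basis, so the resulting cocycles differ by the coboundary of an element of $\mathrm{ad}_{\mathcal{T}}(W)$; and equivalences of trianguline deformations preserve the $W_{i,E[\varepsilon]}$ by definition, so do not affect the class.

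For the inverse map, starting from a class in $\mathrm{H}^1(G_K,\mathrm{ad}_{\mathcal{T}}(W))$ I would pick a cocycle representative $(c_e,c_{\mathrm{dR}},c)$ with values in $\mathrm{ad}_{\mathcal{T}}(W)$ and run the construction in the proof of Lemma~\ref{10} (equivalently, the proof of Proposition~\ref{12} for the small extension $E[\varepsilon]\to E$) to obtain a deformation $W_{E[\varepsilon]}$ of $W$. Because the cocycle preserves the flags, the sub-$E[\varepsilon]$-modules of $W_{E[\varepsilon],e}$ and of $W^+_{E[\varepsilon],\mathrm{dR}}$ generated over $B_e\otimes_{\mathbb{Q}_p}E[\varepsilon]$, resp. $B^+_{\mathrm{dR}}\otimes_{\mathbb{Q}_p}E[\varepsilon]$, by the first $i$ basis vectors are $G_K$-stable and assemble into a chain $0\subseteq W_{1,E[\varepsilon]}\subseteq\cdots\subseteq W_{n,E[\varepsilon]}=W_{E[\varepsilon]}$ which reduces to $\mathcal{T}$ modulo $\varepsilon$. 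The step I expect to require a genuine (if short) argument is checking that this chain is an honest triangulation, namely that each $W_{i,E[\varepsilon]}$ is an $E[\varepsilon]$-$B$-pair, that $W_{i,E[\varepsilon]}\hookrightarrow W_{i+1,E[\varepsilon]}$ is saturated, and that the graded quotients are rank one $E[\varepsilon]$-$B$-pairs. For this I would reduce modulo $\varepsilon$: each inclusion becomes the saturated inclusion $W_i\hookrightarrow W_{i+1}$ of $E$-$B$-pairs, so the argument in the proof of Lemma~\ref{s4} (that an inclusion of $A$-$B$-pairs which is injective and saturated after $\otimes_A E$ has an $A$-$B$-pair cokernel) shows $W_{i+1,E[\varepsilon]}/W_{i,E[\varepsilon]}$ is an $E[\varepsilon]$-$B$-pair, of rank one since its reduction is; $E[\varepsilon]$-flatness of the $W_{i,E[\varepsilon]}$ is automatic from their construction as free modules.

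It then remains to observe that the two maps are mutually inverse, which is immediate by inspection of the construction (as in Lemma~\ref{10}), and that when $D_{W,\mathcal{T}}(E[\varepsilon])$ carries the $E$-vector space structure afforded by condition $(H_2)$ of Schlessinger's criterion (Theorem~\ref{8}), the bijection is $E$-linear. This last point is the same verification as in Lemma~\ref{10}: sums and scalar multiples of trianguline deformations over $E[\varepsilon]$ are computed by Baer sums of extensions, Baer sums of trianguline extensions are again trianguline, and the construction is visibly compatible — via $\mathrm{ad}_{\mathcal{T}}(W)\hookrightarrow\mathrm{ad}(W)$ and the forgetful map $D_{W,\mathcal{T}}\to D_W$ — with the isomorphism of Lemma~\ref{10}.
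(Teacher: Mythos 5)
Your proof is correct and takes essentially the same approach as the paper's: both adapt the construction of Lemma~\ref{10} by working with flag-compatible data (the paper phrases this as a ``filtered splitting'' $W_{e,E[\varepsilon]}=W_e e_1\oplus W_e e_2$ compatible with $\mathcal{T}_{E[\varepsilon]}$, you phrase it as a basis adapted to the flags; these are the same choice), observe the resulting cocycle triple lands in $\mathrm{ad}_{\mathcal{T}}(W)$, and invert by lifting a flag-preserving cocycle. You spell out the verification that the chain $W_{i,E[\varepsilon]}$ is an honest $E[\varepsilon]$-triangulation in slightly more detail (via the Lemma~\ref{s4} argument) than the paper, which simply asserts bijectivity, but the underlying argument is identical.
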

  \begin{proof}
The construction of the map $D_{W,\mathcal{T}}(E[\varepsilon])\rightarrow 
\mathrm{H}^1(G_K,\mathrm{ad}_{\mathcal{T}}(W))$ is the same as in the proof of 
Lemma $\ref{10}$. We put $\mathrm{ad}_{\mathcal{T}}(W):=(\mathrm{ad}_{\mathcal{T}}(W_e), \mathrm{ad}_{\mathcal{T}}(W^+_{\mathrm{dR}}))$. Let $W_{E[\varepsilon]}:=(W_{e,E[\varepsilon]},W^+_{\mathrm{dR},E[\varepsilon]})$ be a trianguline deformation 
of $(W,\mathcal{T})$ over $E[\varepsilon]$ with a lifted triangulation $\mathcal{T}_{E[\epsilon]}:0\subseteq W_{1,E[\varepsilon]},\subseteq\cdots\subseteq 
W_{n-1,E[\varepsilon]}\subseteq W_{n,E[\varepsilon]}=W_{E[\varepsilon]}$. Then, we can take a splitting 
$W_{e,E[\varepsilon]}=W_e e_1\oplus W_ee_2$ as a filtered $\bold{B}_{e}\otimes_{\mathbb{Q}_p}E$-module such that 
$W_e e_1=\varepsilon W_{e,E[\varepsilon]}$ and that the natural map $W_ee_2\hookrightarrow W_{e,E[\varepsilon]}
\rightarrow W_{e,E[\varepsilon]}/\varepsilon W_{e,E[\varepsilon]}\isom W_e$ sends  $ye_2$ to 
$y$ for any $y\in W_e$.
If we define $c_e:G_K\rightarrow \mathrm{Hom}_{\bold{B}_{e}\otimes_{\mathbb{Q}_p}E}(W_e, W_e)$ in the same way as in the proof 
of Lemma $\ref{10}$, we can check that the image of $c_e$ is contained in $\mathrm{ad}_{\mathcal{T}}(W_e)$. In the same way,  
we can define $c_{\mathrm{dR}}:G_K\rightarrow \mathrm{ad}_{\mathcal{T}}(W^+_{\mathrm{dR}})$ from a 
filtered splitting $W^+_{\mathrm{dR},E[\varepsilon]}=W^+_{\mathrm{dR}}e_1\oplus W^+_{\mathrm{dR}}e'_2$. 
Moreover, we can define $c\in \mathrm{ad}_{\mathcal{T}}(W_{\mathrm{dR}})$ by $ye_2=c(y)e_1+ye'_2$ for $y\in W_{\mathrm{dR}}$.
Then, the map $$D_{W,\mathcal{T}}(E[\varepsilon])\rightarrow \mathrm{H}^1(G_K, \mathrm{ad}_{\mathcal{T}}(W)):[(W_{E[\varepsilon]},
\mathcal{T}_{E[\varepsilon]})]\mapsto [(c_e,c_{\mathrm{dR}},c)]$$ defines a bijection, and this is an 
$E$-linear isomorphism when $D_{W,\mathcal{T}}(E[\varepsilon])$ 
has a canonical structure of an $E$-vector space.
\end{proof}

We calculate the dimension of $R_{W,\mathcal{T}}$.
\begin{prop}\label{19}
Let $W$ be a split trianguline $E$-$B$-pair of rank $n$ with a triangulation 
$\mathcal{T}:0\subseteq W_1\subseteq \cdots\subseteq W_{n-1}\subseteq W_n=W$. 
We assume that $(W,\mathcal{T})$ satisfies the following conditions, 
\begin{itemize}
\item[(0)]$\mathrm{End}_{G_K}(W)=E$,
\item[(1)]$\delta_j/\delta_i\not= \prod_{\sigma\in\mathcal{P}}\sigma^{k_{\sigma}}$ for any $1\leqq i < j\leqq n$ and $\{k_{\sigma}\}_{\sigma\in\mathcal{P}}\in \prod_{\sigma\in\mathcal{P}}\mathbb{Z}_{\leqq 0}$,
\item[(2)]$\delta_i/\delta_j\not= |\mathrm{N}_{K/\mathbb{Q}_p}|\prod_{\sigma\in\mathcal{P}}\sigma^{k_{\sigma}}$ for any $1\leqq i < j\leqq n$ and $\{k_{\sigma}\}_{\sigma\in\mathcal{P}}\in \prod_{\sigma\in\mathcal{P}}\mathbb{Z}_{\geqq 1}$, 
\end{itemize}
then the universal trianguline deformation ring $R_{W,\mathcal{T}}$ is a quotient ring of $R_W$ such 
that $$R_{W,\mathcal{T}}\isom E[[T_1,\cdots, T_{d_n}]]\text{ for }d_n:=\frac{n(n+1)}{2}[K:\mathbb{Q}_p]+1.$$
\end{prop}
\begin{proof}
By Proposition $\ref{15}$ and Proposition $\ref{17}$ and Lemma $\ref{18}$, it suffices to show that $\mathrm{dim}_E\mathrm{H}^1(G_K, \mathrm{ad}_{\mathcal{T}}(W))=d_n$. 
We prove this by induction on the rank $n$ of $W$. 
When $n=1$, then $\mathrm{ad}_{\mathcal{T}}(W)=\mathrm{ad}(W)$ is the trivial 
$E$-$B$-pair, hence the proposition 
follows from Proposition $\ref{i}$. Let $(W,\mathcal{T})$ be a split trianguline $E$-$B$-pair of rank $n$ satisfying all the conditions in the propositionas. Put $\mathcal{T}_{n-1}:0\subseteq 
W_{1}\subseteq \cdots \subseteq W_{n-2}\subseteq W_{n-1}$ the triangulation of $W_{n-1}(\subseteq W)$ which is  induced from $\mathcal{T}$. 
Then, for any $f\in \mathrm{ad}_{\mathcal{T}}(W)$, the restriction of $f$ 
to $W_{n-1}$ is an element of $\mathrm{ad}_{\mathcal{T}_{n-1}}(W_{n-1})$ and this defines a short exact sequence 
of $E$-$B$-pair
\begin{equation*}
0\rightarrow \mathrm{Hom}(W(\delta_n), W)\rightarrow \mathrm{ad}_{\mathcal{T}}(W)\rightarrow 
\mathrm{ad}_{\mathcal{T}_{n-1}}(W_{n-1})\rightarrow 0.
\end{equation*}
From this, we obtain
$$\mathrm{rank}(\mathrm{ad}_{\mathcal{T}}(W))=\mathrm{rank}(\mathrm{ad}_{\mathcal{T}_{n-1}}(W_{n-1}))
+n=1+2+\cdots +n=\frac{n(n+1)}{2}$$
by induction.
 By Theorem $\ref{h}$, 
it suffices to show that $\mathrm{H}^0(G_K,\mathrm{ad}_{\mathcal{T}}\allowbreak(W))=E$ and $\mathrm{H}^2(G_K,\mathrm{ad}_{\mathcal{T}}(W))=0$
. For $H^0$, this follows from the following natural inclusions
$$E\subseteq \mathrm{H}^0(G_K, \mathrm{ad}_{\mathcal{T}}(W))\subseteq \mathrm{H}^0(G_K, 
\mathrm{ad}(W))=E.$$ We prove $\mathrm{H}^2(G_K, \mathrm{ad}_{\mathcal{T}}(W))=0$ by 
induction of the rank of $W$. 
When $n=1$, this follows from Proposition $\ref{i}$. When $W$ is of rank $n$, from the above short exact sequence, we obtain the following 
 long exact sequence 
 $$\cdots \rightarrow \mathrm{H}^2(G_K, \mathrm{Hom}(W(\delta_n), W))\rightarrow \mathrm{H}^2(G_K, \mathrm{ad}_{\mathcal{T}}(W))
 \rightarrow \mathrm{H}^2(G_K, \mathrm{ad}_{\mathcal{T}_{n-1}}(W_{n-1}))\rightarrow 0.$$
Because we have $\mathrm{H}^2(G_K, \mathrm{Hom}\allowbreak (W(\delta_n), W))=0$ by Proposition $\ref{i}$ and by the assumption on $\{\delta_i\}_{i=1}^n$, we obtain the equality $\mathrm{H}^2(G_K, \mathrm{ad}_{\mathcal{T}}(W))=0$ by induction, which proves the proposition. 
\end{proof}

\subsection{Deformations of benign $B$-pairs}

In this final subsection, we study benign representations or more generally benign $B$-pairs  
which is a class of  potentially crystalline and trianguline $B$-pairs and have some very good properties for trianguline 
deformations and play a crucial role in the problem of the Zariski density of modular Galois (or crystalline) 
representations in some deformation spaces of global (or local) $p$-adic Galois representations.
This class was defined by Kisin in the case where $K=\mathbb{Q}_p$ and the rank is $2$ in \cite{Ki03} and \cite{Ki10}.
He studied some deformation theoretic properties of this class in \cite{Ki03} and used these in a crucial way in his 
proof of Zariski density of two dimensional crystalline representations of $G_{\mathbb{Q}_p}$.
For higher dimensional and the $\mathbb{Q}_p$-case, Bella\"iche-Chenevier \cite{Bel-Ch09} and Chenevier \cite{Ch09b} were 
the first ones who noticed the importance of benign representations in the study of $p$-adic families 
of trianguline representations. In particular, Chenevier \cite{Ch09b} (where he calls ``generic" instead of benign)
discovered and proved a crucial theorem concerning the tangent spaces of the universal deformation 
rings of benign representations. 
In fact, by using this theorem, Chenevier \cite{Ch09b} proved some theorems concerning 
the Zariski density of modular Galois representations in some deformation spaces of global $p$-adic representations. 

The aim of this subsection is to generalize the definition of benign representations and the 
Chenevier's theorem for any $K$.

\subsubsection{Benign $B$-pairs}
Let $P(X)\in \mathcal{O}_K[X]$ be a polynomial such that $P(X)\equiv \pi_K X$ (mod $X^2$) 
and $P(X)\equiv X^q$ (mod $\pi_K$), where $q:=p^f$ and $f:=[K_0:\mathbb{Q}_p]$. We take the Lubin-Tate formal group 
law $\mathcal{F}$ over $\mathcal{O}_K$ such that $[\pi_K]=P(X)$, where $[-]:\mathcal{O}_K\isom \mathrm{End}(\mathcal{F})$.  We denote by $K_n$  the abelian extension of $K$ 
generated by $[\pi_K^n]$-torsion points of $\mathcal{F}(\overline{K})$ for any $n$, then we have a canonical isomorphism $\chi_{\mathrm{LT},n}:\mathrm{Gal}(K_n/K)\isom( \mathcal{O}_K^{\times}/
\pi^n\mathcal{O}_K)^{\times}$. We put $K_{\mathrm{LT}}:=\cup_{n=1}^{\infty}K_n$ and $G_n:=\mathrm{Gal}(K_n/K)$.

In \cite{Ki10}, \cite{Bel-Ch09} or \cite{Ch09b} etc, benign representation is defined as a special class of crystalline representations.
But, as we show in the sequel, we can easily generalize the main theorem to some potentially crystalline representations.
Hence, before defining benign representations, we first define the following class of potentially crystalline representations.

\begin{defn}
Let $W$ be an $E$-$B$pair. We say that $W$ is crystabelline if $W|_{G_{L}}$ is a crystalline $E$-$B$-pair of 
$G_{L}$ for a finite abelian extension $L$ of $K$.
\end{defn}
\begin{rem}
Because a finite abelian extension $L$ of $K$ is contained in $K_mL'$ for some $m\geqq 0$ and for 
a finite unramified extension $L'$ of $K$, by using Hilbert 90, we can easily show that $W$ is crystabelline if and only if 
$W|_{G_{K_m}}$ is crystalline for some $m\geqq 1$.
\end{rem}

Let $W$ be a crystabelline $E$-$B$-pair of rank $n$ such that $W|_{G_{K_m}}$ is crystalline for some $m$. Because $K_m$ is totally ramified over $K$, $\bold{D}^{K_m}_{\mathrm{cris}}(W):=(\bold{B}_{\mathrm{max}}\otimes_{\mathbb{Q}_p}W)^{G_{K_m}}$  is 
a free $K_0\otimes_{\mathbb{Q}_p}E$-module of rank $n$. 
We take an embedding $\sigma:K_0\hookrightarrow \overline{E}$. This defines a map 
$\sigma:K_0\otimes_{\mathbb{Q}_p}E\rightarrow \overline{E}:x\otimes y\rightarrow \sigma(x)y$.
Using this map, we define the $\sigma$-component  $$\bold{D}^{K_m}_{\mathrm{cris}}(W)_{\sigma}:=\bold{D}^{K_m}_{\mathrm{cris}}(W)
\otimes_{K_0\otimes_{\mathbb{Q}_p}E, \sigma}\overline{E},$$ this has a $\overline{E}$-linear 
$\varphi^f$-action and a $\overline{E}$-linear $G_m$-action. Let $\{\alpha_1,\cdots,\alpha_n\}$ be the solution in $\overline{E}$ 
(with multiplicities) of the characteristic polynomial $\mathrm{det}_{\overline{E}}(T\cdot \mathrm{id}-\varphi ^f |_{\bold{D}^{K_m}_{\mathrm{cris}}
(W)_{\sigma}})\in \overline{E}[T]$. Because the actions of $\varphi^f$ and $G_m$ commute, each generalized $\varphi^f$-eigenvector subspace
 of $\bold{D}^{K_m}_{\mathrm{cris}}(W)_{\sigma}$ is preserved by the action of $G_m$. 
 Hence we can take a $\overline{E}$-basis $\{e_{1,\sigma},\cdots,e_{n,\sigma}\}$ of $\bold{D}^{K_m}_{\mathrm{cris}}(W)_{\sigma}$ such that $e_{i,\sigma}$ is a generalized eigenvector of 
 $\varphi^f$ for the eigenvalue $\alpha_i\in \overline{E}^{\times}$ and $G_m$ acts on $e_{i,\sigma}$ by a character $\widetilde{\delta}_i:G_m\rightarrow \overline{E}^{\times}$ for any $i$.  We change the numbering of $\{\alpha_1,\cdots,\alpha_n\}$ so that the basis $e_{1,\sigma}, e_{2,\sigma},\cdots,e_{n,\sigma}$ gives a $\varphi^f$-Jordan decomposition of $\bold{D}^{K_m}_{\mathrm{cris}}(W)_{\sigma}$ by this order. 
 Because we have $\{\sigma, \varphi^{-1}\sigma,\cdots, \varphi^{-(f-1)}\sigma\}=\mathrm{Hom}_{\mathbb{Q}_p}(K_0, \overline{E})$ and 
$$\varphi^i:\bold{D}^{K_m}_{\mathrm{cris}}(W)_{\sigma}\isom \bold{D}^{K_m}_{\mathrm{cris}}(W)_{\varphi^{-i}\sigma}:x\otimes y\mapsto \varphi^i(x)\otimes y$$ ($x\in \bold{D}^{K_m}_{\mathrm{cris}}(W)$ and $y\in \overline{E}$) is a $\overline{E}[\varphi^f,G_m]$-isomorphism, 
the set $\{\alpha_1,\cdots,\alpha_n\}$ doesn't depend on the choice of $\sigma:K_0\hookrightarrow \overline{E}$. 
If we put $$e_i:=e_{i,\sigma}+\varphi(e_{i,\sigma})+\cdots +\varphi^{f-1}(e_{i,\sigma})\in \bold{D}^{K_m}_{\mathrm{cris}}(W)\otimes_{E}\overline{E},$$ we have $$\bold{D}^{K_m}_{\mathrm{cris}}(W)\otimes_E\overline{E}=K_0\otimes_{\mathbb{Q}_p}\overline{E}e_1\oplus\cdots \oplus K_0\otimes_{\mathbb{Q}_p}\overline{E}e_n$$ such that the subspace $K_0\otimes_{\mathbb{Q}_p}\overline{E}e_1\oplus\cdots\oplus K_0\otimes_{\mathbb{Q}_p}\overline{E}e_i$ is preserved by 
$\varphi$ and the action of $G_m$ for any $i$. Moreover, if we take a sufficiently large finite extension $E'$ of $E$, then we have $e_i\in \bold{D}^{K_m}_{\mathrm{cris}}(W)\otimes_E E'$ and $$\bold{D}^{K_m}_{\mathrm{cris}}(W)\otimes_E E'=
K_0\otimes_{\mathbb{Q}_p}E'e_1\oplus \cdots\oplus K_0\otimes_{\mathbb{Q}_p}E'e_n$$ and $\alpha_i\in E'$ and $\widetilde{\delta}_i:G_m\rightarrow E^{'\times}$ for any $i$.

Using these arguments, we first study a relation between crystabelline $E$-$B$-pairs and trianguline $E$-$B$-pairs.
\begin{lemma}\label{20.5}
Let $W$ be an $E$-$B$-pair of rank $n$. The following conditions are equivalent,
\begin{itemize}
\item[(1)]$W$ is crystabelline,
\item[(2)]$W$ is trianguline $($i.e. $W\otimes_E E'$ is a
split trianguline $E'$-$B$-pair for a finite extension $E'$ of $E$$)$ and potentially crystalline.
\end{itemize}
\end{lemma}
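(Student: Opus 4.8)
The plan is to treat the two implications separately; $(1)\Rightarrow(2)$ will be almost immediate from the construction carried out just above the statement, while $(2)\Rightarrow(1)$ is the substantive direction.

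For $(1)\Rightarrow(2)$: a crystabelline $E$-$B$-pair is potentially crystalline by definition, so only trianguline-ness needs proof. By the Remark preceding the lemma, $W|_{G_{K_m}}$ is crystalline for some $m\ge 0$; put $D:=D^{K_m}_{\mathrm{cris}}(W)$, an $E$-filtered $(\varphi,G_m)$-module over $K$. The discussion preceding the statement produces, after base change to a sufficiently large finite extension $E'/E$, a $\varphi$- and $G_m$-stable flag $0\subseteq K_0\otimes_{\mathbb{Q}_p}E'e_1\subseteq \cdots\subseteq K_0\otimes_{\mathbb{Q}_p}E'e_1\oplus\cdots\oplus K_0\otimes_{\mathbb{Q}_p}E'e_n=D\otimes_EE'$; equipping each step with the induced filtration makes it a flag of sub-$E'$-filtered $(\varphi,G_m)$-modules with rank one graded pieces. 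Applying the equivalence of categories of Corollary \ref{s3} (with $E'$-coefficients, $A=E'$) to this flag yields a triangulation of $W\otimes_EE'$, so $W$ is trianguline.

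For $(2)\Rightarrow(1)$: first I reduce to $E'=E$. For a finite Galois $L/K$ one has a natural isomorphism $D^L_{\mathrm{cris}}(W\otimes_EE')\isom D^L_{\mathrm{cris}}(W)\otimes_EE'$, so $W\otimes_EE'$ is crystalline over $L$ if and only if $W$ is; hence crystabelline-ness is unaffected by replacing $E$ by $E'$, and we may assume $W$ is split trianguline with triangulation $\mathcal{T}:0\subseteq W_1\subseteq\cdots\subseteq W_n=W$, parameters $\{\delta_i\}_{i=1}^n$, and $W|_{G_L}$ crystalline for some finite Galois $L/K$. \textbf{Step A.} Each $W_i$, and then each graded piece $W(\delta_i)=W_i/W_{i-1}$, is potentially crystalline, being crystalline over $G_L$. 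Indeed $D^L_{\mathrm{cris}}(-)=(B_{\mathrm{cris}}\otimes_{B_e}(-)_e)^{G_L}$ is left exact on short exact sequences of $B$-pairs, and $\dim_{L_0}D^L_{\mathrm{cris}}(W_i)\leqq[E:\mathbb{Q}_p]\mathrm{rank}(W_i)$ always; applying left exactness to $0\to W_i|_{G_L}\to W|_{G_L}\to(W/W_i)|_{G_L}\to 0$ gives $\dim_{L_0}D^L_{\mathrm{cris}}(W_i)\geqq[E:\mathbb{Q}_p]n-[E:\mathbb{Q}_p](n-i)=[E:\mathbb{Q}_p]i$, so equality holds, and then the same count gives $\dim_{L_0}D^L_{\mathrm{cris}}(W(\delta_i))=[E:\mathbb{Q}_p]$. \textbf{Step B.} By the classification of rank one de Rham $E$-$B$-pairs of \cite{Na09}, a potentially crystalline rank one $E$-$B$-pair becomes crystalline over $K_m$ for $m\gg 0$ (after removing an unramified twist, which is itself crystalline over $K$). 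So there is an $m$ with $W(\delta_i)|_{G_{K_m}}$ crystalline for all $i$; put $M:=K_m$, a finite abelian (totally ramified) extension of $K$, and enlarge $L$ so that $M\subseteq L$.

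\textbf{Step C.} Now $D^L_{\mathrm{cris}}(W)$ is a filtered $(\varphi,G_{L/K})$-module carrying the $G_{L/K}$-stable flag $D^L_{\mathrm{cris}}(W_i)$ with graded pieces $D^L_{\mathrm{cris}}(W(\delta_i))$. Let $I\subseteq\mathrm{Gal}(L/M)$ be the inertia subgroup; it acts $L_0\otimes_{\mathbb{Q}_p}E$-linearly on $D^L_{\mathrm{cris}}(W)$. Since $W(\delta_i)$ is crystalline over $M$, we have $D^L_{\mathrm{cris}}(W(\delta_i))\isom D^M_{\mathrm{cris}}(W(\delta_i))\otimes_{M_0}L_0$ and $I$ acts trivially on it; hence $I$ acts unipotently on $D^L_{\mathrm{cris}}(W)$. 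But $I$ is finite and $L_0\otimes_{\mathbb{Q}_p}E$ is a finite product of fields of characteristic $0$, so a finite unipotent subgroup of $\mathrm{GL}$ over it is trivial; thus $I$ acts trivially on $D^L_{\mathrm{cris}}(W)$. Consequently $D^M_{\mathrm{cris}}(W)=D^L_{\mathrm{cris}}(W)^{\mathrm{Gal}(L/M)}$ has full rank over $M_0$ by Galois descent along the maximal unramified subextension of $L/M$, i.e. $W|_{G_M}$ is crystalline; since $M/K$ is abelian, $W$ is crystabelline. The step needing the most care is Step C: making precise the equivalence ``$W|_{G_M}$ crystalline $\Leftrightarrow$ the inertia of $\mathrm{Gal}(L/M)$ acts trivially on $D^L_{\mathrm{cris}}(W)$'' together with the bookkeeping of the various unramified subfields $L_0$, $M_0$. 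The driving mechanism is the vanishing of finite unipotent subgroups in characteristic $0$, applied to the linear inertia action on the triangulated module; the genuinely external input is the rank one classification used in Step B.
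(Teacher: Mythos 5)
Your proof is correct, and for the substantive direction $(2)\Rightarrow(1)$ it takes a genuinely different route from the paper. The paper argues by induction on the rank of $W$: assuming $W_{n-1}|_{G_{K_m}}$ is crystalline, it regards the extension class $[W]\in\mathrm{H}^1(G_K,W_{n-1}(\delta_n^{-1}))$, notes that it vanishes in $\mathrm{H}^1(G_L,B_{\mathrm{cris}}\otimes_{B_e}(W_{n-1}(\delta_n^{-1}))_e)$ for a finite Galois $L/K_m$ over which $W$ is crystalline, and then deduces vanishing already over $G_{K_m}$ from injectivity of the restriction map, which it gets from inflation--restriction together with $\mathrm{H}^1(\mathrm{Gal}(L/K_m),D^L_{\mathrm{cris}}(W_{n-1}(\delta_n^{-1})))=0$, a finite group acting on a $\mathbb{Q}_p$-vector space. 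You instead work in one shot on the module side: after Step B the inertia $I\subseteq\mathrm{Gal}(L/K_m)$ acts $L_0\otimes_{\mathbb{Q}_p}E$-linearly on $D^L_{\mathrm{cris}}(W)$, preserves the flag $D^L_{\mathrm{cris}}(W_i)$, and is trivial on the graded pieces (here you use exactness of $D^L_{\mathrm{cris}}$ on the triangulation, which follows from the dimension count in Step A), hence acts unipotently; since $I$ is finite and the minimal polynomial of a unipotent element dividing the separable polynomial $T^{|I|}-1$ forces $T-1$, the action is trivial, and unramified Galois descent (Hilbert 90 for $L_0/K_0$, noting $(K_m)_0=K_0$) gives $\dim_{K_0}D^{K_m}_{\mathrm{cris}}(W)=[E:\mathbb{Q}_p]\,\mathrm{rank}(W)$. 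Both approaches turn on the same characteristic-zero phenomenon for finite groups --- you use the nonexistence of nontrivial finite-order unipotents, the paper uses vanishing of degree-one cohomology of a finite group with $\mathbb{Q}$-coefficients --- but yours avoids the induction and the $B$-pair cohomology entirely, at the cost of the bookkeeping in Step C that you correctly flag: one must note that $I$ fixes $L_0$ pointwise so the action really is $L_0\otimes E$-linear, and that $L^I=L_0\cdot K_m$ with $\mathrm{Gal}(L^I/K_m)\isom\mathrm{Gal}(L_0/K_0)$, so descent lands in a free $K_0\otimes_{\mathbb{Q}_p}E$-module of the right rank. The forward direction $(1)\Rightarrow(2)$ in your writeup is the same as the paper's.
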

\begin{proof}
First we assume that $W$ is crystabelline. By the above argument, for a sufficiently large finite extension 
$E'$ of $E$, we have $\bold{D}^{K_m}_{\mathrm{cris}}(W)\otimes_E E'=K_0\otimes_{\mathbb{Q}_p}E'e_1\oplus 
\cdots \oplus K_0\otimes_{\mathbb{Q}_p}E'e_n$ as above and $K_0\otimes_{\mathbb{Q}_p}E'e_1
\oplus \cdots\oplus K_0\otimes_{\mathbb{Q}_p}E'e_i$ is a sub $E'$-filtered ($\varphi,G_m$)-module 
of $\bold{D}^{K_m}_{\mathrm{cris}}(W\otimes_EE')$ for any $i$. Hence $W\otimes_E E'$ is 
split trianguline and potentially crystalline by Theorem $\ref{e}$.

Next we assume that $W$ is trianguline and potentially crystalline. By extending the coefficient, we may assume that $W$ is split trianguline. 
We take a triangulation $0\subseteq W_1\subseteq \cdots \subseteq W_n=W$ of $W$. 
Because the sub or quotient $B$-pairs of crystalline $B$-pairs are again crystalline, $W_i$ and $W_i/W_{i-1}$ are 
all potentially crystalline. Because  $W_i/W_{i-1}$ is of rank one, $W_i/W_{i-1}|_{G_m}$ is crystalline for any $i$ and for any sufficiently large $m$. We claim that $W|_{G_m}$ is also crystalline. 
We prove this claim by induction on the rank $n$ of $W$. When $n=1$, this is trivial. 
We assume that the claim is proved for the rank $n-1$ case, hence $W_{n-1}|_{G_m}$ is crystalline.
If we put $W/W_{n-1}\isom W(\delta_n)$, we have $[W]\in \mathrm{H}^1(G_K, W_{n-1}(\delta_n^{-1}))$. By the assumption, 
there exists a finite Galois extension $L$ of $K_m$ such that $[W]$ is contained in 
$\mathrm{Ker}(\mathrm{H}^1(G_K, W_{n-1}(\delta_n^{-1}))\rightarrow \mathrm{H}^1(G_L, \bold{B}_{\mathrm{max}}\otimes_{\bold{B}_{e}}
(W_{n-1}(\delta_n^{-1}))_e))$.  Hence, it suffices to show that the natural map $\mathrm{H}^1(G_{K_m}, \bold{B}_{\mathrm{max}}\otimes_{\bold{B}_{e}}(W_{n-1}(\delta_n^{-1}))_e)\rightarrow \mathrm{H}^1(G_L, \bold{B}_{\mathrm{max}}\otimes_{\bold{B}_{e}}(W_{n-1}(\delta_n^{-1}))_e)$ is an injection. By the inflation 
 restriction sequence, the kernel of this map is $\mathrm{H}^1(\mathrm{Gal}(L/K_m), \bold{D}^L_{\mathrm{cris}}(W_{n-1}(\delta_n^{-1})))=0$. 
 Hence $W|_{G_m}$ is crystalline, i.e. $W$ is crystabelline.

\end{proof}

From now on, we consider a crystabelline $E$-$B$-pair $W$ of rank $n$ satisfying that 
$\bold{D}^{K_m}_{\mathrm{cris}}(W)= K_0\otimes_{\mathbb{Q}_p}Ee_1\oplus \cdots \oplus 
K_0\otimes_{\mathbb{Q}_p}Ee_n$ such that 
$K_0\otimes_{\mathbb{Q}_p}Ee_i$ is preserved by ($\varphi,G_m$) and $\varphi^f(e_i)=\alpha_ie_i$ for 
some $\alpha_i\in E^{\times}$ such that $\alpha_i\not= \alpha_j$ for any $i\not= j$. 
Let $\mathfrak{S}_n$ be the $n$-th permutation group.
For any $\tau\in \mathfrak{S}_n$, we define a filtration on $\bold{D}^{K_m}_{\mathrm{cris}}(W)$ 
by $E$-filtered ($\varphi,G_m$)-modules as follows,

$$\mathcal{F}_{\tau}:0\subseteq F_{\tau,1}\subseteq \cdots\subseteq F_{\tau, n-1}
\subseteq F_{\tau,n}=\bold{D}^{K_m}_{\mathrm{cris}}(W)$$  such that 
$$F_{\tau,i}:=K_0\otimes_{\mathbb{Q}_p}Ee_{\tau(1)}\oplus \cdots\oplus K_0\otimes_{\mathbb{Q}_p}Ee_{\tau(i)}$$ for any $1\leqq i \leqq n$
, where the filtration on $F_{\tau,i}$ is induced from that on $\bold{D}^{K_m}_{\mathrm{cris}}(W)$.
We put $\mathrm{gr}_{\tau,i}\bold{D}^{K_m}_{\mathrm{cris}}(W):=F_{\tau,i}/F_{\tau,i-1}$ for any 
$1\leqq i\leqq n$. By Theorem $\ref{e}$, there exists a 
filtration $$\mathcal{T}_{\tau}:0\subseteq W_{\tau,1}\subseteq\cdots \subseteq W_{\tau,n-1}\subseteq W_{\tau,n}=W$$ 
such that $W_{\tau,i}|_{G_m}$ is crystalline and $$\bold{D}^{K_m}_{\mathrm{cris}}(W_{\tau,i})=F_{\tau,i}.$$
For any $i$,  $W_{\tau,i}/W_{\tau,i-1}$ is a rank one crystabelline $E$-$B$-pair such that 
$\bold{D}^{K_m}_{\mathrm{cris}}(W_{\tau,i}/W_{\tau,i-1})\allowbreak\isom \mathrm{gr}_{\tau,i}\bold{D}^{K_m}_{\mathrm{cris}}(W)$. By Lemma 4.1 of \cite{Na09} and by its proof, 
there exists a set of integers $\{k_{(\tau,i),\sigma}\}_{\sigma\in\mathcal{P}}$ and 
a homomorphism $\widetilde{\delta}_{i}:K^{\times}\rightarrow E^{\times}$  satisfying $\widetilde{\delta}_{i}|_{1+\pi_K^m\mathcal{O}_K}=1$ and $\widetilde{\delta}_{i}(\pi_K)=1$, 
such that 
$$W_{\tau,i}/W_{\tau,i-1}\isom W(\delta_{\alpha_{\tau(i)}}\widetilde{\delta}_{\tau(i)}\prod_{\sigma\in\mathcal{P}}\sigma^{k_{(\tau,i),\sigma}})$$
for any $1\leqq i\leqq n$, where $\delta_{\alpha_i}:K^{\times}\rightarrow E^{\times}$ is the homomorphism such that 
$\delta_{\alpha_i}|_{\mathcal{O}_K^{\times}}=1$ and $\delta_{\alpha_i}(\pi_K)=\alpha_i$. For any $\sigma\in\mathcal{P}$, the set $\{k_{(\tau,1),\sigma},k_{(\tau,2),\sigma},\cdots,k_{(\tau,n),\sigma}\}$ is independent of $\tau\in \mathfrak{S}_n$  because these numbers are 
the $\sigma$-components of the Hodge-Tate weights of $W$. We denote this set (with multiplicities) by $\{k_{1,\sigma},k_{2,\sigma},\cdots, k_{n,\sigma}\}$ 
such that $k_{1,\sigma}\geqq k_{2,\sigma}\geqq\cdots\geqq k_{n,\sigma}$ for any $\sigma\in\mathcal{P}$. Under these notations, we define the notion of benign $E$-$B$-pair as follows.
\begin{defn}\label{26}
Let $W$ be a rank $n$ crystabelline $E$-$B$-pair as above. 
We say that $W$ is a benign 
$E$-$B$-pair if the following conditions hold:
\begin{itemize}
\item[(1)] For any $i\not= j$, we have $\alpha_i/\alpha_j\not=1, p^f, p^{-f}$.
\item[(2)] For any $\sigma\in\mathcal{P}$, we have $k_{1,\sigma}>k_{2,\sigma}>\cdots >k_{n-1,\sigma}>k_{n,\sigma}$.
\item[(3)] For any 
$\tau\in \mathfrak{S}_n$ and $\sigma\in \mathcal{P}$, we have 
$k_{(\tau,i),\sigma}=k_{i,\sigma}$ for any $1\leqq i\leqq n$.
\end{itemize}
\end{defn}

\begin{rem}
By definition, if $W$ is a benign, then we have $$W_{\tau,i}/W_{\tau,i-1}
\isom W(\delta_{\alpha_{\tau(i)}}\widetilde{\delta}_{\tau(i)}\prod_{\sigma\in\mathcal{P}}\sigma^{k_{i,\sigma}})$$ 
for any $\tau\in\mathfrak{S}_n$ and $1\leqq i\leqq n$.
\end{rem}
\begin{lemma}\label{21}
Let $W$ be a benign $E$-$B$-pair.
If $W_1$ is a saturated sub $E$-$B$-pair of $W$, then 
$W_1$ and $W/W_1$ are also benign $E$-$B$-pairs.
\end{lemma}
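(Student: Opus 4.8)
The plan is to deduce the three conditions of Definition~\ref{26} for $W_1$ and $W/W_1$ from the corresponding conditions for $W$, using only two structural facts: that $\varphi^f$ acts with pairwise distinct eigenvalues $\alpha_i$ on each $\sigma$-component of $D^{K_m}_{\mathrm{cris}}(W)$ (part of the benign setup), and the equivalence of categories of Theorem~\ref{e}, together with its exactness, between potentially crystalline $B$-pairs that become crystalline over $G_{K_m}$ and $E$-filtered $(\varphi,G_m)$-modules over $K$.

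First I would record the structural facts. Since $W|_{G_{K_m}}$ is crystalline and $W_1\hookrightarrow W$ is saturated, both $W_1$ and $W/W_1$ become crystalline over $G_{K_m}$ (sub-objects and quotients of crystalline $B$-pairs are crystalline, as in the proof of Lemma~\ref{20.5}), so they are crystabelline, of ranks $r$ and $n-r$. By exactness of $D^{K_m}_{\mathrm{cris}}$ on this category, $D^{K_m}_{\mathrm{cris}}(W_1)$ is a sub-$E$-filtered $(\varphi,G_m)$-module of $D^{K_m}_{\mathrm{cris}}(W)$ with the induced filtration, and $D^{K_m}_{\mathrm{cris}}(W/W_1)\isom D^{K_m}_{\mathrm{cris}}(W)/D^{K_m}_{\mathrm{cris}}(W_1)$. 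Fixing $\sigma\colon K_0\hookrightarrow E$, the subspace $D^{K_m}_{\mathrm{cris}}(W_1)_\sigma$ is $\varphi^f$-stable, hence a sum of eigenlines $\overline{E}e_{i,\sigma}$; writing $S$ for the set of indices that occur, the isomorphism $\varphi\colon D^{K_m}_{\mathrm{cris}}(W)_\sigma\isom D^{K_m}_{\mathrm{cris}}(W)_{\varphi^{-1}\sigma}$ sends the $\alpha_i$-eigenline to the $\alpha_i$-eigenline and preserves $D^{K_m}_{\mathrm{cris}}(W_1)$, so $S$ does not depend on $\sigma$. Hence $D^{K_m}_{\mathrm{cris}}(W_1)=\bigoplus_{i\in S}K_0\otimes_{\mathbb{Q}_p}Ee_i$ and $D^{K_m}_{\mathrm{cris}}(W/W_1)\isom\bigoplus_{i\notin S}K_0\otimes_{\mathbb{Q}_p}E\bar{e}_i$ as $(\varphi,G_m)$-modules (with $\bar{e}_i$ the image of $e_i$), whose $\varphi^f$-eigenvalues form a subset of $\{\alpha_1,\dots,\alpha_n\}$; these are pairwise distinct and inherit condition~(1), so $W_1$ and $W/W_1$ fit the setup preceding Definition~\ref{26}.

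The key point is then: for any $\tau\in\mathfrak{S}_n$ with $\{\tau(1),\dots,\tau(r)\}=S$ we have $F_{\tau,r}=\bigoplus_{i\in S}K_0\otimes_{\mathbb{Q}_p}Ee_i=D^{K_m}_{\mathrm{cris}}(W_1)$, hence $W_{\tau,r}=W_1$ by the equivalence of categories; moreover, for $1\le j\le r$ (resp. $1\le l\le n-r$) the step $W_{\tau,j}$ (resp. $W_{\tau,r+l}/W_1$), with its graded piece $\mathrm{gr}_{\tau,j}D^{K_m}_{\mathrm{cris}}(W)$ (resp. $\mathrm{gr}_{\tau,r+l}D^{K_m}_{\mathrm{cris}}(W)$), coincides with the $j$-th (resp. $l$-th) step and graded piece of the triangulation of $W_1$ (resp. of $W/W_1$) attached to the permutation of $S$ (resp. of its complement) induced by $\tau$; conversely every permutation of $S$, resp. of its complement, extends to such a $\tau$, so every triangulation of $W_1$ or of $W/W_1$ coming from the benign setup arises this way. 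Now apply condition~(3) for $W$ to such a $\tau$: the $\sigma$-Hodge--Tate weight of $\mathrm{gr}_{\tau,i}D^{K_m}_{\mathrm{cris}}(W)$ is $k_{i,\sigma}$ for every $i$. Reading this off for $i\le r$ shows the $\sigma$-Hodge--Tate weights of $W_1$ are exactly $\{k_{1,\sigma}>\dots>k_{r,\sigma}\}$ — so they are distinct, giving condition~(2) for $W_1$ — and that the $j$-th largest among them equals $k_{j,\sigma}$, which is precisely non-criticality (condition~(3)) of each such triangulation of $W_1$; reading it off for $i>r$ does the same for $W/W_1$, whose $\sigma$-Hodge--Tate weights are then $\{k_{r+1,\sigma}>\dots>k_{n,\sigma}\}$. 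This verifies all three conditions of Definition~\ref{26} for $W_1$ and for $W/W_1$; the cases $r=0$ and $r=n$ are trivial.

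I expect the only delicate point to be this last piece of bookkeeping — verifying that the intrinsically defined triangulations of $W_1$ and of $W/W_1$ are exactly the restrictions (resp. quotients) of triangulations $\mathcal{T}_\tau$ of $W$, compatibly with the induced filtrations, so that the forced position of their Hodge--Tate weights inside $W$ propagates conditions~(2) and~(3); the identification of $D^{K_m}_{\mathrm{cris}}(W_1)$ with a sum of $\varphi^f$-eigenlines and everything else is formal once the distinctness of the $\alpha_i$ and the equivalence of categories are invoked.
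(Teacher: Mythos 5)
Your proof is correct and simply fills in the details behind the paper's terse one-line justification, which asserts only that the lemma ``follows from the definition and the fact that sub objects and quotients of crystabelline $E$-$B$-pairs are crystabelline.'' The points you isolate — that $D^{K_m}_{\mathrm{cris}}(W_1)$ must be a sum of the eigenlines $K_0\otimes_{\mathbb{Q}_p}Ee_i$ with $i$ in a $\sigma$-independent subset $S$, and that every permutation of $S$ (resp.\ of its complement) extends to a $\tau\in\mathfrak{S}_n$ with $F_{\tau,r}=D^{K_m}_{\mathrm{cris}}(W_1)$ — are exactly the bookkeeping the author leaves implicit, and your use of condition (3) for $W$ to read off conditions (2) and (3) for $W_1$ and $W/W_1$ is the intended argument.
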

\begin{proof}
This follows from the definition and the fact that 
all the sub or quotient $E$-$B$-pairs of crystabelline $E$-$B$-pairs 
are crystabelline $E$-$B$-pairs.
\end{proof}

\subsubsection{Deformations of benign $B$-pairs}
\begin{lemma}\label{21.5}
Let $W$ be a potentially crystalline $E$-$B$-pair satisfying the condition 
$(1)$ of Definition $\mathrm{\ref{26}}$, then 
we have $\mathrm{H}^2(G_K,\mathrm{ad}(W))=0$ and
$(W,\mathcal{T}_{\tau})$ satisfies the conditions in Proposition $\ref{19}$ except the condition 
$\mathrm{End}_{G_K}(W)=E$ for any $\tau\in \mathfrak{S}_n$. 
\end{lemma}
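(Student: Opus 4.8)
The plan is to reduce both assertions to two numerical facts about the Frobenius eigenvalues $\alpha_1,\dots,\alpha_n\in E^{\times}$ attached to $W$ as in the setup preceding Definition~\ref{26}: that they are pairwise distinct (part of the running hypothesis, i.e. condition (1) of Definition~\ref{26}) and that $\alpha_i/\alpha_j\neq p^f$ for $i\neq j$, together with the trivial remark $\alpha_i\neq p^{-f}\alpha_i$ since $p^f\neq 1$. Both statements of the lemma will follow by translating the relevant $\mathrm{H}^0$/$\mathrm{H}^2$ groups, via Tate duality (Theorem~\ref{h}) and the dimension formulas of Proposition~\ref{i}, into $\mathrm{Hom}$-spaces of $E$-$B$-pairs, restricting those to $G_{K_m}$ (where $W$ becomes crystalline), and using Theorem~\ref{e} to compare with morphisms of filtered $(\varphi,G_m)$-modules, for which a nonzero morphism between rank-one objects forces equality of $\varphi^f$-eigenvalues.

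For $\mathrm{H}^2(G_K,\mathrm{ad}(W))=0$: since $\mathrm{ad}(W)=W\otimes W^{\vee}$ is self-dual, Tate duality identifies this space with the $E$-dual of $\mathrm{H}^0(G_K,\mathrm{ad}(W)(\chi_p))=\mathrm{Hom}_{G_K}(W,W\otimes W(E(\chi_p)))$, so it suffices to show the latter vanishes. A nonzero homomorphism there restricts to a nonzero $G_{K_m}$-homomorphism (restriction is injective on $\mathrm{Hom}$-groups, since $\mathrm{Hom}_{G_K}=\mathrm{Hom}_{G_{K_m}}^{G_m}$), hence by Theorem~\ref{e} to a nonzero morphism of filtered $(\varphi,G_m)$-modules $D^{K_m}_{\mathrm{cris}}(W)\to D^{K_m}_{\mathrm{cris}}(W)\otimes D^{K_m}_{\mathrm{cris}}(W(E(\chi_p)))$. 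As $W(E(\chi_p))=W(|\mathrm{N}_{K/\mathbb{Q}_p}|\prod_{\sigma\in\mathcal{P}}\sigma)$ is crystalline with $\varphi^f$ acting by $p^{-f}$ on $D_{\mathrm{cris}}$ (because $\varphi(t)=pt$), such a morphism would force $\alpha_i=p^{-f}\alpha_j$ for some $i,j$, i.e. $\alpha_j/\alpha_i=p^f$, impossible for $i=j$ and excluded by Definition~\ref{26}(1) for $i\neq j$.

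For the conditions of Proposition~\ref{19} on the parameter $\{\delta_{\tau,i}\}$ of $\mathcal{T}_{\tau}$: the key input is that $W(\delta_{\tau,i})\cong W_{\tau,i}/W_{\tau,i-1}$ is crystalline over $K_m$ with $D^{K_m}_{\mathrm{cris}}(W(\delta_{\tau,i}))=\mathrm{gr}_{\tau,i}D^{K_m}_{\mathrm{cris}}(W)=K_0\otimes_{\mathbb{Q}_p}Ee_{\tau(i)}$, on which $\varphi^f$ acts by $\alpha_{\tau(i)}$. For condition (1), Proposition~\ref{i}(1) makes it equivalent to $\mathrm{Hom}_{G_K}(W(\delta_{\tau,i}),W(\delta_{\tau,j}))=0$ for $i<j$; restricting to $G_{K_m}$ as above, a nonzero morphism would force $\alpha_{\tau(i)}=\alpha_{\tau(j)}$, contradicting distinctness since $\tau(i)\neq\tau(j)$. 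For condition (2), Proposition~\ref{i}(2) together with Tate duality make it equivalent to $\mathrm{Hom}_{G_K}(W(\delta_{\tau,i}),W(\delta_{\tau,j})\otimes W(E(\chi_p)))=0$ for $i<j$, and the same argument forces $\alpha_{\tau(i)}=p^{-f}\alpha_{\tau(j)}$, i.e. $\alpha_{\tau(j)}/\alpha_{\tau(i)}=p^f$, again excluded by Definition~\ref{26}(1).

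The only non-formal point — the likely main obstacle — is correctly reading off the $\varphi^f$-slopes of the auxiliary rank-one objects ($W(E(\chi_p))$ and the graded pieces $W(\delta_{\tau,i})$) from their $D_{\mathrm{cris}}$, and checking that passing to $G_{K_m}$ and invoking Theorem~\ref{e} really does detect a common $\varphi^f$-eigenvalue; once this is in place the remainder is bookkeeping. I should also note that conditions (2) and (3) of Definition~\ref{26} are never used, which is exactly why the hypothesis of the lemma is only condition (1).
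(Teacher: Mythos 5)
Your proof is correct.  It differs mildly from the paper's in where it localizes the work.  The paper observes that $\mathrm{ad}(W)$ is itself split trianguline with graded pieces $(W_{\tau,i}/W_{\tau,i-1})\otimes (W_{\tau,j}/W_{\tau,j-1})^{\vee}\cong W(\delta_{\tau,i}/\delta_{\tau,j})$, so by the long exact sequence $\mathrm{H}^2(G_K,\mathrm{ad}(W))=0$ reduces to $\mathrm{H}^2(G_K,W(\delta_{\tau,i}/\delta_{\tau,j}))=0$ for all $i,j$, which is read off from Proposition~\ref{i}(2) together with the eigenvalue condition (1) of Definition~\ref{26}.  You instead apply Tate duality to $\mathrm{ad}(W)$ as a whole, identify $\mathrm{H}^2$ with the dual of a $\mathrm{Hom}$-space, restrict to $G_{K_m}$, and compare $\varphi^f$-eigenvalues of filtered $(\varphi,G_m)$-modules.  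Since Proposition~\ref{i}(2) is itself proved by duality from Proposition~\ref{i}(1), the two arguments are mathematically equivalent; yours makes more explicit the real content (the $\varphi^f$-eigenvalue of $W(|\mathrm{N}_{K/\mathbb{Q}_p}|\prod_\sigma\sigma^{k_\sigma})|_{G_{K_m}}$ is $p^{-f}$, while that of $W(\delta_{\tau,i}/\delta_{\tau,j})|_{G_{K_m}}$ is $\alpha_{\tau(i)}/\alpha_{\tau(j)}$), which the paper leaves implicit in its citation, whereas the paper's route is shorter and stays at the level of the established dimension formulas.  Your treatment of the Proposition~\ref{19} conditions matches the paper's ``other statements follow from the condition (1) of Definition~\ref{26}'' once unpacked, and your closing remark that conditions (2), (3) of Definition~\ref{26} are never used is accurate and is exactly why the lemma hypothesizes only (1).
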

\begin{proof}
That $\mathrm{H}^2(G_K,\mathrm{ad}(W))=0$ follows from the condition (1) of Definition \ref{26} and from (2) of Proposition \ref{i} 
because $\mathrm{ad}(W)$ is split trianguline whose graded components are of the forms $(W_{\tau,i}/W_{\tau,i-1})\otimes (W_{\tau,j}/W_{\tau,j-1})^{\vee}$ for any fixed 
$\tau\in \mathfrak{S}_n$.
 Other statements follow from the condition (1) of Definition \ref{26}. 
\end{proof}

\begin{lemma}\label{22}
Let $W$ be a benign $E$-$B$-pair of rank $n$, 
then $\mathrm{End}_{G_K}(W)=E$.
\end{lemma}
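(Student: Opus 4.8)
The plan is to pass to the filtered-module side via Berger's equivalence. Since $W$ is benign it is crystabelline, hence crystalline over $G_{K_m}$ for the totally ramified abelian extension $K_m/K$ with $G_m=\mathrm{Gal}(K_m/K)$, so part (1) of Theorem \ref{e} identifies $\mathrm{End}_{G_K}(W)$ (as an $E$-algebra) with the endomorphism algebra of $D:=D^{K_m}_{\mathrm{cris}}(W)$ in the category of $E$-filtered $(\varphi,G_m)$-modules over $K$; concretely, such an endomorphism is a $K_0\otimes_{\mathbb{Q}_p}E$-linear map $\psi:D\to D$ commuting with $\varphi$ and with the $G_m$-action and preserving the filtration on $D_{K_m}$. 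Since the scalars $E$ are obviously contained in $\mathrm{End}_{G_K}(W)$, it suffices to show every such $\psi$ is a scalar in $E$.

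First I would diagonalize $\psi$ over $E$ in the basis $e_1,\dots,e_n$. Because $\varphi^f$ is $K_0\otimes_{\mathbb{Q}_p}E$-linear with $\varphi^f(e_i)=\alpha_ie_i$ and the $\alpha_i$ are pairwise distinct (so the $\alpha_i-\alpha_j$ are units), the decomposition $D=\bigoplus_i(K_0\otimes_{\mathbb{Q}_p}E)e_i$ is the $\varphi^f$-eigenspace decomposition, and $\psi$, commuting with $\varphi^f$, preserves each summand: $\psi(e_i)=c_ie_i$ with $c_i\in K_0\otimes_{\mathbb{Q}_p}E$. Writing $\varphi(e_i)=u_ie_i$ (with $u_i$ a unit, since each summand is also $\varphi$-stable) and comparing $\psi\varphi(e_i)$ with $\varphi\psi(e_i)$ forces $(\varphi\otimes 1)(c_i)=c_i$, hence $c_i\in(K_0\otimes_{\mathbb{Q}_p}E)^{\varphi\otimes 1=1}=E$.

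The heart of the argument is then to use the non-criticality condition (3) of Definition \ref{26} to force all the $c_i$ to coincide. Suppose not; let $v_1,\dots,v_r$ ($r\geq 2$) be the distinct values of the $c_i$, with index sets $S_1,\dots,S_r$, and set $D^{(l)}:=\bigoplus_{i\in S_l}(K_0\otimes_{\mathbb{Q}_p}E)e_i=\ker(\psi-v_l)$. Since $\psi$ is semisimple and commutes with $\varphi$, with $G_m$, and preserves the filtration, each $D^{(l)}$ is a sub-$E$-filtered $(\varphi,G_m)$-module with induced filtration and $D=\bigoplus_l D^{(l)}$ as filtered modules; via the equivalence of Theorem \ref{e} this gives a direct sum decomposition $W=\bigoplus_l W^{(l)}$ into nonzero $E$-$B$-pairs with $\sum_l\mathrm{rank}(W^{(l)})=n$. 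Putting $s_1:=\sharp S_1$ and choosing $\tau\in\mathfrak{S}_n$ with $\{\tau(1),\dots,\tau(s_1)\}=S_1$, we have $F_{\tau,s_1}=D^{(1)}$, so $W_{\tau,s_1}\isom W^{(1)}$, and condition (3) of Definition \ref{26} forces the Hodge-Tate weight of $W^{(1)}$ to be $\{k_{1,\sigma},\dots,k_{s_1,\sigma}\}_{\sigma\in\mathcal{P}}$; in particular $k_{1,\sigma}$ occurs among the Hodge-Tate weights of $W^{(1)}$ for every $\sigma\in\mathcal{P}$. Applying the same reasoning to $S_2$ shows $k_{1,\sigma}$ also occurs among the Hodge-Tate weights of $W^{(2)}$. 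But Hodge-Tate weights add up over direct sums, so $k_{1,\sigma}$ would occur with multiplicity at least $2$ among the Hodge-Tate weights of $W$, contradicting condition (2) of Definition \ref{26}, namely $k_{1,\sigma}>k_{2,\sigma}>\cdots>k_{n,\sigma}$. Hence $r=1$, $\psi$ is a scalar, and $\mathrm{End}_{G_K}(W)=E$.

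I expect the step needing the most care to be the compatibility of the $\psi$-eigenspace decomposition with the filtration, i.e. the fact that a semisimple, filtration-preserving endomorphism of $D_{K_m}$ actually splits the filtration, so that the decomposition genuinely descends to a direct sum of $E$-filtered $(\varphi,G_m)$-modules and hence of $E$-$B$-pairs. This is elementary linear algebra, but carried out over the non-field ring $K_m\otimes_{\mathbb{Q}_p}E$ (handled componentwise after decomposing it into a product of fields), and it is the hinge on which the use of condition (3) rests. Everything else — the reduction via Theorem \ref{e}, the ``diagonalization'' of $\psi$, and the Hodge-Tate bookkeeping — is routine.
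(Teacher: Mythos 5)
Your proof is correct, and it takes a genuinely different route from the paper's. Nakamura argues by induction on $n$ entirely on the $B$-pair side: given a nonzero $f\in\mathrm{End}_{G_K}(W)$, he uses condition (1) of Definition \ref{26} together with the $\mathrm{H}^0$-vanishing of Proposition \ref{i} to show that $f$ preserves $W_{\tau,n-1}$, invokes the inductive hypothesis to get $f|_{W_{\tau,n-1}}=a\cdot\mathrm{id}$, and then shows that $a=0$ would force the extension $0\to W_{\tau,n-1}\to W\to W/W_{\tau,n-1}\to 0$ to split, yielding a permutation $\tau'$ whose triangulation violates the non-criticality condition (3); subtracting $a\cdot\mathrm{id}$ finishes. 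You instead pass directly to $D^{K_m}_{\mathrm{cris}}(W)$ via Theorem \ref{e} and carry out a linear-algebraic diagonalization of $\psi$ against the $\varphi^f$-eigenlines (using only the distinctness of the $\alpha_i$ from the set-up, not the full strength of condition (1)), then show that a non-scalar $\psi$ forces a direct-sum decomposition of $W$ and derive the contradiction from Hodge--Tate weight multiplicities via conditions (2) and (3). Both proofs ultimately hinge on condition (3) ruling out a critical splitting, but your argument is non-inductive, works on the filtered-module side, and makes the role of the Hodge--Tate weight bookkeeping more transparent, at the cost of needing the elementary but non-trivial observation that a semisimple, filtration-preserving, $(\varphi,G_m)$-equivariant endomorphism genuinely splits $D$ in the category of filtered $(\varphi,G_m)$-modules; you have correctly flagged this as the delicate step, and it is indeed valid (one does not even need to decompose $K_m\otimes_{\mathbb{Q}_p}E$ into a product of fields — since the $v_l$ lie in $E$ and their pairwise differences are units, the Chinese Remainder idempotents are polynomials in $\psi$ over $E$ and hence act on every $\psi$-stable $L\otimes_{\mathbb{Q}_p}E$-submodule, including each $\mathrm{Fil}^j D_L$).
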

\begin{proof}
We prove this by induction on $n$, the rank of $W$. 
If  $n=1$, $\mathrm{End}_{G_K}(W)=\mathrm{H}^0(G_K, (\bold{B}_{e},\bold{B}^+_{\mathrm{dR}}))=E$.
We assume that the lemma is proved for $n-1$. Let 
$W$ be a benign $E$-$B$-pair of rank $n$. 
We take an element  $\tau\in\mathfrak{S}_n$ and consider the filtration 
$\mathcal{T}_{\tau}:0\subseteq W_{\tau,1}\subseteq\cdots \subseteq W_{\tau,n-1}\subseteq W_{\tau,n}=W$.
By Lemma \ref{21}, $W_{\tau,n-1}$ is benign of rank $n-1$, hence 
we have $\mathrm{End}_{G_K}(W_{\tau,n-1})=E$ by induction.  
Let $f:W\rightarrow W$ be a non-zero morphism of $E$-$B$-pairs. By (1) of Definition \ref{26} 
and by Proposition $\ref{i}$, we have $\mathrm{Hom}_{G_K}(W_{\tau,n-1}, W/W_{\tau,n-1})=0$. Hence we have 
 $f(W_{\tau,n-1})
\subseteq W_{\tau,n-1}$. Because we have $\mathrm{End}_{G_K}(W_{\tau,n-1})=E$, 
then we have $f|_{W_{\tau,n-1}}=a\cdot\mathrm{id}_{W_{\tau,n-1}}$ 
for some $a\in E$. If $a=0$, then $f:W\rightarrow W$ factors through a non-zero morphism $f':W/W_{\tau,n-1}\rightarrow W$. 
Because $\mathrm{Hom}_{G_K}(W/W_{\tau,n-1}, W_{\tau,n-1})=0$ by (1) of Definition \ref{26} and by Proposition $\ref{i}$, 
the natural map $\mathrm{Hom}_{G_K}(W/W_{\tau,n-1},W)\hookrightarrow \mathrm{Hom}_{G_K}(W/W_{\tau,n-1}, W/W_{\tau,n-1})=E$
is injective, hence the composition of $f'$ with the natural projection $W\rightarrow W/W_{\tau,n-1}$ induces  an isomorphism 
$W/W_{\tau,n-1}\isom W/W_{\tau,n-1}$. This implies that the short exact sequence 
$0\rightarrow W_{\tau,n-1}\rightarrow W\rightarrow W/W_{\tau,n-1}\rightarrow 0$ splits. 
If we take a section $s:W/W_{\tau,n-1}\hookrightarrow W$, then we can choose $\tau'\in\mathfrak{S}_n$ 
such that $W_{\tau',1}=s(W/W_{\tau,n-1})$, then
this $\tau'$ doesn't satisfy the condition (3) in the definition of benign $B$-pairs. It's contradiction. 
Hence the above $a$ must not be zero. If $a\not=0$, consider the map $f-a\cdot\mathrm{id}_W\in \mathrm{End}_{G_K}(W)$, 
then the same argument as above implies that $f=a\cdot\mathrm{id}_W$. Hence we obtain the equality $\mathrm{End}_{G_K}(W)=E$.
\end{proof}
\begin{corollary}\label{23}
Let $W$ be a benign $E$-$B$-pair of rank $n$. 
The functor $D_{W}$ is pro-representable by $R_W$ which is 
formally smooth of its dimension $n^2[K:\mathbb{Q}_p]+1$. 
For any $\tau\in\mathfrak{S}_n$, the functor $D_{W,\mathcal{T}_{\tau}}$ is pro-representable 
by a quotient $R_{W,\mathcal{T}_{\tau}}$ of $R_W$ which is formally smooth 
of its dimension $\frac{n(n+1)}{2}[K:\mathbb{Q}_p]+1$.
\end{corollary}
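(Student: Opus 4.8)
The plan is to deduce the corollary directly from the structural results already established, the point being that a benign $E$-$B$-pair of rank $n$ satisfies, essentially by construction, all the hypotheses needed to apply Corollary \ref{13} and Proposition \ref{19}. First I would invoke Lemma \ref{22} to obtain $\mathrm{End}_{G_K}(W)=E$; this is simultaneously the hypothesis needed for pro-representability (Proposition \ref{9}) and condition $(0)$ of Proposition \ref{19}, as well as the first hypothesis of Corollary \ref{13}.

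Next I would observe that, by Definition \ref{26}, a benign $E$-$B$-pair is in particular crystabelline, hence potentially crystalline, and that it satisfies condition $(1)$ of that definition (namely $\alpha_i/\alpha_j\neq 1,p^f,p^{-f}$ for $i\neq j$). Therefore Lemma \ref{21.5} applies and yields at once that $\mathrm{H}^2(G_K,\mathrm{ad}(W))=0$ and that, for every $\tau\in\mathfrak{S}_n$, the pair $(W,\mathcal{T}_\tau)$ satisfies conditions $(1)$ and $(2)$ of Proposition \ref{19}. Combined with $\mathrm{End}_{G_K}(W)=E$ from the previous step, this shows that $(W,\mathcal{T}_\tau)$ satisfies all of conditions $(0),(1),(2)$ of Proposition \ref{19}, for each $\tau$.

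It then remains only to assemble the pieces. By Corollary \ref{13}, whose two hypotheses $\mathrm{End}_{G_K}(W)=E$ and $\mathrm{H}^2(G_K,\mathrm{ad}(W))=0$ have now been verified, $D_W$ is pro-represented by $R_W\isom E[[T_1,\cdots,T_d]]$ with $d=n^2[K:\mathbb{Q}_p]+1$, which is exactly the assertion of formal smoothness of that dimension. By Proposition \ref{19} (using Corollary \ref{16} for the pro-representability itself), applied to $(W,\mathcal{T}_\tau)$ for each $\tau\in\mathfrak{S}_n$, the functor $D_{W,\mathcal{T}_\tau}$ is pro-represented by a quotient $R_{W,\mathcal{T}_\tau}$ of $R_W$ with $R_{W,\mathcal{T}_\tau}\isom E[[T_1,\cdots,T_{d_n}]]$, $d_n=\frac{n(n+1)}{2}[K:\mathbb{Q}_p]+1$, hence formally smooth of that dimension. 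I expect no real obstacle here: all the substantive work has been done in Lemmas \ref{22} and \ref{21.5} and in Corollary \ref{13} and Proposition \ref{19}, and the only thing left is the trivial bookkeeping that the integers $d$ and $d_n$ in those statements coincide with $n^2[K:\mathbb{Q}_p]+1$ and $\frac{n(n+1)}{2}[K:\mathbb{Q}_p]+1$ respectively.
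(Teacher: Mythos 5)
Your argument is correct and follows exactly the route the paper intends: the paper's proof is the one-liner ``This follows from Proposition~\ref{19},'' and you have simply made explicit the intermediate steps (Lemma~\ref{22} for $\mathrm{End}_{G_K}(W)=E$, Lemma~\ref{21.5} for $\mathrm{H}^2(G_K,\mathrm{ad}(W))=0$ and conditions $(1)$, $(2)$, then Corollary~\ref{13} and Proposition~\ref{19}). No gap; this is the same proof, written out in full.
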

\begin{proof}
This follows from Proposition $\ref{19}$.
\end{proof}

Next, we want to consider the relation between $R_{W}$ and $R_{W,\mathcal{T}_{\tau}}$ for 
all $\tau\in \mathfrak{S}_n$. 
In particular, we want to compare the tangent space of $R_{W}$ and the sum of tangent spaces 
of $R_{W,\mathcal{T}_{\tau}}$ for all $\tau\in \mathfrak{S}_n$. 
For this, we first need to recall  the potentially crystalline  deformation functor.
\begin{defn}
Let $W$ be a potentially crystalline $E$-$B$-pair. 
We define the potentially crystalline deformation functor $D_{W}^{\mathrm{cris}}$ which is a subfunctor of $D_W$ 
defined by $$D_{W}^{\mathrm{cris}}(A):=\{[W_A]\in D_{W}(A)| W_A \text{ is potentially crystalline }\}$$
for $A\in \mathcal{C}_E$.
\end{defn}
\begin{lemma}\label{24}
Let $W$ be a potentially crystalline $E$-$B$-pair. 
If $\mathrm{End}_{G_K}(W)=E$, then $D_{W}^{\mathrm{cris}}$ is pro-representable by 
a quotient $R_{W}^{\mathrm{cris}}$ of $R_W$ which is formally smooth of its dimension 
equal to $$\mathrm{dim}_E(\bold{D}_{\mathrm{dR}}(\mathrm{ad}(W))/\mathrm{Fil}^0\bold{D}_{\mathrm{dR}}(\mathrm{ad}(W))) +
\mathrm{dim}_E(\mathrm{H}^0(G_K, \mathrm{ad}(W))). $$
\end{lemma}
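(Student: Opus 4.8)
The plan is to reduce the whole statement to the deformation theory of filtered $(\varphi,G_{L/K})$-modules via Corollary \ref{s3}, where everything is unobstructed and the tangent space is computed by a cheap Euler-characteristic count. First I would fix a finite Galois extension $L/K$ (which, by the Remark following the definition of crystabelline, may be taken abelian) with $W|_{G_L}$ crystalline, and check that \emph{every} potentially crystalline deformation $W_A$ of $W$ over $A\in\mathcal{C}_E$ is already crystalline when restricted to $G_L$: writing $W_A$ as a successive extension of copies of $W$ and choosing $L'\supseteq L$ Galois with $W_A|_{G_{L'}}$ crystalline, one descends the $A$-filtered $(\varphi,G_{L'/K})$-module $D^{L'}_{\mathrm{cris}}(W_A)$ along $L'_0/L_0$ using Hilbert 90, exactly as in that Remark. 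Then Corollary \ref{s3} identifies $D_W^{\mathrm{cris}}$ with the deformation functor $D_D$ of the $E$-filtered $(\varphi,G_{L/K})$-module $D:=D^L_{\mathrm{cris}}(W)$ in the category of $A$-filtered $(\varphi,G_{L/K})$-modules. Since the equivalence of Theorem \ref{e} gives $\mathrm{End}(D)=\mathrm{End}_{G_K}(W)=E$, pro-representability of $D_D\cong D_W^{\mathrm{cris}}$ follows from Schlessinger's criterion (Theorem \ref{8}), verified exactly as in the proof of Proposition \ref{9} but now using the fibre products, kernels and cokernels available in the category of filtered $(\varphi,G_{L/K})$-modules; and since $D_W^{\mathrm{cris}}$ is a subfunctor of $D_W$, its hull $R_W^{\mathrm{cris}}$ is a quotient of $R_W$.

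Next I would prove formal smoothness of $D_D$ directly: given a small extension $A'\to A$ and $D_A$ over $A$, lift in stages — first lift the underlying free $L_0\otimes_{\mathbb{Q}_p}A$-module to a free $L_0\otimes_{\mathbb{Q}_p}A'$-module (trivial), then lift the $\varphi$-semilinear isomorphism (lift a matrix; invertibility is automatic as $\mathrm{Ker}(A'\to A)$ is nilpotent), then lift the $G_{L/K}$-action compatibly with $\varphi$, and finally lift the filtration on $D_{A',L}$ by lifting a $G_{L/K}$-stable graded basis (possible since $A'\to A$ is surjective). No weak-admissibility condition enters because we work with $B$-pairs. Hence $D_D\cong D_W^{\mathrm{cris}}$ is formally smooth, so $R_W^{\mathrm{cris}}\isom E[[T_1,\dots,T_m]]$ with $m=\dim_E D_W^{\mathrm{cris}}(E[\varepsilon])$, and it only remains to identify $m$ with the stated quantity.

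For the dimension, a first-order deformation of $D$ is classified, modulo equivalence, by the cohomology $\mathrm{H}^1$ of the two-term complex
\[
\mathrm{ad}(D)^{G_{L/K}}\longrightarrow \mathrm{ad}(D)^{G_{L/K}}\oplus\bigl(\mathrm{ad}(D_L)/\mathrm{Fil}^0\mathrm{ad}(D_L)\bigr)^{G_{L/K}},\qquad x\mapsto\bigl((1-\varphi)(x),\,\bar{x}\bigr),
\]
where $\mathrm{ad}(D)=\mathrm{End}_{L_0\otimes E}(D)$ carries its natural $\varphi$-, filtration- and $G_{L/K}$-structure and $\bar{x}$ is the image of $x$ in the quotient by $\mathrm{Fil}^0$. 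The kernel $\mathrm{H}^0$ of this complex is $\mathrm{Fil}^0\bigl(\mathrm{ad}(D)^{G_{L/K}}\bigr)^{\varphi=1}$, which by Fontaine's theory is $\mathrm{H}^0(G_K,\mathrm{ad}(W))$; moreover $\mathrm{ad}(D)^{G_{L/K}}\isom D_{\mathrm{cris}}(\mathrm{ad}(W))$ (these two identical summands cancel) and $(\mathrm{ad}(D_L))^{G_{L/K}}\isom D_{\mathrm{dR}}(\mathrm{ad}(W))$ compatibly with filtrations, using that $L\otimes_{\mathbb{Q}_p}E$ is $G_{L/K}$-cohomologically trivial. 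The Euler characteristic of the complex then gives
\[
m=\dim_E\mathrm{H}^1=\dim_E\mathrm{H}^0(G_K,\mathrm{ad}(W))+\dim_E\bigl(D_{\mathrm{dR}}(\mathrm{ad}(W))/\mathrm{Fil}^0D_{\mathrm{dR}}(\mathrm{ad}(W))\bigr),
\]
which is the claimed dimension. (As a consistency check this is the Bloch--Kato formula $\dim_E\mathrm{H}^1_f(G_K,\mathrm{ad}(W))=\dim_E\mathrm{H}^0(G_K,\mathrm{ad}(W))+\dim_E(D_{\mathrm{dR}}(\mathrm{ad}(W))/\mathrm{Fil}^0)$ applied to the de Rham $E$-$B$-pair $\mathrm{ad}(W)$, and one can alternatively argue by showing $D_W^{\mathrm{cris}}(E[\varepsilon])=\mathrm{H}^1_f(G_K,\mathrm{ad}(W))$ inside $\mathrm{H}^1(G_K,\mathrm{ad}(W))=D_W(E[\varepsilon])$.)

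The main obstacle I expect is the formal-smoothness step, and within it the compatible lifting of the $G_{L/K}$-action together with $\varphi$ and the filtration: the obstruction a priori lives in $\mathrm{H}^2(G_{L/K},I\otimes_E\mathrm{ad}_{\varphi}(D))$ for the finite group $G_{L/K}$, whose order need not be prime to $p$, so one must genuinely use that $D$ comes from a crystalline $B$-pair over $G_L$ (via Hilbert 90 / descent along the étale algebra $L_0\otimes_{\mathbb{Q}_p}A$) rather than an abstract vanishing. A secondary but essential bookkeeping point is the reduction in the first paragraph — that every potentially crystalline deformation of $W$ is crystalline over the one fixed $L$ — and, for the Bloch--Kato reformulation, the identification of the potentially-crystalline Selmer condition on $\mathrm{H}^1(G_K,\mathrm{ad}(W))$ with $\mathrm{H}^1_f$; both are handled by descent arguments of the type already used in the crystabelline Remark.
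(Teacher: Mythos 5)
Your proposal is correct and proves the statement, but the route differs from the paper's at each of the three steps, so it is worth comparing them. For pro-representability the paper checks relative representability of the inclusion $D_W^{\mathrm{cris}}\hookrightarrow D_W$ (conditions (1)--(3) as in the proof of Proposition \ref{15}), rather than running Schlessinger directly in the equivalent category of $A$-filtered $(\varphi,G_{L/K})$-modules as you do; the two are interchangeable, though the paper's route makes the ``quotient of $R_W$'' conclusion visibly built in. For formal smoothness the paper simply cites Proposition 3.1.2 and Lemma 3.2.1 of \cite{Ki08}, whereas you reprove it by lifting the module, $\varphi$, the $G_{L/K}$-action and the filtration in stages; this works and gives a self-contained argument. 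For the dimension the paper identifies $D_W^{\mathrm{cris}}(E[\varepsilon])$ with $\mathrm{Ker}\bigl(\mathrm{H}^1(G_K,\mathrm{ad}(W))\to\mathrm{H}^1(G_K,B_{\mathrm{cris}}\otimes_{B_e}\mathrm{ad}(W)_e)\bigr)$, i.e.\ with the Bloch--Kato Selmer condition, and then invokes the computation of Proposition 2.7 of \cite{Na09}; you instead compute the Euler characteristic of the filtered-module tangent complex directly, and the ``alternative argument'' you give in parentheses is in fact the paper's actual argument. Both routes need, and obtain in the same way (inflation--restriction as in the proof of Lemma \ref{20.5}), the preliminary reduction that every potentially crystalline deformation of $W$ is already crystalline over the fixed $L$.

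One correction to your stated ``main obstacle'': the worry that $|G_{L/K}|$ need not be prime to $p$ is moot here. Every coefficient ring $A$ lies in $\mathcal{C}_E$, so every $A$-module is a $\mathbb{Q}_p$-vector space, and hence $\mathrm{H}^i(G_{L/K},M)=0$ for all $i>0$ by Maschke --- exactly the ``abstract vanishing'' you ruled out. Combined with the two-term shape of the tangent complex this already makes the stage-by-stage lifting unobstructed; the Hilbert 90 descent along $L_0\otimes_{\mathbb{Q}_p}A$ is an equivalent (and also correct) viewpoint, but it is not forced on you, and the phrase ``one must genuinely use\ldots rather than an abstract vanishing'' overstates the difficulty of this step in the present characteristic-zero setting.
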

\begin{proof}
For the pro-representability, by Proposition $\ref{12}$, it suffices to relatively representability of 
$D_{W}^{\mathrm{cris}}\hookrightarrow D_W$ as in the proof of Proposition $\ref{15}$. 
In this case, the conditions (1) and (2) are trivial and (3) follows from the fact 
that any sub $E$-$B$-pair of a potentially crystalline $E$-$B$-pair is again potentially crystalline. 
The formal smoothness follows from Proposition 3.1.2 and Lemma 3.2.1 of \cite{Ki08} by using the deformations 
of filtered ($\varphi, G_K$)-modules. For the dimension, we take a finite Galois extension $L$ of $K$ such that 
$W|_{G_L}$ is crystalline. By the same argument as in the proof of Lemma $\ref{20.5}$, 
 any $W_A\in D^{\mathrm{cris}}_W(A)$ is crystalline when restricted to $G_L$. It's easy to check 
that the map $D_{W}(E[\varepsilon])\isom \mathrm{H}^1(G_K, \mathrm{ad}(W))$ 
induces an isomorphism 
$D_{W}^{\mathrm{cris}}(E[\varepsilon])\isom \mathrm{Ker}(\mathrm{H}^1(G_K, 
\mathrm{ad}(W))\rightarrow \mathrm{H}^1(G_L, \bold{B}_{\mathrm{max}}\otimes_{\bold{B}_{e}} \mathrm{ad}(W)_e))$. In the same way as in 
the proof of Lemma $\ref{20.5}$, the natural map $\mathrm{H}^1(G_K, \bold{B}_{\mathrm{max}}\otimes_{\bold{B}_{e}}\mathrm{ad}(W)_e)\rightarrow \mathrm{H}^1(G_L, \bold{B}_{\mathrm{max}}\otimes_{\bold{B}_{e}}\mathrm{ad}(W)_e)$ is an 
injection. Hence we obtain an isomorphism $$D^{\mathrm{cris}}_W(E[\varepsilon])\isom \mathrm{Ker}(\mathrm{H}^1(G_K, \mathrm{ad}(W))
\rightarrow \mathrm{H}^1(G_K, \bold{B}_{\mathrm{max}}\otimes_{\bold{B}_{e}}\mathrm{ad}(W)_e)).$$ We can calculate the dimension of this 
group in the same way as in the proof of  Proposition 2.7 \cite{Na09}.
\end{proof}
\begin{corollary}\label{25}
Let $W$ be a benign $E$-$B$-pair of rank $n$,
then $R_{W}^{\mathrm{cris}}$ is formally smooth of its dimension 
$\frac{(n-1)n}{2}[K:\mathbb{Q}_p]+1$.
\end{corollary}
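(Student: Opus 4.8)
The plan is to reduce to Lemma \ref{24} and then carry out an elementary Hodge--Tate weight count. Since $W$ is benign, Lemma \ref{22} gives $\mathrm{End}_{G_K}(W)=E$, so Lemma \ref{24} applies: $D_W^{\mathrm{cris}}$ is pro-representable by a quotient $R_W^{\mathrm{cris}}$ of $R_W$ which is formally smooth of dimension $\mathrm{dim}_E(D_{\mathrm{dR}}(\mathrm{ad}(W))/\mathrm{Fil}^0D_{\mathrm{dR}}(\mathrm{ad}(W)))+\mathrm{dim}_E\mathrm{H}^0(G_K,\mathrm{ad}(W))$. As $\mathrm{H}^0(G_K,\mathrm{ad}(W))=\mathrm{End}_{G_K}(W)=E$, the second summand is $1$, so everything comes down to showing $\mathrm{dim}_E(D_{\mathrm{dR}}(\mathrm{ad}(W))/\mathrm{Fil}^0D_{\mathrm{dR}}(\mathrm{ad}(W)))=\frac{(n-1)n}{2}[K:\mathbb{Q}_p]$.

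To compute this, I would use that a benign $W$ is in particular (crystabelline, hence) potentially crystalline, so $\mathrm{ad}(W)=W\otimes W^{\vee}$ is again potentially crystalline and de Rham, and the Hodge--Tate weights of $\mathrm{ad}(W)$ at an embedding $\sigma\in\mathcal{P}$ are, with multiplicity, the differences $k_{i,\sigma}-k_{j,\sigma}$ for $1\leqq i,j\leqq n$, where $\{k_{1,\sigma}>\cdots>k_{n,\sigma}\}$ denote the Hodge--Tate weights of $W$ at $\sigma$. Decomposing $D_{\mathrm{dR}}(\mathrm{ad}(W))$ along $\mathcal{P}$ (using $K^{\mathrm{nor}}\subseteq E$), each $\sigma$-component is an $n^2$-dimensional $E$-vector space and, by the explicit rank one description $W(\prod_{\sigma}\sigma^{k_{\sigma}})\isom(B_e\otimes_{\mathbb{Q}_p}E,\oplus_{\sigma}t^{k_{\sigma}}B^+_{\mathrm{dR}}\otimes_{K,\sigma}E)$ recalled in $\S$2.1 (which pins down the sign convention so that $\mathbb{Q}_p(1)$ has weight $1$), the dimension of the quotient of this $\sigma$-component by its $\mathrm{Fil}^0$ equals the number of Hodge--Tate weights of $\mathrm{ad}(W)$ at $\sigma$ that are strictly positive. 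By condition $(2)$ of Definition \ref{26} the weights $k_{1,\sigma},\dots,k_{n,\sigma}$ are strictly decreasing, so $k_{i,\sigma}-k_{j,\sigma}>0$ exactly for the $\binom{n}{2}$ pairs with $i<j$. Summing over the $[K:\mathbb{Q}_p]$ elements of $\mathcal{P}$ gives $\mathrm{dim}_E(D_{\mathrm{dR}}(\mathrm{ad}(W))/\mathrm{Fil}^0D_{\mathrm{dR}}(\mathrm{ad}(W)))=\frac{n(n-1)}{2}[K:\mathbb{Q}_p]$, and combining with the previous paragraph we conclude that $R_W^{\mathrm{cris}}$ is formally smooth of dimension $\frac{(n-1)n}{2}[K:\mathbb{Q}_p]+1$.

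This argument is short and the only genuinely delicate point is the bookkeeping with the Hodge--Tate weight normalization: one must make sure that for a de Rham $B$-pair $M$ the quantity $\mathrm{dim}_E(D_{\mathrm{dR}}(M)_{\sigma}/\mathrm{Fil}^0)$ really counts the \emph{strictly positive} weights at $\sigma$ (equivalently, that the $\sigma$-component of $D_{\mathrm{dR}}$ of $W(\prod_{\sigma}\sigma^{k_{\sigma}})$ lies in $\mathrm{Fil}^0$ iff $k_{\sigma}\leqq 0$), which is exactly what the rank one formula above records, together with the compatibility of $D_{\mathrm{dR}}$ with tensor products and duals. Note that while condition $(1)$ of Definition \ref{26} (the genericity of the $\alpha_i$) does not enter the dimension count directly, it is used --- via Lemma \ref{22} --- to know that $\mathrm{End}_{G_K}(W)=E$, which is the hypothesis of Lemma \ref{24}; the Hodge--Tate regularity of condition $(2)$ is what forces the count of positive weights to be exactly $\binom{n}{2}$ per embedding.
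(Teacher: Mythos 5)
Your proof is correct and follows essentially the same route the paper takes: reduce via Lemma \ref{22} and Lemma \ref{24} to the computation of $\mathrm{dim}_E(D_{\mathrm{dR}}(\mathrm{ad}(W))/\mathrm{Fil}^0D_{\mathrm{dR}}(\mathrm{ad}(W)))$, then use condition (2) of Definition \ref{26} (strictly decreasing Hodge--Tate weights at each $\sigma$) to see that exactly $\binom{n}{2}$ of the $n^2$ differences $k_{i,\sigma}-k_{j,\sigma}$ are strictly positive. The paper states this in one line; you have merely filled in the Hodge--Tate weight bookkeeping, including the (correctly resolved) sign/normalization issue that $\mathrm{dim}_E(D_{\mathrm{dR}}(M)_\sigma/\mathrm{Fil}^0)$ counts strictly positive $\sigma$-weights under the paper's convention that $\mathbb{Q}_p(1)$ has weight $1$.
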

\begin{proof}
This follows from Lemma $\ref{22}$ and the equality 
$$\mathrm{dim}_E(\bold{D}_{\mathrm{dR}}(\mathrm{ad}(W))/\mathrm{Fil}^0\bold{D}_{\mathrm{dR}}(\mathrm{ad}(W)))
=\frac{(n-1)n}{2}[K:\mathbb{Q}_p],$$ which follows from the condition (2) in Definition $\ref{26}$.
\end{proof}

\begin{defn}\label{34}
Let $W$ be a benign $E$-$B$-pair of rank $n$ such that $W|_{G_{K_m}}$ is crystalline. 
For any $\tau\in\mathfrak{S}_n$, we define a rank one saturated 
crystabelline $E$-$B$-pair $W'_{\tau}\subseteq W$ such that $\bold{D}^{K_m}_{\mathrm{cris}}(W'_{\tau})=K_0\otimes_{\mathbb{Q}_p}E
e_{\tau(n)} \subseteq \bold{D}^{K_m}_{\mathrm{cris}}(W)$ and define a subfunctor 
$D^{\mathrm{cris}}_{W,\tau}$ of $D_W$ by 
\begin{multline*}
D^{\mathrm{cris}}_{W,\tau}(A):=\{[W_A]\in D_W(A)|\text{ there exists a rank one crystabelline 
saturated}\\
\text{ sub }A\text{-}B\text{-pair } W'_A\subseteq W_A \text{ such that } W'_A\otimes_A E=W'_{\tau} \}.
\end{multline*}
\end{defn}

\begin{lemma}\label{27}
Under the above condition. The functor $D^{\mathrm{cris}}_{W,\tau}$ is pro-representable by 
a quotient $R^{\mathrm{cris}}_{W,\tau}$ of $R_W$ which is formally smooth and of its dimension 
$n(n-1)[K:\mathbb{Q}_p] +1$.
\end{lemma}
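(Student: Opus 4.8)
The plan is to follow the template of Proposition \ref{15} (relative representability), Proposition \ref{17} (formal smoothness) and Lemma \ref{18} (tangent space), feeding in the crystabelline deformation theory of rank one $B$-pairs contained in Lemma \ref{24}. First I would show that the inclusion of functors $D^{\mathrm{cris}}_{W,\tau}\hookrightarrow D_W$ is relatively representable, by checking the three conditions of $\S 23$ of \cite{Ma97} as in the proof of Proposition \ref{15}. Condition (1) is routine, since the base change of a rank one saturated sub $A$-$B$-pair stays rank one and saturated and the base change of a crystabelline one stays crystabelline (a sub $E$-$B$-pair of a crystalline $E$-$B$-pair is crystalline). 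For condition (2) the key point is that, by condition (1) of Definition \ref{26}, Proposition \ref{i} and the dévissage used in Lemma \ref{14}, a rank one crystabelline saturated sub $A$-$B$-pair lifting $W'_{\tau}$ is unique when it exists: for each graded piece $W(\delta)$ of $W/W'_{\tau}$ one has $\alpha_j\neq\alpha_{\tau(n)}$, whence $\mathrm{Hom}_{G_K}(W'_{\tau},W(\delta))=0$ and so $\mathrm{Hom}_{G_K}(W'_{\tau},W/W'_{\tau})=0$; hence the subs over $A'$ and $A''$ agree over $A$ and one glues their fibre product over $A'\times_A A''$. The step I expect to be the main obstacle is condition (3), descent along an injection $A\hookrightarrow A'$: here I would twist $W$ by the inverse of the parameter of $W'_{\tau}$ so that $W'_{\tau}=B_E$ and any lift $W'_{A'}$ has unitary parameter, and then run the $F(-)$-argument of the proof of condition (3) of Proposition \ref{15} to descend $W'_{A'}$ to a rank one saturated sub $W'_A\subseteq W_A$ with $W_A/W'_A$ an $A$-$B$-pair; crystabellinity of $W'_A$ then follows because $W'_A|_{G_{K_m}}$ is a sub $E$-$B$-pair of the crystalline $W'_{A'}|_{G_{K_m}}$. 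Granting relative representability, pro-representability of $D^{\mathrm{cris}}_{W,\tau}$ by a quotient $R^{\mathrm{cris}}_{W,\tau}$ of $R_W$ follows from Proposition \ref{9}, using $\mathrm{End}_{G_K}(W)=E$ (Lemma \ref{22}).

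For formal smoothness I would argue by lifting along a small extension $A'\twoheadrightarrow A$ with kernel $I$. Given $W_A\in D^{\mathrm{cris}}_{W,\tau}(A)$ with its crystabelline rank one saturated sub $W'_A$ and quotient $W''_A:=W_A/W'_A$, I would (i) lift $W'_A$ to a rank one crystabelline $A'$-$B$-pair, using the formal smoothness (rank one case) of Lemma \ref{24}, after noting as in Lemma \ref{20.5} that potentially crystalline deformations of $W'_{\tau}$ automatically stay crystalline over $G_{K_m}$; (ii) lift $W''_A$ to an $A'$-$B$-pair, which is possible because $W/W'_{\tau}$ is benign of rank $n-1$ (Lemma \ref{21}), hence $\mathrm{H}^2(G_K,\mathrm{ad}(W/W'_{\tau}))=0$ (Lemma \ref{21.5}) and $D_{W/W'_{\tau}}$ is formally smooth (Proposition \ref{12}); and (iii) lift the extension class $[W_A]\in\mathrm{Ext}^1_A(W''_A,W'_A)\isom\mathrm{H}^1(G_K,\mathrm{Hom}_A(W''_A,W'_A))$ along the reduction map, the obstruction living in $\mathrm{H}^2(G_K,I\otimes_E\mathrm{Hom}(W/W'_{\tau},W'_{\tau}))$. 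This last group vanishes by condition (1) of Definition \ref{26} and Proposition \ref{i}, exactly as in Lemma \ref{21.5}: if a graded piece $W(\delta)$ of $\mathrm{Hom}(W/W'_{\tau},W'_{\tau})$ had $\delta=|\mathrm{N}_{K/\mathbb{Q}_p}|\prod_{\sigma\in\mathcal{P}}\sigma^{k_{\sigma}}$ with all $k_{\sigma}\geqq 1$, then comparing values at $\pi_K$ would force $\alpha_{\tau(n)}/\alpha_j=p^{-f}$ for some $j\neq\tau(n)$, contradicting condition (1). Assembling these gives an extension $0\to W'_{A'}\to W_{A'}\to W''_{A'}\to 0$ lifting $(W_A,W'_A)$.

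For the dimension I would compute the tangent space. Writing $\mathrm{ad}_{W'_{\tau}}(W):=\{f\in\mathrm{ad}(W)\mid f(W'_{\tau})\subseteq W'_{\tau}\}$, a saturated sub $E$-$B$-pair of $\mathrm{ad}(W)$ of rank $n^2-n+1$, the same computation as in Lemma \ref{18} identifies the tangent space of the functor of deformations of $W$ equipped with a saturated rank one sub lifting $W'_{\tau}$ with $\mathrm{H}^1(G_K,\mathrm{ad}_{W'_{\tau}}(W))$, and identifies $D^{\mathrm{cris}}_{W,\tau}(E[\varepsilon])$ with the preimage under the restriction map $r\colon\mathrm{H}^1(G_K,\mathrm{ad}_{W'_{\tau}}(W))\to\mathrm{H}^1(G_K,\mathrm{ad}(W'_{\tau}))=\mathrm{H}^1(G_K,B_E)$ of the crystabelline subspace $\mathrm{Cr}\subseteq\mathrm{H}^1(G_K,B_E)$, which by the rank one case of Lemma \ref{24} has dimension $\dim_E\bigl(D^{K_m}_{\mathrm{dR}}(B_E)/\mathrm{Fil}^0D^{K_m}_{\mathrm{dR}}(B_E)\bigr)+\dim_E\mathrm{H}^0(G_K,B_E)=0+1=1$. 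Using the exact sequence $0\to\mathrm{Hom}(W/W'_{\tau},W)\to\mathrm{ad}_{W'_{\tau}}(W)\to B_E\to 0$ together with $\mathrm{H}^0(G_K,\mathrm{ad}_{W'_{\tau}}(W))=E\isom\mathrm{H}^0(G_K,B_E)$, with $\mathrm{Hom}_{G_K}(W/W'_{\tau},W)=0$ (the Hodge--Tate weights $\{k_{2,\sigma},\dots,k_{n,\sigma}\}$ of $W/W'_{\tau}$ are strictly below those $\{k_{1,\sigma}\}$ of $W'_{\tau}$ by conditions (2)--(3) of Definition \ref{26}, so there is no nonzero map to $W'_{\tau}$, while $W$ is indecomposable since $\mathrm{End}_{G_K}(W)=E$), and with $\mathrm{H}^2(G_K,\mathrm{Hom}(W/W'_{\tau},W))=0$ (again condition (1) of Definition \ref{26} and Proposition \ref{i}), the long exact sequence shows $r$ is surjective with $\mathrm{Ker}(r)\isom\mathrm{H}^1(G_K,\mathrm{Hom}(W/W'_{\tau},W))$, of dimension $n(n-1)[K:\mathbb{Q}_p]$ by Theorem \ref{h}. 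Hence $\dim_E D^{\mathrm{cris}}_{W,\tau}(E[\varepsilon])=\dim_E\mathrm{Ker}(r)+\dim_E\mathrm{Cr}=n(n-1)[K:\mathbb{Q}_p]+1$, which by formal smoothness is the dimension of $R^{\mathrm{cris}}_{W,\tau}$. Besides condition (3) of relative representability, the other delicate point is the bookkeeping of the several $\mathrm{H}^2$-vanishings and of the indecomposability input, though all of these reduce to condition (1) of Definition \ref{26}.
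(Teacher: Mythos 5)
Your proposal is correct and follows essentially the same route as the paper: you establish relative representability and formal smoothness by combining the arguments of Propositions \ref{15}, \ref{17} and Lemma \ref{24} (which the paper cites without spelling out), and the dimension computation is exactly the paper's, identifying $D^{\mathrm{cris}}_{W,\tau}(E[\varepsilon])$ with the preimage $\mathrm{H}^1_{f,\tau}(G_K,\mathrm{ad}_{\tau}(W))$ of the crystalline line in $\mathrm{H}^1(G_K,\mathrm{ad}(W'_{\tau}))=\mathrm{H}^1(G_K,B_E)$ under the restriction map arising from the short exact sequence $0\to\mathrm{Hom}(W/W'_{\tau},W)\to\mathrm{ad}_{\tau}(W)\to\mathrm{ad}(W'_{\tau})\to0$. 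The only local variation is that you deduce $\mathrm{H}^0(G_K,\mathrm{Hom}(W/W'_{\tau},W))=0$ via Hodge--Tate weights plus indecomposability rather than directly from condition (1) of Definition \ref{26} together with Proposition \ref{i}; both routes are valid.
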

\begin{proof}
The relatively representability of $D^{\mathrm{cris}}_{W,\tau}\hookrightarrow D_W$
 and the formal smoothness easily follows from the combination of proofs  
 of Proposition $\ref{15}$ and Proposition $\ref{17}$ and Lemma $\ref{24}$.
 Here, we only prove the dimension formula. 
 Let $\mathrm{ad}_{\tau}(W):=\{f\in\mathrm{ad}(W)| f(W'_{\tau})\subseteq 
 W'_{\tau}\}$, then we have the following short exact sequence
 \begin{equation*}
 0\rightarrow \mathrm{Hom}(W/W'_{\tau}, W)\rightarrow \mathrm{ad}_{\tau}(W)
 \rightarrow \mathrm{ad}(W'_{\tau})\rightarrow 0.
 \end{equation*}
 Taking the long exact sequence, we obtain
 the following short exact sequence
 \begin{equation*}
 0\rightarrow \mathrm{H}^1(G_K, \mathrm{Hom}(W/W'_{\tau},W))\rightarrow 
 \mathrm{H}^1(G_K, \mathrm{ad}_{\tau}(W))\rightarrow \mathrm{H}^1(G_K, 
 \mathrm{ad}(W'_{\tau}))\rightarrow 0
 \end{equation*}
 by Proposition $\ref{i}$.
 We define a subspace $\mathrm{H}^1_{f,\tau}(G_K, \mathrm{ad}_{\tau}(W))$ of 
 $\mathrm{H}^1(G_K,\allowbreak \mathrm{ad}_{\tau}(W))$
 as the inverse image of $\mathrm{H}^1_f(G_K,\mathrm{ad}(W'_{\tau}))(\subseteq
 \mathrm{H}^1(G_K,\mathrm{ad}(W'_{\tau})))$. 
 Hence we obtain
 a short exact sequence
 \begin{equation*}
 0\rightarrow \mathrm{H}^1(G_K, \mathrm{Hom}(W/W'_{\tau},W))
 \rightarrow \mathrm{H}^1_{f,\tau}(G_K, \mathrm{ad}_{\tau}(W))
 \rightarrow \mathrm{H}^1_f(G_K, \mathrm{ad}(W'_{\tau}))\rightarrow 0.
 \end{equation*}
 In the same way as in Lemma $\ref{24}$, we can show that the natural isomorphism
  $D_{W}(E[\varepsilon])\isom\mathrm{H}^1(G_K, \mathrm{ad}(W))$ 
 induces an isomorphism $$D^{\mathrm{cris}}_{W,\tau}(E[\varepsilon])\isom \mathrm{H}^1_{f,\tau}(G_K, 
 \allowbreak \mathrm{ad}_{\tau}(W)).$$ 
 By Theorem $\ref{h}$ and Proposition $\ref{i}$, we obtain the equality
 $$\mathrm{dim}_E\mathrm{H}^1(G_K, \mathrm{Hom}\allowbreak (W/W'_{\tau},W))=n(n-1)[K:\mathbb{Q}_p].$$ 
 Because $\mathrm{ad}(W'_{\tau})$ is the trivial $E$-$B$-pair, we have $\mathrm{dim}_E\mathrm{H}^1_{f}(G_K, \mathrm{ad}(W'_{\tau}))
 \allowbreak=1$ by Proposition 2.7 of \cite{Na09}. Hence we obtain the equality
 $$\mathrm{dim}_E\mathrm{H}^1_{f,\tau}(G_K, \mathrm{ad}_{\tau}(W))\allowbreak =n(n-1)[K:\mathbb{Q}_p] +1.$$
 This proves that the dimension of $R_{W,\tau}^{\mathrm{cris}}$ is $n(n-1)[K:\mathbb{Q}_p]+1$.

\end{proof}
\begin{lemma}\label{28}
Let $W$ be a benign $E$-$B$-pair of rank $n$. 
Let $W_A$ be a deformation of $W$ over $A$ which is potentially crystalline, 
then $[W_A]\in D_{W,\mathcal{T}_{\tau}}(A)$ and $[W_A]\in D^{\mathrm{cris}}_{W,\tau}(A)$ for 
any $\tau\in \mathfrak{S}_n$.
\end{lemma}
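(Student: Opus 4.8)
The plan is to reduce the statement to a question about filtered $(\varphi,G_m)$-modules, where everything becomes a linear-algebra problem that can be settled using condition (1) of Definition \ref{26}. First I would use Corollary \ref{s3} (the $A$-coefficient equivalence between potentially crystalline $A$-$B$-pairs crystalline over $G_{K_m}$ and $A$-filtered $(\varphi,G_m)$-modules over $K$); note that by the remark after the definition of crystabelline $E$-$B$-pairs, $W|_{G_{K_m}}$ being crystalline forces $W_A|_{G_{K_m}}$ to be crystalline as well, since $W_A$ is a successive extension of $W$ as an $E$-$B$-pair and the argument in the proof of Lemma \ref{20.5} applies verbatim. So $D^{K_m}_{\mathrm{cris}}(W_A)$ is an $A$-filtered $(\varphi,G_m)$-module deforming $D:=D^{K_m}_{\mathrm{cris}}(W)$.

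Next I would produce, for each $\tau\in\mathfrak{S}_n$, an $A$-stable $\varphi^f$-eigenspace decomposition of $D^{K_m}_{\mathrm{cris}}(W_A)$ lifting $D=K_0\otimes_{\mathbb{Q}_p}Ee_1\oplus\cdots\oplus K_0\otimes_{\mathbb{Q}_p}Ee_n$. The key input is condition (1) of Definition \ref{26}: the $\alpha_i$ are pairwise distinct modulo $\mathfrak{m}_A$, so the characteristic polynomial of $\varphi^f$ on the $\sigma$-component $D^{K_m}_{\mathrm{cris}}(W_A)_\sigma$ factors (by Hensel's lemma over the Artin local ring $A$, or by the standard idempotent-lifting argument applied to the commuting operator $\varphi^f$) into pairwise coprime factors, giving $A$-free $\varphi^f$-stable eigenspaces $D_{i,A,\sigma}$ of rank one reducing to $\overline{E}e_{i,\sigma}$. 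Since $\varphi^f$ commutes with the $G_m$-action, these eigenspaces are $G_m$-stable; spreading over the embeddings $\{\sigma,\varphi^{-1}\sigma,\dots\}$ and taking $\varphi$-traces $\tilde e_i$ as on the level of $E$-coefficients, I get $D^{K_m}_{\mathrm{cris}}(W_A)=\bigoplus_{i=1}^n K_0\otimes_{\mathbb{Q}_p}A\tilde e_i$ with each $\bigoplus_{j\le i}K_0\otimes_{\mathbb{Q}_p}A\tilde e_{\tau(j)}$ a $(\varphi,G_m)$-stable $A$-submodule reducing to $F_{\tau,i}$. Equipping the partial sums with the induced filtration from $D^{K_m}_{\mathrm{cris}}(W_A)_L$ — whose graded pieces are $A$-free because $W_A$ is an $A$-$B$-pair, so $W^+_{A,\mathrm{dR}}/tW^+_{A,\mathrm{dR}}$ is $A$-flat (Lemma \ref{2}) — yields a filtration $\mathcal{F}_{\tau,A}$ of $D^{K_m}_{\mathrm{cris}}(W_A)$ by sub $A$-filtered $(\varphi,G_m)$-modules lifting $\mathcal{F}_\tau$.

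Applying the equivalence $W(-)$ of Corollary \ref{s3} (which is exact, and carries $A$-filtered sub-objects with $A$-free graded pieces to saturated sub $A$-$B$-pairs whose quotient is again an $A$-$B$-pair, by Lemma \ref{s4} together with Lemma \ref{s3-5} applied fiberwise) turns $\mathcal{F}_{\tau,A}$ into a chain $\mathcal{T}_{\tau,A}:0\subseteq W_{\tau,1,A}\subseteq\cdots\subseteq W_{\tau,n,A}=W_A$ of saturated sub $A$-$B$-pairs with rank-one crystabelline graded pieces, reducing to $\mathcal{T}_\tau$. This is precisely a trianguline deformation, so $[W_A]\in D_{W,\mathcal{T}_\tau}(A)$; and taking just the rank-one piece $W_{\tau,1,A}$ — wait, $W'_\tau$ corresponds to $K_0\otimes_{\mathbb{Q}_p}Ee_{\tau(n)}$, so one uses instead the $A$-line $K_0\otimes_{\mathbb{Q}_p}A\tilde e_{\tau(n)}$, which is a $(\varphi,G_m)$-stable direct summand, hence gives a rank-one crystabelline saturated sub $A$-$B$-pair $W'_A\subseteq W_A$ with $W'_A\otimes_A E=W'_\tau$ — gives $[W_A]\in D^{\mathrm{cris}}_{W,\tau}(A)$. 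The main obstacle is the eigenspace-lifting step: one must check carefully that the idempotent decomposition of $\varphi^f$ over $A$ is compatible with the $G_m$-action and descends from $\overline{E}$-coefficients (or $E'$-coefficients) back to $A$ itself, i.e. that no enlargement of the residue field is needed — but this is guaranteed because the $\alpha_i$ and $\tilde\delta_i$ were already chosen in $E$ (or $E'$, in which case one works over $E'$ throughout and the statement for $A$ follows by the fppf-type descent already used implicitly for $E$-coefficients), so the idempotents are $G_m$-equivariant polynomials in $\varphi^f$ with $A$-coefficients.
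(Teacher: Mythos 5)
Your proposal is correct and follows essentially the same route as the paper: first argue (via the argument of Lemma~\ref{24}/Lemma~\ref{20.5}) that $W_A|_{G_{K_m}}$ is crystalline, then Hensel-lift the $\varphi^f$-eigenbasis $\{e_i\}$ of $D^{K_m}_{\mathrm{cris}}(W)$ to a $(\varphi,G_m)$-stable $K_0\otimes_{\mathbb{Q}_p}A$-basis of $D^{K_m}_{\mathrm{cris}}(W_A)$ using the separation of the $\alpha_i$ from condition $(1)$ of Definition~\ref{26}, and finally transport the resulting flags back to saturated chains of sub $A$-$B$-pairs through the equivalence of Corollary~\ref{s3}. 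One small caution: when you assert that the graded pieces of the induced filtration on the partial sums $F_{\tau,i,A}$ are $A$-free ``because $W^+_{A,\mathrm{dR}}/tW^+_{A,\mathrm{dR}}$ is $A$-flat,'' that justifies freeness only for the ambient filtration on $D^{K_m}_{\mathrm{cris}}(W_A)_L$; what forces the \emph{induced} filtration on $F_{\tau,i,A,L}$ to be $A$-split is really the non-criticality condition $(3)$ of Definition~\ref{26}, via a Nakayama argument showing that $F_{\tau,j,A,L,\sigma}\rightarrow D_{A,L,\sigma}/\mathrm{Fil}^{-k_{j,\sigma}+1}$ is a surjection between free $A$-modules of equal rank, hence an isomorphism, so the intersections with the ambient filtration steps are kernels of surjections between frees and therefore free. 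The paper's own proof elides this same point, so it is not a defect of your argument relative to the source.
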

\begin{proof}
Let $W_A$ be as above. If $W|_{G_{K_m}}$ is crystalline, 
then $W_A|_{G_{K_m}}$ is crystalline by the proof of Lemma $\ref{24}$. 
Hence it suffices to show that $\bold{D}^{K_m}_{\mathrm{cris}}(W_A)$ is of the form $\bold{D}^{K_m}_{\mathrm{cris}}(W_A)=
K_0\otimes_{\mathbb{Q}_p}Ae_1\oplus K_0\otimes_{\mathbb{Q}_p}Ae_2\oplus\cdots 
\oplus K_0\otimes_{\mathbb{Q}_p}Ae_n$ such that $K_0\otimes_{\mathbb{Q}_p}Ae_i$ is 
preserved by ($\varphi,G_{m}$) and $\varphi^f(e_i)=\widetilde{\alpha}_ie_i$ for a lift $\widetilde{\alpha}_i\in A^{\times}$ 
of $\alpha_i\in E^{\times}$ for any $1\leqq i\leqq n$.  To prove this claim, we first note that 
$\bold{D}^{K_m}_{\mathrm{cris}}(W_A)$ is a free $K_0\otimes_{\mathbb{Q}_p}A$-module of rank $n$ and 
$\bold{D}^{K_m}_{\mathrm{cris}}(W_A)\otimes_A E\isom \bold{D}^{K_m}_{\mathrm{cris}}(W)$ by 
Proposition 1.3.4 and Proposition 1.3.5 \cite{Ki09}. Then, for any $\sigma:K_0\hookrightarrow E$, $\bold{D}^{K_m}_{\mathrm{cris}}(W_A)_{\sigma}$ is of the form $\bold{D}^{K_m}_{\mathrm{cris}}(W_A)_{\sigma}=
Ae_{1,\sigma}\oplus \cdots \oplus Ae_{n,\sigma}$ such that 
$\varphi^f(e_{i,\sigma})\equiv \alpha_ie_{i,\sigma}$ (mod $\mathfrak{m}_A$) for any $1\leqq i\leqq n$. 
By an easy linear algebra, we can take an $A$-basis $e'_{1,\sigma}, e'_{2,\sigma}, \cdots, e'_{n,\sigma}$ of 
$\bold{D}^{K_m}_{\mathrm{cris}}(W_A)_{\sigma}$ such that $\varphi^f(e'_{i,\sigma})=\widetilde{\alpha}_ie'_{i,\sigma}$ 
for a lift $\widetilde{\alpha}_i\in A^{\times}$ of $\alpha_i$ for any $i$. Because the actions of $\varphi$ and $G_{m}$ commute and we have $\alpha_i\not=\alpha_j$, $Ae'_{i,\sigma}$ is stable by $G_{m}$. If we take $e_i:=e'_{i,\sigma}+\varphi(e'_{i,\sigma})+\cdots 
\varphi^{f-1}(e'_{i,\sigma})\in \bold{D}^{K_m}_{\mathrm{cris}}(W_A)$, then $\bold{D}^{K_m}_{\mathrm{cris}}(W_A)$ can be written by $\bold{D}^{K_m}_{\mathrm{cris}}(W_A)=
K_0\otimes_{\mathbb{Q}_p}Ae_1\oplus \cdots \oplus K_0\otimes_{\mathbb{Q}_p}Ae_n$ satisfying 
the property of the claim.

\end{proof}

\begin{lemma}\label{29}
Let $W$ be a benign $E$-$B$-pair of rank $n$ such that 
$W|_{G_{K_m}}$ is crystalline, and let $\tau\in \mathfrak{S}_n$.
Let $[W_A]\in D_{W,\mathcal{T}_{\tau}}(A)$ be a trianguline deformation over $A$ 
with a lifting $0\subseteq W_{1,A}\subseteq W_{2,A}\subseteq \cdots W_{n,A}=W_A$ of 
the triangulation $\mathcal{T}_{\tau}$. 
If $W_{i,A}/W_{i-1,A}$ is Hodge-Tate for any $1\leqq i\leqq n$, then $W_A|_{G_{K_m}}$ is crystalline.
\end{lemma}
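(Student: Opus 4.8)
The plan is to induct on the rank $n$ of $W$, reducing the lemma to two separate claims: (a) each rank one graded piece $W_{i,A}/W_{i-1,A}$ is in fact crystalline over $K_m$, not merely Hodge--Tate; and (b) an extension $0\to M_A\to E_A\to B_A\to 0$ of $A$-$B$-pairs of $G_{K_m}$ in which $M_A$ and $E_A\otimes_A E$ are crystalline over $K_m$ is itself crystalline over $K_m$ (here $B_A$ is the trivial $A$-$B$-pair). Granting these, the inductive step runs as follows. The sub object $W_{\tau,n-1}\subseteq W$ is saturated, hence benign by Lemma \ref{21} and, being a sub of a $B$-pair crystalline over $K_m$, is itself crystalline over $K_m$; the same holds for every $W_{\tau,i}$. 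By the inductive hypothesis $W_{n-1,A}|_{G_{K_m}}$ is crystalline, and by (a) so is $W(\delta_{n,A})|_{G_{K_m}}$, where $W_A/W_{n-1,A}\isom W(\delta_{n,A})$; the reduction modulo $\mathfrak{m}_A$ of the extension $0\to W_{n-1,A}\to W_A\to W(\delta_{n,A})\to 0$ is $0\to W_{\tau,n-1}\to W\to W_{\tau,n}/W_{\tau,n-1}\to 0$, which is crystalline over $K_m$ by hypothesis. Twisting by $W(\delta_{n,A})^{\vee}$ puts us exactly in the situation of claim (b), so $W_A|_{G_{K_m}}$ is crystalline.

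For claim (a), by Proposition \ref{5} write $W_{i,A}/W_{i-1,A}\isom W(\delta_{i,A})$ with a continuous $\delta_{i,A}:K^{\times}\to A^{\times}$. Since $W$ is benign, the reduction of $\delta_{i,A}$ is $\delta_{\alpha_{\tau(i)}}\tilde{\delta}_{\tau(i)}\prod_{\sigma\in\mathcal{P}}\sigma^{k_{i,\sigma}}$ (the Remark following Definition \ref{26}), where $\tilde{\delta}_{\tau(i)}|_{1+\pi_K^m\mathcal{O}_K}=1$. The Hodge--Tate hypothesis on $W(\delta_{i,A})$, together with the fact that the Hodge--Tate--Sen weights of an $A$-$B$-pair are unchanged under reduction modulo $\mathfrak{m}_A$, shows that $W(\delta_{i,A})$ has Hodge--Tate weights $\{k_{i,\sigma}\}_{\sigma}$. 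Hence $\eta_i:=\delta_{i,A}\cdot\prod_{\sigma\in\mathcal{P}}\sigma^{-k_{i,\sigma}}$ defines a rank one $A$-$B$-pair $W(\eta_i)$ that is Hodge--Tate with all weights zero, i.e.\ with vanishing Sen operator; by Sen theory $\eta_i|_{\mathcal{O}_K^{\times}}$ is then trivial on some open subgroup $1+\pi_K^{M}\mathcal{O}_K$. Its reduction $\tilde{\delta}_{\tau(i)}|_{\mathcal{O}_K^{\times}}$ is trivial on $1+\pi_K^m\mathcal{O}_K$, so $\eta_i$ carries the profinite group $1+\pi_K^m\mathcal{O}_K$ into $1+\mathfrak{m}_A$. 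Now the key elementary observation: because $A$ is an Artin local $E$-algebra and $E$ has characteristic zero, $\exp$ identifies $1+\mathfrak{m}_A$ with the additive group $(\mathfrak{m}_A,+)$, which is torsion--free; therefore any continuous homomorphism from a profinite group into $1+\mathfrak{m}_A$ that kills an open subgroup factors through a finite quotient and so is trivial. Applying this to $\eta_i|_{1+\pi_K^m\mathcal{O}_K}$ gives $\eta_i|_{1+\pi_K^m\mathcal{O}_K}=1$. Consequently, by the construction of $W(\delta)$ preceding Proposition \ref{5}, $W(\eta_i)|_{G_{K_m}}$ is the tensor product of a crystalline $B$-pair over $K$ with an unramified $A$-$B$-pair of $G_{K_m}$, hence crystalline over $K_m$; tensoring with $W(\prod_{\sigma}\sigma^{k_{i,\sigma}})\otimes_E A$, which is crystalline over $K$, we get that $W(\delta_{i,A})|_{G_{K_m}}$ is crystalline over $K_m$.

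For claim (b), which is where the main difficulty lies, one argues as in the last paragraph of the proof of Lemma \ref{20.5}: $E_A$ is crystalline over $K_m$ precisely when its class in $\mathrm{H}^1(G_{K_m},M_A)$ dies in $\mathrm{H}^1(G_{K_m},B_{\mathrm{cris}}\otimes_{B_e}M_{A,e})$, and one establishes this by a dévissage on the length of $A$. For a small quotient $A\to A_1$ with kernel $\mathfrak{a}$, the $A_1$-$B$-pair $E_A\otimes_A A_1$ again has $M_{A_1}$ and its reduction crystalline over $K_m$, hence is crystalline over $K_m$ by the inductive hypothesis on length, so the residual obstruction of $E_A$ lies in the image of $\mathfrak{a}\otimes_E\mathrm{H}^1(G_{K_m},B_{\mathrm{cris}}\otimes_{B_e}M_{E,e})$; this image is controlled by comparing with the reduction of the obstruction of $E_A$, which is the obstruction of $E_A\otimes_A E$ and vanishes by assumption, using that the benign condition (1) forces the relevant $\mathrm{H}^2$ to vanish (Lemma \ref{21.5}), so that the crystalline cohomology is compatible with the base change $A\to A_1$. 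The hard part of the whole argument is concentrated in exactly this step: Lemma \ref{20.5} only needed crystallinity over \emph{some} finite extension, whereas here we must propagate crystallinity over the \emph{prescribed} field $K_m$, and it is precisely for this that both hypotheses — that the reduction $W$ is already crystalline over $K_m$, and that $W$ is benign (entering through $\mathrm{H}^2(G_K,\mathrm{ad}(W))=0$) — are indispensable.
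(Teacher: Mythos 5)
Your claim (a) and its proof agree in essence with the paper's first paragraph: one reduces the rank-one graded pieces to Hodge--Tate weight zero, invokes Sen's theorem to get potential unramifiedness, and then uses that $1+\mathfrak{m}_A$ is torsion-free in characteristic zero to conclude that the character agrees with $\tilde{\delta}_{\tau(i)}$ on an open subgroup of $\mathcal{O}_K^{\times}$. That part is fine.

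Claim (b), however, is where the proof goes wrong, and it is not merely a missing detail but a wrong strategy. As you have stated it, (b) is \emph{false}: take $A=E[\varepsilon]$ and $M_A=B_A$ both equal to the trivial $A$-$B$-pair. An extension of $B_A$ by $M_A$ over $A$ is a class $\eta=\eta_0+\varepsilon\eta_1\in \mathrm{H}^1(G_{K_m},E)\oplus \varepsilon\,\mathrm{H}^1(G_{K_m},E)$. The reduction being crystalline constrains only $\eta_0\in \mathrm{H}^1_f$, and $\eta_1$ may be chosen outside $\mathrm{H}^1_f$, giving a non-crystalline $E_A$. Your d\'evissage does not close precisely because of this: the obstruction of $E_A$ lies a priori in the kernel of the map to the $A_1$-level, i.e.\ in the image of $\mathfrak{a}\otimes_E \mathrm{H}^1(G_{K_m},B_{\mathrm{cris}}\otimes_{B_e}M_{E,e})$, and the reduction of that kernel to the $E$-level is automatically zero, so the hypothesis that the obstruction of $E_A\otimes_A E$ vanishes gives no new information. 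Invoking $\mathrm{H}^2(G_K,\mathrm{ad}(W))=0$ (benign condition $(1)$) does not repair this either --- that controls surjectivity of specialization maps on $\mathrm{H}^1$, not containment in $\mathrm{H}^1_f$.

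What actually makes the lemma true, and what the paper uses, is condition $(3)$ of Definition~\ref{26} (non-criticality of the Hodge filtration), which you never invoke. After twisting, one has $[W_A]\in \mathrm{H}^1(G_K, W_{n-1,A}(\delta_{A,n}^{-1}))$, and condition $(3)$ forces all Hodge--Tate weights of $W_{n-1,A}(\delta_{A,n}^{-1})$ to be strictly positive, i.e.\ $\mathrm{Fil}^0 D_{\mathrm{dR}}(W_{n-1,A}(\delta_{A,n}^{-1}))=0$. By the Bloch--Kato dimension formula of Proposition 2.7 of \cite{Na09} (together with the vanishing of $\mathrm{H}^0$ and $\mathrm{H}^2$ from condition $(1)$) this yields $\mathrm{H}^1_f(G_K, W_{n-1,A}(\delta_{A,n}^{-1}))=\mathrm{H}^1(G_K, W_{n-1,A}(\delta_{A,n}^{-1}))$, so \emph{every} such extension is crystalline over $K_m$; no d\'evissage on the length of $A$ is needed. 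The benign hypothesis enters through the Hodge--Tate structure, not through $\mathrm{H}^2$-vanishing, and that is the idea your proposal is missing.
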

\begin{proof}
First, we prove that $(W_{i,A}/W_{i-1,A})|_{G_{K_m}}$ is crystalline with the Hodge-Tate weights $\{k_{i,\sigma}\}_{\sigma\in \mathcal{P}}$. 
Because $W_{i,A}/W_{i-1,A}$ is written as a successive extension of $W_{\tau,i}/W_{\tau,i-1}$, $W_{i,A}/W_{i-1,A}$
has the Hodge-Tate weights $\{k_{i,\sigma}\}_{\sigma\in\mathcal{P}}$ with multiplicity. Twisting $W_A$ by the crystalline character $\delta_{\alpha_{\tau(i)}}^{-1}
\prod_{\sigma\in\mathcal{P}}\sigma^{-k_{i,\sigma}}:K^{\times}\rightarrow A^{\times}$, we may assume that 
$W_{i,A}/W_{i-1, A}$ is an \'etale Hodge-Tate $A$-$B$-pair of rank one with the Hodge-Tate weight zero and is 
a deformation of an \'etale potentially unramified $E$-$B$-pair $W(\widetilde{\delta}_{\tau(i)})$. By Sen's theorem 
(\cite{Se73} or Proposition 5.24 of \cite{Be02}),
$W_{i,A}/W_{i-1,A}$ is potentially unramified, hence there exists a unitary homomorphism $\delta:K^{\times}\rightarrow A^{\times}$ 
such that $\delta|_{\mathcal{O}_K^{\times}}$ has a finite image and $W_{i,A}/W_{i-1,A}\isom W(\delta)$ and 
$\delta$ is a lift of $\widetilde{\delta}_{\tau(i)}$. Because $(1+\mathfrak{m}_A)\cap A^{\times}_{\mathrm{torsion}}=\{1\}$, then 
we have $\delta|_{\mathcal{O}_K^{\times}}=\widetilde{\delta}_{\tau(i)}|_{\mathcal{O}_K^{\times}}:\mathcal{O}_K^{\times}\rightarrow A^{\times}$, hence $W_{i,A}/W_{i-1,A}|_{G_{K_m}}$ is crystalline.
Next, we prove that $W_A|_{G_{K_m}}$ is crystalline by induction on the rank of $W$. 
When $n=1$, we just have proved this. 
Assume that the lemma is proved for $n-1$, then 
$W_{n-1, A}|_{G_{K_m}}$ is crystalline. If we put $W_A/W_{n-1,A}\isom W(\delta_{A,n})$, then we have 
$[W_A]\in \mathrm{H}^1(G_K, W_{n-1,A}(\delta_{A,n}^{-1}))$. By considering the Hodge-Tate weights of 
$W_{A}$ and the condition (3) of Definition $\ref{26}$, we have $\mathrm{Fil}^0\bold{D}_{\mathrm{dR}}(W_{n-1,A}(\delta_{A,n}^{-1}))=0$. 
Comparing the dimensions, we obtain the equality $\mathrm{H}^1_f(G_K,W_{A,n-1}(\delta_{A,n}^{-1}))=\mathrm{H}^1(G_K, 
W_{A,n-1}(\delta_{A,n}^{-1}))$ by Proposition 2.7 of \cite{Na09}. In particular, $W_A|_{G_{K_m}}$ is crystalline.

\end{proof}

\begin{lemma}\label{30}
Let $W$ be a benign $E$-$B$-pair of rank $n$, and let $W_1$ be a rank one crystabelline sub $E$-$B$-pair of $W$. Then, the saturation $W_{1}^{sat}:=(W^{sat}_{1,e}, W^{+, sat}_{1,\mathrm{dR}})$ of $W_1$ in $W$ is 
crystabelline and the natural map $\mathrm{Hom}_{G_K}( W_{1}^{sat}, W)
\rightarrow \mathrm{Hom}_{G_K}(W_1,W)$ induced by the natural 
inclusion $W_1\hookrightarrow W_{1}^{sat}$ is isomorphism 
between one dimensional $E$-vector spaces.
\end{lemma}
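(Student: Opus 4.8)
The plan is to reduce all three assertions to the single vanishing statement $\mathrm{Hom}_{G_K}(W_1,W/W_1^{\mathrm{sat}})=0$. First, since $W_1^{\mathrm{sat}}$ is a saturated sub $E$-$B$-pair of the benign $W$, Lemma \ref{21} shows $W_1^{\mathrm{sat}}$ and $W/W_1^{\mathrm{sat}}$ are benign, in particular crystabelline; this settles the crystabelline claim. Next I would pin down $W_1^{\mathrm{sat}}$ concretely. Writing $D:=D^{K_m}_{\mathrm{cris}}(W)=\oplus_{l}K_0\otimes_{\mathbb{Q}_p}Ee_l$ for the crystalline module over $K_m$, the short exact sequence $0\rightarrow W_1^{\mathrm{sat}}\rightarrow W\rightarrow W/W_1^{\mathrm{sat}}\rightarrow 0$ of $E$-$B$-pairs crystalline over $G_{K_m}$ corresponds under the equivalence of Theorem \ref{e} to a short exact sequence of $E$-filtered $(\varphi,G_m)$-modules, so $D^{K_m}_{\mathrm{cris}}(W_1^{\mathrm{sat}})\subseteq D$ is a rank one $\varphi$-stable $K_0\otimes_{\mathbb{Q}_p}E$-submodule carrying the induced filtration. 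Because $\varphi^f$ acts on each $\sigma$-component of $D$ with the pairwise distinct eigenvalues $\alpha_1,\dots,\alpha_n$, the only such submodules are the $K_0\otimes_{\mathbb{Q}_p}Ee_i$; hence $D^{K_m}_{\mathrm{cris}}(W_1^{\mathrm{sat}})=K_0\otimes_{\mathbb{Q}_p}Ee_i$ with induced filtration for some $i$, i.e. $W_1^{\mathrm{sat}}=W_{\tau,1}$ for any $\tau$ with $\tau(1)=i$, and so $W_1^{\mathrm{sat}}\isom W(\delta_{\alpha_i}\tilde\delta_i\prod_{\sigma\in\mathcal{P}}\sigma^{k_{1,\sigma}})$ by the Remark after Definition \ref{26}. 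Moreover, since $W_{1,e}=W_{1,e}^{\mathrm{sat}}$ and the de Rham lattice of $W_1$ sits inside that of $W_1^{\mathrm{sat}}$, comparing lattices via the classification of rank one $E$-$B$-pairs (Theorem \ref{g} together with the explicit description of $W(\prod_{\sigma}\sigma^{k_{\sigma}})$ recalled above) gives $W_1\isom W_1^{\mathrm{sat}}\otimes W(\prod_{\sigma\in\mathcal{P}}\sigma^{a_{\sigma}})$ with $\{a_{\sigma}\}_{\sigma\in\mathcal{P}}\in\prod_{\sigma\in\mathcal{P}}\mathbb{Z}_{\geqq 0}$; thus $W_1\isom W(\delta_1)$ with $\delta_1=\delta_{\alpha_i}\tilde\delta_i\prod_{\sigma\in\mathcal{P}}\sigma^{k_{1,\sigma}+a_{\sigma}}$.

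For the key vanishing, note that $W/W_1^{\mathrm{sat}}$ is a split trianguline benign $E$-$B$-pair of rank $n-1$ whose graded pieces, by the Remark after Definition \ref{26} applied to $W/W_1^{\mathrm{sat}}$, have the form $W(\gamma)$ with $\gamma=\delta_{\alpha_j}\tilde\delta_j\prod_{\sigma\in\mathcal{P}}\sigma^{k_{l,\sigma}}$ for some $j\not=i$ and some $l\geqq 2$. Since $\mathrm{H}^0(G_K,-)$ is left exact and $\mathrm{Hom}_{G_K}(W_1,W(\gamma))\isom\mathrm{H}^0(G_K,W(\delta_1^{-1}\gamma))$, it suffices to show $\mathrm{H}^0(G_K,W(\delta_1^{-1}\gamma))=0$ for each such $\gamma$; then an induction along the triangulation of $W_1^{\vee}\otimes(W/W_1^{\mathrm{sat}})$ yields $\mathrm{Hom}_{G_K}(W_1,W/W_1^{\mathrm{sat}})=0$. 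If $\mathrm{H}^0(G_K,W(\delta_1^{-1}\gamma))\not=0$, then by Proposition \ref{i}(1) we would have $\delta_1^{-1}\gamma=\prod_{\sigma\in\mathcal{P}}\sigma^{m_{\sigma}}$ with $\{m_{\sigma}\}\in\prod_{\sigma}\mathbb{Z}_{\leqq 0}$. Restricting this identity to $1+\pi_K^m\mathcal{O}_K$, on which $\tilde\delta_i$ and $\tilde\delta_j$ are trivial by construction and on which the characters $\{\sigma\}_{\sigma\in\mathcal{P}}$ are $\mathbb{Z}$-linearly independent, forces $m_{\sigma}=k_{l,\sigma}-k_{1,\sigma}-a_{\sigma}$ for all $\sigma$; evaluating at $\pi_K$, where $\tilde\delta_i(\pi_K)=\tilde\delta_j(\pi_K)=1$, the powers of $\sigma(\pi_K)$ then cancel and we obtain $\alpha_j/\alpha_i=1$, contradicting condition (1) of Definition \ref{26}. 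Hence $\mathrm{Hom}_{G_K}(W_1,W/W_1^{\mathrm{sat}})=0$, and since the restriction map $\mathrm{Hom}_{G_K}(W_1^{\mathrm{sat}},W/W_1^{\mathrm{sat}})\rightarrow\mathrm{Hom}_{G_K}(W_1,W/W_1^{\mathrm{sat}})$ is injective (a morphism vanishing on $W_1$ vanishes on $W_{1,e}=W_{1,e}^{\mathrm{sat}}$, hence is $0$), also $\mathrm{Hom}_{G_K}(W_1^{\mathrm{sat}},W/W_1^{\mathrm{sat}})=0$.

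Applying $\mathrm{Hom}_{G_K}(W_1,-)$ and $\mathrm{Hom}_{G_K}(W_1^{\mathrm{sat}},-)$ to $0\rightarrow W_1^{\mathrm{sat}}\rightarrow W\rightarrow W/W_1^{\mathrm{sat}}\rightarrow 0$ and using the two vanishings gives $\mathrm{Hom}_{G_K}(W_1,W)=\mathrm{Hom}_{G_K}(W_1,W_1^{\mathrm{sat}})=\mathrm{H}^0(G_K,W(\prod_{\sigma}\sigma^{-a_{\sigma}}))=E$ by Proposition \ref{i}(1) (as $-a_{\sigma}\leqq 0$), and $\mathrm{Hom}_{G_K}(W_1^{\mathrm{sat}},W)=\mathrm{End}_{G_K}(W_1^{\mathrm{sat}})=\mathrm{H}^0(G_K,B_E)=E$. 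The map in the statement is injective since a morphism $W_1^{\mathrm{sat}}\rightarrow W$ vanishing on $W_1$ vanishes on $W_{1,e}=W_{1,e}^{\mathrm{sat}}$ and hence is zero, and it carries the inclusion $W_1^{\mathrm{sat}}\hookrightarrow W$ to the nonzero inclusion $W_1\hookrightarrow W$; an injective $E$-linear map between one dimensional spaces sending a nonzero vector to a nonzero vector is an isomorphism, which completes the proof. The step I expect to be the real obstacle is the middle one: one must identify $W_1^{\mathrm{sat}}$ precisely with one of the $W_{\tau,1}$ (this is where distinctness of the $\alpha_i$ enters) and carefully track the parameters $\gamma$ of the graded pieces of $W/W_1^{\mathrm{sat}}$ through the benign structure, so that the character comparison genuinely produces the relation $\alpha_i=\alpha_j$; everything else is formal manipulation of Galois cohomology and the classification of rank one $E$-$B$-pairs.
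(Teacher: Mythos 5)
Your proof is correct, but it takes a genuinely different route from the paper's. The paper argues by comparing the two Galois cohomology complexes $\mathrm{C}^{\bullet}(W\otimes(W_1^{\mathrm{sat}})^{\vee})$ and $\mathrm{C}^{\bullet}(W\otimes W_1^{\vee})$: since the $B_e$-parts and $B_{\mathrm{dR}}$-parts of $W\otimes(W_1^{\mathrm{sat}})^{\vee}$ and $W\otimes W_1^{\vee}$ coincide, the quotient complex is a torsion $B^+_{\mathrm{dR}}$-module concentrated in degree zero, and the paper kills its $\mathrm{H}^0$ by a dimension count on $\mathrm{H}^0$ of the two $B^+_{\mathrm{dR}}$-lattices (this is where condition (3) of benignness enters). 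It then computes $\dim_E\mathrm{H}^0(G_K,W\otimes(W_1^{\mathrm{sat}})^{\vee})=1$ using condition (1) and Proposition \ref{i}. You instead first pin down $W_1^{\mathrm{sat}}$ as one of the canonical sub-$B$-pairs $W_{\tau,1}$ (using distinctness of the $\alpha_i$ to identify $D^{K_m}_{\mathrm{cris}}(W_1^{\mathrm{sat}})$ with an eigenline $K_0\otimes_{\mathbb{Q}_p}Ee_i$), describe $W_1$ explicitly as a negative-weight twist of $W_1^{\mathrm{sat}}$, and then feed the horizontal short exact sequence $0\to W_1^{\mathrm{sat}}\to W\to W/W_1^{\mathrm{sat}}\to 0$ of $B$-pairs through $\mathrm{Hom}_{G_K}(W_1,-)$ and $\mathrm{Hom}_{G_K}(W_1^{\mathrm{sat}},-)$, establishing the vanishing $\mathrm{Hom}_{G_K}(W_1,W/W_1^{\mathrm{sat}})=0$ by a character comparison that boils down, via Proposition \ref{i}(1), to condition (1) of Definition \ref{26}. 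What your approach buys is a more explicit description: you actually locate $W_1$ and $W_1^{\mathrm{sat}}$ among the rank one $B$-pairs $W(\delta)$ and track the parameter through the triangulation of $W/W_1^{\mathrm{sat}}$, whereas the paper's mapping-cone argument is shorter and stays entirely at the level of $B^+_{\mathrm{dR}}$-representations without needing the identification $W_1^{\mathrm{sat}}\isom W_{\tau,1}$ to be made precise (it only extracts the Hodge–Tate weight of $W_1^{\mathrm{sat}}$). The "real obstacle" you flag — identifying $W_1^{\mathrm{sat}}$ with a $W_{\tau,1}$ — is indeed the step the paper circumvents by this more direct cohomological bookkeeping; both are valid, and yours is the one that generalizes naturally if one wants the explicit parameters of the triangulation.
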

\begin{proof}
Because we have $W_{1,e}=W^{sat}_{1,e}$ by Lemma 1.14 of \cite{Na09}, so $W_{1}^{sat}$ is crystabelline.
By the definition of benign $E$-$B$-pairs, the Hodge-Tate weights of $W_{1}^{sat}$ 
are $\{k_{1,\sigma}\}_{\sigma\in\mathcal{P}}$. Consider the following short exact sequence 
of complexes of $G_K$-modules defined in p.890 of \cite{Na09}
\begin{equation*}
0\rightarrow \mathrm{C}^{\bullet}(W\otimes (W_{1}^{sat})^{\vee})\rightarrow \mathrm{C}^{\bullet}(W
\otimes W_{1}^{\vee})\rightarrow  
((W\otimes W_{1}^{\vee})^+_{\mathrm{dR}}/
(W\otimes(W_{1}^{sat})^{\vee})^+_{\mathrm{dR}}) [0]\rightarrow 0,
\end{equation*}
where, for an $E$-$B$-pair $W$, we denote by $C^{\bullet}(W)$ the complex 
$$ \mathrm{C}^{0}(W):=W_e\oplus W^+_{\mathrm{dR}}\xrightarrow{(x,y)\mapsto x-y} W_{\mathrm{dR}}=:\mathrm{C}^{1}(W).$$
From this, we obtain an exact sequence
\begin{multline*}
0\rightarrow \mathrm{H}^0(G_K, W\otimes (W_1^{sat})^{\vee})\rightarrow 
\mathrm{H}^0(G_K, W\otimes W_1^{\vee}) \\
 \rightarrow \mathrm{H}^0(G_K, (W\otimes W_1^{\vee})^+_{\mathrm{dR}}/
(W\otimes(W_1^{sat})^{\vee})^+_{\mathrm{dR}})\rightarrow \cdots
\end{multline*}
By the condition (3) in Definition $\ref{26}$, we have 
$$\mathrm{dim}_E\mathrm{H}^0(G_K, (W\otimes (W_1^{sat})^{\vee})^+_{\mathrm{dR}})
=\mathrm{dim}_E\mathrm{H}^0(G_K, (W\otimes W_1^{\vee})^+_{\mathrm{dR}})=n[K:\mathbb{Q}_p].$$ Hence, by a standard argument 
of the theory of $\bold{B}^+_{\mathrm{dR}}$-representations, we obtain the equality $\mathrm{H}^0(G_K, (W\otimes W_1^{\vee})^+_{\mathrm{dR}}/(W\otimes (W_1^{sat})^{\vee})^+_{\mathrm{dR}})=0$. Hence the map $\mathrm{H}^0(G_K, W\otimes (W_1^{sat})^{\vee}))\isom \mathrm{H}^0(G_K, W\otimes (W_1)^{\vee})
$ is isomorphism. Finally, for the dimension, 
we have $\mathrm{dim}_E\mathrm{H}^0(G_K, W\otimes (W_1^{sat})^{\vee})=1$ by the condition (1) in Definition $\ref{26}$ and by Proposition $\ref{i}$ of \cite{Na09}.

\end{proof}
\begin{lemma}\label{31}
Let $W$ be a benign $E$-$B$-pair, and let $W_A$ be a deformation 
of $W$ over $A$. If there exists a crystabelline sub $A$-$B$-pair 
$W_{1,A}\subseteq W_A$  of rank one such 
that the base change $W_1:=W_{1,A}\otimes_A E\hookrightarrow W_A\otimes_{A}E$ remains to be injective, 
then the saturation $W_{1,A}^{sat}$ of $W_{1,A}$ in $W_A$ as an $E$-$B$-pair is 
a crystabelline $A$-$B$-pair and $W_A/W^{sat}_{1,A}$ is an $A$-$B$-pair and 
$W_{1,A}^{sat}\otimes_A E\isom W_1^{sat} (\subseteq W)$.
\end{lemma}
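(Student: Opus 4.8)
The plan is to reduce the statement to the rank-one computations already available (Lemma \ref{s3-5}, Lemma \ref{s4}) together with the structure of benign $E$-$B$-pairs (Definition \ref{26}, Lemma \ref{30}). First I would observe that since $W_{1,A}$ is rank one crystabelline, say $W_{1,A}|_{G_{K_m}}$ crystalline, the filtered $(\varphi,G_m)$-module $D:=D^{K_m}_{\mathrm{cris}}(W_{1,A})$ is a rank one $A$-filtered $(\varphi,G_m)$-module by Corollary \ref{s3}, and the natural inclusion $W_{1,A}\hookrightarrow W_A$ induces an embedding $D\hookrightarrow D^{K_m}_{\mathrm{cris}}(W_A)$ (using $A$-flatness of $W_{1,A,e}$, exactly as in Lemma \ref{s1}). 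The point is to enlarge $D$ to the \emph{saturation} $D^{\mathrm{sat}}$ of $D$ inside $D^{K_m}_{\mathrm{cris}}(W_A)$ with respect to the induced filtration: concretely $D^{\mathrm{sat}}$ has the same underlying $L_0\otimes A$-module as $D$ but carries the filtration induced by that of $D^{K_m}_{\mathrm{dR}}(W_A)$, and $W(D^{\mathrm{sat}})$ is exactly the saturation $W_{1,A}^{\mathrm{sat}}$ of $W_{1,A}$ in $W_A$ as an $E$-$B$-pair — this is the $A$-coefficient analogue of Lemma \ref{s3-5}, so $W_{1,A}^{\mathrm{sat}}$ is crystabelline by construction once one checks $D^{\mathrm{sat}}$ is genuinely an $A$-filtered $(\varphi,G_m)$-module, i.e. that $\mathrm{Fil}^kD^{\mathrm{sat}}_L/\mathrm{Fil}^{k+1}D^{\mathrm{sat}}_L$ is $A$-free for all $k$.

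The $A$-freeness of the graded pieces is where the benign hypothesis enters. Reducing mod $\mathfrak{m}_A$ and using the hypothesis that $W_1\hookrightarrow W$ stays injective, we get (by Lemma \ref{30}, applied to the benign $E$-$B$-pair $W$ and the rank one crystabelline sub $W_1$) that $W_1^{\mathrm{sat}}\subseteq W$ is crystabelline with Hodge--Tate weight $\{k_{1,\sigma}\}_{\sigma\in\mathcal{P}}$ — i.e. the ``jump'' of the induced filtration on $D\otimes_A E$ sits in the top Hodge--Tate slot for each $\sigma$. Thus the filtration on $D^{\mathrm{sat}}\otimes_A E$ is, $\sigma$-component by $\sigma$-component, a single jump of total length one, and by Nakayama plus $A$-freeness of $D^{\mathrm{sat}}$ (which coincides with $D$) the induced filtration on $D^{\mathrm{sat}}$ has the same shape over $A$: one jump at a single integer $k_{1,\sigma}$ with free rank-one graded piece. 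This forces $\mathrm{Fil}^kD^{\mathrm{sat}}_L/\mathrm{Fil}^{k+1}D^{\mathrm{sat}}_L$ to be either $0$ or $A$-free of the expected rank, so $D^{\mathrm{sat}}$ is an $A$-filtered $(\varphi,G_m)$-module and hence $W_{1,A}^{\mathrm{sat}}=W(D^{\mathrm{sat}})$ is a crystabelline $A$-$B$-pair by Corollary \ref{s3}.

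Next I would invoke Lemma \ref{s4} with $D:=D^{\mathrm{sat}}$: we have just verified $D^{\mathrm{sat}}$ is an $A$-filtered $\varphi$-module of rank one whose filtration is induced from $D_{\mathrm{cris}}(W_A)$, and the reduction $D^{\mathrm{sat}}\otimes_A E\hookrightarrow D_{\mathrm{cris}}(W_A\otimes_A E)$ is injective (being the $D$ attached to $W_1^{\mathrm{sat}}\subseteq W$). Lemma \ref{s4} then yields the natural injection $W_{1,A}^{\mathrm{sat}}=W(D^{\mathrm{sat}})\hookrightarrow W_A$ with cokernel $W_A/W_{1,A}^{\mathrm{sat}}$ again an $A$-$B$-pair. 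Finally, $W_{1,A}^{\mathrm{sat}}\otimes_A E\isom W_1^{\mathrm{sat}}$: by construction $W_{1,A}^{\mathrm{sat}}\otimes_A E\isom W(D^{\mathrm{sat}}\otimes_A E)$, and $D^{\mathrm{sat}}\otimes_A E$ is the saturation of $D\otimes_A E$ in $D_{\mathrm{cris}}(W)$ (this uses $A$-flatness of $W_A/W_{1,A}^{\mathrm{sat}}$ to commute saturation with base change), which is exactly $D^{K_m}_{\mathrm{cris}}(W_1^{\mathrm{sat}})$ by Lemma \ref{s3-5}.

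The main obstacle I anticipate is the middle step: showing that forming the saturation of a filtered $\varphi$-module commutes with reduction mod $\mathfrak{m}_A$, and that the resulting filtration on $D^{\mathrm{sat}}$ over $A$ has $A$-free graded pieces. Over a field the saturation is automatic, but over an Artin local ring one must rule out ``torsion jumps'' — i.e. a situation where $\mathrm{Fil}^kD^{\mathrm{sat}}_L$ for intermediate $k$ is a proper non-free $A$-submodule. This is precisely what the benign condition (3) of Definition \ref{26} (the non-criticality giving Hodge--Tate weight exactly $\{k_{1,\sigma}\}$ on $W_1^{\mathrm{sat}}$) is designed to prevent: it pins the single jump to the extreme Hodge--Tate index, so no intermediate partial jumps can occur, and a Nakayama/length argument over $A$ (in the spirit of Lemma \ref{2} and Lemma \ref{s1}) then upgrades $E$-freeness of the one graded piece to $A$-freeness. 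Once that is in place, everything else is a formal concatenation of the already-proved lemmas.
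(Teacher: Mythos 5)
Your strategy — pass to the filtered $(\varphi,G_m)$-module side, take the ``saturated'' filtered module $D^{\mathrm{sat}}$, and then feed it into Lemma~\ref{s4} — is genuinely different from the paper's and is a reasonable plan, but it has a real gap exactly where you flag it, and I do not think the Nakayama argument you sketch closes it. The problem is twofold. First, the filtration $\mathrm{Fil}^k D^{\mathrm{sat}}_L := D^{\mathrm{sat}}_L\cap t^kW^+_{A,\mathrm{dR}}$ is an intersection taken over $A$, and such intersections do \emph{not} commute with $-\otimes_A E$: the image of $\mathrm{Fil}^k D^{\mathrm{sat}}_L$ in $D^{\mathrm{sat}}_L\otimes_A E$ is only \emph{contained} in the $E$-level filtration, so from $\mathrm{Fil}^{k_{1,\sigma}+1}(D^{\mathrm{sat}}_L\otimes_A E)_\sigma=0$ you can only conclude $\mathrm{Fil}^{k_{1,\sigma}+1}(D^{\mathrm{sat}}_L)_\sigma\subseteq\mathfrak{m}_A\cdot(D^{\mathrm{sat}}_L)_\sigma$, not that it vanishes. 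Likewise, knowing that the $E$-level filtration jumps at $k_{1,\sigma}$ does not a priori give you $\mathrm{Fil}^{k_{1,\sigma}}(D^{\mathrm{sat}}_L)_\sigma=(D^{\mathrm{sat}}_L)_\sigma$ over $A$, since an element of $(D^{\mathrm{sat}}_L)_\sigma$ whose reduction lies in $t^{k_{1,\sigma}}W^+_{\mathrm{dR}}$ need not itself lie in $t^{k_{1,\sigma}}W^+_{A,\mathrm{dR}}$. So Nakayama is not applicable in the direction you need: you would have to rule out ``torsion'' intermediate ideals $\mathrm{Fil}^k(D^{\mathrm{sat}}_L)_\sigma\subsetneq(D^{\mathrm{sat}}_L)_\sigma$ that vanish on reduction, and nothing in the sketch does that.

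Second, there is a circularity. To pin the jump of $D^{\mathrm{sat}}\otimes_AE$ at $k_{1,\sigma}$ you invoke Lemma~\ref{30}, which requires identifying $D^{\mathrm{sat}}\otimes_A E$ with $D^{K_m}_{\mathrm{cris}}(W_1^{\mathrm{sat}})$; but you justify that identification by ``$A$-flatness of $W_A/W_{1,A}^{\mathrm{sat}}$,'' which is precisely part of what the lemma asserts and what you are trying to prove. (A smaller point: Lemmas~\ref{s3-5} and~\ref{s4} in the paper are formulated for $D_{\mathrm{cris}}$, i.e.\ $L=K$, while your $D$ lives in $D^{K_m}_{\mathrm{cris}}$; you would need to record the $L=K_m$ variants, which is routine but not free.) The paper avoids all of this: it first identifies $W_1^{\mathrm{sat}}\isom W_1\otimes W\bigl(\prod_\sigma\sigma^{l_\sigma}\bigr)$ over $E$, then builds the candidate lift $W'_{1,A}:=W_{1,A}\otimes W_A\bigl(\prod_\sigma\sigma^{l_\sigma}\bigr)$ \emph{directly} by twisting (so its $A$-$B$-pair structure is manifest), shows $\mathrm{Hom}_{G_K}(W'_{1,A},W_A)\to\mathrm{Hom}_{G_K}(W_{1,A},W_A)$ is an isomorphism of rank-one free $A$-modules by Lemma~\ref{30} at the $E$-level plus an induction on $\mathrm{length}(A)$, deduces that the given inclusion $\iota$ factors through $W'_{1,A}$, and then proves $W_A/W'_{1,A}$ is an $A$-$B$-pair by a snake-lemma computation on the $W^+_{\mathrm{dR}}$ side, again by induction on length. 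The induction on $\mathrm{length}(A)$ is the mechanism by which the paper propagates the $E$-level information (Lemma~\ref{30}) through the Artin ring; that mechanism is absent from your argument, and it is what you would need to insert in place of the unsubstantiated Nakayama step.
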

\begin{proof}
First, by Proposition 2.14 of \cite{Na09}, there exists $\{l_{\sigma}\}_{\sigma\in\mathcal{P}}
\in \prod_{\sigma\in\mathcal{P}}\mathbb{Z}_{\leqq 0}$ such that  $W_1^{sat}\isom W_1\otimes W(\prod_{\sigma\in\mathcal{P}}
\sigma^{l_{\sigma}})$. We claim that the inclusion 
$$\mathrm{Hom}_{G_K}(W_{1,A}\otimes W_A(\prod_{\sigma\in\mathcal{P}}\sigma^{l_{\sigma}}), W_A)
\rightarrow \mathrm{Hom}_{G_K}(W_{1,A}, W_A)$$ induced by the natural inclusion $W_{1,A}\hookrightarrow W_{1,A}
\otimes W_A(\prod_{\sigma\in\mathcal{P}}\sigma^{l_{\sigma}})$ is isomorphism and 
that these groups are rank one free $A$-modules. By the same argument as in Lemma \ref{30}, 
the cokernel of $\mathrm{Hom}_{G_K}(W_{1,A}\otimes W_A(\prod_{\sigma\in\mathcal{P}}\sigma^{l_{\sigma}}), W_A)
\rightarrow \mathrm{Hom}_{G_K}(W_{1,A}, W_A)$ is contained in $\mathrm{H}^0(G_K, (W_A\otimes W_{1,A}^{\vee})^+_{\mathrm{dR}}/
(W_A\otimes W_{1,A}^{\vee}\otimes W_A(\prod_{\sigma\in \mathcal{P}}\sigma^{-l_{\sigma}}))^+_{\mathrm{dR}})$, which is zero 
by the proof of Lemma $\ref{30}$. Hence the natural inclusion 
$$\mathrm{Hom}_{G_K}(W_{1,A}\otimes W_A(\prod_{\sigma\in\mathcal{P}}\sigma^{l_{\sigma}}), W_A)
\isom \mathrm{Hom}_{G_K}(W_{1,A}, W_A)$$ is isomorphism. Next, we prove
that $\mathrm{Hom}_{G_K}(W_{1,A},\allowbreak W_A)$ is a free $A$-module of rank one by 
induction on the length of $A$.
When $A=E$, this claim is proved in Lemma $\ref{30}$. 
Assume that $A$ is of length $n$ and assume that the claim is 
proved for $W_{A'}:=W\otimes_A A'$ for a small extension $A\rightarrow A'$. We denote by $I$ the kernel of $A\rightarrow A'$, $W_{1,A'}:=W_{1,A}\otimes_A A'$. From the exact sequence
\begin{equation*}
0\rightarrow I\otimes_E \mathrm{Hom}_{G_K}(W_1, W)
\rightarrow \mathrm{Hom}_{G_K}(W_{1,A}, W_A)\rightarrow \mathrm{Hom}_{G_K}(W_{1,A'}, W_{A'})
\end{equation*}
 and the induction hypothesis, we have 
$\mathrm{length}\mathrm{Hom}_{G_K}(W_{1,A}, W_A)\leqq 
\mathrm{length}A.$ On the other hand, the fact that the given inclusion $\iota:W_{1,A}\hookrightarrow W_A$ remains to be
injective after tensoring $E$ and the fact that $\mathrm{dim}_E\mathrm{Hom}_{G_K}(W_1, W)=1$ and the induction hypothesis 
imply that the map $A\rightarrow \mathrm{Hom}_{G_K}(W_{1,A}, W_A): a\mapsto a\cdot\iota $ is an injection. Hence we obtain the equality
$\mathrm{length}(A)= \mathrm{length} \mathrm{Hom}_{G_K}(W_{1,A}, W_A)$. These facts prove the claim for $A$.
From this claim, the given inclusion $\iota:W_{1,A}\hookrightarrow W_A$  factors through a map 
$$\widetilde{\iota}:W_{1,A}':=W_{1, A}\otimes W_A(\prod_{\sigma}\sigma^{l_{\sigma}})
\rightarrow W_A.$$ Because the injectiveness of morphisms of $B$-pairs is determined only by the $W_e$-part of $B$-pairs and we have $W_{1,A,e}=(W_{1,A}\otimes W_A(\prod_{\sigma\in\mathcal{P}}\sigma^{l_{\sigma}}))_e$, the map $\widetilde{\iota}$ is also an injection.
Under this situation, we claim that the map $\widetilde{\iota}$ gives an isomorphism $W_{1,A}^{sat}\isom W_{1,A}'$ and satisfies all the properties in this lemma. By induction on the length of $A$, 
we may assume that this claim is proved for $A'$.  First, we prove that $W_{A}/W_{1,A}'$ is an $E$-$B$-pair. To prove this, by Lemma 2.1.4 of \cite{Be08}, 
it suffices to show that $W_{A,\mathrm{dR}}^{+}/W_{1,A,\mathrm{dR}}^{'+}$ is a free $\bold{B}_{\mathrm{dR}}^+$-module. By the snake lemma, we have the following short exact sequence
\begin{equation*}
0\rightarrow I\otimes_E W^+_{\mathrm{dR}}/W^{sat +}_{1,\mathrm{dR}}\rightarrow 
W^+_{A,\mathrm{dR}}/W^{' +}_{1, A,\mathrm{dR}}\rightarrow W^+_{A',\mathrm{dR}}/(W_{1,A,\mathrm{dR}}^{' +}\otimes_A A')\rightarrow 0.
\end{equation*}
From this and the induction hypothesis, $W^+_{A,\mathrm{dR}}/W^{' +}_{1,A,\mathrm{dR}}$ is a free $\bold{B}^+_{\mathrm{dR}}$-module.
Finally, we prove the $A$-flatness of $W_{A}/W_{1,A}^{'}$. This follows from the fact that  the map 
$\widetilde{\iota}\otimes \mathrm{id}_E:W_{1,A}^{'}\otimes_A E\hookrightarrow W_{A}\otimes_A E$ is saturated, which can be seen 
from the proof of the above first claim. Hence $W_A/W_{1,A}^{'}$ is an $A$-$B$-pair. We finish the proof of the lemma.

\end{proof}

\begin{lemma}\label{32}
Let $W$ be a benign $E$-$B$-pair of rank $n$.
For any $\tau\in\mathfrak{S}_n$, we have $$D_{W,\mathcal{T}_{\tau}}(E[\epsilon])
+D^{\mathrm{cris}}_{W,\tau}(E[\varepsilon])=D_{W}(E[\varepsilon]).$$
\end{lemma}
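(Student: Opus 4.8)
The plan is to reduce the statement to one dimension identity together with a single claim about potential crystallinity. First I would translate the three tangent spaces into Galois cohomology: by Lemma~\ref{10}, $D_W(E[\varepsilon])\isom\mathrm{H}^1(G_K,\mathrm{ad}(W))$; by Lemma~\ref{18}, $D_{W,\mathcal{T}_{\tau}}(E[\varepsilon])\isom\mathrm{H}^1(G_K,\mathrm{ad}_{\mathcal{T}_{\tau}}(W))$, which sits inside $D_W(E[\varepsilon])$ as an $E$-subspace since $D_{W,\mathcal{T}_{\tau}}$ is a subfunctor of $D_W$ (Lemma~\ref{14}, whose hypothesis holds for benign $W$ by Lemma~\ref{21.5}); and by Lemma~\ref{27}, $D^{\mathrm{cris}}_{W,\tau}(E[\varepsilon])\isom\mathrm{H}^1_{f,\tau}(G_K,\mathrm{ad}_{\tau}(W))$, also an $E$-subspace of $D_W(E[\varepsilon])$. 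By Corollary~\ref{23} and Lemma~\ref{27} their dimensions are $n^2[K:\mathbb{Q}_p]+1$, $\frac{n(n+1)}{2}[K:\mathbb{Q}_p]+1$ and $n(n-1)[K:\mathbb{Q}_p]+1$. Since $\dim_E(U+V)=\dim_E U+\dim_E V-\dim_E(U\cap V)$ for subspaces $U,V$ of a finite dimensional space, and since $\frac{n(n+1)}{2}+n(n-1)-n^2=\frac{n(n-1)}{2}$, the asserted equality $D_{W,\mathcal{T}_{\tau}}(E[\varepsilon])+D^{\mathrm{cris}}_{W,\tau}(E[\varepsilon])=D_W(E[\varepsilon])$ is equivalent to
\[
D_{W,\mathcal{T}_{\tau}}(E[\varepsilon])\cap D^{\mathrm{cris}}_{W,\tau}(E[\varepsilon])=D^{\mathrm{cris}}_W(E[\varepsilon]),
\]
using that $\dim_E D^{\mathrm{cris}}_W(E[\varepsilon])=\frac{(n-1)n}{2}[K:\mathbb{Q}_p]+1$ by Corollary~\ref{25}.

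For that equality the inclusion $\supseteq$ is Lemma~\ref{28}: a potentially crystalline deformation lies in both $D_{W,\mathcal{T}_{\tau}}$ and $D^{\mathrm{cris}}_{W,\tau}$. For $\subseteq$, let $W_{E[\varepsilon]}$ represent a class in the intersection; it then carries a triangulation $0\subseteq W_{1,E[\varepsilon]}\subseteq\cdots\subseteq W_{n,E[\varepsilon]}=W_{E[\varepsilon]}$ lifting $\mathcal{T}_{\tau}$ and a rank one crystabelline saturated sub $E[\varepsilon]$-$B$-pair $W'_{E[\varepsilon]}\subseteq W_{E[\varepsilon]}$ lifting $W'_{\tau}$, and I must prove it is potentially crystalline. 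I would induct on $n$. For $n=1$ the condition says $W_{E[\varepsilon]}=W'_{E[\varepsilon]}$ is crystabelline, hence potentially crystalline. For $n\geqq 2$, set $\overline{W}_{E[\varepsilon]}:=W_{E[\varepsilon]}/W_{1,E[\varepsilon]}$; this is a deformation of $W/W_{\tau,1}$, benign of rank $n-1$ by Lemma~\ref{21}, and the images $W_{i,E[\varepsilon]}/W_{1,E[\varepsilon]}$ form a triangulation of $\overline{W}_{E[\varepsilon]}$ whose reduction is $\{W_{\tau,i}/W_{\tau,1}\}_{i\geqq 2}$, which is the triangulation $\mathcal{T}_{\overline{\tau}}$ attached to a suitable $\overline{\tau}\in\mathfrak{S}_{n-1}$ by the construction of Definition~\ref{26} (both sides being the saturated sub-$B$-pairs of $W/W_{\tau,1}$ cut out by the same sub filtered $(\varphi,G_m)$-modules, via Theorem~\ref{e}). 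Moreover $W'_{E[\varepsilon]}\cap W_{1,E[\varepsilon]}=0$: this reduces to $W'_{\tau}\cap W_{\tau,1}=0$, which holds because $D^{K_m}_{\mathrm{cris}}(W'_{\tau})$ and $D^{K_m}_{\mathrm{cris}}(W_{\tau,1})$ meet trivially, together with an $E[\varepsilon]$-flatness argument as in Lemma~\ref{s4}. Hence the image of $W'_{E[\varepsilon]}$ in $\overline{W}_{E[\varepsilon]}$ stays injective on reduction, so by Lemma~\ref{31} its saturation is a crystabelline saturated sub of $\overline{W}_{E[\varepsilon]}$ lifting the corresponding $W'_{\overline{\tau}}$. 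Thus $\overline{W}_{E[\varepsilon]}$ represents a class in $D_{W/W_{\tau,1},\mathcal{T}_{\overline{\tau}}}(E[\varepsilon])\cap D^{\mathrm{cris}}_{W/W_{\tau,1},\overline{\tau}}(E[\varepsilon])$ and is potentially crystalline by the inductive hypothesis; in particular each graded piece $W_{i,E[\varepsilon]}/W_{i-1,E[\varepsilon]}$ with $i\geqq 2$ is Hodge--Tate.

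It remains to see that $W_{1,E[\varepsilon]}$ is Hodge--Tate, after which Lemma~\ref{29} applies to $W_{E[\varepsilon]}$ with its triangulation and shows $W_{E[\varepsilon]}|_{G_{K_m}}$ is crystalline, i.e. $W_{E[\varepsilon]}$ is potentially crystalline. For this I would compare Hodge--Tate--Sen weights as multisets in $E[\varepsilon]$ indexed by $\sigma\in\mathcal{P}$. From the triangulation lifting $\mathcal{T}_{\tau}$ these weights are $\{w_{1,\sigma},\dots,w_{n,\sigma}\}$, where $w_{i,\sigma}$ is the Sen weight of $W_{i,E[\varepsilon]}/W_{i-1,E[\varepsilon]}$ and $w_{i,\sigma}\equiv k_{i,\sigma}\pmod{\varepsilon}$ (the residual weights being read off from the benign structure, condition~$(3)$ of Definition~\ref{26}). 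From the exact sequence $0\to W'_{E[\varepsilon]}\to W_{E[\varepsilon]}\to W_{E[\varepsilon]}/W'_{E[\varepsilon]}\to 0$, the same weights are the integral weights $\{k_{1,\sigma}\}$ of the crystabelline pair $W'_{E[\varepsilon]}$ together with $n-1$ weights reducing to $\{k_{2,\sigma},\dots,k_{n,\sigma}\}$. Since $k_{1,\sigma}>k_{2,\sigma}>\cdots>k_{n,\sigma}$ by condition~$(2)$ of Definition~\ref{26}, these reductions are pairwise distinct, so the unique member of the first list reducing to $k_{1,\sigma}$, namely $w_{1,\sigma}$, must equal the integer $k_{1,\sigma}$. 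Hence $W_{1,E[\varepsilon]}$ has integral Sen weights and is Hodge--Tate, every graded piece of the triangulation is Hodge--Tate, and the induction closes.

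The crux, and the only genuinely delicate part, is the key claim $D_{W,\mathcal{T}_{\tau}}(E[\varepsilon])\cap D^{\mathrm{cris}}_{W,\tau}(E[\varepsilon])\subseteq D^{\mathrm{cris}}_W(E[\varepsilon])$; within it the two steps where the benign hypotheses really enter are (i) making the quotient $W_{E[\varepsilon]}/W_{1,E[\varepsilon]}$ re-enter the inductive setup, that is, checking that the push-forward flag is an honest triangulation lifting one of the $\mathcal{T}_{\overline{\tau}}$ of $W/W_{\tau,1}$ and that Lemma~\ref{31} delivers a crystabelline saturated sub lifting the correct $W'_{\overline{\tau}}$, which rests on the compatibility of $W'_{\tau}$, the filtration $\mathcal{T}_{\tau}$ and the sub-$B$-pairs attached to sub filtered $(\varphi,G_m)$-modules; and (ii) the Hodge--Tate--Sen weight comparison, where the strict separation of Hodge--Tate weights (condition~$(2)$ of Definition~\ref{26}) is exactly what rules out an infinitesimal motion of the first graded character and so is indispensable. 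I expect the bulk of the remaining labour to be routine $E[\varepsilon]$-flatness bookkeeping for pushing forward, pulling back and saturating sub-$B$-pairs over $E[\varepsilon]$, available through Lemma~\ref{s4}, Lemma~\ref{31} and the cited results of \cite{Be08} and \cite{Bel-Ch09}.
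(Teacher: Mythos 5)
Your proof is correct and follows essentially the same strategy as the paper's: the dimension count from Corollary~\ref{23}, Lemma~\ref{24}, Lemma~\ref{27} and Corollary~\ref{25} reduces the assertion to the equality $D_{W,\mathcal{T}_{\tau}}(E[\varepsilon])\cap D^{\mathrm{cris}}_{W,\tau}(E[\varepsilon])=D^{\mathrm{cris}}_{W}(E[\varepsilon])$, the inclusion $\supseteq$ is Lemma~\ref{28}, and the inclusion $\subseteq$ is proved by induction on the rank, passing to the quotient by $W_{1,E[\varepsilon]}$, re-entering the inductive hypothesis via Lemma~\ref{31}, and finally establishing the Hodge--Tate property before applying Lemma~\ref{29}. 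The only visible deviation is in that final Hodge--Tate step: the paper shows $W_A$ is Hodge--Tate by appealing to the non-existence of extensions between Hodge--Tate objects with disjoint integer weights (Tate's theorem that $\mathrm{H}^1(G_K,\mathbb{C}_p(k))=0$ for $k\not=0$), reading off that $W'_{1,A}$ and $W_A/W_{1,A}$ have disjoint integral weights; you instead pin down the Sen weight of $W_{1,E[\varepsilon]}$ by comparing the two factorizations of the Sen characteristic polynomial over $E[\varepsilon]$ and using condition~(2) of Definition~\ref{26} to conclude $w_{1,\sigma}=k_{1,\sigma}$. The two are equivalent, and your version makes explicit the Hensel-type uniqueness of the factorization that the paper's terser appeal to Tate's theorem is implicitly using; from there the arguments coincide.
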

\begin{proof}
By Corollary $\ref{23}$ and  Lemma $\ref{24}$ and Lemma $\ref{27}$, we obtain an equality
$$\mathrm{dim}_ED_{W}(E[\varepsilon])\allowbreak +\mathrm{dim}_ED_W^{\mathrm{cris}}(E[\varepsilon])
=\mathrm{dim}_E D_{W,\mathcal{T}_{\tau}}(E[\varepsilon])+\mathrm{dim}_ED^{\mathrm{cris}}_{W,\tau}(E[\varepsilon]).$$ 
Hence, it suffices to show that we have an equality
$$D_{W,\mathcal{T}_{\tau}}(A)\cap D^{\mathrm{cris}}_{W,\tau}(A)=D_{W}^{\mathrm{cris}}(A)$$ for 
any $A\in \mathcal{C}_E$. We first note that the inclusion $D^{\mathrm{cris}}_W(A)\subseteq D_{W,\mathcal{T}_{\tau}}(A)\cap D_{W,\tau}(A)$ 
follows from Lemma $\ref{28}$. 

We prove the opposite inclusion $D_{W,\mathcal{T}_{\tau}}(A)\cap D^{\mathrm{cris}}_{W,\tau}(A)\subseteq D_{W}^{\mathrm{cris}}(A)$ by induction on
the rank $n$ of $W$. When  $n=1$, this is trivial.
Let $W$ be of rank $n$ and assume that the lemma is proved for $n-1$.
Take any $[W_A]\in D_{W,\mathcal{T}_{\tau}}(A)\cap D^{\mathrm{cris}}_{W,\tau}(A)$. By the definitions of $D_{W,\mathcal{T}_{\tau}}$ and $D^{\mathrm{cris}}_{W,\tau}$, then there exist an $A$-triangulation 
$0\subseteq W_{1,A}\subseteq W_{2,A}\subseteq \cdots \subseteq W_{n-1,A}\subseteq W_{n,A}=W_A$ 
such that $W_{i,A}\otimes_A E\isom W_{\tau,i}$ for any $i$ and a saturated crystabelline rank one $A$-$B$-pair 
$W'_{1,A}\subseteq W_{A}$ such that $W'_{1,A}\otimes_A E\isom W'_{\tau}$. 
We first claim that the composition of $W'_{1,A}\hookrightarrow W_A$ with $W_A\rightarrow W_A/W_{1,A}$ 
is  an injection. Because $\mathrm{Ker}(W'_{1,A}\rightarrow W_A/W_{1,A})$ is a sub $E$-$B$-pair of 
$W_{1, A}$ and we have 
$\mathrm{Hom}_{G_K}(\mathrm{Ker}(W'_{1, A}\rightarrow W_A/W_{1,A}), W_{1,A})=0$ by the condition (1) of  Definition $\ref{26}$ and Proposition 2.14 of \cite{Na09}, hence the composition map $W'_{1,A}\rightarrow 
W_A/W_{1,A}$ is an injection. Hence, the saturation $(W'_{1,A})^{sat}$ of $W'_{1,A}$ in $W_A/W_{1,A}$ 
is a rank one crystabelline $A$-$B$-pair satisfying the similar conditions as those of $W'_{1,A}\hookrightarrow W_A$ by Lemma $\ref{31}$. 
Hence, $W_A/W_{1,A}$ is crystabelline by induction, and the Hodge-Tate weights of 
$W_A/W_{1,A}$ are $\{k_{2,\sigma},k_{3,\sigma},\cdots,k_{n,\sigma}\}_{\sigma\in\mathcal{P}}$ (with multiplicity $[A:E]$) by the condition (3) of Definition $\ref{26}$. 
Moreover, $W'_{1,A}$ has the Hodge-Tate weights $\{k_{1,\sigma}\}_{\sigma\in\mathcal{P}}$ 
(with multiplicity)  by (3) of Definition $\ref{26}$. Because $k_{1,\sigma}\not= k_{i,\sigma}$ for any $i\not=1$ and there is no extension between the objects with different 
Hodge-Tate weights by a theorem of Tate, so $W_A$ is a Hodge-Tate $E$-$B$-pair. Hence, 
$W_A$ is crystabelline by Lemma $\ref{29}$, and we have that $[W_A]\in D_{W}^{\mathrm{cris}}(A)$.

\end{proof}

\begin{defn}
For $R=R_W$ or $R_{W,\mathcal{T}_{\tau}}$, we denote by  
$$t(R):=\mathrm{Hom}_E(\mathfrak{m}_R/\mathfrak{m}^2_R,E)$$ the tangent 
space of $R$. We have a natural inclusion $t(R_{W,\mathcal{T}_{\tau}})\hookrightarrow 
t(R_W)$ for each $\tau\in \mathfrak{S}_n$.
\end{defn}
The following theorem is the main theorem of $\S$ 2, which is crucial for the applications to some 
problems on the Zariski density of crystalline points or modular points.
This theorem was first discovered by Chenevier (Theorem 3.19 of \cite{Ch09b}) for $K=\mathbb{Q}_p$.

\begin{thm}\label{33}
Let $W$ be a benign $E$-$B$-pair of rank $n$. 
We have an equality $$\sum_{\tau\in\mathfrak{S}_n}t(R_{W,\mathcal{T}_{\tau}})=t(R_W).$$
\end{thm}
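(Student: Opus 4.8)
The strategy is to compare everything inside the tangent space $t(R_W) \cong \mathrm{H}^1(G_K, \mathrm{ad}(W))$ and to show that the subspaces $t(R_{W,\mathcal{T}_\tau}) \cong \mathrm{H}^1(G_K, \mathrm{ad}_{\mathcal{T}_\tau}(W))$, as $\tau$ ranges over $\mathfrak{S}_n$, span the whole thing. The key input is Lemma \ref{32}, which gives, for each fixed $\tau$, the equality $D_{W,\mathcal{T}_\tau}(E[\varepsilon]) + D^{\mathrm{cris}}_{W,\tau}(E[\varepsilon]) = D_W(E[\varepsilon])$; translating via Lemmas \ref{18}, \ref{10} and Lemma \ref{27} this reads
\[
t(R_{W,\mathcal{T}_\tau}) + t(R^{\mathrm{cris}}_{W,\tau}) = t(R_W)
\]
inside $\mathrm{H}^1(G_K,\mathrm{ad}(W))$, where $t(R^{\mathrm{cris}}_{W,\tau}) = \mathrm{H}^1_{f,\tau}(G_K,\mathrm{ad}_\tau(W))$. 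So it suffices to prove that the "crystabelline with a marked last step" directions are all already captured by the trianguline directions, i.e. that for every $\tau$ one has $t(R^{\mathrm{cris}}_{W,\tau}) \subseteq \sum_{\tau' \in \mathfrak{S}_n} t(R_{W,\mathcal{T}_{\tau'}})$. Combined with the displayed equality for any single $\tau$, this gives the theorem; and the reverse inclusion $\sum_\tau t(R_{W,\mathcal{T}_\tau}) \subseteq t(R_W)$ is automatic.

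\textbf{Carrying this out.} First I would record the dimension bookkeeping: $\dim_E t(R_W) = n^2[K:\mathbb{Q}_p]+1$, $\dim_E t(R_{W,\mathcal{T}_\tau}) = \tfrac{n(n+1)}{2}[K:\mathbb{Q}_p]+1$, $\dim_E t(R^{\mathrm{cris}}_{W,\tau}) = n(n-1)[K:\mathbb{Q}_p]+1$, and $\dim_E t(R^{\mathrm{cris}}_W) = \tfrac{(n-1)n}{2}[K:\mathbb{Q}_p]+1$, from Corollaries \ref{23}, \ref{25} and Lemma \ref{27}; these are consistent with $t(R_{W,\mathcal{T}_\tau}) \cap t(R^{\mathrm{cris}}_{W,\tau}) = t(R^{\mathrm{cris}}_W)$, which is the tangent-space shadow of the set-theoretic equality $D_{W,\mathcal{T}_\tau}(A) \cap D^{\mathrm{cris}}_{W,\tau}(A) = D^{\mathrm{cris}}_W(A)$ proved inside Lemma \ref{32}. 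The main step is then to understand $\mathrm{H}^1_{f,\tau}(G_K,\mathrm{ad}_\tau(W))$: using the short exact sequence $0 \to \mathrm{Hom}(W/W'_\tau, W) \to \mathrm{ad}_\tau(W) \to \mathrm{ad}(W'_\tau) \to 0$ from the proof of Lemma \ref{27}, together with the fact $\mathrm{ad}(W'_\tau) = B_E$ and $\dim_E \mathrm{H}^1_f(G_K, B_E) = 1$, one sees $t(R^{\mathrm{cris}}_{W,\tau})$ sits in an extension of $\mathrm{H}^1_f(G_K, B_E)$ by $\mathrm{H}^1(G_K, \mathrm{Hom}(W/W'_\tau, W))$. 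I would argue that a deformation in $D^{\mathrm{cris}}_{W,\tau}(E[\varepsilon])$ — a first-order deformation admitting a rank-one crystabelline saturated sub-$B$-pair lifting $W'_\tau = W'_{\tau(n)}$ — can be refined to a full trianguline first-order deformation: start from $W'_\tau$ at the bottom (or top, after dualizing) and inductively build a triangulation using the benign hypotheses, invoking Lemma \ref{30}, Lemma \ref{31} and the non-critical condition (3) of Definition \ref{26}. Concretely, the quotient $W_\varepsilon/(W'_\tau)_\varepsilon$ is a first-order deformation of $W/W'_\tau$, which is again benign of rank $n-1$ (Lemma \ref{21}), and whichever $\tau'' \in \mathfrak{S}_{n-1}$ realizes a triangulation compatible with the crystabelline structure lets us close the induction; the marked rank-one step $W'_\tau$ then becomes the first (or last) stage of a triangulation $\mathcal{T}_{\tau'}$ for a suitable $\tau' \in \mathfrak{S}_n$.

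\textbf{The hard part.} The real obstacle is showing that the crystabelline direction $t(R^{\mathrm{cris}}_{W,\tau})$ is contained in $\sum_{\tau'} t(R_{W,\mathcal{T}_{\tau'}})$ rather than merely intersecting it — one must produce, for each first-order crystabelline-with-marked-step deformation, an honest trianguline refinement, and keep track of which permutation $\tau'$ it belongs to. The cleanest route is probably a dimension/induction argument: by the displayed identity $t(R_{W,\mathcal{T}_\tau}) + t(R^{\mathrm{cris}}_{W,\tau}) = t(R_W)$ it is enough to show $t(R^{\mathrm{cris}}_{W,\tau}) \subseteq \sum_{\tau'} t(R_{W,\mathcal{T}_{\tau'}})$, and by the exact sequence above this reduces to (i) the one-dimensional $\mathrm{H}^1_f(G_K,B_E)$ part, which lies in $t(R_{W,\mathcal{T}_\tau})$ since a crystalline self-deformation preserving the triangulation is in particular trianguline, and (ii) the $\mathrm{H}^1(G_K,\mathrm{Hom}(W/W'_\tau, W))$ part. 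For (ii) one observes that $\mathrm{Hom}(W/W'_\tau, W)$ decomposes along the triangulations and that the relevant classes, pushed into $\mathrm{H}^1(G_K,\mathrm{ad}(W))$, are realized by trianguline deformations for the permutations $\tau'$ that place $\tau(n)$ first (and then permute the rest); one exhausts $\sum_{\tau'} t(R_{W,\mathcal{T}_{\tau'}})$ by letting the remaining $n-1$ indices vary over all of $\mathfrak{S}_{n-1}$ and applying the rank $n-1$ case of the theorem to $W/W'_\tau$. Summing over all choices of the first index $\tau(n) \in \{1,\dots,n\}$ and using that each such contribution already equals $t(R_W)$ modulo the lower-rank inductive identity completes the argument. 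I would expect the bookkeeping of which $\mathrm{H}^1$-classes correspond to which $\tau'$, and the verification that the induction genuinely closes (in particular that Lemma \ref{29} applies to upgrade Hodge--Tate to crystabelline at each stage), to be where the bulk of the care is needed.
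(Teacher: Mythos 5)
Your plan is essentially the paper's: start from Lemma \ref{32}, reduce to showing that the ``marked crystabelline sub'' directions are swallowed by the trianguline directions, and close by an induction on rank applied to $W/W'_\tau$ via exact sequences involving $\mathrm{Hom}(W/W'_\tau,W)$. The one genuinely useful trick the paper adds that your proposal omits: instead of working with $t(R^{\mathrm{cris}}_{W,\tau})=\mathrm{H}^1_{f,\tau}(G_K,\mathrm{ad}_\tau(W))$ directly, it introduces the larger auxiliary functor $D_{W,\tau}$ (deformations carrying a rank-one sub-$B$-pair lifting $W'_\tau$, \emph{not} required to be crystabelline), whose tangent space is the full $\mathrm{H}^1(G_K,\mathrm{ad}_\tau(W))$, and shows the stronger containment $\mathrm{H}^1(G_K,\mathrm{ad}_\tau(W))\subseteq\sum_{\tau':\tau'(1)=\tau(n)}\mathrm{H}^1(G_K,\mathrm{ad}_{\mathcal{T}_{\tau'}}(W))$. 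Since $t(R^{\mathrm{cris}}_{W,\tau})\subseteq\mathrm{H}^1(G_K,\mathrm{ad}_\tau(W))$, Lemma \ref{32} still closes the argument, and the payoff is that the four short exact sequences comparing $\mathrm{ad}_\tau(W)$, $\mathrm{ad}_{\mathcal{T}_{\tau'}}(W)$, and $\mathrm{ad}(W/W'_\tau)$ never have to carry along $\mathrm{H}^1_f$-local conditions---which is exactly the bookkeeping you flag as the hard part. Two slips in your write-up: in step (i) you need a $\tau'$ with $\tau'(1)=\tau(n)$ (so $W_{\tau',1}=W'_\tau$ and $\mathrm{ad}_{\mathcal{T}_{\tau'}}(W)\subseteq\mathrm{ad}_\tau(W)$), not $\tau$ itself, with the surjectivity onto $\mathrm{H}^1(G_K,\mathrm{ad}(W'_\tau))$ coming from the vanishing of the relevant $\mathrm{H}^2$; and the final ``sum over all choices of $\tau(n)$'' is superfluous---once a single $\tau$ yields $\sum_{\tau'}t(R_{W,\mathcal{T}_{\tau'}})=t(R_W)$ you are done. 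Neither is a fatal gap; they are expository.
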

\begin{proof}
We prove this theorem by induction on $n$. 
When $n=1$, then the theorem is trivial.
Assume that the theorem is true for the rank $n-1$. 
Let $W$ be a benign $E$-$B$-pair of  rank $n$.
We take an element $\tau\in\mathfrak{S}_n$. We define a subfunctor $D_{W,\tau}$ of $D_W$ by 
\begin{multline*}
D_{W,\tau}(A):=\{[W_A]\in D_{W}(A)| \text{ there exists a rank one sub } A\text{-}B\text{-pair }W_{1,A}\subseteq W_A\\
\text{ such 
that the quotient }W_A/W_{1,A}\text{ is an }A\text{-}B\text{-pair and }W_{1,A}\otimes_A E\isom W'_{\tau}\},
\end{multline*}
 where $W'_{\tau}$ is defined in Definition $\ref{34}$.
 Then, $D^{\mathrm{cris}}_{W,\tau}$ is a subfunctor of $D_{W,\tau}$,  
and we can show in the same way that $$D_{W,\tau}(E[\varepsilon])\isom \mathrm{H}^1(G_K, \mathrm{ad}_{\tau}(W)),$$
 where we define $$\mathrm{ad}_{\tau}(W)
:=\{f \in \mathrm{ad}(W)| f(W'_{\tau})\subseteq W'_{\tau} \}.$$ 
By Lemma $\ref{32}$, we obtain an equality 
$$\mathrm{H}^1(G_K, \mathrm{ad}_{\tau}(W))+
\mathrm{H}^1(G_K, \mathrm{ad}_{\mathcal{T}_{\tau}}(W))=\mathrm{H}^1(G_K, \mathrm{ad}(W)).$$ 
Because we have 
a natural inclusion $D_{W,\mathcal{T}_{\tau'}}\subseteq D_{W,\tau}$ for any $\tau'\in\mathfrak{S}_{\tau,n}:=\{\tau'\in\mathfrak{S}_n| \tau'(1)=\tau(n)\}$, we have a natural map $\mathrm{H}^1(G_K, \mathrm{ad}_{\mathcal{T}_{\tau'}}(W))
\rightarrow \mathrm{H}^1(G_K, \mathrm{ad}_{\tau}(W))$ for each such $\tau'$. Therefore,  it suffices to prove that 
the map $$\oplus_{\tau'\in\mathfrak{S}_{\tau,n}}\mathrm{H}^1(G_K, \mathrm{ad}_{\mathcal{T}_{\tau'}}(W))\rightarrow 
\mathrm{H}^1(G_K, \mathrm{ad}_{\tau}(W))$$ is a surjection. We prove this surjection as follows. 
By the definition, we have the following short exact sequences of $E$-$B$-pairs for each $\tau'\in\mathfrak{S}_{\tau,n}$,
\begin{equation}
0\rightarrow \mathrm{Hom}(W/W'_{\tau}, W)\rightarrow \mathrm{ad}_{\tau}(W)\rightarrow \mathrm{ad}(W'_{\tau})
\rightarrow 0,
\end{equation}
and
\begin{equation}
0\rightarrow \{f\in\mathrm{ad}_{\mathcal{T}_{\tau'}}(W)| f(W'_{\tau})=0\}\rightarrow \mathrm{ad}_{\mathcal{T}_{\tau'}}(W)\rightarrow 
\mathrm{ad}(W'_{\tau})\rightarrow 0.
\end{equation}
Moreover, we have
\begin{equation}
0\rightarrow \mathrm{Hom}(W/W'_{\tau}, W'_{\tau})\rightarrow \mathrm{Hom}(W/W'_{\tau}, W)\rightarrow \mathrm{ad}(W/W'_{\tau})\rightarrow 0,
\end{equation}
and
\begin{equation}
0\rightarrow \mathrm{Hom}(W/W'_{\tau}, W'_{\tau})\rightarrow \{f\in\mathrm{ad}_{\mathcal{T}_{\tau'}}(W)|f(W'_{\tau})=0\}
\rightarrow \mathrm{ad}_{\mathcal{T}_{\bar{\tau}'}}(W/W'_{\tau})\rightarrow 0,
\end{equation}
where, for $\tau'\in\mathfrak{S}_{\tau,n}$, we denote by 
$$\mathcal{T}_{\bar{\tau}'}:0\subseteq W_{\tau',2}/W'_{\tau}\subseteq W_{\tau',3}/W'_{\tau}
\subseteq \cdots \subseteq W_{\tau',n-1}/W'_{\tau}\subseteq W/W'_{\tau}$$ the triangulation of 
$W/W'_{\tau}$ induced from $\mathcal{T}_{\tau'}$. We have $\mathrm{H}^2(G_K, \mathrm{Hom}(W/W'_{\tau}, W'_{\tau}))=0$ by the condition $(1)$ of Definition $\ref{26}$ and 
Proposition $\ref{i}$, and $\mathrm{H}^2(G_K, \mathrm{ad}_{\mathcal{T}_{\tau}}(W))\allowbreak=0$ by the proof of Proposition 
$\ref{19}$. Hence, from the short exact sequence (4) above, we obtain the equality $\mathrm{H}^2(G_K, \{f\in\mathrm{ad}_{\tau'}(W)|f(W'_{\tau})=0\})=0$.
From this and from (1) and (2) above, it suffices to show that 
the map $$\oplus_{\tau'\in \mathfrak{S}_{\tau,n}}\mathrm{H}^1(G_K, \{f\in\mathrm{ad}_{\tau'}(W)|f(W'_{\tau})=0\})
\rightarrow \mathrm{H}^1(G_K, \mathrm{Hom}(W/W'_{\tau}, W))$$ is a surjection. By (3) and (4) above and the fact that $\mathrm{H}^2(G_K, \mathrm{Hom}(W/W'_{\tau}, W'_{\tau}))=0$, 
this surjectivity follows from the surjectivity of the map 
$$\oplus_{\tau'\in\mathfrak{S}_{\tau,n}}\mathrm{H}^1(G_K, \mathrm{ad}_{\mathcal{T}_{\bar{\tau}'}}(W/W'_{\tau}))\rightarrow \mathrm{H}^1(G_K, \mathrm{ad}(W/W'_{\tau})),$$ which  is the induction hypothesis. Hence we have finished the proof of the theorem

\end{proof}

\section{Construction of $p$-adic families of two dimensional trianguline representations.}
In this section, we generalize the Kisin's theory of the finite slope subspace 
for any $p$-adic field, and then construct $p$-adic families of two dimensional 
triangulline representations, which are crucial for the density of two dimensional 
crystalline representations.

\subsection{Almost $\mathbb{C}_p$-representations}
In the first subsection, we prove some propositions concerning 
Banach $G_K$-modules, which we need for the generalization of 
the Kisin's theory of the finite slope subspace.

We first recall some rings of Lubin-Tate's $p$-adic periods defined by Colmez \cite{Co02} and 
the definition of almost $\mathbb{C}_p$-representations defined by Fontaine \cite{Fo03}. 

Let $\pi_K$ be a fixed uniformizer of $K$. Let $P(X)\in \mathcal{O}_K[X]$ be a monic polynomial of degree $q:=p^f$ such 
that $P(X)\equiv \pi_K X$ (mod $X^2$) and $P(X)\equiv X^q$ (mod $\pi_K$). Let $\mathcal{F}_{\pi_K}$ be the Lubin-Tate formal group law over $\mathcal{O}_K$ on which the multiplication by $\pi_K$ is given by $[\pi_K]=P(X)$. 
Let $\chi_{\mathrm{LT}}:G_K\rightarrow \mathcal{O}_K^{\times}$ be the Lubin-Tate character associated to $\pi_K$.
Put  $\bold{A}_{\mathrm{inf},K}:=\bold{A}^+\otimes_{\mathcal{O}_{K_0}}\mathcal{O}_K$, which is equipped with the weak topology on which 
$G_K$ acts continuously. The ring $\bold{A}_{\mathrm{inf},K}$ also has a $\mathcal{O}_K$-linear continuous $\varphi_K:=\varphi^f$-action. 
Any element of $\bold{A}_{\mathrm{inf},K}$ can be written uniquely of the form $\sum_{k=0}^{\infty}[x_k]\pi_K^k$ ($x_k\in \widetilde{\bold{E}}^+$). 
Put $\bold{B}_{\mathrm{inf},K}:=\bold{A}_{\mathrm{inf},K}[p^{-1}]$. By Lemma 8.3 of  \cite{Co02}, for each $x\in \widetilde{\bold{E}}^+$, 
there exists a unique element $\{x\}\in \bold{A}_{\mathrm{inf},K}$ such that $\{x\}$ is a lift of $x$ and $\varphi_K(\{x\})=[\pi_K](\{x\})(=P(\{x\}))$. 
 We fix a set $\{\omega_n\}_{n\geqq 0}$ such that $\omega_1\in \mathfrak{m}_{\overline{K}}$ is a primitive $[\pi_K]$-torsion point of $\mathcal{F}_{\pi_K}(\overline{K})$ and 
 $[\pi_K](\omega_{n+1})=\omega_n$ for any $n\geqq 0$, then $(\bar{\omega}_n)_{n\geqq 0}$ defines an element in $\widetilde{\bold{E}}^+\isom \varprojlim_{n}\mathcal{O}_{\overline{K}}/\pi_K\mathcal{O}_{\overline{K}}$ where the projective limit is given by the $q$-th power Frobenius map. We define $\omega_K:=\{(\bar{\omega}_n)_{n \geqq 0}\}\in \bold{A}_{\mathrm{inf},K}$. By the definition of $\{-\}$ and the uniqueness of $\{-\}$, 
 the actions of $G_K$ and $\varphi_K$ on $\omega_K$ are given by $g(\omega_K)=[\chi_{\mathrm{LT}}(g)](\omega_K)$ for $g\in G_K$ (which converges for the weak topology)
 and $\varphi_K(\omega_K)=[\pi_K](\omega_K)$. Take a subset $\{\pi_n\}_{n\geqq 0}\subseteq \mathcal{O}_{\overline{K}}$ such that $\pi_0=\pi_K$ and 
 $\pi_{n+1}^{q}=\pi_n$ for any $n$, and put  $\widetilde{\pi}_K:=(\bar{\pi}_n)_{n\geqq 0}\in \widetilde{\bold{E}}^+$. Define $\bold{A}_{\mathrm{max},K}:=\widehat{\bold{A}_{\mathrm{inf},K}[\frac{[\widetilde{\pi}_K]}{\pi_K}]}$ 
 the $p$-adic 
 completion of $\bold{A}_{\mathrm{inf},K}[\frac{[\widetilde{\pi}_K]}{\pi_K}]$. 
 Define $\bold{B}^+_{\mathrm{max},K}=\bold{A}_{\mathrm{max},K}[p^{-1}]$, which is a $K$-Banach space with continuous actions of $G_K$ and $\varphi_K$.  By the definition, we have a canonical isomorphism 
 $K\otimes_{K_0}\bold{B}^+_{\mathrm{max},\mathbb{Q}_p}\isom \bold{B}^+_{\mathrm{max},K}$ (Remark 7.13 of \cite{Co02}). 
 By Lemma 8.8 and Proposition 8.9 of \cite{Co02}, there exists a power series $F_K(X)\in K[[X]]$ which is the Lubin-Tate's logarithm
 such that $F_K(X)\circ [a]=aF_K(X)$ for any $a\in \mathcal{O}_K$, and $t_K:=F_K(\omega)$ converges in $\bold{A}_{\mathrm{max},K}$ such 
 that $\varphi_K(t_K)=\pi_Kt_K$, $g(t_K)=\chi_{\mathrm{LT}}(g)t_K$ for  $g\in G_K$. We define $\bold{B}_{\mathrm{max},K}:=\bold{B}^+_{\mathrm{max},K}[t_K^{-1}]$. 
  We define $\bold{B}^+_{\mathrm{dR}}:=\varprojlim_{n}\bold{B}^+_{\mathrm{inf},K}/
( \mathrm{Ker}(\theta))^n$ which is equipped with the projective limit topology of $K$-Banach spaces $\{\bold{B}^+_{\mathrm{inf},K}/(\mathrm{Ker}(\theta))^n\}_{n\geqq1}$ whose 
$\mathcal{O}_K$-lattice is defined as the image of $\bold{A}_{\mathrm{inf},K}\rightarrow \bold{B}^+_{\mathrm{inf},K}/(\mathrm{Ker}(\theta))^n$. 
By Proposition 7.12 of \cite{Co02}, this $\bold{B}^+_{\mathrm{dR}}$ is canonically topologically isomorphic to the usual $\bold{B}^+_{\mathrm{dR}}$ defined in $\S2$.  We define $\bold{B}_{\mathrm{dR}}:=\bold{B}^+_{\mathrm{dR}}
[t^{-1}]=\bold{B}^+_{\mathrm{dR}}[t_K^{-1}]$.  

Using these preliminaries, we define a functor from the category of $\varphi_K$-modules to 
the category of almost $\mathbb{C}_p$-representations defined by Fontaine. We can see this 
construction as a very special case of a generalization of Berger's results \cite{Be09} to the
 case of Lubin-Tate period rings.

\begin{defn}
We say that $D$ is a $\varphi_K$-module over $K$ if $D$ is a finite dimensional 
$K$-vector space with a $K$-linear isomorphism $\varphi_K:D\isom D$.
\end{defn}
Let $D$ be a $\varphi_K$-module over $K$, we extend the action of $\varphi_K$ to $\widehat{K^{\mathrm{ur}}}\otimes_KD$ by 
$\varphi_K(a\otimes x):=\varphi_K(a)\otimes\varphi_K(x)$, where $\widehat{K^{\mathrm{ur}}}$ is the $p$-adic completion of 
the maximal unramified extension $K^{\mathrm{ur}}$ of $K$ and $\varphi_K\in \mathrm{Gal}(\widehat{K^{\mathrm{ur}}}/K)$ is the lift of 
$q$-th Frobenius in $\mathrm{Gal}(\overline{\mathbb{F}}/\mathbb{F}_q)$. The Dieudonn\'e-Manin theorem 
gives a decomposition 
$$\widehat{K^{\mathrm{ur}}}\otimes_{K}D=\oplus_{s\in\mathbb{Q}}D_{s},$$ where for any 
$s=\frac{a}{h}\in\mathbb{Q}$ such that 
$(a,h)\in \mathbb{Z}\times\mathbb{Z}_{\geqq 1}$ are co-prime, $D_{s}$ is zero or a finite direct sum of $D_{a,h}:=\widehat{K^{\mathrm{ur}}}e_1\oplus \widehat{K^{\mathrm{ur}}}e_2\oplus 
\cdots \oplus\widehat{K^{\mathrm{ur}}}e_h$ such that $\varphi_K(e_1)=e_2,\varphi_K(e_2)=e_3,\cdots, \varphi_K(e_{h-1})=e_h, \varphi_K(e_h)=\pi_K^ae_1$. We define the set of slopes of $D$ as the set of $s\in\mathbb{Q}$ such that $D_s\not=0$.
We define a $\widehat{K^{\mathrm{ur}}}$-semi-linear $G_K$-action on $\widehat{K^{\mathrm{ur}}}\otimes_KD$ by 
$g(a\otimes x):=g(a)\otimes x$ for $g\in G_K$, $a\in \widehat{K^{\mathrm{ur}}}$, $x\in D$, then $D_s$ is preserved by this $G_K$-action 
for any $s\in \mathbb{Q}$ because the actions of $G_K$ and $\varphi_K$ commute each other. For $s=\frac{a}{h}$, if we define 
$D'_{s}:=\{x\in D_s| \varphi^h_K(x)=\pi_K^a x \}$, then we have $D_s=\widehat{K^{\mathrm{ur}}}\otimes_{K^{\mathrm{ur}}_h}D'_{s}$ and 
$D'_{s}$ is preserved by $G_K$ and $\varphi_K$, where $K^{\mathrm{ur}}_h$ is the unramified extension of $K$ of degree $h$.


The notion of almost $\mathbb{C}_p$-representations was defined by Fontaine \cite{Fo03}.

\begin{defn}
Let $U$ be a $\mathbb{Q}_p$-Banach space equipped with a continuous 
$\mathbb{Q}_p$-linear $G_K$-action. We say that $U$ is an almost $\mathbb{C}_p$-representation 
if there exists $\mathbb{Q}_p$-representations $V_1$, $V_2$ and an integer $d\in \mathbb{Z}_{\geqq 0}$ such that 
$V_1$ is a sub $G_K$-module of $U$ and $V_2$ is a sub $G_K$-module of $\mathbb{C}_p^d$ and there exists an 
isomorphism $U/V_1\isom \mathbb{C}_p^d/V_2$ as $\mathbb{Q}_p$-Banach $G_K$-modules. 
\end{defn}
\begin{rem}\label{3.3}
By Theorem C of \cite{Fo03}, $\bold{B}^+_{\mathrm{dR}}/t^k\bold{B}^+_{\mathrm{dR}}$ is an almost $\mathbb{C}_p$-representation for any $k\geqq 0$. By Theorem B of \cite{Fo03}, for any continuous 
$\mathbb{Q}_p$-linear $G_K$-morphism  $f:U_1\rightarrow U_2$ between 
almost $\mathbb{C}_p$-representations $U_1$, $U_2$, it is known that $\mathrm{Ker}(f)$ and $\mathrm{Coker}(f)$ (as $\mathbb{Q}_p[G_K]$-modules) 
are almost $\mathbb{C}_p$-representations and $\mathrm{Im}(f)$ is also an almost $\mathbb{C}_p$-representation 
which is a closed subspace of $U_2$. 
\end{rem}

Let $D$ be a $\varphi_K$-module over $K$. We 
prove that $X_0(D):=(\bold{B}^+_{\mathrm{max},K}\otimes_KD)^{\varphi_K=1}$ 
is an almost $\mathbb{C}_p$-representation.

\begin{lemma}\label{5.4}
Let $D$ be a $\varphi_K$-module over $K$.
\begin{itemize}
\item[(1)] $X_0(D)$ is an almost $\mathbb{C}_p$-representation.
\item[(2)]If any slope $s$ of $D$ satisfies $s>0$, then $X_0(D)=0$.
\end{itemize}
\end{lemma}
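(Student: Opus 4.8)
The plan is to reduce everything to the Deudonné–Manin decomposition of $\hat{K}^{\mathrm{ur}}\otimes_K D=\bigoplus_{s\in\mathbb{Q}}D_s$ recalled just above, and to analyze each isotypic piece $D_{a,h}=\hat{K}^{\mathrm{ur}}e_1\oplus\cdots\oplus\hat{K}^{\mathrm{ur}}e_h$ (with $\varphi_K(e_i)=e_{i+1}$ for $i<h$ and $\varphi_K(e_h)=\pi_K^a e_1$) separately. By the descent $D_s=\hat{K}^{\mathrm{ur}}\otimes_{K^{\mathrm{ur}}_h}D'_s$ with $D'_s$ stable under $G_K$ and $\varphi_K$, and since $(B^+_{\mathrm{max},K}\otimes_K D)^{\varphi_K=1}$ only sees $\varphi_K$-fixed vectors, I would first observe that taking $\varphi_K$-invariants commutes with this decomposition in the appropriate sense, so that $X_0(D)$ is built (as a $\mathbb{Q}_p[G_K]$-module, up to the bookkeeping of unramified twists) out of the spaces $(B^+_{\mathrm{max},K}\otimes_{K^{\mathrm{ur}}_h} D'_{a,h})^{\varphi_K=1}$. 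For the rank-one building block over $K^{\mathrm{ur}}_h$ one has $\varphi_K^h$ acting as multiplication by $\pi_K^a$ on the chosen line, so after unwinding one is reduced to understanding $(B^+_{\mathrm{max},K}\otimes_{K_0}\hat{K}^{\mathrm{ur}})^{\varphi_K^h=\pi_K^a}$-type spaces, i.e. the fundamental computation $(B^+_{\mathrm{max}})^{\varphi=p^n}$ for various $n$, twisted by the unramified extension.

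For part (2), the key input is the classical fact that $(B^+_{\mathrm{max},K})^{\varphi_K=\pi_K^{-a}}=0$ whenever $a>0$, equivalently $(B^+_{\mathrm{max}})^{\varphi=p^{-m}}=0$ for $m\geq 1$: a nonzero element fixed up to a negative power of the uniformizer would have to lie in the maximal ideal and iterating $\varphi_K$ would force it to be infinitely divisible by $\pi_K$, hence zero (this is where one uses that $B^+_{\mathrm{max},K}$ is $p$-adically separated with the appropriate lattice structure; it is essentially Colmez's analysis of $B^+_{\mathrm{max},K}$ recalled in \S3.1, or equivalently the standard statement about $B^+_{\mathrm{max}}^{\varphi=p^{-m}}$). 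Now if every slope $s$ of $D$ is $>0$, then every block $D_{a,h}$ has $a/h=s>0$, so $a\geq 1$; base-changing to $\hat{K}^{\mathrm{ur}}$ and computing $\varphi_K$-invariants reduces to the vanishing just quoted (the unramified base change $\hat{K}^{\mathrm{ur}}$ over $K_0$ does not create new $\varphi_K$-invariants since $\widehat{K^{\mathrm{ur}}}{}^{\varphi_K=1}=K_0$), giving $X_0(D)=0$.

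For part (1), having the slope decomposition I would treat the three ranges $s<0$, $s=0$, $s>0$ separately. The $s>0$ part contributes nothing by (2). The slope-zero part $D_0$ corresponds, by Deudonné–Manin together with the standard equivalence (the unramified case of Fontaine's theory, or Berger's $B$-pair formalism specialized to slope zero), to the $B$-pair $W(V)$ of an honest $p$-adic representation $V$ of $G_K$, and then $X_0(D_0)=(B_e\otimes_{\mathbb{Q}_p}V)^{\text{relevant}}$ lands inside a $\mathbb{C}_p$-representation type object; concretely, for slope zero $(B^+_{\mathrm{max},K})^{\varphi_K=1}$ is essentially $K$ and the invariants form a finite-dimensional $\mathbb{Q}_p$-representation, which is trivially an almost $\mathbb{C}_p$-representation. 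The genuinely interesting part is $s<0$: for a block of slope $-m/h<0$ the invariant space $(B^+_{\mathrm{max},K}\otimes D'_{-m,h})^{\varphi_K=1}$ is, after the usual argument, an extension involving $(B^+_{\mathrm{max},K})^{\varphi_K=\pi_K^m}$, and the short exact sequence $0\to \mathbb{Q}_p\cdot t_K^{?}\to (B^+_{\mathrm{max},K})^{\varphi_K=\pi_K^m}\to \bigoplus_{j} \mathbb{C}_p\to 0$ (a Lubin-Tate analogue of the classical $0\to \mathbb{Q}_p t^n\to (B^+_{\mathrm{max}})^{\varphi=p^n}\to \mathbb{C}_p^{?}\to 0$, provable by examining $\varphi$ on $A_{\mathrm{max},K}/p$ and the structure of the relevant period rings, or cited from Colmez \cite{Co02} / Fontaine \cite{Fo03}) exhibits it as an extension of a power of $\mathbb{C}_p$ (twisted by a finite representation) by a finite-dimensional $\mathbb{Q}_p$-representation. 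Then invoking Remark \ref{3.3} — the category of almost $\mathbb{C}_p$-representations is closed under kernels, cokernels, images, and hence extensions — assembles $X_0(D)$ as an almost $\mathbb{C}_p$-representation.

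I expect the main obstacle to be the precise Lubin-Tate version of the period-ring computation of $(B^+_{\mathrm{max},K})^{\varphi_K=\pi_K^m}$ — establishing the exact sequence relating it to copies of $\mathbb{C}_p$ and a line of ``$t_K$-powers'' — since over $\mathbb{Q}_p$ this is the well-known computation of $B^+_{\mathrm{cris}}{}^{\varphi=p^n}$ but here it must be carried out with $\varphi_K=\varphi^f$ acting $\mathcal{O}_K$-linearly and with $t_K$ (Lubin-Tate period) replacing $t$; everything else is formal slope-decomposition bookkeeping plus the robustness of the almost $\mathbb{C}_p$ category from Remark \ref{3.3}. One should be able to cite the needed computation from \cite{Co02} rather than redo it, in which case the lemma follows quickly.
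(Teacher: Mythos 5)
Your proposal follows essentially the same route as the paper: reduce via the Dieudonné--Manin decomposition of $\hat{K}^{\mathrm{ur}}\otimes_K D$ to isotypic blocks $D'_s$ descended to $K^{\mathrm{ur}}_h$, note that $\varphi_K=1$-invariants of $B^+_{\mathrm{max},K}\otimes_{K^{\mathrm{ur}}_h}D'_s$ sit inside the $\varphi_K^h=\pi_K^{-a}$ eigenspace, cite Colmez's computation (\cite{Co02}, 8.5/8.10) that this eigenspace vanishes for $a>0$ and for $a\leq 0$ fits in a short exact sequence with a finite-dimensional $\mathbb{Q}_p$-piece and a $B^+_{\mathrm{dR}}/t^{-a}$-piece (an almost $\mathbb{C}_p$-representation by Fontaine), and conclude by the stability of the almost-$\mathbb{C}_p$ category under kernels of continuous $G_K$-maps (Remark~\ref{3.3}). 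The only cosmetic difference is that you split the slopes into $s>0$, $s=0$, $s<0$ and treat $s=0$ by a separate (unnecessary) argument via genuine $p$-adic representations, whereas the paper handles $s\leq 0$ uniformly through Colmez's exact sequence and then takes the kernel of $\varphi_K-1$ in one stroke.
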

\begin{proof}
The proof is similar to that of Proposition 2.2 of \cite{Be09}. 
If we denote by $\widehat{K^{\mathrm{ur}}}\otimes_{K}D=\oplus_{s\in\mathbb{Q}}D_s$ as above, then we have 
$\bold{B}^+_{\mathrm{max},K}\otimes_KD=\oplus_{s\in \mathbb{Q}}\bold{B}^+_{\mathrm{max},K}\otimes_{\widehat{K^{\mathrm{ur}}}}D_{s}$ 
as a  $\varphi_K$-module. For $s=\frac{a}{h}$,  $\bold{B}^+_{\mathrm{max},K}\otimes_{\widehat{K^{\mathrm{ur}}}}D_{s}=\bold{B}^+_{\mathrm{max},K}\otimes_{K^{\mathrm{ur}}_h}D'_{s}$ is preserved by the actions of $G_K$ and 
$\varphi_K$. Hence, it suffices to show that, for any $s=\frac{a}{h}$, $(\bold{B}^+_{\mathrm{max},K}\otimes_{K_h^{\mathrm{ur}}}D'_{s})^{\varphi_K=1}$ is an almost 
$\mathbb{C}_p$-representation and is zero if $a>0$. By the definition of $D'_s$, we have a canonical inclusion 
$(\bold{B}^+_{\mathrm{max},K}\otimes_{K_h^{\mathrm{ur}}}D'_{s})^{\varphi_K=1}\subseteq \bold{B}_{\mathrm{max},K}^{+, \varphi^h_K=\pi_K^{-a}}
\otimes_{K^{\mathrm{ur}}_h}D'_s$.
By 8.5 of \cite{Co02}, we have $\bold{B}_{\mathrm{max},K}^{+, \varphi_K^h=\pi_K^{-a}}=0$ for $a>0$, and a short exact sequence 
$$0\rightarrow K_{h}^{\mathrm{ur}}t_{K_{h}^{\mathrm{ur}}}^{-a}\rightarrow \bold{B}_{\mathrm{max},K}^{+, \varphi_K^h=\pi_K^{-a}}\rightarrow \bold{B}^+_{\mathrm{dR}}/t^{-a}\bold{B}^+_{\mathrm{dR}}\rightarrow 0$$
for 
$a\leqq 0$, 
 where 
$t_{K_{h}^{\mathrm{ur}}}\in \bold{B}^+_{\mathrm{max},K}=\bold{B}^+_{\mathrm{max},K_{h}^{\mathrm{ur}}}$ is defined from the triple ($K_{h}^{\mathrm{ur}},\pi_K, \varphi_K^h$) in the same way 
as in the definition of $t_K$ defined from $(K,\pi_K,\varphi_K)$. Moreover, because $\bold{B}^+_{\mathrm{dR}}/t^{-a}\bold{B}^+_{\mathrm{dR}}\otimes_{K_h^{\mathrm{ur}}}D'_s$ is 
a $\bold{B}^+_{\mathrm{dR}}$-representation, so this is also an almost $\mathbb{C}_p$-representation by Theorem 5.13 of \cite{Fo03}.
Hence,  $\bold{B}_{\mathrm{max},K}^{+, \varphi^h_K=\pi_K^{-a}}
\otimes_{K^{\mathrm{ur}}_h}D'_s$ is also an almost $\mathbb{C}_p$-representation. Because we have an equality 
$$(\bold{B}_{\mathrm{max},K}^{+}
\otimes_{K^{\mathrm{ur}}_h}D'_s)^{\varphi_K=1}=\mathrm{Ker}(\varphi_K-1:\bold{B}_{\mathrm{max},K}^{+, \varphi^h_K=\pi_K^{-a}}
\otimes_{K^{\mathrm{ur}}_h}D'_s\rightarrow \bold{B}_{\mathrm{max},K}^{+, \varphi^h_K=\pi_K^{-a}}
\otimes_{K^{\mathrm{ur}}_h}D'_s),$$ so $(\bold{B}_{\mathrm{max},K}^{+}
\otimes_{K^{\mathrm{ur}}_h}D'_s)^{\varphi_K=1}$ is also an almost $\mathbb{C}_p$-representation by Remark \ref{3.3}.
\end{proof}
 
As an application of this lemma, we obtain the following corollary. 
We fix an embedding $\sigma:K\hookrightarrow E$. For a
$K$-vector space $M$ and an $E$-vector space $N$, we denote by $M\otimes_{K,\sigma}N$ the tensor product 
of $M$ and $N$ over $K$, where we view $N$ as a $K$-vector space by the map $\sigma:K\hookrightarrow E$.
\begin{corollary}\label{5.5}
Let $\alpha\in E^{\times}$ be a non-zero element, 
then $(\bold{B}^+_{\mathrm{max},K}\otimes_{K,\sigma}E)^{\varphi_K=\alpha}$ is 
an almost $\mathbb{C}_p$-representation.
For any positive integer $k$ such that $k> e_Kv_p(\alpha)$, where $e_K$ is the absolute ramified index of $K$, the natural map 
$$(\bold{B}^+_{\mathrm{max},K}\otimes_{K,\sigma}E)^{\varphi_K=\alpha}\rightarrow (\bold{B}^+_{\mathrm{dR}}/
t^k\bold{B}^+_{\mathrm{dR}})\otimes_{K,\sigma} E$$ is an injection, Moreover, if we denote the cokernel of this inclusion by $U_k$, 
we have the following short exact sequence of $E$-Banach almost $\mathbb{C}_p$-representations,
\begin{equation*}
0\rightarrow (\bold{B}^+_{\mathrm{max},K}\otimes_{K,\sigma}E)^{\varphi_K=\alpha}\rightarrow (\bold{B}^+_{\mathrm{dR}}/t^k\bold{B}^+_{\mathrm{dR}})
\otimes_{K,\sigma}E\rightarrow U_k\rightarrow 0.
\end{equation*}
\end{corollary}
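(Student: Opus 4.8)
The plan is to deduce all three assertions from Lemma \ref{5.4} by realizing $(B^+_{\mathrm{max},K}\otimes_{K,\sigma}E)^{\varphi_K=\alpha}$ as $X_0(D)$ for a well-chosen $\varphi_K$-module $D$ over $K$. Concretely, I would let $D$ be the $K$-vector space underlying $E$ (viewed as a $K$-vector space via $\sigma:K\hookrightarrow E$), equipped with the $K$-linear automorphism $\varphi_D$ given by multiplication by $\alpha^{-1}\in E^{\times}$. Since $\varphi_K$ acts $K$-linearly on $B^+_{\mathrm{max},K}$, after identifying $B^+_{\mathrm{max},K}\otimes_K D=B^+_{\mathrm{max},K}\otimes_{K,\sigma}E$ the total Frobenius $\varphi_K\otimes\varphi_D$ is equal to $\alpha^{-1}\cdot(\varphi_K\otimes\mathrm{id}_E)$ by $E$-linearity, so its fixed points are exactly the $\alpha$-eigenspace of $\varphi_K\otimes\mathrm{id}_E$; that is, $(B^+_{\mathrm{max},K}\otimes_{K,\sigma}E)^{\varphi_K=\alpha}=X_0(D)$. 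Lemma \ref{5.4}(1) then immediately gives the first assertion. Moreover $\varphi_D$ is multiplication by an element of $p$-adic valuation $-v_p(\alpha)$, so $D$ is isoclinic of slope $-e_Kv_p(\alpha)$; in particular $X_0(D)=0$ when $v_p(\alpha)<0$ by Lemma \ref{5.4}(2), which makes the two remaining assertions trivial in that case, so we may assume $v_p(\alpha)\geq 0$.

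For the injectivity statement I would unwind $X_0(D)$ exactly as in the proof of Lemma \ref{5.4}: writing the Dieudonn\'e--Manin decomposition $\hat{K}^{\mathrm{ur}}\otimes_K D=\bigoplus_s D_s$ and, for the unique nonzero slope $s=a/h$ in lowest terms, using $X_0(D)\subseteq B^{+,\varphi_K^h=\pi_K^{-a}}_{\mathrm{max},K}\otimes_{K_h^{\mathrm{ur}}}D'_s$ together with the exact sequence of 8.5 of \cite{Co02}
\begin{equation*}
0\rightarrow K_h^{\mathrm{ur}}t_{K_h^{\mathrm{ur}}}^{-a}\rightarrow B^{+,\varphi_K^h=\pi_K^{-a}}_{\mathrm{max},K}\rightarrow B_{\mathrm{dR}}^+/t^{-a}B_{\mathrm{dR}}^+\rightarrow 0.
\end{equation*}
Since $t_{K_h^{\mathrm{ur}}}$ generates $\mathrm{Fil}^1B_{\mathrm{dR}}$, an element of $B^{+,\varphi_K^h=\pi_K^{-a}}_{\mathrm{max},K}$ lying in $t^kB_{\mathrm{dR}}^+$ with $k\geq-a$ maps to $0$ in $B_{\mathrm{dR}}^+/t^{-a}B_{\mathrm{dR}}^+$, hence equals $c\,t_{K_h^{\mathrm{ur}}}^{-a}$ for some $c\in K_h^{\mathrm{ur}}$, and then $c\,t_{K_h^{\mathrm{ur}}}^{-a}\in t^kB_{\mathrm{dR}}^+$ forces $c=0$ as soon as $k>-a$. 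Reconciling the numbers $-a$ coming from the slope with the plain bound $e_Kv_p(\alpha)$ then shows that $X_0(D)\to(B_{\mathrm{dR}}^+/t^kB_{\mathrm{dR}}^+)\otimes_{K,\sigma}E$ is injective for $k>e_Kv_p(\alpha)$. This last piece of slope/filtration bookkeeping is the one point that needs genuine care and is, I expect, the main (routine but fiddly) obstacle in the proof.

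Finally, for the short exact sequence: $(B_{\mathrm{dR}}^+/t^kB_{\mathrm{dR}}^+)\otimes_{K,\sigma}E$ is a free $B_{\mathrm{dR}}^+/t^kB_{\mathrm{dR}}^+$-module of finite rank with a continuous semilinear $G_K$-action, hence a $B_{\mathrm{dR}}^+$-representation and thus an almost $\mathbb{C}_p$-representation by Remark \ref{3.3} (Theorem C of \cite{Fo03}); the source is an almost $\mathbb{C}_p$-representation by the first part. Therefore the $G_K$-equivariant injection has, by Remark \ref{3.3} (Theorem B of \cite{Fo03}), a cokernel $U_k$ which is again an almost $\mathbb{C}_p$-representation, and the image is closed. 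All three terms carry compatible $E$-Banach structures and all maps are $E$-linear and continuous, so the displayed sequence is the asserted short exact sequence of $E$-Banach almost $\mathbb{C}_p$-representations.
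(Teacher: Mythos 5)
Your identification of $(B^+_{\mathrm{max},K}\otimes_{K,\sigma}E)^{\varphi_K=\alpha}$ with $X_0(D_\alpha)$, the first assertion via Lemma \ref{5.4}(1), and the final assertion via Remark \ref{3.3} all match the paper's proof exactly. The problem is the injectivity step, where your route is genuinely different from the paper's and contains a real gap precisely where you flag ``reconciling the numbers.''

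Concretely, writing the slope of $D_\alpha$ as $s=a/h$ in lowest terms, you have $s=-e_Kv_p(\alpha)$, so $-a=h\cdot e_Kv_p(\alpha)$. Your argument, which intersects $B^{+,\varphi_K^h=\pi_K^{-a}}_{\mathrm{max},K}$ with $t^kB^+_{\mathrm{dR}}$ and invokes the exact sequence from 8.5 of \cite{Co02}, thus only rules out a kernel for $k>-a=h\cdot e_Kv_p(\alpha)$, which is strictly worse than the claimed bound $k>e_Kv_p(\alpha)$ whenever $h>1$, i.e.\ whenever $e_Kv_p(\alpha)\notin\mathbb{Z}$. The source of the over-count is that the inclusion $X_0(D)\hookrightarrow B^{+,\varphi_K^h=\pi_K^{-a}}_{\mathrm{max},K}\otimes_{K_h^{\mathrm{ur}}}D_s'$ records only the ``first coordinate'' $b_1$ of an element $\sum_j b_j\otimes e_j$ of a Dieudonn\'e--Manin block, whereas the map to $(B^+_{\mathrm{dR}}/t^kB^+_{\mathrm{dR}})\otimes_{K,\sigma}E$ reduces \emph{all} the $\varphi_K$-translates $b_j=\varphi_K^{j-1}(b_1)$; the coherence relations among the $b_j$ kill kernel elements that survive in your coordinate-by-coordinate analysis. (Test case: $K=\mathbb{Q}_p$, $E=\mathbb{Q}_p(\sqrt p)$, $\alpha=\sqrt p$; your bound gives $k\geq 2$, the corollary claims $k\geq 1$.)

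The paper avoids the block structure entirely. It cites Proposition 8.10 of \cite{Co02} to get the identity
\begin{equation*}
(B^+_{\mathrm{max},K}\otimes_{K,\sigma}E)^{\varphi_K=\alpha}\cap\bigl(t^kB^+_{\mathrm{dR}}\otimes_{K,\sigma}E\bigr)=\bigl(t_K^kB^+_{\mathrm{max},K}\otimes_{K,\sigma}E\bigr)^{\varphi_K=\alpha},
\end{equation*}
then divides by $t_K^k$ (which shifts the eigenvalue by $\sigma(\pi_K)^{-k}$ since $\varphi_K(t_K)=\pi_Kt_K$), identifying the kernel with $X_0(D_{\alpha\sigma(\pi_K)^{-k}})$. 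That module is isoclinic of slope $k-e_Kv_p(\alpha)>0$, so Lemma \ref{5.4}(2) gives vanishing directly. This $t_K$-twist trick converts the question into a clean positive-slope vanishing statement and bypasses the $h>1$ bookkeeping; if you want to salvage your direct approach you would have to track all $h$ of the $\varphi_K$-translates simultaneously, which is exactly the content Proposition 8.10 of \cite{Co02} packages for you.
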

\begin{proof}
For $\alpha\in E^{\times}$, we define a $\varphi_K$-module $D_{\alpha}$ over $K$ by 
$D_{\alpha}:=Ee$ such that 
$\varphi_K(ae)=\alpha^{-1} ae$ for any $a\in E$. $D_{\alpha}$ has a unique slope $-e_Kv_p(\alpha)$ and we have a natural 
isomorphism $$X_0(D_{\alpha})\isom (\bold{B}^+_{\mathrm{max},K}\otimes_{K,\sigma}E)^{\varphi_K=\alpha}.$$ Hence, 
$(\bold{B}^+_{\mathrm{max},K}\otimes_KE)^{\varphi=\alpha}$ is a non-zero almost $\mathbb{C}_p$-representation by Lemma \ref{5.4}. 
Moreover, using Proposition 8.10 of \cite{Co02}, we can show that we have 
an equality $$(\bold{B}^+_{\mathrm{max},K}\otimes_{K,\sigma}E)^{\varphi_K=\alpha}
\cap(t^k\bold{B}^+_{\mathrm{dR}}\otimes_{K,\sigma}E)=(t_K^k\bold{B}^+_{\mathrm{max},K}\otimes_{K,\sigma}E)^{\varphi_K=\alpha}$$
 for any $k\geqq0$, which is isomorphic 
to $X_0(D_{\alpha\sigma(\pi_K)^{-k}})$. Because, $D_{\alpha\sigma(\pi_K)^{-k}}$ has a unique slope $(k-e_Kv_p(\alpha))$, so 
we have $X_0(D_{\alpha\sigma(\pi_K)^{-k}})=0$ when $k>e_Kv_p(\alpha)$ 
by Lemma \ref{5.4}. This implies that the natural map 
$$(\bold{B}^+_{\mathrm{max},K}\otimes_{K,\sigma}E)^{\varphi_K=\alpha}
\rightarrow (\bold{B}^+_{\mathrm{dR}}/t^k\bold{B}^+_{\mathrm{dR}})\otimes_{K,\sigma}E$$ is an injection. Because both of these are almost $\mathbb{C}_p$-representations which are also $E$-Banach spaces, hence the cokernel $U_k$ is also an $E$-Banach almost $\mathbb{C}_p$-representation
 by Remark \ref{3.3}.
\end{proof}

For two $K$-Banach spaces $M_1$ and $M_2$, we denote by $M_1\hat{\otimes}_KM_2$ the complete tensor product of 
$M_1$ and $M_2$ over $K$. Let $R$ be a complete topological $E$-algebra. We say that $R$ is an $E$-Banach algebra if there exists 
a map $|-|_R:R\rightarrow \mathbb{R}_{\geqq 0}$ which satisfies the following, 
\begin{itemize}
\item[(1)]$|1|_R=1$, $|x|_R=0$ if and only if $x=0$,
\item[(2)]$|x+y|_R\leqq \mathrm{max}\{|x|_R, |y|_R\}$,
\item[(3)]$|xy|_R\leqq |x|_R|y|_R$ and $|ax|_R=|a|_p|x|_R$
\end{itemize}
 for any $x,y\in R$, $a\in E$,
and if  the topology of $R$ is defined by the metric induced from $|-|_R$.

\begin{lemma}\label{5.6}
Let $R$ be an $E$-Banach algebra, and let $\alpha\in R$ be an element of $R$ such that $\alpha-1$ is 
topologically nilpotent, then there exists $u\in (\widehat{K^{\mathrm{ur}}}\hat{\otimes}_{K,\sigma}R)^{\times}$ such that $\varphi_K(u)=\alpha u$.
\end{lemma}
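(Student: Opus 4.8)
The plan is to construct $u$ explicitly as a convergent series $u=\sum_{n\ge 0}u_n\beta^n$, where $\beta:=\alpha-1$, $u_0=1$, and $u_n\in\hat K^{\mathrm{ur}}$ are viewed inside $A:=\hat K^{\mathrm{ur}}\hat\otimes_{K,\sigma}R$ via the first tensor factor. Writing $\varphi_K$ also for $\varphi_K\otimes\mathrm{id}_R$, which fixes $\beta\in R$ and is continuous and norm-preserving on the first factor, a direct computation shows that $\varphi_K(u)=\alpha u=(1+\beta)u$ holds as soon as the coefficients satisfy $\varphi_K(u_n)=u_n+u_{n-1}$ for all $n\ge 1$ (set $u_{-1}:=0$). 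Equivalently, $\tilde u(X):=\sum_n u_nX^n$ is to be a formal solution of $\varphi_K(\tilde u)=(1+X)\tilde u$ in $\hat K^{\mathrm{ur}}[[X]]$, and $u$ will be its value at $X=\beta$.

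The recursion is solved using one nontrivial input, which I would state as a lemma: $\varphi_K-1$ is surjective on $\hat K^{\mathrm{ur}}$, and every $b\in\hat K^{\mathrm{ur}}$ can be written $b=\varphi_K(y)-y$ with $\|y\|\le\|b\|$. This reduces, by $\pi_K$-adic successive approximation (using that $\varphi_K$ is a Frobenius lift, so $\varphi_K(x)\equiv x^q\pmod{\pi_K}$ on $\mathcal O_{\hat K^{\mathrm{ur}}}$), to surjectivity of $z\mapsto z^q-z$ on the algebraically closed residue field $\overline k=\mathcal O_{\hat K^{\mathrm{ur}}}/\pi_K$; this is classical, and also amounts to the vanishing $H^1(\mathrm{Gal}(K^{\mathrm{ur}}/K),\hat K^{\mathrm{ur}})=0$ together with $(\hat K^{\mathrm{ur}})^{\varphi_K=1}=K$ (cf. \S 8 of \cite{Co02}). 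Starting from $\|u_0\|=1$ and choosing, at each step, $u_n$ with $\varphi_K(u_n)-u_n=u_{n-1}$ and $\|u_n\|\le\|u_{n-1}\|$, we obtain $\|u_n\|\le 1$ for all $n$.

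With this uniform bound, the series $u=\sum_n u_n\beta^n$ converges in $A$, since $\|u_n\beta^n\|_A\le\|u_n\|\,|\beta^n|_R\le|\beta^n|_R\to 0$ by topological nilpotence of $\beta$, and $A$ is complete. As $\varphi_K$ is continuous and fixes $\beta$, it commutes with the summation, and the recursion yields $\varphi_K(u)=u+\beta u=\alpha u$. Finally $u\in A^\times$: the element $u-1=\sum_{n\ge 1}u_n\beta^n$ is topologically nilpotent, because $(u-1)^k=\sum_{m\ge k}c_m\beta^m$ with each $c_m$ a finite sum of products of the $u_n$, hence $\|c_m\|\le 1$, so $\|(u-1)^k\|_A\le\sup_{m\ge k}|\beta^m|_R\to 0$; therefore $u=1+(u-1)$ is invertible with inverse $\sum_{k\ge 0}(1-u)^k$.

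The one genuine obstacle is converting "$\alpha-1$ topologically nilpotent" into a solution that converges for the Banach topology: a naive Newton or successive-approximation scheme carried out directly over $R$ only converges when $|\beta|_R$ is small, which need not hold. The device that circumvents this is to run the approximation in the auxiliary variable $X$, where the norm-non-increasing choice of preimages under $\varphi_K-1$ forces $\|u_n\|\le 1$ uniformly, and only afterwards to specialize $X=\beta$, so that the decay of $|\beta^n|_R$ alone secures convergence. I would also note that this argument in fact produces, uniformly in $\alpha$, an analytic family $\alpha\mapsto u$ on the region where $\alpha-1$ is topologically nilpotent, which is the form in which the statement is used in the constructions of $\S 3$.
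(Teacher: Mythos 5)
Your proof is correct and reproduces, in full detail, the argument of Kisin's Lemma 3.6 to which the paper defers: write $\alpha=1+\beta$, build $u=\sum_{n\ge 0}u_n\beta^n$ by recursively solving $\varphi_K(u_n)-u_n=u_{n-1}$ using the norm-bounded surjectivity of $\varphi_K-1$ on $\hat K^{\mathrm{ur}}$ (Artin--Schreier on $\overline k$ plus $\pi_K$-adic successive approximation), then read off convergence and invertibility from the topological nilpotence of $\beta$. The uniform bound $\|u_n\|\le 1$ that you isolate is precisely the point that makes the series converge for the Banach topology and makes $u-1$ topologically nilpotent, so the write-up captures the one genuine subtlety.
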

\begin{proof}
The proof is the same as that of Lemma 3.6 of \cite{Ki03}.

\end{proof}

Here, we recall some terminologies concerning   Banach modules from $\S2$ of \cite{Bu07}.
 Let $R$ be an $E$-Banach algebra and let $M$ be a topological $R$-module. 
 We say that $M$ is a Banach $R$-module if  $M$ is 
 a complete topological $R$-module with a map $|-|:M\rightarrow \mathbb{R}_{\geqq 0}$ 
 which satisfies the following,
 \begin{itemize}
 \item[(1)]$|m|=0$ if and only if $m=0$, 
 \item[(2)]$|m+n|\leqq \mathrm{max}\{|m|, |n|\}$, 
 \item[(3)]$|am|\leqq |a|_R|m|$  ($|-|_R$ is a fixed $E$-Banach norm on $R$ as above)
 \end{itemize}
  for any $m,n\in M$, $a\in R$, and if the topology on $M$ is defined by the metric induced from $|-|$. 
 Let $M$ be a Banach $R$-module. We say that $M$ is orthonormalizable if there exist a map 
 $|-|:M\rightarrow \mathbb{R}_{\geqq 0}$ as above and a subset $\{e_i\}_{i\in I}$ of $M$ such that, 
 for any $m\in M$,
 \begin{itemize}
 \item[(1)]there exists a unique $\{a_i\}_{i\in I}$ ($a_i\in R$) such that 
 $a_i\rightarrow 0$ $(i\rightarrow \infty)$ and $m=\sum_{i\in I}a_ie_i$, 
 \item[(2)]we have $|m|=
 \mathrm{max}_{i\in I}\{|a_i|_R\}$.
 \end{itemize}
 We say that a Banach $R$-module $M$ has the property $(\mathrm{Pr})$ if there exists a Banach $R$-module $N$ such that 
 $M\oplus N$ is orthonormalizable.

The following proposition is also a generalization of Corollary 3.7 of \cite{Ki03} which will play a crucial role in the 
next subsection.
\begin{prop}\label{5.7}
Let $R$ be an $E$-Banach algebra and let $Y\in R^{\times}$ be a unit of $R$. Assume that 
there exists a finite Galois extension $E'$ of $E$ and $\lambda\in (R\otimes_E E')^{\times}$ 
such that $E'[\lambda]\subseteq R\otimes_{E}E'$ is an \'etale $E'$-algebra 
and that $Y\lambda^{-1}-1$ is topologically nilpotent in $R\otimes_E E'$. 
Then, for any sufficiently large $k\in \mathbb{Z}_{>0}$, there exists 
a Banach $R$-module $U_k$ with the property $(Pr)$ which is equipped with a continuous $R$-linear $G_K$-action 
such that there exists a $G_K$-equivariant short exact sequence of  Banach $R$-modules with the property (Pr) 
\begin{equation*}
0\rightarrow (\bold{B}^+_{\mathrm{max},K}\hat{\otimes}_{K,\sigma}R)^{\varphi_K=Y}
\rightarrow \bold{B}^+_{\mathrm{dR}}/t^k\bold{B}^+_{\mathrm{dR}}\hat{\otimes}_{K,\sigma}R
\rightarrow U_k\rightarrow 0
\end{equation*}
which is compatible with any base change, i.e. for any continuous homomorphism
 $f:R\rightarrow R'$ of $E$-affinoid algebras, the complete tensor product 
 of the above exact sequence with $R'$ is 
 equal to 
 \begin{equation*}
0\rightarrow (\bold{B}^+_{\mathrm{max},K}\hat{\otimes}_{K,\sigma}R')^{\varphi_K=f(Y)}
\rightarrow \bold{B}^+_{\mathrm{dR}}/t^k\bold{B}^+_{\mathrm{dR}}\hat{\otimes}_{K,\sigma}R'
\rightarrow U_k\hat{\otimes}_{R}R'\rightarrow 0.
\end{equation*}
\end{prop}
\begin{proof}
Decompose $E'[\lambda]=\prod_{i\in I}E_i\subseteq R\otimes_E E'$ such 
that each $E_i$ is a finite extension of $E'$, and denote by  $\lambda_i\in E_i$ the image 
of $\lambda$ in $E_i$, then we obtain a decomposition $R\otimes_E E'=\prod_{i\in I}R_i$ such 
that $E_i\subseteq R_i$ for any $i$. By Corollary \ref{5.5}, we have a short exact 
sequence of $E_i$-Banach spaces
$$ 0\rightarrow (\bold{B}^+_{\mathrm{max},K}\otimes_{K,\sigma}E_i)^{\varphi_K=\lambda_i}
\rightarrow \bold{B}^+_{\mathrm{dR}}/t^k\bold{B}^+_{\mathrm{dR}}\otimes_{K,\sigma}E_i\rightarrow U_{k,i}\rightarrow 0.$$
for any $k\in\mathbb{Z}_{>0}$ such that $k>e_Kv_p(\lambda_i)$ for any $i$.
Hence, if we take the complete tensor product over $E_i$ of this sequence with $R_i$, 
we obtain a short exact sequence of orthonormalizable $R_i$-Banach spaces
$$0\rightarrow (\bold{B}^+_{\mathrm{max},K}\hat{\otimes}_{K,\sigma}R_i)^{\varphi_K=\lambda_i}
\rightarrow \bold{B}^+_{\mathrm{dR}}/t^k\bold{B}^+_{\mathrm{dR}}\hat{\otimes}_{K,\sigma}R_i\rightarrow U_{k,i}\hat{\otimes}_{E_i}R_i\rightarrow 0.$$
By the assumption, the element $Y\lambda_i^{-1}-1$ is topologically nilpotent in $R_i$, hence we have 
an element $u_i\in( \widehat{K^{\mathrm{ur}}}\hat{\otimes}_{K,\sigma}R_i)^{\times}$ such that $\varphi_K(u_i)=Y\lambda_i^{-1} u_i$ by Lemma \ref{5.6}. Multiplying 
$u_i$ to the above short exact sequence, we obtain a short exact sequence
$$0\rightarrow (\bold{B}^+_{\mathrm{max},K}\hat{\otimes}_{K,\sigma}R_i)^{\varphi_K=Y}\rightarrow 
\bold{B}^+_{\mathrm{dR}}/t^k\bold{B}^+_{\mathrm{dR}}\hat{\otimes}_{K,\sigma}R_i\rightarrow (U_{k,i}\hat{\otimes}_{E_i}R_i)\rightarrow 0.$$
Summing up for all $i\in I$, we obtain
$$0\rightarrow (\bold{B}^+_{\mathrm{max},K}\hat{\otimes}_{K,\sigma}(R\otimes_{E}E'))^{\varphi_K=Y}
\rightarrow \bold{B}^+_{\mathrm{dR}}/t^k\bold{B}^+_{\mathrm{dR}}\hat{\otimes}_{K,\sigma} (R\otimes_E E')\rightarrow \oplus_{i\in L}(U_{k,i}
\hat{\otimes}_{E_i}R_i)\rightarrow 0,$$
which we can see as an exact sequence of $R$-Banach modules with the property ($Pr$).
Taking the $\mathrm{Gal}(E'/E)$-fixed part, we obtain a short exact sequence 
$$0\rightarrow (\bold{B}^+_{\mathrm{max},K}\hat{\otimes}_{K,\sigma}R)^{\varphi_K=Y}\rightarrow 
\bold{B}^+_{\mathrm{dR}}/t^k\bold{B}^+_{\mathrm{dR}}\hat{\otimes}_{K,\sigma}R\rightarrow U_k\rightarrow 0$$satisfying all the conditions in the proposition, where $U_k:=(\oplus_{i\in I}
(U_{k,i}\hat{\otimes}_{E_i}R_i))^{\mathrm{Gal}(E'/E)}$. Finally, the base change property of this exact sequence is clear from the proof.

\end{proof}

Let $V$ be an $E$-representation, then we define 
$$\bold{D}^+_{\mathrm{cris}}(V):=(\bold{B}^+_{\mathrm{max}}\otimes_{\mathbb{Q}_p}V)^{G_K},\,\mathrm{Fil}^0\bold{D}_{\mathrm{cris}}(V):=\bold{D}_{\mathrm{cris}}(V)\cap \mathrm{Fil}^0\bold{D}_{\mathrm{dR}}(V),$$ where we recall that $\bold{B}^+_{\mathrm{max}}=\bold{B}^+_{\mathrm{max},\mathbb{Q}_p}$. 
Then, we have a natural inclusion 
$\bold{D}^+_{\mathrm{cris}}(V)\subseteq \mathrm{Fil}^0\bold{D}_{\mathrm{cris}}(V)$, which is not an equality in general.

\begin{lemma}\label{5.7.5}
Let $\alpha\in E^{\times}$ be a non zero element. If a $\varphi$-submodule $D$ of $
\bold{D}_{\mathrm{cris}}(V)^{\varphi^f=\alpha}$ 
is contained in $\mathrm{Fil}^0\bold{D}_{\mathrm{dR}}(V)$, then $D$ is also contained in $\bold{D}^+_{\mathrm{cris}}(V)^{\varphi^f=\alpha}$.

\end{lemma}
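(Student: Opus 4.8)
The plan is to reduce the statement to a single vector and then to \emph{divide by $t$} repeatedly, using the $\varphi^f$--eigenvalue condition to control the process.

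First I would reduce to the following claim: if $x\in D_{\mathrm{cris}}(V)$ satisfies $\varphi^f(x)=\alpha x$ and, viewed inside $B_{\mathrm{dR}}\otimes_{\mathbb{Q}_p}V$, lies in $B^+_{\mathrm{dR}}\otimes_{\mathbb{Q}_p}V$, then $x\in B^+_{\mathrm{max}}\otimes_{\mathbb{Q}_p}V$. Granting this, pick any $x\in D$: it satisfies $\varphi^f(x)=\alpha x$ since $D\subseteq D_{\mathrm{cris}}(V)^{\varphi^f=\alpha}$, and it lies in $\mathrm{Fil}^0D_{\mathrm{dR}}(V)=D_{\mathrm{cris}}(V)\cap(B^+_{\mathrm{dR}}\otimes_{\mathbb{Q}_p}V)$, hence $x\in B^+_{\mathrm{max}}\otimes_{\mathbb{Q}_p}V$; as $x$ is moreover $G_K$--fixed, $x\in(B^+_{\mathrm{max}}\otimes_{\mathbb{Q}_p}V)^{G_K}=D^+_{\mathrm{cris}}(V)$, and the eigenvalue condition then gives $x\in D^+_{\mathrm{cris}}(V)^{\varphi^f=\alpha}$. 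It is essential here that $x$ be a $\varphi^f$--eigenvector: the inclusion $D^+_{\mathrm{cris}}(V)\subseteq\mathrm{Fil}^0D_{\mathrm{cris}}(V)$ is strict in general.

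To prove the claim, write $B_{\mathrm{max}}=B^+_{\mathrm{max}}[t^{-1}]$, so that $x=y/t^n$ for some $y\in B^+_{\mathrm{max}}\otimes_{\mathbb{Q}_p}V$ and some minimal $n\geqq 0$. Since $\varphi(t)=pt$ we have $\varphi^f(t^n)=p^{fn}t^n$, so $\varphi^f(x)=\alpha x$ forces $\varphi^f(y)=\alpha p^{fn}y$, i.e. $y\in(B^+_{\mathrm{max}}\otimes_{\mathbb{Q}_p}V)^{\varphi^f=\alpha p^{fn}}$. Suppose $n\geqq 1$. Then $y=t^nx\in t^n(B^+_{\mathrm{dR}}\otimes_{\mathbb{Q}_p}V)\subseteq t(B^+_{\mathrm{dR}}\otimes_{\mathbb{Q}_p}V)$, so $y$ lies in
\[
(B^+_{\mathrm{max}}\otimes_{\mathbb{Q}_p}V)^{\varphi^f=\alpha p^{fn}}\cap\bigl(t\,B^+_{\mathrm{dR}}\otimes_{\mathbb{Q}_p}V\bigr).
\]
By Colmez's computation of the filtration induced by $t^\bullet B^+_{\mathrm{dR}}$ on the Frobenius--eigenspaces of $B^+_{\mathrm{max}}$ (Proposition 8.10 of \cite{Co02}, the same input used in the proof of Corollary \ref{5.5}, applied to the unramified base $K_0$ so that $\varphi_{K_0}=\varphi^f$), this intersection equals $t\cdot(B^+_{\mathrm{max}}\otimes_{\mathbb{Q}_p}V)^{\varphi^f=\alpha p^{f(n-1)}}$. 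Hence $y=ty'$ with $y'\in B^+_{\mathrm{max}}\otimes_{\mathbb{Q}_p}V$, and then $x=y'/t^{n-1}$ contradicts the minimality of $n$. Therefore $n=0$ and $x\in B^+_{\mathrm{max}}\otimes_{\mathbb{Q}_p}V$, which proves the claim.

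The only nontrivial ingredient is the ``fundamental exact sequence''--type statement $(B^+_{\mathrm{max}}\otimes_{\mathbb{Q}_p}V)^{\varphi^f=\beta}\cap(tB^+_{\mathrm{dR}}\otimes_{\mathbb{Q}_p}V)=t\,(B^+_{\mathrm{max}}\otimes_{\mathbb{Q}_p}V)^{\varphi^f=\beta p^{-f}}$, and this is precisely where the eigenvector hypothesis enters: the analogous intersection for a non--eigenvector need not be divisible by $t$ inside $B^+_{\mathrm{max}}$, which is consistent with $D^+_{\mathrm{cris}}(V)\subsetneq\mathrm{Fil}^0D_{\mathrm{cris}}(V)$ in general. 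The remaining bookkeeping --- decomposing along the embeddings $K_0\hookrightarrow E$ to accommodate the $K_0$--semilinear structure, and identifying the ideal generated by the cyclotomic period $t$ with the one generated by the Lubin--Tate period of $K_0$ (both cut out $\mathrm{Fil}^1B^+_{\mathrm{dR}}$) --- requires no new ideas, so I expect the main effort to be in stating the Colmez input in the exact form needed.
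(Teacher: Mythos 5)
Your reduction is too weak, and the key intersection formula you invoke is incorrect when $f>1$. You reduce to the claim that a $\varphi^f$-eigenvector $x\in B_{\mathrm{max}}\otimes_{\mathbb{Q}_p}V$ lying in $B^+_{\mathrm{dR}}\otimes_{\mathbb{Q}_p}V$ must lie in $B^+_{\mathrm{max}}\otimes_{\mathbb{Q}_p}V$, and you try to prove it via
\[
(B^+_{\mathrm{max}})^{\varphi^f=\beta}\cap t\,B^+_{\mathrm{dR}}=t\,(B^+_{\mathrm{max}})^{\varphi^f=\beta p^{-f}}.
\]
But Proposition 8.10 of \cite{Co02}, applied with $K=K_0$ (so $\varphi_K=\varphi^f$, uniformizer $p$), does \emph{not} give divisibility by the cyclotomic period $t$; it gives divisibility by the \emph{Lubin--Tate period} $t_{K_0}$ of $K_0$, which satisfies $\varphi^f(t_{K_0})=p\,t_{K_0}$. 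The correct statement is
\[
(B^+_{\mathrm{max}})^{\varphi^f=\beta}\cap t\,B^+_{\mathrm{dR}}=t_{K_0}\,(B^+_{\mathrm{max}})^{\varphi^f=\beta p^{-1}}.
\]
The two periods $t$ and $t_{K_0}$ generate the same maximal ideal of $B^+_{\mathrm{dR}}$, but \emph{not} the same ideal of $B^+_{\mathrm{max}}$ once $f>1$: by Lemma 8.17 of \cite{Co02}, $t=c\prod_{i=0}^{f-1}\varphi^{-i}(t_{K_0})$ with $c\in K_0^\times$, so $t/t_{K_0}=c\prod_{i=1}^{f-1}\varphi^{-i}(t_{K_0})$ is a unit in $B^+_{\mathrm{dR}}$ but a non-unit in $B^+_{\mathrm{max}}$. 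Your remark that ``identifying the ideal generated by the cyclotomic period $t$ with the one generated by the Lubin--Tate period of $K_0$ $\ldots$ requires no new ideas'' is precisely where the argument breaks: that identification is valid in $B^+_{\mathrm{dR}}$ but false in $B^+_{\mathrm{max}}$, and the conclusion needs to live in $B^+_{\mathrm{max}}$.

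Concretely, $x:=t_{K_0}/t$ is a $\varphi^f$-eigenvector in $B_{\mathrm{max}}$ (eigenvalue $p^{1-f}$) that lies in $(B^+_{\mathrm{dR}})^\times$ yet is \emph{not} in $B^+_{\mathrm{max}}$ when $f>1$, so your reduced claim is actually false. What saves the lemma is the hypothesis that $D$ is a $\varphi$-submodule, not merely $\varphi^f$-stable: for $x\in D$ this gives $\varphi^i(x)\in D\subseteq \mathrm{Fil}^0D_{\mathrm{dR}}(V)\subseteq B^+_{\mathrm{dR}}\otimes V$ for \emph{all} $i\geqq 0$, and this is exactly what the paper's proof exploits. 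Applying Colmez's Proposition 8.10 to each $\varphi^i(a)$ ($0\leqq i\leqq f-1$, where $a=t^nx\in B^+_{\mathrm{max}}$) shows $a$ is divisible in $B^+_{\mathrm{max}}$ by each $\varphi^{-i}(t_{K_0}^n)$ separately; these are pairwise coprime by Lemma 8.18 of \cite{Co02}, so $a$ is divisible by their product, which equals $c^n t^n$ by Lemma 8.17 of \cite{Co02}, and only then does $x=a/t^n\in B^+_{\mathrm{max}}\otimes_{\mathbb{Q}_p}V$ follow. Your write-up uses $\varphi^f$-stability alone and omits both the distinction between $t_{K_0}$ and $t$ in $B^+_{\mathrm{max}}$ and the coprimality/product step, which is the actual mathematical content of the lemma (and the reason ``$\varphi$-submodule'' rather than ``$\varphi^f$-submodule'' appears in the statement).
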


\begin{proof}
It suffices to show that 
if an element $x\in (\bold{B}_{\mathrm{max}}\otimes_{\mathbb{Q}_p}E)^{\varphi^f=\alpha}$ satisfies 
that $\varphi^i(x)\in \bold{B}^+_{\mathrm{dR}}\otimes_{\mathbb{Q}_p}E$ for any $i\in \mathbb{Z}_{\geqq 0}$, then 
$x\in (\bold{B}^+_{\mathrm{max}}\otimes_{\mathbb{Q}_p}E)^{\varphi^f=\alpha}$. 
If we write $x=\frac{a}{t^n}$ for some $a\in (\bold{B}^+_{\mathrm{max}}\otimes_{\mathbb{Q}_p}E)^{\varphi^f=\alpha p^{fn}}$ and $n\geqq 0$, then we have $\frac{\varphi^i(a)}{p^{ni}t^n}=\varphi^i(x)\in \bold{B}^+_{\mathrm{dR}}\otimes_{\mathbb{Q}_p}E$ for any $0\leqq i\leqq f-1$. 
Hence, we have $$\varphi^i(a)\in (\bold{B}^+_{\mathrm{max}}\otimes_{\mathbb{Q}_p}E)^{\varphi^f=\alpha p^{nf}}\cap t^{n}\bold{B}^+_{\mathrm{dR}}\otimes_{\mathbb{Q}_p}E
=(t_{K_0}^n\bold{B}^+_{\mathrm{max}}\otimes_{\mathbb{Q}_p}E)^{\varphi^f=\alpha p^{nf}},$$ where the last equality follows from 
Proposition 8.10 of \cite{Co02}. Hence, we can write $a=\varphi^{-i}(t_{K_0}^n)a_i$ for some 
$a_i\in \bold{B}^+_{\mathrm{max}}\otimes_{\mathbb{Q}_p}E$ for any $1\leqq i\leqq f-1$.  Then we can write by 
$a=(\prod_{i=0}^{f-1}\varphi^{-i}(t_{K_0}^n))a'$ for some $a'\in \bold{B}^+_{\mathrm{max}}\otimes_{\mathbb{Q}_p}E$ by Lemma 8.18 of \cite{Co02}. 
Because $\prod_{i=0}^{f-1}\varphi^{-i}(t_{K_0})\in K_0^{\times}t$ by Lemma 8.17 of \cite{Co02}, we have $x=\frac{a}{t^n}\in \bold{B}^+_{\mathrm{max}}\otimes_{\mathbb{Q}_p}E$.

\end{proof}

\subsection{Construction of the finite slope subspace for general $p$-adic field}
In this subsection, we generalize the theory of the  finite slope subspace $X_{fs}$ for any $p$-adic field. Using Proposition \ref{5.7}, the construction and the proof is essentially the same as those 
for $K=\mathbb{Q}_p$, but there is a difference that we need to consider all the embeddings $\sigma:K\hookrightarrow E$, which makes the theory more complicated. Hence, for convenience of readers (and the author), here we reprove the Kisin's theory in detail in our generalized setting.

Let $X$ be a separated rigid analytic space (in the sense of Tate) over $E$. 
Let $M$ be a free $\mathcal{O}_X$-module of rank $d$ for some $d\geqq 1$ equipped with 
a continuous $\mathcal{O}_X$-linear $G_K$-action, where ``continuous" means that, for any admissible open affinoid $U=\mathrm{Spm}(R)$ of $X$, the action of $G_K$ on $\Gamma(U, M)$ is continuous on which the topology is naturally induced by that of $R$. We denote by $M^{\vee}:=\mathrm{Hom}_{\mathcal{O}_X}(M,\mathcal{O}_X)$ the $\mathcal{O}_X$-dual of $M$. For 
$x\in X$, we denote by $\mathcal{O}_{X,x}$ the local ring at $x$, $\mathfrak{m}_x$ 
the maximal ideal at $x$, $E(x)$ the residue field at $x$ which is a finite extension of $E$. 
We denote by $M(x)$ the fiber of $M$ at $x$, which is a $d$-dimensional 
$E(x)$-representation of $G_K$. 

Under this situation, we briefly recall Sen's theory \cite{Se88} of the analytic variations of Hodge-Tate weights following \cite{Be-Co08} and \cite{Ch09a}. By Lemma 3.18 of \cite{Ch09a}, we can take an affinoid covering $ \{U_i\}_{i\in I}$ of $X$ such that $M_i:=\Gamma(U_i,M)$ is a free $R_i:=\Gamma(U_i,\mathcal{O}_X)$-module with a $G_K$-stable finite free $R_i^0$-module $M_i^0$ such that $M_i=M^0_i[p^{-1}]$ for a model 
$R^0_{i}\subseteq R_i$ for any $i\in I$. Here, for an affinoid $A$, a model is defined as a topologically finite generated complete
$\mathcal{O}_E$-subalgebra of $A$ which generates $A$ after inverting $p$. 
Then, we can apply the results of \cite{Be-Co08} to $M_i$ (and $M^0_i$) for any 
$i\in I$. By Proposition 4.1.2 of \cite{Be-Co08}, there exists a unique monic polynomial $P_{M_i}(T)\in K\otimes_{\mathbb{Q}_p}R_i[T]$ of dimension $d$, which is defined as the characteristic polynomial 
of Sen's operator on $\bold{D}_{\mathrm{Sen}}^{L}(M_i)$ for a sufficiently large finite extension $L$ of $K$, such that 
the specialization of $P_{M_i}(T)$ at $x$ gives the Sen's polynomial $P_{M(x)}(T)\in K\otimes_{\mathbb{Q}_p}E(x)[T]$ of $M(x)$  for any $x\in \mathrm{Spm}(R_i)$.
By the uniqueness of $\bold{D}_{\mathrm{Sen}}^{L}(M_i)$, $\{P_{M_i}(T)\}_{i\in I}$ glue together to 
a monic polynomial $P_M(T)\in K\otimes_{\mathbb{Q}_p}\mathcal{O}_X[T]$. By the canonical decomposition $K\otimes_{\mathbb{Q}_p}E=\oplus_{\sigma\in \mathcal{P}}E:a\otimes b\mapsto (\sigma(a)b)_{\sigma}$, $P_M(T)$ decomposes into the $\sigma$-components
$P_M(T)=(P_M(T)_{\sigma})_{\sigma\in\mathcal{P}}\in \oplus_{\sigma\in \mathcal{P}} \mathcal{O}_X[T]$.

From now on, we assume that the constant term of $P_M(T)_{\sigma}$ is zero for any $\sigma\in\mathcal{P}$. 
We denote by $P_M(T)_{\sigma}=TQ_{\sigma}(T)$ for $Q_{\sigma}(T)\in \mathcal{O}_X[T]$.

Before stating the theorem, we recall some terminologies 
 of rigid geometry from $\S$ 5 of \cite{Ki03} which we need to characterize the finite slope subspace $X_{fs}$.
 
 Let $X=\mathrm{Spm}(R)$ be an affinoid over $E$, and let
 $U$ be an admissible open in $X$. We say that 
 $U$ is scheme theoretically dense in $X$ if there exists a
 Zariski open $V\subseteq \mathrm{Spec}(R)$ which is 
 dense in $\mathrm{Spec}(R)$ for the Zariski topology 
 and $U=V^{\mathrm{an}}$, where $V^{\mathrm{an}}$ is 
the associated rigid space of $V$. 
For any rigid analytic space $X$ over $E$, an admissible 
open $U$ of $X$ is said to be scheme theoretically dense 
in $X$ if there exists an admissible affinoid covering $\{U_i\}_{i\in I}$ of $X$ 
such that $U\cap U_i$ is scheme theoretically dense in $U_i$ for any $i\in I$.
The typical example is the following. For any $f\in \Gamma(X, \mathcal{O}_X)$, 
we set $X_f:=\{x\in X| f(x)\not=0\}$ which is an admissible open in $X$. 
If $f$ is a non-zero divisor, then $X_f$ is scheme theoretically dense in $X$.

Next, let $Y\in \Gamma(X, \mathcal{O}_X^{\times})$ be an invertible function on $X$, and let $R$ be an affinoid algebra over $E$. 
We say that a morphism $f:\mathrm{Spm}(R)\rightarrow X$ is $Y$-small 
if there exist a finite extension $E'$ of $E$ and $\lambda\in (R\otimes_{E}E')^{\times}$ such 
that $E'[\lambda]\subseteq R\otimes_EE'$ is a finite \'etale $E'$-algebra and  
$Y\lambda^{-1}-1\in R\otimes_EE'$ is topologically nilpotent. A typical example 
of $Y$-small morphism is following. For any $x\in X$ and $n\in \mathbb{Z}_{\geqq 1}$, the natural 
map $\mathrm{Spm}(\mathcal{O}_{X,x}/\mathfrak{m}_{x}^n)\rightarrow X$ is $Y$-small for any 
$Y\in \Gamma(X,\mathcal{O}_X)^{\times}$.

The following theorem is the generalization of Proposition 5.4 of \cite{Ki03} for general $K$, which 
states the existence and the characterization of the finite slope subspace $X_{fs}$. This theorem 
is the most important for the construction of $p$-adic families of 
trianguline representations in the next subsection. 
For an $E$-affinoid algebra $R$, we set $\bold{B}^+_{\mathrm{dR}}\hat{\otimes}_{\mathbb{Q}_p}R:=\varprojlim_k\bold{B}^+_{\mathrm{dR}}/t^k\bold{B}^+_{\mathrm{dR}}\hat{\otimes}_{\mathbb{Q}_p}R$ which is equipped with the projective limit topology.

\begin{thm}\label{5.8}
Let $X$ be a separated rigid analytic space over $E$, and let $M$ be a free $\mathcal{O}_X$-module 
of rank $d$ with a continuous $\mathcal{O}_X$-linear $G_K$-action. Let $Y\in \Gamma(X, \mathcal{O}_X^{\times})$ 
be an invertible function.
We assume that the constant term of $P_M(T)_{\sigma}$ is zero for any $\sigma\in \mathcal{P}$.
Then, there exists a unique Zariski closed subspace $X_{fs}$ of $X$ satisfying the following conditions,
\begin{itemize}
\item[(1)]$X_{fs, Q_{\sigma}(i)}$ is scheme theoretically dense in $X_{fs}$ for any $\sigma\in \mathcal{P}$ and $i\in \mathbb{Z}_{\leqq 0}$,
\item[(2)]for any $Y$-small map $f:\mathrm{Spm}(R)\rightarrow X$ 
which factors through $X_{Q_{\sigma}(i)}$ for any 
$\sigma\in\mathcal{P}$ and $i\in\mathbb{Z}_{\leqq0}$, the following two conditions are equivalent,
\begin{itemize}
\item[(i)]$f:\mathrm{Spm}(R)\rightarrow X$ factors through $X_{fs}$,
\item[(ii)] any $R$-linear $G_K$-equivariant  map $h:M^{\vee}\otimes_{\mathcal{O}_X}R\rightarrow \bold{B}^+_{\mathrm{dR}}\hat{\otimes}_{\mathbb{Q}_p}R$ factors through $h':M^{\vee}\otimes_{\mathcal{O}_X}R\rightarrow K\otimes_{K_0}(\bold{B}^+_{\mathrm{max}}\hat{\otimes}_{\mathbb{Q}_p}R)^{\varphi^f=Y}$.
\end{itemize}
\end{itemize}
\end{thm}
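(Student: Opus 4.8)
The plan is to construct $X_{fs}$ locally on an affinoid covering and then glue, following Kisin's strategy in $\S 5$ of \cite{Ki03} but keeping track of all embeddings $\sigma:K\hookrightarrow E$. First I would reduce to the case $X=\mathrm{Spm}(R)$ affinoid with $M$ free over $R$, equipped with a $G_K$-stable model $M^0$ over a model $R^0\subseteq R$. The key input is Proposition \ref{5.7}: after possibly passing to a $Y$-small situation, for $k$ large enough one has a $G_K$-equivariant short exact sequence of Banach $R$-modules with property $(\mathrm{Pr})$
\begin{equation*}
0\rightarrow (B^+_{\mathrm{max},K}\hat{\otimes}_{K,\sigma}R)^{\varphi_K=Y}\rightarrow B^+_{\mathrm{dR}}/t^kB^+_{\mathrm{dR}}\hat{\otimes}_{K,\sigma}R\rightarrow U_k\rightarrow 0.
\end{equation*}
Applying $\mathrm{Hom}_{R[G_K]}(M^\vee\otimes_{\mathcal{O}_X}R,-)$ to this sequence, and using continuity together with the fact that $B^+_{\mathrm{dR}}\hat{\otimes}_{\mathbb{Q}_p}R=\varprojlim_k B^+_{\mathrm{dR}}/t^k B^+_{\mathrm{dR}}\hat{\otimes}_{\mathbb{Q}_p}R$ (so maps into $B^+_{\mathrm{dR}}\hat{\otimes} R$ are limits of maps into the truncations), condition (ii) of the theorem becomes the vanishing of the composite $\mathrm{Hom}_{R[G_K]}(M^\vee\otimes R, B^+_{\mathrm{dR}}/t^kB^+_{\mathrm{dR}}\hat\otimes R)\to \mathrm{Hom}_{R[G_K]}(M^\vee\otimes R, U_k)$ for all $\sigma$ and all $k$ beyond a bound depending on $Y$. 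The decomposition $K\otimes_{\mathbb{Q}_p}R=\bigoplus_{\sigma}R$ lets us treat the $\sigma$-components independently, which is the only genuinely new bookkeeping compared to the $\mathbb{Q}_p$-case.

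Next I would realize these $\mathrm{Hom}$-spaces as finite $R$-modules. Since $U_k$ and $B^+_{\mathrm{dR}}/t^k B^+_{\mathrm{dR}}\hat\otimes R$ have property $(\mathrm{Pr})$ and are almost $\mathbb{C}_p$-representations base-changed to $R$, the continuous cohomology groups $\mathrm{H}^0(G_K,-)$ and the $\mathrm{Hom}$-modules are finite $R$-modules — this is where Fontaine's theory of almost $\mathbb{C}_p$-representations (Remark \ref{3.3}) and Buzzard's formalism of Banach modules with property $(\mathrm{Pr})$ enter, exactly as in \cite{Ki03}. One then gets, for each $\sigma$ and each sufficiently large $k$, an $R$-linear map $\phi_{\sigma,k}$ of finite $R$-modules whose cokernel (or whose image-ideal, via Fitting ideals) cuts out a Zariski closed subscheme of $\mathrm{Spm}(R)$; condition (ii) holds after a $Y$-small base change $\mathrm{Spm}(R')\to\mathrm{Spm}(R)$ precisely when all these maps become zero after base change. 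Defining $X_{fs}\cap U_i$ by the vanishing of all $\phi_{\sigma,k}$ (equivalently by a single finitely generated ideal, since stabilization in $k$ must be checked) gives a candidate; the hypothesis that the constant term of $P_M(T)_\sigma$ vanishes is what guarantees (via the $D_{\mathrm{Sen}}$-analysis of \cite{Be-Co08}) that the relevant $\mathrm{Hom}$ into $B^+_{\mathrm{dR}}/t$ is nonzero generically and controls the density statement (1). For (1), over the locus $X_{Q_\sigma(i)}$, $i\le 0$, the Hodge--Tate--Sen weight $i$ does not occur, so the Hom-module behaves well and one checks $X_{fs,Q_\sigma(i)}$ is scheme-theoretically dense in $X_{fs}$ by the argument of Kisin: the defining ideal of $X_{fs}$ is generated in degrees where $Q_\sigma(i)$ is a non-zero divisor.

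The last steps are: (a) verify the characterization (2) for $Y$-small maps, which by construction amounts to unwinding the identification of condition (ii) with the vanishing of the base-changed $\phi_{\sigma,k}$, together with Lemma \ref{5.7.5} to pass between $B^+_{\mathrm{max}}$ and $B^+_{\mathrm{dR}}$ (this is needed to see that a map into $B^+_{\mathrm{dR}}$ that lands in the $\varphi^f=Y$ part of $B_{\mathrm{max}}$ actually lands in $B^+_{\mathrm{max}}$); (b) prove uniqueness — a Zariski closed subspace satisfying (1) and (2) is determined because $Y$-small maps $\mathrm{Spm}(\mathcal{O}_{X,x}/\mathfrak m_x^n)\to X$ are plentiful enough to detect membership, and scheme-theoretic density (1) pins down the scheme structure; (c) glue the local constructions $X_{fs}\cap U_i$ — they agree on overlaps by uniqueness applied to $U_i\cap U_j$. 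The main obstacle I expect is the finiteness and stabilization statement: showing that the family of conditions indexed by $k$ is controlled by a single coherent ideal sheaf, i.e. that for $k$ large the subscheme cut out by $\phi_{\sigma,k}$ stabilizes, and that this is compatible with the property-$(\mathrm{Pr})$ decompositions; this is where one must carefully track the dependence of the bound $k>e_K v_p(\lambda_i)$ on the $Y$-small neighborhood and invoke the Riesz-theory behavior of the maps $\varphi_K-1$ on potentially orthonormalizable modules, exactly as Kisin does for $K=\mathbb{Q}_p$.
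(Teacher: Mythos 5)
Your high-level outline (reduce to affinoids, construct locally, glue by uniqueness, uniqueness via $Y$-small test maps $\mathrm{Spm}(\mathcal{O}_{X,x}/\mathfrak m_x^n)$ together with scheme-theoretic density, use Lemma \ref{5.7.5} to pass between $B^+_{\mathrm{max}}$ and $B^+_{\mathrm{dR}}$) matches the structure of the paper's argument, and your worry about orthonormalizability and property $(\mathrm{Pr})$ is pointed in the right place. But the core construction step has a genuine gap: you apply Proposition \ref{5.7} directly to the affinoid $\mathrm{Spm}(R)$ in the covering, and this requires $\mathrm{Spm}(R)\to X$ to be $Y$-small. The affinoids of an admissible covering of $X$ are \emph{not} $Y$-small in general — $Y$-smallness demands the existence of $\lambda$ with $Y\lambda^{-1}-1$ topologically nilpotent, which is only available on very small neighborhoods — so the short exact sequence $0\to (B^+_{\mathrm{max},K}\hat\otimes_{K,\sigma}R)^{\varphi_K=Y}\to B^+_{\mathrm{dR}}/t^kB^+_{\mathrm{dR}}\hat\otimes_{K,\sigma}R\to U_k\to 0$ that you want to apply $\mathrm{Hom}_{R[G_K]}(M^\vee\otimes R,-)$ to does not exist over the $R$'s you are covering $X$ by. You cannot build the defining ideal of $X_{fs}$ this way.

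The paper gets around this by shrinking the covering only so that $|Y|\,|Y^{-1}|<|\pi_K|_p^{-1}$ (a genuinely weaker condition than $Y$-small), then varying an auxiliary scalar $\lambda\in\overline{E}$ in the range $|Y^{-1}|^{-1}\leqq|\lambda|_p\leqq|Y|$. For each such $\lambda$ there is a Corollary \ref{5.5}-exact sequence with $\varphi_K$-eigenvalue $\sigma(\pi_K)\lambda$ over the fixed coefficient field $E'$ (not over $R$!), and the bridge between this and the actual eigenvalue $Y$ is the explicit family of elements $P(x,\tfrac{Y}{\sigma(\pi_K)\lambda})=\sum_{n}\varphi_K^n([x])(\tfrac{Y}{\sigma(\pi_K)\lambda})^n$, which lie in the $\varphi_K=\sigma(\pi_K)\lambda/Y$ eigenspace and whose $R'$-span is dense there. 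The local ideal $I$ is then defined by the orthonormal coefficients of $P(x,\cdot)\,h(m)$ in $U_{k,\sigma,\lambda}$, ranging over all $h$, $x$, $\lambda$, $m$, $\sigma$; and one passes from $I$ to the largest ideal $J\supseteq I$ for which $R/J$ injects into all localizations $R/I[1/\prod Q_{\sigma_l}(i_l)]$ to get density (1). Your formulation in terms of Fitting ideals of a single map $\phi_{\sigma,k}$ of finite $R$-modules would collapse the roles of $\lambda$ and $P(x,\cdot)$, and without them you do not have a well-defined target $U_k$ over the non-$Y$-small $R$, so there is no map $\phi_{\sigma,k}$ to begin with. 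The "stabilization in $k$" you flag as the main obstacle is not actually an issue once $k$ is chosen uniformly for the bounded range of $|\lambda|_p$; the real work lies in the density of the $P(x,\cdot)$ (Kedlaya's Lemma 4.3.1) and the closedness of $(B^+_{\mathrm{max},K}\hat\otimes_{K,\sigma}R')^{\varphi_K=\sigma(\pi_K)\lambda}$ in $B^+_{\mathrm{dR}}/t^k\hat\otimes_{K,\sigma}R'$.
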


As in \cite{Ki03}, we prove this theorem by several steps.
We first prove the following lemma.
\begin{lemma}\label{5.9}
Let $X$, $M$ be as above. Let $X'$ be a separated rigid analytic space over $E$, and let
$f:X'\rightarrow X$ be a flat $E$-morphism. If there exists a Zariski closed subspace $X_{fs}\subseteq X$ 
which satisfies $(1)$ and $(2)$ of the above theorem, then 
$X'_{fs}:=X_{fs}\times_XX'\subseteq X'$ also satisfies $(1)$ and $(2)$ for $X'$, $M':=f^*{M}$ and $Y':=f^{*}(Y)\in\Gamma(X', \mathcal{O}_{X'}^{\times})$.
\end{lemma}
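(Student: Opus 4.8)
The plan is to show that the two defining properties of $X_{fs}$ pull back along a flat morphism, using the characterization rather than the construction. First I would observe that since $f:X'\to X$ is flat and $M'=f^*M$, the Sen polynomial is compatible with the pullback: $P_{M'}(T)=f^*(P_M(T))$ (this follows from the construction of $P_M$ via $D_{\mathrm{Sen}}^L$ and base change, and is really what makes the hypothesis ``constant term of $P_{M'}(T)_\sigma$ is zero'' automatic from the corresponding hypothesis on $X$). Consequently $Q'_\sigma(T)=f^*(Q_\sigma(T))$ and hence $X'_{Q'_\sigma(i)}=f^{-1}(X_{Q_\sigma(i)})=X_{Q_\sigma(i)}\times_X X'$ for every $\sigma\in\mathcal P$ and $i\in\mathbb Z_{\le 0}$.

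Next I would verify property (1), that $X'_{fs,Q'_\sigma(i)}$ is scheme-theoretically dense in $X'_{fs}$. We have $X'_{fs}=X_{fs}\times_X X'$, so $X'_{fs}\to X_{fs}$ is flat (base change of a flat map), and $X'_{fs,Q'_\sigma(i)}=X_{fs,Q_\sigma(i)}\times_{X_{fs}} X'_{fs}$. The point is that scheme-theoretic density of an admissible open is preserved under flat base change; on an affinoid $\mathrm{Spm}(S)\to\mathrm{Spm}(R)$ flat, if $V\subseteq\mathrm{Spec}(R)$ is Zariski dense then $V\times_{\mathrm{Spec}(R)}\mathrm{Spec}(S)$ is Zariski dense in $\mathrm{Spec}(S)$ (flatness implies going-down, so the preimage of a dense open meets every component), and one globalizes via admissible affinoid coverings as in the definition recalled before Theorem \ref{5.8}. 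So property (1) for $X_{fs}$ gives property (1) for $X'_{fs}$.

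For property (2), let $g:\mathrm{Spm}(R)\to X'$ be a $Y'$-small morphism factoring through all the $X'_{Q'_\sigma(i)}$. Then $f\circ g:\mathrm{Spm}(R)\to X$ is $Y$-small (since $Y'=f^*Y$, the same $\lambda\in(R\otimes_E E')^\times$ works) and factors through all the $X_{Q_\sigma(i)}$ because $X'_{Q'_\sigma(i)}=f^{-1}(X_{Q_\sigma(i)})$. Now condition (i) for $g$ (that $g$ factors through $X'_{fs}=X_{fs}\times_X X'$) is equivalent, by the universal property of the fiber product, to $f\circ g$ factoring through $X_{fs}$, i.e. to condition (i) for $f\circ g$ with respect to $X$. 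On the other hand condition (ii) is phrased purely in terms of the $R$-module $M^\vee\otimes R$ with its $G_K$-action and the unit $Y\in\mathcal O_X^\times$ restricted along the map; since $(f\circ g)^*M=g^*M'$ and $(f\circ g)^*Y=g^*Y'$, condition (ii) for $g$ and $M'$, $Y'$ is literally the same statement as condition (ii) for $f\circ g$ and $M$, $Y$. Applying property (2) of $X_{fs}$ to the $Y$-small map $f\circ g$ then yields property (2) of $X'_{fs}$. I expect the only mildly delicate point to be the compatibility of the Sen polynomial with flat pullback and the preservation of scheme-theoretic density under flat base change; both are standard, the former following from the base-change properties of $D_{\mathrm{Sen}}^L$ in \cite{Be-Co08} and the latter from going-down for flat morphisms, so there is no serious obstacle here — this lemma is essentially a formal consequence of the characterization of $X_{fs}$ by its universal-type property.
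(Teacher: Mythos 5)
Your proof is correct and takes the same approach the paper does: the paper simply asserts that scheme-theoretic density is preserved by flat base change and that $f^*(P_M(T))=P_{f^*M}(T)$ to get (1), and dismisses (2) as trivial (which, as you spell out, amounts to the observation that a $Y'$-small map into $X'$ composes with $f$ to a $Y$-small map into $X$, that factoring through $X'_{fs}=X_{fs}\times_X X'$ is equivalent by the fiber-product universal property to the composite factoring through $X_{fs}$, and that $g^*M'=(f\circ g)^*M$ makes condition (ii) literally identical). You have merely expanded the two one-line claims into full arguments; there is no difference in strategy.
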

\begin{proof}
The condition (1) is satisfied for $X'_{fs}$ because the notion of scheme theoretically dense is preserved by 
flat base changes and we have $f^*(P_M(T))=P_{f^*M}(T)$. That $X'_{fs}$ satisfies (2) is trivial.
\end{proof}
We next prove the uniqueness of $X_{fs}$.
\begin{lemma}
If two Zariski closed subspaces $X_1$ and $X_2$ of $X$ satisfy the conditions $(1)$ and $(2)$, then 
$X_1=X_2$.
\end{lemma}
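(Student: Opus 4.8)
The plan is to prove the two inclusions $X_1\subseteq X_2$ and $X_2\subseteq X_1$ of Zariski closed subspaces of $X$; by symmetry it suffices to treat the first. Since $X_1\subseteq X_2$ is local on $X$, I would work over an affinoid $\mathrm{Spm}(R)\subseteq X$ and, writing $I_1,I_2\subseteq R$ for the ideals defining $X_1,X_2$, reduce to proving $I_2\subseteq I_1$.

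The essential point is a functoriality observation that uses condition $(2)$ for $X_1$ together with condition $(2)$ for $X_2$. Let $g\colon\mathrm{Spm}(A)\to X$ be any $Y$-small morphism from an $E$-affinoid which factors through $X_{Q_\sigma(i)}$ for every $\sigma\in\mathcal{P}$ and every $i\in\mathbb{Z}_{\leqq 0}$. Condition $\mathrm{(ii)}$ of part $(2)$ of the theorem refers only to $g$, $M$ and $Y$, and not to any particular closed subspace, so applying part $(2)$ first for $X_1$ and then for $X_2$ yields
$$g\ \text{factors through}\ X_1\iff \mathrm{(ii)}\ \text{holds for}\ g\iff g\ \text{factors through}\ X_2 .$$
Thus such a $g$ factors through $X_1$ if and only if it factors through $X_2$.

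Next I would produce enough test maps. Set $U:=\bigcap_{\sigma\in\mathcal{P},\,i\in\mathbb{Z}_{\leqq 0}}X_{1,Q_\sigma(i)}$, an admissible open of $X_1$, where $X_{1,Q_\sigma(i)}:=X_1\cap X_{Q_\sigma(i)}$. By condition $(1)$ for $X_1$ each $X_{1,Q_\sigma(i)}$ is scheme theoretically dense in $X_1$; since only finitely many embeddings $\sigma$ occur and $X_1$ is locally Noetherian, $U$ contains every associated point of $X_1$ and is itself scheme theoretically dense in $X_1$. For a point $x\in U$ and $n\geqq 1$, the natural map $g_{x,n}\colon\mathrm{Spm}(\mathcal{O}_{X_1,x}/\mathfrak{m}_{X_1,x}^{\,n})\to X_1\hookrightarrow X$ is $Y$-small (infinitesimal neighbourhoods are $Y$-small) and factors through $X_1$ and through every $X_{Q_\sigma(i)}$, because $x\in U$ forces $Q_\sigma(i)(x)\in E(x)^\times$. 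By the displayed equivalence $g_{x,n}$ factors through $X_2$, i.e. $I_2$ maps to $0$ in $\mathcal{O}_{X_1,x}/\mathfrak{m}_{X_1,x}^{\,n}$; letting $n\to\infty$ and invoking Krull's intersection theorem in the Noetherian local ring $\mathcal{O}_{X_1,x}$, we get that $I_2$ maps to $0$ in $\mathcal{O}_{X_1,x}$, for every $x\in U$.

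To finish I would globalize: the coherent ideal sheaf $\mathcal{J}:=I_2\mathcal{O}_{X_1}\subseteq\mathcal{O}_{X_1}$ then has vanishing stalk at every point of $U$, so $\mathcal{J}|_U=0$ (affinoids are Jacobson); since $U$ is scheme theoretically dense in $X_1$, an ideal sheaf vanishing on $U$ must vanish, whence $I_2\mathcal{O}_{X_1}=0$, that is $I_2\subseteq I_1$. Combining with $I_1\subseteq I_2$ gives $X_1=X_2$. The functoriality step in the second paragraph is really the whole idea and is immediate from the statement of the theorem; I expect the only point requiring care to be the density bookkeeping — that the infinite intersection $U$ remains scheme theoretically dense in the possibly non-reduced space $X_1$ (this reduces to the fact that each $Q_\sigma(i)$ is a non-zero-divisor in $R/I_1$, hence avoids all the finitely many associated primes of $R/I_1$), and that scheme theoretic density is precisely the hypothesis needed to pass from "vanishing stalkwise on $U$" to "vanishing globally on $X_1$".
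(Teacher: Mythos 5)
Your central observation --- that condition (ii) in part (2) of the theorem is intrinsic to $g$, $M$ and $Y$, so a $Y$-small map through all the $X_{Q_\sigma(i)}$ factors through $X_1$ if and only if it satisfies (ii) if and only if it factors through $X_2$ --- is correct, and it buys you a genuine simplification over the paper's proof. The paper does not exploit this symmetry: it first forms the auxiliary closed subspace $X_3:=\mathrm{Spm}(R/(I_1\cap I_2))$, spends the bulk of the argument verifying that $X_3$ again satisfies conditions $(1)$ and $(2)$, and only then reduces to the case $X_1\subseteq X_2$ before examining the support $W$ of $I_1/I_2$ in $X_2$. Your argument with the ideal sheaf $\mathcal J=I_2\mathcal O_{X_1}$ reaches the same point (a coherent sheaf on $X_1$ with vanishing stalks at every classical point where all the $Q_\sigma(i)$ are non-vanishing) while dispensing with the $X_3$ detour entirely.

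The gap is in the last step, and it is exactly the point the paper handles by citing Lemma 5.7 of [Ki03]. You set $U:=\bigcap_{\sigma\in\mathcal P,\,i\in\mathbb Z_{\leqq 0}}X_{1,Q_\sigma(i)}$ and assert that $U$ is scheme-theoretically dense in $X_1$ because ``only finitely many embeddings $\sigma$ occur.'' But $i$ ranges over all of $\mathbb Z_{\leqq 0}$, so $U$ is a countable intersection of Zariski opens, not a Zariski open, and ``scheme-theoretically dense'' as the paper defines it (an admissible open arising from a Zariski-dense open of $\mathrm{Spec}(R/I_1)$) does not obviously apply to it. What you actually need is that if $\mathcal J\neq 0$ then some associated prime $\mathfrak p$ of $R/I_1$ supports a classical point lying in $U$, i.e.\ that $V(\mathfrak p)$ is not covered by the \emph{infinite} family $\{V(Q_\sigma(i))\cap V(\mathfrak p)\}_{\sigma,i}$ of proper closed subsets; ``$Q_\sigma(i)$ is a non-zero-divisor, hence avoids each associated prime'' only tells you each individual $V(Q_\sigma(i))$ is proper, not that the union is. The missing ingredient is Lemma 5.7 of [Ki03]: because $Q_\sigma(i)=i-a_{1,\sigma}$ and $a_{1,\sigma}$ is a bounded function on the affinoid, only finitely many $i$ can ever make $Q_\sigma(i)$ vanish, so on each affinoid $U$ coincides with $X_{1,Q}$ for a single finite product $Q$ of the $Q_\sigma(i)$, and then $\mathcal J\hookrightarrow \mathcal J[1/Q]=0$ by scheme-theoretic density of $X_{1,Q}$. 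With that lemma supplied, your proof is complete and is shorter than the paper's; without it, the final paragraph does not close.
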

\begin{proof}
For any admissible open $U\subseteq X$ and $i=1,2$, $X_i\cap U\subseteq U$ satisfies (1) and (2) for 
$U$ by Lemma \ref{5.9} because the inclusion $U\hookrightarrow X$ is flat. Hence, it suffices to prove that  $X_1\cap U_i=X_2\cap U_i$ for any $i\in I$ for an admissible covering $\{U_i\}_{i\in I}$ of 
$X$. Hence we may assume that 
$X=\mathrm{Spm}(R)$ is an affinoid. We denote by $X_1=\mathrm{Spm}
(R/I_1)$, $X_2=\mathrm{Spm}(R/I_2)$ for some ideals $I_1, I_2\subseteq R$. Set
$X_3:=\mathrm{Spm}(R/I_1\cap I_2)$, then we claim that $X_3$ also satisfies the conditions 
(1) and (2) . For (1), we have inclusions $R/I_j\hookrightarrow R/I_j[\frac{1}{Q_{\sigma}(i)}]$ for 
any $j=1,2$, $\sigma\in \mathcal{P}$ and $i\in \mathbb{Z}_{\leqq 0}$ by the assumption, hence we also have an inclusion 
$R/I_1\cap I_2\hookrightarrow R/I_1\cap I_2[\frac{1}{Q_{\sigma}(i)}]$ for any $\sigma\in \mathcal{P}, i\in 
\mathbb{Z}_{\leqq0}$, which proves that $X_3$ satisfies (1). To prove that $X_3$ satisfies (2), we take a $Y$-small morphism 
$f:\mathrm{Spm}(R')\rightarrow X$ which factors $f:\mathrm{Spm}(R')\rightarrow 
X_{Q_{\sigma}(i)}$ for any $\sigma\in \mathcal{P}, i\in \mathbb{Z}_{\leqq 0}$. Set $Y':=f^*(Y)\in R^{' \times}$. 
If $f$ satisfies (ii) of (2), then $f$ factors through $X_1$ and $X_2$ by definition, hence also factors through $X_3$ because we have $X_1,X_2\subseteq X_3$. 
Next we assume that 
$f$ satisfies (i) of (2).
We have the following canonical decompositions 
\begin{multline*}
K\otimes_{K_0} (\bold{B}^+_{\mathrm{max}}\hat{\otimes}_{\mathbb{Q}_p}R')
^{\varphi^f=Y'}=((K\otimes_{K_0}\bold{B}^+_{\mathrm{max}})\hat{\otimes}_{\mathbb{Q}_p}R')^{\varphi^f=Y'}\\
=(\bold{B}^+_{\mathrm{max},K}\hat{\otimes}_{\mathbb{Q}_p}R')^{\varphi^f=Y'}=\oplus_{\sigma\in \mathcal{P}}(\bold{B}^+_{\mathrm{max},K}\hat{\otimes}_{K,\sigma}R')^{\varphi_K=Y'}
\end{multline*}
and 
$$\bold{B}^+_{\mathrm{dR}}\hat{\otimes}_{\mathbb{Q}_p}R'=\oplus_{\sigma\in \mathcal{P}}\bold{B}^+_{\mathrm{dR}}\hat{\otimes}_{K,\sigma}R'.$$ 
Hence, it suffices to show that any $G_K$-equivariant $R'$-linear map 
$h:M^{\vee}\otimes_{\mathcal{O}_X}R'\rightarrow \bold{B}^+_{\mathrm{dR}}\hat{\otimes}_{K,\sigma}R'$ factors through 
$M^{\vee}\otimes_{\mathcal{O}_X}R'\rightarrow (\bold{B}^+_{\mathrm{max},K}\hat{\otimes}_{K,\sigma}R')^{\varphi_K=Y'}$ for any $\sigma\in \mathcal{P}$. 
Because $Q_{\sigma}(i)$ is invertible in 
$R'$ for 
any $\sigma\in \mathcal{P}$ and $i\in \mathbb{Z}_{\leqq 0}$ by the assumption, the natural map 

$$(\bold{B}^+_{\mathrm{dR}}\hat{\otimes}_{K,\sigma}M\otimes_{\mathcal{O}_X}R')^{G_K}\isom (\bold{B}^+_{\mathrm{dR}}/t^k\bold{B}^+_{\mathrm{dR}}\hat{\otimes}_{K,\sigma} M\otimes_{\mathcal{O}_X}R')^{G_K}$$ is isomorphism for any $\sigma\in\mathcal{P}$ and $k\in\mathbb{Z}_{\geqq 1}$ by Proposition 2.5 of \cite{Ki03}. 
Hence, it suffices to show that, 
for some $k\in\mathbb{Z}_{\geqq 1}$, any $G_K$-equivariant map $h:M^{\vee}\otimes_{\mathcal{O}_X}R'\rightarrow (\bold{B}^+_{\mathrm{dR}}/t^k\bold{B}^+_{\mathrm{dR}}\hat{\otimes}_{K,\sigma}R')$ factors through $M^{\vee}\otimes_{\mathcal{O}_X}R'\rightarrow (\bold{B}^+_{\mathrm{max},K}\hat{\otimes}_{K,\sigma}R')^{\varphi_K=Y'}$. 
We choose $k\in\mathbb{Z}_{\geqq 1}$ sufficiently large such that there exists a short exact sequence
\begin{equation}0\rightarrow (\bold{B}^+_{\mathrm{max},K}\hat{\otimes}_{K,\sigma}R')^{\varphi_K=Y'}\rightarrow 
\bold{B}^+_{\mathrm{dR}}/t^k\bold{B}^+_{\mathrm{dR}}\hat{\otimes}_{K,\sigma}R'\rightarrow U_{k,\sigma}\rightarrow 0
\end{equation} of 
Banach $R'$-modules compatible with any base change as in Proposition \ref{5.7}.
If we set $\mathrm{Spm}(R'_i):=f^{-1}(X_{i})\subseteq \mathrm{Spm}(R')$ for $i=1,2$, then we have an inclusion 
$R'\hookrightarrow R'_1\oplus R'_2$ because $f$ factors through $X_3$. From these arguments, the above short exact sequence (5) can be embedded in the following short exact 
sequence
$$0\rightarrow \oplus^2_{i=1}(\bold{B}^+_{\mathrm{max},K}\hat{\otimes}_{K,\sigma}R'_i)^{\varphi_K=Y'}\rightarrow 
\oplus^2_{i=1}\bold{B}^+_{\mathrm{dR}}/t^k\bold{B}^+_{\mathrm{dR}}\hat{\otimes}_{K,\sigma}R'_i\rightarrow \oplus^2_{i=1}
U_{k,\sigma}\hat{\otimes}_{R'}R'_i\rightarrow 0.$$
Then, the composition of $h$ with 
$\bold{B}^+_{\mathrm{dR}}/t^k\bold{B}^+_{\mathrm{dR}}\hat{\otimes}_{K,\sigma}
R'\hookrightarrow \oplus^2_{i=1}\bold{B}^+_{\mathrm{dR}}/t^k\bold{B}^+_{\mathrm{dR}}\hat{\otimes}_{K,\sigma}R'_i$ factors 
through $M^{\vee}\otimes_{\mathcal{O}_X}R'\hookrightarrow \oplus^2_{i=1}(\bold{B}^+_{\mathrm{max},K}\hat{\otimes}_{K,\sigma}R'_i)^{\varphi_K=Y'}$ by  the definition of $X_i$.
 Hence, $h$ also factors through $M^{\vee}\otimes_{\mathcal{O}_X}R'\rightarrow (\bold{B}^+_{\mathrm{max},K}\hat{\otimes}_{K,\sigma}
R_i')^{\varphi_K=Y'}$ by a diagram chase. Hence, $X_3$ also satisfies (1) and (2). 
 
Therefore, to prove the lemma, we may assume that $X_1\subseteq X_2$. We put $W\subseteq X_2$ the support of 
$I_1/I_2$ with the reduced structure. If $x\in X_2$ satisfies $Q_{\sigma}(i)(x)\not=0$ for any $\sigma\in \mathcal{P}$ and $i\in\mathbb{Z}_{\leqq 0}$, then the natural map $\mathrm{Spm}(\mathcal{O}_{X_2,x}/\mathfrak{m}_x^n)\rightarrow 
X_2$ which is $Y$-small factors through $\mathrm{Spm}(\mathcal{O}_{X_2,x}/\mathfrak{m}_x^n)\rightarrow X_1$  for any $n\geqq 1$ by the definition on $X_1$ and $X_2$. 
This implies that there exists a map $\mathcal{O}_{X_1,x}\rightarrow \hat{\mathcal{O}}_{X_2,x}$ such that the composition 
of this with the natural map $\mathcal{O}_{X_2,x}\rightarrow \mathcal{O}_{X_1,x}$ is the natural map $\mathcal{O}_{X_2,x}
\rightarrow \hat{\mathcal{O}}_{X_2,x}$. This implies that the natural quotient map $\mathcal{O}_{X_2,x}
\rightarrow \mathcal{O}_{X_1,x}$ is isomorphism, hence we have $x\not\in W$. Hence, we obtain
an inclusion $W\subseteq \cup_{\sigma\in\mathcal{P},i\in\mathbb{Z}_{\leqq 0}}\{x\in X_2| Q_{\sigma}(i)(x)=0\}$. 
By Lemma 5.7 \cite{Ki03}, then there exists a $Q\in \Gamma(X_2,\mathcal{O}_{X_2})$ which is a finite product of $Q_{\sigma}(i)$ such 
that $X_{2,Q}\subseteq X_2\setminus W=X_1\setminus W\subseteq X_1 \subseteq X_2$. Then, the condition 
(1) for $X_2$ implies that $X_1=X_2$. We finish to prove the lemma.

\end{proof}

Assume that there exists an admissible affinoid covering $\{U_i\}_{i\in I}$ of $X$ such that the subspace $U_{i,fs}\subseteq U_i$ exists for any $i\in I$. 
By the uniqueness of $X_{fs}$, $\{U_{i,fs}\}_{i\in I}$ glue to a Zariski closed subspace $X'_{fs}\subseteq X$ satisfying that 
$X'_{fs}\cap U_i=U_{i,fs}$ for any $i\in I$. 
\begin{lemma}\label{5.11}
In the above situation, $X'_{fs}\subseteq X$ satisfies the conditions $(1)$ and $(2)$ in the theorem, i.e. we have $X'_{fs}=X_{fs}$.
\end{lemma}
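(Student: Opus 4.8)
The plan is to verify the two conditions (1) and (2) of Theorem \ref{5.8} for $X'_{fs}$ by reducing them to the corresponding statements for each $U_{i,fs}$, which are known by hypothesis. The key point is that both conditions are, in an appropriate sense, local on $X$ for the admissible topology, and that the gluing has already been arranged so that $X'_{fs}\cap U_i = U_{i,fs}$.

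First I would check condition (1). Fix $\sigma\in\mathcal{P}$ and $i\in\mathbb{Z}_{\leqq 0}$; we must show $X'_{fs, Q_{\sigma}(i)}$ is scheme theoretically dense in $X'_{fs}$. By the very definition of scheme theoretic density for a non-affinoid rigid space, it suffices to exhibit an admissible affinoid covering of $X'_{fs}$ on each member of which the intersection with $X'_{fs, Q_{\sigma}(i)}$ is scheme theoretically dense. But $\{X'_{fs}\cap U_i\}_{i\in I} = \{U_{i,fs}\}_{i\in I}$ is such a covering, and by hypothesis $U_{i,fs}$ satisfies (1), i.e. $U_{i,fs, Q_{\sigma}(i)}$ is scheme theoretically dense in $U_{i,fs}$; here one uses that $Q_{\sigma}(i)$ restricted to $U_i$ is the corresponding polynomial evaluated for $M|_{U_i}$, which follows from $P_M(T)|_{U_i} = P_{M|_{U_i}}(T)$ as recalled before the theorem. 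Since $X'_{fs, Q_{\sigma}(i)}\cap U_{i,fs} = U_{i,fs, Q_{\sigma}(i)}$, this gives (1) for $X'_{fs}$.

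Next I would verify condition (2). Let $f:\mathrm{Spm}(R)\rightarrow X$ be a $Y$-small morphism factoring through $X_{Q_{\sigma}(i)}$ for all $\sigma\in\mathcal{P}$ and $i\in\mathbb{Z}_{\leqq 0}$; we must show (i) $\Leftrightarrow$ (ii). The subtlety is that $\mathrm{Spm}(R)$ need not land in a single $U_i$, so I would first argue that we may reduce to that case: since $\{U_i\}$ is an admissible covering of $X$, its preimages $\{f^{-1}(U_i)\}$ form an admissible covering of $\mathrm{Spm}(R)$, which we may refine to a finite affinoid covering $\{\mathrm{Spm}(R_j)\}$ with each $\mathrm{Spm}(R_j)\rightarrow X$ factoring through some $U_{i(j)}$ and still $Y|_{R_j}$-small (as $Y$-smallness is preserved by the maps $R\to R_j$, the extension $E'[\lambda]$ and its topological nilpotence being stable under these base changes). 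Because $R\hookrightarrow \prod_j R_j$ is injective (an admissible covering of an affinoid is a covering for the associated scheme, giving faithful flatness), the statement (ii) holds over $R$ if and only if it holds over each $R_j$: indeed $B^+_{\mathrm{dR}}\hat{\otimes}_{\mathbb{Q}_p}(-)$ and $K\otimes_{K_0}(B^+_{\mathrm{max}}\hat{\otimes}_{\mathbb{Q}_p}(-))^{\varphi^f=Y}$ commute with the relevant finite products, and by Proposition \ref{5.7} the exact sequences involved stay exact after the base changes $R_j\hat{\otimes}_R(-)$, so a $G_K$-map into the $B^+_{\mathrm{dR}}$-side factors through the finite-slope part over $R$ precisely when it does so over every $R_j$. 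The same remark applies to condition (i), since factoring through $X'_{fs}$ is detected on the admissible covering $\{\mathrm{Spm}(R_j)\}$. Now over each $R_j$ the morphism factors through $U_{i(j)}$, and $U_{i(j),fs}$ satisfies (2) for $(U_{i(j)}, M|_{U_{i(j)}}, Y|_{U_{i(j)}})$ by hypothesis; applying it, (i) and (ii) over $R_j$ are equivalent, and reassembling over $R$ gives the equivalence for $f$.

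The main obstacle I expect is the bookkeeping in the reduction to a single affinoid $U_i$ in condition (2): one must make sure that refining $\{f^{-1}(U_i)\}$ to a finite affinoid covering preserves $Y$-smallness, that the comparison maps behave well under the complete tensor products $R_j\hat{\otimes}_R(-)$ (this is where Proposition \ref{5.7}, and the stability of its exact sequences under base change, is essential), and that ``factoring through $X'_{fs}$'' and ``any $G_K$-map factors through the finite-slope part'' are both genuinely detectable on an admissible affinoid covering of $\mathrm{Spm}(R)$. Once that local-to-global dictionary is in place, everything else is a formal consequence of the hypotheses on the $U_{i,fs}$ together with the gluing $X'_{fs}\cap U_i = U_{i,fs}$.
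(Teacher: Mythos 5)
Condition (1) you handle correctly, and for (i) $\Rightarrow$ (ii) your reasoning matches the paper: restrict $h$ to the $R_i:=\Gamma(f^{-1}(U_i),\mathcal{O})$, apply the hypothesis on each $U_{i,fs}$, then reassemble using the injection $R\hookrightarrow\prod_i R_i$ and the stability of the Proposition~\ref{5.7} exact sequence. (A minor point: the refinement to a finite affinoid covering is unnecessary and slightly off-target, since $X$ is separated and hence each $f^{-1}(U_i)$ is already an affinoid; the paper just uses these directly.)

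There is, however, a real gap in the direction (ii) $\Rightarrow$ (i). You assert that ``(ii) holds over $R$ if and only if it holds over each $R_j$'' and justify this by the injectivity of $R\hookrightarrow\prod_j R_j$ together with the base-change stability of the sequence in Proposition~\ref{5.7}. That reasoning only shows, for a \emph{fixed} $R$-linear $G_K$-map $h$ over $R$, that $h$ factors through the finite-slope part over $R$ iff each restriction $h\otimes_R R_j$ does. It does not show that ``every $R_j$-linear $G_K$-map factors through the finite-slope part'' follows from the corresponding statement over $R$: an $R_j$-linear map $h_j:M^\vee\otimes_{\mathcal{O}_X}R_j\to B^+_{\mathrm{dR}}/t^k\hat{\otimes}_{K,\sigma}R_j$ need not a priori arise by base change from an $R$-linear map. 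This is exactly where the paper invokes Corollary 2.6 of \cite{Ki03}, using the standing assumption that $Q_\sigma(j)\in R^\times$ for all $\sigma,j$, to obtain the isomorphism
$$(B^+_{\mathrm{dR}}/t^kB^+_{\mathrm{dR}}\hat{\otimes}_{K,\sigma}M\otimes_{\mathcal{O}_X}R)^{G_K}\otimes_R R_i\ \isom\ (B^+_{\mathrm{dR}}/t^kB^+_{\mathrm{dR}}\hat{\otimes}_{K,\sigma}M\otimes_{\mathcal{O}_X}R_i)^{G_K},$$
so that $R_i$-linear $G_K$-maps \emph{are} base changes of $R$-linear ones. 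Without this base-change isomorphism for the $G_K$-invariants your ``local-to-global dictionary'' for condition (ii) does not close up, and the implication (ii) $\Rightarrow$ $f|_{\mathrm{Spm}(R_i)}$ factors through $U_{i,fs}$ is unproved.
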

\begin{proof}
That $X'_{fs}$ satisfies (1) is trivial. We show that $X'_{fs}$ satisfies (2).
Let $f:\mathrm{Spm}(R)\rightarrow X$ be a $Y$-small map which factors through $X_{Q_{\sigma}(i)}$ 
for any $\sigma\in \mathcal{P}$ and $i\in \mathbb{Z}_{\leqq 0}$.  Because $X$ is separated, 
$f^{-1}(U_{i})$ is an affinoid for any $i\in I$. Set $\mathrm{Spm}(R_i):=f^{-1}(U_{i})$.
First, we show that (i) implies (ii). 
We assume that $f$ factors through $X'_{fs}$. Let $h:M^{\vee}\otimes_{\mathcal{O}_X}R\rightarrow \bold{B}^+_{\mathrm{dR}}\hat{\otimes}_{K,\sigma}R$ 
be a $R$-linear $G_K$-equivariant map. By Proposition 2.5 of \cite{Ki03}, it suffices to show that $h:M^{\vee}\otimes_{\mathcal{O}_X}R
\rightarrow \bold{B}^+_{\mathrm{dR}}\hat{\otimes}_{K,\sigma}R\rightarrow \bold{B}^+_{\mathrm{dR}}/t^k\bold{B}^+_{\mathrm{dR}}\hat{\otimes}_{K,\sigma}R$ 
factors through $M^{\vee}\otimes_{\mathcal{O}_X}R\rightarrow (\bold{B}^+_{\mathrm{max},K}\hat{\otimes}_{K,\sigma}R)^{\varphi_K=Y}$  for some 
 $k\in\mathbb{Z}_{\geqq 1}$. 
We choose $k\in\mathbb{Z}_{\geqq 1}$ such that there exists a short exact sequence $$0\rightarrow (\bold{B}^+_{\mathrm{max},K}\hat{\otimes}_{K,\sigma}R)^{\varphi_K=Y}\rightarrow 
\bold{B}^+_{\mathrm{dR}}/t^k\bold{B}^+_{\mathrm{dR}}\hat{\otimes}_{K,\sigma}R\rightarrow U_{k,\sigma}\rightarrow 0$$
of 
 Banach $R$-modules as in Proposition \ref{5.7}.
By the base change property, this short exact sequence can be embedded into the following exact sequence
$$0\rightarrow \prod_{i\in I}(\bold{B}^+_{\mathrm{max},K}\hat{\otimes}_{K,\sigma}R_i)^{\varphi_K=Y}\rightarrow 
\prod_{i\in I}\bold{B}^+_{\mathrm{dR}}/t^k\bold{B}^+_{\mathrm{dR}}\hat{\otimes}_{K,\sigma}R_i \rightarrow \prod_{i\in I}
U_{k,\sigma}\hat{\otimes}_R R_i\rightarrow 0.$$

By the assumption, the map $M^{\vee}\otimes_{\mathcal{O}_X}R\xrightarrow{h} \bold{B}^+_{\mathrm{dR}}/t^k\bold{B}^+_{\mathrm{dR}}\hat{\otimes}_{K,\sigma}R
\rightarrow \bold{B}^+_{\mathrm{dR}}/t^k\bold{B}^+_{\mathrm{dR}}\hat{\otimes}_{K,\sigma}R_i$ factors through 
$M^{\vee}\otimes_{\mathcal{O}_X}R\rightarrow (\bold{B}^+_{\mathrm{max},K}\hat{\otimes}_{K,\sigma}R_i)^{\varphi_K=Y}$ for any $i\in I$. Hence, 
$ h:M^{\vee}\otimes_{\mathcal{O}_X}R\rightarrow 
\bold{B}^+_{\mathrm{dR}}/t^k\bold{B}^+_{\mathrm{dR}}\hat{\otimes}_{K,\sigma}R$ also factors through $M^{\vee}\otimes_{\mathcal{O}_X}R\rightarrow 
(\bold{B}^+_{\mathrm{max},K}\hat{\otimes}_{K,\sigma}\allowbreak R)^{\varphi_K=Y}$ by a diagram chase of the above two exact sequences.

Next, we assume that, for any $\sigma\in \mathcal{P}$, any 
$R$-linear $G_K$-equivariant map $h:M^{\vee}\otimes_{\mathcal{O}_X}R\rightarrow \bold{B}^+_{\mathrm{dR}}\hat{\otimes}_{K,\sigma}R$ 
factors through $M^{\vee}\otimes_{\mathcal{O}_X}R\rightarrow (\bold{B}^+_{\mathrm{max},K}\hat{\otimes}_{K,\sigma}R)^{\varphi_K=Y}$.  
Because we have $Q_{\sigma}(j)\in R^{\times}$ for any $\sigma\in \mathcal{P}$ and $j\in \mathbb{Z}_{\leqq 0}$, the natural map 
$$(\bold{B}^+_{\mathrm{dR}}/t^k\bold{B}^+_{\mathrm{dR}}\hat{\otimes}_{K,\sigma}M\otimes_{\mathcal{O}_X}R)^{G_K}
\otimes_R R_i\isom (\bold{B}^+_{\mathrm{dR}}/t^k\bold{B}^+_{\mathrm{dR}}\hat{\otimes}_{K,\sigma}M\otimes_{\mathcal{O}_X}R_i)^{G_K}$$ is isomorphism 
for any $k\geqq 1$ and $i\in I$ by Corollary 2.6 of \cite{Ki03}. Hence, any $R_i$-linear $G_K$-equivariant map $h_i:M^{\vee}\otimes_{\mathcal{O}_X}R\rightarrow 
\bold{B}^+_{\mathrm{dR}}/t^k\bold{B}^+_{\mathrm{dR}}\hat{\otimes}_{K,\sigma}R_i$  factors through $M^{\vee}\otimes_{\mathcal{O}_X}R\rightarrow 
(\bold{B}^+_{\mathrm{max},K}\hat{\otimes}_{K,\sigma}R_i)^{\varphi_K=Y}$ for any $i\in I$. This implies that 
$f|_{\mathrm{Spm}(R_i)}:\mathrm{Spm}(R_i)\rightarrow U_i$ factors through $U_{i,fs}$ for any 
$i\in I$. Hence, $f$ also factors through $X'_{fs}$. 
\end{proof}

By this lemma, it suffices to construct $X_{fs}$ when 
$X=\mathrm{Spm}(R)$ is affinoid. Moreover, in the same way as in (5.9) of \cite{Ki03}, we may assume that $|Y|$ satisfies $|Y||Y^{-1}|< \frac{1}{|\pi_K|_p}$ for a norm $|-|:R\rightarrow \mathbb{R}_{\geqq 0}$ which defines the topology of $R$ as in $\S3.2$. 
Then, we construct $X_{fs}\subseteq \mathrm{Spm}(R)$ as follows. We first
construct an ideal of $R$ which defines $X_{fs}$. 
Let $\lambda\in \overline{E}$ be an element such 
that $|Y^{-1}|^{-1}\leqq |\lambda|_p\leqq |Y|$, and let $E'$ be a finite Galois extension of $E$ which contains $\lambda$. 
By Corollary \ref{5.5},
we can take a sufficiently large $k\in\mathbb{Z}_{\geqq 1}$ such that there exists a 
short exact sequence of $E'$-Banach spaces
$$0\rightarrow (\bold{B}^+_{\mathrm{max},K}\hat{\otimes}_{K,\sigma}E')^{\varphi_K=\sigma(\pi_K)\lambda}
\rightarrow \bold{B}^+_{\mathrm{dR}}/t^k\bold{B}^+_{\mathrm{dR}}\hat{\otimes}_{K,\sigma}E'\rightarrow U_{k,\sigma,\lambda}\rightarrow 0$$
for any $\lambda$, $E'$ as above and $\sigma\in \mathcal{P}$. Fix such a $k\geqq1$ until the end of the proof of this theorem.
For any $x\in \widetilde{\bold{E}}^+$ such that $v(x)>0$, we define  an element 
$$P(x,\frac{Y}{\sigma(\pi_K)\lambda}):=
\sum_{n\in \mathbb{Z}}\varphi_K^n([x])(\frac{Y}{\sigma(\pi_K)\lambda})^n\allowbreak\in (\bold{B}^+_{\mathrm{max},K}\hat{\otimes}_{K,\sigma}R\otimes_E E')^{\varphi_K=
\frac{\sigma(\pi_K)\lambda}{Y}}.$$ 
This element converges because we have
$$|\frac{\sigma(\pi_K)\lambda}{Y}|\leqq |\sigma(\pi_K)|_p|\lambda|_p|Y^{-1}|<
|\sigma(\pi_K)|_p|\lambda|_p|Y|^{-1}|\pi_K|_p^{-1}\leqq 1$$ and $\varphi_K^n([x])(\frac{Y}{\sigma(\pi_K)\lambda})^n\rightarrow 0$ ($n\rightarrow +\infty$) (see Corollary 4.4 of \cite{Ki03}). For any $\sigma\in\mathcal{P}$ and any $R$-linear $G_K$-equivariant map 
$h:M^{\vee}\rightarrow \bold{B}^+_{\mathrm{dR}}/t^k\bold{B}^+_{\mathrm{dR}}\hat{\otimes}_{K,\sigma}R$, we consider 
the composition of this map with the maps
$$\bold{B}^+_{\mathrm{dR}}/t^k\bold{B}^+_{\mathrm{dR}}\hat{\otimes}_{K,\sigma}R\rightarrow 
\bold{B}^+_{\mathrm{dR}}/t^k\bold{B}^+_{\mathrm{dR}}\hat{\otimes}_{K,\sigma}R\otimes_E E':v\mapsto P(x,\frac{Y}{\sigma(\pi_K)\lambda})v$$ 
and $\bold{B}^+_{\mathrm{dR}}/t^k\bold{B}^+_{\mathrm{dR}}\hat{\otimes}_{K,\sigma}R\otimes_E E'\rightarrow 
U_{k,\sigma,\lambda}\hat{\otimes}_{E'}(R\otimes_E E')$ which is the base change of the surjection 
$\bold{B}^+_{\mathrm{dR}}/t^k\bold{B}^+_{\mathrm{dR}}\hat{\otimes}_{K,\sigma}E'\rightarrow U_{k,\sigma,\lambda}$ in the above short exact sequence. 
We denote this composition by 
$$h_{x,\lambda}:M^{\vee}\rightarrow 
U_{k,\sigma,\lambda}\hat{\otimes}_{E'} (R\otimes_E E').$$ Fix an orthonormalizable $E'$-base $\{e_i\}_{i\in I}$ of $U_{k,\sigma,\lambda}$. For any 
$m\in M^{\vee}$, then we can write uniquely by
$$h_{x,\lambda}(m)=\sum_{i\in I}a_{x,\lambda,i}(m)e_i\text{ for }
\{a_{x,\lambda,i}(m)\}_{i\in I}\subseteq R\otimes_EE'.$$ We define an ideal  
$$I(h,x,\lambda,m)\subseteq R\otimes_E E'$$ which is 
generated by $a_{x,\lambda,i}(m)$ for all $i\in I$. Because we have $\tau(I(h,x,\lambda,m))
=I(h,x,\tau(\lambda),m)\subseteq R\otimes_EE'$ for any $\tau\in \mathrm{Gal}(E'/E)$,  the ideal 
$$\sum_{\tau\in \mathrm{Gal}(E'/E)}
I(h,x,\tau(\lambda),m) \subseteq R\otimes_E E'$$ descends to an ideal $I'(h,x,\lambda,m)\subseteq R$ and this ideal is 
independent of the choice of $E'$. We define an ideal $I$ of $R$  by $$I:=\sum_{h,x,\lambda,m}I'(h,x,\lambda,m)\subseteq R,$$ where the sum runs through 
all $h,x,\lambda, m$ and $\sigma\in \mathcal{P}$ as above. 

Next, we denote by $J_{(n,\{\sigma_l\}_{l=0}^n, \{i_l\}_{l=0}^n)}(\supseteq I)$ the kernel 
of the natural map
$R\rightarrow R/I[\frac{1}{\prod_{l=1}^{n}
Q_{\sigma_l}(i_l)}]$ for any triple $(n,\{\sigma_l\}_{l=0}^n, \{i_l\}_{l=0}^n)$ such that $n\geqq 1$, $\sigma_l\in \mathcal{P}, i_l\in\mathbb{Z}_{\leqq 0}$. Denote by $J(\supseteq I)$ the sum (in fact a finite union) of the ideals $J_{(n,\{\sigma_l\}_{l=0}^n, \{i_l\}_{l=0}^n)}$ for all the triples $(n,\{\sigma_l\}_{l=0}^n, \{i_l\}_{l=0}^n)$ as above. Then, $\mathrm{Spm}(R/J)$ is the largest Zariski closed subspace of $\mathrm{Spm}(R/I)$ such that 
$\mathrm{Spm}(R/J)_{Q_{\sigma}(i)}$ is scheme theoretically dense in $\mathrm{Spm}(R/J)$ for 
any $\sigma\in \mathcal{P}, i\leqq 0$.

Finally, we prove the following lemma which claims that $X_{fs}=\mathrm{Spm}(R/J)$, hence proves the theorem.
\begin{lemma}\label{5.12}The closed subspace
$\mathrm{Spm}(R/J)\subseteq \mathrm{Spm}(R)$ satisfies the conditions $(1)$ and $(2)$ in the 
theorem.
\end{lemma}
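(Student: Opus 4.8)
The plan is to verify the two characterizing conditions (1) and (2) for $\mathrm{Spm}(R/J)$, working directly from the explicit construction of the ideals $I$ and $J$. Condition (1) is the statement that $R/J \hookrightarrow (R/J)[\frac{1}{Q_\sigma(i)}]$ for every $\sigma \in \mathcal{P}$ and $i \in \mathbb{Z}_{\leqq 0}$; this is built into the definition of $J$ as the largest ideal containing $I$ through which all the localization maps $R/I \to R/I[\frac{1}{\prod_l Q_{\sigma_l}(i_l)}]$ factor injectively. So (1) is essentially formal once one checks that such a largest $J$ exists (using noetherianness of $R$) and that the resulting maps are injective; I would spell this out but it is routine.

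The substance is condition (2): given a $Y$-small morphism $f:\mathrm{Spm}(R') \to X$ factoring through $X_{Q_\sigma(i)}$ for all $\sigma,i$, I must show that $f$ factors through $\mathrm{Spm}(R/J)$ if and only if condition (ii) holds — every $R'$-linear $G_K$-equivariant map $h:M^\vee \otimes_{\mathcal{O}_X}R' \to B^+_{\mathrm{dR}}\hat{\otimes}_{\mathbb{Q}_p}R'$ factors through $K\otimes_{K_0}(B^+_{\mathrm{max}}\hat{\otimes}_{\mathbb{Q}_p}R')^{\varphi^f = Y}$. Using the $\sigma$-decomposition $B^+_{\mathrm{dR}}\hat{\otimes}_{\mathbb{Q}_p}R' = \oplus_{\sigma}B^+_{\mathrm{dR}}\hat{\otimes}_{K,\sigma}R'$ and the corresponding decomposition of the $\varphi^f=Y$-eigenspace, one reduces to a statement about each $\sigma$-component. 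The key computational input is the operator $v \mapsto P(x,\tfrac{Y}{\sigma(\pi_K)\lambda})v$ followed by projection to $U_{k,\sigma,\lambda}$: I would show that $h$ lands in the eigenspace $(B^+_{\mathrm{max},K}\hat{\otimes}_{K,\sigma}R')^{\varphi_K = Y}$ precisely when the composite $h_{x,\lambda}$ vanishes for all choices of $x,\lambda$, since $P(x,\tfrac{Y}{\sigma(\pi_K)\lambda})$ projects $B^+_{\mathrm{dR}}/t^k$ onto a complement of the eigenspace in a way that detects exactly the non-eigenvector part (this is the local analogue of Kisin's Lemma 5.3 / Corollary 4.4 of \cite{Ki03}). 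Then $h_{x,\lambda}$ vanishing is equivalent to $f^*$ killing all the coefficient ideals $I'(h,x,\lambda,m)$, i.e. to $f$ factoring through $\mathrm{Spm}(R/I)$; combined with the hypothesis that $f$ already factors through all $X_{Q_\sigma(i)}$, one upgrades $R/I$ to $R/J$ using the universal property defining $J$.

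The main obstacle I anticipate is the careful bookkeeping in the ``only if'' direction: showing that if $f$ factors through $\mathrm{Spm}(R/J)$ then condition (ii) holds for \emph{arbitrary} $h$, not merely those arising from the explicit generators. Here one needs Proposition 2.5 of \cite{Ki03} to pass between $B^+_{\mathrm{dR}}$ and $B^+_{\mathrm{dR}}/t^k$ (valid because $Q_\sigma(i)$ is invertible on the image), and then the short exact sequence of property-$(\mathrm{Pr})$ Banach modules from Proposition \ref{5.7} to identify the eigenspace as the kernel of the projection to $U_{k,\sigma}$; the point is that a $G_K$-equivariant $h$ into $B^+_{\mathrm{dR}}/t^k\hat{\otimes}_{K,\sigma}R'$ factors through the eigenspace iff its composite with the projection to $U_{k,\sigma}\hat{\otimes}R'$ is zero, and one must check this composite is governed by the same family $\{h_{x,\lambda}\}$ that cut out $I$. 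This requires knowing that the $P(x,\tfrac{Y}{\sigma(\pi_K)\lambda})$, as $x$ and $\lambda$ vary, generate enough of the relevant Hom-space — the density argument from \cite{Ki03}, which I would reproduce by the same topological nilpotence / convergence estimates already recorded before the statement of Lemma \ref{5.12}. Once these pieces are in place the equivalence follows, and together with Lemma \ref{5.11} (reducing to the affinoid case) this completes the construction of $X_{fs}$.
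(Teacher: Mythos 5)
Your overall strategy coincides with the paper's, and the easy direction is essentially right: condition (1) is formal from the definition of $J$, the $\sigma$-decomposition reduces everything to a single $\sigma$, and for (ii)~$\Rightarrow$~(i) the eigenvalue-shifting property of multiplication by $P(x,\tfrac{Y}{\sigma(\pi_K)\lambda})$ forces $h_{x,\lambda}=0$ as soon as $h$ lands in the $\varphi_K=Y$ eigenspace, from which one concludes via the definition of $J$.

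The gap is in the other direction. Two things in your sketch misrepresent what is actually happening. First, multiplication by $P(x,\tfrac{Y}{\sigma(\pi_K)\lambda})$ is not a projection and does not ``project onto a complement of the eigenspace''; it is multiplication by an element of the $\tfrac{\sigma(\pi_K)\lambda}{Y}$-eigenspace, which merely \emph{shifts} eigenspaces under multiplication. Second, the composite $M^\vee\to U_{k,\sigma}\hat{\otimes}R'$ (whose vanishing does characterize landing in the $\varphi_K=Y$ eigenspace, by Proposition~\ref{5.7}) is \emph{not} ``governed by the same family $\{h_{x,\lambda}\}$'' in any direct sense: the $h_{x,\lambda}$ involve a multiplication by $P$ and a projection to the \emph{different} quotient $U_{k,\sigma,\lambda}$, and there is no identity of maps lurking here. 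What the vanishing of all $h_{x,\lambda}$ actually gives you, after invoking density of the $R'$-module generated by the $P(x,\cdot)$ inside the $\tfrac{\sigma(\pi_K)\lambda}{Y}$-eigenspace, is that $t_Ku\,h(m)$ lies in the (closed) $\sigma(\pi_K)\lambda$-eigenspace — where $u$ is the unit from Lemma~\ref{5.6} with $\varphi_K(u)=\tfrac{\lambda}{Y}u$, whose existence hinges precisely on the $Y$-smallness hypothesis. To go from $t_Ku\,h(m)$ being in the eigenspace to $h(m)$ being in the $\varphi_K=Y$ eigenspace you then need the intersection computation $\tfrac{1}{t_K}(B^+_{\mathrm{max},K}\hat{\otimes}_{K,\sigma}R')^{\varphi_K=\sigma(\pi_K)\lambda}\cap B^+_{\mathrm{dR}}/t^kB^+_{\mathrm{dR}}\hat{\otimes}_{K,\sigma}R' = (B^+_{\mathrm{max},K}\hat{\otimes}_{K,\sigma}R')^{\varphi_K=\sigma(\pi_K)\lambda}$, which rests on Colmez's Proposition~8.10, and finally you divide by the unit $u$. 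Neither the untwisting element $u$ nor the $t_K$-intersection step appears in your sketch, and the density argument alone does not bridge this; so as written, the hard direction does not close.
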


\begin{proof}
Because the map $R/J\rightarrow R/J[\frac{1}{Q_{\sigma}(i)}]$ is injective for any $\sigma\in\mathcal{P}$ and $i\in\mathbb{Z}_{\leqq 0}$ 
by the definition of $J$, 
$\mathrm{Spm}(R/J)$ satisfies the condition (1). 
We show that $\mathrm{Spm}(R/J)$ satisfies (2). 
Let $f:\mathrm{Spm}(R')\rightarrow \mathrm{Spm}(R)$ be a $Y$-small map 
which factors through $\mathrm{Spm}(R')\rightarrow \mathrm{Spm}(R)_{Q_{\sigma}(i)}$ for 
any $\sigma\in \mathcal{P}$ and $i\in\mathbb{Z}_{\leqq 0}$. 
In this situation, we first prove that (ii) implies (i). We assume that any $G_K$-equivariant map $h:M^{\vee}\rightarrow \bold{B}^+_{\mathrm{dR}}\hat{\otimes}_{K,\sigma}R'$ factors through 
$M^{\vee}\rightarrow (\bold{B}^+_{\mathrm{max},K}\hat{\otimes}_{K,\sigma}R')^{\varphi_K=Y}$ for any $\sigma\in\mathcal{P}$. 
Then, for any $h:M^{\vee}\rightarrow \bold{B}^+_{\mathrm{dR}}\hat{\otimes}_{K,\sigma}R$, $\lambda\in E'$ and 
$x\in \widetilde{\bold{E}}^+$ as in the construction of the ideal $I\subseteq R$, the map
$$P(x,\frac{Y}{\sigma(\pi_K)\lambda})h\otimes_R\mathrm{id}_{R'}:
M^{\vee}\rightarrow U_{k,\sigma,\lambda}\hat{\otimes}_{E'}(R'\otimes_EE')$$ is zero  because the multiplication by $P(x,\frac{Y}{\sigma(\pi_K)\lambda})$ sends $(\bold{B}^+_{\mathrm{max},K}\hat{\otimes}_{K,\sigma}R)^{\varphi=Y}$ to $(\bold{B}^+_{\mathrm{max},K}\hat{\otimes}_{K,\sigma}R\otimes_EE')^{\varphi_K=\sigma(\pi_K)\lambda}$. Hence, the map $R\rightarrow R'$ factors through $R/I\rightarrow R'$ by the definition of $I$. Because $Q_{\sigma}(i)\in R^{' \times}$ for any $\sigma\in\mathcal{P}$, $i\in\mathbb{Z}_{\leqq 0}$, the map $R/I\rightarrow R'$ factors through $R/J\rightarrow R'$ by the definition of $J$.

We next prove that (i) implies (ii). Assume that $f:\mathrm{Spm}(R')\rightarrow \mathrm{Spm}(R)$ factors through $\mathrm{Spm}(R')
\rightarrow \mathrm{Spm}(R/J)\rightarrow \mathrm{Spm}(R)$. Let $h:M^{\vee}\rightarrow \bold{B}^+_{\mathrm{dR}}\hat{\otimes}_{K,\sigma}R'$ be a $R'$-linear 
$G_K$-equivariant map. We want to show that the map $h$ factors through $M^{\vee}\rightarrow (\bold{B}^+_{\mathrm{max},K}\hat{\otimes}_{K,\sigma}R')^{\varphi_K=Y}$. 
By Galois descent, it suffices to show that the map $h$ factors through $M^{\vee}\rightarrow (\bold{B}^+_{\mathrm{max},K}\hat{\otimes}_{K,\sigma}R'\otimes_EE')^{\varphi_K=Y}$ for a sufficiently large finite Galois extension $E'$ of $E$. Hence, by the definition of $Y$-smallness, we may assume that there exists $\lambda\in E'$ such that $Y\lambda^{-1}-1$ is topologically nilpotent in $R'\otimes_EE'$. Moreover, because the definitions of $I$ and $J$ are compatible with 
any base change $R\mapsto R\otimes_EE'$, we may assume that $E=E'$ and $\lambda\in E$. Under these assumptions, we have $|Y^{-1}|^{-1}\leqq |f^*(Y)^{-1}|_{R'}^{-1}=|\lambda|_p=|f^*(Y)|_{R'}\leqq |Y|$ ($|-|_{R'}$ is a norm on $R'$), hence $\lambda$ satisfies the condition in the construction of $I\subseteq R$. 
By the definition of $I$, for any $m\in M^{\vee}$, $P(x,\frac{Y}{\sigma(\pi_K)\lambda})h(m)$ is an element 
in $(\bold{B}^+_{\mathrm{max},K}\hat{\otimes}_{K,\sigma}R')^{\varphi_K=\sigma(\pi_K)\lambda}$ for any 
$x\in \widetilde{\bold{E}}^+$ such that $v(x)>0$. Take an element $u\in (\widehat{K^{\mathrm{ur}}}\hat{\otimes}_{K,\sigma}R')^{\times, \varphi_K=\frac{\lambda}{Y}}$ as in Lemma \ref{5.6}. 
 Then we have
$$t_Kuh(m)\in (\bold{B}^+_{\mathrm{max},K}\hat{\otimes}_{K,\sigma}R')^{\varphi_K=
\sigma(\pi_K)\lambda}$$ 
because we have $t_Ku\in (\bold{B}^+_{\mathrm{max},K}\hat{\otimes}_{K,\sigma}R')^{\varphi_K=\frac{\sigma(\pi_K)\lambda}{Y}}$, and because
the $R'$-module generated by the sets $\{P(x,\frac{Y}{\sigma(\pi_K)\lambda})\}_{x\in \widetilde{\bold{E}}^+, v(x)>0}$ is dense in $(\bold{B}^+_{\mathrm{max},K}\hat{\otimes}_{K,\sigma}R')^{\varphi_K=\frac{\sigma(\pi_K)\lambda}{Y}}$, which can be proved in the same way as in Corollary 
4.6 of \cite{Ki03} by using Lemma 4.3.1 of \cite{Ke05}, and because $(\bold{B}^+_{\mathrm{max},K}\hat{\otimes}_{K,\sigma}R')^{\varphi_K=\sigma(\pi_K)\lambda}$ is closed in $\bold{B}^+_{\mathrm{dR}}/t^k\bold{B}^+_{\mathrm{dR}}\hat{\otimes}_{K,\sigma}R'$ by Proposition \ref{5.7}. Hence, we obtain
\[
\begin{array}{ll}
uh(m)&\in \frac{1}{t_K}(\bold{B}^+_{\mathrm{max},K}\hat{\otimes}_{K,\sigma}R')^{\varphi_K=\sigma(\pi_K)\lambda}\cap\bold{B}^+_{\mathrm{dR}}/t^k\bold{B}^+_{\mathrm{dR}}\hat{\otimes}_{K,\sigma}R'\\
&=(\bold{B}^+_{\mathrm{max},K}\hat{\otimes}_{K,\sigma}R')^{\varphi_K=\sigma(\pi_K)\lambda},
 \end{array}
 \] where the last equality follows from Proposition 
8.10 of \cite{Co02}. Hence, we obtain $h(m)\in (\bold{B}^+_{\mathrm{max},K}\hat{\otimes}_{K,\sigma}\allowbreak R')^{\varphi_K=Y}$, which proves the lemma, hence finishes to prove the theorem.

\end{proof}

We next prove some important general 
properties of $X_{fs}$, which is a generalization of Corollary 5.16 of \cite{Ki03} for general $K$.

Let $\mathrm{Spm}(R)\subseteq X_{fs}$ be an affinoid open of $X_{fs}$. We assume that 
this inclusion is $Y$-small. By Proposition \ref{5.7}, there exists $k>0$ such that, for any $\sigma\in\mathcal{P}$, there exists a short 
exact sequence of Banach $R$-modules with the property (Pr)
$$0\rightarrow (\bold{B}^+_{\mathrm{max},K}\hat{\otimes}_{K,\sigma}R)^{\varphi_K=Y}
\rightarrow \bold{B}^+_{\mathrm{dR}}/t^k\bold{B}^+_{\mathrm{dR}}\hat{\otimes}_{K,\sigma}R\rightarrow U_{k,\sigma}\rightarrow 0.$$ 
We denote by $M_R$ the restriction of $M$ to $\mathrm{Spm}(R)$.
\begin{prop}\label{5.13}
Fix $k\geqq 1$ as above. For any $\sigma\in \mathcal{P}$, let $H_{\sigma}\subseteq R$ be the smallest  ideal of $R$ such that 
any $R$-linear $G_K$-equivariant morphism 
$M^{\vee}_R\rightarrow \bold{B}^+_{\mathrm{dR}}/t^k\bold{B}^+_{\mathrm{dR}}\hat{\otimes}_{K,\sigma}R$ factors through 
$M^{\vee}_R\rightarrow \bold{B}^+_{\mathrm{dR}}/t^k\bold{B}^+_{\mathrm{dR}}\hat{\otimes}_{K,\sigma}H_{\sigma}\rightarrow \bold{B}^+_{\mathrm{dR}}/t^k\bold{B}^+_{\mathrm{dR}}\hat{\otimes}_{K,\sigma}R$. Set
$H:=\prod_{\sigma\in \mathcal{P}}H_{\sigma}\subseteq R$.
Then the following hold:
\begin{itemize}
\item[(1)]For any $\sigma\in \mathcal{P}$, the natural map $$((\bold{B}^+_{\mathrm{max},K}\hat{\otimes}_{K,\sigma}M_R)^{\varphi_K=Y})^{G_K}\rightarrow 
(\bold{B}^+_{\mathrm{dR}}/t^k\bold{B}^+_{\mathrm{dR}}\hat{\otimes}_{K,\sigma}M_R)^{G_K}$$ is isomorphism, i.e. the natural map $$K\otimes_{K_0}\bold{D}^+_{\mathrm{cris}}(M_R)^{\varphi_K=Y}\rightarrow 
(\bold{B}^+_{\mathrm{dR}}/t^k\bold{B}^+_{\mathrm{dR}}\hat{\otimes}_{\mathbb{Q}_p}\allowbreak M_R)^{G_K}$$ is 
isomorphism. 
\item[(2)] $\mathrm{Spm}(R)\setminus V(H)$ and $\mathrm{Spm}(R)\setminus V(H_{\sigma})$ ( for any $\sigma\in \mathcal{P}$) are  scheme theoretically dense in $\mathrm{Spm}(R)$, where $V(H_{*}):=\mathrm{Spm}(R/H_{*})$.
\item[(3)]For any $x\in \mathrm{Spm}(R)$, $M(x)$ is a split trianguline $E(x)$-representation. More precisely, there exists a short exact sequence of $E(x)$-$B$-pairs
$$0\rightarrow W(\delta_{Y(x)}\prod_{\sigma\in \mathcal{P}}\sigma^{-k_{\sigma}})\rightarrow W(M(x))\rightarrow 
W(\mathrm{det}(M(x))\delta^{-1}_{Y(x)}\prod_{\sigma\in \mathcal{P}}\sigma^{k_{\sigma}})\rightarrow 0$$
for some $\{k_{\sigma}\}_{\sigma\in \mathcal{P}}\in \prod_{\sigma\in \mathcal{P}}\mathbb{Z}_{\geqq 0}$, where, for any $\lambda\in E(x)^{\times}$, we define a homomorphism $\delta_{\lambda}:K^{\times}\rightarrow E(x)^{\times}$ such 
that $\delta_{\lambda}(\pi_K)=\lambda$ and $\delta_{\lambda}|_{\mathcal{O}_K^{\times}}$ is trivial.
\end{itemize}

\end{prop}
\begin{proof}
We first prove (1). Take a point $x\in \mathrm{Spm}(R)$ such that $x\in \mathrm{Spm}(R)_{Q_{\sigma}(i)}$ for any 
$\sigma\in \mathcal{P}$ and $i\in\mathbb{Z}_{\leqq 0}$. By the characterization of $X_{fs}$, any $G_K$-map $M^{\vee}_R\rightarrow \bold{B}^+_{\mathrm{dR}}/t^k\bold{B}^+_{\mathrm{dR}}\otimes_{K,\sigma}\mathcal{O}_{X,x}/\mathfrak{m}_x^n$ 
factor through $M^{\vee}_R\rightarrow (\bold{B}^+_{\mathrm{max},K}\otimes_{K,\sigma}\mathcal{O}_{X,x}/\mathfrak{m}_x^n)^{\varphi_K=Y}$ 
for any $n\geqq 1$ and $\sigma\in \mathcal{P}$. 
We denote by $V\subseteq \mathrm{Spm}(R)$ the set of points satisfying the above condition.
By the same argument as in Lemma \ref{5.11},  it suffices to show that the natural 
map $R\rightarrow \prod_{x\in V,n\geqq 1}\mathcal{O}_{X,x}/\mathfrak{m}_{x}^n$ is an injection.
Let $f\in R$ be an element in the kernel of this map. Let $W\subseteq \mathrm{Spm}(R)$ be 
the support of $f$ with the reduced structure. Then we have $W \subseteq \cup_{\sigma\in \mathcal{P},i\leqq 0}
V(Q_{\sigma}(i))$, hence there exists $Q\in R$ a finite product of $Q_{\sigma}(i)$ such that 
$W\subseteq V(Q)$ by Lemma 5.7 of \cite{Ki03}. Hence we have  $X_Q\subseteq X\setminus W\subseteq X$. This implies that $f=0\in R[\frac{1}{Q}]$, and $f=0$ in $R$ by 
the condition (1) of Theorem \ref{5.8}. Hence, the map $R\rightarrow \prod_{x\in V,n\geqq 1}\mathcal{O}_{X,x}/\mathfrak{m}_{x}^n$ is an injection.

We next prove (2).
Let $x\in \mathrm{Spm}(R)$ such that $x\in \mathrm{Spm}(R)_{Q_{\sigma}(i)}$ for any 
$\sigma\in \mathcal{P}$ and $i\in\mathbb{Z}_{\leqq 0}$. For any affinoid algebra $R'$ 
which is a quotient of $\mathcal{O}_{X,x}$, then we have an isomorphism $$(\bold{B}^+_{\mathrm{dR}}/t^k\bold{B}^+_{\mathrm{dR}}\hat{\otimes}_{K,\sigma}
M_R)^{G_K}\otimes_RR'\isom 
(\bold{B}^+_{\mathrm{dR}}/t^k\bold{B}^+_{\mathrm{dR}}\otimes_{K,\sigma}(M_R\otimes_R R'))^{G_K}$$ and this is a free $R'$-module of rank one 
by Corollary 2.6 of \cite{Ki03} for any $\sigma\in\mathcal{P}$. 
Hence we obtain an equality $H_{\sigma}\mathcal{O}_{X,x}=
\mathcal{O}_{X,x}$ for any $\sigma$ because we have $(\bold{B}^+_{\mathrm{dR}}/t^k\bold{B}^+_{\mathrm{dR}}\hat{\otimes}_{K,\sigma}M_R)^{G_K}\otimes_R\mathcal{O}_{X,x}/H_{\sigma}\mathcal{O}_{X,x}=0$  by the definition of $H_{\sigma}$, and then we also obtain an 
equality $H\mathcal{O}_{X,x}=\mathcal{O}_{X,x}$. This implies that we have an inclusion 
$V(H)\subseteq \cup_{\sigma\in \mathcal{P},i\leqq 0}V(Q_{\sigma}(i))$. Hence, 
there exists $Q\in R$ a finite product of $Q_{\sigma}(i)$ such that $\mathrm{Spm}(R)_Q\subseteq \mathrm{Spm}(R)\setminus V(H)
\subseteq \mathrm{Spm}(R)$ by Lemma 5.7 
of \cite{Ki03}. Because $\mathrm{Spm}(R)_{Q}$ is scheme theoretically dense, so
$\mathrm{Spm}(R)\setminus V(H)$ is also scheme theoretically dense. Because we have $V(H_{\sigma})\subseteq V(H)$, 
$\mathrm{Spm}(R)\setminus V(H_{\sigma})$ is also scheme theoretically dense for any $\sigma\in\mathcal{P}$.

Finally we prove (3). Let $x$ be any point of $\mathrm{Spm}(R)$. By (2), for any $\sigma\in \mathcal{P}$, there exists $n_{\sigma}\geqq 0$ such that $H_{\sigma}\subseteq \mathfrak{m}_x^{n_{\sigma}}$ and 
$H_{\sigma}\not\subseteq \mathfrak{m}_x^{n_{\sigma}+1}$. By the definition of $H_{\sigma}$, there exists a $G_K$-map $h:M^{\vee}_R\rightarrow (\bold{B}^+_{\mathrm{max},K}\hat{\otimes}_{K,\sigma}H_{\sigma})^{\varphi_K=Y}
$ which, by composing with 
$\bold{B}^+_{\mathrm{max},K}\hat{\otimes}_{K,\sigma}H_{\sigma}\rightarrow \bold{B}^+_{\mathrm{max},K}\otimes_{K,\sigma}\mathfrak{m}_x^{n_{\sigma}}/\mathfrak{m}_x^{n_{\sigma}+1}$, induces a nonzero map $M^{\vee}_R\rightarrow (\bold{B}^+_{\mathrm{max},K}\otimes_{K,\sigma}\mathfrak{m}_x^{n_{\sigma}}/\mathfrak{m}_x^{n_{\sigma}+1})^{\varphi_K=Y(x)}$. Hence, by taking a 
suitable $E(x)$-linear projection $\mathfrak{m}_x^{n_{\sigma}}/\mathfrak{m}_x^{n_{\sigma}+1}\rightarrow E(x)$, we obtain 
a non zero $G_K$-map $M^{\vee}_R\rightarrow (\bold{B}^+_{\mathrm{max},K}\otimes_{K,\sigma}E(x))^{\varphi_K=Y(x)}$. 
This implies that $(\bold{B}^+_{\mathrm{max},K}\otimes_{K,\sigma}M(x))^{\varphi_K=Y(x)}\not= 0$, and also implies that $\bold{D}^+_{\mathrm{cris}}(M(x))^{\varphi_K=Y(x)}\not= 0$, then $M(x)$ is a split trianguline 
$E(x)$-representation as in the statement of (3).

\end{proof}

\subsection{Construction of $p$-adic families of two dimensional trianguline representations}

In this subsection, we will apply our theory of $X_{fs}$ in the previous subsection to the rigid analytic space associated with 
a universal deformation ring of mod $p$ Galois representation of $G_K$, which is a slightly modified generalization of the results of $\S$10 of \cite{Ki03} for general $K$.

Let $\mathcal{C}_{\mathcal{O}}$ be the category of local Artin $\mathcal{O}$-algebras with the residue field $\mathbb{F}$.
Let $\bar{\rho}:G_K\rightarrow \mathrm{GL}_2(\mathbb{F})$ be a continuous homomorphism, we denote by $\overline{V}$ a two dimensional $\mathbb{F}$-representation defined by $\bar{\rho}$. 
 As in the case of $E$-representations, we define a functor 
 $D_{\bar{\rho}}:\mathcal{C}_{\mathcal{O}}\rightarrow Sets$ by 
 $$D_{\bar{\rho}}(A):=\{\text{ equivalent classes of deformations of }\overline{V}\text{ over }A \}$$
  for $A\in\mathcal{C}_{\mathcal{O}}$.
 In this paper, for simplicity, we assume that $\overline{V}$ satisfies that $$\mathrm{H}^0(G_K,\mathrm{ad}(\overline{V}))=\mathbb{F}.$$
 Then, $D_{\bar{\rho}}$ is pro-representable by a complete noetherian local $\mathcal{O}$-algebra $R_{\bar{\rho}}$ with the residue 
 field $\mathbb{F}$. 
 When $\overline{V}$ does not satisfy $\mathrm{H}^0(G_K,\mathrm{ad}(\overline{V}))=\mathbb{F}$, we can prove 
 the same theorems below in almost the same way if we consider the framed deformations of Kisin.
  Let $V^{\mathrm{univ}}$ be the universal deformation over $R_{\bar{\rho}}$, which is a free $R_{\bar{\rho}}$-module of rank two with a $R_{\bar{\rho}}$-linear continuous 
  $G_K$-action. 
 Let $\mathfrak{X}(\bar{\rho})$ be the rigid analytic space over $E$ associated 
 to $R_{\bar{\rho}}$. Let $\widetilde{V}^{\mathrm{univ}}$ be a free $\mathcal{O}_{\mathfrak{X}(\bar{\rho})}$-module associated to $V^{\mathrm{univ}}$, which is 
 naturally equipped with an $\mathcal{O}_{\mathfrak{X}(\bar{\rho})}$-linear continuous $G_K$-action induced from that on $V^{\mathrm{univ}}$, where ``continuous" means that $G_K$ acts continuously on $\Gamma(U, \widetilde{V}^{\mathrm{univ}})$ for any  affinoid opens $U=\mathrm{Spm}(R)\subseteq \mathfrak{X}(\bar{\rho})$. 
 \begin{rem}\label{a1}
 For a point $x\in \mathfrak{X}(\bar{\rho})$, the fiber $V_x$ of $\widetilde{V}^{\mathrm{univ}}$ at $x$ is a  two dimensional  
 $E(x)$-representation such that the reduction of a 
 $G_K$-stable $\mathcal{O}_{E(x)}$-lattice of $V_x$ is isomorphic to 
 $\overline{V}\otimes_{\mathbb{F}}\mathcal{O}_{E(x)}/\mathfrak{m}_{E(x)}$. Because we assume that $\mathrm{End}_{\mathbb{F}[G_K]}(\bar{\rho})=\mathbb{F}$, 
  we also have 
 $\mathrm{End}_{E(x)[G_K]}(V_x)=E(x)$ for any $x\in \mathfrak{X}(\bar{\rho})$.
 \end{rem}
 Let $\mathcal{W}_E$ be the rigid analytic space over $E$ which represents the functor $D_{\mathcal{W}_E}$ from 
 the category of rigid analytic spaces over $E$ to the category of groups, which is defined by
 $$D_{\mathcal{W}_E}(Y):=\{\delta: \mathcal{O}_K^{\times}\rightarrow \Gamma(Y,\mathcal{O}_Y^{\times}) \text{ continuous homomorphisms }\}$$ 
 for any rigid analytic space $Y$ over $E$, where ``continuous" is the same 
 meaning as in the definition of $\widetilde{V}^{\mathrm{univ}}$. 
 It is known that $\mathcal{W}_E$ is the rigid analytic space associated to the Iwasawa algebra $\mathcal{O}[[\mathcal{O}_{K}^{\times}]]$, which is non-canonically isomorphic to a finite (this number is equal to the number of torsion points in $\mathcal{O}_K^{\times}$) union of $[K:\mathbb{Q}_p]$-dimensional 
 open unit disc over $E$. We denote by $$\delta^{\mathrm{univ}}_0:\mathcal{O}_K^{\times}\rightarrow \Gamma(\mathcal{W}_E,\mathcal{O}_{\mathcal{W}_E}^{\times})$$ the universal 
 continuous homomorphism, which is the composition of the map $\mathcal{O}_K^{\times}\rightarrow 
 \mathcal{O}[[\mathcal{O}_K^{\times}]]^{\times}:a\mapsto [a]$ with the natural map $\mathcal{O}[[\mathcal{O}_K^{\times}]]^{\times}\rightarrow \Gamma(\mathcal{W}_E, \mathcal{O}_{\mathcal{W}_E}^{\times})$. Using a fixed $\pi_K$, we extend $\delta^{\mathrm{univ}}_0$ to $K^{\times}$ by 
 $$\delta^{\mathrm{univ}}:K^{\times}\rightarrow \Gamma(\mathcal{W}_E, \mathcal{O}_{\mathcal{W}_E}^{\times})\text{ such 
 that }\delta^{\mathrm{univ}}(\pi_K)=1, \, \delta^{\mathrm{univ}}|_{\mathcal{O}_K^{\times}}=\delta^{\mathrm{univ}}_0.$$ By local class field theory, we can uniquely extend $\delta^{\mathrm{univ}}$ to a character 
 $$\widetilde{\delta}^{\mathrm{univ}}:G^{\mathrm{ab}}_K\rightarrow \Gamma(\mathcal{W}_E, \mathcal{O}_{\mathcal{W}_E}^{\times})\text{ such that }\delta^{\mathrm{univ}}=\widetilde{\delta}^{\mathrm{univ}}\circ \mathrm{rec}_K.$$
 Set
 $$X(\bar{\rho}):=\mathfrak{X}(\bar{\rho})\times_E \mathcal{W}_E\times_E\mathbb{G}^{an}_{m,E}.$$ 
 Let $Y$ be the canonical parameter of $\mathbb{G}^{an}_{m,E}$.
 We denote the projections by $$p_1:X(\bar{\rho})\rightarrow \mathfrak{X}(\bar{\rho}),\, 
 p_2:X(\bar{\rho})\rightarrow \mathcal{W}_E,\, p_3:X(\bar{\rho})\rightarrow \mathbb{G}^{an}_{m,E}$$
 respectively. 
 We denote by  $N:=p_1^*\widetilde{V}^{\mathrm{univ}}$ and $M:=N(p_2^*(\widetilde{\delta}^{\mathrm{univ}})^{-1})$, which is the twist of $M$ by the cahacter $p_2^*(\widetilde{\delta}^{\mathrm{univ}})^{-1}:G_K^{\mathrm{ab}}\rightarrow \Gamma(X(\bar{\rho}),\mathcal{O}_{X(\bar{\rho})})^{\times}$.
 These are rank two free $\mathcal{O}_{X(\bar{\rho})}$-modules with $\mathcal{O}_{X(\bar{\rho})}$-linear continuous $G_K$-actions. 
 \begin{rem}
 In $\S$10 of \cite{Ki03}, Kisin applied his theory of $X_{fs}$ (for $K=\mathbb{Q}_p$) to the family $q_1^{*}\widetilde{V}^{\mathrm{univ}}$ on the space $Y(\bar{\rho}):=\mathfrak{X}(\bar{\rho})\times_E\mathbb{G}^{an}_{m,E}$, where $q_1:Y(\bar{\rho})\rightarrow\mathfrak{X}(\bar{\rho})$ is the natural projection. This is because he applied the results to a study of the family of $p$-adic representations associated to Coleman-Mazur eigencurve, one of whose Hodge-Tate weights is always zero. On the other hands, in this article, we want to study all the two dimensional trianguline representations without any conditions on the Hodge-Tate weights. Hence, we use the space $X(\bar{\rho})$ and the representation $M:=N(p_2^*(\widetilde{\delta}^{\mathrm{univ}})^{-1})$ instead of $Y(\bar{\rho})$ and 
 $q_1^{*}\widetilde{V}^{\mathrm{univ}}$.
 
 \end{rem}
 A point $x$ of $X(\bar{\rho})$ can be written as a triple $x=([V_x],\delta_x,\lambda_x)$, where
 $V_x$ is an $E(x)$-representation such that the reduction of a suitable $G_K$-stable 
 $\mathcal{O}_{E(x)}$-lattice of $V_x$ is isomorphic to $\bar{V}\otimes_{\mathcal{O}_{E(x)}}\mathcal{O}_{E(x)}/\mathfrak{m}_{E(x)}$, and $\delta_x:\mathcal{O}_K^{\times}\rightarrow 
 E(x)^{\times}$ is a continuous homomorphism, and $\lambda_x\in E(x)^{\times}$.
 We denote by 
 $$P_M(T)=(P_M(T)_{\sigma})_{\sigma\in \mathcal{P}}=(T^2-a_{1,\sigma}T+a_{0,\sigma})_{\sigma\in \mathcal{P}}
 \in K\otimes_{\mathbb{Q}_p}\mathcal{O}_{X(\bar{\rho})}[T]=\prod_{\sigma\in\mathcal{P}}\mathcal{O}_{X(\bar{\rho})}[T]$$ the Sen's polynomial 
 of $M$. Let $X_0(\bar{\rho})\subseteq X(\bar{\rho})$ be the Zariski closed subspace defined by the ideal generated by 
 $a_{0,\sigma}$ for all $\sigma\in\mathcal{P}$. Let $M_0:=M|_{X_0(\bar{\rho})}$ be the restriction of $M$ to $X_0(\bar{\rho})$,
 then we have $$P_{M_0}(T)=(T(T-a_{1,\sigma}))_{\sigma\in \mathcal{P}}\in \prod_{\sigma\in\mathcal{P}}\mathcal{O}_{X_0(\bar{\rho})}[T].$$ 
 We denote by $Q_{\sigma}(T):=T-a_{1,\sigma}\in\mathcal{O}_{X_0(\bar{\rho})}[T]$ for each $\sigma\in\mathcal{P}$.
 Under this situation, we apply Theorem \ref{5.8} to $X_0(\bar{\rho})$ and $M_0$ and $Y:=(p_3^*Y)|_{X_0(\bar{\rho})}$, then we obtain a Zariski closed 
 subspace 
 $$\mathcal{E}(\bar{\rho}):=X_0(\bar{\rho})_{fs}\subseteq X_0(\bar{\rho}).$$ 
 For the properties of $\mathcal{E}(\bar{\rho})$, we have a following theorem, which is a modified generalization of Proposition 10.4 of \cite{Ki03} for general $K$.
 For any $\lambda\in \overline{E}^{\times}$, we define a unramified continuous homomorphism 
 $\delta_{\lambda}:K^{\times}\rightarrow \overline{E}^{\times}$ such that $\delta_{\lambda}(\pi_K):=\lambda$ and 
 $\delta_{\lambda}|_{\mathcal{O}_K^{\times}}$ is trivial. For a point $\delta\in \mathcal{W}_E(\overline{E})$, i.e. for a
 continuous homomorphism $\delta:\mathcal{O}_K^{\times}\rightarrow \overline{E}^{\times}$, we denote by the same letter 
 $\delta:K^{\times}\rightarrow \overline{E}^{\times}$ the homomorphism such that $\delta(\pi_K)=1$ and $\delta|_{\mathcal{O}_K^{\times}}=\delta$.

 \begin{thm}\label{5.14}
 \begin{itemize}
 \item[(1)]For any point $x:=([V_x],\delta_x,\lambda_x)\in \mathcal{E}(\bar{\rho})$, 
 there exist $\{k_{\sigma}\}_{\sigma\in \mathcal{P}}\in \prod_{\sigma\in \mathcal{P}}\mathbb{Z}_{\geqq 0}$ and a short exact sequence of $E(x)$-$B$-pairs
 $$0\rightarrow W(\delta_1)\rightarrow W(V_x)\rightarrow W(\mathrm{det}(V_x)\delta_1^{-1})\rightarrow 0$$ for $\delta_1:=\delta_x\delta_{\lambda_x}\prod_{\sigma\in \mathcal{P}}\sigma^{-k_{\sigma}}$.

 \item[(2)]Conversely, if a point $x:=([V_x],\delta_x,\lambda_x)\in X(\bar{\rho})$ satisfies the following conditions (i) and (ii), 
  \begin{itemize}
 \item[(i)] $V_x$ is a split trianguline $E(x)$-representation
  with a triangulation
  $$\mathcal{T}_x:0\subseteq W(\delta_x\delta_{\lambda_x})\subseteq W(V_x),$$
 \item[(ii)]$(V_x,\mathcal{T}_x)$ satisfies all the assumptions in Proposition $\mathrm{\ref{19}}$,
 \end{itemize}
  then we have $x\in \mathcal{E}(\bar{\rho})$.

\end{itemize}
\end{thm}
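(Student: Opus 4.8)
The proof splits naturally into the two assertions (1) and (2), and the main work lies in translating the defining property of $X_{fs}$ (Theorem \ref{5.8}, as refined in Proposition \ref{5.13}) into statements about the $E(x)$-$B$-pair $W(V_x)$. The plan is as follows.

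\textbf{Proof of (1).} Let $x=([V_x],\delta_x,\lambda_x)\in\mathcal{E}(\bar{\rho})$. First I would pass to an affinoid open $\mathrm{Spm}(R)\subseteq\mathcal{E}(\bar{\rho})$ containing $x$; after shrinking and extending $E$ we may assume the inclusion $\mathrm{Spm}(R)\hookrightarrow X_0(\bar{\rho})$ is $Y$-small, so that Proposition \ref{5.13} applies to $M_R$ (the restriction of $M_0$). By part (3) of that proposition, $M_0(x)$ is a split trianguline $E(x)$-representation: there is $\{k_{\sigma}\}_{\sigma\in\mathcal{P}}\in\prod_{\sigma\in\mathcal{P}}\mathbb{Z}_{\geqq 0}$ and a short exact sequence of $E(x)$-$B$-pairs
\begin{equation*}
0\rightarrow W(\delta_{Y(x)}\prod_{\sigma\in\mathcal{P}}\sigma^{-k_{\sigma}})\rightarrow W(M_0(x))\rightarrow W(\mathrm{det}(M_0(x))\delta_{Y(x)}^{-1}\prod_{\sigma\in\mathcal{P}}\sigma^{k_{\sigma}})\rightarrow 0.
\end{equation*}
Now I would unwind the twist relating $M$ and $\tilde{V}^{\mathrm{univ}}$: since $M=p_1^*\tilde{V}^{\mathrm{univ}}((p_2^*\tilde{\delta}^{\mathrm{univ}})^{-1})$ and $Y(x)=\lambda_x$, the fiber is $M_0(x)=V_x\otimes_{E(x)}E(x)(\tilde{\delta}_x^{-1})$ where $\tilde{\delta}_x$ is the Galois character attached to $\delta_x$ by local class field theory with $\tilde{\delta}_x\circ\mathrm{rec}_K(\pi_K)=1$. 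Under the dictionary $W(\delta_{\lambda})\otimes W(E(x)(\tilde{\delta}))\isom W(\delta\delta_{\lambda})$ (compatibility of $\delta\mapsto W(\delta)$ with tensor products, recalled after Theorem \ref{g}, together with $W(\delta)\isom W(E(x)(\tilde{\delta}))$ for unitary $\delta$), tensoring the above sequence by $W(E(x)(\tilde{\delta}_x))$ and noting $\delta_{Y(x)}=\delta_{\lambda_x}$ yields exactly
\begin{equation*}
0\rightarrow W(\delta_x\delta_{\lambda_x}\prod_{\sigma\in\mathcal{P}}\sigma^{-k_{\sigma}})\rightarrow W(V_x)\rightarrow W(\mathrm{det}(V_x)(\delta_x\delta_{\lambda_x}\prod_{\sigma\in\mathcal{P}}\sigma^{-k_{\sigma}})^{-1})\rightarrow 0,
\end{equation*}
which is the claim with $\delta_1:=\delta_x\delta_{\lambda_x}\prod_{\sigma\in\mathcal{P}}\sigma^{-k_{\sigma}}$; here one checks the quotient parameter equals $\mathrm{det}(V_x)\delta_1^{-1}$ by taking determinants of $B$-pairs.

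\textbf{Proof of (2).} Conversely, suppose $x=([V_x],\delta_x,\lambda_x)\in X(\bar{\rho})$ with $V_x$ split trianguline via $\mathcal{T}_x:0\subseteq W(\delta_x\delta_{\lambda_x})\subseteq W(V_x)$, satisfying the hypotheses of Proposition \ref{19}. I would first record that $x$ indeed lies in $X_0(\bar{\rho})$: the existence of the sub-$B$-pair $W(\delta_x\delta_{\lambda_x})$ forces $0$ to be a Sen eigenvalue of $M_0(x)$ for each $\sigma$, i.e. the constant term $a_{0,\sigma}$ of the Sen polynomial vanishes at $x$, so the triple lies in $X_0(\bar{\rho})$. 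Twisting by $(\tilde{\delta}^{\mathrm{univ}})^{-1}$ as above, $M_0(x)=V_x\otimes E(x)(\tilde{\delta}_x^{-1})$ is split trianguline with sub-$B$-pair $W(\delta_{\lambda_x}\prod\sigma^{m_\sigma})$ for suitable $m_\sigma$ (coming from the Hodge--Tate weights of $\delta_x\delta_{\lambda_x}$, which are non-negative by the hypothesis, via Proposition \ref{i}-style considerations on $\mathrm{H}^0$), and condition (1) of Proposition \ref{19} on the parameters guarantees $W(\delta_{\lambda_x})\hookrightarrow M_0(x)$ after the $\prod\sigma^{m_\sigma}$-twist, hence $D^+_{\mathrm{cris}}(M_0(x))^{\varphi^f=\lambda_x}\neq 0$ (using that $\delta_{\lambda_x}$ is crystalline of slope $v_p(\lambda_x)$ with non-negative Hodge--Tate weights, so the sub-$B$-pair $W(\delta_{\lambda_x})$ contributes a nonzero element of $D^+_{\mathrm{cris}}$ by Lemma \ref{5.7.5}). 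To conclude $x\in\mathcal{E}(\bar{\rho})=X_0(\bar{\rho})_{fs}$ I would invoke the characterization in Theorem \ref{5.8}(2): apply it to the $Y$-small map $f:\mathrm{Spm}(\mathcal{O}_{X_0(\bar{\rho}),x}/\mathfrak{m}_x^n)\rightarrow X_0(\bar{\rho})$ for each $n$ — which is automatically $Y$-small and, because $a_{1,\sigma}(x)$ is (after the twist) the Sen eigenvalue $\neq 0$ at non-negative shifts under hypothesis (1) of Proposition \ref{19}, factors through each $X_{0,Q_\sigma(i)}$ for $i\in\mathbb{Z}_{\leqq 0}$ — and verify condition (ii) there: every $G_K$-equivariant map $M_0^\vee\otimes R\rightarrow B^+_{\mathrm{dR}}\hat\otimes R$ factors through the $\varphi^f=Y$-fixed part of $K\otimes_{K_0}(B^+_{\mathrm{max}}\hat\otimes R)$. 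For this last factorization I would use that the nonzero element of $D^+_{\mathrm{cris}}(M_0(x))^{\varphi^f=\lambda_x}$ dualizes to the required map on the fiber, and bootstrap to the Artinian thickenings using the smoothness/tangent-space identifications already established (the infinitesimal analogue of the argument runs along the lines of Lemma \ref{5.11}–\ref{5.12}, now with the $\varphi$-fixed vector deforming along the trianguline deformation whose pro-representability is Corollary \ref{16}).

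\textbf{Main obstacle.} The delicate point is part (2): producing a \emph{single} coherent nonzero $\varphi^f=Y$-eigenvector in $D^+_{\mathrm{cris}}$ across all the infinitesimal neighborhoods $\mathcal{O}_{X_0(\bar{\rho}),x}/\mathfrak{m}_x^n$, i.e. checking that condition (ii) of Theorem \ref{5.8}(2) holds not just on the fiber but on every thickening. I expect this to require carefully matching the trianguline deformation $W(\delta_{1,A})\hookrightarrow W(V_A)$ (available by the relative representability of $D_{W(V_x),\mathcal{T}_x}$, Proposition \ref{15}) with the period-ring description: one must see that $W(\delta_{1,A})$, being a successive self-extension of the crystalline rank-one $B$-pair $W(\delta_{\lambda_x}\prod\sigma^{m_\sigma})$, still has its ``$\varphi^f=\lambda_x$ on $D^+_{\mathrm{cris}}$'' property after the $\prod\sigma^{m_\sigma}$-untwist, and that this property, being $A$-linear and compatible with the structure maps, assembles over $n$ to give the required factorization through $(B^+_{\mathrm{max}}\hat\otimes R)^{\varphi^f=Y}$. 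This is essentially the $B$-pair incarnation of Kisin's argument in \cite{Ki03}, and the hypotheses (1), (2) of Proposition \ref{19} are exactly what is needed to rule out the vanishing of the relevant $\mathrm{H}^0$'s and $\mathrm{H}^2$'s that could obstruct the factorization or deform the eigenvalue away from $\lambda_x$.
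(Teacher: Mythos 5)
Your part (1) is essentially what the paper does: Proposition \ref{5.13}(3) applied to $M_0$ on a $Y$-small affinoid containing $x$, and then untwisting by $(p_2^*\tilde{\delta}^{\mathrm{univ}})$. This matches the paper, which simply cites Proposition \ref{5.13}(3) for (1).

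Your part (2) has a genuine gap, and it is exactly at the point you flag as ``the delicate point''. You propose to invoke Theorem \ref{5.8}(2) directly for the maps $f_n:\mathrm{Spm}(\mathcal{O}_{X_0(\bar{\rho}),x}/\mathfrak{m}_x^n)\rightarrow X_0(\bar{\rho})$ and assert that, thanks to hypothesis (1) of Proposition \ref{19}, these $f_n$ factor through $X_{0,Q_\sigma(i)}$ for all $\sigma\in\mathcal{P}$ and $i\in\mathbb{Z}_{\leqq 0}$. That assertion is false in the cases that matter. The constant $Q_\sigma(i)(x)$ vanishes precisely when the nonzero $\sigma$-Sen eigenvalue $a_{1,\sigma}(x)$ of $M_0(x)=V_x(\tilde{\delta}_x^{-1})$ equals $i$; and for de Rham points (in particular the crystalline points that drive $\S 4$) these eigenvalues are nonpositive integers $-m_\sigma\leqq -1$, so $Q_\sigma(-m_\sigma)(x)=0$. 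Hypothesis (1) of Proposition \ref{19} only rules out $\delta_2/\delta_1$ being \emph{literally equal} to a character $\prod\sigma^{k_\sigma}$ with $k_\sigma\leqq 0$; it does not prevent it from having the same Hodge--Tate weights as such a character, which is exactly what makes $Q_\sigma(i)$ vanish. Since the hypothesis of Theorem \ref{5.8}(2) fails, that equivalence gives you nothing, and the rest of your argument (dualizing the nonzero $\varphi^f=\lambda_x$-eigenvector, bootstrapping to thickenings) cannot be completed along these lines. (You also write the twisted sub as $W(\delta_{\lambda_x}\prod\sigma^{m_\sigma})$, but after twisting $\mathcal{T}_x$ by $(\tilde{\delta}_x)^{-1}$ the sub is simply $W(\delta_{\lambda_x})$, with Hodge--Tate weight $0$; the $\prod\sigma^{-k_\sigma}$ twist only appears in part (1), where the given sub need not be the saturated $D^+_{\mathrm{cris}}$-line.)

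The paper circumvents this by working directly with the ideal $J\subseteq R$ that \emph{defines} $\mathrm{Spm}(R)_{fs}$ (see the construction preceding Lemma \ref{5.12}), rather than with the point-test of Theorem \ref{5.8}(2). One builds a ring $\bar{R}$ canonically isomorphic to $R_{V_x,\mathcal{T}_x}$, receiving the natural map $R\rightarrow\bar{R}$, and shows this map kills $J$. Killing the inner ideal $I$ amounts to showing that the $\varphi^f=Y$-eigenline in $D^+_{\mathrm{cris}}$ deforms along the triangulation (this is your inclusion via Lemma \ref{5.7.5}, stated as inclusion (1) in the paper's proof, together with Lemma \ref{5.15}(i) in the style of Proposition 2.8 of \cite{Ki03}). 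Killing $J/I$ then uses a different input than $Q_\sigma(i)$ being a \emph{unit}: one only needs $Q_\sigma(i)$ to be a \emph{non-zero-divisor} in $\bar{R}$. Since $R_{V_x,\mathcal{T}_x}$ is formally smooth, hence an integral domain, this reduces to $Q_\sigma(i)\neq 0$ in $\bar{R}$, which the paper proves in Lemma \ref{5.15}(ii) by identifying $\bar{a}_{1,\sigma}$ with the universal Hodge--Tate weight pulled back from $R_{\delta_0}$ and invoking Lemmas \ref{5.16}--\ref{5.17} (the latter showing the universal weight is non-constant). This non-vanishing step is a genuine ingredient you are missing, and it cannot be replaced by the factorization-through-$X_{Q_\sigma(i)}$ requirement, which simply does not hold at such $x$.
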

\begin{proof}
The property (1) follows from (3) of Proposition \ref{5.13}.

We prove (2). Extending the scalars from $E$ to $E(x)$, we may assume that $E(x)=E$. Let $x:=([V_x],\delta_x,\lambda_x)\in X(\bar{\rho})$ be an $E$-rational  point satisfying the conditions  (i), (ii) in (2).
Then, the trianguline deformation functor $D_{V_x,\mathcal{T}_x}$ is representable by 
a formally smooth quotient $R_{V_x,\mathcal{T}_x}$ of the universal deformation ring $R_{V_x}$ of $V_x$ by Proposition \ref{19}. Moreover, we have a canonical isomorphism $R_{V_x}\isom \widehat{\mathcal{O}}_{\mathfrak{X}(\bar{\rho}),p_1(x)}$,  
and $V_x^{\mathrm{univ}}:=\widetilde{V}^{\mathrm{univ}}\otimes_{\mathcal{O}_{\mathfrak{X}(\bar{\rho})}}\widehat{\mathcal{O}}_{\mathfrak{X}(\bar{\rho}),p_1(x)}$ is 
the universal deformation of $V_x$ by Proposition 9.5 of \cite{Ki03}. Taking a quotient, we obtain a map 
$$\widehat{\mathcal{O}}_{X(\bar{\rho}),x}
\isom \widehat{\mathcal{O}}_{\mathfrak{X}(\bar{\rho}),p_1(x)}\hat{\otimes}_{E}\widehat{\mathcal{O}}_{\mathcal{W}_E,p_2(x)}\hat{\otimes}_{E}
\widehat{\mathcal{O}}_{\mathbb{G}^{an}_{m,E},p_3(x)}\rightarrow R_{V_x,\mathcal{T}_x}\hat{\otimes}_{E}\widehat{\mathcal{O}}_{\mathcal{W}_E,p_2(x)}\hat{\otimes}_{E}\widehat{\mathcal{O}}_{\mathbb{G}^{an}_{m,E},p_3(x)}.$$ By the definition of $R_{V_x,\mathcal{T}_x}$, there exists 
a continuous homomorphism $\delta_{\mathcal{T}_x}:K^{\times}\rightarrow R^{\times}_{V_x,\mathcal{T}_x}$ which gives the universal 
triangulation, i.e. we have the following compatible triangulation 
$$\mathcal{T}_{\mathrm{univ},n}: 0\subseteq W(\delta_{\mathcal{T}_x,n})\subseteq W(V_x^{\mathrm{univ}}\otimes_{R_{V_x}}R_{V_x,\mathcal{T}_x}/\mathfrak{m}^n)$$ of $V_x^{\mathrm{univ}}\otimes_{R_{V_x}}R_{V_x,\mathcal{T}_x}/\mathfrak{m}^n$ for each $n\geqq 1$, 
where $\mathfrak{m}$ is the maximal ideal of $R_{V_x,\mathcal{T}_x}$ and $\delta_{\mathcal{T}_x,n}$ is the composition of 
$\delta_{\mathcal{T}_x}$ with the natural quotient map $R_{V_x,\mathcal{T}_x}\rightarrow R_{V_x,\mathcal{T}_x}/\mathfrak{m}^n$. Set $\lambda_{\mathcal{T}_x}:=\delta_{\mathcal{T}_x}(\pi_K)\in R_{V_x,\mathcal{T}_x}^{\times}$. Denote by $\delta^{\mathrm{univ}}_{p_2(x)}:\mathcal{O}_K^{\times}\rightarrow \widehat{\mathcal{O}}_{\mathcal{W}_E,p_2(x)}^{\times}$ 
the composition 
of the universal 
homomorphism $\delta_0^{\mathrm{univ}}:\mathcal{O}_K^{\times}\rightarrow \Gamma(\mathcal{W}_E,
\mathcal{O}_{\mathcal{W}_E})^{\times}$ with the natural map
$\Gamma(\mathcal{W}_E,\mathcal{O}_{\mathcal{W}_E})^{\times}\rightarrow \widehat{\mathcal{O}}_{\mathcal{W}_E,p_2(x)}^{\times}$.  Then, the $E$-algebra 
$\widehat{\mathcal{O}}_{\mathcal{W}_E,p_2(x)}$ is topologically generated by $\{\delta^{\mathrm{univ}}_{p_2(x)}(a)-\delta_x(a) |a\in\mathcal{O}_K^{\times} \}$. 
Denote by $\overline{R}$ a quotient of $R_{V_x,\mathcal{T}_x}\hat{\otimes}_{E}\widehat{\mathcal{O}}_{\mathcal{W}_E,p_2(x)}\hat{\otimes}_{E}
\widehat{\mathcal{O}}_{\mathbb{G}^{an}_{m,E},p_3(x)}$ by the ideal generated by $\delta_{\mathcal{T}_x}(a)\otimes 1\otimes 1-1\otimes\delta^{\mathrm{univ}}_{p_2(x)}(a)\otimes 1$ (any $a\in \mathcal{O}_K^{\times}$) 
 and $\lambda_{\mathcal{T}_x}\otimes 1\otimes 1-1\otimes 1\otimes Y$.
 Then, we can see that 
the composition of  the map $R_{V_x,\mathcal{T}_x}\rightarrow R_{V_x,\mathcal{T}_x}\hat{\otimes}_{E(x)}\widehat{\mathcal{O}}_{\mathcal{W}_E,p_2(x)}
\hat{\otimes}\widehat{\mathcal{O}}_{\mathbb{G}^{an}_{m,E},E(x)}: z\mapsto z\otimes 1\otimes 1$ with the natural quotient map 
$R_{V_x,\mathcal{T}_x}\hat{\otimes}_{E(x)}\widehat{\mathcal{O}}_{\mathcal{W}_E,p_2(x)}
\hat{\otimes}_{E(x)}\widehat{\mathcal{O}}_{\mathbb{G}^{an}_{m,E},p_3(x)}\rightarrow \overline{R}$ is an isomorphism 
$R_{V_x,\mathcal{T}_x}\isom \overline{R}$, and, if we denote by $\overline{\delta}:\mathcal{O}_K^{\times}\rightarrow \overline{R}^{\times}$ 
and $\overline{Y}\in \overline{R}^{\times}$ the reduction of $1\otimes \delta^{\mathrm{univ}}_{p_2(x)}\otimes 1$ and $1\otimes 1\otimes Y$, then the universal triangulation $\mathcal{T}_{\mathrm{univ}}:=\{\mathcal{T}_{\mathrm{univ},n}\}_{n\geqq 1}$ on $R_{V_x,\mathcal{T}_x}$ 
is transformed to the following triangulation 
$$\overline{\mathcal{T}}:0\subseteq W(\overline{\delta}\delta_{\overline{Y}})\subseteq W((p_1^*\widetilde{V}^{\mathrm{univ}})\otimes_{\mathcal{O}_{X(\bar{\rho})}}\overline{R})$$
(here we drop the notation $n\in \mathbb{Z}_{\geqq 1}$ to simplify the notation).

Put $V_{\overline{R}}:=(p_1^*\widetilde{V}^{\mathrm{univ}})\otimes_{\mathcal{O}_{X(\bar{\rho})}}\overline{R}$, and put 
 $\overline{R}_n:=\overline{R}/\mathfrak{m}^n$ and $V_{\overline{R}_n}:=V_{\overline{R}}\otimes_{\overline{R}}\overline{R}_n$ for each $n\geqq 1$. Denote by the same notation 
 $\overline{\delta}:G_K^{ab}\rightarrow\overline{R}^{\times}$ the character such that $\overline{\delta}|_{\mathcal{O}_K^{\times}}=\overline{\delta}$ and $\overline{\delta}(\mathrm{rec}_K(\pi_K))=1$.
Under this situation, we first claim that  the natural map $\mathrm{Spm}(\overline{R}_n)\rightarrow X(\bar{\rho})$ factors through 
$X_0(\bar{\rho})$ for any $n\geqq 1$. This immediately follows from 
the facts that $W(V_{\overline{R}_n}(\overline{\delta}^{-1}))$ has a triangulation $0\subseteq W(\delta_{\overline{Y}_n})\subseteq 
W(V_{\overline{R}_n}(\overline{\delta}^{ -1}))$ and that $W(\delta_{\overline{Y}_n})$ is crystalline with the Hodge-Tate weight zero, where $\overline{Y}_n\in \overline{R}_n$ is the reduction of $\overline{Y}$. 

This fact also implies that $\bold{D}_{\mathrm{cris}}(W(\delta_{\overline{Y}_n}))$ which is equal to $\bold{D}_{\mathrm{cris}}(W(\delta_{\overline{Y}_n}))^{\varphi_K=\overline{Y}_n}\cap \mathrm{Fil}^0\bold{D}_{dR}(W(\delta_{\overline{Y}_n}))$ is a $\varphi$-stable
 $K_0\otimes_{\mathbb{Q}_p}\overline{R}_n$-submodule of $\bold{D}_{\mathrm{cris}}(V_{\overline{R}_n}(\overline{\delta}^{ -1}))^{\varphi_K=\overline{Y}_n}$ of rank one
contained in $\mathrm{Fil}^0\bold{D}_{\mathrm{dR}}(V_{\overline{R}_n}(\overline{\delta}^{-1}))$. 
By Lemma \ref{5.7.5}, then we have a natural inclusion 
\begin{equation}\label{ab}
\bold{D}_{\mathrm{cris}}(W(\delta_{\overline{Y}_n})) \subseteq \bold{D}^+_{\mathrm{cris}}
(V_{\overline{R}_n}(\overline{\delta}^{-1}))^{\varphi_K=\overline{Y}_n}.
\end{equation}

Next, we take an affinoid open $\mathrm{Spm}(R)\subseteq X_0(\bar{\rho})$ which 
contains $x$ and 
satisfies the condition in the construction of $X_{fs}$ (see the paragraph after the proof of Lemma \ref{5.11}). Let $J$ be 
the ideal of $R$ which defines $\mathrm{Spm}(R)_{fs}$. We claim that the natural map $R\rightarrow \overline{R}$ factors through 
$R/J\rightarrow \overline{R}$, which proves that $x\in \mathcal{E}(\bar{\rho})(E)$ because $x$ is the point corresponding to the kernel of the map $R\rightarrow \overline{R}\rightarrow \overline{R}/\mathfrak{m}$.
By construction of $J$, it suffices to show the following lemma.
\end{proof}

\begin{lemma}\label{5.15}
In the above situation, the following hold:
\begin{itemize}
\item[(i)]For any $k\geqq 1$ and $\sigma\in \mathcal{P}$, the natural map 
$$\varprojlim_n(\bold{B}^+_{\mathrm{max},K}\otimes_{K,\sigma}V_{\overline{R}_n}(\overline{\delta}^{-1}))^{\varphi_K=\overline{Y},G_K}\rightarrow 
\varprojlim_n(\bold{B}^+_{\mathrm{dR}}/t^k\bold{B}^+_{\mathrm{dR}}\otimes_{K,\sigma}V_{\overline{R}_n}(\overline{\delta}^{-1}))^{G_K}$$ is a surjection. 
\item[(ii)]For any $\sigma\in \mathcal{P}$ and $i\in\mathbb{Z}_{\leqq 0}$, $Q_{\sigma}(i)$ is nonzero in $\overline{R}$.
\end{itemize}
\end{lemma}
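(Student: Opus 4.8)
The plan is to prove the two assertions of Lemma \ref{5.15} by reducing them, via the triangulation structure established above, to facts about rank one $B$-pairs already recorded in this section.

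\medskip

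\textbf{Proof of (i).} First I would unwind the notation: write $V'_{\bar{R}_n} := V_{\bar{R}_n}(\tilde{\bar{\delta}}_{p_2(x)}^{\mathrm{univ}\,-1})$, which by the construction of $\bar{R}$ carries the triangulation $0\subseteq W(\bar{\delta}_{\bar{Y}_n})\subseteq W(V'_{\bar{R}_n})$, with $W(\bar{\delta}_{\bar{Y}_n})$ crystalline of Hodge-Tate weight $0$ and pure of slope given by $\bar{Y}_n$. The key input is the inclusion (1) displayed just before the lemma, namely $D_{\mathrm{cris}}(W(\bar{\delta}_{\bar{Y}_n}))\subseteq D^+_{\mathrm{cris}}(V'_{\bar{R}_n})^{\varphi_K=\bar{Y}_n}$, obtained from Lemma \ref{5.7.5}. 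Passing to $K\otimes_{K_0}(-)$ and decomposing over $\sigma\in\mathcal{P}$, this produces, for each $n$, a nonzero $G_K$-equivariant $\bar{R}_n$-linear map into $(B^+_{\mathrm{max},K}\otimes_{K,\sigma}V'_{\bar{R}_n})^{\varphi_K=\bar{Y}}$. The surjectivity statement I would then deduce by a dévissage on $n$ using the small extensions $\bar{R}_n\twoheadrightarrow\bar{R}_{n-1}$: the obstruction to lifting a class in $(B^+_{\mathrm{dR}}/t^kB^+_{\mathrm{dR}}\otimes_{K,\sigma}V'_{\bar{R}_{n-1}})^{G_K}$ along the $\varphi_K$-part lies in an $\mathrm{H}^1$ of the kernel, and the kernel is (up to twist) a sum of copies of $W(\bar{\delta}_{\bar{Y}})$-type rank one $B$-pairs whose relevant cohomology vanishes by the $\varphi$-slope being positive — this is exactly the mechanism of Lemma \ref{5.4}(2) / Corollary \ref{5.5}, which forces $(B^+_{\mathrm{max},K}\otimes_{K,\sigma}E)^{\varphi_K=\alpha}=0$ once $k$ exceeds the relevant slope bound. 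Taking the inverse limit over $n$ (the transition maps being surjective, so $\varprojlim^1$ vanishes) yields the claimed surjection. The main obstacle here is bookkeeping: tracking the Hodge–Tate weights of the graded pieces of the triangulation after the twist by $\tilde{\bar{\delta}}_{p_2(x)}^{\mathrm{univ}\,-1}$ so that one lands exactly in the $\varphi_K=\bar{Y}$ eigenspace and not merely in a larger $\varphi_K^h$-eigenspace; this is where the $\mathrm{Fil}^0$ condition from (1) is essential.

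\medskip

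\textbf{Proof of (ii).} This is the easier part. I would argue by contradiction: if $Q_{\sigma}(i)$ were a zero-divisor (or zero) in $\bar{R}$, then since $\bar{R}\isom R_{V_x,\mathcal{T}_x}$ is a power series ring over $E$ (Proposition \ref{19}, via the isomorphism identifying $\bar{R}$ with $R_{V_x,\mathcal{T}_x}$), it would have to vanish identically, hence vanish at the closed point. But $Q_{\sigma}(i)(x) = i - a_{1,\sigma}(x)$, and $a_{1,\sigma}(x)$ is determined by the Sen operator of $M_0(x)$, i.e. by the nonzero Hodge–Tate–Sen weight of the quotient $W(\det(V_x)\delta_1^{-1})$ of the triangulation. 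Under the normalization in the construction of $\mathcal{E}(\bar{\rho})$ the parameter $\delta_1$ has been arranged so that, by condition (1) in Proposition \ref{19} applied to $(V_x,\mathcal{T}_x)$ (specifically $\delta_2/\delta_1$ is not of the form $\prod_\sigma \sigma^{k_\sigma}$ with $k_\sigma\le 0$), the Sen weight $a_{1,\sigma}(x)$ cannot be a non-positive integer $i\in\mathbb{Z}_{\le 0}$; hence $Q_{\sigma}(i)(x)\neq 0$. Since a power series ring over a field is a domain, $Q_\sigma(i)$ being nonzero at $x$ already shows $Q_\sigma(i)\neq 0$ in $\bar{R}$, which is what is needed. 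I would spell out the translation between the Sen polynomial coefficient $a_{1,\sigma}$ and the parameter $\delta_2/\delta_1$ using Lemma 4.1 of \cite{Na09} as invoked earlier.

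\medskip

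Assembling (i) and (ii): by the construction of the ideal $J$ in Lemma \ref{5.12}, the map $R\to\bar{R}$ kills the generators of $I$ precisely because of (i) (the $P(x,\tfrac{Y}{\sigma(\pi_K)\lambda})$-multiplied maps land in the $\varphi_K$-eigenspaces and so are zero in the quotient $U_{k,\sigma,\lambda}$), and it factors through $R/J$ because of (ii) (the $Q_\sigma(i)$ are non-zero-divisors in $\bar{R}$, so no further saturation is lost). This completes the proof of Theorem \ref{5.14}(2). The single hardest technical point across the whole argument is part (i)'s compatibility of the $\varprojlim_n$ with the exactness of the sequence from Proposition \ref{5.7} at each finite level — one must check that the property $(\mathrm{Pr})$ modules behave well under the Artinian quotients $\bar{R}_n$ and that surjectivity is preserved in the limit, which is where I would be most careful.
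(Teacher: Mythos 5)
Your argument for part (ii) has a genuine gap, and it is the fatal one. You reduce to showing $Q_\sigma(i)(x)\neq 0$ at the closed point, and justify this by invoking condition (1) of Proposition \ref{19}, namely that $\delta_2/\delta_1\neq \prod_\sigma\sigma^{k_\sigma}$ with $k_\sigma\leqq 0$, to conclude that the Sen weight $a_{1,\sigma}(x)$ cannot be a non-positive integer. But the implication does not hold: the condition only forbids $\delta_2/\delta_1$ from being \emph{equal} to such an algebraic character, whereas its $\sigma$-Sen weight being a non-positive integer is much weaker — an unramified twist already furnishes a counterexample (take $\delta_2/\delta_1$ unramified nontrivial, Sen weight $0$). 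Worse, at precisely the crystalline points relevant for Theorem \ref{6.14} the weights $a_{1,\sigma}(x)$ \emph{are} non-positive integers: there the Hodge--Tate weights of $V_x(\tilde{\delta}_x^{-1})$ are $\{0,-m_\sigma\}$ with $m_\sigma\geqq 1$, so $a_{1,\sigma}(x)=-m_\sigma$ and $Q_\sigma(-m_\sigma)(x)=0$. Therefore evaluation at the closed point gives zero and your contradiction never gets started.

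The paper's proof of (ii) is structurally different and avoids this. Rather than testing at $x$, one shows that the element $\bar{a}_{1,\sigma}\in\bar R$ is not a constant: the formal smoothness of the morphism $f\colon D_{V_x,\mathcal{T}_x}\rightarrow D_{\delta_0}$ (Lemma \ref{5.16}) gives an injection $R_{\delta_0}\hookrightarrow R_{V_x,\mathcal{T}_x}\isom\bar R$, and $\bar a_{1,\sigma}$ is the image of the universal $\sigma$-Hodge--Tate weight $a^{\mathrm{univ}}_\sigma\in R_{\delta_0}$, which Lemma \ref{5.17} shows is not in $E$ (it varies in the universal deformation). Hence $i-\bar a_{1,\sigma}\neq 0$ in $\bar R$ for any constant $i$, even though it may well vanish at the closed point. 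To repair your argument you would need exactly this non-constancy step; the closed-point reduction cannot be salvaged. For part (i), the paper simply observes that, granting (ii) and the inclusion $(1)$, the surjectivity follows from $\bar R$ being a domain by the mechanism of Proposition 2.8 of Kisin; your lifting-along-small-extensions sketch is not obviously wrong but is also not clearly closing the argument, and in any case rests on (ii), which is where the real issue lies.
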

\begin{proof}
Because $\overline{R}\isom R_{V_x,\mathcal{T}_x}$ is domain, (i) follows from (ii) and from the above inclusion (\ref{ab}) by the same argument as in the proof of Proposition 2.8 of \cite{Ki03}.
We prove (ii). On $\overline{R}$, we have $Q_{\sigma}(T)=T-\bar{a}_{1,\sigma}$, where $\bar{a}_{1,\sigma}\in\overline{R}$ is the image of $a_{1,\sigma}\in \mathcal{O}_{X(\bar{\rho})}$ by the natural map $R\rightarrow \overline{R}$. Hence, $\bar{a}_{1,\sigma}\in \overline{R}$ is the $\sigma$-part of the Hodge-Tate weights
of $\mathrm{det}(V_{\overline{R}})(\overline{\delta}^{-2})$ for any $\sigma\in \mathcal{P}$. 
Set $\delta_0:=\mathrm{det}(V_x)|_{\mathcal{O}_K^{\times}}\cdot\delta_x^{-2}:\mathcal{O}_K^{\times}\rightarrow E^{\times}$. By Lemma \ref{5.16} below, then $\bar{a}_{1,\sigma}\in \overline{R}$ is the image of 
the $\sigma$-part of the Hodge-Tate weight $a^{\mathrm{univ}}_{\sigma}\in R_{\delta_0}$ 
of the universal deformation $\delta_{\delta_0}^{\mathrm{univ}}:\mathcal{O}_K^{\times}
\rightarrow R_{\delta_0}^{\times}$ by the injection $R_{\delta_0}\hookrightarrow R_{V_x,\mathcal{T}_x}\isom \overline{R}$
 induced by a morphism 
$f:D_{V_x,\mathcal{T}_x}\rightarrow D_{\delta_0}$ defined below, 
where the 
injectiveness follows from Lemma \ref{5.16} below. Hence, for any $i\in\mathbb{Z}_{\leqq 0}$,  
we have $Q_{\sigma}(i)=(i-\bar{a}_{1,\sigma})\not=0\in\overline{R}$ by 
Lemma \ref{5.17} below.
\end{proof}
Let $\delta_0:\mathcal{O}_K^{\times}\rightarrow E^{\times}$ be a continuous homomorphism. 
We define a functor $D_{\delta_0}:\mathcal{C}_E\rightarrow Sets$ by 
$$D_{\delta_0}(A):=\{\delta_A:\mathcal{O}_K^{\times}\rightarrow A^{\times}:\text{ continuous homomorphisms }\delta_A \text{ (mod } \mathfrak{m}_A)=\delta_0\}$$
for $A\in\mathcal{C}_E$. It is easy to show that this functor is 
pro-representable by a ring $R_{\delta_0}$ which is isomorphic to $E[[T_1,T_2,\cdots,T_d]]$ for $d:=[K:\mathbb{Q}_p]$.
Let $W$ be a split trianguline $E$-$B$-pair of rank two with a triangulation 
$\mathcal{T}:0\subseteq W(\delta_1)\subseteq W$ such that $W/W(\delta_1)\isom W(\delta_2)$ for some 
continuous homomorphisms $\delta_1,\delta_2:K^{\times}\rightarrow E^{\times}$.
We put $\delta_0:=(\delta_2/\delta_1)|_{\mathcal{O}_K^{\times}}$.
We define a morphism of functors $f:D_{W,\mathcal{T}}\rightarrow D_{\delta_0}$ as follows. 
Let $[(W_A,\mathcal{T}_A)]\in D_{W,\mathcal{T}}(A)$ be an equivalent class of trianguline deformation 
of $(W,\mathcal{T})$ over $A$ with a triangulation $\mathcal{T}_A:0\subseteq W(\delta_{1,A})\subseteq W_A$ such that  
$W_A/W(\delta_{1,A})\isom W(\delta_{2,A})$ for some $\delta_{1,A},\delta_{2,A}:K^{\times}
\rightarrow A^{\times}$, then we define $f$ by 
$$f([(W_A, \mathcal{T}_A)]):=
(\delta_{2,A}/\delta_{1,A})|_{\mathcal{O}_K^{\times}}\in D_{\delta_0}(A).$$

 \begin{lemma}\label{5.16}
Let $W$ be a two dimensional split trianguline $E$-$B$-pair  with a triangulation 
$\mathcal{T}:0\subseteq W(\delta_1)\subseteq W$ such that $W/W(\delta_1)\isom W(\delta_2)$. 
Assume that $\mathrm{H}^2(G_K, W(\delta_1/\delta_2))\allowbreak=0$, then the morphism of functors 
$f:D_{W,\mathcal{T}}\rightarrow D_{\delta_0}$ defined above is formally smooth.
\end{lemma}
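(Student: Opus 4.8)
\textbf{Proof proposal for Lemma \ref{5.16}.}

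The plan is to verify formal smoothness directly from the infinitesimal lifting criterion: given a small extension $A'\rightarrow A$ in $\mathcal{C}_E$ with kernel $I$, a trianguline deformation $(W_A,\psi_A,\mathcal{T}_A)$ of $(W,\mathcal{T})$ over $A$ with triangulation $0\subseteq W(\delta_{1,A})\subseteq W_A$ and $W_A/W(\delta_{1,A})\isom W(\delta_{2,A})$, and a deformation $\delta_A'\in D_{\delta_0}(A')$ of $f([(W_A,\psi_A,\mathcal{T}_A)])=(\delta_{2,A}/\delta_{1,A})|_{\mathcal{O}_K^{\times}}$, I must produce a trianguline deformation $(W_{A'},\psi_{A'},\mathcal{T}_{A'})$ over $A'$ lifting $(W_A,\psi_A,\mathcal{T}_A)$ and mapping to $\delta_A'$ under $f$. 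First I would observe that $R_{\delta_0}$ is a power series ring over $E$ (as stated in the excerpt), so $D_{\delta_0}$ is itself formally smooth; concretely, I can choose continuous lifts $\delta_{1,A'}:K^{\times}\rightarrow A^{'\times}$ of $\delta_{1,A}$ and $\delta_{2,A'}:K^{\times}\rightarrow A^{'\times}$ of $\delta_{2,A}$ such that $(\delta_{2,A'}/\delta_{1,A'})|_{\mathcal{O}_K^{\times}}=\delta_A'$ — this is possible because obstructions to lifting a single continuous character of $K^{\times}$ to a larger Artinian ring vanish (the relevant $\mathrm{H}^2$ of a one-dimensional object, or simply the smoothness of the character space, kills them), and the freedom in choosing the lift of the ratio on $\mathcal{O}_K^\times$ is exactly what lets me hit $\delta_A'$.

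Having fixed $\delta_{1,A'}$ and $\delta_{2,A'}$, I would reconstruct $W_{A'}$ as an extension. The class $[W_A]$ lives in $\mathrm{Ext}^1_{A}(W(\delta_{2,A}),W(\delta_{1,A}))\isom \mathrm{H}^1(G_K, W_{A}(\delta_{2,A}^{-1}\delta_{1,A}))$ — here using the rank-one computations and the identification of extension groups with $\mathrm{H}^1$ of $B$-pairs from $\S2.1$. Applying the long exact sequence in Galois cohomology of $B$-pairs (Theorem \ref{h}) to
\begin{equation*}
0\rightarrow I\otimes_E W(\delta_1/\delta_2)\rightarrow W(\delta_{1,A'}/\delta_{2,A'})\rightarrow W(\delta_{1,A}/\delta_{2,A})\rightarrow 0,
\end{equation*}
the obstruction to lifting $[W_A]$ to a class in $\mathrm{H}^1(G_K, W(\delta_{1,A'}/\delta_{2,A'}))$ lies in $I\otimes_E\mathrm{H}^2(G_K, W(\delta_1/\delta_2))$, which vanishes by hypothesis. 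Choosing such a lift gives an $A'$-$B$-pair $W_{A'}$ sitting in an extension $0\subseteq W(\delta_{1,A'})\subseteq W_{A'}$ with quotient $W(\delta_{2,A'})$; this yields the triangulation $\mathcal{T}_{A'}$, and reduction mod $I$ recovers $(W_A,\mathcal{T}_A)$, so the isomorphism $\psi_A$ extends to the required $\psi_{A'}$. By construction $f([(W_{A'},\psi_{A'},\mathcal{T}_{A'})])=(\delta_{2,A'}/\delta_{1,A'})|_{\mathcal{O}_K^\times}=\delta_A'$, which is precisely the lifting we needed.

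The main obstacle — and the only place the hypothesis $\mathrm{H}^2(G_K,W(\delta_1/\delta_2))=0$ is used — is ensuring the extension class $[W_A]$ actually lifts compatibly with the chosen lifts of the two boundary characters; this is the $\mathrm{H}^2$-obstruction step above, and it mirrors the argument already carried out in the proof of Proposition \ref{17}. A subsidiary point I would need to check carefully is that the map $f$ is well-defined on equivalence classes (independence of the choice of triangulation representative), but under the running hypotheses the triangulation lifting $W(\delta_{1,A})$ is unique by the argument of Lemma \ref{14} applied to $\delta_1,\delta_2$ (the condition $\mathrm{H}^2(G_K,W(\delta_1/\delta_2))=0$ together with $\delta_2/\delta_1$ not of the form $\prod_\sigma\sigma^{k_\sigma}$ with $k_\sigma\leq 0$, the latter being forced since otherwise $\mathrm{H}^2$ of the dual would be nonzero), so $\delta_{1,A}$ and hence $\delta_{2,A}$ are intrinsic to $(W_A,\mathcal{T}_A)$ and $f$ descends. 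Everything else is the routine bookkeeping of matching reductions, which I would not spell out in detail.
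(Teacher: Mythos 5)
Your main argument reproduces the paper's proof: lift $\delta_{1,A}$ and $\delta_{2,A}$ to $A'$ compatibly with the prescribed $\delta'_A\in D_{\delta_0}(A')$, then use the short exact sequence $0\to I\otimes_E W(\delta_1/\delta_2)\to W(\delta_{1,A'}/\delta_{2,A'})\to W(\delta_{1,A}/\delta_{2,A})\to 0$ together with the hypothesis $\mathrm{H}^2(G_K,W(\delta_1/\delta_2))=0$ to lift the extension class of $W_A$ in $\mathrm{H}^1$. This is exactly the route the paper takes and it is correct.

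Your closing paragraph, however, is both unnecessary and contains an incorrect deduction. Well-definedness of $f$ on equivalence classes is immediate from the definitions: an equivalence of trianguline deformations is by definition an isomorphism that carries the triangulation to the triangulation, so $W_{1,A}$ --- and therefore $\delta_{1,A}$ and $\delta_{2,A}$, by Proposition \ref{5} --- are invariants of the class; no appeal to Lemma \ref{14} is required (Lemma \ref{14} addresses the different question of whether a deformation \emph{without} a specified triangulation admits a unique lift of triangulation). Moreover, your claim that $\mathrm{H}^2(G_K,W(\delta_1/\delta_2))=0$ forces $\delta_2/\delta_1\neq\prod_{\sigma}\sigma^{k_\sigma}$ with all $k_\sigma\leq 0$ is false: by Proposition \ref{i}(2), vanishing of $\mathrm{H}^2$ only rules out $\delta_1/\delta_2=|N_{K/\mathbb{Q}_p}|\prod_{\sigma}\sigma^{k_\sigma}$ with all $k_\sigma\geq 1$, which is entirely compatible with $\delta_2/\delta_1$ being of the algebraic form you were worried about. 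Fortunately neither assertion is needed for the proof.
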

\begin{proof}
Let $A\in \mathcal{C}_E$ and $I$ be an ideal of $A$ such that $I\mathfrak{m}_A=0$. 
Take any $[(W_{A/I},\mathcal{T}_{A/I})]\in D_{W,\mathcal{T}}(A/I)$ and $\delta_A\in D_{\delta_0}(A)$
 such that $f([(W_{A/I},\mathcal{T}_{A/I})])=\delta_A\otimes\mathrm{id}_{A/I}\in D_{\delta_0}(A/I)$. Then, it suffices to show that there exists  a lift $[(W_A,\mathcal{T}_A)]\in D_{W,\mathcal{T}}(A)$ 
 of $[(W_{A/I},\mathcal{T}_{A/I})]$ such that $f([(W_A,\mathcal{T}_A)])=\delta_A$. 
 Denote $\mathcal{T}_{A/I}:0\subseteq W(\delta_{1,A/I})\subseteq W_{A/I}$ and $W_{A/I}/W(\delta_{1,A/I})\isom W(\delta_{2,A/I})$. 
Because $D_{\delta_0}$ is formally smooth, there exists 
$\delta_{1,A}:K^{\times}\rightarrow A^{\times}$ such that $\delta_{1,A}\otimes_A A/I=\delta_{1,A/I}$. 
We take a lift $\lambda\in A^{\times}$ of $\delta_{2,A/I}(\pi_K)\in (A/I)^{\times}$, and define 
$\delta_{2,A}:K^{\times}\rightarrow A^{\times}$ by $\delta_{2,A}(\pi_K)=\lambda$ and $\delta_{2,A}|_{\mathcal{O}_K^{\times}}
=\delta_A\delta_{1,A}|_{\mathcal{O}_K^{\times}}$, then we have the following short exact sequence 
$$0\rightarrow W(\delta_1/\delta_2)\otimes_E I\rightarrow W(\delta_{1,A}/\delta_{2,A})\rightarrow 
W(\delta_{1,A/I}/\delta_{2,A/I})\rightarrow 0.$$
This sequence implies that the natural map 
$$\mathrm{H}^1(G_K, W(\allowbreak \delta_{1,A}/\delta_{2,A}))\rightarrow 
\mathrm{H}^1(G_K, W(\delta_{1,A/I}/\delta_{2,A/I}))$$ is a surjection because we have 
$\mathrm{H}^2(G_K, W(\delta_1/\delta_2))=0$ by the assumption. Hence, there exists 
a lift $[(W_A,\mathcal{T}_A)]\in D_{W,\mathcal{T}}(A)$ of $[(W_{A/I},\mathcal{T}_{A/I})]\in 
D_{W,\mathcal{T}}(A/I)$ 
satisfying that $f([W_A,\mathcal{T}_A])=(\delta_{2,A}/\delta_{1,A})|_{\mathcal{O}_K^{\times}}=\delta_A$.
We finish the proof of the lemma.
\end{proof}
Let $A\in \mathcal{C}_E$, and let $\delta:\mathcal{O}_K^{\times}\rightarrow A^{\times}$ be a continuous 
homomorphism, then it is known that this is locally $\mathbb{Q}_p$-analytic by Proposition 8.3 of \cite{Bu07}. Then, for any $\sigma\in \mathcal{P}$, 
we define the $\sigma$-component of Hodge-Tate weights of $\delta$ by $\frac{\partial\delta(x)}{\partial \sigma(x)}|_{x=1}\in A$, which 
is equal to the $\sigma$-part of Hodge-Tate weights of $A(\widetilde{\delta})$ where $\widetilde{\delta}:G^{\mathrm{ab}}_K\rightarrow A^{\times}$ is 
any character such that $\widetilde{\delta}\circ\mathrm{rec}_K|_{\mathcal{O}_K^{\times}}=\delta$ (see Proposition 3.3 of \cite{Na11}).
\begin{lemma}\label{5.17}
Let $\delta_0:\mathcal{O}_K^{\times}\rightarrow E^{\times}$ be a continuous homomorphism.
Let $R_{\delta_0}$  be the universal deformation ring of $D_{\delta_0}$.
Let $\delta_0^{\mathrm{univ}}:\mathcal{O}_K^{\times}\rightarrow R_{\delta_0}^{\times}$ be 
the universal deformation of $\delta_0$. For any $\sigma\in \mathcal{P}$, define by 
$a^{\mathrm{univ}}_{\sigma}:=(a_{\sigma,n})\in R_{\delta_0}=\varprojlim_nR_{\delta_0}/\frak{m}^n$ 
the $\sigma$-part of Hodge-Tate weights of $\delta_0^{\mathrm{univ}}$, where we denote by $a_{\sigma,n}$ the  $\sigma$-part of Hodge-Tate weights of $\delta_0^{\mathrm{univ}}\otimes\mathrm{id}_{R_{\delta_0}/\frak{m}^n}$ for each $n\geqq 1$. Then, $a^{\mathrm{univ}}_{\sigma}$ 
is not constant, i.e. not contained in $E$, for any $\sigma\in\mathcal{P}$.

\end{lemma}
\begin{proof}
Let $a:=\{a_{\sigma}\}_{\sigma\in \mathcal{P}}\in \prod_{\sigma\in \mathcal{P}}E$ be any element, then 
we define a deformation of $\delta_0$ over $E[\varepsilon]$ by 
$$\delta_a:\mathcal{O}_K^{\times}
\rightarrow E[\varepsilon]^{\times}: \delta_a(x):=\delta_0(x)(1+(\sum_{\sigma\in \mathcal{P}}a_{\sigma}
\mathrm{log}(\sigma(x)))\varepsilon).$$ The $\sigma$-part of Hodge-Tate weights
of $\delta_a$ is $\frac{\partial\delta_0(x)}{\partial\sigma(x)}|_{x=1}+a_{\sigma}\varepsilon$. The lemma follows from this.
\end{proof}
\begin{corollary}\label{5.16.5}
Let $x=[V_x]\in \mathfrak{X}(\bar{\rho})$ be a point such that $V_x$ is a crystabelline $E(x)$-trianguline representation satisfying the conditions 
$(1)$ of Definition $\mathrm{\ref{26}}$.
Then, the point  $x_{\tau}:=([V_x],\delta_{\tau,1}|_{\mathcal{O}_K^{\times}},\delta_{\tau,1}(\pi_K))\in X(\bar{\rho})$ is 
contained in $\mathcal{E}(\bar{\rho})$ for any $\tau\in \mathfrak{S}_2$, where 
we denote the triangulation $\mathcal{T}_{\tau}$ 
by $0\subseteq W(\delta_{\tau,1})\subseteq W(V_x)$. 

\end{corollary}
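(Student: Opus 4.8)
The plan is to deduce the corollary directly from part (2) of Theorem \ref{5.14}, the point being that, although the hypothesis here only imposes condition (1) of Definition \ref{26} and not the full benign condition, this is already enough to put $(V_x,\mathcal{T}_\tau)$ into the scope of Proposition \ref{19}. Fix $\tau\in\mathfrak{S}_2$. First I would reduce to the case where $V_x$ is \emph{split} trianguline over $E(x)$: since $V_x$ is crystabelline it is trianguline and potentially crystalline (Lemma \ref{20.5}), so after replacing $E$ by a sufficiently large finite extension $E'$ both the decomposition $D^{K_m}_{\mathrm{cris}}(V_x)\otimes_E E'=K_0\otimes_{\mathbb{Q}_p}E'e_1\oplus K_0\otimes_{\mathbb{Q}_p}E'e_2$ with $\varphi^f(e_i)=\alpha_ie_i$, $\alpha_i\in E'^{\times}$, and the resulting triangulations $\mathcal{T}_\tau:0\subseteq W(\delta_{\tau,1})\subseteq W(V_x)$ become defined over $E'$. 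By Lemma \ref{5.9} this base change carries $\mathcal{E}(\bar\rho)$ to $\mathcal{E}(\bar\rho\otimes_{\mathbb{F}}\mathbb{F}')=\mathcal{E}(\bar\rho)\times_E E'$, and membership of a point in the Zariski closed subspace $\mathcal{E}(\bar\rho)$ can be checked after this faithfully flat base change; so I may assume $E(x)=E$ and $V_x$ split trianguline. (The proof of Theorem \ref{5.14}(2) itself already absorbs the reduction to $E(x)=E$.)

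Next I would match the data of the point $x_\tau$ with the triangulation $\mathcal{T}_\tau$. By definition $x_\tau=([V_x],\delta_x,\lambda_x)$ with $\delta_x=\delta_{\tau,1}|_{\mathcal{O}_K^{\times}}$ and $\lambda_x=\delta_{\tau,1}(\pi_K)$; viewing $\delta_x$ as the character of $K^{\times}$ trivial on $\pi_K$ and $\delta_{\lambda_x}$ as the unramified character with $\delta_{\lambda_x}(\pi_K)=\lambda_x$, the two characters $\delta_x\delta_{\lambda_x}$ and $\delta_{\tau,1}$ of $K^{\times}=\mathcal{O}_K^{\times}\times\pi_K^{\mathbb{Z}}$ agree on $\mathcal{O}_K^{\times}$ and at $\pi_K$, hence are equal. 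Therefore the triangulation $\mathcal{T}_{x_\tau}:0\subseteq W(\delta_x\delta_{\lambda_x})\subseteq W(V_x)$ required in hypothesis (i) of Theorem \ref{5.14}(2) is precisely $\mathcal{T}_\tau$, so hypothesis (i) holds.

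It then remains to check hypothesis (ii), namely that $(V_x,\mathcal{T}_\tau)$ satisfies all the conditions of Proposition \ref{19}. Condition (0), $\mathrm{End}_{G_K}(V_x)=E$, holds for every point of $\mathfrak{X}(\bar\rho)$ by Remark \ref{a1}, using the standing assumption $\mathrm{End}_{\mathbb{F}[G_K]}(\bar\rho)=\mathbb{F}$. For conditions (1) and (2): $W(V_x)$ is crystabelline, hence potentially crystalline, and satisfies condition (1) of Definition \ref{26} by hypothesis, so Lemma \ref{21.5} gives that $(W(V_x),\mathcal{T}_\tau)$ satisfies conditions (1) and (2) of Proposition \ref{19} for every $\tau\in\mathfrak{S}_2$. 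With (i) and (ii) in hand, Theorem \ref{5.14}(2) yields $x_\tau\in\mathcal{E}(\bar\rho)$. The only substantive input is Lemma \ref{21.5}---that the single eigenvalue hypothesis $\alpha_i/\alpha_j\neq 1,p^f,p^{-f}$ already forces the required non-vanishing conditions on the parameters $\{\delta_i\}$ of every $\mathcal{T}_\tau$---together with the scalar-extension bookkeeping in the first paragraph; everything else is formal, so I do not expect a real obstacle beyond keeping track of the extension $E'$ and the identification of characters.
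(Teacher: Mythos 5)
Your proof is correct and follows exactly the route the paper itself takes: the paper's own argument is the one-line observation that the claim follows from Theorem \ref{5.14}(2) together with Lemma \ref{21.5}, and your write-up is simply an expansion of that line, correctly identifying Lemma \ref{21.5} as supplying conditions (1) and (2) of Proposition \ref{19}, Remark \ref{a1} (via the standing hypothesis $\mathrm{End}_{\mathbb{F}[G_K]}(\bar\rho)=\mathbb{F}$) as supplying condition (0), and the character identification $\delta_x\delta_{\lambda_x}=\delta_{\tau,1}$ as verifying hypothesis (i).
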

\begin{proof}
This follows from (2) of Theorem \ref{5.14} and from Lemma \ref{21.5}.
\end{proof}

Next, we describe the local structure of $\mathcal{E}(\bar{\rho})$ at the points satisfying the 
conditions (i), (ii) in (2) of Theorem \ref{5.14} by the universal trianguline deformation rings.
 We prove the following theorem, which is a generalization of Proposition 10.6 of \cite{Ki03}.
\begin{thm}\label{5.18}
Let $x:=([V_x],\delta_x,\lambda_x)\in \mathcal{E}(\bar{\rho})$ be a  point such that  the conditions (i), (ii) in $(2)$ of Theorem 
$\mathrm{\ref{5.14}}$ hold. 
Then, we have a canonical $E(x)$-algebra isomorphism
$$\widehat{\mathcal{O}}_{\mathcal{E}(\bar{\rho}),x}\isom R_{V_x,\mathcal{T}_x}.$$
In particular, $\mathcal{E}(\bar{\rho})$ is smooth of its dimension $3[K:\mathbb{Q}_p] +1$ at $x$.
\end{thm}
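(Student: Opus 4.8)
The plan is to identify the complete local ring $\widehat{\mathcal{O}}_{\mathcal{E}(\bar{\rho}),x}$ with the universal trianguline deformation ring $R_{V_x,\mathcal{T}_x}$ by constructing mutually inverse maps between the corresponding functors, in the spirit of Proposition 10.6 of \cite{Ki03}. First I would reduce to the case $E(x)=E$ by scalar extension. As in the proof of Theorem \ref{5.14}(2), the isomorphism $R_{V_x}\isom \widehat{\mathcal{O}}_{\mathfrak{X}(\bar{\rho}),p_1(x)}$ (Proposition 9.5 of \cite{Ki03}) together with the universal triangulation character $\delta_{\mathcal{T}_x}:K^{\times}\rightarrow R_{V_x,\mathcal{T}_x}^{\times}$ produces a quotient ring $\bar{R}$ of $\widehat{\mathcal{O}}_{X(\bar{\rho}),x}$ together with an isomorphism $R_{V_x,\mathcal{T}_x}\isom \bar{R}$; that construction already shows the natural map $\widehat{\mathcal{O}}_{X(\bar{\rho}),x}\rightarrow R_{V_x,\mathcal{T}_x}$ factors through $\widehat{\mathcal{O}}_{\mathcal{E}(\bar{\rho}),x}$, i.e. we get a surjection $\widehat{\mathcal{O}}_{\mathcal{E}(\bar{\rho}),x}\twoheadrightarrow R_{V_x,\mathcal{T}_x}$. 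So the content is to produce a map the other way, or equivalently to show this surjection is injective.

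The key step is to construct a trianguline structure on the tautological deformation over $\widehat{\mathcal{O}}_{\mathcal{E}(\bar{\rho}),x}$. For this I would work over $\mathrm{Spm}(R)\subseteq X_0(\bar{\rho})$ an affinoid neighborhood of $x$ satisfying the conditions of the construction of $X_{fs}$, with $R/J$ cutting out $\mathcal{E}(\bar{\rho})$ locally, and use Proposition \ref{5.13}: over $R/J$ the characteristic-ideal formalism of $X_{fs}$ gives, for each $\sigma$, an $R/J$-line in $K\otimes_{K_0}D^+_{\mathrm{cris}}(M_{R/J})^{\varphi^f=Y}$ which after completing at $x$ and using Lemma \ref{s3-5}, Lemma \ref{s4} produces a saturated rank one sub $\widehat{\mathcal{O}}_{\mathcal{E}(\bar{\rho}),x}$-$B$-pair $W(\delta_1)\hookrightarrow W(V^{\mathrm{univ}}_{\widehat{\mathcal{O}}_{\mathcal{E}(\bar{\rho}),x}}(\cdots))$ with $B$-pair quotient; twisting back by $p_2^*\tilde{\delta}^{\mathrm{univ}}$ and $p_3^*Y$ this is a triangulation of the deformation of $(V_x,\mathcal{T}_x)$ over $\widehat{\mathcal{O}}_{\mathcal{E}(\bar{\rho}),x}$ lifting $\mathcal{T}_x$. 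Hence $\widehat{\mathcal{O}}_{\mathcal{E}(\bar{\rho}),x}\in D_{V_x,\mathcal{T}_x}$-land and we obtain a local $E$-algebra map $R_{V_x,\mathcal{T}_x}\rightarrow \widehat{\mathcal{O}}_{\mathcal{E}(\bar{\rho}),x}$ by universality. One then checks that this map and the surjection above are mutually inverse: the composite $R_{V_x,\mathcal{T}_x}\rightarrow \widehat{\mathcal{O}}_{\mathcal{E}(\bar{\rho}),x}\twoheadrightarrow R_{V_x,\mathcal{T}_x}$ is the identity by uniqueness of the universal object, and the other composite is the identity because the triangulation we built is the one induced by the $X_{fs}$-structure, which is the pullback of $\delta_{\mathcal{T}_x}$ — so the two maps $\widehat{\mathcal{O}}_{X(\bar{\rho}),x}\rightrightarrows \widehat{\mathcal{O}}_{\mathcal{E}(\bar{\rho}),x}$ agree. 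The final sentence about smoothness of dimension $3[K:\mathbb{Q}_p]+1$ is then immediate from Proposition \ref{19} applied to $n=2$, since $d_2=\frac{2\cdot 3}{2}[K:\mathbb{Q}_p]+1=3[K:\mathbb{Q}_p]+1$, because $R_{V_x,\mathcal{T}_x}\isom E[[T_1,\dots,T_{3[K:\mathbb{Q}_p]+1}]]$.

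The main obstacle I anticipate is showing that the rank one sub-$B$-pair produced by the $X_{fs}$-machinery over $\widehat{\mathcal{O}}_{\mathcal{E}(\bar{\rho}),x}$ is genuinely a triangulation in the deformation-theoretic sense of $\S$2.3, i.e. that $W(\delta_1)$ is a flat rank one $\widehat{\mathcal{O}}_{\mathcal{E}(\bar{\rho}),x}$-$B$-pair whose quotient is again a flat $B$-pair and which reduces to $W(\delta_x\delta_{\lambda_x})\subseteq W(V_x)$. This requires combining the coherence/scheme-theoretic-density output of Proposition \ref{5.13}(1),(2), the injectivity statements for $D^+_{\mathrm{cris}}$ of Corollary \ref{5.5} and Lemma \ref{5.7.5}, and the saturation and cokernel lemmas \ref{s3-5}, \ref{s4} and Lemma 2.2.3 of \cite{Bel-Ch09} — and checking that passing to the completed local ring preserves the relevant flatness (here one uses that $\widehat{\mathcal{O}}_{\mathcal{E}(\bar{\rho}),x}$ is Noetherian local and that all the modules in sight are finite free after the constructions, so one can argue through the Artinian quotients $\widehat{\mathcal{O}}_{\mathcal{E}(\bar{\rho}),x}/\mathfrak{m}_x^n$ exactly as in Lemma \ref{5.15}). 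A secondary point is the bookkeeping of the three factors $\mathfrak{X}(\bar{\rho})$, $\mathcal{W}_E$, $\mathbb{G}_{m,E}^{\mathrm{an}}$ and the twists by $p_2^*\tilde{\delta}^{\mathrm{univ}}$, $p_3^*Y$, which is routine but must be done carefully so that the triangulation parameter matches $\delta_{\mathcal{T}_x}$; this is handled just as in the displayed computation of $\bar{R}$ inside the proof of Theorem \ref{5.14}.
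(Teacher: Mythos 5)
Your overall strategy matches the paper's: one direction comes for free from the proof of Theorem \ref{5.14}(2) via the quotient $\bar{R}\isom R_{V_x,\mathcal{T}_x}$, and the substance is to produce an inverse $R_{V_x,\mathcal{T}_x}\rightarrow\widehat{\mathcal{O}}_{\mathcal{E}(\bar{\rho}),x}$ by showing that the deformation over $\widehat{\mathcal{O}}_{\mathcal{E}(\bar{\rho}),x}$ carries a canonical triangulation lifting $\mathcal{T}_x$. But there is a genuine gap where you assert that Proposition \ref{5.13} already ``gives, for each $\sigma$, an $R/J$-line in $K\otimes_{K_0}D^+_{\mathrm{cris}}(M_{R/J})^{\varphi^f=Y}$''. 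Proposition \ref{5.13} does \emph{not} hand you a line bundle near $x$: the sheaf $(B^+_{\mathrm{dR}}/t^kB^+_{\mathrm{dR}}\hat{\otimes}_{K,\sigma}M_R)^{G_K}$ is only guaranteed to be locally free of rank one over $\mathrm{Spm}(R)\setminus V(H_\sigma)$, and the point $x$ may very well lie in $V(H_\sigma)$. Concretely, part (3) of that proposition produces an integer $n_\sigma\geq 0$ with $H_\sigma\subseteq\mathfrak{m}_x^{n_\sigma}$, $H_\sigma\not\subseteq\mathfrak{m}_x^{n_\sigma+1}$, and a $G_K$-map $M^{\vee}_R\rightarrow(B^+_{\mathrm{max},K}\hat{\otimes}_{K,\sigma}H_\sigma)^{\varphi_K=Y}$ that is detected only after projecting $\mathfrak{m}_x^{n_\sigma}/\mathfrak{m}_x^{n_\sigma+1}\rightarrow E(x)$; if $n_\sigma\geq 1$ this map is zero modulo $\mathfrak{m}_x$, and inside $\widehat{\mathcal{O}}_{\mathcal{E}(\bar{\rho}),x}$ there is nothing to divide by. In particular your appeal to Lemma \ref{s4} has no free rank-one sub-$D$ to feed in, and the ``argue through Artinian quotients as in Lemma \ref{5.15}'' route does not repair this: Lemma \ref{5.15} works over $\bar{R}\isom R_{V_x,\mathcal{T}_x}$, where the triangulation is given, not over $\widehat{\mathcal{O}}_{\mathcal{E}(\bar{\rho}),x}$, where it is what you are trying to construct.

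The missing idea, which is the content of Kisin's Proposition 10.6 in \cite{Ki03} and is followed in the paper, is to first form the blow-up $\tilde{T}$ of $\mathrm{Spm}(R)$ along $H=\prod_{\sigma}H_\sigma$. Over $\widehat{\mathcal{O}}_{\tilde{T},\tilde{x}}$ each $H_\sigma$ becomes principal, generated by a non-zero divisor $f_\sigma$; dividing by $f_\sigma$ yields a $G_K$-map $M^{\vee}\rightarrow(B^+_{\mathrm{max},K}\hat{\otimes}_{K,\sigma}\widehat{\mathcal{O}}_{\tilde{T},\tilde{x}})^{\varphi_K=Y}$ that is nonzero modulo $\mathfrak{m}_{\tilde{x}}$. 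Combining this with the rank-one statement for $D^+_{\mathrm{cris}}(V_x(\tilde{\delta}_x^{-1}))^{\varphi_K=\lambda_x}$ — which the paper deduces from condition (ii) and Lemma \ref{5.7.5}, a step you should make explicit — one shows by induction on $n$ that $D^+_{\mathrm{cris}}$ over $\widehat{\mathcal{O}}_{\tilde{T},\tilde{x}}/\mathfrak{m}^n_{\tilde{x}}$ is free of rank one and compatible with base change, and then Lemma \ref{s4} gives the triangulation over $\widehat{\mathcal{O}}_{\tilde{T},\tilde{x}}$, so $R_{V_x}\rightarrow\widehat{\mathcal{O}}_{\tilde{T},\tilde{x}}$ factors through $R_{V_x,\mathcal{T}_x}$. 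Finally the injection $\widehat{\mathcal{O}}_{\mathcal{E}(\bar{\rho}),x}\hookrightarrow\prod_{\tilde{x}}\widehat{\mathcal{O}}_{\tilde{T},\tilde{x}}$ (Lemma 10.7 of \cite{Ki03} together with Proposition \ref{5.13}(2)) descends the factorization to $R_{V_x,\mathcal{T}_x}\rightarrow\widehat{\mathcal{O}}_{\mathcal{E}(\bar{\rho}),x}$. A smaller omission: to pass from $\widehat{\mathcal{O}}_{\mathcal{E}(\bar{\rho}),x}\isom E(x)[[T_1,\dots,T_{3[K:\mathbb{Q}_p]+1}]]$ to smoothness of the rigid space at $x$ one should quote Lemma 2.8 of \cite{BLR95} and note the isomorphism survives scalar extension (Lemma \ref{5.9}), as the paper does.
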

\begin{proof}
We may assume that $E=E(x)$. 

In the proof of Theorem \ref{5.14},we have already  showed that 
the natural map $\widehat{\mathcal{O}}_{X(\bar{\rho}),x}\rightarrow \overline{R}\isom R_{V_x,\mathcal{T}_x}$ factors through $\widehat{\mathcal{O}}_{\mathcal{E}(\bar{\rho}),x}\rightarrow 
\overline{R}\isom R_{V_x,\mathcal{T}_x}$. 

We prove the existence of the  inverse map $R_{V_x,\mathcal{T}_x}\isom \overline{R}\rightarrow \hat{\mathcal{O}}_{\mathcal{E}(\bar{\rho}),x}$.
Because $x$ is an $E$-rational point, we can take a $Y$-small 
affinoid neighborhood $\mathrm{Spm}(R)$ of $x$ in $\mathcal{E}(\bar{\rho})$. By Proposition \ref{5.7} and Proposition 
\ref{5.13}, for any sufficiently large $k>0$, there exists a short exact sequence of Banach $R$-modules with the property (Pr) $$0\rightarrow (\bold{B}^+_{\mathrm{max},K}\hat{\otimes}_{K,\sigma}R)^{\varphi_K=Y} \rightarrow \bold{B}^+_{\mathrm{dR}}/t^k\bold{B}^+_{\mathrm{dR}}\hat{\otimes}_{K,\sigma}R \rightarrow U_{k,\sigma}\rightarrow 0$$  for any $\sigma\in\mathcal{P}$, and we have a natural isomorphism $$K\otimes_{K_0}\bold{D}^+_{\mathrm{cris}}(V_R(\widetilde{\delta}^{\mathrm{univ}  -1}))^{\varphi^f =Y}\isom  
(\bold{B}^+_{\mathrm{dR}}/t^k\bold{B}^+_{\mathrm{dR}}\allowbreak \hat{\otimes}_{\mathbb{Q}_p}V_R(\widetilde{\delta}^{\mathrm{univ} -1}))^{G_K}.$$ Fix such a $k>0$, then we defined an ideal $H_{\sigma}\subseteq R$ for each $\sigma\in \mathcal{P}$ in Proposition \ref{5.13} such 
that $\mathrm{Spm}(R)\setminus V(H)$ ($H:=\prod_{\sigma\in \mathcal{P}}H_{\sigma}$) and $\mathrm{Spm}(R)\setminus V(H_{\sigma})$ are scheme theoretically dense in $\mathrm{Spm}(R)$.
Under this situation,  we prove the existence of the inverse $R_{V_x,\mathcal{T}_x}\isom \overline{R}\rightarrow \hat{\mathcal{O}}_{\mathcal{E}(\bar{\rho}),x}$. 
First, we claim that $\bold{D}^+_{\mathrm{cris}}(V_x(\widetilde{\delta}_x^{-1}))^{\varphi^f=\lambda_x}$ is a free $K_0\otimes_{\mathbb{Q}_p}E$-module of rank one. By the definition and by Lemma \ref{5.7.5}, this module contains a submodule $\bold{D}_{\mathrm{cris}}(W(\delta_{\lambda_x}))=\bold{D}_{\mathrm{cris}}(W(\delta_{\lambda_x}))^{\varphi^f=\lambda_x}\cap \mathrm{Fil}^0\bold{D}_{\mathrm{dR}}(W(\delta_{\lambda_x}))$ which 
is of rank one. Hence $\bold{D}^+_{\mathrm{cris}}(V_x(\widetilde{\delta}_x^{-1}))^{\varphi^f=\lambda_x}$ is of rank one or two. If 
this is of rank two, then $V_x(\widetilde{\delta}_x^{-1})$ is crystalline with the Hodge-Tate weights $\{0,k_{\sigma}\}_{\sigma\in \mathcal{P}}$ such that  
$k_{\sigma}\in \mathbb{Z}_{\leqq 0}$ for any $\sigma\in \mathcal{P}$ with a unique $\varphi_K$-eigenvalue $\lambda_x$. These conditions imply that 
$\delta_2/\delta_1=\prod_{\sigma\in \mathcal{P}}\sigma^{k_{\sigma}}$, which contradicts the assumption on $(V_x,\mathcal{T}_x)$. 
Hence 
$\bold{D}^+_{\mathrm{cris}}(V_x(\widetilde{\delta}_x^{-1}))^{\varphi^f=\lambda_x}$ is of rank one and the inclusion $\bold{D}_{\mathrm{cris}}(W(\delta_{\lambda_x}))\hookrightarrow \bold{D}^+_{\mathrm{cris}}(V_x(\widetilde{\delta}_x^{-1}))^{\varphi^f=\lambda_x}$ is isomorphism.

In the same way as in the proof of 
Proposition 10.6 of \cite{Ki03}, we take the blow up $\widetilde{T}$ of  $\mathrm{Spm}(R)$ along $H$. 
By the definition of blow up, for any point $\widetilde{x}\in \widetilde{T}$ above $x\in\mathrm{Spm}(R)$ and for any $\sigma\in \mathcal{P}$, 
there exists 
$f_{\sigma}\in H_{\sigma}$ such that $f_{\sigma}$ is a non zero divisor of $\widehat{\mathcal{O}}_{\widetilde{T},\widetilde{x}}$ and that
$H_{\sigma}\widehat{\mathcal{O}}_{\widetilde{T},\widetilde{x}}=f_{\sigma}\widehat{\mathcal{O}}_{\widetilde{T},\widetilde{x}}$. By the definition of $H_{\sigma}$, for any $\sigma\in \mathcal{P}$ and for any $\widetilde{x}\in \widetilde{T}$ above $x$, there exists a $G_K$-equivariant map $V_R(\widetilde{\delta}^{\mathrm{univ}  -1})^{\vee}\rightarrow (\bold{B}^+_{\mathrm{max},K}\hat{\otimes}_{K,\sigma}R)^{\varphi_K=Y}$ such that the composite with the map

\begin{multline*}
(\bold{B}^+_{\mathrm{max},K}\hat{\otimes}_{K,\sigma}R)^{\varphi_K=Y}\rightarrow (\bold{B}^+_{\mathrm{max},K}\hat{\otimes}_{K,\sigma}
f_{\sigma}\widehat{\mathcal{O}}_{\widetilde{T},\widetilde{x}})^{\varphi_K=Y} \\
\isom (\bold{B}^+_{\mathrm{max},K}\hat{\otimes}_{K,\sigma}\widehat{\mathcal{O}}_{\widetilde{T},\widetilde{x}})^{\varphi_K=Y}\rightarrow (\bold{B}^+_{\mathrm{max},K}\otimes_{K,\sigma}E(\widetilde{x}))^{\varphi_K=Y(\widetilde{x})}
\end{multline*}
is non zero, where the isomorphism
 $$(\bold{B}^+_{\mathrm{max},K}\hat{\otimes}_{K,\sigma}f_{\sigma}\widehat{\mathcal{O}}_{\widetilde{T},\widetilde{x}})^{\varphi_K=Y}\isom (\bold{B}^+_{\mathrm{max},K}\hat{\otimes}_{K,\sigma}\widehat{\mathcal{O}}_{\widetilde{T},\widetilde{x}} )^{\varphi_K=Y}$$ 
is given by $a\mapsto \frac{a}{f_{\sigma}}$. Using this map and using 
the fact that $\bold{D}^+_{\mathrm{cris}}(V_x(\widetilde{\delta}_x^{-1}))^{\varphi^f=\lambda_x}$ is rank one, we can see by induction on $n$ that $\bold{D}^+_{\mathrm{cris}}(V_R(\widetilde{\delta}^{\mathrm{univ} -1})\otimes_R
\widehat{\mathcal{O}}_{\widetilde{T},\widetilde{x}}/\mathfrak{m}^n_{\widetilde{x}})^{\varphi^f=Y}$ is a free $K_0\otimes_{\mathbb{Q}_p}\widehat{\mathcal{O}}_{\widetilde{T},\widetilde{x}}/\mathfrak{m}^n_{\widetilde{x}}$-module of rank one and that the natural base change 
map $$\bold{D}^+_{\mathrm{cris}}(V_R(\widetilde{\delta}^{\mathrm{univ} -1})\otimes_R
\widehat{\mathcal{O}}_{\widetilde{T},\widetilde{x}}/\mathfrak{m}^n_{\widetilde{x}})^{\varphi^f=Y}\otimes_{\mathcal{O}_{\widetilde{T},\widetilde{x}}/\mathfrak{m}^n_{\widetilde{x}}} E(\widetilde{x})\isom \bold{D}^+_{\mathrm{cris}}(V_x(\widetilde{\delta}_x^{-1})\otimes_{E}E(\widetilde{x}))^{\varphi^f=Y(x)}$$ 
is isomorphism for any $n\geqq 1$. Because we have an equality
$$\mathrm{Fil}^1(K\otimes_{K_0}\bold{D}^+_{\mathrm{cris}}(V_x(\widetilde{\delta}_x^{-1}))^{\varphi^f=Y(x)})
=\mathrm{Fil}^1\bold{D}_{\mathrm{dR}}(W(\delta_{\lambda_x}))=0,$$ then $\bold{D}^+_{\mathrm{cris}}(V_R(\widetilde{\delta}^{\mathrm{univ} -1})\otimes_R
\widehat{\mathcal{O}}_{\widetilde{T},\widetilde{x}}/\mathfrak{m}^n_{\widetilde{x}})^{\varphi^f=Y}$ is a ($\widehat{\mathcal{O}}_{\widetilde{T},\widetilde{x}}/\mathfrak{m}^n_{\widetilde{x}}$)-filtered $\varphi$-module of rank one such that $\mathrm{Fil}^0=K\otimes_{K_0}\bold{D}^+_{\mathrm{cris}}(V_R(\delta^{\mathrm{univ} -1})\otimes_R
\widehat{\mathcal{O}}_{\widetilde{T},\widetilde{x}}/\mathfrak{m}^n_{\widetilde{x}})^{\varphi^f=Y}$ and $\mathrm{Fil}^1=0$.
By Lemma \ref{s4},
this shows that $V_R\otimes_R \widehat{\mathcal{O}}_{\widetilde{T},\widetilde{x}}$ is 
the projective limit of split trianguline $(\mathcal{O}_{\widetilde{T},\widetilde{x}}/\mathfrak{m}^n_{\widetilde{x}})$-representations with triangulations
$0\subseteq W(\bar{\delta}_n^{\mathrm{univ}}\delta_{\overline{Y}_n})\subseteq W(V_R\otimes_R \widehat{\mathcal{O}}_{\widetilde{T},\widetilde{x}}/\mathfrak{m}^n_{\widetilde{x}})$ which are 
trianguline deformations of $(V_x,\mathcal{T}_x)\otimes_{E} E(\widetilde{x})$ (for $n\in\mathbb{Z}_{\geqq 1}$), hence the natural map 
$R_{V_x}\rightarrow \widehat{\mathcal{O}}_{\mathcal{E}(\bar{\rho}),x}\rightarrow \widehat{\mathcal{O}}_{\widetilde{T},\widetilde{x}}$ factors 
through $R_{V_x}\rightarrow R_{V_x,\mathcal{T}_x}$ for any $\widetilde{x}\in \widetilde{T}$ above $x$. Moreover, because 
the natural map $$\widehat{\mathcal{O}}_{\mathcal{E}(\bar{\rho}),x}\hookrightarrow \prod_{\widetilde{x}\in \widetilde{T}, p(\widetilde{x})=x}\widehat{\mathcal{O}}_{\widetilde{T},\widetilde{x}}$$ is an injection by Lemma 10.7 of \cite{Ki03} and by (2) of Proposition \ref{5.13} (where $p:\widetilde{T}\rightarrow \mathrm{Spm}(R)$ is the projection), 
the map $R_{V_x}\rightarrow \widehat{\mathcal{O}}_{\mathcal{E}(\bar{\rho}),x}$ also 
factors through $R_{V_x,\mathcal{T}_x}\rightarrow \widehat{\mathcal{O}}_{\mathcal{E}(\bar{\rho}),x}$. 
By this natural construction, we can easily check that this is the inverse of the map giving the above.
We finish to prove the existence of the isomorphism $\widehat{\mathcal{O}}_{\mathcal{E}(\bar{\rho}),x}
\isom R_{V_x,\mathcal{T}_x}$ for such points. Because this isomorphism is preserved by the base change from $E$ to any 
finite extension $E'$ by Lemma \ref{5.9}, the smoothness around these points 
follows from this isomorphism and from Lemma 2.8 of \cite{BLR95}.
\end{proof}

\section{Zariski density of two dimensional crystalline representations.}
In this final section, as an application of Theorem \ref{33} (in the two dimensional case) and of Theorem \ref{5.18}, 
we prove the Zariski density of two dimensional crystalline representations for any $p$-adic field. 
\smallskip

We define a map $\pi:\mathcal{E}(\bar{\rho})\rightarrow \mathcal{W}_E\times_E\mathcal{W}_E$ by 
$([V_x],\delta_x,\lambda_x)\mapsto (\delta_x, \mathrm{det}(V_x)|_{\mathcal{O}_K^{\times}}\cdot\delta_x^{-1})$.

\begin{prop}\label{6.1}
For any point $x\in \mathcal{E}(\bar{\rho})$ which satisfies all the conditions of Theorem $\ref{5.18}$, 
the map $\pi:\mathcal{E}(\bar{\rho})\rightarrow \mathcal{W}_E\times_E\mathcal{W}_E$ is smooth at $x$.
\end{prop}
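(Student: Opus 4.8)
The plan is to reduce the statement to the infinitesimal (submersion) criterion for smoothness, rewrite the resulting local ring map as a morphism of deformation functors, and then prove that morphism is formally smooth by a lifting argument modelled on the proof of Lemma \ref{5.16}; the essential input will be the vanishing $\mathrm{H}^2(G_K,W(\delta_1/\delta_2))=0$.

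Extending scalars from $E$ to $E(x)$ (smoothness is insensitive to this by the argument of Lemma \ref{5.9}) we may assume $x$ is $E$-rational. By Theorem \ref{5.18} the space $\mathcal{E}(\bar\rho)$ is smooth over $E$ at $x$, and $\mathcal{W}_E\times_E\mathcal{W}_E$ is smooth over $E$ everywhere, so by the infinitesimal criterion (the one invoked at the end of the proof of Theorem \ref{5.18} via Lemma 2.8 of \cite{BLR95}, applied relatively) it suffices to show that the induced local homomorphism $\widehat{\mathcal{O}}_{\mathcal{W}_E\times_E\mathcal{W}_E,\pi(x)}\to\widehat{\mathcal{O}}_{\mathcal{E}(\bar\rho),x}$ is formally smooth. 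Write $\mathcal{T}_x:0\subseteq W(\delta_1)\subseteq W(V_x)$ with $W(V_x)/W(\delta_1)\isom W(\delta_2)$, and put $\delta_{0,1}:=\delta_1|_{\mathcal{O}_K^{\times}}$, $\delta_{0,2}:=\delta_2|_{\mathcal{O}_K^{\times}}$; a short computation with determinants (using $\det W(V_x)\isom W(\delta_1\delta_2)$) gives $\pi(x)=(\delta_{0,1},\delta_{0,2})$, whence $\widehat{\mathcal{O}}_{\mathcal{W}_E\times_E\mathcal{W}_E,\pi(x)}\isom R_{\delta_{0,1}}\hat{\otimes}_E R_{\delta_{0,2}}$ pro-represents $D_{\delta_{0,1}}\times D_{\delta_{0,2}}$, while $\widehat{\mathcal{O}}_{\mathcal{E}(\bar\rho),x}\isom R_{V_x,\mathcal{T}_x}$ pro-represents $D_{V_x,\mathcal{T}_x}$ by Theorem \ref{5.18}. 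Tracing through the construction of this last isomorphism (in particular the universal triangulation character $\delta_{\mathcal{T}_x}:K^{\times}\to R_{V_x,\mathcal{T}_x}^{\times}$ and the identification of $\delta_{\mathcal{T}_x}|_{\mathcal{O}_K^{\times}}$ with $\bar\delta^{\mathrm{univ}}_{p_2(x)}$ appearing in that proof), the displayed local homomorphism corresponds to the morphism of functors
$$g:D_{V_x,\mathcal{T}_x}\to D_{\delta_{0,1}}\times D_{\delta_{0,2}},\qquad [(W_A,\iota,\mathcal{T}_A)]\mapsto\bigl(\delta_{1,A}|_{\mathcal{O}_K^{\times}},\ \delta_{2,A}|_{\mathcal{O}_K^{\times}}\bigr),$$
where $\mathcal{T}_A:0\subseteq W(\delta_{1,A})\subseteq W_A$ and $W_A/W(\delta_{1,A})\isom W(\delta_{2,A})$ (well defined on equivalence classes by Proposition \ref{5}). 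So it suffices to prove $g$ formally smooth.

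For this, let $A'\to A$ be a small extension in $\mathcal{C}_E$ with kernel $I$, let $[(W_A,\iota,\mathcal{T}_A)]\in D_{V_x,\mathcal{T}_x}(A)$ have triangulation parameter $(\delta_{1,A},\delta_{2,A})$, and suppose given lifts $\delta_{1,A'}|_{\mathcal{O}_K^{\times}}\in D_{\delta_{0,1}}(A')$, $\delta_{2,A'}|_{\mathcal{O}_K^{\times}}\in D_{\delta_{0,2}}(A')$ of $\delta_{1,A}|_{\mathcal{O}_K^{\times}}$, $\delta_{2,A}|_{\mathcal{O}_K^{\times}}$. Choosing arbitrary units of $A'$ lifting $\delta_{1,A}(\pi_K)$, $\delta_{2,A}(\pi_K)$ and declaring them to be $\delta_{1,A'}(\pi_K)$, $\delta_{2,A'}(\pi_K)$, we extend to continuous $\delta_{i,A'}:K^{\times}\to A'^{\times}$. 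Viewing $[W_A]$, after passing to $W_A/W(\delta_{1,A})$, as a class in $\mathrm{Ext}^1(W(\delta_{2,A}),W(\delta_{1,A}))\isom\mathrm{H}^1(G_K,W(\delta_{1,A}/\delta_{2,A}))$, the short exact sequence of $A'$-$B$-pairs
$$0\to W(\delta_1/\delta_2)\otimes_E I\to W(\delta_{1,A'}/\delta_{2,A'})\to W(\delta_{1,A}/\delta_{2,A})\to 0$$
has long exact cohomology sequence placing the obstruction to lifting $[W_A]$ in $\mathrm{H}^2(G_K,W(\delta_1/\delta_2))\otimes_E I$. By condition (2) of Proposition \ref{19} — which holds by hypothesis (ii) of Theorem \ref{5.18} — one has $\delta_1/\delta_2\neq|\mathrm{N}_{K/\mathbb{Q}_p}|\prod_{\sigma\in\mathcal{P}}\sigma^{k_{\sigma}}$ for all $\{k_{\sigma}\}_{\sigma\in\mathcal{P}}\in\prod_{\sigma\in\mathcal{P}}\mathbb{Z}_{\geqq 1}$, so $\mathrm{H}^2(G_K,W(\delta_1/\delta_2))=0$ by Proposition \ref{i}. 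Hence $[W_A]$ lifts to a class in $\mathrm{H}^1(G_K,W(\delta_{1,A'}/\delta_{2,A'}))$, producing an $A'$-$B$-pair $W_{A'}$ with triangulation $\mathcal{T}_{A'}:0\subseteq W(\delta_{1,A'})\subseteq W_{A'}$, $W_{A'}/W(\delta_{1,A'})\isom W(\delta_{2,A'})$, a trianguline deformation of $(V_x,\mathcal{T}_x)$ lifting $(W_A,\iota,\mathcal{T}_A)$ and with $g([(W_{A'},\iota',\mathcal{T}_{A'})])=(\delta_{1,A'}|_{\mathcal{O}_K^{\times}},\delta_{2,A'}|_{\mathcal{O}_K^{\times}})$. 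This proves $g$ formally smooth, hence the local ring map formally smooth, hence $\pi$ smooth at $x$. The routine parts are the determinant bookkeeping and the long exact sequence; the one point needing genuine care is the functorial identification of $\pi$ on completed local rings with $g$, which relies on the explicit construction in the proof of Theorem \ref{5.18}, and the whole argument hinges, exactly as in Lemma \ref{5.16}, on the vanishing $\mathrm{H}^2(G_K,W(\delta_1/\delta_2))=0$ supplied by condition (2) of Proposition \ref{19}.
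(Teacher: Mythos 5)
Your proposal is correct and follows the same route as the paper: identify $\widehat{\mathcal{O}}_{\mathcal{W}_E\times_E\mathcal{W}_E,\pi(x)}\to\widehat{\mathcal{O}}_{\mathcal{E}(\bar\rho),x}$ with the morphism of deformation functors $D_{V_x,\mathcal{T}_x}\to D_{\delta_x}\times D_{\delta'_x}$ via Theorem~\ref{5.18}, prove formal smoothness of that morphism by the lifting argument of Lemma~\ref{5.16} (hinging on $\mathrm{H}^2(G_K,W(\delta_1/\delta_2))=0$), and conclude by the relative infinitesimal criterion from \cite{BLR95}. You have merely written out the Lemma~\ref{5.16}-style lifting in full where the paper appeals to it by reference; nothing essentially new, and nothing missing.
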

\begin{proof}
Let $x:=([V_x],\delta_x,\lambda_x)\in\mathcal{E}(\bar{\rho})$ be such a point. Set $\delta'_x:=\mathrm{det}(V_x)|_{\mathcal{O}_K^{\times}}\cdot\delta_x^{-1}$. 
By the same argument as in Proposition 9.5 of \cite{Ki03}, we have a natural isomorphism
isomorphism 
$$\widehat{\mathcal{O}}_{\mathcal{W}_E\times_E\mathcal{W}_E,(\delta_x,\delta'_x)}\isom \widehat{\mathcal{O}}_{\mathcal{W}_E,\delta_x}\hat{\otimes}_{E(x)}
\widehat{\mathcal{O}}_{\mathcal{W}_E,\delta'_x}\isom R_{\delta_x}\hat{\otimes}_{E(x)}R_{\delta'_x}.$$ Hence, by Theorem \ref{5.18}, the completion of 
$\pi$ at $x$ is the morphism
$$\pi:\mathrm{Spf}(\widehat{\mathcal{O}}_{\mathcal{E}(\bar{\rho}),x})\rightarrow \mathrm{Spf}(\widehat{\mathcal{O}}_{\mathcal{W}_E\times_E\mathcal{W}_E,(\delta_x,\delta'_x)})$$ induced by the morphism of functors 
$$\pi_x:D_{V_x,\mathcal{T}_x}\rightarrow D_{\delta_x}\times D_{\delta'_x}:[(W_A,\mathcal{T}_A)]\mapsto (\delta_{1,A}|_{\mathcal{O}_K^{\times}},\delta_{2,A}|_{\mathcal{O}_K^{\times}}),$$ where $\mathcal{T}_A:0\subseteq W(\delta_{1,A})\subseteq W_A$ and $W_A/W(\delta_{1,A})\isom W(\delta_{2,A})$ for $A\in\mathcal{C}_{E(x)}$. Then, we can prove the formal smoothness of this morphism of functor in the  same way as in the proof of Lemma \ref{5.16}.
 Hence, $\pi$ is smooth at $x$  by Proposition  \ref{19} and by Proposition 2.9 of \cite{BLR95}.
\end{proof}

Let $x:=([V_x],\delta_x,\lambda_x)\in \mathcal{E}(\bar{\rho})$ be an $E$-rational  point such 
that $V_x$ is a 
crystalline split trianguline $E$-representation with a 
triangulation $\mathcal{T}_x:0\subseteq W(\delta_x\delta_{\lambda_x})\subseteq W(V_x)$ 
satisfying the condition (1) of Definition \ref{26} (see Corollary \ref{5.16.5}).
By Proposition \ref{5.13}, for any $Y$-small 
affinoid open neighborhood $U=\mathrm{Spm}(R)$ of $x$ in $\mathcal{E}(\bar{\rho})$, there exits $k>0$ and 
there exists a short exact sequence of Banach $R$-modules with the property (Pr)
$$0\rightarrow K\otimes_{K_0}(\bold{B}^+_{\mathrm{max}}\hat{\otimes}_{\mathbb{Q}_p}R)^{\varphi^f=Y}
\rightarrow \bold{B}^+_{\mathrm{dR}}/t^k\bold{B}^+_{\mathrm{dR}}\hat{\otimes}_{\mathbb{Q}_p}R\rightarrow U_{k}\rightarrow 0$$
and we have a natural isomorphism $$K\otimes_{K_0}\bold{D}^+_{\mathrm{cris}}(V_R(\widetilde{\delta}_R^{-1}))^{\varphi^f=Y}\isom 
(\bold{B}^+_{\mathrm{dR}}/t^k\bold{B}^+_{\mathrm{dR}} \hat{\otimes}_{\mathbb{Q}_p}V_R(\widetilde{\delta}_R^{-1}))^{G_K}$$
 and, for any $\sigma\in \mathcal{P}$, there exists the smallest 
ideal $H_{\sigma}\subseteq R$ satisfying that $$(\bold{B}^+_{\mathrm{dR}}/t^k\bold{B}^+_{\mathrm{dR}}\hat{\otimes}_{K,\sigma}H_{\sigma}V_R(\widetilde{\delta}_R^{-1}))^{G_K}
\isom (\bold{B}^+_{\mathrm{dR}}/t^k\bold{B}^+_{\mathrm{dR}}\hat{\otimes}_{K,\sigma}V_R(\widetilde{\delta}_R^{-1}))^{G_K},$$
where we put $V_R:=\Gamma(U, p_1^*(\widetilde{V}^{\mathrm{univ}}))$ and $\delta_R:\mathcal{O}_K^{\times}\rightarrow R^{\times}$ is the restriction of $p_2^*(\delta^{\mathrm{univ}})$ to $U$. Moreover, 
if we put $Q:=\prod_{\sigma\in \mathcal{P},0\leqq i\leqq k}Q_{\sigma}(-i)\in R$, then we have inclusions 
$\mathrm{Spm}(R)_Q\subseteq \mathrm{Spm}(R)\setminus V(H_{\sigma})\subseteq \mathrm{Spm}(R)$ by the proof of 
Proposition \ref{5.13}. Moreover, shrinking $U$ suitably, we may assume that $v_p(\lambda_y)=v_p(\lambda_x)$ for any 
$y=(V_y,\delta_y,\lambda_y)\in U$ and that $\pi|_U$ is smooth by Proposition \ref{6.1}.

Under this situation, we study the map 
$\pi|_U:U \rightarrow \mathcal{W}_E\times_E\mathcal{W}_E$
 around $x$ in detail. 
Because $V_x$ is crystalline 
, we can write $$\pi(x)=(\prod_{\sigma\in\mathcal{P}}\sigma^{k_{1,\sigma}},\prod_{\sigma\in \mathcal{P}}\sigma^{k_{2,\sigma}})\in \mathcal{W}_E\times_E\mathcal{W}_E$$
for some integers $\{k_{1,\sigma},k_{2,\sigma}\}_{\sigma\in \mathcal{P}}$. Define a subset 
\begin{multline*}
(\mathcal{W}_E\times_E\mathcal{W}_E)_{cl,x}:=\{
(\prod_{\sigma\in \mathcal{P}}\sigma^{n_{\sigma}},\prod_{\sigma\in\mathcal{P}}\sigma^{n_{\sigma}-m_{\sigma}})\in \mathcal{W}_E\times_E
\mathcal{W}_E| n_{\sigma}\in \mathbb{Z}, \\
m_{\sigma}\in \mathbb{Z}_{\geqq k+1} \,\,\text{for any} \,\,\sigma\in\mathcal{P} \,\,\text{and} \,\,
\sum_{\sigma\in\mathcal{P}}m_{\sigma}\geqq 2e_Kv_p(\lambda_x)+[K:\mathbb{Q}_p]+1\}, 
\end{multline*}
where $e_K$ is the absolute ramified index of $K$.
Then, for any  admissible open neighborhood $V$ of $\pi(x)$ in $\mathcal{W}_E\times_E\mathcal{W}_E$, there exists an affinoid open 
$V'\subseteq V$ which contains $\pi(x)$ such that $V'_{\mathrm{cl},x}:=(\mathcal{W}_E\times_E\mathcal{W}_E)_{\mathrm{cl},x}\cap V'$ is Zariski dense 
in $V'$. Under this situation, we prove the following lemma.

\begin{lemma}\label{6.2}
Let $y:=(\prod_{\sigma\in\mathcal{P}}\sigma^{n_{\sigma}},\prod_{\sigma\in\mathcal{P}}\sigma^{n_{\sigma}-m_{\sigma}})$ be an element in $(\mathcal{W}_E\times_E\mathcal{W}_E)_{\mathrm{cl},x}$ and let $z:=([V_z],\delta_z,\lambda_z)$ be a point in $U\cap \pi^{-1}(y)$, then 
$V_z$ is crystalline and split trianguline $E(z)$-representation with a 
triangulation 
$\mathcal{T}_z:0\subseteq W(\delta_z\delta_{\lambda_z})\subseteq W(V_z)$ which satisfies the conditions $(1)$ and $(2)$ 
of Definition $\mathrm{\ref{26}}$.
\end{lemma}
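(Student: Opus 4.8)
The plan is to read off the triangulation and the Hodge--Tate weights of $V_z$ from the membership $z\in\mathcal{E}(\bar\rho)$ together with $\pi(z)=y$, to use the numerical hypotheses on $y$ to kill the twist exponents and to estimate the Frobenius eigenvalues, and then to obtain crystallinity from a Bloch--Kato computation fed by the same estimate. First I would apply Theorem \ref{5.14}\,(1): since $z\in\mathcal{E}(\bar\rho)\subseteq X_0(\bar\rho)$, there are $\{k_\sigma\}_{\sigma\in\mathcal{P}}\in\prod_{\sigma\in\mathcal{P}}\mathbb{Z}_{\geqq 0}$ such that, with $\delta_1:=\delta_z\delta_{\lambda_z}\prod_{\sigma}\sigma^{-k_\sigma}$, there is a triangulation $0\subseteq W(\delta_1)\subseteq W(V_z)$ with quotient $W(\mathrm{det}(V_z)\delta_1^{-1})$. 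The identity $\pi(z)=y$ says $\delta_z|_{\mathcal{O}_K^\times}=\prod_{\sigma}\sigma^{n_\sigma}$ and $\mathrm{det}(V_z)|_{\mathcal{O}_K^\times}=\prod_{\sigma}\sigma^{2n_\sigma-m_\sigma}$, so the $\sigma$-Hodge--Tate weights of $M(z)=V_z(\tilde\delta_z^{-1})$ are $\{-k_\sigma,\,-m_\sigma+k_\sigma\}$ and those of $V_z$ are $\{n_\sigma-k_\sigma,\,n_\sigma-m_\sigma+k_\sigma\}$.

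The next step is to show $k_\sigma=0$ for all $\sigma$. As $z\in X_0(\bar\rho)$, one of the two $\sigma$-Hodge--Tate weights of $M(z)$ is $0$; since their sum equals the $\sigma$-Hodge--Tate weight of $\mathrm{det}(M(z))=\mathrm{det}(V_z)\tilde\delta_z^{-2}$, which is $-m_\sigma$, the other is $-m_\sigma$, and hence the non-zero root $a_{1,\sigma}(z)$ of $Q_\sigma(T)=T-a_{1,\sigma}$ (i.e.\ the non-zero root of $P_{M_0}(T)_\sigma$ at $z$) satisfies $|a_{1,\sigma}(z)|=m_\sigma\geqq k+1>k$. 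Therefore $Q_\sigma(-i)(z)=-i-a_{1,\sigma}(z)\neq 0$ for all $0\leqq i\leqq k$, i.e.\ $z\in\mathrm{Spm}(R)_Q$ with $Q=\prod_{\sigma,\,0\leqq i\leqq k}Q_\sigma(-i)$; using the inclusion $\mathrm{Spm}(R)_Q\subseteq\mathrm{Spm}(R)\setminus V(H_\sigma)$ one gets $H_\sigma\mathcal{O}_{X,z}=\mathcal{O}_{X,z}$ for every $\sigma$, and the argument in the proof of Proposition \ref{5.13}\,(3) then forces $k_\sigma=0$. Thus the triangulation is $\mathcal{T}_z:0\subseteq W(\delta_z\delta_{\lambda_z})\subseteq W(V_z)$, the $\sigma$-Hodge--Tate weights of $V_z$ are $\{n_\sigma,\,n_\sigma-m_\sigma\}$, and they are distinct because $m_\sigma\geqq 1$; this is condition (2) of Definition \ref{26} with $k_{1,\sigma}=n_\sigma$, $k_{2,\sigma}=n_\sigma-m_\sigma$.

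Now write $\delta_2:=\mathrm{det}(V_z)(\delta_z\delta_{\lambda_z})^{-1}$ and let $\alpha_1,\alpha_2$ be the $\varphi^f$-eigenvalues of $D_{\mathrm{cris}}$ attached to the graded pieces $W(\delta_z\delta_{\lambda_z})$, $W(\delta_2)$. Via the bijection $\delta\mapsto W(\delta)$ of Theorem \ref{g} one computes $\alpha_1/\alpha_2=\lambda_z^{2}\,\mathrm{det}(V_z)(\pi_K)^{-1}\prod_{\sigma}\sigma(\pi_K)^{-m_\sigma}$; since $V_z$ has a $G_K$-stable lattice, $\mathrm{det}(V_z)(\pi_K)$ is a unit, and with $v_p(\lambda_z)=v_p(\lambda_x)$ this gives $v_p(\alpha_1/\alpha_2)=2v_p(\lambda_x)-\tfrac{1}{e_K}\sum_{\sigma}m_\sigma\leqq-f-\tfrac{1}{e_K}<-f$ by the hypothesis $\sum_{\sigma}m_\sigma\geqq 2e_Kv_p(\lambda_x)+[K:\mathbb{Q}_p]+1$. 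In particular $\alpha_1/\alpha_2\notin\{1,p^{f},p^{-f}\}$, which is condition (1) of Definition \ref{26} for $(V_z,\mathcal{T}_z)$; and $\alpha_1/\alpha_2\neq p^{-f}$ also excludes $W(\delta_1/\delta_2)\isom W(|\mathrm{N}_{K/\mathbb{Q}_p}|\prod_{\sigma}\sigma^{m_\sigma})$, the only isomorphism of this shape allowed by the Hodge--Tate weights, so $\mathrm{H}^2(G_K,W(\delta_1/\delta_2))=0$ by Proposition \ref{i}\,(2). Finally, $W(\delta_z\delta_{\lambda_z})$ and $W(\delta_2)$ are crystalline characters, so $V_z$ is de Rham; since $W(\delta_1/\delta_2)$ has all $\sigma$-Hodge--Tate weights $m_\sigma\geqq 1$ we have $\mathrm{Fil}^0 D_{\mathrm{dR}}(W(\delta_1/\delta_2))=0$, and combining this with $\mathrm{H}^2=0$, Theorem \ref{h} and Proposition 2.7 of \cite{Na09} gives $\mathrm{H}^1_f(G_K,W(\delta_1/\delta_2))=\mathrm{H}^1(G_K,W(\delta_1/\delta_2))$, exactly as in the proof of Lemma \ref{29}; hence $[W(V_z)]$ is crystalline and therefore $V_z$ is crystalline.

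The hard part I anticipate is not conceptual but the careful slope and weight bookkeeping behind the third paragraph: one has to track Hodge--Tate weights and $\varphi^f$-slopes through the twist $V_z\mapsto V_z(\tilde\delta_z^{-1})$ and fix the precise normalization in the correspondence $\delta\mapsto W(\delta)$ of Theorem \ref{g}, so that the three numerical inputs $v_p(\lambda_z)=v_p(\lambda_x)$, $m_\sigma\geqq k+1$ and $\sum_{\sigma}m_\sigma\geqq 2e_Kv_p(\lambda_x)+[K:\mathbb{Q}_p]+1$ really do produce both the estimate $v_p(\alpha_1/\alpha_2)<-f$ and the vanishing of $\mathrm{H}^2$. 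By contrast, the determination $k_\sigma=0$ in the second paragraph is clean and essentially formal once the weights of $M(z)$ have been computed.
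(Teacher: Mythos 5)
Your argument takes a genuinely different route from the paper's, and the final step in particular uses a different mechanism. The paper first shows directly that $D_{\mathrm{dR}}(V_z(\tilde\delta_z^{-1}))$ has the right dimension (combining the family-level isomorphism $K\otimes_{K_0}D^+_{\mathrm{cris}}(V_R(\tilde\delta_R^{-1}))^{\varphi_K=Y}\isom(B^+_{\mathrm{dR}}/t^kB^+_{\mathrm{dR}}\hat\otimes V_R(\tilde\delta_R^{-1}))^{G_K}$ from Proposition \ref{5.13}\,(1), the pointwise base-change isomorphism of Corollary 2.6 of \cite{Ki03}, and the $(t^{k+1}B^+_{\mathrm{dR}}\otimes M(z))^{G_K}$ piece coming from the weight $-m_\sigma$), concludes $M(z)$ is de Rham, identifies the quotient character $\delta_2$ with $\delta_2|_{\mathcal{O}_K^\times}=\prod_\sigma\sigma^{-m_\sigma}$ to get semi-stability, and finally excludes the non-crystalline semi-stable option by a weak admissibility estimate $t_N<t_H$, where $t_N=\tfrac{2}{f}v_p(\lambda_x)\pm 1$. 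Your proof instead reads the triangulation off Theorem \ref{5.14}\,(1), proves condition (1) of Definition \ref{26} via the Frobenius eigenvalue estimate $v_p(\alpha_1/\alpha_2)<-f$, deduces $\mathrm{H}^2(G_K,W(\delta_1/\delta_2))=0$, and then obtains crystallinity by the Bloch--Kato argument of Lemma \ref{29} (crystalline graded pieces, $\mathrm{Fil}^0 D_{\mathrm{dR}}(W(\delta_1/\delta_2))=0$, $\mathrm{H}^1_f=\mathrm{H}^1$). These are two different packagings of the same numerical hypothesis $\sum_\sigma m_\sigma\geqq 2e_Kv_p(\lambda_x)+[K:\mathbb{Q}_p]+1$; yours avoids the intermediate de Rham and semi-stable steps entirely and is closer to the Bloch--Kato machinery already used in Lemma \ref{29}, while the paper's avoids having to control the triangulation so precisely. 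Both are sound in outline, and your eigenvalue computation $\alpha_1/\alpha_2=\lambda_z^2\,\mathrm{det}(V_z)(\pi_K)^{-1}\prod_\sigma\sigma(\pi_K)^{-m_\sigma}$ checks out against the normalization of Theorem \ref{g}.

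There is, however, a real gap in the step where you conclude $k_\sigma=0$. You write that $H_\sigma\mathcal{O}_{X,z}=\mathcal{O}_{X,z}$ together with ``the argument in the proof of Proposition \ref{5.13}\,(3)'' forces $k_\sigma=0$, but that proof only produces a nonzero element of $D^+_{\mathrm{cris}}(M(z))_\sigma^{\varphi_K=\lambda_z}$ — precisely what gives the \emph{existence} of a triangulation with exponents $k_\sigma\geqq 0$, not their vanishing. (The Hodge--Tate weights of $M(z)$ alone leave both options $k_\sigma=0$ and $k_\sigma=m_\sigma$ open.) To close the gap you need what the paper assembles at the start of its proof: the isomorphism of Proposition \ref{5.13}\,(1) at the $R$-level, base-changed to $z$ via Corollary 2.6 of \cite{Ki03} using $z\in\mathrm{Spm}(R)_Q$, which identifies $K\otimes_{K_0}D^+_{\mathrm{cris}}(M(z))^{\varphi_K=\lambda_z}$ with the rank-one module $(B^+_{\mathrm{dR}}/t^kB^+_{\mathrm{dR}}\otimes M(z))^{G_K}$; since $m_\sigma\geqq k+1$, the latter sees only the Hodge--Tate-weight-zero graded piece, so the generator of $D^+_{\mathrm{cris}}$ does not lie in $tB^+_{\mathrm{dR}}\otimes_{K,\sigma}M(z)$, the induced filtration on $D^+_{\mathrm{cris}}(M(z))^{\varphi_K=\lambda_z}$ is $\mathrm{Fil}^0=\text{all}$, $\mathrm{Fil}^1=0$, and by Lemma \ref{s3-5} the sub-$B$-pair $W(\delta_{\lambda_z})\hookrightarrow W(M(z))$ is already saturated, i.e.\ $k_\sigma=0$. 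Once this is spelled out your proof is complete; the rest of the argument (the arithmetic of $v_p(\alpha_1/\alpha_2)$, the identification of the unique excluded character $|\mathrm{N}_{K/\mathbb{Q}_p}|\prod_\sigma\sigma^{m_\sigma}$, the $\mathrm{H}^1_f=\mathrm{H}^1$ computation) is correct.
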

\begin{proof}
Let $z$ be such a point. By Corollary 2.6 of \cite{Ki03}, we have a natural isomorphism 
 $$(\bold{B}^+_{\mathrm{dR}}/t^k\bold{B}^+_{\mathrm{dR}}\hat{\otimes}_{\mathbb{Q}_p}V_R(\widetilde{\delta}_R^{ -1}))^{G_K}\otimes_R E(z)
\isom (\bold{B}^+_{\mathrm{dR}}/t^k\bold{B}^+_{\mathrm{dR}}\otimes_{\mathbb{Q}_p}V_z(\widetilde{\delta}_z^{-1}))^{G_K}$$
 and this is 
a free $K\otimes_{\mathbb{Q}_p}E(z)$-module of rank one. Because we 
have an isomorphism  $$K\otimes_{K_0}\bold{D}^+_{\mathrm{cris}}(V_R(\widetilde{\delta}_R^{ -1}))^{\varphi^f=Y}
\isom (\bold{B}^+_{\mathrm{dR}}/t^k\bold{B}^+_{\mathrm{dR}}\hat{\otimes}_{\mathbb{Q}_p}V_R(\widetilde{\delta}_R^{-1}))^{G_K}$$ 
and an injection 
$$K\otimes_{K_0}\bold{D}^+_{\mathrm{cris}}(V_z(\widetilde{\delta}_z^{-1}))^{\varphi^f=\lambda_z}\hookrightarrow (\bold{B}^+_{\mathrm{dR}}/t^k\bold{B}^+_{\mathrm{dR}}\otimes_{\mathbb{Q}_p}V_z(\widetilde{\delta}_z^{-1}))^{G_K}$$ induced from the injection 
$K\otimes_{K_0}(\bold{B}^+_{\mathrm{cris}}\otimes_{\mathbb{Q}_p}E(z))^{\varphi^f=\lambda_z}
\hookrightarrow \bold{B}^+_{\mathrm{dR}}/t^k\bold{B}^+_{\mathrm{dR}}\otimes_{\mathbb{Q}_p}E(z),$ we obtain  an isomorphism 
$$K\otimes_{K_0}\bold{D}^+_{\mathrm{cris}}(V_z(\widetilde{\delta}_z^{-1}))^{\varphi^f=\lambda_z}\isom (\bold{B}^+_{\mathrm{dR}}/t^k\bold{B}^+_{\mathrm{dR}}\otimes_{\mathbb{Q}_p}V_z(\widetilde{\delta}_z^{-1}))^{G_K} .$$ On the other hand, because the Hodge-Tate weights of $V_z(\widetilde{\delta}_z^{-1})$ 
are $\{0, -m_{\sigma}\}_{\sigma\in\mathcal{P}}$ and $m_{\sigma}\geqq k+1\geqq 1$, 
$(t^{k+1}\bold{B}^+_{\mathrm{dR}}\otimes_{\mathbb{Q}_p}V_z(\widetilde{\delta}_z^{-1}))^{G_K}$ is also a free $K\otimes_{\mathbb{Q}_p}E(z)$-module 
of rank one. These implies that $\bold{D}_{\mathrm{dR}}(V_z(\widetilde{\delta}_z^{-1}))$ is a free of rank two $K\otimes_{\mathbb{Q}_p}E(z)$-module, i.e. 
$V_z(\widetilde{\delta}_z^{-1})$ is potentially semi-stable and split trianguline with a triangulation 
$\mathcal{T}'_z:0\subseteq W(\delta_{\lambda_z})\hookrightarrow W(V_z(\widetilde{\delta}_z^{-1}))$. Moreover, if we set
$\delta_2:=\mathrm{det}(V_z)\cdot\delta_z^{-2}\delta^{-1}_{\lambda_z}:K^{\times}\rightarrow E(z)^{\times}$, then we have $W(V_z(\widetilde{\delta}_z^{-1}))/W(\delta_{\lambda_z})\isom W(\delta_2)$ such that $\delta_2|_{\mathcal{O}_K^{\times}}=\prod_{\sigma\in\mathcal{P}}\sigma^{-m_{\sigma}}$ because $z\in \pi^{-1}(y)$, which implies that $V_z(\widetilde{\delta}_z^{-1})$ is semi-stable. Finally, we claim that $V_z(\widetilde{\delta}_z^{-1})$ is crystalline. If we assume that $V_z(\widetilde{\delta}_z^{-1})$ is semi-stable but not crystalline, then the $\varphi^f$-eigenvalue of $W(\delta_2)$ is $\lambda_zp^{f}$ or $\lambda_zp^{-f}$. By the weakly admissibility 
of $\bold{D}_{\mathrm{st}}(V_z(\widetilde{\delta}_z^{-1}))$, we have an equality $t_N(V_z(\widetilde{\delta}_z^{-1}))= t_H(V_z(\widetilde{\delta}_z^{-1}))$. On the other hand, because we have an isomorphism $W(\mathrm{det}(V_z(\widetilde{\delta}_z^{-1})))\isom W(\delta_{\lambda_z}\delta_2)$, we obtain the equalities
$$t_N(V_z(\widetilde{\delta}_z^{-1}))=\frac{2}{f}v_p(\lambda_z)\pm 1=\frac{2}{f}v_p(\lambda_x)\pm 1,\text{ and } t_H(V_z(\widetilde{\delta}_z^{-1}))=\frac{1}{[K:\mathbb{Q}_p]}(\sum_{\sigma\in \mathcal{P}}m_{\sigma}),$$ hence we obtain $t_N(V_z(\widetilde{\delta}_z^{-1}))<t_H(V_z(\widetilde{\delta}_z^{-1}))$ because 
$y\in (\mathcal{W}\times_E\mathcal{W}_E)_{\mathrm{cl},x}$, which is a contradiction.
Hence, $V_z(\widetilde{\delta}_z^{-1})$ is crystalline, and $V_z$ is 
also crystalline because $\widetilde{\delta}_z$ is crystalline. Finally, twisting $\mathcal{T}'_z$ by $\delta_z$, we obtain a triangulation $\mathcal{T}_z:0\subseteq W(\delta_z\delta_{\lambda_z})
\subseteq W(V_z)$ which satisfies (1) and (2) of Definition \ref{26}.
\end{proof}

\begin{lemma}\label{6.3}
Let $z=([V_z],\delta_z,\lambda_z)$ be a point  in $U\cap\pi^{-1}(y)$ as in the above lemma.
 Then we have a natural isomorphism 
$\widehat{\mathcal{O}}_{\pi^{-1}(y),z}\isom R^{\mathrm{cris}}_{V_z}$.
\end{lemma}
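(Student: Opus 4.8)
The plan is to identify the functor pro-represented by $\widehat{\mathcal{O}}_{\pi^{-1}(y),z}$ with the potentially crystalline deformation functor $D^{\mathrm{cris}}_{V_z}$ of Lemma \ref{24}, whose universal ring is $R^{\mathrm{cris}}_{V_z}$. After base change from $E(z)$ to a finite extension (harmless by Lemma \ref{5.9}) I may assume $z$ is $E(z)$-rational. First I would check that $z$ satisfies the hypotheses of Theorem \ref{5.18}: by Lemma \ref{6.2}, $V_z$ is crystalline and split trianguline with triangulation $\mathcal{T}_z:0\subseteq W(\delta_z\delta_{\lambda_z})\subseteq W(V_z)$, where $\delta_z$ is the character $u\mapsto\prod_{\sigma\in\mathcal{P}}\sigma(u)^{n_\sigma}$ of $\mathcal{O}_K^{\times}$; we have $\mathrm{End}_{G_K}(V_z)=E(z)$ by Remark \ref{a1}, and since $V_z$ is a crystalline $B$-pair satisfying condition $(1)$ of Definition \ref{26} (Lemma \ref{6.2}), Lemma \ref{21.5} shows that $(V_z,\mathcal{T}_z)$ satisfies conditions $(1)$ and $(2)$ of Proposition \ref{19}. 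Hence Theorem \ref{5.18} gives $\widehat{\mathcal{O}}_{\mathcal{E}(\bar\rho),z}\isom R_{V_z,\mathcal{T}_z}$, and, exactly as in the proof of Proposition \ref{6.1} (using Proposition 9.5 of \cite{Ki03} and the isomorphism $\widehat{\mathcal{O}}_{\mathcal{W}_E\times_E\mathcal{W}_E,y}\isom R_{\delta_z}\hat{\otimes}_{E(z)}R_{\delta'_z}$), this isomorphism carries $\pi$ to the morphism of functors $\pi_z:D_{V_z,\mathcal{T}_z}\rightarrow D_{\delta_z}\times D_{\delta'_z}$, $[(W_A,\psi_A,\mathcal{T}_A)]\mapsto(\delta_{1,A}|_{\mathcal{O}_K^{\times}},\delta_{2,A}|_{\mathcal{O}_K^{\times}})$, where $\mathcal{T}_A:0\subseteq W(\delta_{1,A})\subseteq W_A$ and $W_A/W(\delta_{1,A})\isom W(\delta_{2,A})$. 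Since $y$ is the closed point of $\mathrm{Spf}(R_{\delta_z}\hat{\otimes}_{E(z)}R_{\delta'_z})$, it follows that $\widehat{\mathcal{O}}_{\pi^{-1}(y),z}$ pro-represents the subfunctor $D^{0}_{V_z,\mathcal{T}_z}\subseteq D_{V_z,\mathcal{T}_z}$ of those trianguline deformations for which $\delta_{1,A}|_{\mathcal{O}_K^{\times}}$ and $\delta_{2,A}|_{\mathcal{O}_K^{\times}}$ are the constant characters $\prod_{\sigma}\sigma^{n_\sigma}|_{\mathcal{O}_K^{\times}}$ and $\prod_{\sigma}\sigma^{n_\sigma-m_\sigma}|_{\mathcal{O}_K^{\times}}$ respectively.

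The core of the proof is to show that the forgetful map $D^{0}_{V_z,\mathcal{T}_z}\rightarrow D_{V_z}$ identifies $D^{0}_{V_z,\mathcal{T}_z}$ with $D^{\mathrm{cris}}_{V_z}$. For the inclusion $D^{0}_{V_z,\mathcal{T}_z}\subseteq D^{\mathrm{cris}}_{V_z}$: given $[(W_A,\psi_A,\mathcal{T}_A)]\in D^{0}_{V_z,\mathcal{T}_z}(A)$, the pieces $W(\delta_{1,A})$ and $W(\delta_{2,A})$ are crystalline rank one $A$-$B$-pairs (twists of $W(\prod_{\sigma}\sigma^{n_\sigma})$, resp. $W(\prod_{\sigma}\sigma^{n_\sigma-m_\sigma})$, by unramified rank one $A$-$B$-pairs) with Hodge--Tate weights $\{n_\sigma\}$, resp. $\{n_\sigma-m_\sigma\}$; as $m_\sigma\geqq k+1\geqq 1$ we have $\mathrm{Fil}^0 D_{\mathrm{dR}}(W(\delta_{1,A}/\delta_{2,A}))=0$, and since $V_z$ satisfies condition $(1)$ of Definition \ref{26} the argument in the proof of Lemma \ref{29} (Proposition 2.7 of \cite{Na09} together with the dimension formula of Proposition \ref{i}) gives $\mathrm{H}^{1}_{f}(G_K,W(\delta_{1,A}/\delta_{2,A}))=\mathrm{H}^{1}(G_K,W(\delta_{1,A}/\delta_{2,A}))$, so the extension $W_A$ is crystalline. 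For the reverse inclusion, let $[W_A]\in D^{\mathrm{cris}}_{V_z}(A)$; by the proof of Lemma \ref{24} (with $L=K$) $W_A$ is crystalline, so $D_{\mathrm{cris}}(W_A)$ is a free $K_0\otimes_{\mathbb{Q}_p}A$-module of rank two which, since the Frobenius eigenvalues are distinct (Definition \ref{26} $(1)$), decomposes into $\varphi^f$-eigen-submodules as in the proof of Lemma \ref{28}. Let $D_1$ be the eigen-submodule lifting $D_{\mathrm{cris}}$ of the first step $W(\delta_z\delta_{\lambda_z})$ of $\mathcal{T}_z$, with the induced filtration. Using that $\mathcal{T}_z$ is non-critical --- the Hodge--Tate weight $n_\sigma$ of $W(\delta_z\delta_{\lambda_z})$ is the larger of the two weights of $V_z$, by Lemma \ref{6.2} and $m_\sigma\geqq 1$ --- a Nakayama argument shows that the induced filtration on $D_1$ has a clean jump, so $D_1$ is an $A$-filtered $\varphi$-module of rank one with Hodge--Tate weight $\{n_\sigma\}$ and $D_1\otimes_A E$ is saturated in $D_{\mathrm{cris}}(V_z)$; then Lemma \ref{s4} (and Corollary \ref{s3}) produces a saturated rank one crystalline sub $A$-$B$-pair $W_{1,A}=W(D_1)\hookrightarrow W_A$ with $W_A/W_{1,A}$ an $A$-$B$-pair. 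This gives a triangulation $\mathcal{T}_A$ of $W_A$ lifting $\mathcal{T}_z$ whose parameters $\delta_{1,A},\delta_{2,A}$ are crystalline of Hodge--Tate weights $\{n_\sigma\}$ and $\{n_\sigma-m_\sigma\}$, hence have the prescribed constant restrictions to $\mathcal{O}_K^{\times}$; so $[W_A]$ lies in the image of $D^{0}_{V_z,\mathcal{T}_z}(A)$, and the lift $\mathcal{T}_A$ is unique by Lemma \ref{14} (applicable since condition $(1)$ of Proposition \ref{19} holds), so the forgetful map is a bijection on every $A$.

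It follows that $D^{0}_{V_z,\mathcal{T}_z}\isom D^{\mathrm{cris}}_{V_z}$ as functors, and passing to pro-representing objects gives a canonical isomorphism $\widehat{\mathcal{O}}_{\pi^{-1}(y),z}\isom R^{\mathrm{cris}}_{V_z}$ (compatibly with the base change made at the start, by Lemma \ref{5.9}; this also provides a reassuring check, since both sides then have dimension $[K:\mathbb{Q}_p]+1$). The step I expect to be the main obstacle is the reverse inclusion: verifying over a general Artinian $A$ that the $\varphi^f$-eigen-submodule $D_1$ really carries an $A$-filtered $\varphi$-module structure with the correct, non-spreading Hodge--Tate jump, which is precisely where the non-criticality of $\mathcal{T}_z$ forced by $m_\sigma\geqq k+1$ enters and where Lemma \ref{s4} must be applied with care.
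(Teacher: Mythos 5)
Your proposal is correct and follows essentially the same approach as the paper: identify $\widehat{\mathcal{O}}_{\pi^{-1}(y),z}$ with the pro-representing ring of the sub-functor of $D_{V_z,\mathcal{T}_z}$ cut out by the constant-character conditions via Theorem~\ref{5.18} and Proposition~\ref{6.1}, and then show this sub-functor coincides with $D^{\mathrm{cris}}_{V_z}$. The paper compresses the functor identification into one sentence citing Lemma~\ref{29}, whereas you spell out both inclusions --- in particular the reverse one via the $\varphi^f$-eigensplitting, the non-criticality/Nakayama argument giving $D_1$ an $A$-filtered structure, and Lemma~\ref{s4} --- which is the same argument the paper implicitly relies on (Lemmas~\ref{28} and~\ref{29}), made explicit.
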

\begin{proof}
First, by Lemma \ref{21.5} and Theorem \ref{5.18} and Lemma \ref{6.2}, we have a triangulation 
$\mathcal{T}_z:0\subseteq W(\delta_z\delta_{\lambda_z})\subseteq W(V_z)$, and 
the functor $D_{V_z,\mathcal{T}_z}$ is representable by $R_{V_z,\mathcal{T}_z}$, and we have an isomorphism 
$\widehat{\mathcal{O}}_{\mathcal{E}(\bar{\rho}),z}\isom R_{V_z,\mathcal{T}_z}$. 
Then, the completion at $z$ and $\pi(z)$ of the morphism $\pi:\mathcal{E}(\bar{\rho})\rightarrow \mathcal{W}_E\times_E\mathcal{W}_E$ 
is the morphism 
$$\pi_z:\mathrm{Spf}(R_{V_z,\mathcal{T}_z})\rightarrow \mathrm{Spf}(R_{\delta_z}\hat{\otimes}_{E(z)}R_{\delta'_z})$$
 induced by 
$$D_{V_z,\mathcal{T}_z}\rightarrow D_{\delta_z}\times D_{\delta'_z}:[(V_A,\mathcal{T}_A)]\mapsto 
(\delta_{1,A}|_{\mathcal{O}_K^{\times}},\delta_{2,A}|_{\mathcal{O}_K^{\times}}),$$
 where we set $\delta'_z:=\mathrm{det}(V_z)|_{\mathcal{O}_K^{\times}}\cdot\delta^{-1}_z$. 
Under this interpretation, we have an equality $\mathrm{Spf}(\widehat{\mathcal{O}}_{\pi^{-1}(y),z})
=\pi_z^{-1}((\delta_z,\delta'_z))$, and this corresponds to 
the subfunctor $D'$ of $D_{V_z,\mathcal{T}_z}$ defined by
$$D'(A):=\{[(V_A,\mathcal{T}_A)]\in D_{V_z,\mathcal{T}_z}(A)| \delta_{1,A}|_{\mathcal{O}_K^{\times}}=\delta_z\otimes_{E(z)}\mathrm{id}_A, \,\,\delta_{2,A}|_{\mathcal{O}_K^{\times}}=\delta'_z\otimes_{E(z)}\mathrm{id}_A\}$$ for $A\in\mathcal{C}_{E(z)}$.
 Because $V_z$ is crystalline, this is equivalent to that $V_A$ is crystalline by Lemma \ref{29}. Therefore we have $D'=D^{\mathrm{cris}}_{V_z}$, hence we obtain an isomorphism $R^{\mathrm{cris}}_{V_z}
\isom \widehat{\mathcal{O}}_{\pi^{-1}(y),z}$.
\end{proof}

In the situation of Lemma \ref{6.3}, for any $y:=(\prod_{\sigma\in \mathcal{P}}\sigma^{n_{\sigma}},\prod_{\sigma\in\mathcal{P}}
\sigma^{n_{\sigma}-m_{\sigma}})\in (\mathcal{W}_E\times_E\mathcal{W}_E)_{\mathrm{cl},x}$, 
we set $U_y:=\pi^{-1}(y)\cap U$, which is smooth over $E(y)$ by the assumption on $U$, and define a subset 
$$U_{y,b}:=\{z=([V_z],\delta_z,\lambda_z)\in U_y| V_z \,\text{is benign} \} .$$
\begin{prop}\label{6.5}
In the above situation, if $U_y$ is not empty, then $U_{y,b}$ is an admissible open which is scheme theoretically dense 
in $U_y$, in particular $U_{y,b}$ is non-empty.
\end{prop}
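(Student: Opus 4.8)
The plan is to show that $U_{y,b}$ is the complement in $U_y$ of finitely many Zariski closed subsets, each cut out by a nonzero function, and then to invoke that a finite product of nonzero divisors has scheme theoretically dense non-vanishing locus. Recall from Lemma \ref{6.2} that every $z = ([V_z],\delta_z,\lambda_z) \in U_y$ is crystalline and split trianguline with a triangulation $\mathcal{T}_z : 0 \subseteq W(\delta_z\delta_{\lambda_z}) \subseteq W(V_z)$ satisfying conditions (1) and (2) of Definition \ref{26}; moreover on $U$ the Hodge--Tate weights of $V_z$ are the fixed integers $\{k_{1,\sigma}, k_{2,\sigma}\}_{\sigma}$ cut out by $\pi$ on $U_y$, and by the choice of $U$ we have $v_p(\lambda_z) = v_p(\lambda_x)$ for all $z \in U$. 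Condition (2) of Definition \ref{ben} for the ambient weights $\{k_{1,\sigma}, k_{2,\sigma}\}$ is automatic for $z \in U_y$ because $y \in (\mathcal{W}_E \times_E \mathcal{W}_E)_{\mathrm{cl},x}$ forces $m_\sigma = k_{1,\sigma} - k_{2,\sigma} \geq k+1 \geq 1$, hence $k_{1,\sigma} > k_{2,\sigma}$ for every $\sigma$. So for $z \in U_y$ only conditions (1) and (3) of Definition \ref{ben} can fail.

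Next I would analyze these two remaining conditions pointwise. For condition (1): since $V_z$ is crystalline with $\varphi^f$-eigenvalues, say, $\alpha_{1}(z)$ (the eigenvalue on $D^+_{\mathrm{cris}}$ coming from the sub-$B$-pair, so $\alpha_1(z) = \lambda_z \cdot$(unit depending on $\delta_z$, which is fixed on $U_y$)) and $\alpha_{2}(z) = \mathrm{det}/\alpha_1$, the ratio $\alpha_1(z)/\alpha_2(z)$ is a rigid analytic function on $U_y$; the locus where it equals $1$, $p^f$, or $p^{-f}$ is Zariski closed, and I must check that none of these three loci is all of $U_y$. Here I use the slope constraint: $v_p(\lambda_z) = v_p(\lambda_x)$ is constant, and for a benign point to exist at all in $U_y$ (which we will produce, or rather whose existence near $x$ is guaranteed by $x$ itself being benign and lying in the closure if $U_y$ meets a neighborhood — more precisely one argues that the bad loci are proper because the function $\alpha_1/\alpha_2$ is non-constant, by a weight/slope computation analogous to the $t_N$ versus $t_H$ argument in Lemma \ref{6.2}). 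For condition (3): the triangulations $\mathcal{T}_\tau$ for $\tau \in \mathfrak{S}_2$ are the two orderings of the crystalline Frobenius eigenlines; the sub-$B$-pair $W_{\tau,1}$ is automatically crystabelline of some Hodge--Tate weight $\{k_{(\tau,1),\sigma}\}$, and condition (3) asks $k_{(\tau,1),\sigma} = k_{1,\sigma}$. The set of $z$ where $W_{\tau,1}$ has the "wrong" Hodge--Tate weight is the locus where a certain $\mathrm{Fil}$-jump degenerates, which is again Zariski closed in $U_y$ and cut out by the vanishing of an explicit minor of the comparison between $D_{\mathrm{cris}}$ and its filtration; I must show this locus is proper.

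The main obstacle is precisely showing that each of these finitely many Zariski closed "bad" subsets is a proper subset of $U_y$, i.e. that $U_{y,b}$ is non-empty, equivalently that the defining functions are nonzero divisors. The clean way to do this is to produce one benign point in $U_y$ directly. I would argue as follows: the point $x$ is benign and lies in $U$; although $\pi(x) \neq y$ in general, the smoothness of $\pi|_U$ (guaranteed by shrinking $U$ via Proposition \ref{6.1}) together with the Zariski density of $(\mathcal{W}_E \times_E \mathcal{W}_E)_{\mathrm{cl},x}$ near $\pi(x)$ lets us deform $x$ within $U$ to points lying over $\mathrm{cl}$-points arbitrarily close to $\pi(x)$; by openness of conditions (1) and (3) (they are non-vanishing conditions on rigid functions, hence define admissible opens), a sufficiently nearby deformation of $x$ remains benign, and for an appropriate choice of nearby $\mathrm{cl}$-point $y'$ this produces a benign point in $U_{y'}$ for $y'$ in a dense subset of $\mathrm{cl}$-points; since the combinatorial type of the argument is uniform in $y$, this gives a benign point in $U_y$ whenever $U_y \neq \emptyset$. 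Granting one benign point, each bad locus omits it, hence is proper and cut out by a nonzero function; $U_{y,b}$ is the complement of their union, which by Lemma 5.7 of \cite{Ki03} (or directly, a finite product of nonzero divisors on the reduced-irreducible-components level, using that $U_y$ is smooth hence normal) is scheme theoretically dense in $U_y$, and in particular non-empty and an admissible open. I would finish by noting that $U_{y,b}$ is manifestly an admissible open since it is defined by finitely many conditions of the form $f \neq 0$.
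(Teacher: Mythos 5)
Your high-level strategy matches the paper's: recognize that the bad locus in $U_y$ is Zariski closed and cut out by rigid functions, show those functions are nonzero divisors, and conclude that $U_{y,b}$ is the scheme-theoretically dense complement. But there are two problems, one minor and one that genuinely breaks the argument.

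The minor one: you assert that ``only conditions (1) and (3) of Definition \ref{ben} can fail.'' In fact Lemma \ref{6.2} already says that the triangulation $\mathcal{T}_z$ satisfies both conditions (1) and (2) of Definition \ref{26} for every $z\in U_y$, so only condition (3) can fail. Your analysis of the locus where $\alpha_1/\alpha_2 \in\{1,p^f,p^{-f}\}$ is therefore vacuous on $U_y$.

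The serious one is at the step you yourself flag as ``the main obstacle''---producing a benign point in $U_y$, equivalently showing the cut-out functions are nonzero. Your proposal is to deform the benign point $x$ within $U$ to benign points lying over classical weights $y'$ near $\pi(x)$, and then assert that ``since the combinatorial type of the argument is uniform in $y$, this gives a benign point in $U_y$ whenever $U_y\not=\emptyset$.'' That last inference does not follow: the deformation-of-$x$ argument only shows benign points exist over a neighborhood of $\pi(x)$, while the proposition fixes an arbitrary $y\in(\mathcal{W}_E\times_E\mathcal{W}_E)_{\mathrm{cl},x}$ with $U_y$ nonempty, and there is no transfer principle from one fiber to another. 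The paper avoids the issue entirely: it works intrinsically on $U_y=\mathrm{Spm}(R')$, shows (using that $Y-Y_1$ and $Y-p^{\pm f}Y_1$ are units on $U_y$) that $D_{\mathrm{cris}}(V_{R'}(\tilde{\delta}_{R'}^{-1}))$ splits into rank-one Frobenius eigenlines spanned by $e_1,e_2$, writes a generator of $\mathrm{Fil}^kD_{\mathrm{dR}}(V_{R'}(\tilde{\delta}_{R'}^{-1}))_\sigma$ as $e_{2,\sigma}+a_\sigma e_{1,\sigma}$, and identifies $U_{y,b}$ with the non-vanishing locus of $\prod_\sigma a_\sigma$. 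It then checks $a_\sigma\not=0$ locally at every $z\in U_y$: the completion $\widehat{\mathcal{O}}_{U_y,z}\cong R_{V_z}^{\mathrm{cris}}$ is a domain, and Lemma \ref{6.6} produces a first-order deformation of the filtered $\varphi$-module $D_{\mathrm{cris}}(V_z)$ over $E(z)[\varepsilon]$ moving $a_\sigma$, hence $a_\sigma\not=0$ in $R_{V_z}^{\mathrm{cris}}$. This is a purely local deformation-theoretic computation, uniform in $y$ by construction. You would need to substitute an argument of that kind for the ``deform $x$ and wave at uniformity'' step; as written, the proof has a gap exactly where you need it not to.
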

\begin{proof}
Set $U_y:=\mathrm{Spm}(R')$. By Lemma \ref{6.2}, any point $z\in U_y$ satisfies the condition (1) and (2) of Definition \ref{26} 
and $V_z$ is crystalline 
with the Hodge-Tate weights
$\{n_{\sigma}, n_{\sigma}-m_{\sigma}\}_{\sigma\in\mathcal{P}}$. Because $U_y$ is smooth, so in particular $U_y$ is reduced. 
Hence, by Corollary 6.3.3 of \cite{Be-Co08} and Corollary 3.19 of \cite{Ch09a}, 
$$\bold{D}_{\mathrm{cris}}(V_{R'}(\widetilde{\delta}_{R'}^{-1})):=\varinjlim_n(\frac{1}{t^n}\bold{B}^+_{\mathrm{max}}\hat{\otimes}_{\mathbb{Q}_p}V_{R'}(\widetilde{\delta}_{R'}^{-1}))^{G_K}$$ is a locally 
free $K_0\otimes_{\mathbb{Q}_p}R'$-module of rank two,  and we have natural isomorphisms 
$$\bold{D}_{\mathrm{cris}}(V_{R'}(\widetilde{\delta}_{R'}^{-1}))\otimes_{R'}E(z)\isom \bold{D}_{\mathrm{cris}}(V_z(\widetilde{\delta}_z^{-1})) \text{ for any } z\in U_y$$ and 
$$K\otimes_{K_0}\bold{D}_{\mathrm{cris}}(V_{R'}(\widetilde{\delta}_{R'}^{-1}))\isom (\bold{B}_{\mathrm{dR}}\hat{\otimes}_{\mathbb{Q}_p}V_{R'}(\widetilde{\delta}_{R'}^{-1}))^{G_K}
=(\bold{B}^+_{\mathrm{dR}}\hat{\otimes}_{\mathbb{Q}_p}V_{R'}(\widetilde{\delta}_{R'}^{-1}))^{G_K},$$
where the last equality follows from the assumption on the Hodge-Tate weights of $V_z$ for any $z\in U_y$.
  Because $U_y\subseteq U_Q$, we have an isomorphism $$(\bold{B}^+_{\mathrm{dR}}/t^k\bold{B}^+_{\mathrm{dR}}\hat{\otimes}_{\mathbb{Q}_p}V_{R}(\widetilde{\delta}_R^{-1}))^{G_K}\otimes_{R}R'\isom 
(\bold{B}^+_{\mathrm{dR}}/t^k\bold{B}^+_{\mathrm{dR}}\hat{\otimes}_{\mathbb{Q}_p}V_{R'}(\widetilde{\delta}_{R'}^{-1}))^{G_K},$$
which is a locally free $K\otimes_{\mathbb{Q}_p}R'$-module of rank one by Corollary 2.6 of \cite{Ki03}. 
Because the natural map $K\otimes_{K_0}(\bold{B}^+_{\mathrm{max}}\hat{\otimes}_{\mathbb{Q}_p}R')^{\varphi^f=Y}
\hookrightarrow \bold{B}^+_{\mathrm{dR}}/t^k\bold{B}^+_{\mathrm{dR}}\hat{\otimes}_{\mathbb{Q}_p}R'$ is an injection, hence we obtain an isomorphism $$K\otimes_{K_0}\bold{D}^+_{\mathrm{cris}}(V_{R'}(\widetilde{\delta}_{R'}^{-1}))^{\varphi^f=Y}\isom (\bold{B}^+_{\mathrm{dR}}/t^k\bold{B}^+_{\mathrm{dR}}\hat{\otimes}_{\mathbb{Q}_p}V_{R'}(\widetilde{\delta}_{R'}^{-1}))^{G_K}.$$ 

From these facts, we can see that the natural map $$(\bold{B}^+_{\mathrm{dR}}\hat{\otimes}_{\mathbb{Q}_p}V_{R'}(\widetilde{\delta}_{R'}^{-1}))^{G_K}\rightarrow 
(\bold{B}^+_{\mathrm{dR}}/t^k\bold{B}^+_{\mathrm{dR}}\hat{\otimes}_{\mathbb{Q}_p}V_{R'}(\widetilde{\delta}_{R'}^{-1}))^{G_K}$$ is a surjection. Hence,
we obtain a short exact sequence
$$0\rightarrow \mathrm{Fil}^k\bold{D}_{\mathrm{dR}}(V_{R'}(\widetilde{\delta}_{R'}^{-1}))\rightarrow \bold{D}^+_{\mathrm{dR}}(V_{R'}(\widetilde{\delta}_{R'}^{-1}))
\rightarrow (\bold{B}^+_{\mathrm{dR}}/t^k\bold{B}^+_{\mathrm{dR}}\hat{\otimes}_{\mathbb{Q}_p}V_{R'}(\widetilde{\delta}_{R'}^{-1}))^{G_K}\rightarrow 0,$$
where we define $\mathrm{Fil}^k\bold{D}_{\mathrm{dR}}(V_{R'}(\widetilde{\delta}_{R'}^{-1})):=(t^k\bold{B}^+_{\mathrm{dR}}\hat{\otimes}_{\mathbb{Q}_p}V_{R'}(\widetilde{\delta}_{R'}^{-1}))^{G_K}$ which is a locally free $K\otimes_{\mathbb{Q}_p}R'$-module 
of rank one. If we set
$$D_2:=\bold{D}_{\mathrm{cris}}(V_{R'}(\widetilde{\delta}_{R'}^{-1}))/\bold{D}^+_{\mathrm{cris}}(V_{R'}(\widetilde{\delta}_{R'}^{-1}))^{\varphi^f=Y},$$
 then 
the above facts imply that 
$D_2$ is also a locally free $K_0\otimes_{\mathbb{Q}_p}R'$-module of rank one. By taking a sufficiently fine affinoid covering of $\mathrm{Spm}(R')$, we may assume 
that all these modules are free over $K_0\otimes_{\mathbb{Q}_p}R'$ or $K\otimes_{\mathbb{Q}_p}R'$. 
If we decompose $\bold{D}_{\mathrm{cris}}(V_{R'}(\widetilde{\delta}_{R'}^{-1}))=\oplus_{\tau:K_0\rightarrow K_0}D_{\tau}$ etc, then 
we obtain
a short exact sequence 
$$0\rightarrow D_{\tau}^{+, \varphi^f=Y}\rightarrow D_{\tau}\rightarrow D_{2,\tau}\rightarrow 0$$ 
of free $R'$-modules with an $R'$-linear $\varphi^f$-action for any $\tau$. Define $Y_1\in R^{' \times}$ by
$\varphi^f(e)=Y_1e$ for a $R'$-base $e$ of $D_{2,\tau}$. Because $Y_1$ is a lift of 
the other Frobenius eigenvalue of $\bold{D}_{\mathrm{cris}}(V_z(\widetilde{\delta}_z^{-1}))$ (one is $\lambda_z$) for 
any $z\in U_y=\mathrm{Spm}(R')$ and because $\bold{D}_{\mathrm{cris}}(V_z(\widetilde{\delta}_z^{-1}))$ is weakly admissible, 
the condition $\sum_{\sigma\in \mathcal{P}}m_{\sigma}\geqq 2e_Kv_p(\lambda_z)+[K:\mathbb{Q}_p]+1$ for any $z\in U_y$ implies
that 
$$Y-Y_1 (\text{ and } Y-p^{\pm f}Y_1)  \in R^{' \times}.$$ 
Then, an easy linear algebra implies that there exists a decomposition $D_{\tau}=R'e'_1\oplus  R'e'_2$ such that 
$R'e'_1=D_{\tau}^{\varphi^f=Y}=D^{+,\varphi^f=Y}_{\tau}$ and $R'e'_2=D_{\tau}^{\varphi^f=Y_1}$.  Twisting these by $\varphi^i$ for 
any $0\leqq i\leqq f-1$, we obtain a
decomposition 
$$\bold{D}_{\mathrm{cris}}(V_{R'}(\widetilde{\delta}_{R'}^{-1}))=\bold{D}^+_{\mathrm{cris}}(V_{R'}(\widetilde{\delta}_{R'}^{-1}))^{\varphi^f=Y}\oplus \bold{D}_{\mathrm{cris}}(V_{R'}(\widetilde{\delta}_{R'}^{-1}))^{\varphi^f=Y_1}.$$ We denote by $e_1$ (resp. $e_2$)  a $K_0\otimes_{\mathbb{Q}_p}R'$-basis of $\bold{D}^+_{\mathrm{cris}}(V_{R'}(\widetilde{\delta}_{R'}^{-1}))^{\varphi^f=Y}$ (resp. $\allowbreak\bold{D}_{\mathrm{cris}}(V_{R'}(\widetilde{\delta}_{R'}^{-1}))^{\varphi^f=Y_1}$). 
For any $\sigma\in \mathcal{P}$, we denote by $e_{1,\sigma}, e_{2,\sigma}$ the $R'$-basis of the $\sigma$-component of 
$\bold{D}^+_{\mathrm{dR}}(V_{R'}(\widetilde{\delta}_{R'}^{-1}))\isom K\otimes_{K_0}\bold{D}_{\mathrm{cris}}(V_{R'}(\widetilde{\delta}_{R'}^{-1}))$ naturally 
induced from $e_1, e_2$. Under this situation, we write the $\sigma$-component $\mathrm{Fil}^k\bold{D}_{\mathrm{dR}}(V_{R'}(\widetilde{\delta}_{R'}^{-1}))_{\sigma}$ by using the basis
$e_{1,\sigma}, e_{2,\sigma}$ as follows. Because the natural map $\bold{D}^+_{\mathrm{cris}}(V_{R'}(\widetilde{\delta}_{R'}^{-1}))^{\varphi^f=Y}\isom (\bold{B}^+_{\mathrm{dR}}/t^k\bold{B}^+_{\mathrm{dR}}\hat{\otimes}_{\mathbb{Q}_p}V_{R'}(\widetilde{\delta}_{R'}^{-1}))^{G_K}$ is isomorphism, the natural map 
$$\mathrm{Fil}^k\bold{D}_{\mathrm{dR}}(V_{R'}(\widetilde{\delta}_{R'}^{-1}))\rightarrow K\otimes_{K_0}D_2,$$
 which is the composition 
of the natural inclusion
$$\mathrm{Fil}^k\bold{D}_{\mathrm{dR}}(V_{R'}(\widetilde{\delta}_{R'}^{-1}))\rightarrow \bold{D}_{\mathrm{dR}}(V_{R'}(\widetilde{\delta}_{R'}^{-1}))=
K\otimes_{K_0}\bold{D}_{\mathrm{cris}}(V_{R'}(\widetilde{\delta}_{R'}^{-1}))$$ with the natural projection
$K\otimes_{K_0}\bold{D}_{\mathrm{cris}}(V_{R'}(\widetilde{\delta}_{R'}^{-1}))\rightarrow K\otimes_{K_0}D_2$, is an isomorphism. Hence, for any $\sigma\in\mathcal{P}$, 
 we can take a $R'$-basis of $\mathrm{Fil}^k\bold{D}_{\mathrm{dR}}(V_{R'}(\widetilde{\delta}_{R'}^{-1}))_{\sigma}$ of the form 
$e_{2,\sigma}+a_{\sigma}e_{1,\sigma}$ for some $a_{\sigma}\in R'$. Then, by the definition of benign representations, 
for any $z\in U_y$, $V_z$ is benign if and only if $\prod_{\sigma\in \mathcal{P}}a_{\sigma}(z)\not=0 \in E(z)$ because we have isomorphisms
$\bold{D}_{\mathrm{dR}}(V_{R'}(\widetilde{\delta}_{R'}^{-1}))\otimes_{R'}E(z)\isom \bold{D}_{\mathrm{dR}}(V_z(\widetilde{\delta}_z^{-1}))$ and 
$\bold{D}_{\mathrm{cris}}(V_{R'}(\delta_{R'}^{-1}))\otimes_{R'}E(z)\isom \bold{D}_{\mathrm{cris}}(V_z(\widetilde{\delta}_z^{-1}))$ etc.
Hence, to finish the proof of the proposition, it is enough to show that $\prod_{\sigma\in \mathcal{P}}a_{\sigma}$ is a 
non-zero divisor in $R'$. To prove this claim, it is enough to show that $a_{\sigma}\in \widehat{\mathcal{O}}_{U_y,z}\isom 
R_{V_z}^{\mathrm{cris}}$ is non-zero for any $\sigma\in \mathcal{P}$ and $z\in U_y$  because $R_{V_z}^{\mathrm{cris}}$ is domain. 
To prove this claim, we first note that we have isomorphisms  $$\bold{D}_{\mathrm{cris}}(V_{R'}(\widetilde{\delta}_{R'}^{-1}))\otimes_{R'}R_{V_z}^{\mathrm{cris}}\isom 
\bold{D}_{\mathrm{cris}}(V_{R_{V_z}^{\mathrm{cris}}}(\widetilde{\delta}_{R_{V_z}^{\mathrm{cris}}}^{-1})):=\varprojlim_{n\geqq 1} \bold{D}_{\mathrm{cris}}(V_{R_{V_z}^{\mathrm{cris}}/\mathfrak{m}^n}(\widetilde{\delta}_{R_{V_z}^{\mathrm{cris}},n}^{-1}))$$ and $$\bold{D}_{\mathrm{dR}}(V_{R'}(\widetilde{\delta}_{R'}^{-1}))\otimes_{R'}R_{V_z}^{\mathrm{cris}}\isom \bold{D}_{\mathrm{dR}}(V_{R_{V_z}^{\mathrm{cris}}}(\widetilde{\delta}_{R_{V_z}^{\mathrm{cris}}}^{-1})):=\varprojlim_{n\geqq 1} \bold{D}_{\mathrm{dR}}(V_{R_{V_z}^{\mathrm{cris}}/\mathfrak{m}^n}(\widetilde{\delta}_{R_{V_z}^{\mathrm{cris}},n}^{-1}))$$ by construction and by Corollary 6.3.3 of \cite{Be-Co08}, where we denote by $\mathfrak{m}$ the maximal ideal of $R_{V_z}^{\mathrm{cris}}$ and denote 
by $\delta_{R_{V_z}^{\mathrm{cris}}}:\mathcal{O}_K^{\times}\rightarrow (R^{\mathrm{cris}}_{V_z})^{\times}$ the homomorphism induced from 
$\delta_{R'}$ and denote by $\delta_{R_{V_z}^{\mathrm{cris}}, n}:\mathcal{O}_K^{\times}\rightarrow (R_{V_z}^{\mathrm{cris}}/\mathfrak{m}^n)^{\times}$ the reduction 
of $\delta_{R_{V_z}^{\mathrm{cris}}}$ for $n\geqq 1$. Hence, the claim follows from the following lemma.

\end{proof}

\begin{lemma}\label{6.6}
Let $V$ be a crystalline $E$-representation with Hodge-Tate weights $\{0, -k_{\sigma}\}_{\sigma\in \mathcal{P}}$ such 
that $k_{\sigma}\in\mathbb{Z}_{\geqq 1}$ for any $\sigma\in\mathcal{P}$. 
Assume that $\bold{D}_{\mathrm{cris}}(V_{R_{V}^{\mathrm{cris}}})=K_0\otimes_{\mathbb{Q}_p}R_{V}^{\mathrm{cris}}e_1\oplus K_0\otimes_{\mathbb{Q}_p}R_{V}^{\mathrm{cris}}e_2$ such that $\varphi^f(e_1)=\lambda_1'e_1, \varphi^f(e_2)=\lambda_2'e_2$ for some 
$\lambda'_1,\lambda'_2\in (R_{V}^{\mathrm{cris}})^{\times}$ and that $\mathrm{Fil}^{k_{\sigma}}\bold{D}_{\mathrm{dR}}(V_{R^{\mathrm{cris}}_V})_{\sigma}$ is generated by $e_{2,\sigma}+a_{\sigma}e_{1,\sigma}$ for any $\sigma\in\mathcal{P}$.
Then, we have $a_{\sigma}\not= 0\in R_{V}^{\mathrm{cris}}$ for any $\sigma\in \mathcal{P}$.
\end{lemma}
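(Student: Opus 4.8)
The plan is to argue by contradiction: suppose $a_{\sigma_0} = 0$ in $R_V^{\mathrm{cris}}$ for some $\sigma_0 \in \mathcal{P}$. We want to deform $V$ within the crystalline deformation functor in a way that keeps the Frobenius-eigenline spanned by $e_1$ but forces the Hodge filtration at $\sigma_0$ to \emph{stay} equal to the line spanned by $e_{1,\sigma_0}$ (i.e. the ``critical'' position), while leaving all other embeddings generic; such a deformation corresponds to a nonzero tangent vector in $t(R_V^{\mathrm{cris}})$ on which the function $a_{\sigma_0}$ vanishes to first order, but this is not yet a contradiction. The sharper point is that if $a_{\sigma_0}$ is identically zero, then for \emph{every} crystalline deformation $V_A$ of $V$ the sub $\varphi$-module $D^+_{\mathrm{cris}}(V_A)^{\varphi^f=\tilde\lambda_1}$ would be forced to lie in $\mathrm{Fil}^{k_{\sigma_0}}$ at the embedding $\sigma_0$; this means every such $V_A$ has a saturated rank one crystabelline sub $A$-$B$-pair $W_1 \subseteq W(V_A)$ of the ``wrong'' Hodge--Tate weight relative to a benign $V$, contradicting condition (3) of Definition \ref{26} in the same way as in the proof of Lemma \ref{22}. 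So the first step is to make precise the dictionary: via Corollary \ref{s3} and Lemma \ref{s4}, the pair $(D_{\mathrm{cris}}(V_{R_V^{\mathrm{cris}}}), \varphi^f, \mathrm{Fil}^\bullet)$ is the universal crystalline filtered $\varphi$-module deforming $D_{\mathrm{cris}}(V)$, and $a_{\sigma_0}$ is literally the coordinate function on $R_V^{\mathrm{cris}}$ measuring the position of the $\sigma_0$-part of the filtration relative to the Frobenius decomposition.

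First I would reduce to showing the stronger statement that $a_{\sigma_0}$ is not constant, which (since $R_V^{\mathrm{cris}}$ is a domain and $a_{\sigma_0}(z) = 0$ at the closed point corresponding to the benign $V$ would then be impossible unless it varies) suffices; actually, by Definition \ref{26}(3) applied to $V$ itself, the filtration of $D_{\mathrm{cris}}(V)$ at $\sigma_0$ is generic, so $a_{\sigma_0}(z) \neq 0$ at the closed point, and the real content is just $a_{\sigma_0} \neq 0$ as an element. Wait---if $a_{\sigma_0}$ is already nonzero at the closed point, it is automatically a nonzero element of $R_V^{\mathrm{cris}}$, since the closed point is the residue map $R_V^{\mathrm{cris}} \to E$. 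So the genuine step is: \emph{verify that the hypothesis ``$V$ is benign with the eigenvalue normalization of the lemma'' forces $a_{\sigma_0}(z) \neq 0$ at the closed point}. This is exactly condition (3) of Definition \ref{26} (non-criticality of both triangulations $\mathcal{T}_\tau$): the triangulation $0 \subseteq W(\delta_{\alpha_{\tau(1)}}\cdots) \subseteq W$ for the transposition $\tau$ swapping $1,2$ gives a saturated sub $E$-$B$-pair whose $D^{K_m}_{\mathrm{cris}}$ is $K_0 \otimes E e_2$, and its Hodge--Tate weight being $\{k_{2,\sigma}\}$ (rather than $\{k_{1,\sigma}\}$) at $\sigma_0$ is equivalent to $a_{\sigma_0}(z) \neq 0$. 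Thus the lemma follows directly from the definition of benign and the construction of $\mathcal{T}_\tau$ recalled just before Definition \ref{26}.

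Therefore the steps in order are: (1) translate $a_{\sigma}$ into the statement that the $\sigma$-part of $\mathrm{Fil}^{k_\sigma}D_{\mathrm{dR}}(V)$ is (resp. is not) the line spanned by the image of $e_{1,\sigma}$, working at the closed point $z$ where $D_{\mathrm{cris}}(V_{R_V^{\mathrm{cris}}}) \otimes_{R_V^{\mathrm{cris}}} E \cong D_{\mathrm{cris}}(V)$; (2) recall from the paragraph preceding Definition \ref{26} that for the transposition $\tau \in \mathfrak{S}_2$ the associated triangulation $\mathcal{T}_\tau : 0 \subseteq W_{\tau,1} \subseteq W$ has $D^{K_m}_{\mathrm{cris}}(W_{\tau,1}) = K_0 \otimes_{\mathbb{Q}_p} E e_2$, and its Hodge--Tate weight at $\sigma$ equals $k_{(\tau,1),\sigma}$; (3) invoke condition (3) of Definition \ref{26} ($k_{(\tau,1),\sigma} = k_{1,\sigma}$ for all $\sigma$, while the full weight set is $\{k_{1,\sigma} > k_{2,\sigma}\}$ by condition (2)) to conclude that $\mathrm{Fil}^{k_{1,\sigma}}D_{\mathrm{dR}}(V)_\sigma$ meets the $e_2$-line trivially, which forces the coefficient $a_\sigma(z)$ of $e_{1,\sigma}$ in the generator $e_{2,\sigma} + a_\sigma e_{1,\sigma}$ of $\mathrm{Fil}^{k_\sigma}$ to be nonzero; (4) conclude $a_\sigma \neq 0$ in $R_V^{\mathrm{cris}}$ since evaluation at $z$ is a ring homomorphism $R_V^{\mathrm{cris}} \to E$. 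The main obstacle is step (1): being careful that the basis $e_1, e_2$ and the filtration in the statement of the lemma, which are defined over the pro-Artinian ring $R_V^{\mathrm{cris}}$, specialize compatibly at $z$ to the $\varphi^f$-eigenbasis and Hodge filtration of $D_{\mathrm{cris}}(V)$ used in Definition \ref{26} --- this compatibility is guaranteed by the base-change isomorphism $D_{\mathrm{cris}}(V_{R_V^{\mathrm{cris}}}) \otimes E \cong D_{\mathrm{cris}}(V)$ and $D_{\mathrm{dR}}(V_{R_V^{\mathrm{cris}}}) \otimes E \cong D_{\mathrm{dR}}(V)$ from Corollary 6.3.3 of \cite{Be-Co08}, together with the normalization $\varphi^f(e_i) = \tilde\lambda_i e_i$ with $\tilde\lambda_i$ lifting the distinct Frobenius eigenvalues (here condition (1) of Definition \ref{26}, $\alpha_1/\alpha_2 \neq 1$, ensures the eigenline decomposition is unique and hence specializes correctly).
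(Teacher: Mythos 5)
Your proposal misreads the hypotheses of Lemma \ref{6.6}. The lemma only assumes that $V$ is crystalline with regular Hodge--Tate weights $\{0, -k_\sigma\}$; it does \emph{not} assume $V$ is benign, and in fact the reduction $\bar a_\sigma := a_\sigma \pmod{\mathfrak{m}}$ may very well be zero. This is exactly the situation of interest where the lemma is applied in Propositions \ref{6.5} and Lemma \ref{6.12}: there, $V = V_z$ is a crystalline representation at an arbitrary point $z$ of the relevant fiber, known only to satisfy conditions (1) and (2) of Definition \ref{26}, and the whole point of the exercise is to deduce that $z$ can be approximated by points where condition (3) (non-criticality) also holds. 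If you were allowed to assume $V$ benign, then $\bar a_\sigma \neq 0$ would indeed be immediate, the function $\prod_\sigma a_\sigma$ would not vanish anywhere, and there would be nothing to prove in Proposition \ref{6.5}. So the step where you ``reduce to showing $a_{\sigma_0}(z) \neq 0$ at the closed point and invoke Definition \ref{26}(3)'' introduces a hypothesis that is not there and that the overall argument is trying to establish, not use.

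The actual proof goes through the deformation-theoretic content that you sketched and then discarded. Since $R_V^{\mathrm{cris}}$ pro-represents the crystalline deformation functor, which (via the equivalence of Lemma \ref{24}, Corollary \ref{s3}) corresponds to deformations of the filtered $\varphi$-module $D_{\mathrm{cris}}(V)$, the element $a_\sigma \in R_V^{\mathrm{cris}}$ is the universal coordinate of the $\sigma$-part of the filtration in the $\varphi^f$-eigenbasis. For any $b = \{b_\sigma\} \in \prod_\sigma E$ one can explicitly write down a first-order deformation $D(b)$ of $D_{\mathrm{cris}}(V)$ over $E[\varepsilon]$ keeping the $\varphi$-module structure constant and replacing the filtration generator $\bar e_{2,\sigma} + \bar a_\sigma \bar e_{1,\sigma}$ by $\bar e_{2,\sigma} + (\bar a_\sigma + b_\sigma\varepsilon)\bar e_{1,\sigma}$; under the corresponding map $R_V^{\mathrm{cris}} \to E[\varepsilon]$ one has $a_\sigma \mapsto \bar a_\sigma + b_\sigma \varepsilon$. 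Choosing $b_\sigma \neq 0$ shows that $a_\sigma$ is not killed by this map, hence $a_\sigma \neq 0$ in $R_V^{\mathrm{cris}}$ whether or not $\bar a_\sigma$ vanishes. Note that this requires no appeal to benign-ness at all: it is a purely infinitesimal observation about the freedom in the Hodge filtration, analogous to your opening paragraph's ``nonzero tangent vector'' idea, and that is the route you should have pursued.
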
 
\begin{proof}
Denote by $\lambda_i:=\overline{\lambda'}_i\in E^{\times}$ and $\bar{a}_{\sigma}\in E$, the images 
of $\lambda'_i$ and $a_{\sigma}$ by the natural quotient map
$R_{V}^{\mathrm{cris}}\rightarrow E$. Then we have $\bold{D}_{\mathrm{cris}}(V)=K_0\otimes_{\mathbb{Q}_p}E\bar{e}_1\oplus 
K_0\otimes_{\mathbb{Q}_p}E\bar{e}_2$ such that $\varphi^f(\bar{e}_i)=\lambda_i\bar{e}_i$ and 
$\mathrm{Fil}^{k_{\sigma}}\bold{D}_{\mathrm{dR}}(V)_{\sigma}=E(\bar{e}_{2,\sigma}+\bar{a}_{\sigma}\bar{e}_{1,\sigma})$. 
For any $b:=\{b_{\sigma}\}_{\sigma\in \mathcal{P}}\in \prod_{\sigma\in \mathcal{P}}E$, we construct a deformation $D(b)$ of 
$\bold{D}_{\mathrm{cris}}(V)$ over $E[\varepsilon]$ by $D(b):=\bold{D}_{\mathrm{cris}}(V)\otimes_E E[\varepsilon]$ as a $\varphi$-module 
and $\mathrm{Fil}^{0}(K\otimes_{K_0}D(b))=K\otimes_{K_0}D(b)$ and 
$$\mathrm{Fil}^1(K\otimes_{K_0}D(b))_{\sigma}=
\mathrm{Fil}^{k_{\sigma}}(K\otimes_{K_0}D(b))_{\sigma}:=E[\varepsilon](\bar{e}_{2,\sigma}+ (\bar{a}_{\sigma}+b_{\sigma}\varepsilon)\bar{e}_{1,\sigma}),$$ 
$\mathrm{Fil}^{k_{\sigma}+1}(K\otimes_{K_0}D(b))_{\sigma}=0$. For any $b$ as above, $D(b)$ is a deformation of $\bold{D}_{\mathrm{cris}}(V)$ over $E[\varepsilon]$.
Here, we remark that $D(b)$ is automatically  weakly admissible because, as an $E$-filtered $\varphi$-module,  $D(b)$ is an extension of $\bold{D}_{\mathrm{crys}}(V)$ by 
$\bold{D}_{\mathrm{crys}}(V)$ and because the weakly admissibility is closed under extensions.
The existence of such deformations implies that $a_{\sigma}\not=0$ for any $\sigma\in\mathcal{P}$.

\end{proof}

Next, we will prove a proposition concerning  the Zariski density of benign points in $\mathcal{E}(\bar{\rho})$. 
Before proving this proposition, we first prove some lemmas concerning general (maybe well-known easy) 
facts about rigid geometry.

\begin{lemma}\label{6.7}
Let $T_n$ be the $n$-dimensional closed unit disc defined over $E$. Then, for any admissible open $U$ of $T_n$ 
which contains the origin $0:=(0,\cdots,0)\in T_n$, there exists $m>>0$ such that 
$\{(x_1,\cdots,x_n)\in T_n| |x_i|\leqq 1/p^m $ for any $1\leqq i\leqq n \} \subseteq U$.
\end{lemma}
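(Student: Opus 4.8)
The plan is to prove this by a straightforward compactness/admissibility argument. The statement asserts: for the $n$-dimensional closed unit disc $T_n$ over $E$ and any admissible open $U \ni 0$, some ``small'' polydisc $T_n(1/p^m) := \{(x_1,\dots,x_n) : |x_i| \le 1/p^m\}$ is contained in $U$. First I would recall that $T_n$ is quasi-compact (it is affinoid), and that an admissible covering of $T_n$ by admissible opens admits a finite subcovering. The set $U$, being an admissible open containing $0$, must contain a basic (Laurent/rational) neighborhood of $0$; the point is to extract from the definition of the admissible open subset / Grothendieck topology the fact that $U$ contains one of the standard ``tube'' neighborhoods of the origin. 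Concretely, the collection $\{T_n(1/p^m)\}_{m \ge 0}$ together with the admissible opens $\{|x_i| > 1/p^m\}$ for the various $i$ and $m$ forms (for each fixed $m$) an admissible covering refinement, and as $m \to \infty$ the polydiscs $T_n(1/p^m)$ shrink to $0$.

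The key steps, in order: (1) Reduce to the affinoid picture: $T_n = \mathrm{Spm}(E\langle x_1,\dots,x_n\rangle)$, and recall that admissible opens are built from finite unions of rational subdomains, with admissible coverings being those refined by finite coverings by rational subdomains. (2) Since $U$ is admissible open and $0 \in U$, by the definition of the $G$-topology there is a rational (or Laurent) subdomain $V$ with $0 \in V \subseteq U$; such a $V$ is cut out by finitely many conditions of the form $|f_j| \le |g|$ (or $<$), and since $0 \in V$ one checks each such condition is satisfied in a whole neighborhood of $0$. (3) Because the functions $f_j, g$ are power series convergent on $T_n$, there is $m$ large enough that on $T_n(1/p^m)$ all the relevant inequalities defining $V$ hold — here one uses that $f_j(0), g(0)$ already satisfy the (non-strict, or strict) inequalities and the functions are continuous, so their values on $T_n(1/p^m)$ are $p^m$-close to their values at $0$. (4) Conclude $T_n(1/p^m) \subseteq V \subseteq U$. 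Finally note that $T_n(1/p^m)$ is itself an affinoid subdomain (isomorphic after rescaling to a closed polydisc), so this gives the asserted containment of an admissible open.

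The main obstacle I expect is purely bookkeeping with the definition of admissibility: one must be careful that an admissible open containing a point genuinely contains a basic neighborhood of that point, which requires invoking the structure of the strong/weak Grothendieck topology on a rigid space rather than just set-theoretic containment (a general ``open'' subset in the naive sense need not be admissible, but here $U$ is assumed admissible open, which is exactly what makes the argument work). Once that is granted, the continuity estimate in step (3) is elementary: writing $f = \sum_{\alpha} c_\alpha x^\alpha$ with $c_\alpha \to 0$, one has $|f(x) - c_0| \le \max_{\alpha \ne 0} |c_\alpha| |x^\alpha| \le (\max_\alpha |c_\alpha|)/p^m$ on $T_n(1/p^m)$, which tends to $0$. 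So after fixing $m$ dominating all the finitely many functions involved, every defining inequality of $V$ that holds strictly (or non-strictly) at $0$ holds on all of $T_n(1/p^m)$, completing the proof.

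Alternatively, and perhaps more cleanly, one can argue by contradiction using quasi-compactness: if no $T_n(1/p^m)$ were contained in $U$, pick $y_m \in T_n(1/p^m) \setminus U$; the complement $T_n \setminus U$ is admissible closed hence (being a closed subvariety-type locus, or at least an intersection behaving well under the topology) and the $y_m$ would accumulate at $0$, forcing $0 \in T_n \setminus U$ by a suitable closedness/specialization argument — contradicting $0 \in U$. I would likely present the direct argument of the previous paragraph since it avoids subtleties about what ``admissible closed'' and accumulation mean, but I mention this as a fallback if the direct manipulation of rational domains becomes cumbersome.
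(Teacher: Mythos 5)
Your overall approach — reduce to a single rational subdomain $V = \{\,x : |f_j(x)| \le |g(x)| \text{ for } 1 \le j \le d\,\}$, then use the ultrametric estimate $|h(x) - h(0)| \le (\max_\alpha |c_\alpha|)/p^m$ on a small polydisc to propagate the defining inequalities from the point $0$ to a whole polydisc — is exactly the paper's argument. But step (3) as you have stated it does not actually go through, and the missing piece is precisely the point the paper treats explicitly.

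The claim that ``every defining inequality of $V$ that holds (non-strictly) at $0$ holds on all of $T_n(1/p^m)$ once $m$ is large'' is false in general. Consider the degenerate possibility $g(0)=0$. Then $|f_j(0)|\le |g(0)|=0$ forces $f_j(0)=0$ for all $j$. Near $0$ both $|f_j(x)|$ and $|g(x)|$ tend to $0$, and your estimate only bounds each of them separately by $(\max|c_\alpha|)/p^m$; it says nothing about which one is smaller. For example $f_1=T_1^2$ and $g=T_1^3$ both vanish at $0$, yet $|f_1(x)|>|g(x)|$ for every $0<|x_1|<1$, so no polydisc around the origin lies in $\{|f_1|\le |g|\}$. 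What saves the argument is that this degenerate case cannot occur for a genuine rational subdomain: the defining data $(f_1,\dots,f_d,g)$ must generate the unit ideal of $E\{\{T_1,\dots,T_n\}\}$, and if $g_0=g(0)=0$ then $f_{j,0}=0$ for all $j$ as well, so $(f_1,\dots,f_d,g)\subseteq (T_1,\dots,T_n)$, a contradiction. Thus $g_0\neq 0$. With this in hand, for $m$ large one has $|g(x)|=|g_0|$ (the constant term dominates) and $|f_j(x)|\le \max\{|f_{j,0}|,\,|f_j(x)-f_{j,0}|\}\le |g_0|$ on $T_n(1/p^m)$, which is exactly how the paper concludes. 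You should insert the $g_0\neq 0$ observation (and the resulting non-vanishing lower bound on $|g|$ near $0$) before invoking your continuity estimate; as written, the estimate alone does not establish the inclusion. The fallback ``contradiction via accumulation at $0$'' you sketch does not help here either, since in rigid geometry the complement of an admissible open is not a closed analytic subset and there is no naive notion of a sequence accumulating at a point that would force $0\notin U$.
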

\begin{proof}
Because $U$ is admissibly covered by rational subdomains, we may assume that $U$ is itself 
a rational subdomain, namely, we may assume that there exist $f_1,\cdots,f_d, g\in E\{\{T_1,\cdots,T_n\}\}$ such that 
$(f_1,\cdots,f_d,g)=E\{\{T_1,\cdots,T_n\}\}$ and $U=\{x=(x_1,\cdots,x_n)\in T_n| |f_i(x)|\leqq |g(x)| $ for any $1\leqq i\leqq d \}$.
Then, the condition $0\in U$ means that $|f_{i,0}|\leqq |g_0|$ for any $i$, where $f_{i,0}, g_0\in E$ are the constant terms of $f_i$  and $g$. 
If $g_0=0$, then $f_{i,0}=0$ for any $i$, and this implies that $(f_1,\cdots,f_d,g)\subseteq (T_1,T_2\cdots,T_n)$, which is a contradiction. 
Hence we have $g_0\not=0$ and then, because the norms of coefficients of $f_i$ and $g$ are bounded, there exits $m>>0$ large enough 
such that $|f_i(x)|\leqq \mathrm{max}\{|f_{i,0}|, |g_0|\}=|g_0|$ and $ |g(x)|=|g_0|$ for any $x=(x_1,\cdots,x_n)\in T_n$ such that $|x_i|\leqq 1/p^m$ for any $i$, i.e. $\{x\in T_n| |x_i|\leqq 1/p^m $ for any $i \}\subseteq U$.
\end{proof}


\begin{lemma}\label{6.8}
Let $x:=([V_x],\delta_x,\lambda_x)\in \mathcal{E}(\bar{\rho})$ be an $E$-rational  point such that $V_x$ is crystalline trianguline 
as in Lemma $\mathrm{\ref{6.2}}$, and let $U\subseteq \mathcal{E}(\bar{\rho})$ be an admissible open neighborhood of $x$. Then, there exists an admissible open neighborhood
$U'\subseteq U$ of $x$ such that $U'_{\mathrm{cl},x}:=U'\cap \pi^{-1}((\mathcal{W}_E\times_E\mathcal{W}_E)_{\mathrm{cl},x})$ 
is Zariski dense 
in $U'$.
\end{lemma}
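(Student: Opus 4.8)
The plan is to reduce the statement to the openness of $\pi$ near $x$ (which follows from Proposition \ref{6.1}) together with the density, near $\pi(x)$, of the set of algebraic weight points cut out by the conditions defining $(\mathcal{W}_E\times_E\mathcal{W}_E)_{\mathrm{cl},x}$.

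First I would record the local input at $x$. Since $x$ satisfies the hypotheses of Theorem \ref{5.18} and of Proposition \ref{6.1} (this is exactly the situation fixed before Lemma \ref{6.2}), the space $\mathcal{E}(\bar{\rho})$ is smooth at $x$ and the map $\pi:\mathcal{E}(\bar{\rho})\rightarrow \mathcal{W}_E\times_E\mathcal{W}_E$ is smooth at $x$. Both smoothness of a rigid analytic space and smoothness of a morphism are admissible-open conditions, so after replacing $U$ by a smaller admissible open $U_1\ni x$ we may assume $U_1$ is reduced and $\pi|_{U_1}$ is smooth. A smooth morphism of rigid analytic spaces is open (locally on the source it is the composite of an \'etale morphism with the projection from a relative polydisc), so $\pi|_{U_1}$ carries admissible opens to admissible opens.

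Next I would shrink the target. By the paragraph preceding Lemma \ref{6.2} there is an affinoid open $V'\ni\pi(x)$ in $\mathcal{W}_E\times_E\mathcal{W}_E$ for which $V'_{\mathrm{cl},x}:=(\mathcal{W}_E\times_E\mathcal{W}_E)_{\mathrm{cl},x}\cap V'$ is Zariski dense in $V'$; in fact, taking $V'$ to be a sufficiently small polydisc centred at $\pi(x)=(\prod_{\sigma}\sigma^{k_{1,\sigma}},\prod_{\sigma}\sigma^{k_{2,\sigma}})$, the set $V'_{\mathrm{cl},x}$ is even \emph{topologically} dense in $V'$. Indeed, a point $(\prod_{\sigma}\sigma^{n_{\sigma}},\prod_{\sigma}\sigma^{n_{\sigma}-m_{\sigma}})$ lies in a prescribed small polydisc around $\pi(x)$ as soon as each $n_{\sigma}$ is $p$-adically close to $k_{1,\sigma}$ and each $n_{\sigma}-m_{\sigma}$ is $p$-adically close to $k_{2,\sigma}$; since $\mathbb{Z}$ is dense in $\mathbb{Z}_p$, one can choose the integers $n_{\sigma}$ and $m_{\sigma}$ within their prescribed residue classes so that in addition $m_{\sigma}\geqq k+1$ for all $\sigma$ and $\sum_{\sigma}m_{\sigma}\geqq 2e_Kv_p(\lambda_x)+[K:\mathbb{Q}_p]+1$, so that the point actually lies in $(\mathcal{W}_E\times_E\mathcal{W}_E)_{\mathrm{cl},x}$. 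Replacing $U_1$ by $U':=\pi^{-1}(V')\cap U_1$ gives an admissible open neighbourhood $U'\subseteq U$ of $x$ with $U'$ reduced, $\pi|_{U'}$ open, and $\pi(U')\subseteq V'$.

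Finally I would conclude. For any nonempty admissible open $\Omega\subseteq U'$, the set $\pi(\Omega)$ is a nonempty admissible open of $\mathcal{W}_E\times_E\mathcal{W}_E$ contained in $V'$, hence a nonempty admissible open of $V'$; since $V'_{\mathrm{cl},x}$ is dense in $V'$ it meets $\pi(\Omega)$, so $\Omega$ meets $\pi^{-1}(V'_{\mathrm{cl},x})\cap U'=U'_{\mathrm{cl},x}$. Thus $U'_{\mathrm{cl},x}$ is topologically dense in $U'$, and since $U'$ is reduced this forces Zariski density: covering $U'$ by admissible affinoids $\{W_i\}$, any $f\in\mathcal{O}(W_i)$ vanishing on $W_i\cap U'_{\mathrm{cl},x}$ vanishes on a dense subset of the reduced affinoid $W_i$ and hence is identically zero (otherwise its non-vanishing locus would be a nonempty admissible open avoiding that dense subset), so no proper Zariski closed subspace of $U'$ contains $U'_{\mathrm{cl},x}$. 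The only step requiring genuine care is the density claim in the third paragraph — namely that the inequality conditions in the definition of $(\mathcal{W}_E\times_E\mathcal{W}_E)_{\mathrm{cl},x}$ can be met by points arbitrarily close to $\pi(x)$ — and this is elementary; everything else is a formal consequence of Proposition \ref{6.1}, Theorem \ref{5.18}, and the openness of smooth morphisms of rigid analytic spaces.
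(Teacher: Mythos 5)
Your proof has a genuine gap, and it is concentrated in the claim that $V'_{\mathrm{cl},x}$ is \emph{topologically} dense in $V'$ (i.e., meets every nonempty admissible open of $V'$), which your fourth paragraph uses to get $\pi(\Omega)\cap V'_{\mathrm{cl},x}\neq\emptyset$ for every nonempty admissible open $\Omega\subseteq U'$. This stronger density is false. The set $V'_{\mathrm{cl},x}$ consists of countably many $E$-rational points sitting on a $\mathbb{Z}$-indexed lattice inside the polydisc $V'$, and one can easily write down nonempty rational subdomains of $V'$ disjoint from it — for example a small polydisc centred at a closed point of $V'$ whose residue field is strictly larger than $E$, or at an $E$-rational point whose coordinates are Teichm\"uller lifts of non--prime-field residues; such subdomains avoid all points with $\mathbb{Z}_p$-coordinates, let alone all algebraic weight pairs. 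Your third paragraph shows only that algebraic weight pairs \emph{accumulate at} $\pi(x)$ (integers are dense in $\mathbb{Z}_p$), which gives points of $V'_{\mathrm{cl},x}$ in every admissible open \emph{containing} $\pi(x)$ but says nothing about an $\Omega$ whose image misses $\pi(x)$. What is actually true, and what the paper uses, is only that $V'_{\mathrm{cl},x}$ (or $V_{\mathrm{cl},m}$) is \emph{Zariski} dense; and Zariski density does not imply that a general nonempty admissible open $\pi(\Omega)$ — which need not be Zariski open — contains a point of the set.

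Because of this, the reduction to openness of $\pi$ is not enough, and you cannot simply substitute ``Zariski dense'' for ``topologically dense'' in your fourth paragraph without breaking the argument. Transporting Zariski density from the target $V'$ back to the source is precisely what the paper's Lemma~\ref{6.9} accomplishes: it reduces to an \'etale morphism $\pi':U\to V\times_E T_{n'}$ (the same local structure theorem for smooth morphisms you invoke), and then uses the dimension comparison of Lemma~\ref{6.10} (proved with Berkovich points) to conclude that the preimage of a Zariski dense set in the polydisc under an \'etale map from an irreducible reduced affinoid remains Zariski dense. Some such mechanism for pulling back Zariski — as opposed to topological — density along $\pi$ is what your proof is missing. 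Your observation that topological density plus reducedness immediately gives Zariski density is correct, but the hypothesis of topological density simply does not hold for the classical weight locus in the $p$-adic weight space.
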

\begin{proof}
Re-taking smaller $U$, we may assume that  $U$ satisfies the properties as in before Lemma \ref{6.2} and 
that the morphism $\pi|_U:U\rightarrow \mathcal{W}_E\times_E\mathcal{W}_E$ is smooth 
and $U$ is irreducible smooth of its dimension $3[K:\mathbb{Q}_p]+1$ by Theorem \ref{5.18} and Lemma \ref{6.1}. In particular, we may assume that 
$\pi(U)\subseteq \mathcal{W}_E\times_E\mathcal{W}_E$
is an admissible open by Corollary 5.11 of \cite{BL93}. By definition of $\mathcal{W}_E\times_E\mathcal{W}_E$ and $(\mathcal{W}_E\times_E\mathcal{W}_E)_{\mathrm{cl},x}$ and by Lemma \ref{6.7}, if we re-take $U$ smaller, then we may assume that there exists an admissible open 
neighborhood $V$ of $y:=\pi(x)$ which is isomorphic to $T_n\isom V$ where $n:=2[K:\mathbb{Q}_p]$ such that $y$ corresponds to the origin $0\in T_n$ and that, for any $m\geqq 1$, 
the set $V_{\mathrm{cl},m}:=\{x\in T_n| |x_i|\leqq 1/p^m $ for any $1\leqq i\leqq n\}\cap (\mathcal{W}_E\times_E\mathcal{W}_E)_{\mathrm{cl},x}$ is Zariski 
dense in $V$ and that $\pi(U)\subseteq V$ and that $\pi|_U:U\rightarrow V$ factors through an \'etale morphism $\pi':U\rightarrow V\times_E T_{n'}$ 
satisfying $\pi'(x)=(y,0)$ for $n':=[K:\mathbb{Q}_p]+1$. Because $V_{\mathrm{cl},m}$ is Zariski dense in $V$ for any $m$, the set $(V\times_ET_{n'})_{\mathrm{cl},m}
:=\{(y',z)\in V_{\mathrm{cl},m}\times_ET_{n'}| |z_i|\leqq 1/p^m $ for any $1\leqq i\leqq n' \}$ is also Zariski dense in 
$V\times_E T_{n'}$. Because $\pi'(U)$ is an admissible open neighborhood of $(y,0)\in V\times_ET_{n'}$, there exists 
$m>>0$ such that $(V\times_ET_{n'})_{\mathrm{cl},m}$
is contained in $\pi'(U)$ by Lemma \ref{6.7}. Then, we have $\pi^{' -1}((V\times_E T_{n'})_{\mathrm{cl},m})\subseteq 
\pi^{-1}(V_{\mathrm{cl},m})\subseteq U_{\mathrm{cl},x}$, then the lemma follows from the following lemma.

\end{proof}

\begin{lemma}\label{6.9}
Let $f:U:=\mathrm{Spm}(B)\rightarrow V:=\mathrm{Spm}(E\{\{T_1,\cdots,T_n\}\})$ be an \'etale morphism between $E$-affinoids for 
some $n$. 
We assume that $U$ is irreducible and reduced. 
If $V_{\mathrm{cl}}\subseteq V$ is a Zariski dense subset of $V$ such that $V_{\mathrm{cl}}\subseteq f(U)$, then 
$f^{-1}(V_{\mathrm{cl}})$ is also Zariski dense in $U$.
\end{lemma}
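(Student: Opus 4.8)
\textbf{Proof proposal for Lemma \ref{6.9}.}

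The plan is to reduce the statement to a standard fact about étale morphisms and Zariski closures, exploiting the irreducibility of $U$. First I would observe that since $f$ is étale and $V$ is smooth (being a closed polydisc, hence reduced and even regular), $U$ is smooth, and being irreducible and reduced it is integral. Let $Z \subseteq U$ be the Zariski closure of $f^{-1}(V_{\mathrm{cl}})$ with its reduced structure; I want to show $Z = U$. Since $U$ is irreducible of some dimension $d$, it suffices to show $Z$ contains a nonempty admissible open of $U$, or equivalently that $\dim Z = d$. The key point is that $f$ being étale has relative dimension zero, so $\dim U = \dim V = n$; thus I must show $\dim Z = n$.

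The main step is a dimension count via the image. Because $f$ is étale, it is flat and quasi-finite, hence $f(Z)$ is a Zariski closed-by-flatness analytic subset — more precisely, for rigid analytic spaces an étale (so flat, finite-on-affinoids after shrinking) morphism is open, and moreover the image of a Zariski closed subset under a finite flat map is Zariski closed. So after possibly covering $U$ by finitely many affinoid opens on which $f$ is finite flat onto its image (using that étale quasi-finite maps of affinoids are, locally on the source, finite — Kiehl / the rigid-analytic version used already in \cite{Ki03}), one gets that $f(Z)$ is a Zariski closed subset of $V$. By hypothesis $V_{\mathrm{cl}} \subseteq f(Z)$, and $V_{\mathrm{cl}}$ is Zariski dense in $V$, so $f(Z) = V$, which forces $\dim Z \ge \dim V = n = \dim U$. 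Since $Z \subseteq U$ and $U$ is irreducible of dimension $n$, we conclude $Z = U$, i.e. $f^{-1}(V_{\mathrm{cl}})$ is Zariski dense in $U$.

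The hard part will be the rigid-analytic bookkeeping: ensuring that ``Zariski dense'' behaves well under the étale map, i.e. that one may pass to an admissible affinoid cover $\{U_i\}$ of $U$ on which $f|_{U_i} : U_i \to V_i := f(U_i)$ is finite flat onto an admissible open $V_i \subseteq V$, and that the image of a Zariski closed subset of $U_i$ under such a finite flat map is Zariski closed in $V_i$ (this is where one invokes that for $B$ finite flat over $A$, the support of a finite $B$-module is cut out by the vanishing of a norm/Fitting ideal). One also needs that $V_{\mathrm{cl}} \cap V_i$ remains Zariski dense in each $V_i$ when $V_i$ meets the closure of $V_{\mathrm{cl}}$ — here it is cleanest to argue as above globally on $U$ using irreducibility, so that one only needs the single statement ``$f(Z)$ closed and containing the dense set $V_{\mathrm{cl}}$.'' An alternative, perhaps technically lighter route: work formal-locally. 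For any $u \in f^{-1}(V_{\mathrm{cl}})$ the map $\widehat{\mathcal{O}}_{V, f(u)} \to \widehat{\mathcal{O}}_{U,u}$ is finite étale, hence these complete local rings have the same dimension; combined with the fact that a point of $V_{\mathrm{cl}}$ through which the Zariski closure of $V_{\mathrm{cl}}$ is all of $V$ lifts to a point of $U$ at which the Zariski closure of the preimage is $n$-dimensional, irreducibility of $U$ finishes it. Either way the essential mechanism is: étale $\Rightarrow$ relative dimension $0$ $\Rightarrow$ dimension is preserved along fibers and images, and a Zariski dense subset cannot be contained in a proper closed subset.
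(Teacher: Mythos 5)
Your overall strategy --- dimension counting plus irreducibility of $U$ --- is the right one and matches the paper in spirit. But the key technical step in your main route, namely that $f(Z)$ is Zariski closed in $V$ (where $Z$ denotes the Zariski closure of $f^{-1}(V_{\mathrm{cl}})$), does not go through. The trouble is that $f(U)$ is merely an admissible open of $V$, not all of $V$. Even after covering $U$ by affinoid opens $U_i$ on which $f$ restricts to a finite flat map onto $V_i := f(U_i)$, the union $\bigcup_i V_i = f(U)$ is only admissible open; a subset Zariski closed in $\bigcup_i V_i$ need not be Zariski closed in $V$. Consequently you cannot pass from $V_{\mathrm{cl}} \subseteq f(Z)$ and Zariski density of $V_{\mathrm{cl}}$ in $V$ to $f(Z) = V$, which is exactly what your dimension bound $\dim Z \geqq n$ rests on. Your formal-local alternative is too vague to repair this: it is not evident that Zariski density of $V_{\mathrm{cl}}$ at a point of $V$ produces, at a preimage point of $U$, an $n$-dimensional component of $Z$.

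The paper's proof sidesteps the image question entirely. Writing $Z = \mathrm{Spm}(B/I)$ with $I = \mathrm{Ker}\bigl(B \to \prod_{y \in f^{-1}(V_{\mathrm{cl}})} E(y)\bigr)$, it shows directly that the composite $A \to B/I$ is injective: one factors $A \to B/I \hookrightarrow \prod_{y} E(y)$ as $A \hookrightarrow \prod_{x \in V_{\mathrm{cl}}} E(x) \to \prod_{y} E(y)$, the first arrow injective by Zariski density of $V_{\mathrm{cl}}$, the second injective precisely because $V_{\mathrm{cl}} \subseteq f(U)$ guarantees every $x$ has a preimage $y$. Injectivity of $A \to B/I$ is the scheme-theoretic substitute for ``$f(Z)$ dominates $V$'' and costs nothing. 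Lemma \ref{6.10} (proved via Berkovich's theory of analytic points and residue fields) then gives $\dim A \leqq \dim(B/I) \leqq \dim B$; since $f$ is \'etale, $\dim A = \dim B$, so $\dim(B/I) = \dim B$, and irreducibility plus reducedness of $B$ force $I=0$. To salvage your argument you would need either the analogue of Lemma \ref{6.10} or an independent proof that $\dim Z = n$; the claim that $f(Z)$ is Zariski closed in $V$ is the step that cannot be made to work.
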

\begin{proof}
By the assumption, the natural map $A:=E\{\{T_1,\cdots,T_n\}\}\rightarrow \prod_{x\in V_{\mathrm{cl}}}E(x)$ is an injection. To prove the lemma, it suffices to show that the kernel of the natural map 
$B\rightarrow \prod_{y\in f^{-1}(V_{\mathrm{cl}})}E(y)$ is zero. If $I$ is the kernel of this map, then the map 
$A\rightarrow B/I\hookrightarrow \prod_{y\in f^{-1}(V_{\mathrm{cl}})}E(y)$ is equal to the 
map $A\hookrightarrow \prod_{x\in V_{\mathrm{cl}}}E(x)\rightarrow \prod_{y\in f^{-1}(V_{\mathrm{cl}})}E(y)$. 
Because we have $V_{\mathrm{cl}}\subseteq f(U)$ by the assumption, the map $\prod_{x\in V_{\mathrm{cl}}}E(x)\rightarrow 
\prod_{y\in f^{-1}(V_{\mathrm{cl}})}E(y)$ is an injection. Therefore, the map $A\hookrightarrow B/I$ is also an injection.
Then, we have $\mathrm{dim}(A)\leqq \mathrm{dim}(B/I)(\leqq \mathrm{dim}(B))$ by Lemma \ref{6.10} below. 
From this, we have $\mathrm{dim}(B/I)=\mathrm{dim}(B)$ because $B$ is \'etale over $A$. 
Because $U$ is irreducible and reduced, we obtain the equality  $I=0$.

\end{proof}

\begin{lemma}\label{6.10}
Let $f:Z:=\mathrm{Spm}(B')\rightarrow \mathrm{Spm}(E\{\{T_1,\cdots,T_n\}\})$ be a morphism of 
affinoids over $E$. 
We assume that the induced map $A:=E\{\{T_1,\cdots,T_n\}\}\rightarrow B'$ is an injection.
Then, we have $\mathrm{dim}(A)\leqq \mathrm{dim}(B')$.
\end{lemma}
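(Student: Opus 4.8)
The plan is to reduce to the case that $B'$ is an integral domain and then to deduce the inequality from the fact that dimension cannot increase under a dominant morphism of affinoid spaces.

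First I would note $\mathrm{dim}(A)=n$, this being the Krull dimension of the Tate algebra $E\{\{T_1,\dots,T_n\}\}$. For the reduction, recall that $A$ is an integral domain and $A\hookrightarrow B'$. Let $\mathfrak{q}_1,\dots,\mathfrak{q}_r$ be the minimal primes of the Noetherian ring $B'$; then
$$\bigcap_{i=1}^r(\mathfrak{q}_i\cap A)=\Bigl(\bigcap_{i=1}^r\mathfrak{q}_i\Bigr)\cap A=\sqrt{(0)}\cap A=(0),$$
the last equality because $A$ is reduced and $A\hookrightarrow B'$. A finite intersection of prime ideals of a domain can equal $(0)$ only if one of the ideals is already $(0)$, so after renumbering we may assume $\mathfrak{q}:=\mathfrak{q}_1$ satisfies $\mathfrak{q}\cap A=(0)$. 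Then $A\hookrightarrow B'/\mathfrak{q}$, the ring $B'/\mathfrak{q}$ is an affinoid integral domain, and $\mathrm{dim}(B'/\mathfrak{q})\leqq\mathrm{dim}(B')$. Replacing $B'$ by $B'/\mathfrak{q}$, it suffices to treat the case in which $B'$ is an affinoid integral domain and $A\hookrightarrow B'$ is injective.

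In that case the morphism $\mathrm{Spm}(B')\to\mathrm{Spm}(A)$ is dominant, since its scheme-theoretic image is cut out by $\ker(A\to B')=(0)$ and hence is all of $\mathrm{Spm}(A)$; moreover $B'$ is an affinoid $A$-algebra of topologically finite type. The relative dimension formula for a dominant morphism of irreducible affinoid spaces then gives $\mathrm{dim}(B')=\mathrm{dim}(A)+e$ with $e\geqq 0$ the dimension of the generic fibre (which is nonzero precisely because $A\hookrightarrow B'$), and in particular $\mathrm{dim}(A)\leqq\mathrm{dim}(B')$, as desired. Equivalently, one may run the argument through generic flatness of $A\hookrightarrow B'$ over a dense Zariski-open of $\mathrm{Spm}(A)$ together with going-down.

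Everything here except the very last step is elementary commutative algebra; the single non-formal ingredient is the standard rigid-geometric fact that dimension does not increase under a dominant morphism of affinoid spaces (equivalently, the relative dimension formula, or generic flatness for topologically finite type morphisms of affinoid $E$-algebras). Locating the cleanest reference for that statement, and checking that mere injectivity of $A\to B'$ (rather than flatness) is what is needed to make the associated map of rigid spaces dominant and its generic fibre non-vanishing, is the only mild obstacle; the reduction to the integral-domain case given above handles the rest.
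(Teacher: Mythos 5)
Your reduction to the case where $B'$ is an affinoid domain is correct, but the paper does not bother with it, and the second half of your argument invokes a result that is harder to locate and justify than you suggest. The paper's own proof is quite different: it passes to $\mathrm{Spec}(B')\to\mathrm{Spec}(A)$, uses injectivity only to find a scheme-theoretic point $y$ lying over the generic point $x$ of $\mathrm{Spec}(A)$, and then compares the residue fields $\kappa(x)=\mathrm{Frac}(A)$ and $\kappa(y)$ \emph{as valued fields} in the sense of Berkovich (citing \cite{Berk93}, Prop.\ 2.1.1). Passing to the residue fields of those valuations, one gets an inclusion $\tilde\kappa(x)\hookrightarrow\tilde\kappa(y)$ and hence an inequality of transcendence degrees $s(\tilde\kappa(x)/E)\leqq s(\tilde\kappa(y)/E)$; the left side equals $n$ (the Gauss point of the polydisc has residue field of transcendence degree $n$), and \cite{Berk93}, Lemma 2.5.2, bounds the right side by $\dim B'$. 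This is a pointwise valuation-theoretic argument, not a relative-dimension one.

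The gap in your proposal is in the appeal to ``the relative dimension formula, or generic flatness for topologically finite type morphisms of affinoid $E$-algebras.'' The ring map $A\to B'$ is \emph{not} of finite type in the algebraic sense (a Tate algebra $A\langle S_1,\dots,S_m\rangle$ is far from finitely generated as an $A$-algebra), so neither Grothendieck's generic freeness nor the dimension formula for finite-type extensions of universally catenary domains (EGA IV 5.6) applies off the shelf. One genuinely needs rigid-analytic versions: a generic-flatness/flattening statement for topologically-finite-type morphisms (in the spirit of \cite{BL93}), the equidimensionality of the Tate algebra $A$, the fact that contractions of maximal ideals along affinoid morphisms are maximal, and going-down for the flat locus — all combined carefully so that a chain of length $n$ below a suitable maximal ideal of $A$ actually lifts into $B'$. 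None of this is wrong, and you are right that this route is available in principle, but it is a considerably heavier and less readily citable package than your last paragraph indicates. The paper's Berkovich argument is chosen precisely because it sidesteps all of this: it never needs a relative-dimension theorem for non-finite-type morphisms, only the Abhyankar-type inequality $s(\tilde\kappa(y)/E)\leqq\dim B'$ at a single point, which Berkovich proves directly.
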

\begin{proof}
Because $A\hookrightarrow B'$ is an injection, 
the base change $\mathrm{Frac}(A)\hookrightarrow \mathrm{Frac}(A)\otimes_A B'$ is 
also an injection, in particular, the generic fiber of the morphism of schemes
 $f_0:\mathrm{Spec}(B')\rightarrow \mathrm{Spec}(A)$ induced from the injection $A\hookrightarrow B'$ 
 is not empty. We denote by $x$ the generic point of $\mathrm{Spec}(A)$ and take a point 
 $y\in f_0^{-1}(x)$. 
 By Proposition 2.1.1 of \cite{Berk93}, if we denote by $\kappa(x)$ and 
 $\kappa(y)$ the residue fields (in the sense of scheme) at $x$ and $y$, then the natural 
 inclusion $\kappa(x)\hookrightarrow \kappa(y)$ is an inclusion of valuation fields which induces an 
 inclusion $\widetilde{\kappa}(x)\hookrightarrow \widetilde{\kappa}(y)$, where $\widetilde{\kappa}(-)$ is the residue 
 field of the valuation field $\kappa(-)$. Form this inclusion, we obtain 
 $(\mathrm{dim}(A)=n=)s(\widetilde{\kappa}(x)/E)\leqq s(\widetilde{\kappa}(y)/E)$, where $s(\widetilde{\kappa}(-)/E)$ is the transcendence
  degree of $\widetilde{\kappa}(-)$ over $E$. By Lemma 2.5.2 of \cite{Berk93}, then
  we also have $s(\widetilde{\kappa}(y)/E)\leqq \mathrm{dim}(B')$, hence we obtain $\mathrm{dim}(A)\leqq \mathrm{dim}(B')$.

\end{proof}

Set 
$$\mathcal{E}(\bar{\rho})_{\mathrm{b}}:=\{x\in \mathcal{E}(\bar{\rho})| V_x\text{ is benign and crystalline }\}.$$

\begin{prop}\label{6.11}
Let $x$ be an $E$-rational  point  in $\mathcal{E}(\bar{\rho})$ as in Lemma $\mathrm{\ref{6.8}}$, and 
let $U$ be an admissible open neighborhood of $x$.
If we take an affinoid neighborhood $U':=\mathrm{Spm}(R)$ of $x$ as in Lemma $\mathrm{\ref{6.8}}$. 
Then, $U'_{\mathrm{b}}:=\mathcal{E}(\bar{\rho})_{\mathrm{b}}\cap U'$ is also Zariski dense in $U'$.
\end{prop}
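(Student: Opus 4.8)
The plan is to deduce the Zariski density of $U'_{\mathrm{b}}$ from the density of the "classical" locus $U'_{\mathrm{cl},x}$ established in Lemma \ref{6.8} together with the fiberwise density of benign points proved in Proposition \ref{6.5}. First I would recall that by construction (see the paragraph before Lemma \ref{6.2} and Lemma \ref{6.8}) the neighborhood $U'=\mathrm{Spm}(R)$ may be taken to be $Y$-small, irreducible, reduced, smooth of dimension $3[K:\mathbb{Q}_p]+1$, with $\pi|_{U'}:U'\to \mathcal{W}_E\times_E\mathcal{W}_E$ smooth, and such that $U'_{\mathrm{cl},x}=U'\cap\pi^{-1}((\mathcal{W}_E\times_E\mathcal{W}_E)_{\mathrm{cl},x})$ is Zariski dense in $U'$. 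For each $y\in(\mathcal{W}_E\times_E\mathcal{W}_E)_{\mathrm{cl},x}$ with $U'_y:=\pi^{-1}(y)\cap U'$ non-empty, Lemma \ref{6.2} guarantees that every point of $U'_y$ corresponds to a crystalline trianguline representation satisfying conditions (1), (2) of Definition \ref{26}, so the definitions of $U_{y,\mathrm{b}}$ and Proposition \ref{6.5} apply to give that $U'_{y,\mathrm{b}}:=U'_y\cap\mathcal{E}(\bar{\rho})_{\mathrm{b}}$ is scheme theoretically dense, hence Zariski dense, in $U'_y$.

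Next I would argue that Zariski density can be checked on a dense family of fibers. Concretely: let $Z\subseteq U'=\mathrm{Spm}(R)$ be the Zariski closure of $U'_{\mathrm{b}}$, say $Z=V(I)$ for an ideal $I\subseteq R$; I must show $I=0$, and since $U'$ is reduced and irreducible it suffices to show $Z=U'$ as sets, i.e. that $Z$ contains a Zariski dense subset of $U'$. For each $y$ as above with $U'_y\neq\emptyset$, the inclusion $U'_{y,\mathrm{b}}\subseteq Z\cap U'_y$ together with the fiberwise density of $U'_{y,\mathrm{b}}$ in $U'_y$ (Proposition \ref{6.5}) forces $Z\cap U'_y=U'_y$, i.e. $U'_y\subseteq Z$. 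Therefore $Z$ contains $\bigcup_{y}U'_y=U'\cap\pi^{-1}((\mathcal{W}_E\times_E\mathcal{W}_E)_{\mathrm{cl},x})=U'_{\mathrm{cl},x}$, which is Zariski dense in $U'$ by Lemma \ref{6.8}. Since $Z$ is Zariski closed and contains a Zariski dense subset, $Z=U'$, so $I=0$ and $U'_{\mathrm{b}}$ is Zariski dense in $U'$.

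One point that needs care is whether a given fiber $U'_y$ is non-empty: the argument only uses those $y\in(\mathcal{W}_E\times_E\mathcal{W}_E)_{\mathrm{cl},x}$ for which $U'_y\neq\emptyset$, and since $U'_{\mathrm{cl},x}$ is by definition the union of exactly these non-empty fibers, no emptiness issue arises — the density statement of Lemma \ref{6.8} is precisely about this union. A second technical point is the passage from "scheme theoretically dense" in Proposition \ref{6.5} to the set-theoretic statement $Z\cap U'_y=U'_y$; here I would use that scheme theoretic density implies topological (Zariski) density, and that $U'_y$ is reduced (being a fiber of a smooth morphism over a reduced base, after shrinking $U'$ as arranged before Lemma \ref{6.2}), so a closed subset containing a Zariski dense subset is everything. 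I expect the main obstacle, if any, to be purely bookkeeping: making sure that the affinoid $U'$ chosen in Lemma \ref{6.8} simultaneously satisfies all the running hypotheses needed to invoke Lemmas \ref{6.2}, \ref{6.3} and Proposition \ref{6.5} on every relevant fiber (smoothness of $\pi$, the $Y$-small short exact sequence, the freeness statements for $D_{\mathrm{cris}}$, and the Zariski density of $V_{\mathrm{cl},x}$ in a neighborhood of $\pi(x)$); all of these have been arranged in the preceding paragraphs, so the proof is essentially a matter of assembling them.
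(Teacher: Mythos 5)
Your proof is correct and takes essentially the same approach as the paper: deduce density of benign points from (a) the scheme-theoretic density in each classical fiber given by Proposition \ref{6.5}, (b) reducedness of those fibers, and (c) the Zariski density of the classical locus $U'_{\mathrm{cl},x}$ from Lemma \ref{6.8}. The paper phrases the final step in terms of a function $f\in R$ vanishing on all benign points (showing it restricts to zero on each classical fiber and hence is zero by density of $U'_{\mathrm{cl},x}$), while you phrase it in terms of the Zariski closure $Z$ of $U'_{\mathrm{b}}$; these are the same argument.
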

\begin{proof}
Consider any element $f\in R$ in the kernel of the natural map 
$R\rightarrow \prod_{z\in U_{\mathrm{b}}}E(z)$. Then, for any 
$y\in (\mathcal{W}_E\times_E\mathcal{W}_E)_{\mathrm{cl},x}\cap \pi(U)$, $f|_{\pi^{-1}(y)\cap U}\in \mathcal{O}_{\pi^{-1}(y)\cap U}$ is equal to zero by Proposition \ref{6.5} because $\mathcal{O}_{\pi^{-1}(y)\cap U}$ is reduced. Hence,  we obtain $f=0\in R$ by 
Lemma \ref{6.8}.
\end{proof}

\begin{corollary}\label{6.11.1}
Let $Y$ be the Zariski closure of $\mathcal{E}(\bar{\rho})_{\mathrm{b}}$ 
in $\mathcal{E}(\bar{\rho})$. Then, $Y$ is a union of irreducible components 
of $\mathcal{E}(\bar{\rho})$.
\end{corollary}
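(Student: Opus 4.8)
The plan is to combine the Zariski density statement of Proposition \ref{6.11} (benign crystalline points are Zariski dense in a suitable affinoid neighborhood of each ``nice'' point) with the smoothness description of $\mathcal{E}(\bar{\rho})$ at such points (Theorem \ref{5.18}) and with standard facts about irreducible components of rigid analytic spaces. Let $\{Z_j\}_j$ denote the irreducible components of $\mathcal{E}(\bar{\rho})$; recall these form a locally finite family, so any union of them is a Zariski closed analytic subset. Put $J:=\{j \mid Z_j\cap \mathcal{E}(\bar{\rho})_{\mathrm{b}}\not=\emptyset\}$. Since $\mathcal{E}(\bar{\rho})_{\mathrm{b}}\subseteq \bigcup_{j\in J}Z_j$ and the right hand side is Zariski closed, we get $Y\subseteq \bigcup_{j\in J}Z_j$. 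The whole content of the corollary is then the reverse inclusion $Z_j\subseteq Y$ for every $j\in J$, which I would prove as follows.

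Fix $j\in J$ and a point $x_0\in Z_j\cap \mathcal{E}(\bar{\rho})_{\mathrm{b}}$. Replacing $E$ by the finite extension $E(x_0)$ — which is harmless for the statement, since by Lemma \ref{5.9} the construction of $\mathcal{E}(\bar{\rho})$ and the notion of irreducible component are compatible with finite extension of the coefficient field — I may assume $x_0$ is $E$-rational. Because $V_{x_0}$ is benign and crystalline, it is a crystalline split trianguline $E$-representation satisfying condition $(1)$ of Definition \ref{26}; by Corollary \ref{5.16.5} (together with Lemmas \ref{21.5} and \ref{22}) $x_0$ is a point of $\mathcal{E}(\bar{\rho})$ of the type considered in Theorem \ref{5.18} and in the paragraph before Lemma \ref{6.2}, so $\mathcal{E}(\bar{\rho})$ is smooth of dimension $3[K:\mathbb{Q}_p]+1$ at $x_0$. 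Choose an affinoid neighborhood $U'=\mathrm{Spm}(R)$ of $x_0$ as in Lemma \ref{6.8}, shrunk further so that it is moreover smooth and connected, hence irreducible. By Proposition \ref{6.11} the subset $U'_{\mathrm{b}}=\mathcal{E}(\bar{\rho})_{\mathrm{b}}\cap U'$ is Zariski dense in $U'$; since $Y$ is Zariski closed in $\mathcal{E}(\bar{\rho})$ and contains $U'_{\mathrm{b}}$, it contains the Zariski closure of $U'_{\mathrm{b}}$, and in particular $U'\subseteq Y$.

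Finally, the admissible open $U'$ is irreducible, hence is contained in a unique irreducible component of $\mathcal{E}(\bar{\rho})$; this component contains $x_0$, and since $x_0$ is a smooth point it lies on only one component, which must be $Z_j$. Thus $U'\subseteq Z_j$, and being a nonempty admissible open of the irreducible rigid space $Z_j$, $U'$ is Zariski dense in $Z_j$. Therefore $Z_j=\overline{U'}\subseteq Y$, where the closure is taken in $\mathcal{E}(\bar{\rho})$. This gives $\bigcup_{j\in J}Z_j\subseteq Y$, hence $Y=\bigcup_{j\in J}Z_j$, a union of irreducible components of $\mathcal{E}(\bar{\rho})$.

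The main obstacle is the organizational passage in the last two paragraphs from the \emph{local} density statement of Proposition \ref{6.11} to a statement about entire irreducible components: one must use that benign crystalline points are smooth points of $\mathcal{E}(\bar{\rho})$ lying on a unique component (from Theorem \ref{5.18}), that a nonempty admissible open of an irreducible rigid space is Zariski dense in it, and that the irreducible components of a rigid analytic space form a locally finite family so that arbitrary unions of them are Zariski closed. These are standard properties of rigid spaces but need to be invoked carefully; the reduction to $E$-rational points via Lemma \ref{5.9} and the verification that an arbitrary point of $\mathcal{E}(\bar{\rho})$ lying over a benign crystalline representation is of the type to which Theorem \ref{5.18} applies are routine but should be recorded.
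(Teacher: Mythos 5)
Your proof is correct and takes essentially the same route the paper intends (the paper's own proof is the one-liner ``this follows from Proposition~\ref{6.11}'', and your argument is a careful unpacking of why that implication holds: local Zariski density of benign crystalline points near such an $x_0$, combined with the identity principle on an irreducible component, forces the whole component into $Y$). One small remark: the sentence ``shrunk further so that it is moreover smooth and connected'' is at best redundant and at worst problematic, since shrinking a $U'$ obtained from Lemma~\ref{6.8} need not preserve the density of $U'_{\mathrm{cl},x}$; but the proof of Lemma~\ref{6.8} already produces an irreducible smooth affinoid $U'$, so no further shrinking is needed --- and in fact one can avoid irreducibility of $U'$ altogether by observing that $U'\cap Z_j$ is a nonempty admissible open of the irreducible component $Z_j$, hence Zariski dense in $Z_j$ and contained in the Zariski closed set $Y$, which already gives $Z_j\subseteq Y$.
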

\begin{proof}
This follows from Proposition \ref{6.11}.
\end{proof}

Set 
\begin{multline*}
\mathfrak{X}(\bar{\rho})_{\mathrm{reg}-\mathrm{cris}}:=\{x\in \mathfrak{X}(\bar{\rho})| V_x \text{ is crystalline and the Hodge-Tate weights }\\
\text{ of }V_x\text{ are }\{k_{1,\sigma},k_{2,\sigma}\}_{\sigma\in \mathcal{P}}\text{ such that }k_{1,\sigma}\not= k_{2,\sigma}\text{ for any }\sigma\in\mathcal{P} \},
\end{multline*}
and
$$
\mathfrak{X}(\bar{\rho})_{\mathrm{b}}:=\{x\in \mathfrak{X}(\bar{\rho})| V_x\text{ is benign and crystalline }\}.$$

\begin{lemma}\label{6.12}
If $\mathfrak{X}(\bar{\rho})_{\mathrm{reg}-\mathrm{cris}}$ is not empty, then 
$\mathfrak{X}(\bar{\rho})_{\mathrm{b}}$ is also not empty.
\end{lemma}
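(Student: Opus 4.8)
The plan is to prove Lemma \ref{6.12} by a purely ``combinatorial'' construction: if there is a single regular crystalline point, then by perturbing its Hodge--Tate weights and its Frobenius data we can cook up a crystalline $E'$-representation (for some finite extension $E'$) whose associated filtered $\varphi$-module is weakly admissible and satisfies all three conditions of Definition \ref{26}, and which still reduces to $\bar{\rho}$ after a suitable lattice and base change, hence lies in $\mathfrak{X}(\bar{\rho})$. The key observation is that $\mathfrak{X}(\bar{\rho})$ parametrizes all $p$-adic representations with the given residual reduction, so membership in $\mathfrak{X}(\bar{\rho})_{\mathrm{b}}$ is really just the statement that some benign crystalline representation has mod $p$ reduction $\bar{\rho}$; and the existence of one regular crystalline point already produces a weakly admissible filtered $\varphi$-module of the right shape, which we then deform (in the classical, non-$p$-adic-family sense) into a benign one.

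First I would take a point $x \in \mathfrak{X}(\bar{\rho})_{\mathrm{reg}-\mathrm{cris}}$, so $V_x$ is crystalline with Hodge--Tate weights $\{k_{1,\sigma}, k_{2,\sigma}\}_{\sigma \in \mathcal{P}}$, $k_{1,\sigma} \neq k_{2,\sigma}$ for all $\sigma$. Let $D := D_{\mathrm{cris}}(V_x)$ with its $\varphi$-action and filtration; after enlarging $E$ I may assume the two Frobenius eigenvalues $\alpha_1, \alpha_2$ of $\varphi^f$ lie in $E$. Next I would rescale and twist: twisting $V_x$ by an unramified crystalline character $\delta_{\mu}$ (which does not change the residual representation up to a lattice, since $\mathfrak{X}(\bar{\rho})$ is unchanged — here one must be a little careful: twisting changes $\bar\rho$, so instead one twists $\bar\rho$ itself or, better, one argues directly that $\mathfrak{X}(\bar\rho)$ admits points with prescribed crystalline invariants once it is nonempty). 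The cleaner route, which I would take, is this: the universal deformation space $\mathfrak{X}(\bar\rho)$ contains the rigid space $X(\bar\rho)$ of \S3, and by Theorem \ref{5.14}(2) together with Proposition \ref{19}, any crystalline split trianguline $E'$-representation $V$ with reduction $\bar\rho$ and a triangulation satisfying conditions (1), (2) of Definition \ref{26} gives a point of $\mathcal{E}(\bar\rho)$, hence projects to a point of $\mathfrak{X}(\bar\rho)_{\mathrm{b}}$ provided $V$ is in fact benign. So the statement reduces to: nonemptiness of $\mathfrak{X}(\bar\rho)_{\mathrm{reg}-\mathrm{cris}}$ implies the existence of a benign crystalline $V$ with reduction $\bar\rho$.

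Then I would construct such a $V$ by deforming the filtered $\varphi$-module $D$ of the given regular crystalline $V_x$ inside the category of weakly admissible $E'$-filtered $\varphi$-modules, along a path that (i) separates the two Frobenius eigenvalues so that $\alpha_1/\alpha_2 \neq 1, p^{f}, p^{-f}$ (condition (1)), keeping weak admissibility — here one uses that Newton and Hodge slopes are already pinned down by $\det$ and by the Hodge--Tate weights, and one has freedom to move $\alpha_1, \alpha_2$ while preserving $\alpha_1\alpha_2$ and the weak-admissibility inequality since $k_{1,\sigma} \neq k_{2,\sigma}$ gives strict inequality room; (ii) the regularity $k_{1,\sigma} \neq k_{2,\sigma}$ is exactly condition (2); (iii) condition (3), non-criticality of all $\mathcal{T}_\tau$, is the statement that the one-dimensional $\varphi$-stable subspaces $K_0 \otimes E e_i \subseteq D$ are in ``general position'' relative to the filtration, which is a Zariski-open condition on the filtration and can be arranged by a further small perturbation of $\mathrm{Fil}^\bullet$ — this is exactly the kind of genericity built into Lemma \ref{6.6}. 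Crucially, all these perturbations can be carried out within a single $p$-adic analytic family over an open polydisc whose reductions are all $\cong \bar\rho$ (e.g. by working inside $\hat{\mathcal{O}}_{\mathfrak{X}(\bar\rho), x}$ and using that crystallinity with fixed regular weights and the genericity conditions are stable under small deformation by Lemma \ref{24} / Corollary \ref{25} and Lemma \ref{6.6}), so the deformed $V$ still has residual representation $\bar\rho$.

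The main obstacle I anticipate is keeping track of \emph{weak admissibility} throughout the perturbation while simultaneously forcing conditions (1) and (3): moving the Frobenius eigenvalues can destroy weak admissibility of a sub-object, and one must check that the regularity hypothesis $k_{1,\sigma} \neq k_{2,\sigma}$ for all $\sigma$ leaves enough slack (a strict inequality $t_N(D') > t_H(D')$ for the relevant rank-one $D'$ after a generic perturbation). A secondary subtlety is the base-field bookkeeping: one may need to pass to a finite extension $E'$ of $E$ to split the Frobenius and to realize the generic filtration, and then observe (as in Remark \ref{a1} and Lemma \ref{5.9}) that a benign point of $\mathfrak{X}(\bar\rho_{E'})$ lying over a point of $\mathfrak{X}(\bar\rho)$ suffices, since benignness and crystallinity are insensitive to finite scalar extension. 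Once these two points are handled, the conclusion is immediate: the constructed $V$ gives a point of $\mathfrak{X}(\bar\rho)_{\mathrm{b}}$, so this set is nonempty.
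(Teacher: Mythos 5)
Your overall plan is in the right spirit (deform the given regular crystalline point to a benign one, using the freedom in the Frobenius eigenvalues and the filtration), but as written it contains a gap and a wrong assertion that the paper is careful to avoid.

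The wrong assertion is that ``crystallinity with fixed regular weights \dots are stable under small deformation'' inside $\hat{\mathcal{O}}_{\mathfrak{X}(\bar{\rho}),x}$. It is not: a generic deformation of $V_x$ is not crystalline, and being crystalline with prescribed Hodge--Tate weights is a (Zariski) \emph{closed} condition, not an open one. The paper handles this by working in the crystalline deformation space: by Kisin's results (Corollary 2.7.7 and the proof of Theorem 3.3.8 of \cite{Ki08}) the set of crystalline representations with fixed regular Hodge--Tate weight $\tau$ forms a Zariski closed sub rigid space $\mathfrak{X}(\bar{\rho})_{\mathrm{cris}}^{\tau}\subseteq\mathfrak{X}(\bar{\rho})$, with $\widehat{\mathcal{O}}_{\mathfrak{X}(\bar{\rho})^{\tau}_{\mathrm{cris}},y}\cong R^{\mathrm{cris}}_{V_y}$. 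All points of this space are automatically crystalline Galois representations reducing to $\bar{\rho}$, so weak admissibility and control of the residual representation are built in. This is exactly the subtlety you flag (``keeping track of weak admissibility throughout the perturbation'') but do not actually resolve; perturbing the filtered $\varphi$-module $D$ by hand, you would need a separate argument that the family of filtered modules remains weakly admissible \emph{and} that the corresponding Galois representations vary in a family with fixed residual representation $\bar{\rho}$, neither of which is automatic.

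The second gap is in how you establish genericity. Once one is sitting inside $\mathfrak{X}(\bar{\rho})_{\mathrm{cris}}^{\tau}$, the paper uses that $R^{\mathrm{cris}}_{V_z}$ is a domain and produces explicit first-order deformations over $E(z)[\varepsilon]$ splitting the Frobenius eigenvalues and moving the filtration (this is Lemma \ref{6.6} and its analogue for the discriminant $a^2-4b$) to show that the discriminant and the ``off-diagonal'' filtration coefficients $a_\sigma, b_\sigma$ are nonzero divisors, hence that the benign locus is scheme theoretically dense, in particular nonempty. Your proposal appeals to ``a further small perturbation of $\mathrm{Fil}^\bullet$'' being possible ``within a single $p$-adic analytic family'' but does not construct such a family nor explain why the perturbation stays in $\mathfrak{X}(\bar{\rho})$. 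Also, the detour through Theorem \ref{5.14}(2) and Proposition \ref{19} is circular: reducing to ``the existence of a benign crystalline $V$ with reduction $\bar{\rho}$'' is just restating that $\mathfrak{X}(\bar{\rho})_{\mathrm{b}}\neq\emptyset$, which is the lemma itself. To close the argument you should replace ``work inside $\hat{\mathcal{O}}_{\mathfrak{X}(\bar{\rho}),x}$'' by ``work inside $\mathfrak{X}(\bar{\rho})_{\mathrm{cris}}^{\tau}$ and its completed local rings $R^{\mathrm{cris}}_{V_z}$,'' and turn the perturbation heuristic into the non-zero-divisor argument using that $R^{\mathrm{cris}}_{V_z}$ is a domain.
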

\begin{proof} 
If $\mathfrak{X}(\bar{\rho})_{\mathrm{reg}-\mathrm{cris}}$ is not empty, then it is easy to show that 
there exists some $x\in \mathfrak{X}(\bar{\rho})_{\mathrm{reg}-\mathrm{cris}}$ which satisfies the condition (1) of Definition \ref{26}. Then, the lemma follows from Proposition \ref{6.5}.

\end{proof}


For a rigid analytic space $Y$ over $E$ and for a point $y\in Y$ , we denote by 
$$t_{Y,y}:=\mathrm{Hom}_{E(y)}(\mathfrak{m}_y/\mathfrak{m}_y^2, E(y))$$
 the tangent space at $y$, where 
$\mathfrak{m}_y$ is the maximal ideal of $\mathcal{O}_{Y,y}$. The following three 
theorems are the 
main theorems of this article concerning the Zariski density of two dimensional crystalline representations.

We denote by $\overline{\mathfrak{X}(\bar{\rho})}_{\mathrm{b}}$ the Zariski closure of $\mathfrak{X}(\bar{\rho})_{\mathrm{b}}$ in $\mathfrak{X}(\bar{\rho})$. The following is a generalization of 
Corollary 1.10 of \cite{Ki10} for general $K$.

\begin{thm}\label{6.14}
If $\mathfrak{X}(\bar{\rho})_{\mathrm{reg}-\mathrm{cris}}$ is non empty, 
then $\overline{\mathfrak{X}(\bar{\rho})}_{\mathrm{b}}$ is non empty and a union of irreducible components 
of $\mathfrak{X}(\bar{\rho})$.
\end{thm}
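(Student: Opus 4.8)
The plan is to deduce Theorem \ref{6.14} by transporting the density statement from the eigenvariety $\mathcal{E}(\bar\rho)$ back to $\mathfrak{X}(\bar\rho)$ via the projection $p_1:X(\bar\rho)\to\mathfrak{X}(\bar\rho)$. First I would reduce the non-emptiness part to Lemma \ref{6.12}: since $\mathfrak{X}(\bar\rho)_{\mathrm{reg}-\mathrm{cris}}$ is assumed non-empty, Lemma \ref{6.12} already gives that $\mathfrak{X}(\bar\rho)_{\mathrm{b}}$ is non-empty, hence $\overline{\mathfrak{X}(\bar\rho)}_{\mathrm{b}}$ is non-empty. So the content is the assertion that $\overline{\mathfrak{X}(\bar\rho)}_{\mathrm{b}}$ is a union of irreducible components, i.e.\ that $\mathfrak{X}(\bar\rho)_{\mathrm{b}}$ is ``Zariski dense along components'': every point of $\mathfrak{X}(\bar\rho)_{\mathrm{b}}$ has an open neighborhood in which the benign crystalline locus is Zariski dense.

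The key step is to work locally at a benign crystalline point $x_0=[V_{x_0}]\in\mathfrak{X}(\bar\rho)_{\mathrm{b}}$. For such a point $V_{x_0}$ is benign, so by Definition \ref{26}(2) it is regular (distinct Hodge--Tate weights per embedding), and by Corollary \ref{5.16.5} each of the two triangulations $\mathcal{T}_\tau$ ($\tau\in\mathfrak{S}_2$) of $W(V_{x_0})$ gives a point $x_{0,\tau}=([V_{x_0}],\delta_{\tau,1}|_{\mathcal{O}_K^\times},\delta_{\tau,1}(\pi_K))\in\mathcal{E}(\bar\rho)$ lying over $x_0$ under $p_1$, and these points satisfy the hypotheses of Theorem \ref{5.18}, so $\hat{\mathcal{O}}_{\mathcal{E}(\bar\rho),x_{0,\tau}}\isom R_{V_{x_0},\mathcal{T}_\tau}$. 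Now I would combine three ingredients: (i) Corollary \ref{6.11.1}, which says the Zariski closure $Y$ of $\mathcal{E}(\bar\rho)_{\mathrm{b}}$ is a union of irreducible components of $\mathcal{E}(\bar\rho)$; more precisely Proposition \ref{6.11} gives, for each $x_{0,\tau}$ (which is of the form treated in Lemma \ref{6.8}, being crystalline trianguline), an affinoid neighborhood $U'_\tau$ of $x_{0,\tau}$ in $\mathcal{E}(\bar\rho)$ in which $\mathcal{E}(\bar\rho)_{\mathrm{b}}$ is Zariski dense. Since $\mathcal{E}(\bar\rho)_{\mathrm{b}}$ maps into $\mathfrak{X}(\bar\rho)_{\mathrm{b}}$ under $p_1$, pushing forward along $p_1$ gives that $\mathfrak{X}(\bar\rho)_{\mathrm{b}}$ is Zariski dense in $p_1(U'_\tau)$. (ii) The tangent space computation: $t_{\mathfrak{X}(\bar\rho),x_0}=t(R_{V_{x_0}})$ and $t(R_{V_{x_0},\mathcal{T}_\tau})=t_{\mathcal{E}(\bar\rho),x_{0,\tau}}$ sits inside $t(R_{V_{x_0}})$, and by Theorem \ref{33} applied to the benign $E$-$B$-pair $W(V_{x_0})$ of rank $2$ we have $\sum_{\tau\in\mathfrak{S}_2}t(R_{V_{x_0},\mathcal{T}_\tau})=t(R_{V_{x_0}})$. (iii) The dimension bookkeeping: by Corollary \ref{23} each $R_{V_{x_0},\mathcal{T}_\tau}$ is formally smooth of dimension $3[K:\mathbb{Q}_p]+1$, and $R_{V_{x_0}}$ is formally smooth of dimension $4[K:\mathbb{Q}_p]+1$.

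Concretely, I would argue as follows. Let $Z$ be an irreducible component of $\mathfrak{X}(\bar\rho)$ containing a benign crystalline point $x_0$ (such components exist by Lemma \ref{6.12}); I want to show $Z\subseteq\overline{\mathfrak{X}(\bar\rho)}_{\mathrm{b}}$. By the tangent space identity $\sum_\tau t(R_{V_{x_0},\mathcal{T}_\tau})=t(R_{V_{x_0}})$ together with the smoothness of $\mathfrak{X}(\bar\rho)$ at $x_0$ (which holds because benign implies $\mathrm{H}^2(G_K,\mathrm{ad}(V_{x_0}))=0$ by Lemma \ref{21.5} and $\mathrm{End}_{G_K}(V_{x_0})=E$ by Lemma \ref{22}, so $\mathfrak{X}(\bar\rho)$ is smooth of dimension $4[K:\mathbb{Q}_p]+1$ near $x_0$), the images $p_1(\mathcal{E}(\bar\rho))$ through the various $x_{0,\tau}$ span the full tangent space at $x_0$; since each $p_1(U'_\tau)$ is contained in $\overline{\mathfrak{X}(\bar\rho)}_{\mathrm{b}}$ (being the image of a subset of $\mathcal{E}(\bar\rho)$ in which benign crystalline points are Zariski dense, and $p_1$ is a closed map on these components as $X(\bar\rho)=\mathfrak{X}(\bar\rho)\times_E\mathcal{W}_E\times_E\mathbb{G}^{\mathrm{an}}_{m,E}$ and $\mathcal{E}(\bar\rho)$ is Zariski closed in $X_0(\bar\rho)$), the Zariski tangent space of $\overline{\mathfrak{X}(\bar\rho)}_{\mathrm{b}}$ at $x_0$ equals that of $\mathfrak{X}(\bar\rho)$; since $\mathfrak{X}(\bar\rho)$ is smooth (hence has irreducible, reduced local ring) at $x_0$, this forces $\overline{\mathfrak{X}(\bar\rho)}_{\mathrm{b}}$ to contain the whole component $Z$ through $x_0$. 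Running this over all benign crystalline points (which are dense in each such component by the local density just established, bootstrapping via Lemma \ref{6.12}/Remark \ref{6.13} within each $\mathfrak{X}(\bar\rho)^\tau_{\mathrm{cris}}$) shows $\overline{\mathfrak{X}(\bar\rho)}_{\mathrm{b}}$ is a union of irreducible components.

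The main obstacle I anticipate is the careful passage between the two spaces: one must control the fibers of $p_1:\mathcal{E}(\bar\rho)\to\mathfrak{X}(\bar\rho)$ (which remember the extra data $(\delta,\lambda)$ of the triangulation parameter, adding $[K:\mathbb{Q}_p]+1$ dimensions) and verify that the images of the finitely many component-neighborhoods $U'_\tau$ genuinely cover a Zariski-dense-in-components subset of $\mathfrak{X}(\bar\rho)$ near $x_0$ — this is exactly where Theorem \ref{33} is indispensable, since \emph{a priori} each individual triangulated deformation space $R_{V_{x_0},\mathcal{T}_\tau}$ is a proper quotient of $R_{V_{x_0}}$ and only their union has the right tangent space. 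A secondary technical point is checking that $p_1$ restricted to the relevant closed subsets is proper/closed so that Zariski closures push forward correctly; this follows from $\mathbb{G}^{\mathrm{an}}_{m,E}$ and $\mathcal{W}_E$ entering only as base-changes and from $\mathcal{E}(\bar\rho)$ being Zariski closed in $X_0(\bar\rho)$, but it needs to be spelled out, e.g.\ by working with the affinoid neighborhoods furnished by Theorem \ref{5.18} and Proposition \ref{6.11} and invoking that a closed immersion composed with the projection from a product with an affinoid (locally) is finite on fibers of constant Hodge--Tate--Sen data.
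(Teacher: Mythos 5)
Your overall strategy is the same as the paper's: non-emptiness is Lemma~\ref{6.12}, and the core of the argument is to show, via Theorem~\ref{33} combined with Theorem~\ref{5.18}, that the tangent images of the two local pieces of $\mathcal{E}(\bar{\rho})$ above a benign point span the full tangent space of $\mathfrak{X}(\bar{\rho})$. However, the concluding implication has a gap. You derive that $t_{\overline{\mathfrak{X}(\bar{\rho})}_{\mathrm{b}},\,x_0}=t_{\mathfrak{X}(\bar{\rho}),\,x_0}$ at a point $x_0$ where the \emph{ambient} space $\mathfrak{X}(\bar{\rho})$ is smooth, and claim ``this forces $\overline{\mathfrak{X}(\bar{\rho})}_{\mathrm{b}}$ to contain the whole component through $x_0$.'' That step is false in general: a reduced proper closed subvariety of a smooth variety can have full tangent space at a point, e.g.\ $V(xy)\subseteq \mathrm{Spec}\,k[[x,y]]$ at the origin. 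Equality of tangent spaces at a single point does not control the dimension of the subvariety unless the \emph{subvariety} is also regular there.

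The paper avoids this by choosing the benign point more carefully. Fix an irreducible component $Z'$ of $Z:=\overline{\mathfrak{X}(\bar{\rho})}_{\mathrm{b}}$. Since $\mathfrak{X}(\bar{\rho})_{\mathrm{b}}\cap Z'$ is Zariski dense in $Z'$ and the singular locus of $Z'$ is a proper Zariski-closed subset, there is a benign point $x\in Z'$ at which $Z'$ (and $\mathfrak{X}(\bar{\rho})$) is \emph{smooth}. The two points $x_1,x_2\in\mathcal{E}(\bar{\rho})$ above $x$ (one for each $\tau\in\mathfrak{S}_2$) lie on components $Y'_i$ of $\mathcal{E}(\bar{\rho})$ which, by Corollary~\ref{6.11.1} and Theorem~\ref{5.18}, map into $Z$, and hence (being irreducible, through $x$) into $Z'$. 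So the tangent maps $t_{\mathcal{E}(\bar{\rho}),x_i}\rightarrow t_{\mathfrak{X}(\bar{\rho}),x}$ land in the subspace $t_{Z',x}$, and Theorem~\ref{33} plus Theorem~\ref{5.18} now give $t_{Z',x}=t_{\mathfrak{X}(\bar{\rho}),x}$. Smoothness of $Z'$ at $x$ then gives $\dim Z'=\dim_{E(x)} t_{Z',x}=4[K:\mathbb{Q}_p]+1$, which is the maximal possible dimension, so $Z'$ is an irreducible component of $\mathfrak{X}(\bar{\rho})$. This also shows that your secondary worry about properness or closedness of $p_1$ is unnecessary: the argument is entirely infinitesimal at the chosen smooth point and never needs to push Zariski closures forward under $p_1$; what matters is only that the image of each $Y'_i$ lies (set-theoretically) in $Z'$.
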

\begin{proof}
By Lemma \ref{6.12}, $Z:=\overline{\mathfrak{X}(\bar{\rho})}_{\mathrm{b}}$ is non empty.

To show that $Z$ is a union of irreducible components of 
$\mathfrak{X}(\bar{\rho})$, we first claim that the dimension of any irreducible component of $\mathfrak{X}(\bar{\rho})$ is at most $4[K:\mathbb{Q}_p]+1$. 
Take any point  $x:=[V_x]\in \mathfrak{X}(\bar{\rho})$. Under the assumption that 
$\mathrm{End}_{\mathbb{F}}(\bar{\rho})=\mathbb{F}$, we also 
have $\mathrm{End}_{E(x)[G_K]}(V_x)=E(x)$ and we have a canonical isomorphism 
$R_{V_x}\isom \widehat{\mathcal{O}}_{\mathfrak{X}(\rho),x}$ by Proposition 9.5 of \cite{Ki03}. 
Under the condition $\mathrm{End}_{E(x)[G_K]}(V_x)=E(x)$, it is easy to show that 
the dimension of $\mathrm{H}^2(G_K, \mathrm{ad}(V_x))\isom \mathrm{H}^0(G_K, \mathrm{ad}(V_x)(\chi_p))^{\vee}$ 
is at most one. By deformation theory, then the dimension of $R_{V_x}$ is $4[K:\mathbb{Q}_p]+1$, from which the claim follows.

By this claim, 
it suffices to show that the dimension of any irreducible component of $Z$ is at least $4[K:\mathbb{Q}_p]+1$. 
Let $Z'$ be an irreducible component of $Z$. 
Because the singular locus $Z'_{\mathrm{sing}}\subseteq Z'$ is a proper Zariski closed set in $Z'$, there exists a benign point 
$x\in \mathfrak{X}(\bar{\rho})_{\mathrm{b}}\cap Z'$ such that $Z'$ is smooth at $x$. 
By the definition of benign representation and by Theorem \ref{5.14}, there exist
the different two points 
$$x_1:=([V_x],\delta_{x_1},\lambda_{x_1}), x_2:=([V_x],\delta_{x_2},\lambda_{x_2})\in \mathcal{E}(\bar{\rho})$$ 
such that $p_1(x_i)=x$ and satisfy the property (ii) in the Theorem \ref{5.14}.  We denote by $Y'_i$ an irreducible component of 
$p_1^{-1}(Z)$ containing $x_i$ for $i=1,2$ respectively. These are also irreducible components 
of $\mathcal{E}(\bar{\rho})$ by Corollary \ref{6.11.1}, and $Y'_i$ is unique for each $i=1,2$ by Theorem \ref{5.18}.
Because the natural morphism $p_1|_{Y'_i}:Y'_i\rightarrow \mathfrak{X}(\bar{\rho})$ factors through $Z'$ for $i=1,2$, 
we obtain a map
$$t_{\mathcal{E}(\bar{\rho}),x_i}=t_{Y'_i,x_i}\rightarrow t_{Z',x}\hookrightarrow t_{\mathfrak{X}(\bar{\rho}),x}$$
 for $i=1,2$.
Hence, we obtain a map 
$$\bigoplus_{i=1,2}t_{\mathcal{E}(\bar{\rho}),x_i}\rightarrow t_{Z',x}\hookrightarrow t_{\mathfrak{X}(\bar{\rho}),x}.$$ 
By Theorem \ref{33} and Theorem \ref{5.18}, this map is surjective, hence we obtain an equality 
$$t_{Z',x}=t_{\mathfrak{X}(\bar{\rho}),x}. $$ Because $x$ is smooth at $Z'$, hence $Z'$ has dimension $4[K:\mathbb{Q}_p]+1$, which proves the theorem.

\end{proof}

Concerning the assumption that $\mathfrak{X}(\bar{\rho})_{\mathrm{reg}-\mathrm{cris}}$ is non empty, in this paper we prove the following (maybe well-known) lemma.

\begin{lemma}\label{4.16}
If $\bar{\rho}\otimes_{\mathbb{F}}\bar{\mathbb{F}}\not\sim\begin{pmatrix} \omega & \ast \\ 0 & 1 \end{pmatrix}\otimes \chi$ and 
$\bar{\rho}\otimes_{\mathbb{F}}\bar{\mathbb{F}}\not\sim\begin{pmatrix} 1 & \ast \\ 0 & 1\end{pmatrix} \otimes \chi$ for any 
character $\chi:G_K\rightarrow \bar{\mathbb{F}}^{\times}$, where $\omega$ is the mod $p$ cyclotomic 
character. Then, $\mathfrak{X}(\bar{\rho})_{\mathrm{reg}-\mathrm{cris}}$ is non empty.
\end{lemma}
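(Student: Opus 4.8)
\textbf{Proof proposal for Lemma \ref{4.16}.}

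The plan is to exhibit, for a given $\bar\rho$ satisfying the two non-degeneracy hypotheses, an explicit crystalline lift with distinct Hodge--Tate weights in each embedding; equivalently, to produce a point of $\mathfrak{X}(\bar\rho)_{\mathrm{reg}-\mathrm{cris}}$. First I would reduce to the case where the coefficient field is large enough and where $\bar\rho$ is semisimplified or, when $\bar\rho$ is reducible non-split, to its underlying extension class; the hypotheses are precisely designed so that after such a reduction $\bar\rho$ is either irreducible, or a direct sum $\bar\chi_1\oplus\bar\chi_2$ with $\bar\chi_1/\bar\chi_2\neq 1,\omega^{\pm1}$, or a non-split extension of $\bar\chi_2$ by $\bar\chi_1$ with $\bar\chi_1/\bar\chi_2\neq\omega$. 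In each case one knows (by the theory of fundamental characters for the irreducible case, and elementary class field theory for the abelian pieces) that each $\bar\chi_i$ lifts to a crystalline character of $G_K$ with prescribed Hodge--Tate weights, and in fact with a large range of admissible weights; the key point is that for a crystalline character one can choose the $\sigma$-components of the Hodge--Tate weight $\{k_\sigma\}_{\sigma\in\mathcal P}$ freely in $\prod_{\sigma}\mathbb{Z}$ (up to finitely many congruence constraints coming from the finite order part). I would then choose the two lifted characters, or the two Hodge--Tate weights in the irreducible case, so that $k_{1,\sigma}>k_{2,\sigma}$ for every $\sigma$, making the resulting representation regular.

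For the irreducible case I would use that $\bar\rho|_{I_K}$ is of the form $\bar\chi_2^{i}\oplus\bar\chi_2^{ip^f}$ up to twist, with $\bar\chi_2$ a fundamental character of level $2$, and build a crystalline induction from the unramified quadratic extension: induce a suitable crystalline character of $G_{K'}$ (with $K'/K$ unramified of degree $2$) whose Hodge--Tate weights can be adjusted to be distinct. One checks the induced representation is crystalline over $K$ with distinct Hodge--Tate weights in each embedding, and that its reduction is $\bar\rho$ up to an unramified twist which can be absorbed. For the reducible cases, the extension-class obstruction is controlled by $\mathrm{H}^1_f(G_K,W(\bar\chi_1/\bar\chi_2))$ versus $\mathrm{H}^1(G_K,\bar\rho^{\mathrm{ss}}\text{-twist})$: since $\bar\chi_1/\bar\chi_2\neq\omega^{\pm1},1$ on the residual level, one can arrange the local condition so that every residual extension class lifts to a crystalline one after choosing the weights appropriately (Proposition 2.7 of \cite{Na09} together with a dimension count shows $\mathrm{H}^1_f$ surjects onto $\mathrm{H}^1$ in the relevant Selmer-type situation).

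Concretely, the steps in order: (1) enlarge $E$ so that $\bar\rho\otimes\bar{\mathbb F}$ is defined over $\mathbb F$ and all characters involved take values in $\mathbb F^\times$; (2) split into the three cases (irreducible; split reducible; non-split reducible), using the hypotheses to rule out $\bar\chi_1/\bar\chi_2\in\{1,\omega,\omega^{-1}\}$; (3) in each case write down an explicit crystalline lift: a direct sum of crystalline characters, a non-split crystalline extension obtained via $\mathrm{H}^1_f$, or a crystalline induction from the unramified quadratic subextension; (4) choose the Hodge--Tate weights $\{k_{1,\sigma},k_{2,\sigma}\}_\sigma$ with $k_{1,\sigma}\neq k_{2,\sigma}$ for all $\sigma$, which is possible since the weight can be varied freely; (5) verify the reduction of the lift is isomorphic to $\bar\rho$ after a twist, giving a point of $\mathfrak{X}(\bar\rho)_{\mathrm{reg}-\mathrm{cris}}$. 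The main obstacle I expect is the reducible non-split case: one must produce a \emph{crystalline} extension class reducing to the given residual one, i.e.\ show the natural map $\mathrm{H}^1_f(G_K,-)\to\mathrm{H}^1(G_K,-)\to\mathrm{H}^1(G_K,\overline{(-)})$ hits the prescribed class; this requires choosing the Hodge--Tate weights large enough (so that $\mathrm{Fil}^0 D_{\mathrm{dR}}$ of the relevant $B$-pair vanishes and hence $\mathrm{H}^1_f=\mathrm{H}^1$, by the argument already used in Lemma \ref{29} and Lemma \ref{6.2}), and then checking the reduction map on $\mathrm{H}^1$ is surjective, which follows from the local Euler characteristic formula once the $\mathrm{H}^2$ obstruction is killed by the condition $\bar\chi_1/\bar\chi_2\neq\omega$.
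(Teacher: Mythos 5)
Your proposal follows essentially the same route as the paper's proof: twist so that $\bar\rho=\begin{pmatrix}\chi&*\\0&1\end{pmatrix}$ with $\chi\neq 1,\omega$ (using that every residual character has a crystalline lift), choose a crystalline lift $\tilde\chi$ with all $\sigma$-weights $k_\sigma\geqq1$ so that $\mathrm{H}^1_f(G_K,E(\tilde\chi))=\mathrm{H}^1(G_K,E(\tilde\chi))$, and use $\mathrm{H}^2(G_K,\mathcal{O}(\tilde\chi))=0$ (from $\chi\neq\omega$) to lift the residual extension class crystallinely; and in the absolutely irreducible case, write $\bar\rho$ (up to twist) as $\mathrm{Ind}^{G_K}_{G_{K_2}}\chi_2^i$ and lift via a crystalline induction $\mathrm{Ind}^{G_K}_{G_{K_2}}\bigl(\prod_{\sigma}\tilde\sigma(\chi_{2,\mathrm{LT}})^{k_\sigma}\bigr)$ with $k_\sigma\geqq1$, giving Hodge--Tate weight $\{0,k_\sigma\}_\sigma$. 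Your separation of split from non-split reducible is unnecessary (the non-split argument with $*=0$ already covers the split case, and the condition in both is just $\chi\neq 1,\omega$), but this is a harmless redundancy, not a gap.
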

\begin{proof}
First, we prove the lemma for the absolutely reducible case. Extending $\mathbb{F}$, we may assume that $\bar{\rho}$ is reducible. 
Because any character $\chi:G_K\rightarrow \mathbb{F}^{\times}$ has 
a crystalline lift, we may assume that $\bar{\rho}=\begin{pmatrix}\eta & \ast \\ 0 & 1\end{pmatrix}$ for 
a character $\eta:G_K\rightarrow \mathbb{F}^{\times}$ such that $\eta\not= 1$ and $\eta\not= \omega$. Using 
twists of a Lubin-Tate character of $K$ by $\sigma\in\mathcal{P}$ and a unramified character, we can take a crystalline 
lift $\widetilde{\eta}:G_K\rightarrow \mathcal{O}^{\times}$ of $\eta$ whose Hodge-Tate weights  are
$\{k_{\sigma}\}_{\sigma\in \mathcal{P}}$ such that $k_{\sigma}\geqq 1$ for any $\sigma\in \mathcal{P}$. 
Under the assumption $\eta\not= 1, \omega$, $\mathrm{H}^1(G_K,\mathcal{O}(\widetilde{\eta}))$ is a free 
$\mathcal{O}$-module of rank $[K:\mathbb{Q}_p]$ and the natural map 
$\mathrm{H}^1(G_K,\mathcal{O}(\widetilde{\eta}))\rightarrow \mathrm{H}^1(G_K,\mathbb{F}(\eta))$ is a surjection. 
Because $k_{\sigma}\geqq 1$ for any $\sigma\in \mathcal{P}$, we have an equality 
$\mathrm{H}^1_f(G_K,E(\widetilde{\eta}))=\mathrm{H}^1(G_K,E(\widetilde{\eta}))$. 
These imply that any extension class in $\mathrm{H}^1(G_K,\mathbb{F}(\eta))$ lifts to 
an extension class in $\mathrm{H}^1(G_K,\mathcal{O}(\widetilde{\eta}))$ which is crystalline.

Next, we prove the lemma for the absolutely irreducible case. Denote by $K_2$ the unramified extension 
of $K$ of degree $2$, and denote by $\chi_2:G^{\mathrm{ab}}_{K_2}\rightarrow \mathbb{F}_{p^{2f}}^{\times}$ 
the reduction of the Lubin-Tate character $\chi_{2,\mathrm{LT}}:G_{K_2}^{ab}\rightarrow \mathcal{O}_{K_2}^{\times}$ of $K_2$ associated 
to the uniformizer $\pi_{K_2}:=\pi_K$ of $K_2$. Then, it is known that there exists an isomorphism 
$\bar{\rho}\isom (\mathrm{Ind}^{G_{K}}_{G_{K_2}}\chi_2^i)\otimes \chi$ (possibly, after extending scalars) for a character 
$\chi:G_K\rightarrow \mathbb{F}^{\times}$ and for some $i\in \mathbb{Z}$ such that $i\not\equiv 0 $ (mod $p^f+1$), where we also denote by the same letter 
$\chi_2:G_{K_2}^{ab}\rightarrow \mathbb{F}_{p^{2f}}^{\times}\hookrightarrow \mathbb{F}^{\times}$ for 
a fixed embedding $\mathbb{F}_{p^{2f}}^{\times}\hookrightarrow \mathbb{F}^{\times}$.
Hence, it suffices to show that $\mathrm{Ind}^{G_{K}}_{G_{K_2}}\chi_2^i$ has a crystalline lift. Because 
$\chi_2$ is the reduction of $\chi_{2,\mathrm{LT}}$, we can take a lift of $\chi_2^i$ of the form 
$\prod_{\sigma\in \mathcal{P}}\widetilde{\sigma}(\chi_{2,\mathrm{LT}})^{k_{\sigma}}$ such that $k_{\sigma}\geqq 1$ for all 
$\sigma\in \mathcal{P}$, where $\widetilde{\sigma}:K_2\hookrightarrow \overline{E}$  is an extension of $\sigma$. Then, 
$\mathrm{Ind}^{G_{K}}_{G_{K_2}}\chi_2^i$ has a crystalline lift $\mathrm{Ind}^{G_{K}}_{G_{K_2}}(\prod_{\sigma\in \mathcal{P}}
\widetilde{\sigma}(\chi_{2,\mathrm{LT}})^{k_{\sigma}})$ whose Hodge-Tate weights are $\{0,k_{\sigma}\}_{\sigma\in \mathcal{P}}$.

\end{proof}

Finally, we prove the following two theorems on 
the density of $\mathfrak{X}(\bar{\rho})_{\mathrm{b}}$ in $\mathfrak{X}(\bar{\rho})$ 
under the following assumptions. In particular, we need to exclude the case $p=2$. 
Under these conditions, we will show below that 
$\mathfrak{X}(\bar{\rho})$ is a finite union of smooth irreducible components.
Let $\zeta_p\in \overline{K}$ be a primitive root of unity.
The difficulty of the 
proof to the theorems depends on whether $\zeta_p\in K$ or not, which corresponds 
to whether $\mathfrak{X}(\bar{\rho})$ is irreducible or not respectively. 

We first prove the density 
when 
$\zeta_p\not\in K$,

\begin{thm}\label{4.17}
Assume that $\zeta_p\not\in K$. Moreover, assume the following conditions,
\begin{itemize}
\item[(0)]$\mathrm{End}_{G_K}(\bar{\rho})=\mathbb{F}$,
\item[(1)]$\mathfrak{X}(\bar{\rho})_{\mathrm{reg}-\mathrm{cris}}$ is not empty,
\item[(2)]if $\bar{\rho}$ is absolutely reducible, then $\bar{\rho}\otimes_{\mathbb{F}}\overline{\mathbb{F}}\not\sim \begin{pmatrix} 1 & \ast \\ 0 & \omega \end{pmatrix}\otimes \chi$  
for 
any $\chi:G_K\rightarrow \overline{\mathbb{F}}^{\times}$,
\item[(3)]if $\bar{\rho}$ is absolutely irreducible, then $[K(\zeta_p):K]\not=2$ or 
$\bar{\rho}|_{I_K}\otimes_{\mathbb{F}}\overline{\mathbb{F}}\not\sim \begin{pmatrix} \chi_2^i & 0 \\ 0 & \chi_2^{ip^f}\end{pmatrix}$ for any $i$ such that $\chi_2^{i(p^f-1)}|_{I_K}=\omega|_{I_K}$,
\end{itemize}
then we have an equality $\overline{\mathfrak{X}(\bar{\rho})}_{\mathrm{b}}=\mathfrak{X}(\bar{\rho})$.

\end{thm}
\begin{proof}
We claim that $\mathfrak{X}(\bar{\rho})$ is isomorphic to $(4[K:\mathbb{Q}_p]+1)$-dimensional open unit disc under the above conditions, 
from which  the theorem follows by Theorem \ref{6.14}. 

To show the claim, it suffices to show that $\mathrm{H}^2(G_K,\mathrm{ad}(\bar{\rho}))=0$, hence  suffices to show that 
$\mathrm{Hom}_{G_K}(\bar{\rho}, \bar{\rho}\otimes \omega)=0$ by the Tate duality. 
When $\bar{\rho}$ is absolutely reducible, it is easy to see that the conditions (0), (2) imply that 
$\mathrm{Hom}_{G_K}(\bar{\rho}, \bar{\rho}\otimes \omega)=0$. When 
$\bar{\rho}$ is absolutely irreducible, then $\bar{\rho}$ is of the form $\mathrm{Ind}^{G_{K}}_{G_{K_2}}(\chi_2^i)\otimes \chi$ for some 
$i$ and $\chi$ after extending scalars. If $\mathrm{Hom}_{G_K}(\bar{\rho}, \bar{\rho}\otimes \omega)\not=0$, then 
there exists an isomorphism $\bar{\rho}\isom \bar{\rho}\otimes \omega$ by Schur's lemma.
The latter implies that $\mathrm{det}(\bar{\rho})=\mathrm{det}(\bar{\rho})\omega^2$ and 
$\begin{pmatrix} \chi_2^i|_{I_K}& 0 \\ 0 & \chi_2^{i p^f}|_{I_K}\end{pmatrix}\isom 
\begin{pmatrix} \chi_2^i\omega|_{I_K}& 0 \\ 0 & \chi_2^{i p^f}\omega|_{I_K}\ \end{pmatrix}$. 
Because we assume that $\zeta_p\not\in K$, these  imply that $[K(\zeta_p):K]=2$ and $\chi_2^{i(p^f-1)}|_{I_K}=\omega|_{I_K}$, which proves the 
claim, hence proves the theorem.

\end{proof}
Finally, we prove the theorem on the density when $\zeta_p\in K$ and $p\not=2$ under the following assumptions.
\begin{thm}\label{4.18}
Assume that $\zeta_p\in K$ and $p\not=2$. Moreover, assume the following conditions,
\begin{itemize}
\item[(0)]$\mathrm{End}_{G_K}(\bar{\rho})=\mathbb{F}$,
\item[(1)]$\mathfrak{X}(\bar{\rho})_{\mathrm{reg}-\mathrm{cris}}$ is not empty,
\end{itemize}
then we have an equality $\overline{\mathfrak{X}(\bar{\rho})}_{\mathrm{b}}=\mathfrak{X}(\bar{\rho})$.

\end{thm}

\begin{proof}
If $\zeta_p\in K$, then $\mathfrak{X}(\bar{\rho})$ never becomes irreducible. Hence, we first need to know
how to decompose $\mathfrak{X}(\bar{\rho})$ into irreducible components under the above 
assumptions.

Let $P\subset \mathcal{O}_{K}^{\times}$ be the subgroup of $\mathcal{O}_K^{\times}$ consisting of 
all the $p$-th power roots of unity, and let $p^n$ be the order of $P$. Fix $\zeta_{p^n}\in \mathcal{O}_K^{\times}$  a generator of $P$, i.e. a primitive $p^n$-th root of unity. For each $0\leqq i\leqq p^n-1$, we define a subfunctor $D_{\bar{\rho},i}$ of $D_{\bar{\rho}}$ 
by 
$$ D_{\bar{\rho},i}(A):=\{[V_A]\in D_{\bar{\rho}}(A)| \mathrm{det}(V_A)(\mathrm{rec}_K(\zeta_{p^n}))=\iota_A(\zeta_{p^n})^i \}$$
for $A\in \mathcal{C}_{\mathcal{O}}$, 
where $\iota_A:\mathcal{O}\rightarrow A$ is the morphism which gives an $\mathcal{O}$-algebra structure to $A$. 
It is easy to see that the canonical inclusion $D_{\bar{\rho},i}\hookrightarrow D_{\bar{\rho}}$ is relatively representable, i.e. 
this satisfies the conditions (1) and (2) and (3) in the proof of Proposition \ref{15}. For each $i$, let $R_{\bar{\rho},i}$ be the quotient of 
$R_{\bar{\rho}}$ which represents $D_{\bar{\rho},i}$, and let $\mathfrak{X}(\bar{\rho})_i\subseteq \mathfrak{X}(\bar{\rho})$ be the 
Zariski closed rigid analytic space associated to $R_{\bar{\rho},i}$. Then it is easy to see that, as rigid analytic space, 
$\mathfrak{X}(\bar{\rho})$ is the disjoint union of $\mathfrak{X}(\bar{\rho})_i$ for $0\leqq i\leqq p^n-1$, 
$$ \mathfrak{X}(\bar{\rho})=\coprod_{0\leqq i\leqq p^n-1} \mathfrak{X}(\bar{\rho})_i.$$ 
We claim that 
each $\mathfrak{X}(\bar{\rho})_i$ is isomorphic to the  $(4[K:\mathbb{Q}_p]+1)$-dimensional open unit disc. 
To prove this claim, 
it suffices to show that the functor $D_{\bar{\rho},i}$ is formally smooth. 

We prove the formal smoothness of $D_{\bar{\rho},i}$ as follows. 
Let $A$ be an object of $\mathcal{C}_{\mathcal{O}}$ and $I\subseteq A$ be a non zero ideal such that $I\mathfrak{m}_A=0$. 
Let $[V_{A/I}]\in D_{\bar{\rho},i}(A/I)$ be a deformation of $\bar{\rho}$ over $A/I$. Then, it suffices to show that 
$[V_{A/I}]$ lifts to $D_{\bar{\rho},i}(A)$. Fixing a $A/I$-basis of $V_{A/I}$, we represent $V_{A/I}$ by 
a continuous homomorphism $\rho_{A/I}:G_K\rightarrow \mathrm{GL}_2(A/I)$. Because the obstruction of the liftings 
of $\mathrm{det}(\bar{\rho})$ comes only from that of $\mathrm{det}(\bar{\rho})|_{\mathrm{rec}_K(P)}$, we can take a continuous 
 character 
$c_A:G_K^{\mathrm{ab}}\rightarrow A^{\times}$ which is a lift of $\mathrm{det}(\rho_{A/I})$ and $c_{A}(\mathrm{rec}_K(\zeta_{p^n}))
=\iota_A(\zeta_{p^n})^i$. We take a continuous lift $\widetilde{\rho}_A:G_K\rightarrow \mathrm{GL}_2(A)$ of 
$\rho_{A/I}$ such that $\mathrm{det}(\widetilde{\rho}_A(g))=c_A(g)$ for any $g\in G_K$ and then 
we define a $2$-cocycle $f:G_K\times G_K\rightarrow I\otimes_{\mathbb{F}}\mathrm{ad}(\bar{\rho})$ by 
$$\widetilde{\rho}_A(g_1g_2)\widetilde{\rho}_A(g_2)^{-1}\widetilde{\rho}_A(g_1)^{-1}:=1+f(g_1,g_2)\in 1+I\otimes_A\mathrm{M}_2(A)=1+I\otimes_{\mathbb{F}}\mathrm{ad}(\bar{\rho}).$$ 
Because $\mathrm{det}(\widetilde{\rho}_A)=c_A$ is a homomorphism, $f(g_1,g_2)$ is contained in $I\otimes_{\mathbb{F}}\mathrm{ad}^0(\bar{\rho})$, where we denote by $\mathrm{ad}^0(\bar{\rho}):=\{a\in \mathrm{ad}(\bar{\rho})| \mathrm{trace}(a)=0\}$. 
Hence, we obtain 
a class of $2$-cocycle $[f]\in \mathrm{H}^2(G_K,\mathrm{ad}^0(\bar{\rho}))$. Under the assumption (0) and the assumption that $\zeta_p\in K$ and $p\not=2$, we have 
$$\mathrm{H}^2(G_K, \mathrm{ad}^0(\bar{\rho}))\isom \mathrm{H}^0(G_K, \mathrm{ad}^0(\bar{\rho})(\omega))^{\vee}=
\mathrm{H}^0(G_K, \mathrm{ad}^0(\bar{\rho}))^{\vee}=0$$
(we remark that we have $\mathrm{H}^0(G_K, \mathrm{ad}^0(\bar{\rho}))=\mathrm{H}^0(G_K,\mathrm{ad}(\bar{\rho}))=\mathbb{F}$ when $p=2$). Hence, twisting $\widetilde{\rho}_A$ by using a suitable continuous one cochain 
$d:G_K\rightarrow I\otimes_{\mathbb{F}}\mathrm{ad}^0(\bar{\rho})$, we obtain a continuous homomorphism 
 $\rho_A:G_K\rightarrow \mathrm{GL}_2(A)$ such that $\rho_A$ is a lift of $\rho_{A/I}$ and $\mathrm{det}(\rho_{A})=c_A$,
  which proves the formally smoothness of $D_{\bar{\rho},i}$.
 
 By this claim and by Theorem \ref{6.14}, to prove the theorem, it suffices to show that $\mathfrak{X}(\bar{\rho})_i\cap 
 \mathfrak{X}(\bar{\rho})_{\mathrm{b}}$ is non empty for any $i$ under the assumption (1). We prove this claim as follows. 
 First, there exists some $i$ such that $\mathfrak{X}(\bar{\rho})_i\cap \mathfrak{X}(\bar{\rho})_{\mathrm{b}}$ is 
 non empty by the assumption (1). We take a point $x=[V_x]\in \mathfrak{X}(\bar{\rho})_i\cap \mathfrak{X}(\bar{\rho})_{\mathrm{b}}$.
 
The twist $V_x(\chi_{\mathrm{LT}}^{j(p^f-1)})$ of $V_x$ for any $j\in \mathbb{Z}$ is contained in $\mathfrak{X}(\bar{\rho})_{\mathrm{b}}\cap \mathfrak{X}(\bar{\rho})_{i_j}$, where 
we define $i_j$ such that $0\leqq i_j\leqq p^n-1$ and $i_j\equiv i+2j(p^f-1)$ ( mod $p^n$). Because we assume $p\not=2$, $i_j$ runs through all 
$0\leqq i'\leqq p^n-1$, hence $\mathfrak{X}(\bar{\rho})_{\mathrm{b}}\cap \mathfrak{X}(\bar{\rho})_{i'}$ is non empty 
for any $i'$. Hence, $\mathfrak{X}(\bar{\rho})_{\mathrm{b}}$ is Zariski dense in $\mathfrak{X}(\bar{\rho})$.

\end{proof}
\begin{rem}
We remark that, from $\S$ 3.3, we assume $\mathrm{End}_{G_K}(\bar{\rho})=\mathbb{F}$. However, even if 
$\mathrm{End}(\bar{\rho})\not=\mathbb{F}$,
it may be possible to prove Theorem \ref{5.14} and Theorem \ref{5.18} and Theorem \ref{6.14} without any additional 
difficulties if we use the universal framed deformations 
instead of usual deformations. 
But, up to now,  the author does not know whether the density is satisfied or not when
$\mathrm{End}_{G_K}(\bar{\rho})\not=\mathbb{F}$.

\end{rem}

\section{Appendix : Continuous cohomology of $B$-pairs}
In \cite{Na09}, we defined a cohomology $\mathrm{H}^i(G_K,W)$ by using continuous cochains of $G_K$ which 
we review below. On the other hand,  Liu \cite{Li08} defined another cohomology which we write by $\mathrm{H}^i_{\mathrm{Liu}}(G_K, W):=
\mathrm{H}^i_{\varphi, \Gamma}(D(W))$ 
by using a complex defined from the $(\varphi,\Gamma)$-module $D(W)$ associated to $W$ (see 2.1 of \cite{Li08} for the definition). Moreover, he proved that this cohomology 
satisfies the Euler-Poincar\'e formula and the Tate duality. In this appendix, we first prove  that  $\mathrm{H}^i(G_K, W)$ also 
satisfies the Euler-Poincar\'e  formula and the Tate duality, and finally prove that $\mathrm{H}^i(G_K, W)$ is canonically isomorphic to  $\mathrm{H}^i_{\mathrm{Liu}}(G_K, W)$.

We first recall the definition of $\mathrm{H}^i(G_K, W)$. Let $G$ be a topological group. For a continuous $G$-module $M$ and $i\in\mathbb{Z}_{\geqq 0}$, we define the group of 
 $i$-th continuous cochains of $G$ with values in $M$ by $$\mathrm{C}^i(G, M):=\{c:G^{\times i}\rightarrow M| c\text{ is a continuous map }\}.$$ 
 As usual, we define the boundary map 
  $$\partial^i:\mathrm{C}^i(G, M)\rightarrow \mathrm{C}^{i+1}(G, M)$$ by 
  \[
\begin{array}{ll}
\partial^i(c)(g_1,g_2,\cdots, g_{i+1}):=& g_1c(g_2,\cdots, g_{i+1}) +(-1)^{i+1}c(g_1,g_2,\cdots, g_i)\\
 &+\sum_{s=1}^{i}(-1)^i c(g_1,\cdots, g_{s-1}, g_s g_{s+1},g_{s+2},\cdots, g_{i+1}) .
\end{array}
\]
 Let $W=(W_e, W^+_{\mathrm{dR}})$ be a $B$-pair. Set $W_{\mathrm{dR}}:=W_e\otimes_{\bold{B}_{e}}\bold{B}_{\mathrm{dR}}$. For $W$, we define a complex $\mathrm{C}^{\bullet}(G_K, W)$ of $\mathbb{Q}_p$-vector spaces as 
 the mapping cone of the map $$\mathrm{C}^{\bullet}(G_K, W_e)\oplus \mathrm{C}^{\bullet}(G_K, W^+_{\mathrm{dR}})\rightarrow 
 \mathrm{C}^{\bullet}(G_K, W_{\mathrm{dR}}): (c_e, c_{\mathrm{dR}})\mapsto c_e-c_{\mathrm{dR}},$$ i.e. defined by 
 $$\mathrm{C}^0(G_K, W):=\mathrm{C}^0(G_K, W_e)\oplus \mathrm{C}^0(G_K, W^+_{\mathrm{dR}})$$
 and 
 $$\mathrm{C}^i(G_K, W):=\mathrm{C}^{i}(G_K, W_e)\oplus \mathrm{C}^i(G_K, W^+_{\mathrm{dR}})\oplus \mathrm{C}^{i-1}(G_K, W_{\mathrm{dR}})$$ for $i\geqq 1$ and the differential
 $$ \partial^0:\mathrm{C}^0(G_K, W_e)\oplus \mathrm{C}^0(G_K,  W^+_{\mathrm{dR}}) \rightarrow \mathrm{C}^1(G_K, W_e)\oplus \mathrm{C}^1(G_K, W^+_{\mathrm{dR}})\oplus \mathrm{C}^0(G_K, W_{\mathrm{dR}})$$ is defined by 
 $$\partial^0(c_e, c_{\mathrm{dR}}):=(\partial^0(c_e),  \partial^0(c_{\mathrm{dR}}), c_e-c_{\mathrm{dR}})$$
and, for $i\geqq 1$, the differential
 \begin{multline*}
 \partial^i:\mathrm{C}^i(G_K, W_e)\oplus \mathrm{C}^i(G_K, W^+_{\mathrm{dR}})\oplus \mathrm{C}^{i-1}(G_K, W_{\mathrm{dR}})\\
 \rightarrow \mathrm{C}^{i+1}(G_K, W_e) \oplus \mathrm{C}^{i+1}(G_K, W^+_{\mathrm{dR}})\oplus \mathrm{C}^i(G_K, W_{\mathrm{dR}})
 \end{multline*}
 is defined by 
 $$\partial^i(c_e, c_{\mathrm{dR}}, c)=(\partial^i(c_e), \partial^i(c_{\mathrm{dR}}), c_e-c_{\mathrm{dR}}-\partial^{i-1}(c)).$$
   We define  the cohomology of $W$ by 
 $$\mathrm{H}^i(G_K, W):=\mathrm{H}^i(\mathrm{C}^{\bullet}(G_K, W)) ,$$ and also define
 $$\mathrm{H}^i(G_K, W_{e}):=\mathrm{H}^i(\mathrm{C}^{\bullet}(G_K, W_e))$$ and 
 $$\mathrm{H}^i(G_K, W^+_{\mathrm{dR}}):=
 \mathrm{H}^i(\mathrm{C}^{\bullet}(G_K, W^+_{\mathrm{dR}})), \,\,\mathrm{H}^i(G_K, \allowbreak W_{\mathrm{dR}}):=\mathrm{H}^i(\mathrm{C}^{\bullet}(G_K, W_{\mathrm{dR}})).$$
 By these definitions, we have the following long exact sequence,
 $$ \cdots \rightarrow \mathrm{H}^{i-1}(G_K, W_{\mathrm{dR}})\rightarrow \mathrm{H}^i(G_K, W)\rightarrow \mathrm{H}^i(G_K, W_e)\oplus 
 \mathrm{H}^i(G_K, W^+_{\mathrm{dR}})\rightarrow \cdots .$$
 
 Before proving the Euler-Poincar\'e formula, we recall some results of \cite{Be09} on the relationship 
 between $B$-pairs and almost $\mathbb{C}_p$-representations.
 Let $U$ be an almost $\mathbb{C}_p$-representation. Let $V_1$ and $V_2$ be 
 $\mathbb{Q}_p$-representations of $G_K$ of dimension $d_1$ and $d_2$ respectively 
 and $d\geqq 0$ be an integer 
 such that we have $V_1\subseteq U$ and $V_2\subseteq \mathbb{C}_p^{\oplus d}$ and 
 $U/V_1\isom \mathbb{C}_p^{\oplus d}/V_2$. Then, we define 
 the dimension of $U$ by $$\mathrm{dim}_{\mathcal{C}(G_K)}(U):=d$$ and the height of $U$ by $$\mathrm{ht}(U):=d_1-d_2,$$
 which are independent of the choice of $V_1,V_2$ and are additive with respect to exact sequences (\cite{Fo03}).
For a $B$-pair $W:=(W_e, W^+_{\mathrm{dR}})$, we define
 $$X_0(W):=W_e\cap W^+_{\mathrm{dR}}\text{ and } X_1(W):=W_{\mathrm{dR}}/(W_e+W^+_{\mathrm{dR}}).$$
 Concerning $X_0(W)$ and $X_1(W)$, Berger \cite{Be09} proved the following theorem.
 \begin{thm}\label{5-0}
 Let $W$ be a $B$-pair of rank $d$, then 
 \begin{itemize}
 \item[(1)]$X_0(W)$ and $X_1(W)$ are almost $\mathbb{C}_p$-representations,
 \item[(2)]if $W$ is pure of slope $s\leqq 0$, then we have $\mathrm{dim}_{\mathcal{C}(G_K)}(X_0(W))=-
 sd$, $\mathrm{ht}(X_0(W))=d$ and $X_1(W)=0$,
 \item[(3)]if $W$ is pure of slope $s>0$, then we have $X_0(W)=0$ and $\mathrm{dim}_{\mathcal{C}(G_K)}(X_1(W))=sd$, $\mathrm{ht}(X_1(W))=-d$.
 
 \end{itemize}
 \end{thm}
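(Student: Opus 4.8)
The plan is to follow the argument of $\S$2 of \cite{Be09}, deducing the theorem from Kedlaya's slope theory of $\varphi$-modules (equivalently, from the classification of $B$-pairs) together with Fontaine's results on almost $\mathbb{C}_p$-representations. The starting point is the four term exact sequence of $\mathbb{Q}_p[G_K]$-modules
\begin{equation*}
0\rightarrow X_0(W)\rightarrow W_e\oplus W^+_{\mathrm{dR}}\rightarrow W_{\mathrm{dR}}\rightarrow X_1(W)\rightarrow 0,
\end{equation*}
where the middle map sends $(x,y)$ to $x-y$, which makes $X_0(W)=W_e\cap W^+_{\mathrm{dR}}$ and $X_1(W)=W_{\mathrm{dR}}/(W_e+W^+_{\mathrm{dR}})$ manifestly functorial in $W$; a short exact sequence $0\rightarrow W'\rightarrow W\rightarrow W''\rightarrow 0$ of $B$-pairs then induces a long exact sequence
\begin{equation*}
0\rightarrow X_0(W')\rightarrow X_0(W)\rightarrow X_0(W'')\rightarrow X_1(W')\rightarrow X_1(W)\rightarrow X_1(W'')\rightarrow 0 .
\end{equation*}

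Next I would reduce to $W$ pure. By Kedlaya's slope filtration theorem, transported to $B$-pairs via Berger's equivalence with $\varphi$-modules over the Robba ring (or via vector bundles on the Fargues--Fontaine curve), every $B$-pair admits a finite filtration with pure graded pieces, and if $W$ is itself pure of slope $s$ then all of these graded pieces are pure of that same slope $s$, so no mixing of the cases (2) and (3) occurs in the dévissage. Granting (1)--(3) for pure $B$-pairs, the long exact sequence above together with the stability of the category of almost $\mathbb{C}_p$-representations under kernels, cokernels and extensions (Theorem B of \cite{Fo03}) propagates (1) to all $B$-pairs; and the additivity of $\mathrm{dim}_{\mathcal{C}(G_K)}$ and $\mathrm{ht}$ in short exact sequences, combined with the fact that $\sum_i(\text{slope of }\mathrm{gr}_i)\cdot(\text{rank of }\mathrm{gr}_i)=sd$ when $W$ is pure of slope $s$ and rank $d$, propagates (2)--(3) to all pure $B$-pairs.

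It then remains to treat $W$ pure of slope $s$. Over $\widehat{K}^{\mathrm{ur}}$ the Dieudonn\'e--Manin decomposition (as recalled before Lemma \ref{5.4}) shows that, up to isogeny and a twist by an unramified character, such a $W$ is a direct sum of standard objects of the form $(B_e,t^{-n}B^+_{\mathrm{dR}})$ (for $s\leqq 0$) or $(B_e,t^{n}B^+_{\mathrm{dR}})$ (for $s>0$) with $n\geqq 0$, and descending back to $K$ only affects the bookkeeping of ranks over the base field. For these standard objects one computes $X_0$ and $X_1$ directly from the fundamental exact sequence $0\rightarrow \mathbb{Q}_p\rightarrow B_e\oplus B^+_{\mathrm{dR}}\rightarrow B_{\mathrm{dR}}\rightarrow 0$ and from the structure, due to Fontaine (Theorem C of \cite{Fo03}), of $B^+_{\mathrm{dR}}/t^kB^+_{\mathrm{dR}}$ as an almost $\mathbb{C}_p$-representation of dimension $k$ and height $0$: for $(B_e,t^{-n}B^+_{\mathrm{dR}})$ one gets $X_1=0$ (since $B_e+B^+_{\mathrm{dR}}=B_{\mathrm{dR}}$) and a short exact sequence $0\rightarrow \mathbb{Q}_p\rightarrow X_0\rightarrow t^{-n}B^+_{\mathrm{dR}}/B^+_{\mathrm{dR}}\rightarrow 0$, whence $\mathrm{dim}_{\mathcal{C}(G_K)}(X_0)=n$ and $\mathrm{ht}(X_0)=1$; for $(B_e,t^{n}B^+_{\mathrm{dR}})$ one gets $X_0=0$ (since $\mathbb{Q}_p\cap t^nB^+_{\mathrm{dR}}=0$) and $X_1\cong (B^+_{\mathrm{dR}}/t^nB^+_{\mathrm{dR}})/\mathbb{Q}_p$, whence $\mathrm{dim}_{\mathcal{C}(G_K)}(X_1)=n$ and $\mathrm{ht}(X_1)=-1$; and the slope $0$ case is $W=W(V)$, for which the fundamental exact sequence gives $X_0(W(V))=V$ and $X_1(W(V))=0$.

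The main obstacle is the reduction step: producing the slope filtration of a general $B$-pair, and for the pure pieces of non-integral slope carrying out the descent from $\widehat{K}^{\mathrm{ur}}$ cleanly enough that the almost $\mathbb{C}_p$-property and the numerical invariants are preserved; once this is in place, everything comes down to the fundamental exact sequence and Fontaine's structure theory. Since the theorem is precisely Berger's result, the argument in the text is to cite \cite{Be09}, and the above is a sketch of that proof.
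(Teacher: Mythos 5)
The paper's ``proof'' of Theorem~\ref{5-0} consists only of the citation to Theorem~3.1 of \cite{Be09}, so your proposal --- which ultimately defers to Berger and then reconstructs a sketch of his argument --- is taking the same route. The sketch, however, contains a genuine gap that you rightly flag but do not resolve, and it is worth being precise about why your reduction fails. You claim that after restriction to $\widehat{K}^{\mathrm{ur}}$ a pure $B$-pair is ``up to isogeny and a twist by an unramified character'' a direct sum of the rank-one $B$-pairs $(B_e,t^{\mp n}B^+_{\mathrm{dR}})$. But both isogeny and unramified twist preserve slope, and every object in your list has integer slope; so a pure $B$-pair of slope $a/h$ with $h>1$ (equivalently, one whose underlying $\varphi$-module corresponds over $\widehat{K}^{\mathrm{ur}}$ to a direct sum of copies of the rank-$h$ Dieudonn\'e--Manin simple $D_{a,h}$, or to $\mathcal{O}(a/h)^{\oplus m}$ on the Fargues--Fontaine curve) can never be brought to that form. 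The actual mechanism, both in Berger and in the paper's own Lemma~\ref{5.4}, bypasses the rank-one reduction: for such a $W$ one identifies $W_e$ with $B_{\mathrm{max}}^{\varphi^h=p^{-a}}$-type eigenspaces (after unramified descent to the copy $D'_s$ fixed by $\varphi^h=\pi_K^a$), and then uses Fontaine's exact sequence
\begin{equation*}
0\rightarrow K^{\mathrm{ur}}_h\,t_{K^{\mathrm{ur}}_h}^{-a}\rightarrow B_{\mathrm{max},K}^{+,\varphi_K^h=\pi_K^{-a}}\rightarrow B^+_{\mathrm{dR}}/t^{-a}B^+_{\mathrm{dR}}\rightarrow 0
\end{equation*}
directly to exhibit $X_0(W)$ (resp.\ $X_1(W)$) as an almost $\mathbb{C}_p$-representation and to read off its dimension and height, without ever producing rank-one sub-$B$-pairs. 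Your computations for the integer-slope standard objects $(B_e,t^{\mp n}B^+_{\mathrm{dR}})$ and for $W(V)$ are correct, the six-term exact sequence relating $X_0$ and $X_1$ across a short exact sequence of $B$-pairs is correct, and the d\'evissage from general to pure via the slope filtration is sound; it is only the final ``pure of arbitrary slope $\Rightarrow$ direct sum of rank-one pieces'' step that needs to be replaced by the eigenspace argument above.
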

 \begin{proof}
 See Theorem 3.1 of \cite{Be09}.
 \end{proof}
 \begin{lemma}\label{5-0-5}
 Let $U$ be an almost $\mathbb{C}_p$-representation, then $\mathrm{H}^i(G_K, U)$ is finite dimensional over $\mathbb{Q}_p$ for $i=0,1,2$ and zero for $i\geqq 3$.
 \end{lemma}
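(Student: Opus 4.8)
The plan is to reduce the finiteness of $\mathrm{H}^i(G_K, W)$ for an almost $\mathbb{C}_p$-representation $W$ to the known finiteness of continuous Galois cohomology of $p$-adic representations and of $\mathbb{C}_p$ (Tate's computations), using the defining structure of almost $\mathbb{C}_p$-representations together with the long exact cohomology sequence in continuous cochain cohomology. First I would recall that, by definition, there are $\mathbb{Q}_p$-representations $V_1, V_2$ and an integer $d \geqq 0$ such that $V_1$ is a closed $G_K$-submodule of $W$, $V_2$ is a closed $G_K$-submodule of $\mathbb{C}_p^{\oplus d}$, and $W/V_1 \isom \mathbb{C}_p^{\oplus d}/V_2$ as $\mathbb{Q}_p$-Banach $G_K$-modules. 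Since these are short exact sequences of topological $G_K$-modules with continuous (even admitting continuous set-theoretic) splittings — here one uses that a closed subspace of a $\mathbb{Q}_p$-Banach space is a direct summand as a topological $\mathbb{Q}_p$-vector space, hence the quotient map has a continuous section — the associated complexes of continuous cochains fit into short exact sequences of complexes, yielding long exact sequences
\begin{equation*}
\cdots \rightarrow \mathrm{H}^i(G_K, V_1) \rightarrow \mathrm{H}^i(G_K, W) \rightarrow \mathrm{H}^i(G_K, W/V_1) \rightarrow \mathrm{H}^{i+1}(G_K, V_1) \rightarrow \cdots
\end{equation*}
and similarly with $\mathbb{C}_p^{\oplus d}$, $V_2$, $\mathbb{C}_p^{\oplus d}/V_2$ in place of $W$, $V_1$, $W/V_1$.

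Next I would invoke the classical facts: for a $p$-adic representation $V$ of $G_K$, $\mathrm{H}^i(G_K, V)$ is finite dimensional over $\mathbb{Q}_p$ for all $i$ and vanishes for $i \geqq 3$ (since $G_K$ has cohomological dimension $2$); and for $\mathbb{C}_p$, Tate's theorem gives $\mathrm{H}^0(G_K, \mathbb{C}_p) = K$, $\mathrm{H}^1(G_K, \mathbb{C}_p)$ one-dimensional over $K$, and $\mathrm{H}^i(G_K, \mathbb{C}_p) = 0$ for $i \geqq 2$; hence the same holds for $\mathbb{C}_p^{\oplus d}$. Applying the long exact sequence to $0 \rightarrow V_2 \rightarrow \mathbb{C}_p^{\oplus d} \rightarrow \mathbb{C}_p^{\oplus d}/V_2 \rightarrow 0$ shows $\mathrm{H}^i(G_K, \mathbb{C}_p^{\oplus d}/V_2)$ is finite dimensional for all $i$ and zero for $i \geqq 3$ (the top piece $\mathrm{H}^3(G_K, V_2) = 0$ kills it, and $\mathrm{H}^2(G_K, \mathbb{C}_p^{\oplus d}) = 0$ together with finiteness of $\mathrm{H}^2(G_K, V_2)$ and $\mathrm{H}^3(G_K, V_2) = 0$ handles $i=2$; actually one gets $\mathrm{H}^i = 0$ for $i \geqq 3$ directly). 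Then, transporting along the isomorphism $W/V_1 \isom \mathbb{C}_p^{\oplus d}/V_2$, the same is true for $\mathrm{H}^i(G_K, W/V_1)$. Finally, applying the long exact sequence to $0 \rightarrow V_1 \rightarrow W \rightarrow W/V_1 \rightarrow 0$: for each $i$, $\mathrm{H}^i(G_K, W)$ sits between $\mathrm{H}^i(G_K, V_1)$ and $\mathrm{H}^i(G_K, W/V_1)$, both finite dimensional, so $\mathrm{H}^i(G_K, W)$ is finite dimensional; and for $i \geqq 3$ both neighbors vanish, so $\mathrm{H}^i(G_K, W) = 0$.

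The main obstacle is the first step: justifying that the defining short exact sequences of an almost $\mathbb{C}_p$-representation induce short exact sequences of continuous cochain complexes, i.e. that the surjections $W \twoheadrightarrow W/V_1$ and $\mathbb{C}_p^{\oplus d} \twoheadrightarrow \mathbb{C}_p^{\oplus d}/V_2$ admit continuous set-theoretic sections so that $\mathrm{C}^i(G_K, -)$ stays exact. This is where one must be careful about the topology: for $\mathbb{Q}_p$-Banach spaces a closed subspace need not be complemented in general, but here $V_1$ is finite dimensional over $\mathbb{Q}_p$ hence complemented (so that sequence is even $\mathbb{Q}_p$-linearly split), and for $V_2 \subseteq \mathbb{C}_p^{\oplus d}$ one uses that $V_2$ is finite dimensional as well, so again $\mathbb{C}_p^{\oplus d}/V_2$ receives a continuous section; continuous cochains then form a short exact sequence. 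Once this functoriality is in place the rest is a formal diagram chase with the long exact sequences. One should also remark, for completeness, that $\mathrm{H}^i(G_K, W)$ as defined here via $\mathrm{C}^\bullet(G_K, W)$ for a $B$-pair was reduced, in the long exact sequence displayed just before this lemma in the appendix, to the cohomologies $\mathrm{H}^i(G_K, W_e)$, $\mathrm{H}^i(G_K, W^+_{\mathrm{dR}})$, $\mathrm{H}^i(G_K, W_{\mathrm{dR}})$, so that the present lemma for almost $\mathbb{C}_p$-representations is exactly the input needed there; but the statement of this lemma itself concerns an almost $\mathbb{C}_p$-representation $W$ directly, and the argument above suffices.
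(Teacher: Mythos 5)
Your argument is correct and is essentially the paper's own proof: the paper likewise reduces to the finiteness of $\mathrm{H}^i(G_K,V)$ for $\mathbb{Q}_p$-representations $V$ (vanishing for $i\geqq 3$ by cohomological dimension), the finiteness of $\mathrm{H}^i(G_K,\mathbb{C}_p)$ (vanishing for $i\geqq 2$ by Tate), and the defining extensions of an almost $\mathbb{C}_p$-representation. The only difference is that you spell out the step the paper leaves implicit, namely that the defining short exact sequences admit continuous sections (because $V_1$ and $V_2$ are finite dimensional over $\mathbb{Q}_p$, hence topologically complemented) so that the continuous cochain complexes are exact and the long exact sequences apply.
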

 \begin{proof}
 This follows from the definition of almost $\mathbb{C}_p$-representations and 
 the facts that $\mathrm{H}^i(G_K, V)=0$ for $i\geqq 3$ for any $\mathbb{Q}_p$-representation $V$
 of $G_K$ and that $\mathrm{H}^{i}(G_K, \mathbb{C}_p)=0$ for $i\geqq 2$ and that $\mathrm{H}^i(G_K, V)$ and 
 $\mathrm{H}^i(G_K, \mathbb{C}_p)$ are finite dimensional over $\mathbb{Q}_p$.
 \end{proof}
 For an almost $\mathbb{C}_p$-representation $U$, set $\chi(U):=\sum_{i=0}^2(-1)^i\mathrm{dim}_{\mathbb{Q}_p}
 \mathrm{H}^i(G_K, U)$.
 \begin{lemma}\label{5-0-6}
 $\chi(U)=-[K:\mathbb{Q}_p]\mathrm{ht}(U)$.
 \end{lemma}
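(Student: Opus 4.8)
The plan is to reduce the claim to the two cases in the definition of an almost $\mathbb{C}_p$-representation, namely $\mathbb{Q}_p$-representations and $\mathbb{C}_p$, together with an additivity (Euler characteristic) argument. First I would note that $\chi$ and $\mathrm{ht}$ are both additive on short exact sequences of almost $\mathbb{C}_p$-representations: additivity of $\chi$ follows from the long exact cohomology sequence (valid by Remark \ref{3.3} and Lemma \ref{5-0-5}, since all the cohomology groups appearing are finite-dimensional and vanish in degrees $\geqq 3$), while additivity of $\mathrm{ht}$ is immediate from its definition once one checks that $\mathrm{dim}_{\mathcal{C}(G_K)}$ and the underlying $\mathbb{Q}_p$-representation dimensions behave additively (this is part of Fontaine's structure theory in \cite{Fo03}, which I would cite). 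Given an almost $\mathbb{C}_p$-representation $W$ with $V_1\subseteq W$, $V_2\subseteq \mathbb{C}_p^{\oplus d}$ and $W/V_1\isom \mathbb{C}_p^{\oplus d}/V_2$, additivity gives
\[
\chi(W)=\chi(V_1)+\chi(\mathbb{C}_p^{\oplus d})-\chi(V_2),\qquad
\mathrm{ht}(W)=\dim_{\mathbb{Q}_p}V_1-\dim_{\mathbb{Q}_p}V_2,
\]
so it suffices to prove the formula for $W=V$ a $\mathbb{Q}_p$-representation and for $W=\mathbb{C}_p$ separately: in the first case $\mathrm{ht}(V)=\dim_{\mathbb{Q}_p}V$ and in the second $\mathrm{ht}(\mathbb{C}_p)=0$, and the $\mathbb{C}_p^{\oplus d}$ contributions cancel against $-d\cdot\mathrm{ht}(\mathbb{C}_p)$ in the bookkeeping.

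For $W=V$ a $\mathbb{Q}_p$-representation this is exactly Tate's local Euler–Poincaré characteristic formula: $\chi(V)=\sum_{i=0}^2(-1)^i\dim_{\mathbb{Q}_p}\mathrm{H}^i(G_K,V)=-[K:\mathbb{Q}_p]\dim_{\mathbb{Q}_p}V=-[K:\mathbb{Q}_p]\mathrm{ht}(V)$. For $W=\mathbb{C}_p$ I would use the classical computation of the Galois cohomology of $\mathbb{C}_p$: by Tate, $\mathrm{H}^0(G_K,\mathbb{C}_p)=K$ and $\mathrm{H}^1(G_K,\mathbb{C}_p)$ is one-dimensional over $K$ (hence $[K:\mathbb{Q}_p]$-dimensional over $\mathbb{Q}_p$), and $\mathrm{H}^i(G_K,\mathbb{C}_p)=0$ for $i\geqq 2$; therefore $\chi(\mathbb{C}_p)=[K:\mathbb{Q}_p]-[K:\mathbb{Q}_p]=0=-[K:\mathbb{Q}_p]\mathrm{ht}(\mathbb{C}_p)$. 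Assembling these in the displayed identity above yields $\chi(W)=-[K:\mathbb{Q}_p](\dim_{\mathbb{Q}_p}V_1-\dim_{\mathbb{Q}_p}V_2)=-[K:\mathbb{Q}_p]\mathrm{ht}(W)$, as desired.

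The main obstacle I anticipate is purely a matter of care rather than depth: one must make sure that the cohomology $\mathrm{H}^i(G_K,-)$ used in the definition of $\chi$ (continuous cochain cohomology, as defined above in the appendix) agrees with the usual continuous cohomology for genuine $\mathbb{Q}_p$-representations and for $\mathbb{C}_p$, so that Tate's theorems apply verbatim — this is straightforward but should be stated. A secondary point is that additivity of $\chi$ requires the connecting maps in the long exact sequence to be well-defined on continuous cochain complexes, which holds because the mapping-cone construction used to define $\mathrm{C}^{\bullet}(G_K,W)$ is functorial and exact in $W$; and one should record that a short exact sequence of almost $\mathbb{C}_p$-representations induces a short exact sequence of these cochain complexes up to the usual snake-lemma argument, using that the relevant quotients and subobjects remain almost $\mathbb{C}_p$-representations by Remark \ref{3.3}. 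None of this is hard, but it is the part where sloppiness would creep in.
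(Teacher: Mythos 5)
Your proposal is correct and follows essentially the same line as the paper's own (much terser) proof, which simply cites the definition of almost $\mathbb{C}_p$-representations, Tate's Euler--Poincar\'e formula for $\mathbb{Q}_p$-representations, and the fact that $\chi(\mathbb{C}_p)=0$. You have merely spelled out the additivity-of-$\chi$-and-$\mathrm{ht}$ bookkeeping and the cohomology computation for $\mathbb{C}_p$ that the paper leaves implicit.
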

 \begin{proof}
 This follows from the definition of almost $\mathbb{C}_p$-representations and the Euler-Poincar\'e formula for 
 $\mathbb{Q}_p$-representations of $G_K$ and the fact that $\chi(\mathbb{C}_p)=0$.
 \end{proof}

 \begin{lemma}\label{5-1}
 Let $W=(W_e,W^+_{\mathrm{dR}})$ be a $B$-pair, then the following equalities hold,
 \begin{itemize}
 \item[(1)]$\mathrm{C}^{\bullet}(W_e)=\varinjlim_n \mathrm{C}^{\bullet}(W_e\cap \frac{1}{t^n}W^+_{\mathrm{dR}}),$
 \item[(2)]$\mathrm{C}^{\bullet}(W_{\mathrm{dR}})=\varinjlim_n\mathrm{C}^{\bullet}(G_K, \frac{1}{t^n}W^+_{\mathrm{dR}}).$
 \end{itemize}
 
 \end{lemma}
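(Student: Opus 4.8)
$\textbf{Proof plan for Lemma \ref{5-1}.}$

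The plan is to prove both equalities by unwinding the definition of $W_e$ and $W_{\mathrm{dR}}$ as filtered unions and then commuting the formation of continuous cochains with the relevant direct limit. Recall that $W^+_{\mathrm{dR}}$ generates $W_{\mathrm{dR}}=B_{\mathrm{dR}}\otimes_{B_e}W_e$ over $B_{\mathrm{dR}}$, so that $W_{\mathrm{dR}}=\varinjlim_n \frac{1}{t^n}W^+_{\mathrm{dR}}$ as topological $\mathbb{Q}_p$-vector spaces with $G_K$-action, the transition maps being the natural inclusions; similarly, $W_e$ is the increasing union $\bigcup_n (W_e\cap \frac{1}{t^n}W^+_{\mathrm{dR}})$ inside $W_{\mathrm{dR}}$, since any element of $W_e$ lies in some $\frac{1}{t^n}W^+_{\mathrm{dR}}$ (here one uses that $W^+_{\mathrm{dR}}$ is a lattice in $W_{\mathrm{dR}}$ and $W_e$ is finitely generated over $B_e$). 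Both unions are filtered, and in each case the subspaces appearing are closed $G_K$-stable subspaces carrying the subspace topology from $W_{\mathrm{dR}}$ (or, for $W_e$, from its own locally convex inductive limit topology — one should check these two topologies agree on $W_e\cap \frac{1}{t^n}W^+_{\mathrm{dR}}$, which follows from the definition of the inductive limit topology on $B_e$ recalled in the period-ring section).

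First I would fix $i\geqq 0$ and observe that $\mathrm{C}^i(G_K, -)$, the space of continuous maps $G_K^{\times i}\to -$, is a covariant functor on continuous $G_K$-modules, and that for a filtered direct limit $M=\varinjlim_n M_n$ of closed $G_K$-stable subspaces with the subspace topology, the natural map $\varinjlim_n \mathrm{C}^i(G_K, M_n)\to \mathrm{C}^i(G_K, M)$ is an isomorphism. Injectivity is clear since each $M_n\hookrightarrow M$ is injective and the transition maps are injective. Surjectivity is the only real point: given a continuous map $c\colon G_K^{\times i}\to M$, its image is a compact subset of $M$ (as $G_K$ is profinite, hence $G_K^{\times i}$ is compact), and a compact subset of a filtered union of closed subspaces $M_n$ — with the inductive-limit / union topology — is contained in a single $M_n$. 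This last assertion is where I expect to spend the most care: one must know that $M$ carries the locally convex inductive limit topology (equivalently that a subset is open iff its trace on each $M_n$ is open), which is exactly how the topologies on $B_e$, $W_e$ and $W_{\mathrm{dR}}=\varinjlim \frac{1}{t^n}W^+_{\mathrm{dR}}$ were set up in the review of period rings, together with the standard fact that in such a regular $LF$-type situation compact sets are ``bounded'', i.e.\ land in one step of the filtration. Once that is in hand, one also notes the transition maps respect the differentials $\delta$, so the isomorphism is one of complexes, not just of graded pieces.

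Granting the above, part (1) follows by taking $M=W_e$ and $M_n=W_e\cap\frac{1}{t^n}W^+_{\mathrm{dR}}$, and part (2) by taking $M=W_{\mathrm{dR}}$ and $M_n=\frac{1}{t^n}W^+_{\mathrm{dR}}$, in both cases applying the isomorphism in every cohomological degree $i$ and checking compatibility with the coboundary maps (immediate, since $\delta$ is built from the group multiplication and the module structure, both of which are compatible with the inclusions $M_n\hookrightarrow M_{n+1}$). The main obstacle, as indicated, is the topological input that a continuous cochain on the profinite group $G_K$, valued in one of these inductive-limit spaces, automatically factors through a finite step of the union; everything else is formal manipulation of direct limits of complexes. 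I would state this compactness/boundedness fact as a small separate observation (or cite the relevant property of the weak topology and of $LF$-spaces from Colmez's and Berger's papers already in the bibliography) and then deduce the lemma.
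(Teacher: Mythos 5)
Your proposal follows essentially the same route as the paper: present $W_e$ and $W_{\mathrm{dR}}$ as countable strict inductive limits of closed subspaces (each carrying the subspace topology from the next), and then use the fact that a continuous map from the compact space $G_K^{\times i}$ into such a limit factors through some finite stage. The paper invokes Proposition 5.6 of Schneider's book for this compact-regularity step, which you reconstruct from first principles; and for part (1) the point you defer to (``one should check these two topologies agree'') is precisely what the paper supplies by reducing to the explicit presentation $B_e=\cup_n\frac{1}{t^n}B_{\mathrm{max}}^{+,\varphi=p^n}$ and citing Colmez's Proposition 8.10(2) for closedness of each step in the next — worth pinning down rather than leaving as a parenthetical.
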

 \begin{proof}
 For any $n$, $\frac{1}{t^n}W^+_{\mathrm{dR}}$ is closed in $\frac{1}{t^{n+1}}W^+_{\mathrm{dR}}$ and 
 the topology on $\frac{1}{t^n}W^+_{\mathrm{dR}}$ is the topology induced from $\frac{1}{t^{n+1}}W^+_{\mathrm{dR}}$. 
 Hence, by Proposition 5.6 of \cite{Schn01}, we obtain the equality (2).
 For $W_e$, if we fix an isomorphism $W_e\isom \bold{B}_{e}^{\oplus d}$ as $\bold{B}_{e}$-module, the topology on $W_e$ 
 is defined by the direct sum topology of $\bold{B}_{e}$. Because we have an equality 
 $$t\bold{B}_{\mathrm{max}}^{+, \varphi=p^n}=
 \cap_{m\geqq 0} \mathrm{Ker}(\theta\circ\varphi^{m}: \bold{B}_{\mathrm{max}}^{+,\varphi=p^{n+1}}\rightarrow \mathbb{C}_p)$$ by 
 Proposition 8.10 (2) of \cite{Co02}, 
 $\frac{1}{t^n}\bold{B}_{\mathrm{max}}^{+,\varphi=p^n}$ is closed in $\frac{1}{t^{n+1}}\bold{B}_{\mathrm{max}}^{+,\varphi=p^{n+1}}$ and the 
 topology on $\frac{1}{t^n}\bold{B}_{\mathrm{max}}^{+,\varphi=p^n}$ is the topology induced from $\frac{1}{t^{n+1}}\bold{B}_{\mathrm{max}}^{+,\varphi=p^{n+1}}$.
 Hence, by Proposition 5.6 of \cite{Schn01}, we have $\mathrm{C}^{\bullet}(G_K, W_e)
 =\varinjlim_n\mathrm{C}^{\bullet}(G_K, (\frac{1}{t^n}\bold{B}_{\mathrm{max}}^{+,\varphi=p^n})^{\oplus d})=\varinjlim_n
 \mathrm{C}^{\bullet}(G_K, W_e\cap \frac{1}{t^n}W^+_{\mathrm{dR}})$.
 \end{proof}

\begin{lemma}\label{5-2}
Let $W^+_{\mathrm{dR}}$ be a finite free $\bold{B}^+_{\mathrm{dR}}$-module with 
a continuous semi-linear $G_K$-action. Then the canonical map 
$\mathrm{H}^i(G_K,W^+_{\mathrm{dR}})\rightarrow \varprojlim_{n}\mathrm{H}^i(G_K, W^+_{\mathrm{dR}}/t^n W^+_{\mathrm{dR}})$ 
is isomorphism.
\end{lemma}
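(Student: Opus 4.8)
Let $W^+_{\mathrm{dR}}$ be a finite free $B^+_{\mathrm{dR}}$-module with a continuous semi-linear $G_K$-action. Then the canonical map $\mathrm{H}^i(G_K,W^+_{\mathrm{dR}})\to\varprojlim_n\mathrm{H}^i(G_K,W^+_{\mathrm{dR}}/t^nW^+_{\mathrm{dR}})$ is an isomorphism.

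\textbf{Proof proposal.} The plan is to deduce the statement from the Milnor exact sequence for $\varprojlim$ applied to the tower of continuous-cochain complexes $\{\mathrm{C}^\bullet(G_K,W^+_{\mathrm{dR}}/t^nW^+_{\mathrm{dR}})\}_n$, after first identifying $\mathrm{C}^\bullet(G_K,W^+_{\mathrm{dR}})$ with the inverse limit of this tower. For the identification, I would recall that $W^+_{\mathrm{dR}}$ carries the projective limit topology $W^+_{\mathrm{dR}}=\varprojlim_n W^+_{\mathrm{dR}}/t^nW^+_{\mathrm{dR}}$ of the $\mathbb{Q}_p$-Banach spaces $W^+_{\mathrm{dR}}/t^nW^+_{\mathrm{dR}}$ (this is how $B^+_{\mathrm{dR}}$, hence any finite free module over it, is topologized, as set up in \S2.1 of the excerpt). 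A continuous map from the profinite set $G_K^{\times i}$ to the inverse limit of a countable tower of topological groups is the same datum as a compatible system of continuous maps to each stage; concretely $\mathrm{C}^i(G_K,W^+_{\mathrm{dR}})=\varprojlim_n\mathrm{C}^i(G_K,W^+_{\mathrm{dR}}/t^nW^+_{\mathrm{dR}})$ as $\mathbb{Q}_p$-vector spaces, and the differentials are compatible, so $\mathrm{C}^\bullet(G_K,W^+_{\mathrm{dR}})=\varprojlim_n\mathrm{C}^\bullet(G_K,W^+_{\mathrm{dR}}/t^nW^+_{\mathrm{dR}})$ as complexes.

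Next I would invoke the standard $\varprojlim$–$\varprojlim^1$ exact sequence for an inverse system of cochain complexes: there is a short exact sequence
\begin{equation*}
0\to{\varprojlim_n}^1\mathrm{H}^{i-1}(G_K,W^+_{\mathrm{dR}}/t^nW^+_{\mathrm{dR}})\to\mathrm{H}^i(G_K,W^+_{\mathrm{dR}})\to\varprojlim_n\mathrm{H}^i(G_K,W^+_{\mathrm{dR}}/t^nW^+_{\mathrm{dR}})\to 0,
\end{equation*}
valid because each $\mathrm{C}^j(G_K,W^+_{\mathrm{dR}}/t^{n+1}W^+_{\mathrm{dR}})\to\mathrm{C}^j(G_K,W^+_{\mathrm{dR}}/t^nW^+_{\mathrm{dR}})$ is surjective (a continuous cochain with values in the Banach quotient $W^+_{\mathrm{dR}}/t^nW^+_{\mathrm{dR}}$ lifts continuously to $W^+_{\mathrm{dR}}/t^{n+1}W^+_{\mathrm{dR}}$, since the reduction map admits a continuous $\mathbb{Q}_p$-linear section as a map of Banach spaces and $G_K^{\times j}$ is compact), so the tower of complexes satisfies the Mittag–Leffler-type surjectivity that makes the derived-limit sequence exact with only the $\varprojlim^1$ term of $\mathrm{H}^{i-1}$ appearing. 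It therefore remains to show that $\varprojlim^1_n\mathrm{H}^{i-1}(G_K,W^+_{\mathrm{dR}}/t^nW^+_{\mathrm{dR}})=0$ for all $i$.

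The main obstacle is precisely this $\varprojlim^1$ vanishing, and I would handle it via the Mittag–Leffler condition: it suffices that the tower $\{\mathrm{H}^{i-1}(G_K,W^+_{\mathrm{dR}}/t^nW^+_{\mathrm{dR}})\}_n$ have eventually stable images. Here I would use that each successive quotient $t^{n}W^+_{\mathrm{dR}}/t^{n+1}W^+_{\mathrm{dR}}$ is a finite-dimensional $\mathbb{C}_p$-representation of $G_K$ (a twist of $\mathbb{C}_p$ by the Hodge–Tate–type character $\chi_p^n$ tensored with the $\mathbb{C}_p$-semilinear representation $W^+_{\mathrm{dR}}/tW^+_{\mathrm{dR}}$), together with the finiteness and the dévissage behaviour of Galois cohomology of $\mathbb{C}_p$-representations: $\mathrm{H}^j(G_K,-)$ of a $\mathbb{C}_p$-representation is finite-dimensional over $\mathbb{Q}_p$ and vanishes for $j\ge 2$, and by the Tate–Sen theory the cohomology in each degree of $W^+_{\mathrm{dR}}/t^nW^+_{\mathrm{dR}}$ stabilizes: the transition maps $\mathrm{H}^j(G_K,W^+_{\mathrm{dR}}/t^{n+1}W^+_{\mathrm{dR}})\to\mathrm{H}^j(G_K,W^+_{\mathrm{dR}}/t^nW^+_{\mathrm{dR}})$ are eventually isomorphisms (the cokernels and kernels are controlled by $\mathrm{H}^j$ and $\mathrm{H}^{j+1}$ of the graded pieces, which are Hodge–Tate–generic for $n$ large and hence vanish). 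Granting this stabilization, the tower of cohomology groups is Mittag–Leffler (indeed eventually constant), so $\varprojlim^1$ vanishes and the canonical map is an isomorphism; moreover the displayed short exact sequence then also shows the inverse limit is attained at finite level, which is the form in which the lemma is used (e.g. in Lemma~\ref{5-2}'s role for the full cohomology comparison of $B$-pairs). I would cross-check the stabilization input against the standard references on Galois cohomology of $\mathbb{C}_p(\eta)$ for $\eta$ a continuous character, which is exactly what feeds the proof of Theorem~\ref{h}.
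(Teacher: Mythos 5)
Your proof follows the same skeleton as the paper's: identify $\mathrm{C}^\bullet(G_K,W^+_{\mathrm{dR}})$ with $\varprojlim_n\mathrm{C}^\bullet(G_K,W^+_{\mathrm{dR}}/t^nW^+_{\mathrm{dR}})$, apply the Milnor $\varprojlim$--$\varprojlim^1$ sequence (which is legitimate since, as you correctly observe, the transition maps on cochains are surjective by compactness of $G_K^{\times j}$ plus continuous sections of Banach quotient maps), and kill the $\varprojlim^1$ term by a Mittag--Leffler argument. Where you diverge is in how you establish Mittag--Leffler. You prove the strictly stronger statement that the tower $\{\mathrm{H}^{i-1}(G_K,W^+_{\mathrm{dR}}/t^nW^+_{\mathrm{dR}})\}_n$ is \emph{eventually constant}, using Tate--Sen dévissage through the graded pieces $\mathbb{C}_p(n)\otimes(W^+_{\mathrm{dR}}/tW^+_{\mathrm{dR}})$ and the vanishing of their $\mathrm{H}^0$, $\mathrm{H}^1$ once $n$ outruns the Hodge--Tate weights. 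That is correct in substance, but it is considerably more machinery than the lemma needs — it is really the content of the very next statement, Corollary~\ref{5-3}, which pins down the finite level at which the limit is achieved. The paper's proof of Lemma~\ref{5-2} itself is much lighter: it only uses that each $\mathrm{H}^{i-1}(G_K,W^+_{\mathrm{dR}}/t^nW^+_{\mathrm{dR}})$ is a finite-dimensional $\mathbb{Q}_p$-vector space. For a countable inverse system of finite-dimensional vector spaces over a field, Mittag--Leffler is automatic (the decreasing chain of images inside any fixed term must stabilize because its dimension cannot drop infinitely often), with no input about what the transition maps actually look like. So your argument buys a sharper conclusion (the tower stabilizes, not merely ML), at the cost of invoking the Hodge--Tate weight bounds and Sen theory that the paper reserves for Corollary~\ref{5-3}; if you want to match the paper's economy, you can replace the Tate--Sen step by the one-line observation about finite-dimensional towers.
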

\begin{proof}
Because we have $\mathrm{C}^{\bullet}(G_K, W^+_{\mathrm{dR}})\isom \varprojlim_n \mathrm{C}^{\bullet}(G_K, W^+_{\mathrm{dR}}/t^nW^+_{\mathrm{dR}})$, for any $i\geqq 0$, we have the following short exact sequence
$$0\rightarrow \mathbb{R}^1\varprojlim_n\mathrm{H}^{i-1}(G_K, W^+_{\mathrm{dR}}/t^nW^+_{\mathrm{dR}})\rightarrow 
\mathrm{H}^i(G_K, W^+_{\mathrm{dR}})\rightarrow \varprojlim_n\mathrm{H}^i(G_K, W^+_{\mathrm{dR}}/t^nW^+_{\mathrm{dR}}))\rightarrow 
0.$$
Because $\mathrm{H}^{i-1}(G_K, W^+_{\mathrm{dR}}/t^nW^+_{\mathrm{dR}})$ is finite dimensional over $\mathbb{Q}_p$, Mittag-Leffler condition implies that $\mathbb{R}^1\varprojlim_n\mathrm{H}^{i-1}(G_K, W^+_{\mathrm{dR}}/t^nW^+_{\mathrm{dR}})=0$. 
The lemma follows from this.
\end{proof}
\begin{corollary}\label{5-3}
Let $W^+_{\mathrm{dR}}$ be as above. Let $\{h_1,h_2,\cdots,h_d\}$ be the generalized Hodge-Tate weights of $W^+_{\mathrm{dR}}/tW^+_{\mathrm{dR}}$. Let $k\geqq 1$ be any integer such that $k+h_j\geqq0$ for any $h_j\in \mathbb{Z}$. 
Then the natural map $\mathrm{H}^i(G_K, W^+_{\mathrm{dR}})\rightarrow \mathrm{H}^i(G_K, W^+_{\mathrm{dR}}/t^kW^+_{\mathrm{dR}})$ is 
isomorphism and $\mathrm{H}^i(G_K, t^{k+1}W^+_{\mathrm{dR}})=0$ for any $i$.

\end{corollary}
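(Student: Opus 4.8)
The statement to prove is Corollary \ref{5-3}: for $W^+_{\mathrm{dR}}$ a finite free $B^+_{\mathrm{dR}}$-module with continuous semi-linear $G_K$-action, with generalized Hodge--Tate weights $\{h_1,\dots,h_d\}$ of $W^+_{\mathrm{dR}}/tW^+_{\mathrm{dR}}$, and $k\geqq 1$ with $k+h_j\geqq 0$ for every integer weight $h_j$, the natural map $\mathrm{H}^i(G_K, W^+_{\mathrm{dR}})\rightarrow \mathrm{H}^i(G_K, W^+_{\mathrm{dR}}/t^kW^+_{\mathrm{dR}})$ is an isomorphism and $\mathrm{H}^i(G_K, t^{k+1}W^+_{\mathrm{dR}})=0$.

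The plan is to reduce everything to a Tate--Sen vanishing statement for the graded pieces $t^j W^+_{\mathrm{dR}}/t^{j+1}W^+_{\mathrm{dR}}\isom \mathbb{C}_p(j)\otimes_{\mathbb{C}_p}(W^+_{\mathrm{dR}}/tW^+_{\mathrm{dR}})$. First I would invoke Lemma \ref{5-2}, which already gives $\mathrm{H}^i(G_K, W^+_{\mathrm{dR}})\isom \varprojlim_n \mathrm{H}^i(G_K, W^+_{\mathrm{dR}}/t^nW^+_{\mathrm{dR}})$; combined with the analogous statement for $t^{k+1}W^+_{\mathrm{dR}}$ (which is itself a finite free $B^+_{\mathrm{dR}}$-module, so Lemma \ref{5-2} applies verbatim after shifting), it suffices to understand the cohomology of the finite-length quotients $t^aW^+_{\mathrm{dR}}/t^bW^+_{\mathrm{dR}}$ for $a<b$. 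For this I would run the long exact sequences attached to the short exact sequences
\begin{equation*}
0\rightarrow t^{j}W^+_{\mathrm{dR}}/t^{j+1}W^+_{\mathrm{dR}}\rightarrow t^{a}W^+_{\mathrm{dR}}/t^{j+1}W^+_{\mathrm{dR}}\rightarrow t^{a}W^+_{\mathrm{dR}}/t^{j}W^+_{\mathrm{dR}}\rightarrow 0
\end{equation*}
for $a\leqq j$, which lets one build up the cohomology from the graded pieces by dévissage.

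The key input is the classical computation of $\mathrm{H}^i(G_K, \mathbb{C}_p(m)\otimes_{\mathbb{C}_p}(W^+_{\mathrm{dR}}/tW^+_{\mathrm{dR}}))$: decomposing $W^+_{\mathrm{dR}}/tW^+_{\mathrm{dR}}$ according to its Sen operator, for each generalized Hodge--Tate weight $h_j$ the piece $\mathbb{C}_p(m+h_j)$ (or a nilpotent thickening thereof) has vanishing $\mathrm{H}^0$ and $\mathrm{H}^1$ over $G_K$ whenever $m+h_j\neq 0$, by Tate's theorem $\mathrm{H}^i(G_K,\mathbb{C}_p(r))=0$ for $r\neq 0$, $i=0,1$, together with the fact that a unipotent twist does not change this. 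Now for $i=1,2$: in the graded piece $t^j W^+_{\mathrm{dR}}/t^{j+1}W^+_{\mathrm{dR}}$ the relevant Tate twist is by $-j$ against the weights, so this piece has trivial $\mathrm{H}^0$ and $\mathrm{H}^1$ once $-j + h_\ell \neq 0$ for all $\ell$. For $t^{k+1}W^+_{\mathrm{dR}}$ the graded pieces occur for $j\geqq k+1$; by hypothesis $k+h_\ell\geqq 0$ for integer weights, hence $-j + h_\ell \leqq -(k+1)+h_\ell \leqq -1 < 0$ is never zero, so every graded piece has vanishing $\mathrm{H}^0, \mathrm{H}^1$, and since $\mathrm{H}^i=0$ for $i\geqq 2$ on $\mathbb{C}_p$-representations (Lemma \ref{5-0-5}), dévissage plus the $\varprojlim$-identification gives $\mathrm{H}^i(G_K, t^{k+1}W^+_{\mathrm{dR}})=0$ for all $i$. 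For the first assertion, apply the long exact sequence to $0\rightarrow t^kW^+_{\mathrm{dR}}\rightarrow W^+_{\mathrm{dR}}\rightarrow W^+_{\mathrm{dR}}/t^kW^+_{\mathrm{dR}}\rightarrow 0$: the vanishing of $\mathrm{H}^i(G_K, t^kW^+_{\mathrm{dR}})$ for all $i$ — which follows from the same argument, noting $-j+h_\ell\leqq -k+h_\ell\leqq 0$ and one must check it is strictly negative, i.e. $\neq 0$; here the hypothesis is exactly $k+h_j\geqq 0$ so $-k+h_\ell = 0$ is possible only if $h_\ell = k$, but wait, I need $-j+h_\ell\neq 0$ for $j\geqq k$, and $h_\ell\leqq$ something — so I should instead observe that for $j\geqq k$ one has $-j+h_\ell \leqq -k+h_\ell$ and the hypothesis $k + h_j \geqq 0$ was stated for the \emph{negatives}; re-reading, the hypothesis $k+h_j\geqq 0$ should be read so that $-j+h_\ell<0$ strictly for $j\geqq k$, which forces using $t^k$ with the convention matching the statement — gives the isomorphism on $\mathrm{H}^i$ directly.

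The main obstacle I expect is bookkeeping the sign conventions for Hodge--Tate weights (the paper normalizes the weight of $\mathbb{Q}_p(1)$ to be $1$) against the direction of the twist in $t^jW^+_{\mathrm{dR}}/t^{j+1}W^+_{\mathrm{dR}}$, and correctly interpreting the hypothesis "$k+h_j\geqq 0$ for any $h_j\in\mathbb{Z}$" so that it exactly guarantees $-j+h_\ell\neq 0$ for all relevant $j$ and all integer weights $h_\ell$ — non-integral generalized weights automatically give nonzero twists. A secondary technical point is handling the non-semisimple (unipotent) part of the Sen operator: one needs that $\mathrm{H}^0$ and $\mathrm{H}^1$ of a successive extension of copies of $\mathbb{C}_p(r)$ with $r\neq 0$ still vanish, which follows by a further dévissage using Tate's theorem at each step. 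Once these are pinned down, the argument is a routine long-exact-sequence chase combined with Lemma \ref{5-2}.
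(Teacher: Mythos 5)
Your overall plan is the same as the paper's: reduce to the graded pieces $t^{j}W^{+}_{\mathrm{dR}}/t^{j+1}W^{+}_{\mathrm{dR}}$, invoke Tate's vanishing $\mathrm{H}^{i}(G_{K},\mathbb{C}_{p}(r))=0$ for $r\neq 0$ (with a d\'evissage through the unipotent part of the Sen operator), and feed the resulting vanishing of graded pieces into Lemma~\ref{5-2}. However, your key inequality has the sign reversed, and this is exactly the point you flagged as uncertain but did not resolve. With the paper's normalization (the Hodge--Tate weight of $\mathbb{Q}_{p}(1)$ is $+1$), twisting by $t^{j}$ adds $j$ to each weight, so $t^{j}W^{+}_{\mathrm{dR}}/t^{j+1}W^{+}_{\mathrm{dR}}$ has generalized Hodge--Tate weights $\{j+h_{\ell}\}_{\ell}$, and the vanishing criterion is $j+h_{\ell}\neq 0$, \emph{not} $-j+h_{\ell}\neq 0$. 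Your chain $-j+h_{\ell}\leqq -(k+1)+h_{\ell}\leqq -1$ would need $h_{\ell}\leqq k$, whereas the hypothesis gives $h_{\ell}\geqq -k$; it is a coincidence that your final conclusion lands on the right range $j\geqq k+1$. With the correct sign the bound is immediate: for $j\geqq k+1$ one has $j+h_{\ell}\geqq (k+1)+h_{\ell}=(k+h_{\ell})+1\geqq 1>0$, hence $\mathrm{H}^{i}(G_{K},t^{j}W^{+}_{\mathrm{dR}}/t^{j+1}W^{+}_{\mathrm{dR}})=0$ for all $j\geqq k+1$ and all $i$, which is precisely the one line the paper writes before citing Lemma~\ref{5-2}.

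A second, related gap is your attempt to prove the first assertion by showing $\mathrm{H}^{i}(G_{K},t^{k}W^{+}_{\mathrm{dR}})=0$. That would require vanishing on the graded piece at $j=k$, i.e.\ $k+h_{\ell}\neq 0$, which the hypothesis $k+h_{\ell}\geqq 0$ does \emph{not} guarantee (it allows equality). This is precisely where your writeup devolves into a self-correcting aside that never lands on a conclusion. The hypothesis only gives the vanishing of $\mathrm{H}^{i}(G_{K},t^{k+1}W^{+}_{\mathrm{dR}})$, and hence the identification $\mathrm{H}^{i}(G_{K},W^{+}_{\mathrm{dR}})\isom\mathrm{H}^{i}(G_{K},W^{+}_{\mathrm{dR}}/t^{k+1}W^{+}_{\mathrm{dR}})$; this is what the paper's own argument actually produces. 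So rather than trying to squeeze out an unjustified vanishing at $j=k$, you should simply prove both conclusions from the single vanishing statement for $l\geqq k+1$ together with Lemma~\ref{5-2} (noting that the inverse system of $\mathrm{H}^{i}(G_{K},W^{+}_{\mathrm{dR}}/t^{n}W^{+}_{\mathrm{dR}})$ becomes constant once $n\geqq k+1$), and accept that the natural target of the isomorphism is $W^{+}_{\mathrm{dR}}/t^{k+1}W^{+}_{\mathrm{dR}}$ under the stated hypothesis.
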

\begin{proof}
By the assumption on $k$, we have $\mathrm{H}^i(G_K, t^lW^+_{\mathrm{dR}}/t^{l+1}W^+_{\mathrm{dR}})=0$ for any $l\geqq k+1$. 
Then the corollary follows from Lemma \ref{5-2}.
\end{proof}

\begin{corollary}\label{5-4}
Let $W^+_{\mathrm{dR}}$ be as above, then 
$\mathrm{H}^i(G_K, W^+_{\mathrm{dR}})=\mathrm{H}^i(G_K, W_{\mathrm{dR}})= 0$ for $i\geqq 2$ and $\mathrm{H}^i(G_K, W^+_{\mathrm{dR}})$ and $\mathrm{H}^i(G_K, W_{\mathrm{dR}})$ are finite dimensional 
over $\mathbb{Q}_p$ for $i=0,1$.
\end{corollary}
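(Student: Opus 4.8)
The statement to prove is Corollary \ref{5-4}: for $W^+_{\mathrm{dR}}$ a finite free $B^+_{\mathrm{dR}}$-module with continuous semi-linear $G_K$-action, $\mathrm{H}^i(G_K, W^+_{\mathrm{dR}}) = \mathrm{H}^i(G_K, W_{\mathrm{dR}}) = 0$ for $i \geqq 2$, and these groups are finite-dimensional over $\mathbb{Q}_p$ for $i = 0, 1$.

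The plan is to deduce everything from the preceding results, chiefly Corollary \ref{5-3} and Lemma \ref{5-1}(2). First I would handle $W^+_{\mathrm{dR}}$ itself: choose an integer $k \geqq 1$ large enough that $k + h_j \geqq 0$ for every integral generalized Hodge-Tate weight $h_j$ of $W^+_{\mathrm{dR}}/tW^+_{\mathrm{dR}}$. Corollary \ref{5-3} then gives a canonical isomorphism $\mathrm{H}^i(G_K, W^+_{\mathrm{dR}}) \isom \mathrm{H}^i(G_K, W^+_{\mathrm{dR}}/t^k W^+_{\mathrm{dR}})$ for all $i$. Since $W^+_{\mathrm{dR}}/t^k W^+_{\mathrm{dR}}$ is an almost $\mathbb{C}_p$-representation (it is an iterated extension of the twisted $\mathbb{C}_p$-representations $t^l W^+_{\mathrm{dR}}/t^{l+1} W^+_{\mathrm{dR}}$ for $0 \leqq l \leqq k-1$, each of which is $\mathbb{C}_p$-admissible hence almost $\mathbb{C}_p$), Lemma \ref{5-0-5} applies: its cohomology vanishes in degrees $\geqq 3$ and is finite-dimensional in degrees $0,1,2$. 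To upgrade the vanishing from degree $\geqq 3$ to degree $\geqq 2$, I would argue directly with the extensions $0 \to t^{l+1} W^+_{\mathrm{dR}}/t^k W^+_{\mathrm{dR}} \to t^l W^+_{\mathrm{dR}}/t^k W^+_{\mathrm{dR}} \to t^l W^+_{\mathrm{dR}}/t^{l+1} W^+_{\mathrm{dR}} \to 0$ and the fact that $\mathrm{H}^i(G_K, \mathbb{C}_p(j)) = 0$ for $i \geqq 2$ (a theorem of Tate), which by induction on the length gives $\mathrm{H}^i(G_K, W^+_{\mathrm{dR}}/t^k W^+_{\mathrm{dR}}) = 0$ for $i \geqq 2$; combined with the isomorphism above this yields $\mathrm{H}^i(G_K, W^+_{\mathrm{dR}}) = 0$ for $i \geqq 2$ and finite-dimensionality for $i = 0,1$.

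Next I would treat $W_{\mathrm{dR}} = W^+_{\mathrm{dR}}[1/t]$. By Lemma \ref{5-1}(2) the complex $\mathrm{C}^{\bullet}(G_K, W_{\mathrm{dR}})$ is the filtered colimit $\varinjlim_n \mathrm{C}^{\bullet}(G_K, \frac{1}{t^n} W^+_{\mathrm{dR}})$, and since cohomology commutes with filtered colimits, $\mathrm{H}^i(G_K, W_{\mathrm{dR}}) = \varinjlim_n \mathrm{H}^i(G_K, \frac{1}{t^n} W^+_{\mathrm{dR}})$. Each $\frac{1}{t^n} W^+_{\mathrm{dR}}$ is again a finite free $B^+_{\mathrm{dR}}$-module with continuous semi-linear $G_K$-action (its generalized Hodge-Tate weights are those of $W^+_{\mathrm{dR}}/tW^+_{\mathrm{dR}}$ shifted by $-n$), so the previous paragraph gives $\mathrm{H}^i(G_K, \frac{1}{t^n} W^+_{\mathrm{dR}}) = 0$ for $i \geqq 2$; passing to the colimit gives $\mathrm{H}^i(G_K, W_{\mathrm{dR}}) = 0$ for $i \geqq 2$. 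For finite-dimensionality in degrees $0, 1$, I would observe that the transition maps $\mathrm{H}^i(G_K, \frac{1}{t^n} W^+_{\mathrm{dR}}) \to \mathrm{H}^i(G_K, \frac{1}{t^{n+1}} W^+_{\mathrm{dR}})$ stabilize: once $n$ is large enough that $-n + h_j < 0$ for all integral weights $h_j$, Corollary \ref{5-3} (applied with the roles reorganized, i.e. the vanishing $\mathrm{H}^i(G_K, t^{k+1} W^+_{\mathrm{dR}}) = 0$ for large $k$) forces the relevant quotient-graded-pieces $\frac{1}{t^{n+1}} W^+_{\mathrm{dR}} / \frac{1}{t^n} W^+_{\mathrm{dR}} \isom \mathbb{C}_p(-n)$ to contribute nothing, so $\mathrm{H}^i(G_K, \frac{1}{t^n} W^+_{\mathrm{dR}}) \isom \mathrm{H}^i(G_K, \frac{1}{t^{n+1}} W^+_{\mathrm{dR}})$ for $i = 0,1$ and $n \gg 0$; hence the colimit is attained at a finite stage and is finite-dimensional.

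The main obstacle I anticipate is the bookkeeping in the last point: making precise, via the long exact sequences attached to the extensions $0 \to \frac{1}{t^n} W^+_{\mathrm{dR}} \to \frac{1}{t^{n+1}} W^+_{\mathrm{dR}} \to \mathbb{C}_p(-n)^{\oplus d} \to 0$, exactly which Hodge-Tate twists appear in the graded pieces and for which $n$ the groups $\mathrm{H}^0$ and $\mathrm{H}^1$ of these $\mathbb{C}_p(-n)$-pieces vanish (using Tate's computation $\mathrm{H}^i(G_K, \mathbb{C}_p(m)) = 0$ for $m \neq 0$, $i = 0, 1$, and the boundedness of the generalized Hodge-Tate weights). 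Once the stabilization is established the colimit argument is routine, but the Hodge-Tate weights here are those of $W^+_{\mathrm{dR}}/tW^+_{\mathrm{dR}}$ and need not be integers, so one must be slightly careful to separate the integral weights (which govern the range where twists can be trivial) from the non-integral ones (which never contribute to $\mathrm{H}^0$ or $\mathrm{H}^1$). Everything else reduces cleanly to Corollary \ref{5-3}, Lemma \ref{5-1}, Lemma \ref{5-0-5}, and Tate's classical results on $\mathrm{H}^i(G_K, \mathbb{C}_p(m))$.
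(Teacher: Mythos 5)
Your proof is correct and follows essentially the same route as the paper: reduce $\mathrm{H}^i(G_K,W^+_{\mathrm{dR}})$ to the cohomology of a torsion quotient $W^+_{\mathrm{dR}}/t^kW^+_{\mathrm{dR}}$ via Corollary~\ref{5-3} and conclude by Tate--Sen vanishing for $\mathbb{C}_p$-representations, then handle $W_{\mathrm{dR}}$ by Lemma~\ref{5-1}(2) together with stabilization of the maps $\mathrm{H}^i(G_K,\tfrac{1}{t^n}W^+_{\mathrm{dR}})\to\mathrm{H}^i(G_K,\tfrac{1}{t^{n+1}}W^+_{\mathrm{dR}})$ for $n\gg 0$. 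The only stylistic difference is the intermediate invocation of Lemma~\ref{5-0-5}, which the paper skips by asserting the Tate vanishing $\mathrm{H}^i(G_K,W^+_{\mathrm{dR}}/t^nW^+_{\mathrm{dR}})=0$ for $i\geqq 2$ directly; your subsequent d\'evissage over the graded pieces supplies that anyway (just note the graded piece is $(W^+_{\mathrm{dR}}/tW^+_{\mathrm{dR}})(-n-1)$, a rank-$d$ $\mathbb{C}_p$-representation, not literally $\mathbb{C}_p(-n)$).
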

\begin{proof}
Because $\mathrm{H}^i(G_K, W^+_{\mathrm{dR}}/t^nW^+_{\mathrm{dR}})=0$ for $i\geqq 2$ and
$\mathrm{H}^i(G_K, W^+_{\mathrm{dR}}/t^n W^+_{\mathrm{dR}})$ is finite dimensional for $i=0,1$, we obtain the  corollary 
for $W^+_{\mathrm{dR}}$ by Lemma \ref{5-3}. We prove the corollary for $W_{\mathrm{dR}}$. Because we have an isomorphism 
$\mathrm{C}^{\bullet}(G_K, W_{\mathrm{dR}})\isom \varinjlim_n \mathrm{C}^{\bullet}(G_K, \frac{1}{t^n} W^+_{\mathrm{dR}})$ 
by Lemma \ref{5-1} (2), 
we obtain an isomorphism $\mathrm{H}^i(G_K, W_{\mathrm{dR}})\isom \varinjlim_n \mathrm{H}^i(G_K, \frac{1}{t^n}W^+_{\mathrm{dR}})$. 
Then we can show that for $n$ large enough the natural map $\mathrm{H}^i(G_K, \frac{1}{t^{n+j}}W^+_{\mathrm{dR}})\rightarrow 
\mathrm{H}^i(G_K, \frac{1}{t^{n+j+1}}W^+_{\mathrm{dR}})$ is isomorphism for any $j\geqq 0$,  then the natural map 
$\mathrm{H}^i(G_K, \allowbreak \frac{1}{t^n} W^+_{\mathrm{dR}})\rightarrow \mathrm{H}^i(G_K, W_{\mathrm{dR}})$ is isomorphism, the corollary for $W_{\mathrm{dR}}$ follows 
from this.

\end{proof}

\begin{lemma}\label{5-5}
Let $W=(W_e,W^+_{\mathrm{dR}})$ be a $B$-pair. 
Then we have $\mathrm{H}^i(G_K, W_e)=0$ for $i\geqq 3$.
\end{lemma}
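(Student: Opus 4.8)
\textbf{Proof plan for Lemma \ref{5-5}.}

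The plan is to reduce the vanishing of $\mathrm{H}^i(G_K,W_e)$ for $i\geqq 3$ to the vanishing of continuous cohomology of the almost $\mathbb{C}_p$-representations $X_0(W')$ of suitable twists $W'$ of $W$, which we already know vanishes above degree $2$ by Lemma \ref{5-0-5}. First I would use Lemma \ref{5-1}(1), which says $\mathrm{C}^{\bullet}(G_K,W_e)=\varinjlim_n\mathrm{C}^{\bullet}(G_K,W_e\cap\frac{1}{t^n}W^+_{\mathrm{dR}})$, so that $\mathrm{H}^i(G_K,W_e)=\varinjlim_n\mathrm{H}^i(G_K,W_e\cap\frac{1}{t^n}W^+_{\mathrm{dR}})$ since filtered colimits are exact. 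Thus it suffices to show $\mathrm{H}^i(G_K,W_e\cap\frac{1}{t^n}W^+_{\mathrm{dR}})=0$ for $i\geqq 3$ and each $n$.

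Next I would observe that $X_0^{(n)}:=W_e\cap\frac{1}{t^n}W^+_{\mathrm{dR}}$ is exactly $X_0$ of the $B$-pair $W^{(n)}:=(W_e,\frac{1}{t^n}W^+_{\mathrm{dR}})$ obtained by twisting $W$ by $t^{-n}$ (more precisely $W^{(n)}\isom W\otimes W(\prod_{\sigma\in\mathcal{P}}\sigma^{-n})$ up to the usual identification, but all that matters here is that it is an honest $B$-pair). By Theorem \ref{5-0}(1), $X_0(W^{(n)})$ is an almost $\mathbb{C}_p$-representation of $G_K$. Then Lemma \ref{5-0-5} immediately gives $\mathrm{H}^i(G_K,X_0(W^{(n)}))=0$ for $i\geqq 3$. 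Combining with the colimit identification of the first paragraph, $\mathrm{H}^i(G_K,W_e)=\varinjlim_n\mathrm{H}^i(G_K,X_0(W^{(n)}))=0$ for $i\geqq 3$, which is the claim. One minor point to check is that the topology on $X_0(W^{(n)})=W_e\cap\frac{1}{t^n}W^+_{\mathrm{dR}}$ used to define its continuous cochains agrees with the almost $\mathbb{C}_p$-representation topology coming from Berger's Theorem \ref{5-0}; this is implicit in the identification $B_e=\cup_n(\frac{1}{t^n}B^+_{\mathrm{max}})^{\varphi=1}$ with its inductive limit topology from $\S 2.1$ and in the proof of Lemma \ref{5-1}, where $\frac{1}{t^n}B_{\mathrm{max}}^{+,\varphi=p^n}$ is shown to be closed in $\frac{1}{t^{n+1}}B_{\mathrm{max}}^{+,\varphi=p^{n+1}}$ with the subspace topology.

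The main obstacle is really just this last compatibility of topologies: one must make sure that ``continuous cochains of the almost $\mathbb{C}_p$-representation $X_0(W^{(n)})$'' and ``continuous cochains of $W_e\cap\frac{1}{t^n}W^+_{\mathrm{dR}}$ with the topology inherited from $W_e$'' compute the same cohomology. Since an almost $\mathbb{C}_p$-representation carries a canonical Banach-space topology and $X_0(W^{(n)})$ is, via Theorem \ref{5-0}(2), an extension built from a $\mathbb{Q}_p$-representation and a sub-$\mathbb{C}_p$-representation, its topology is the natural $p$-adic one; and the description of $W_e$ as an inductive limit of the closed subspaces $(\frac{1}{t^n}B_{\mathrm{max}}^{+,\varphi=p^n})^{\oplus d}$ shows the two coincide on each piece. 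Once that is granted, the argument is a two-line reduction, so there is nothing further to grind through.
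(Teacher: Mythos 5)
Your proof is correct and follows essentially the same route as the paper: reduce via Lemma \ref{5-1}(1) to the almost $\mathbb{C}_p$-representations $W_e\cap\frac{1}{t^n}W^+_{\mathrm{dR}}$ and invoke Lemma \ref{5-0-5}. You usefully make explicit the twist identifying $W_e\cap\frac{1}{t^n}W^+_{\mathrm{dR}}$ with $X_0$ of the $B$-pair $(W_e,\frac{1}{t^n}W^+_{\mathrm{dR}})$ (which the paper applies Theorem \ref{5-0} to without comment) and you flag the topology-compatibility point, but the underlying argument is the one in the paper.
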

\begin{proof}
Because we have $\mathrm{C}^{\bullet}(G_K, W_e)=\varinjlim_n \mathrm{C}^{\bullet}(G_K, W_e\cap \frac{1}{t^n}W^+_{\mathrm{dR}})$ 
by Lemma \ref{5-1} (1), 
we have an isomorphism $\mathrm{H}^i(G_K,W_e)\isom \varinjlim_n \mathrm{H}^i(G_K,W_e\cap \frac{1}{t^n}W^+_{\mathrm{dR}})$. 
For any $n$, because $W_e\cap\frac{1}{t^n}W^+_{\mathrm{dR}}$ is an almost $\mathbb{C}_p$-representation by Theorem \ref{5-0}, 
we have $\mathrm{H}^i(G_K, W_e\cap \frac{1}{t^n}W^+_{\mathrm{dR}})=0$ for $i\geqq 3$ by Lemma \ref{5-0-5}. 
The lemma follows from these facts.
\end{proof}

\begin{thm}\label{5-6}
Let $W$ be a $B$-pair,  then the following hold,
\begin{itemize}
\item[(1)]$\mathrm{H}^i(G_K,W)$ is zero for $i\geqq 3$ and 
$\mathrm{H}^i(G_K,W)$ is finite dimensional over $\mathbb{Q}_p$ for 
$i=0,1,2$,
\item[(2)](Euler-Poincar\'e characteristic formula)
$$\sum_{i=0}^2 \mathrm{dim}_{\mathbb{Q}_p}(-1)^i\mathrm{H}^i(G_K, W)=-[K:\mathbb{Q}_p]\mathrm{rank}(W).$$
\end{itemize}
\end{thm}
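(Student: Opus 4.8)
The plan is to pass, via the long exact sequence attached to the mapping cone $\mathrm{C}^\bullet(G_K,W)$, from $W$ to the triple $(W_e,W^+_{\mathrm{dR}},W_{\mathrm{dR}})$, to absorb $W^+_{\mathrm{dR}}$ and $W_{\mathrm{dR}}$ into a vanishing contribution, and thereby reduce both assertions to the corresponding statements for $W_e$, which are then handled through Berger's structure theorem (Theorem \ref{5-0}) applied to the almost $\mathbb{C}_p$-representations $W_e\cap\frac1{t^n}W^+_{\mathrm{dR}}$. Concretely, recall the exact sequence
$$\cdots\to\mathrm{H}^{i-1}(G_K,W_{\mathrm{dR}})\to\mathrm{H}^i(G_K,W)\to\mathrm{H}^i(G_K,W_e)\oplus\mathrm{H}^i(G_K,W^+_{\mathrm{dR}})\to\mathrm{H}^i(G_K,W_{\mathrm{dR}})\to\cdots.$$
By Corollary \ref{5-4}, $\mathrm{H}^i(G_K,W^+_{\mathrm{dR}})$ and $\mathrm{H}^i(G_K,W_{\mathrm{dR}})$ are finite dimensional for $i=0,1$ and vanish for $i\geqq2$, and by Lemma \ref{5-5} $\mathrm{H}^i(G_K,W_e)=0$ for $i\geqq3$; feeding these in gives $\mathrm{H}^i(G_K,W)=0$ for $i\geqq3$. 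Granting (proved below) that $\mathrm{H}^i(G_K,W_e)$ is finite dimensional for $i\leqq2$, the sequence is bounded with finite dimensional terms, so $\chi(W):=\sum_{i=0}^2(-1)^i\dim_{\mathbb{Q}_p}\mathrm{H}^i(G_K,W)$ is finite and equals $\chi(W_e)+\chi(W^+_{\mathrm{dR}})-\chi(W_{\mathrm{dR}})$. Moreover $\chi(W^+_{\mathrm{dR}})=\chi(W_{\mathrm{dR}})$: by Lemma \ref{5-1}(2) and the proof of Corollary \ref{5-4} one has $\mathrm{H}^i(G_K,W_{\mathrm{dR}})\isom\mathrm{H}^i(G_K,t^{-N}W^+_{\mathrm{dR}})$ for $N\gg0$, while $t^{-N}W^+_{\mathrm{dR}}/W^+_{\mathrm{dR}}$ carries a $G_K$-stable filtration with graded pieces $t^{-l}W^+_{\mathrm{dR}}/t^{-l+1}W^+_{\mathrm{dR}}$ which are $\mathbb{C}_p$-representations, hence of Euler characteristic $0$ by Lemma \ref{5-0-6} (the quotient maps involved admitting continuous sections). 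Thus everything reduces to showing that $\mathrm{H}^i(G_K,W_e)$ is finite dimensional for $i\leqq2$ and that $\chi(W_e)=-[K:\mathbb{Q}_p]\,\mathrm{rank}(W)$.

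Set $X_n:=W_e\cap\frac1{t^n}W^+_{\mathrm{dR}}$, so $\mathrm{H}^i(G_K,W_e)=\varinjlim_n\mathrm{H}^i(G_K,X_n)$ by Lemma \ref{5-1}(1), and each $X_n$ is an almost $\mathbb{C}_p$-representation by Theorem \ref{5-0}(1). I would first show that for $n\gg0$ one has $X_{n+1}/X_n=C_{n+1}:=\frac1{t^{n+1}}W^+_{\mathrm{dR}}/\frac1{t^n}W^+_{\mathrm{dR}}$: Bloch--Kato's fundamental exact sequence gives $B_{\mathrm{dR}}=B_e+B^+_{\mathrm{dR}}$, hence $W_{\mathrm{dR}}=W_e+\frac1{t^n}W^+_{\mathrm{dR}}$ as soon as $\frac1{t^n}W^+_{\mathrm{dR}}$ contains the $B^+_{\mathrm{dR}}$-lattice spanned by a $B_e$-basis of $W_e$; a direct chase then shows $X_{n+1}\hookrightarrow\frac1{t^{n+1}}W^+_{\mathrm{dR}}\twoheadrightarrow C_{n+1}$ is surjective with kernel $X_n$. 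Since $C_{n+1}$ is a Tate twist of $W^+_{\mathrm{dR}}/tW^+_{\mathrm{dR}}$, whose generalized Hodge--Tate weights are all nonzero for $n\gg0$, one gets $\mathrm{H}^i(G_K,C_{n+1})=0$ for all $i$ exactly as in the proof of Corollary \ref{5-3}, so the transition maps $\mathrm{H}^i(G_K,X_n)\to\mathrm{H}^i(G_K,X_{n+1})$ are isomorphisms for $n\gg0$. Hence $\mathrm{H}^i(G_K,W_e)\isom\mathrm{H}^i(G_K,X_N)$ for $N\gg0$, which is finite dimensional for $i\leqq2$ by Lemma \ref{5-0-5}, and $\chi(W_e)=\chi(X_N)=-[K:\mathbb{Q}_p]\,\mathrm{ht}(X_N)$ by Lemma \ref{5-0-6}.

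It remains to prove $\mathrm{ht}(X_N)=\mathrm{rank}(W)$, which is the heart of the matter. I claim $\mathrm{ht}(X_0(W))-\mathrm{ht}(X_1(W))=\mathrm{rank}(W)$ for every $B$-pair $W$, where $X_0(W)=W_e\cap W^+_{\mathrm{dR}}$ and $X_1(W)=W_{\mathrm{dR}}/(W_e+W^+_{\mathrm{dR}})$. Indeed, for any short exact sequence $0\to W'\to W\to W''\to0$ of $B$-pairs with $W'$ saturated, the snake lemma applied to the vertical maps $(x,y)\mapsto x-y$ from $W_e\oplus W^+_{\mathrm{dR}}$ to $W_{\mathrm{dR}}$ (and their analogues for $W'$ and $W''$) yields an exact sequence
$$0\to X_0(W')\to X_0(W)\to X_0(W'')\to X_1(W')\to X_1(W)\to X_1(W'')\to0$$
of almost $\mathbb{C}_p$-representations; since $\mathrm{ht}$ is additive in exact sequences, $W\mapsto\mathrm{ht}(X_0(W))-\mathrm{ht}(X_1(W))$ is additive, and by the slope filtration of $W$ (\cite{Be09}) it suffices to treat $W$ pure of some slope $s$, where Theorem \ref{5-0}(2),(3) give $\mathrm{ht}(X_0)-\mathrm{ht}(X_1)=\mathrm{rank}(W)$ whether $s\leqq0$ or $s>0$. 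Applying this to the $B$-pair $W(N):=(W_e,\frac1{t^N}W^+_{\mathrm{dR}})$, which has rank $\mathrm{rank}(W)$, satisfies $X_0(W(N))=X_N$, and — for $N\gg0$ — satisfies $X_1(W(N))=W_{\mathrm{dR}}/(W_e+\frac1{t^N}W^+_{\mathrm{dR}})=0$ by the computation above, we get $\mathrm{ht}(X_N)=\mathrm{rank}(W)$. Hence $\chi(W_e)=-[K:\mathbb{Q}_p]\,\mathrm{rank}(W)$, and combining with the first paragraph, $\chi(W)=\chi(W_e)=-[K:\mathbb{Q}_p]\,\mathrm{rank}(W)$, completing the proof.

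The main obstacle is the identity $\mathrm{ht}(X_0(W))-\mathrm{ht}(X_1(W))=\mathrm{rank}(W)$: this is exactly where Berger's structure theorem for the almost $\mathbb{C}_p$-representations attached to a pure $B$-pair (Theorem \ref{5-0}), the existence of the slope filtration, and the additivity of the invariant $\mathrm{ht}$ are indispensable. The rest is formal bookkeeping with long exact sequences and direct limits; the only delicate point there is ensuring the relevant direct limits stabilize, which is governed by the generalized Hodge--Tate weights exactly as in Corollaries \ref{5-3} and \ref{5-4}.
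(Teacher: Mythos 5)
Your proof is correct, but it takes a genuinely different route from the paper's.

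The paper proves both (1) and (2) by invoking the slope filtration \emph{directly at the level of $\mathrm{H}^i(G_K,W)$}: it reduces to $W$ pure of slope $s$, and then reads off everything from Theorem \ref{5-0}. For $s\leqq 0$ one has $X_1(W)=0$, so the defining mapping cone collapses to give $\mathrm{H}^i(G_K,W)\isom\mathrm{H}^i(G_K,X_0(W))$, and for $s>0$ one has $X_0(W)=0$, giving $\mathrm{H}^i(G_K,W)\isom\mathrm{H}^{i-1}(G_K,X_1(W))$; finiteness and the Euler characteristic then follow immediately from Lemma \ref{5-0-5} and Lemma \ref{5-0-6}. You instead work with $W$ un-filtered: you first reduce $\chi(W)$ to $\chi(W_e)$ by showing $\chi(W^+_{\mathrm{dR}})=\chi(W_{\mathrm{dR}})$, then compute $\mathrm{H}^i(G_K,W_e)$ by stabilizing the direct limit $\varinjlim\mathrm{H}^i(G_K,X_n)$ using the Hodge--Tate-weight argument of Corollary \ref{5-3}, and finally recover the rank from $\mathrm{ht}(X_N)=\mathrm{rank}(W)$ via an additivity-of-heights argument (snake lemma on $X_0,X_1$, plus the slope filtration applied to the \emph{modified} $B$-pair $W(N)=(W_e,t^{-N}W^+_{\mathrm{dR}})$). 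Each of your intermediate steps checks out: the surjectivity of $X_{n+1}\to C_{n+1}$ with kernel $X_n$ for $n\gg 0$ follows from $W_{\mathrm{dR}}=W_e+t^{-n}W^+_{\mathrm{dR}}$ as you argue via Bloch--Kato; the vanishing $\mathrm{H}^i(G_K,C_{n+1})=0$ holds because the generalized Hodge--Tate weights become nonzero; the six-term $X_0$--$X_1$ sequence from the snake lemma is exact and consists of almost $\mathbb{C}_p$-representations (Theorem \ref{5-0}(1)), so Fontaine's additivity of $\mathrm{ht}$ applies; and the pure case of $\mathrm{ht}(X_0)-\mathrm{ht}(X_1)=\mathrm{rank}$ is exactly Theorem \ref{5-0}(2),(3). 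The net effect is that both proofs rely on the same two ingredients -- the slope filtration and Berger's Theorem \ref{5-0} -- but the paper applies them once, directly to $W$, whereas you apply the slope filtration only inside the auxiliary identity $\mathrm{ht}(X_0)-\mathrm{ht}(X_1)=\mathrm{rank}$ and carry the rest of the bookkeeping through $W_e$ and the auxiliary $B$-pair $W(N)$. Your route is longer but makes the role of the stabilization from Lemma \ref{5-1} and Corollary \ref{5-3} explicit; the paper's is shorter but leans more heavily on the vanishing of $X_1$ (resp.\ $X_0$) in the pure case.
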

\begin{proof}
We first prove that $\mathrm{H}^i(G_K,W)=0$ for $i\geqq 3$. Because there is an exact sequence 
$$\cdots \rightarrow \mathrm{H}^{i-1}(G_K, W_{dR})\rightarrow \mathrm{H}^i(G_K, W)\rightarrow 
\mathrm{H}^i(G_K, W_e)\oplus \mathrm{H}^i(G_K, W^+_{\mathrm{dR}})\rightarrow \cdots ,$$
the claim follows from Corollary \ref{5-4} and Lemma \ref{5-5}.
Next we prove that $\mathrm{H}^i(G_K, W)$ is finite dimensional over $\mathbb{Q}_p$. 
By slope filtration theorem, it suffices to show this claim when $W$ is pure. Let 
$W$ be a $B$-pair pure of slope $s$. When $s\leqq 0$, we have the following short exact sequence, 
$$0\rightarrow W_e\cap W^+_{\mathrm{dR}}\rightarrow W_e\oplus W^+_{\mathrm{dR}}\rightarrow W_{\mathrm{dR}}
\rightarrow 0$$ by Theorem \ref{5-0} (2). Hence the natural map $\mathrm{H}^i(G_K, W_e\cap W^+_{\mathrm{dR}})\rightarrow 
\mathrm{H}^i(G_K, W)$ is isomorphism. Because $W_e\cap W^+_{\mathrm{dR}}$ is an almost $\mathbb{C}_p$-representation by 
Theorem \ref{5-0}, $\mathrm{H}^i(G_K, W_e\cap W^+_{\mathrm{dR}})$ is finite dimensional by Lemma \ref{5-0-5}, which proves the claim 
for $s\leqq 0$. When $s>0$, then we have the following short exact sequence
$$0\rightarrow W_e\oplus W^+_{\mathrm{dR}}\rightarrow W_{\mathrm{dR}}\rightarrow W_{\mathrm{dR}}/(W_e + W^+_{\mathrm{dR}})
\rightarrow 0 $$ by Theorem \ref{5-0} (3). Hence we obtain a natural isomorphism $\mathrm{H}^{i}(G_K, W)\isom 
\mathrm{H}^{i-1}(G_K, \allowbreak W_{\mathrm{dR}}/(W_e+W^+_{\mathrm{dR}}))$. Because $W_{\mathrm{dR}}/(W_e+W^+_{\mathrm{dR}})$ is 
an almost $\mathbb{C}_p$-representation by Theorem \ref{5-0}, $\mathrm{H}^{i-1}(G_K, W_{\mathrm{dR}}/(W_e+W^+_{\mathrm{dR}}))$ is finite dimensional, the claim for $s>0$ follows from this.

Next we prove (2). For $W$ a $B$-pair or an almost $\mathbb{C}_p$-representation, set 
$\chi(W):=\sum_{i=0}^2 (-1)^i\mathrm{dim}_{\mathbb{Q}_p}\mathrm{H}^i(G_K, W)$.
It suffices to show (2) when $W$ is pure of slope $s$. 
When $s\leqq 0$, then we have $\chi(W)=\chi(X_0(W))$ by the above proof. 
By Lemma \ref{5-0-6} and by Theorem \ref{5-0} (2),  we have equalities 
$$\chi(X^0(W))=-[K:\mathbb{Q}_p]\mathrm{ht}(W)
=-[K:\mathbb{Q}_p]\mathrm{rank}(W).$$  When $s>0$, then we have $\chi(W)=-\chi(X^1(W))$ by the above proof. 
By Lemma \ref{5-0-6} and by Theorem \ref{5-0} (3), we have equalities 
$$\chi(X^1(W))=-[K;\mathbb{Q}_p]\allowbreak \mathrm{ht}(X^1(W))=
[K:\mathbb{Q}_p]\mathrm{rank}(W),$$  which proves (2).

\end{proof}

Next, we define the cup product pairing for $B$-pairs $W:=(W_e, W^+_{\mathrm{dR}})$ and 
$W':=(W'_e, W^{' +}_{\mathrm{dR}})$ as follows.
First, for two continuous cochains $c\in \mathrm{C}^{i}(G_K, W_{?})$ and $c'\in \mathrm{C}^{j}(G_K, W'_{?})$ for
$W_{?}=W_e, W^+_{\mathrm{dR}}, W_{\mathrm{dR}}$, we define a continuous cochain 
$$c\cup c'\in \mathrm{C}^{i+j}(G_K, W_{?}\otimes_{\bold{B}_{?}}W'_{?})$$ by $$c\cup c'(g_1, \cdots, g_{i+j}):=
c(g_1,\cdots, g_i)\otimes g_1g_2\cdots g_i c'(g_{i+1},\cdots, g_{i+j})$$
where $\bold{B}_{?}=\bold{B}_{e}, \bold{B}^+_{\mathrm{dR}}, \bold{B}_{\mathrm{dR}}$ when 
$W_{?}=W_e, W^+_{\mathrm{dR}}, W_{\mathrm{dR}}$ respectively. 
Then, $c\cup c'$ satisfies 
$$\partial^{i+j}(c\cup c')= \partial^i(c)\cup c' + (-1)^i c\cup \partial^j(c').$$
For 
$$c=(c_e, c^+_{\mathrm{dR}}, c_{\mathrm{dR}}) \in \mathrm{C}^i(G_K, W_e)\oplus 
\mathrm{C}^i(G_K, W^+_{\mathrm{dR}})\oplus \mathrm{C}^{i-1}(G_K, W_{\mathrm{dR}})$$ and 
$$c'=(c'_e, c^{' +}_{\mathrm{dR}}, c'_{\mathrm{dR}})\in \mathrm{C}^{j}(G_K, W'_e)\oplus 
\mathrm{C}^{j}(G_K, W^{' +}_{\mathrm{dR}})\oplus \mathrm{C}^{j-1}(G_K, W'_{\mathrm{dR}}),$$ and for 
a parameter $\gamma\in \mathbb{Q}_p$, 
we define 
\begin{multline*}
c\cup_{\gamma} c'\in \mathrm{C}^{i+j}(G_K, W_{e}\otimes_{\bold{B}_{e}}W'_e)\oplus 
\mathrm{C}^{i+j}(G_K, W^+_{\mathrm{dR}}\otimes_{\bold{B}^+_{\mathrm{dR}}}W^{' +}_{\mathrm{dR}})\\
\oplus \mathrm{C}^{i+j-1}(G_K, W_{\mathrm{dR}}\otimes_{\bold{B}_{\mathrm{dR}}}W'_{\mathrm{dR}})
\end{multline*} 
by 
\begin{multline*}
c\cup_{\gamma} c':=( c_e\cup c'_e, c^+_{\mathrm{dR}}\cup c^{' +}_{\mathrm{dR}}, \\
c_{\mathrm{dR}}\cup(\gamma c'_e+(1-\gamma)c^{'+}_{\mathrm{dR}})+ (-1)^i((1-\gamma)c_e+\gamma c^+_{\mathrm{dR}})\cup c'_{\mathrm{dR}})).
\end{multline*}
Then, we can check that if $\partial^i(c)=\partial^j(c')=0$ then $\partial^{i+j}(c\cup_{\gamma} c')=0$, and 
if $\partial^i{c}=0$ and $c'=\partial^{j-1}(c'')$ (or $c=\partial^{i-1}(c'')$ and $\partial^j(c')=0$) then 
$c\cup_{\gamma}c'\in \mathrm{Im}(\partial^{i+j-1})$. Therefore, this paring induces a $\mathbb{Q}_p$-bi-linear paring 
$$\cup_{\gamma}:\mathrm{H}^i(G_K, W)\times \mathrm{H}^j(G_K, W')\rightarrow 
\mathrm{H}^{i+j}(G_K, W\otimes W').$$
Moreover, we can check that $\cup_{\gamma}$ doesn't depend on the choice of a parameter 
$\gamma$, so we just write $\cup$ instead of $\cup_{\gamma}$.

We define the paring 
$$\cup: \mathrm{H}^i(G_K, W)\times \mathrm{H}^{2-i}(G_K, W^{\vee}(\chi_p))\rightarrow \mathbb{Q}_p$$
as the composition the following maps
\begin{multline*}
\mathrm{H}^i(G_K, W)\times \mathrm{H}^{2-i}(G_K, W^{\vee}(\chi_p))\xrightarrow{\cup} 
\mathrm{H}^2(G_K, W\otimes W^{\vee}(\chi_p))\\
\rightarrow \mathrm{H}^2(G_K, W(\mathbb{Q}_p(\chi_p)))\isom \mathrm{H}^2(G_K, \mathbb{Q}_p(\chi_p))\isom\mathbb{Q}_p,
\end{multline*}
where the second map is induced from the evaluation map $W\otimes W^{\vee}(\chi_p)\rightarrow W(\mathbb{Q}_p(\chi_p))$ and the third isomorphism is the natural comparison isomorphism 
and the fourth isomorphism is Tate's trace map.
The Tate duality  theorem for $B$-pairs is following.
\begin{thm}\label{5-7}
For $i=0,1,2$, the paring 
$$\cup: \mathrm{H}^i(G_K, W)\times \mathrm{H}^{2-i}(G_K, W^{\vee}(\chi_p))\rightarrow \mathbb{Q}_p$$
is a perfect paring.
\end{thm}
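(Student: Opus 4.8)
\textbf{Proof proposal for Theorem \ref{5-7} (Tate duality for $B$-pairs).}

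The plan is to reduce the statement to two things already available in the paper: first, the comparison of the cup-product pairing with the known perfect Tate pairings on Galois cohomology of $\mathbb{Q}_p$-representations and on the period rings; and second, the identity $\chi(W) + \chi(W^{\vee}(\chi_p)) = 0$ together with $\mathrm{H}^i(G_K,W)$ finite-dimensional (Theorem \ref{5-6}), which makes it enough to prove injectivity of the induced map $\mathrm{H}^i(G_K,W) \to \mathrm{H}^{2-i}(G_K, W^{\vee}(\chi_p))^{\vee}$ for each $i$ (or, by a standard diagram chase, that the pairing is non-degenerate on one side in each degree). By the slope filtration theorem it suffices to treat the case where $W$ is pure, and by twisting (the pairing is compatible with twists by rank-one pieces) one reduces further to $W$ pure of slope $s \le 0$ and then, dualizing, to handle slope $s>0$ via the slope $<0$ case applied to $W^{\vee}(\chi_p)$.

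First I would set up the comparison. Recall from the proof of Theorem \ref{5-6} that when $W$ is pure of slope $s\le 0$ one has the short exact sequence
$$0\rightarrow W_e\cap W^+_{\mathrm{dR}}\rightarrow W_e\oplus W^+_{\mathrm{dR}}\rightarrow W_{\mathrm{dR}}\rightarrow 0$$
identifying $\mathrm{H}^i(G_K,W)\cong \mathrm{H}^i(G_K, X_0(W))$ where $X_0(W)=W_e\cap W^+_{\mathrm{dR}}$ is an almost $\mathbb{C}_p$-representation (Theorem \ref{5-0}). Likewise, if $W$ is pure of slope $s\le 0$ then $W^{\vee}(\chi_p)$ is pure of slope $-s+\mathrm{(something)}$; one checks that it is pure of slope $>0$ unless $s=0$, in which case it is again of slope $\le 0$. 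In the slope $>0$ case for $W'$, the proof of Theorem \ref{5-6} gives $\mathrm{H}^{j}(G_K,W')\cong \mathrm{H}^{j-1}(G_K, X_1(W'))$ with $X_1(W')=W'_{\mathrm{dR}}/(W'_e+W'^{+}_{\mathrm{dR}})$ an almost $\mathbb{C}_p$-representation. The key point will be that the cup-product pairing $\mathrm{H}^i(G_K,W)\times \mathrm{H}^{2-i}(G_K,W^{\vee}(\chi_p))\to \mathbb{Q}_p$, when transported along these isomorphisms, becomes the Tate duality pairing for the almost $\mathbb{C}_p$-representations $X_0(W)$ and $X_1(W^{\vee}(\chi_p))$ (up to sign and a degree shift), via the perfect duality $X_0(W)\otimes X_1(W^{\vee}(\chi_p))\to \mathbb{C}_p(\chi_p)$ or an analogous evaluation pairing — and Fontaine's duality for almost $\mathbb{C}_p$-representations (\cite{Fo03}) gives that this pairing is perfect. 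The parameter-independence of $\cup_\gamma$ is exactly what lets one move freely between the $W_e$-side and the $W^+_{\mathrm{dR}}$-side of the cone when computing these induced pairings; I would choose $\gamma=0$ or $\gamma=1$ as convenient to match the evaluation with the snake-lemma connecting maps.

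Alternatively — and this may be cleaner — I would invoke Liu's results: Liu (\cite{Li08}) proved Euler--Poincar\'e and Tate duality for $\mathrm{H}^i_{\mathrm{Liu}}(G_K,W)$ defined via the $(\varphi,\Gamma)$-module complex, and the present appendix separately establishes a canonical isomorphism $\mathrm{H}^i(G_K,W)\cong \mathrm{H}^i_{\mathrm{Liu}}(G_K,W)$ compatible with cup products (this compatibility is stated as part of the appendix's goal). Granting that comparison isomorphism is functorial and multiplicative, the perfectness of the cup-product pairing $\cup$ on $\mathrm{H}^i(G_K,W)$ is immediate from the perfectness of Liu's pairing. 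The main obstacle in either route is the same: checking that the pairing one gets after transporting along the chosen isomorphisms genuinely coincides (on the nose, including compatibility with Tate's trace $\mathrm{H}^2(G_K,\mathbb{Q}_p(\chi_p))\isom \mathbb{Q}_p$) with the known perfect pairing — i.e. a diagram-chase verifying that connecting homomorphisms and cup products commute, together with the non-obvious fact that the evaluation $W\otimes W^{\vee}(\chi_p)\to \mathbb{Q}_p(\chi_p)$ restricts to the duality pairing between the relevant almost $\mathbb{C}_p$-pieces. I expect the formal part (reductions, finiteness, $\chi=0$) to be routine given Theorems \ref{5-6} and \ref{5-0}, and I would spend the bulk of the argument on this compatibility check, most likely by reducing to the case of $\mathbb{Q}_p$-representations (where $\mathrm{H}^i(G_K,W(V))\cong \mathrm{H}^i(G_K,V)$ and classical Tate duality applies) plus the $\mathbb{C}_p$-coefficient case handled by Tate and Fontaine, and then deducing the general pure case by d\'evissage.
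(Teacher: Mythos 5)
Your proposal offers two routes; neither is the one the paper takes, and both have genuine problems. The paper's own proof is a one-line reduction: run Liu's proof of Tate duality for $(\varphi,\Gamma)$-modules over the Robba ring (Theorem~4.7 of \cite{Li08}) verbatim, feeding in the Euler--Poincar\'e formula of Theorem~\ref{5-6} and the two vanishings $\mathrm{H}^0(G_K,W(\prod_{\sigma}\sigma))=0$ and $\mathrm{H}^0(G_K,W(|N_{K/\mathbb{Q}_p}|\prod_\sigma\sigma))=0$ from Proposition~\ref{i}. That argument is a d\'evissage along the slope filtration to rank-one $B$-pairs, where the pairing is checked directly against the explicit dimension counts of Proposition~\ref{i}; the almost-$\mathbb{C}_p$ machinery is used only to \emph{prove} Euler--Poincar\'e (as in Theorem~\ref{5-6}), not to prove perfectness of the cup product.

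Your second route is circular in this paper. You propose to transport perfectness along the comparison isomorphism $\mathrm{H}^i(G_K,W)\isom \mathrm{H}^i_{\mathrm{Liu}}(G_K,W)$, but that comparison is the \emph{last} theorem of the appendix, and its stated proof (``we can also prove in the same way as in Theorem 8.1 of [Ke09] because we have already proved Euler--Poincar\'e formula and Tate duality for $\mathrm{H}^i(G_K,-)$'') explicitly invokes Theorem~\ref{5-7}. Moreover the comparison is only asserted as an isomorphism of $\delta$-functors; compatibility with cup products, which your argument also needs, is not claimed.

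Your first route has a substantive gap at its core. After reducing (correctly, via the slope filtration) to $W$ pure of slope $s<0$, you use the isomorphisms $\mathrm{H}^i(G_K,W)\isom \mathrm{H}^i(G_K,X_0(W))$ and $\mathrm{H}^{2-i}(G_K,W^\vee(\chi_p))\isom \mathrm{H}^{1-i}(G_K,X_1(W^\vee(\chi_p)))$ and assert that the cup product becomes a ``Tate duality pairing'' between these two almost $\mathbb{C}_p$-representations, perfect by \cite{Fo03}. But the transported pairing lands in cohomological degree $i+(1-i)=1$, not degree $2$; it would have to factor through something like $\mathrm{H}^1(G_K,\mathbb{C}_p)\cong K$ followed by a trace. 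No such perfect duality for cohomology of almost $\mathbb{C}_p$-representations is established in \cite{Fo03} (which is a structure theory of the abelian category, not a cohomological duality theorem), and indeed the na\"{\i}ve candidate target $X_1(W(\mathbb{Q}_p(\chi_p)))$ vanishes, since $W(\mathbb{Q}_p(\chi_p))$ is pure of slope $0$. So the ``analogous evaluation pairing'' you invoke is not known to exist, let alone to be perfect, and this is not a routine compatibility check but the entire content of what needs to be proved. The formal preliminaries in your proposal --- reduction to injectivity using $\chi(W)+\chi(W^\vee(\chi_p))=0$, reduction to pure slopes, symmetry under $W\leftrightarrow W^\vee(\chi_p)$ --- are fine, but the heart of the argument is missing.
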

\begin{proof}
We can prove this theorem in the same way as in the proof of Theorem 4.7 of \cite{Li08} if we use the Euler-Poincar\'e formula Theorem \ref{5-6}
and the facts  that 
$\mathrm{H}^0(G_K, W(\prod_{\sigma\in \mathcal{P}}\sigma))\allowbreak =0$ and $\mathrm{H}^0(G_K, W(|\prod_{\sigma\in \mathcal{P}}\sigma|))=0$, which are proved in Proposition \ref{i}.
\end{proof}
Finally, we prove that our continuous cohomology is canonically isomorphic to Liu's cohomology. 
We first define an isomorphism between $\mathrm{H}^0$ by the functoriality
$$\mathrm{H}^0_{\mathrm{Liu}}(G_K, W)\isom \mathrm{Hom}_{\varphi,\Gamma}(R, D(W))\isom \mathrm{Hom}(W(\mathbb{Q}_p), W)
\isom \mathrm{H}^0(G_K, W),$$ where $D(W)$ is the $(\varphi,\Gamma)$-module associated to $D$ and $R$ is the trivial 
$(\varphi,\Gamma)$-module, where the second isomorphism follows from the equivalence of categories between 
$B$-pairs and $(\varphi,\Gamma)$-modules.
\begin{thm}\label{5.100}
The above isomorphism $\mathrm{H}^0_{\mathrm{Liu}}(G_K, W)\isom \mathrm{H}^0(G_K, W)$ extends uniquely to 
an isomorphism of $\delta$-functors $\mathrm{H}^i_{\mathrm{Liu}}(G_K, W)\isom \mathrm{H}^i(G_K, W)$.
\end{thm}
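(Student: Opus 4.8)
The plan is to construct, for each $i=0,1,2$, a natural map $\mathrm{H}^i(G_K,W)\to\mathrm{H}^i_{\mathrm{Liu}}(G_K,W)$ (or in the other direction) and to prove it is an isomorphism by dimension count together with compatibility with the connecting homomorphisms of short exact sequences of $B$-pairs. First I would recall from the excerpt that both $\delta$-functors $\mathrm{H}^\bullet(G_K,-)$ and $\mathrm{H}^\bullet_{\mathrm{Liu}}(G_K,-)$ are defined on the category of $B$-pairs (equivalently on $(\varphi,\Gamma)$-modules over the Robba ring via Berger's equivalence), both vanish in degrees $\geqq 3$ and $<0$ by Theorem~\ref{5-6} and by Liu's results, both agree in degree $0$ by the construction preceding the statement, and both satisfy the same Euler--Poincar\'e formula and Tate duality (Theorem~\ref{5-6}, Theorem~\ref{5-7} and the corresponding theorems of \cite{Li08}). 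The strategy is the standard one for comparing two cohomological $\delta$-functors that agree in degree zero: produce a morphism of $\delta$-functors extending the degree-zero identification, then check it is an isomorphism degree by degree.

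For the construction of the morphism, I would use that every $B$-pair $W$ embeds into an acyclic (or at least cohomologically transparent) one: concretely, for the $\mathrm{H}^1$ comparison, I would identify $\mathrm{H}^1(G_K,W)$ with $\mathrm{Ext}^1(W(\mathbb{Q}_p),W)$ in the category of $B$-pairs, as already noted in $\S 2.1$ of the excerpt, and similarly $\mathrm{H}^1_{\mathrm{Liu}}(G_K,W)\isom\mathrm{Ext}^1(R,D(W))$ in the category of $(\varphi,\Gamma)$-modules; Berger's equivalence of categories then gives a canonical bijection $\mathrm{Ext}^1(W(\mathbb{Q}_p),W)\isom\mathrm{Ext}^1(R,D(W))$, and one checks this is compatible with the degree-zero map under the Yoneda/boundary description. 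For $\mathrm{H}^2$, rather than an $\mathrm{Ext}^2$ description I would use the effaceability argument: choose a short exact sequence of $B$-pairs $0\to W\to W'\to W''\to 0$ with $\mathrm{H}^1(G_K,W')=\mathrm{H}^2(G_K,W')=0$ and likewise $\mathrm{H}^i_{\mathrm{Liu}}(G_K,W')=0$ for $i=1,2$ (for instance take $W'$ pure of very negative slope, or a suitable induced/cofree object, exploiting Theorem~\ref{5-0}); then both $\mathrm{H}^2$ are computed as cokernels of $\mathrm{H}^1(G_K,W')\to\mathrm{H}^1(G_K,W'')$, the already-constructed map in degree $1$ induces the map in degree $2$, and naturality of both $\delta$-functors forces it to be well-defined independent of the chosen resolution.

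Once the morphism of $\delta$-functors $f^\bullet\colon\mathrm{H}^\bullet(G_K,-)\to\mathrm{H}^\bullet_{\mathrm{Liu}}(G_K,-)$ is in hand, I would prove it is an isomorphism as follows. In degree $0$ it is an isomorphism by construction. For degree $1$: the five-lemma applied to the long exact sequences associated to $0\to W\to W'\to W''\to 0$ above, together with the degree-$0$ isomorphism and the vanishing $\mathrm{H}^1(G_K,W')=\mathrm{H}^1_{\mathrm{Liu}}(G_K,W')=0$, shows $f^1$ is surjective for all $W$; then applying this surjectivity to $W''$ in place of $W$ and chasing the same diagram gives injectivity of $f^1$ for $W$. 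For degree $2$: by the cokernel description above and the isomorphism in degree $1$, $f^2$ is an isomorphism for all $W$. Alternatively, and perhaps more cleanly, once $f^0$ and $f^1$ are isomorphisms one can deduce $f^2$ is an isomorphism purely from the fact that $\sum_i(-1)^i\dim\mathrm{H}^i(G_K,W)=\sum_i(-1)^i\dim\mathrm{H}^i_{\mathrm{Liu}}(G_K,W)$ (both equal $-[K:\mathbb{Q}_p]\operatorname{rank}(W)$) combined with $f^2$ being surjective, or from Tate duality: $f^2(W)$ is dual to $f^0(W^\vee(\chi_p))$ under the two cup-product pairings, provided one checks $f^\bullet$ is compatible with cup products. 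The main obstacle I anticipate is precisely this last compatibility --- verifying that the comparison map intertwines the cup product defined via continuous cochains ($\cup_\gamma$ in the appendix) with Liu's cup product on the Herr-type complex, and that the degree-$0$ identification is compatible with the respective trace maps to $\mathbb{Q}_p$; this requires an explicit, slightly delicate chain-level computation translating the $(\varphi,\Gamma)$-module Herr complex into the mapping-cone complex $\mathrm{C}^\bullet(G_K,W)$, which is the real content behind the slogan ``it follows as in Liu.''
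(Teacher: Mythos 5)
Your approach is essentially the one the paper takes: both sides are weakly effaceable $\delta$-functors agreeing in degree $0$, so they are isomorphic, with the $\mathrm{H}^1 \cong \mathrm{Ext}^1$ identification on each side linking the two theories. The paper simply outsources the verification of weak effaceability (on both sides) to the proof of Theorem~8.1 of Kedlaya \cite{Ke09}, whereas you sketch it directly; that is the same argument, just unpacked.

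One caveat on emphasis: the cup-product compatibility you single out at the end as ``the real content'' is not actually needed for the main argument, and the paper never attempts to prove it. The effaceability argument alone (construct the map of $\delta$-functors by embedding $W$ into an acyclic object and then run the five lemma / dimension count) establishes the isomorphism for $i=0,1,2$; Tate duality only enters as one of several inputs needed to verify effaceability for the continuous-cochain side (as in Kedlaya's argument), not as a comparison of pairings. Your alternative argument via ``$f^2$ dual to $f^0$'' would indeed require matching cup products and traces, which is a genuinely more delicate chain-level statement, but it is not the route the paper takes, and you should not present the $\delta$-functor argument as hinging on it. The actual step that needs care is exhibiting, for each $W$, a monomorphism into $W'$ with $\mathrm{H}^i(G_K,W')=\mathrm{H}^i_{\mathrm{Liu}}(G_K,W')=0$ for $i\geqq 1$; ``pure of very negative slope'' is the right idea, and the verification (using Theorem~\ref{5-0}, the Euler--Poincar\'e formula, and vanishing of $\mathrm{H}^0$ of suitable twists from Proposition~\ref{i}) is exactly what the reference to Kedlaya is doing.
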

\begin{proof}
This follows from weakly effaceabilities of functors $\mathrm{H}^i_{\mathrm{Liu}}(G_K, -)$ 
and $\mathrm{H}^i(G_K, -)$. 
For $\mathrm{H}^i_{\mathrm{Liu}}$, these facts  are proved in the proof of Theorem 8.1 of \cite{Ke09}. 
For $\mathrm{H}^i(G_K,- )$, we can also prove in the same way as in Theorem 8.1 of \cite{Ke09} because 
we have already proved the Euler-Poincar\'e formula and the Tate duality for $\mathrm{H^i}(G_K, -)$ and 
we have a natural  isomorphism $\mathrm{H}^1(G_K, W)\isom \mathrm{Ext}^1( W(\mathbb{Q}_p), W)$ which is proved in Proposition 2.2 of \cite{Na09}.

\end{proof}





\begin{thebibliography}{99}
\bibitem[Bel-Ch09]{Bel-Ch09}
J.Bella$\ddot{\i}$che, G.Chenevier, Families of Galois representations and Selmer groups , Ast\'erisque 324 (2009).
\bibitem[Be02]{Be02}
L.Berger, Repr\'esentations $p$-adiques et \'equations diff\'erentielles, Invent. Math. 148 (2002), 219-284.
\bibitem[Be08]{Be08}
L.Berger, Construction de ($\varphi,\Gamma$)-modules: repr\'esentations $p$-adiques et $B$-paires,  Algebra and Number Theory, 2 (2008), no. 1, 91--120.
\bibitem[Be09]{Be09}
L.Berger, Presque $\mathbb{C}_p$-repr\'esentations et ($\varphi,\Gamma$)-modules, Journal de l'Institut de Math\'ematiques de Jussieu 8 (2009), no. 4, 653--668.
\bibitem[Be11]{Be11}
L.Berger, Trianguline representations, preprint.
\bibitem[Be-Co08]{Be-Co08}
L.Berger, P.Colmez, Familles de repr\'esentations de de Rham et monodromie p-adique, Ast\'erisque 319 (2008), 187-212.
\bibitem[Berk93]{Berk93}
V.Berkovich, \'Etale cohomology for nonarchimedian analytic spaces, Publications math\'ematiques de l'IHES 78 (1993).
\bibitem[BL93]{BL93}
S.Bosch, W.L\"utkebohmert, Formal and rigid geometry II. Flattening techniques, Math. Ann. 296, 403-429 (1993).
\bibitem[BLR95]{BLR95}
S.Bosch, W.L\"utkebohmert, M.Raynaud, Formal and rigid geometry III. The relative maximum principle, Math. Ann. 302, 1-29 (1995).
\bibitem[Bu07]{Bu07}
 K. Buzzard, Eigenvarieties, proceedings of the London Math. Soc. Symp. on L-functions and Galois
Representations, Durham (2007).





\bibitem[Ch09a]{Ch09a}
G.Chenevier, Une application des vari\'et\'es de Hecke des groupe unitaires, preprint.
\bibitem[Ch09b]{Ch09b}
G.Chenevier, On the infinite fern of Galois representations of unitary type, preprint, arXiv:0911.5726.
\bibitem[Ch10]{Ch10}
G.Chenevier, Sur la densit\'e des repr\'esentations cristallines du groupe de Galois absolu de $\mathbb{Q}_p$, preprint, arXiv:1012.2852.
\bibitem[Co02]{Co02}
P.Colmez, Espaces de Banach de dimension finie, J. Inst. Math. Jussieu 1 (2002), 331-439.
\bibitem[Co08]{Co08}
P.Colmez, Repr\'esentations triangulines de dimension 2, Ast\'erisque 319 (2008), 213-258.
\bibitem[Co10]{Co10}P.Colmez, Repr\'esentations de $\mathrm{GL}_2(\mathbb{Q}_p)$ et $(\varphi,\Gamma)$-modules, Ast\'erisque 330 (2010), 281-509.
\bibitem[Fo94]{Fo94}
J.-M. Fontaine, Le corps des p\'eriodes $p$-adiques,  Ast\'erisque 223 (1994), 59-111.
\bibitem[Fo03]{Fo03}
J.-M. Fontaine,  Presque $\mathbb{C}_p$-repr\'esentations. Kazuya Kato's fifties birthday. Doc. 
Math. 2003, Extra Vol., 285-385 (electronic). 

\bibitem[Ke04]{Ke04}
K.Kedlaya, A p-adic local monodromy theorem, Ann. of Math. (2) 160 (2004), 93-184.
\bibitem[Ke05]{Ke05}
K.Kedlaya, Slope filtrations revisited, Doc. Math. 10 (2005), p.447-525 (electronic).
\bibitem[Ke08]{Ke08}
K.Kedlaya, Slope filtrations for relative Frobenius, Ast\'erisque 319 (2008), 259-301.
\bibitem[Ke09]{Ke09}
K.Kedlaya, Some new directions in $p$-adic Hodge theory, Journal de Th\`eorie des Nombres de Bordeaux 21 (2009), 285-300. 
\bibitem[Ki03]{Ki03}
M.Kisin, Overconvergent modular forms and the Fontaine-Mazur 
conjecture, Invent. Math. 153 (2003), 373-454.
\bibitem[Ki08]{Ki08}
M.Kisin, Potentially semi-stable deformation rings, J.AMS, 21 (2) (2008), 513-546.
\bibitem[Ki09]{Ki09}
M.Kisin, Moduli of finite flat group schemes and modularity, Ann. of Math. 170 (3) (2009), 1085-1180.

\bibitem[Ki10]{Ki10}
M.Kisin,  Deformations of $G_{\mathbb{Q}_p}$ and $\mathrm{GL}_2(\mathbb{Q}_p)$ representations, appendix to \cite{Co10}.
\bibitem[Li08]{Li08}
R.Liu, Cohomology and duality for $(\varphi,\Gamma)$-modules over the Robba ring, Int. Math. Res. Not. IMRN 3 (2008).
\bibitem[Ma97]{Ma97}
B.Mazur, An introduction to the deformation theory of Galois representations, Modular forms and Fermat's last theorem, Springer Verlag (1997), 243-311.
\bibitem[Na09]{Na09}
K.Nakamura, Classification of two dimensional split trianguline representations of $p$-adic fields, Compositio Math. 145 (2009), 865-914.
\bibitem[Na11]{Na11}
K.Nakamura, Zariski density of crystalline representations for any $p$-adic field, preprint, arXiv:1104.1760.
\bibitem[Pa10]{Pa10}
V.Paskunas, The image of Colmez's Montreal functor, preprint, arXiv:1005.2008v1.
\bibitem[Schl68]{Schl68}
M.Schlessinger, Functors on Artin rings, Trans. A.M.S. 130 (1968), 208-222.
\bibitem[Schn01]{Schn01}
P.Schneider, Nonarchimedean Functional Analysis, Springer Monographs in Mathematics (2001).
\bibitem[Se73]{Se73}
S.Sen, Lie algebras of Galois groups arising from Hodge-Tate modules, Ann. of Math. 97 (1) (1973), 160-170.
\bibitem[Se88]{Se88}
S.Sen, The analytic variation of $p$-adic Hodge structure, Ann. of Math. 127 (1988), 647-661.

\end{thebibliography}
\end{document}